\documentclass[letterpaper,11pt,reqno]{amsart}
\usepackage{mlmodern}
\usepackage[utf8]{inputenc}
\usepackage[T1]{fontenc}
\usepackage{amsmath, bm}
\usepackage{amssymb}
\usepackage{graphicx}
\usepackage[counterclockwise]{rotating}
\usepackage{mathtools}

\usepackage{enumerate}
\usepackage{amsfonts}
\usepackage{dirtytalk}
\usepackage{chngcntr}
\counterwithin{figure}{section}
\usepackage{xy}
\usepackage{amsthm}
\usepackage{xfrac}
\usepackage[dvipsnames]{xcolor}

\usepackage[obeyspaces,hyphens,spaces]{url}
\makeatletter

\usepackage[
backend=bibtex,
giveninits=true,
style=alphabetic,
url=false, isbn=false, maxbibnames=99,mincitenames=1, maxcitenames=4]{biblatex}
\renewbibmacro{in:}{}

\DeclareFieldFormat[article,periodical]{volume}{\mkbibbold{#1}} 

\usepackage[urlcolor=Magenta, 
linkcolor=red, 
citecolor=Green,
colorlinks=true,
linktocpage=true]{hyperref}
\usepackage{xr-hyper}

\usepackage{tikz}
\usepackage{tikz-cd}
\tikzstyle{lien}=[->,>=stealth,rounded corners=5pt,thick]
\usepackage{geometry}
\usepackage{cleveref}

\crefformat{section}{\S#2#1#3} 
\crefformat{subsection}{\S#2#1#3}
\crefformat{subsubsection}{\S#2#1#3}

\newtheorem{thm}{Theorem}[section]
\newtheorem{prop}[thm]{Proposition}
\newtheorem{lem}[thm]{Lemma}
\newtheorem{cor}[thm]{Corollary}

\theoremstyle{definition}
\newtheorem{defi}[thm]{Definition}
\newtheorem{rem}{Remark}[section]
\newtheorem{ex}{Example}[section]

\newtheorem{assum}{Assumption}

\numberwithin{equation}{section}

\usepackage{import}
\usepackage{graphicx}
\graphicspath{{images/}}

\usepackage{blindtext}

\author{Julien Moy}
\address{Laboratoire de Mathématiques d'Orsay, Universit{\'e} Paris-Saclay, 91405 Orsay cedex, France}
\email{julien.moy@universite-paris-saclay.fr}
\title[Trace formula for perturbations of Schr\"odinger operators]{A long time Gutzwiller trace formula for perturbations of Schr\"odinger operators near Anosov energy levels}

\def\lesssimA#1#2{\mathrel{ \vcenter{\offinterlineskip%
			\ialign{\hfil##\hfil\cr \noalign{\vskip 0.15ex} $#1<$\cr \noalign{\vskip 0.25ex} $#1\sim$\cr}%
}}}

\def\lesssim{\mathpalette\lesssimA{}}

\def\gtrsimA#1#2{\mathrel{ \vcenter{\offinterlineskip%
			\ialign{\hfil##\hfil\cr \noalign{\vskip 0.15ex} $#1>$\cr \noalign{\vskip 0.25ex} $#1\sim$\cr}%
}}}

\def\gtrsim{\mathpalette\gtrsimA{}}
\begin{document}
	\maketitle
	
	\begin{abstract} Let $P_h$ be a semiclassical pseudodifferential operator on a closed Riemannian manifold $(M,g)$, with principal symbol $p$. We consider perturbations $P_h+h^\alpha Q$, with $0<\alpha\le 1$, where $Q$ is a semiclassical pseudodifferential operator whose symbol oscillates at scale $h^\beta$, with $2\beta <\alpha$. 
		
		Assuming that the classical dynamics of the unperturbed system on a regular energy surface $p^{-1}(E)$ is hyperbolic, we prove a Gutzwiller trace formula for the perturbed operator that holds up to times of order $|\log h|$. This formula relates the distribution of energy levels of the quantum system near the energy $E$ to the periodic orbits of its classical counterpart. The perturbation $h^\alpha Q$ affects only the phases appearing in the trace formula. 
	\end{abstract}
	
	\tableofcontents
	
	\section{Introduction}
	
	\subsection{Spectral statistics in quantum chaos} This work is motivated by some classical conjectures in quantum chaos about the distribution of energy levels of quantum systems whose classical counterparts exhibit chaotic dynamics. The Bohigas--Giannoni--Schmit conjecture \cite{PhysRevLett.52.1,BGS2} asserts that generically---in a sense to be made precise---, the local spectral statistics of such systems coincide with those of large Gaussian random matrices. To fix ideas, consider a semiclassical Schr\"odinger operator $P_h=h^2\Delta_g+V$ on a closed Riemannian manifold $(M,g)$, where $\Delta_g$ is the nonnegative Laplace--Beltrami operator and $V$ is a smooth potential. Then $P_h$ is elliptic and its spectrum on $L^2(M)$ consists of a nondecreasing sequence of eigenvalues $\{E_{j,h}\}_{j\ge 0}$ that diverges to $+\infty$. To measure statistical properties of the spectrum near an energy $E$, one introduces a smoothed counting function
	\[N_f(E,L):=\sum_j f\Big(L\frac{E_{j,h}-E}{h}\Big),\]
	where $f$ is a test function with smooth compactly supported Fourier transform. One should think of $f$ as a regularized version of $\mathbf 1_{[-1,1]}$. The larger $L$ is, the finer properties of the spectrum are encoded in the fluctuations of $N_f(E,L)$ as $E$ varies. A simple quantity to measure the fluctuations of the number of eigenvalues in a microscopic neighborhood of $E$ is the smoothed number variance
	\begin{equation} \label{eq: smoothed number variance}\Sigma(E,E+\Delta E,L):=\frac{1}{\Delta E}\int_{E}^{E+\Delta E} |N_f(E',L)-\overline N_f|^2 \mathrm{d}E',\end{equation}
	where $\overline N_f$ is the smoothed average number of eigenvalues\[\quad \overline N_f:=\frac{1}{\Delta E}\int_{E}^{E+\Delta E} N_f(E',L)\mathrm{d}E',\]
	and we assume $\frac hL\ll \Delta E\ll 1$. The assumption $\Delta E\gg \frac hL$ ensures that we consider many disjoint windows of width $\frac hL$ when computing the variance while the condition $\Delta E\ll 1$ means that we stay near the energy level $E$ at the classical level. An avatar of the BGS conjecture is that for a generic $P_h$, when the classical dynamics on the energy level $E$ is chaotic, the smoothed number variance \eqref{eq: smoothed number variance} should match that of the spectrum of a large random matrix of the GOE ensemble.
	
	Arguments in favor of the conjecture typically rely on the use of the Gutzwiller trace formula, which expresses the regularized density of energy levels near the energy $E$ as the sum of a Weyl term and an oscillating sum running over the set of periodic orbits of the classical system on the corresponding energy level \cite{JHHannay1984,Berry1,BerryRigidity,Sieber_2001}. Given a test function $f$ with smooth compactly supported Fourier transform, for $L=1$, the trace formula takes the form
	\[\sum_j f\Big(\frac{E_{j,h}-E}{h}\Big)=N_{\rm Weyl}+N_{\rm osc} +\text{remainder}.\]
	Here $N_{\rm Weyl}$ is an integral over the energy shell $p^{-1}(E)$ involving local geometric invariants which is responsible for the Weyl's law asymptotics, and is given at leading order by
	\[N_{\rm Weyl}\underset{h\to 0}\sim \frac{\mu_{\rm L}(p^{-1}(E))}{(2\pi h)^{d-1}} \widehat f(0),\]
	where $\mu_{\rm L}$ is the Liouville measure on $p^{-1}(E)$. The term $N_{\rm osc}$ is an oscillating sum running over the periodic orbits of the classical flow on the energy shell $p^{-1}(E)$:
	\[N_{\rm osc} =\sum_{\substack{\gamma\ \text{periodic} \\ \text{orbit}}} A_\gamma \exp(\mathrm{i}h^{-1}S(\gamma)) \widehat f(T_\gamma).\]
	Here $T_\gamma$ and $S(\gamma)$ denote the period and the classical action of the periodic orbit $\gamma$, and $A_\gamma$ is a dynamical invariant measuring the instability of the orbit, see Theorem \ref{prop: long time gutz}. Berry \cite{Berry1,BerryRigidity} proposed to study the fluctuations of the oscillating sum when $E$ varies by performing a \emph{diagonal approximation}. It consists in discarding pairs of distinct periodic orbits when computing energy averages of $|N_{\rm osc}|^2$, by arguing that the nondiagonal terms $\mathrm{e}^{\frac{\mathrm{i}}{h}(S(\gamma)-S(\gamma'))}$ with $\gamma'\notin \{\gamma,\gamma^{-1}\}$ (here we denote by $\gamma^{-1}$ the orbit $\gamma$ traveled backwards) interfere destructively. This approximation lacks however a rigorous justification, and is even known to predict incorrect results for certain systems that present some \say{accidental} symmetries \cite{Luo1994}. Nevertheless, the BGS conjecture is still expected in a generic sense.
	
	To pursue this line of work, we study perturbations of quantum systems that are too weak to affect the classical evolution, but strong enough to modify the phases $\exp(\mathrm{i}h^{-1}S(\gamma))$ appearing in the trace formula. In a forthcoming work \cite{Moyspecstat}, we will use this trace formula to study  the spectral statistics in the model of random Schr\"odinger operators introduced by Ingremeau and Vogel \cite{ingremeau2023emergence}.

	\subsection{The semiclassical trace formula}
	
	The semiclassical trace formula has a long history that we briefly cover here, although we do not aim at being exhaustive. Early instances of the trace formula appeared in the pioneering works of Balian--Bloch on wave propagation in cavities \cite{BalianBlochI,BalianBlochII,BalianBlochIII}, and in Gutzwiller's work on Schr\"odinger operators \cite{Gutzwiller71}. Mathematically rigorous results were obtained shortly after by Colin de Verdière \cite{ColindeVerdière1973} for the heat operator and soon after by Chazarain \cite{Chazarain1974} and Duistermaat--Guillemin \cite{Duistermaat1975} for first order elliptic operators on a closed manifold, using the theory of homogeneous Fourier integral operators (FIOs). Semiclassical versions of the trace formula were later obtained by Brummelhuis--Uribe \cite{Brummelhuis1991} and Paul--Uribe \cite{PAUL1995192} who employed a reduction to the classical homogeneous theory of FIOs via a trick of Colin de Verdière. A direct semiclassical treatment using $h$-FIOs was given by Meinrenken \cite{MEINRENKEN1992279} for Schr\"odinger operators on $\mathbf{R}^n$; Combescure--Ralston--Robert then gave a streamlined proof based on coherent state decomposition \cite{Combescure1999-dq}. Another proof was obtained by Sj\"ostrand--Zworski, who reduced the proof to the study of a \emph{quantum monodromy operator} \cite{SJOSTRAND20021}. Trace formulae for longer time scales --- valid up to small logarithmic times in $h^{-1}$ --- were obtained by Jakobson--Polterovitch--Toth \cite{JakobsonWeyl2007} for the half-wave propagator on closed negatively curved surfaces using the Hadamard parametrix, and in a different setting by Schenck \cite{Schenk} for compact perturbations of the Laplacian on $\mathbf{R}^n$. Faure was able to reach arbitrarily large multiples of $|\log h|$ in the context of quantized Anosov diffeomorphisms of the torus \cite{faure2007Trace}. Strohmaier and Zelditch \cite{strohmaier} proved a relativistic generalization of the Duistermaat--Guillemin trace formula for stationary spacetimes. The recent work \cite{wunsch2025gutzwillertraceformulasingular} investigates the effect of discontinuities of the potential.

	\subsection{Main results} Let $(M,g)$ be a smooth, closed, orientable, Riemannian manifold. Denote by $T^*M$ its cotangent bundle. We refer to Appendix \ref{app: semiclassical on manifolds} for the notation of semiclassical calculus used here. We fix a quantization procedure $\operatorname{Op}_h^{M}(\cdot)$, that associates to any reasonable symbol $a\in C^\infty(T^*M)$ an operator $\operatorname{Op}_h^{M}(a):C^\infty(M)\to \mathcal D'(M)$, see Definition \ref{defi: weyl quanti manifold}.\medskip
	
	\begin{assum} \label{assum: classical flow} Let $m>0$ and let $p_0\in S^m(T^*M)$ be an elliptic, $h$-independent symbol. Denote by $\Phi^t_0$ the associated Hamiltonian flow on $T^*M$. Let $E\in \mathbf{R}$. We assume that $\mathrm{d}p_0$ is nonvanishing on $\Sigma_E:=p_0^{-1}(E)$, so that $\Sigma_E$ is a smooth compact hypersurface in $T^*M$. We further assume that the restriction $\Phi^t_0:\Sigma_E\to \Sigma_E$ is an Anosov flow. This means that the space $T\Sigma_E$ admits a continuous flow invariant decomposition
		\[T\Sigma_E=\mathbf{R}H_{p_0}\oplus E^s\oplus E^u,\]
		such that there exist constant $C,\lambda>0$ satisfying
		\[\forall v\in E^s, \forall t\ge 0, \quad \|\mathrm{d}\Phi_0^t\cdot v\|\le C \mathrm{e}^{-\lambda t}\|v\|,\]
		and
		\[\forall v\in E^u, \forall t\ge 0, \quad \|\mathrm{d}\Phi_0^{-t}\cdot v\|\le C \mathrm{e}^{-\lambda t}\|v\|.\]
		Here $\|\cdot\|$ is any smooth norm on $T(T^*M)$.
	\end{assum}
	
	\begin{assum}\label{assum: derivatives symbol} Let $0<\alpha\le 1$ and let $0\le 2\beta<\alpha$. Let $m>0$, and let $h\in (0,1]$. We consider a continuous family of real symbols $\{p_\delta\}_{\delta\in [0,h^\alpha]}$, such that $p_0$ is as above, and for all $n\ge 1$, the symbol $\partial_\delta^n p_\delta$ belongs to $S^m_\beta(T^*M)$ uniformly in $\delta \in [0,h^\alpha]$. Also, let $\{r_\delta\}_{\delta\in [0,h^\alpha]}$ be a family of real symbols in $h^{2-2\beta}S^{m}_\beta(T^*M)$. We define
		\begin{equation} \label{eq: defi P h delta}P_{h,\delta}=\operatorname{Op}_h^{M}(p_\delta+r_\delta).\end{equation}	\end{assum}
	We point out that the lower order term $r_\delta$ will not affect the leading order in the trace formula, on the timescales that we consider. One reason for adding a remainder is that we want the semiclassical Laplacian $h^2\Delta_g$ to fit under our assumptions. This operator is not simply given by the quantization of the symbol $|\xi|_g^2$, but one has instead $h^2\Delta_g=\operatorname{Op}_h^{M}(|\xi|_g^2+r)$ for some symbol $r\in h^2S^0(T^*M)$, up to a $\mathcal O(h^\infty)$ remainder.
	\begin{ex} For instance one may consider a family $p_\delta=p_0+\delta q$, with $q\in S_\beta^m(T^*M)$.
	\end{ex}
	We obtain the following trace formula for the operator $P_{h,\delta}$.
	\begin{thm}[Long time Gutzwiller trace formula] \label{prop: long time gutz} For any $(\alpha,\beta)$ satisfying Assumption \ref{assum: derivatives symbol}, there are constants $\varepsilon=\varepsilon(\alpha,\beta)>0$ and $\mathfrak c=\mathfrak c(\alpha,\beta)>0$ such that for any $h$ small enough, for any $\delta\in [0,h^\alpha]$ and $1\le L\le \mathfrak c|\log h|$ the following holds. For any test function $f$, with Fourier transform $\widehat f$ smooth and compactly supported in $[-1,1]$, we have
		\[\operatorname{Tr} f\Big(\frac Lh(P_{h,\delta}-E)\Big)=N_{\rm Weyl}+N_{\rm osc},\]
		where the Weyl term is given by
		\[N_{\rm Weyl}=\frac{1}{(2\pi h)^{d-1}L}\sum_{k\le N} \frac{\mu_{k,\delta}}{L^k}\widehat f^{(k)}(0)+\mathcal O(h^{N(1-2\beta)-C_d}\|f\|_{C^{2N+d}}),\]
		and the oscillating part admits an expansion in powers of $h$ which is given at first order by
		\begin{equation} \label{eq: N osc in mainthm}N_{\rm osc}=\frac 1L\sum_{\gamma\in \mathcal P_E} T_\gamma^{\#} \frac{\mathrm{e}^{\mathrm{i} ( \frac 1h S(\gamma_\delta)-\frac\pi 2\mathfrak{m}_\gamma)}}{|\det(I-P_\gamma)|^{\frac 12}} \widehat f\Big(\frac{T_\gamma}{L}\Big)+\mathcal O(h^\varepsilon \|f\|_{C^{C_d}}).\end{equation}
		Here,
		\begin{itemize}\item $\mu_{k,\delta}$ is a function depending on $h,\delta$ satisfying $|\mu_{k,\delta}|\le C_k h^{\lfloor \frac{k+1}{2}\rfloor}$ uniformly in $\delta\in [0,h^\alpha]$, where $\lfloor \cdot\rfloor$ is the floor function.
			\item $\mathcal P_E$ denotes the set of periodic orbits of the flow $\Phi^t_0:\Sigma_E\to \Sigma_E$, and for $\gamma\in \mathcal P_E$, we denote by $T_\gamma$ and $T_\gamma^\#$ the period and primitive period of $\gamma$, respectively.
			\item For each $\gamma\in \mathcal P_E$, there is a continuous deformation of periodic orbits $\{\gamma_\delta\}_{\delta\in [0,h^\alpha]}$ of the perturbed flow $\Phi^t_\delta$ with $\gamma_\delta\subset p_\delta^{-1}(E)$. The action of $\gamma_\delta$ is  
			\[S(\gamma_\delta)=\int_{\gamma_\delta} \alpha,\]
			where $\alpha$ is the canonical $1$-form on $T^*M$.
			\item $P_\gamma$ is the linearized Poincar\'e map of the orbit, given by $P_\gamma=\mathrm{d}\Phi^{T_\gamma}_0:E^s\oplus E^u\to E^s\oplus E^u$.
			\item $\mathfrak{m}_\gamma$ is the Maslov index of $\gamma$, interpreted as the winding number of the weak unstable manifold around $\gamma$, see \S \ref{subsec: elucidating Maslov}.\end{itemize}
	\end{thm}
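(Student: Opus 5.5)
We start from the Fourier representation
\[\operatorname{Tr} f\Big(\frac Lh(P_{h,\delta}-E)\Big)=\frac{1}{2\pi L}\int \widehat f\Big(\frac tL\Big)\,\mathrm{e}^{\frac{\mathrm{i}}{h}tE}\operatorname{Tr}\big(\mathrm{e}^{-\frac{\mathrm{i}}{h}tP_{h,\delta}}\big)\,\mathrm{d}t,\]
so that only times $|t|\le L\le \mathfrak c|\log h|$ contribute. We first insert a spectral cutoff $\chi(P_{h,\delta})$ localizing near $\Sigma_E$; this is harmless modulo $\mathcal O(h^\infty)$ by ellipticity of $p_0$ and the rapid decay of $f$. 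We then split the time integral with a partition of unity $1=\psi_0(t)+\psi_1(t)$, where $\psi_0$ is supported in $|t|\le 2t_0$ and $t_0$ is smaller than the shortest period on $\Sigma_E$.

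The short-time piece $\psi_0$ gives $N_{\rm Weyl}$. For $|t|\le 2t_0$ we write $\mathrm{e}^{-\frac{\mathrm{i}}{h}tP_{h,\delta}}$ as an $h$-FIO with a local generating function, computed by the standard WKB/eikonal construction. The symbols lie in $S^m_\beta$ with $2\beta<\alpha\le 1$, so each transport step gains $h^{1-2\beta}$. Taking the trace and applying stationary phase in $(x,\xi,t)$ at the fixed points $t=0$ produces the expansion in $\widehat f^{(k)}(0)/L^k$, with remainder $h^{N(1-2\beta)-C_d}$. The bound $|\mu_{k,\delta}|\le C_kh^{\lfloor (k+1)/2\rfloor}$ comes from the usual parity argument for the Weyl quantization. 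The perturbation $\delta\partial_\delta p$ and $r_\delta$ enter only through symbols depending smoothly on $\delta$ with uniform $S_\beta$ bounds.

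For the long-time piece we need the propagator up to time $\mathfrak c|\log h|$. The plan is to treat $h^\alpha Q$-type terms by interaction picture. Write $P_{h,\delta}=P_{h,0}+R_\delta$ with $R_\delta=\mathcal O(h^\alpha)$ in $S_\beta$. Conjugating $R_\delta$ by the unperturbed flow $\mathrm{e}^{\frac{\mathrm{i}}{h}sP_{h,0}}$ gives, by Egorov's theorem up to Ehrenfest-type times $|s|\le \mathfrak c|\log h|$ (with $\mathfrak c$ small relative to the Lyapunov exponents), an operator whose symbol $R_\delta\circ\Phi^s_0$ still lies in a class $S_{\beta'}$ with $\beta'<1/2$. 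Hence
\[\mathrm{e}^{-\frac{\mathrm{i}}{h}tP_{h,\delta}}=\mathrm{e}^{-\frac{\mathrm{i}}{h}tP_{h,0}}\operatorname{Op}_h\Big(\mathrm{e}^{-\frac{\mathrm{i}}{h}\int_0^t R_\delta\circ\Phi^{s}_0\,\mathrm{d}s}\,(1+\mathcal O(h^{\varepsilon}))\Big).\]
The phase $h^{-1}\int_0^t R_\delta\circ\Phi_0^s$ is of size $h^{\alpha-1}|\log h|$, which is large, so it must be kept inside the phase rather than expanded. The error is controlled because commutators cost $h^{1-2\beta'}$ while the phase derivative costs $h^{\alpha-1-\beta'}$; here the assumption $2\beta<\alpha$ is what makes products of the Duhamel terms small.

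Next we compute the trace of the unperturbed long-time propagator times this oscillating multiplier. We microlocalize with a partition into small balls and use a long-time Lagrangian-state / hyperbolic-dispersion parametrix, in the spirit of Schenck and Jakobson--Polterovitch--Toth. Stationary phase is then applied near each periodic orbit of period at most $L$. The Anosov assumption gives nondegeneracy of $\det(I-P_\gamma)$, with a quantitative lower bound in terms of $\mathrm{e}^{\lambda T_\gamma}$. The number of periodic orbits grows like $\mathrm{e}^{h_{\rm top}L}$, and the Hessians degenerate only exponentially, so with $\mathfrak c$ small the total stationary-phase remainder is $\mathcal O(h^\varepsilon)$.

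At each orbit, the extra phase $h^{-1}\int_0^{T_\gamma}R_\delta\circ\Phi^s_0$ combines with $h^{-1}S(\gamma)$. By first-order variation of the action, $\frac{\mathrm{d}}{\mathrm{d}\delta}S(\gamma_\delta)=-\int_0^{T}\partial_\delta p_\delta(\gamma_\delta(s))\,\mathrm{d}s$ at fixed energy, and $\gamma_\delta$ exists by structural stability of Anosov flows. Integrating in $\delta$, this yields $h^{-1}S(\gamma_\delta)$ with error $h^{-1}\delta^2 T=o(1)$ since $\alpha>1/2$; more precisely, we use the full continuous family to avoid the second-order error. The Maslov index and the amplitude $T^\#_\gamma|\det(I-P_\gamma)|^{-1/2}$ come from the unperturbed computation as usual.

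The main obstacle is the uniform long-time parametrix and stationary phase, together with controlling the $S_\beta$ perturbation over $|\log h|$ times, since symbols lose regularity like $\mathrm{e}^{\lambda t}h^{-\beta}$. I would first try choosing $\mathfrak c$ so that $\mathrm{e}^{\Lambda_{\max}\mathfrak c|\log h|}h^{-\beta}\le h^{-(1-\alpha/2)}$ or similar, keeping all symbols in some exotic class $S_{\beta''}$ with $\beta''<1/2$.
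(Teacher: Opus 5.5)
Your long-time step has a genuine gap. The interaction-picture factorization
\[\mathrm{e}^{-\frac{\mathrm{i}}{h}tP_{h,\delta}}=\mathrm{e}^{-\frac{\mathrm{i}}{h}tP_{h,0}}\operatorname{Op}_h\big(\mathrm{e}^{-\frac{\mathrm{i}}{h}\Theta_t}(1+\mathcal O(h^{\varepsilon}))\big),\qquad \Theta_t:=\int_0^t R_\delta\circ\Phi_0^s\,\mathrm{d}s,\]
does not hold in the range the theorem covers.

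\begin{itemize}
\item \emph{Symbol class.} Since $\nabla R_\delta$ has size up to $\delta h^{-\beta}$, the multiplier $\mathrm{e}^{-\frac{\mathrm{i}}{h}\Theta_t}$ has gradient of size up to $h^{\alpha-\beta-1}\mathrm{e}^{Ct}$. It therefore lies in an admissible class $S_{\beta'}$ with $\beta'<\frac12$ only when $\alpha>\frac12+\beta$.
\item \emph{Geometry.} $\mathrm{e}^{\frac{\mathrm{i}}{h}tP_{h,0}}\mathrm{e}^{-\frac{\mathrm{i}}{h}tP_{h,\delta}}$ quantizes $\Phi_0^{-t}\circ\Phi_\delta^t$, which moves points by up to $\delta h^{-\beta}$ (cf.\ the bound on $\partial_\delta\Phi^t_\delta$). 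Once this exceeds $h^{1/2}$, the operator is a genuine FIO, not a pseudodifferential operator.
\item \emph{Duhamel error.} The second Duhamel/Magnus term is $h^{-2}\iint[\widetilde R(s_1),\widetilde R(s_2)]$ with $\widetilde R(s)\approx\operatorname{Op}_h(R_\delta\circ\Phi_0^s)$. It has size $h^{-1}\delta^2h^{-2\beta}\mathrm{e}^{Ct}$, which is small only if $\alpha>\frac12+\beta$. The hypothesis $2\beta<\alpha$ that you invoke does not control it.
\end{itemize}
The theorem allows, for instance, $\beta=0$ and $\alpha=\frac13$. There $\delta=h^{1/3}$ moves trajectories by $h^{1/3}\gg h^{1/2}$, and the Magnus term is of size $h^{-1/3}$.

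This is also why the phase you obtain is wrong. Your representation only yields the linearized action $S(\gamma)-\int_0^{T_\gamma}R_\delta(\gamma(s))\,\mathrm{d}s$. Its discrepancy with $S(\gamma_\delta)$ is $h^{-1}\delta^2\partial_\delta^2S$, where $\partial_\delta^2S$ pairs $\nabla\partial_\delta p\sim h^{-\beta}$ against $\partial_\delta\gamma_\delta\sim h^{-\beta}$. The error is therefore of order $h^{2\alpha-2\beta-1}$ (up to $\mathrm{e}^{CT}$), not $h^{-1}\delta^2T$. It is not $o(1)$ under the stated hypotheses, and $\alpha>\frac12$ is not one of them. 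Appealing to the full family $\{\gamma_\delta\}$ does not help, because nothing in your formula sees it: the Magnus terms you discard are exactly what would turn the linearization into $S(\gamma_\delta)$.

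The missing idea is to put the perturbation into the \emph{phase} of the FIO rather than into a multiplier. Because $\delta\le h^\alpha<h^{2\beta}$, one has $\|p_\delta\|_{C^\ell}\lesssim h^{-\beta(\ell-2)_+}$. This is the critical regularity for which $\|\Phi^t_\delta\|_{C^1}\lesssim\mathrm{e}^{Ct}$, and the propagator of $P_{h,\delta}$ itself is an FIO whose phase is a generating function $\varphi_\delta$ of the perturbed flow, so Blair's exotic stationary phase applies. The paper proceeds as follows:
\begin{itemize}
\item it builds this WKB parametrix, as you do for short times;
\item it composes the parametrix along the orbit in unstable/stable coordinates via metaplectic conjugation;
\item it bounds the $\partial_\delta$-derivatives of the composed generating functions;
\item in the trace, it locates the critical point by a quantitative implicit function theorem as a periodic point of $\Phi^t_\delta$.
\end{itemize}
The critical value is then exactly $S(\gamma_\delta)$, since generating functions are actions. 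Only the amplitude is replaced by the unperturbed one, at cost $\mathcal O(\delta h^{-2\beta-\varepsilon})$. Your route could at best cover the subrange $\alpha>\frac12+\beta$.

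A minor point on the Weyl term: the bound $|\mu_{k,\delta}|\lesssim h^{\lfloor\frac{k+1}{2}\rfloor}$ is not a Weyl-parity effect. It comes from the stationary-phase operators $\mathcal L_{j,h}$ having order at most $2j$, so $k$ derivatives on $\widehat f$ force $2j\ge k$, combined with improved bounds on the phase derivatives at $t=0$.
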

	
	\begin{rem}We make a few comments.
		\begin{itemize} 
			\item The sum \eqref{eq: N osc in mainthm} is finite because $\widehat f$ has compact support and there are finitely many periodic orbits with period smaller than $L$. If the phases $\mathrm{e}^{\frac{\mathrm{i}}{h} S(\gamma_\delta)}$ were mutually independent, $N_{\rm osc}$ would be of order $1$. However, if the phases align --- and we assume for simplicity that all Maslov indices vanish --- then $N_{\rm osc}$ is of order $\mathrm{e}^{cL}$ for some positive constant $c$.
			\item The variations of the action with respect to $\delta$ are given by the formula
			\[\frac{\partial S(\gamma_\delta)}{\partial\delta}=-\int_{\gamma_\delta} \frac{\partial p_\delta}{\partial \delta}\mathrm{d}t.\]
	\end{itemize} \end{rem}
	
	Even if the flow $\Phi^t_0:p_0^{-1}(E)\to p_0^{-1}(E)$ is not Anosov, provided periodic orbits are hyperbolic, we can prove a Gutzwiller trace formula for the operator $P_{h,\delta}$ that holds for finite times. That is, we are not able to let $L$ go to $+\infty$ as $h\to 0$. In fact, the Anosov property provides a quantitative estimate on the separation of periodic orbits in phase space which allows working up to $L\le \mathfrak c|\log h|$.
	
	Recall that a periodic orbit of a Hamiltonian flow is \emph{hyperbolic} if the linearized Poincar\'e map of the orbit, which acts on the symplectic orthogonal to the flow direction, admits no eigenvalue of modulus $1$. Given a hyperbolic periodic orbit, one can define stable and unstable distributions along the orbit, which are still denoted $E^{s}$ and $E^u$.
	\begin{thm}\label{thm: only assume hyperbolic per orbit} We drop the assumption that the classical flow is Anosov. Let $\gamma$ be a hyperbolic periodic orbit of the flow on $\Sigma_E$, and assume that the period of $\gamma$ is isolated in the set of all periods. Let $f$ be a test function with smooth compactly supported Fourier transform, and such that $T_\gamma$ is the only period in the support of $\widehat f$. Then, with the notation of Theorem \ref{prop: long time gutz}, one has
		\[\operatorname{Tr} f\Big(\frac{P_{h,\delta}-E}{h}\Big)=T_\gamma^\#\frac{\mathrm{e}^{\mathrm{i} ( \frac 1h S(\gamma_\delta)-\frac\pi 2\mathfrak{m}_\gamma)}}{|\det(I-P_\gamma)|^{\frac 12}} \widehat f(T_\gamma)+\mathcal O(h^\varepsilon).\]
	\end{thm}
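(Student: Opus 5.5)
We prove Theorem \ref{thm: only assume hyperbolic per orbit} by the standard route: write the trace as a time integral of the propagator, localize in phase space near $\gamma$, and evaluate the resulting oscillatory integral by stationary phase. The only new input compared with the classical argument is the control of the perturbation $h^\alpha Q$, which oscillates at scale $h^\beta$.

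\textbf{Step 1: reduction to the propagator.} By Fourier inversion,
\[
\operatorname{Tr} f\Big(\frac{P_{h,\delta}-E}{h}\Big)=\frac1{2\pi}\int \widehat f(t)\,\mathrm e^{\mathrm i tE/h}\operatorname{Tr}\big(\mathrm e^{-\mathrm i tP_{h,\delta}/h}\big)\,\mathrm dt .
\]
Ellipticity of $p_0$ and functional calculus let us insert an energy cutoff $\chi(P_{h,\delta})$ localizing near $\Sigma_E$, at cost $\mathcal O(h^\infty)$. Since $\widehat f$ is supported away from $0$ (only the period $T_\gamma$ lies in its support), no Weyl term appears. Write $\widehat f=\widehat f_1+\widehat f_2$ with $\widehat f_1$ supported near $T_\gamma$. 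Then take a microlocal partition of unity $\operatorname{Op}_h(\psi)+\operatorname{Op}_h(1-\psi)$, with $\psi$ supported in a small tubular neighbourhood of $\gamma$ in $T^*M$ (of size independent of $h$).

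\textbf{Step 2: discarding the non-periodic region.} Away from $\gamma$, and for $t$ in $\operatorname{supp}\widehat f$, points of $\Sigma_E$ are not $t$-periodic. Hence there is a uniform lower bound $|\Phi_0^t(\rho)-\rho|+|t-T_\gamma|\ge c>0$ on the relevant set, and this bound persists for $\Phi_\delta^t$ once $h$ is small. Represent $\mathrm e^{-\mathrm itP_{h,\delta}/h}$ microlocally as an $h$-FIO with phase generated by $\Phi_\delta^t$ and symbol in an exotic class $S_\beta$. Non-stationary phase then gives $\mathcal O(h^{\infty})$ for this contribution.

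Proving that FIO representation is the first technical point. Because $\partial_\delta p_\delta\in S_\beta$ with $2\beta<\alpha\le1$, the perturbed flow satisfies $\Phi_\delta^t=\Phi_0^t+\mathcal O(h^{\alpha-\beta})$ in $C^1$ on bounded times. The symbol obtained from the Egorov/transport equations has derivatives losing $h^{-\beta}$ each. Each integration by parts therefore gains $h^{1-2\beta}$ (one factor from the phase, one loss from the symbol), which is still a positive power.

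\textbf{Step 3: the contribution near $\gamma$.} Choose local symplectic coordinates near $\gamma$ (a normal form $(t,\tau,y,\eta)$ with $\gamma=\{\tau=0,y=\eta=0\}$). In these coordinates the localized trace becomes
\[
(2\pi h)^{-d}\int \mathrm e^{\mathrm i\varphi_\delta(t,x,\theta)/h}a\,\mathrm dt\,\mathrm dx\,\mathrm d\theta .
\]
Its critical set is the family $\{\gamma_\delta\}$: the periodic orbit of $\Phi_\delta^{t}$ on $p_\delta^{-1}(E)$ with period near $T_\gamma$. This family exists by the implicit function theorem applied to the Poincar\'e map, since hyperbolicity means $I-P_\gamma$ is invertible and $\delta\le h^\alpha$ is small.

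The critical set is clean, of dimension one, parametrized along the orbit, and the transversal Hessian has determinant $\det(I-P_{\gamma_\delta})$. Stationary phase then yields
\[
T_\gamma^\#\,\mathrm e^{\mathrm i S(\gamma_\delta)/h}\,\mathrm e^{-\mathrm i\pi\mathfrak m_\gamma/2}\,|\det(I-P_{\gamma})|^{-1/2}\,\widehat f(T_\gamma)
\]
with relative error $\mathcal O(h^{1-2\beta}+h^{\alpha-\beta})$. The factor $T_\gamma^\#$ comes from integrating along the orbit; the values $\widehat f(T_{\gamma_\delta})$ and $\det(I-P_{\gamma_\delta})$ are replaced by their $\delta=0$ counterparts at cost $\mathcal O(h^{\alpha-\beta})$. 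The action, by contrast, must be kept exact in $\delta$: $h^{-1}(S(\gamma_\delta)-S(\gamma_0))$ is of size $h^{\alpha-1}$, possibly large. The Maslov index is locally constant in $\delta$, so it can be computed at $\delta=0$ as in \S\ref{subsec: elucidating Maslov}.

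\textbf{Main obstacle.} The hardest step is Step 3's stationary phase with exotic symbols. Stationary phase needs the amplitude and the $\delta$-dependent part of the phase to vary slowly compared with the scale $h^{1/2}$ of the critical-point localization. This is exactly why we need $2\beta<\alpha$ and $\beta<1/2$.

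My first attempt would be to split off the perturbation. Write $\varphi_\delta=\varphi_0+(\varphi_\delta-\varphi_0)$, with $\varphi_\delta-\varphi_0=\mathcal O(h^\alpha)$ and $k$-th derivatives of size $\mathcal O(h^{\alpha-k\beta})$. After rescaling $z\mapsto h^{1/2}z$ around the critical point, the Taylor terms of $h^{-1}(\varphi_\delta-\varphi_0)$ beyond the linear order become $\mathcal O(h^{\alpha-1+k/2-k\beta})$ for $k\ge2$. This is $\mathcal O(h^{\alpha-2\beta})$ at $k=2$ and better for higher $k$. The linear and constant terms are absorbed by moving the critical point to $\gamma_\delta$.

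This yields an error $\mathcal O(h^\varepsilon)$ with $\varepsilon=\min(\alpha-2\beta,1-2\beta)>0$.
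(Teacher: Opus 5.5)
Your proposal is correct and follows essentially the paper's route specialized to bounded times. The shared steps are an FIO representation of $\mathrm{e}^{-\frac{\mathrm{i}}{h}tP_{h,\delta}}$ with exotic phase and amplitude, disposal of the non-periodic region, location of $\gamma_\delta$ by an implicit function argument, and an exotic stationary phase that keeps $S(\gamma_\delta)$ exact while replacing the amplitude, Hessian and $\widehat f$ by their unperturbed values. The differences are implementation: the paper invokes Blair's lemma where you sketch a rescaling argument, and it localizes with $h^\nu$-scale cutoffs and metaplectic conjugation to stable/unstable frames, which is also where the Maslov factor you import from the $\delta=0$ theory comes from.

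Minor corrections:
\begin{enumerate}
\item $\Phi^t_\delta-\Phi^t_0$ is $\mathcal O(h^{\alpha-\beta})$ only in $C^0$; in $C^1$ it is $\mathcal O(h^{\alpha-2\beta})$ (Proposition \ref{prop: bound delta derivatives of the flow}). So replacing $\det(I-P_{\gamma_\delta})$ costs $\mathcal O(h^{\alpha-2\beta})$, not $\mathcal O(h^{\alpha-\beta})$; this is still consistent with your final $\varepsilon$.
\item In Step 2 the relevant lower bound is on $|p_0(\rho)-E|+|\Phi_0^t(\rho)-\rho|$, off a neighbourhood of $\{T_\gamma\}\times\gamma$, with the energy direction handled by $\partial_t$ of the phase. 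The term $|t-T_\gamma|$ should not appear, since it is not part of the phase gradient.
\item With the paper's Fourier convention there is no factor $\frac{1}{2\pi}$ in your Step 1.
\end{enumerate}
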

	
	To our knowledge, the interpretation of the Maslov index as the winding number of the unstable manifold around the orbit goes back to Creagh, Robbins and Littlejohn \cite{creagh1990,robbins1991}. In the main application of Theorem \ref{prop: long time gutz} that we have in mind, there are actually no Maslov indices. Indeed, by a result of Paternain--Paternain \cite{Paternain1994}, we know that if a symbol $p(x,\xi)\in C^\infty(T^*M)$ satisfies
	\begin{itemize}
		\item $\nabla_{\xi}^2p(x,\xi)\ge 0$;
		\item The classical flow on the energy shell $p^{-1}(E)$ is Anosov;
	\end{itemize}
	then the weak-unstable distribution $E^{wu}=\mathbf{R}H_{p_0}\oplus E^u$ is globally transverse to the vertical distribution $\mathbf V$, thus all Maslov indices vanish. This applies in particular to electromagnetic Hamiltonians $p(x,\xi)=|\xi-\mathbf A(x)|_g^2 +V(x)$, where $\mathbf A\in C^\infty(M,T^*M)$ is a $1$-form called a \emph{magnetic potential}, and $V\in C^\infty(M)$. For the free Hamiltonian $p(x,\xi)=|\xi|_g^2$, this result was known since Klingenberg \cite{Klingenberg1974}.
	
	\subsection{Organization of the paper} In \S \ref{sec: energy loc}, we perform an energy localization, based on the ellipticity of the operator $P_{h,\delta}$, that will allow us to work in a compact subset of the phase space $T^*M$ in the rest of the paper. This relies on the functional calculus for exotic pseudodifferential operators presented in the appendix. In \S \ref{sec: classical dynamics}, we study the dynamics generated by the perturbed Hamiltonian $p_\delta$. The main body of the work consists in controlling the behavior of the flow, and therefore the generating functions, when $\delta$ varies. The estimates that we obtain are similar to those in \cite[\S 4]{ingremeau2023emergence}.

	In \S \ref{sec: FIO}, we review some elements of the local theory of Fourier integral operators (FIOs), then we construct a WKB approximation for the short time propagator $\mathrm{e}^{-\frac{\mathrm{i}}{h} tP_{h,\delta}}$. While this procedure is fairly classical, some care is needed because we work with an exotic symbol $p_\delta$ whose derivatives of order $\ge 3$ blow up in the limit $h\to 0$. A notable feature is that the perturbations are included in the phase and not only in the amplitude of the FIOs. 
	
	In \S \ref{sec: the short time trace}, we then compute the short time trace, which is responsible for the Weyl term in the trace formula. 
	
	In \S\ref{sec: long time propagation}, we study the long time quantum dynamics. A WKB approximation for the long time propagator is obtained by composing short time approximations. As one may encounter \emph{caustics} which prevent expressing the propagator nicely, as in \cite{Schenk} we work in unstable/stable coordinates that are adapted to the dynamics.  At the quantum level, this classical change of coordinates amounts to conjugating the quantum propagator by some metaplectic operators. Similar results were obtained for perturbations $P_{h,\delta}=h^2\Delta_g+\delta Q$ on negatively curved manifolds by Ingremeau and Vogel \cite{ingremeau2023emergence}, by lifting the computations to the universal cover. Finally, we compute the long time trace using a stationary phase expansion. In the unperturbed case $\delta=0$, this section is very similar to \cite{Schenk}, aside from Maslov indices considerations.
	
	The final section \S \ref{sec: metaplectic} provides the material on metaplectic operators and Maslov indices which is needed in the paper. 
	
	The appendix, which is very much inspired by \cite{DJN21,bonthonneauWeyl}, contains a presentation of the semiclassical pseudodifferential calculus on Riemannian manifolds, with an emphasis on exotic symbols.
	
	\subsection*{Acknowledgments} I wish to thank Emmanuel Schenck, Maxime Ingremeau, and Martin Vogel for our discussions during the preparation of this work. Thanks also to St\'ephane Nonnenmacher for his guidance during the writing of this manuscript and his feedback on a previous version of the paper.
	\subsection{Notations and conventions}
	If $k$ is an integer, we let $k_+:=\max(0,k)$.
	
	$\mathcal S(\mathbf{R}^d)$ is the Schwartz space of rapidly decaying functions and $\mathcal S'(\mathbf{R}^d)$ is its dual.
	A function $f\in \mathcal S(\mathbf{R})$ and its Fourier transform $\widehat f\in \mathcal S(\mathbf{R})$ are given by
	\[\widehat f(\xi)=\frac{1}{2\pi}\int_\mathbf{R} \mathrm{e}^{-\mathrm{i} t\xi}f(t)\mathrm{d}t,\qquad f(t)=\int_{\mathbf{R}}\mathrm{e}^{\mathrm{i} t\xi}\widehat f(\xi)\mathrm{d}\xi.\]

	Given a function $f:\mathbf{R}^d\to \mathbf{R}$, the notation $\partial^\ell f$ represents any linear combination of partial derivatives of $f$ of order $\ell$. If $G=(G^1,\ldots,G^d):\mathbf{R}^d\to \mathbf{R}^d$ is a smooth map, the notation $\partial^\ell G$ represents $\partial^\ell G^i$ for some $i\in \{1,\ldots,d\}$, where $\partial^\ell G^i$ is defined as above.

	One can then write Fa\`a di Bruno's formula in a compact way. If $f:\mathbf{R}^d\to \mathbf{R}$ and $G:\mathbf{R}^d\to \mathbf{R}^d$ are as above, one has
	\[\partial^\ell(f\circ G)=\sum_{r=0}^{\ell}\sum_{\ell_1+\ldots+\ell_r=\ell} (\partial^r f)(G)\cdot \Big(\prod_{i=1}^r \partial^{\ell_i}G\Big).\]
	By convention, an empty sum equals $0$ and an empty product equals $1$. Thus the formula still makes sense for $\ell=0$. When $\ell\ge 1$ we can make the sum start at $r=1$.
	
	We write $f\lesssim g$ or $f=\mathcal O(g)$ if there is a constant $C$ such that $f\le Cg$. If $C$ depends on a set of relevant parameters, denoted by $\bullet$, we write $f\lesssim_{\!\! \bullet} g$.
	
	If $f$ depends on a semiclassical parameter $h\in (0,1]$, we write $f=\mathcal O(h^\infty)$ if $f=\mathcal O_N(h^N)$ for all $N\ge 0$.
	
	Given two Banach spaces $E,F$ and a bounded linear operator $A:E\to F$, we denote by $\|A\|_{E\to F}$ the operator norm of $A$. If $E=F$ we often write $\|A\|_E$ instead. The notation $\mathcal O_\bullet(g)_{E\to F}$ represents an operator $A:E\to F$ such that $\|A\|_{E\to F}=\mathcal O_\bullet(g)$.
	
	If $H$ is a separable Hilbert space and $A:H\to H$ is a linear operator, we denote by $\|A\|_{\rm tr}$ the trace-class norm of $A$.

	\section{Energy localization} \label{sec: energy loc} Let $f$ be a real function with smooth compactly supported Fourier transform. Recall that we are studying
	\[N_f(E,L):=\operatorname{Tr}f\Big(L\frac{P_{h,\delta}-E}{h}\Big)=\sum_j f\Big(L\frac{E_{j,h}-E}{h}\Big),\]
	where $\{E_{j,h}\}_{j\ge 0}$ denotes the sequence of eigenvalues of $P_{h,\delta}$. 
	\begin{lem}\label{lem: first energy loc} Let $\chi\equiv 1$ on $[-1,1]$ and $\chi\equiv 0$ outside $[E-2,E+2]$. Also, let $\tilde \chi(x)\equiv x$ on $[E-1,E+1]$ and $\tilde \chi(x)\equiv 0$ outside $[-2,2]$. Then, if $\nu<1$ and $1\le L\le C|\log h|$ we have
		\begin{equation} \label{eq: energy localized} N_f(E,L)=\operatorname{Tr}f\Big(L\frac{\tilde \chi(P_{h,\delta})-E}{h}\Big) \chi\Big(\frac{P_{h,\delta}-E}{h^\nu}\Big)+\mathcal O_C(h^\infty). \end{equation}
	\end{lem}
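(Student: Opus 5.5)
The plan is to reduce \eqref{eq: energy localized} to a scalar comparison, eigenvalue by eigenvalue, using the spectral theorem. I read the cutoff $\chi$ as equal to $1$ on $[-1,1]$ and supported in $[-2,2]$, so that $\chi((x-E)/h^\nu)$ localizes to $|x-E|\le 2h^\nu$. Since $p_\delta+r_\delta$ is real and elliptic, $P_{h,\delta}$ is self-adjoint with discrete spectrum $\{E_{j,h}\}$ and an orthonormal eigenbasis. Both operators in \eqref{eq: energy localized} are functions of $P_{h,\delta}$, so they are diagonal in this basis. Writing $x_j:=E_{j,h}$, the difference of the two traces is
\[
D:=\sum_j \Big[f\Big(L\frac{x_j-E}{h}\Big)-f\Big(L\frac{\tilde\chi(x_j)-E}{h}\Big)\chi\Big(\frac{x_j-E}{h^\nu}\Big)\Big].
\]
It suffices to show $D=\mathcal O(h^\infty)$. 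Absolute convergence of the left-hand sum, and hence trace-class membership, will come out of the same estimates.

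The key steps are as follows. First, I split the eigenvalues according to $|x_j-E|\le h^\nu$ or $|x_j-E|>h^\nu$.

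In the first region, $\chi((x_j-E)/h^\nu)=1$. Moreover $\tilde\chi(x_j)=x_j$, because $h^\nu<1$ puts $x_j$ in $[E-1,E+1]$. So these terms cancel exactly.

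In the second region, $|L(x_j-E)/h|\ge h^{\nu-1}$ because $L\ge1$. Since $f$ is Schwartz (its Fourier transform is smooth with compact support), for every $N$
\[
\Big|f\Big(L\frac{x_j-E}{h}\Big)\Big|\le C_N\Big(1+\frac{|x_j-E|}{h}\Big)^{-2N}\le C_N\,h^{N(1-\nu)}(1+|x_j-E|)^{-N}.
\]
Here I used $1+|x_j-E|/h\ge \max(h^{\nu-1},1+|x_j-E|)$.

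The second part of each summand in this region is nonzero only if $h^\nu<|x_j-E|\le 2h^\nu$. For such $j$ and $h$ small we still have $\tilde\chi(x_j)=x_j$, so it obeys the same bound, multiplied by $\|\chi\|_\infty$.

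Note that the upper bound on $L$ plays no role here; only $L\ge1$ and the fact that $\nu<1$ matter.

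It then remains to sum these bounds over $j$. This needs a polynomial bound on the eigenvalue counting function, of the form
\[
\#\{j: |E_{j,h}|\le \lambda\}\le C h^{-d}(1+\lambda)^{K}
\]
for some $K$. With such a bound, $\sum_j (1+|x_j-E|)^{-N}\lesssim h^{-d}$ once $N>K$, and then $|D|\lesssim_N h^{N(1-\nu)-d}$ for every $N$, which gives $\mathcal O(h^\infty)$.

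I expect the counting bound to be the main (though standard) obstacle. My first attempt would use ellipticity of $p_0\in S^m$ together with the functional calculus of the appendix. Choose $k$ with $km>d$. Then $(P_{h,\delta}+i)^{-k}$ is a pseudodifferential operator with symbol in $S^{-km}$, uniformly in $\delta$, so it is trace class with norm $\mathcal O(h^{-d})$. This gives
\[
\sum_j (1+|E_{j,h}|)^{-k}\lesssim h^{-d},
\]
which is all the summation step requires: I would take $N\ge k$ and bound $(1+|x_j-E|)^{-N}\lesssim_E (1+|x_j|)^{-k}$.

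If that route is inconvenient, an alternative is a min-max comparison with $h^2\Delta_g$ shifted by a constant, using Gårding's inequality.
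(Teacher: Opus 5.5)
Your proof is correct and follows the paper's argument: both sides are diagonal in the eigenbasis of $P_{h,\delta}$, the terms with $|E_{j,h}-E|\le h^\nu$ cancel exactly, and the remaining terms are controlled by the rapid decay of $f$ together with a polynomial Weyl-type bound on the eigenvalue counting function. The paper gets that bound from Proposition~\ref{prop: rough bounds counting func} by min-max comparison with $h^2\Delta_g$, which is your suggested alternative, and your write-up is slightly more careful than the printed one, since you explicitly include the tail $|E_{j,h}-E|>2h^\nu$ and observe that only $L\ge 1$ is used.
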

	\begin{proof}Since $\widehat f$ is compactly supported we have $|f(t)|\le C_N |t|^{-N}$ for $t\ge 1$. Then the absolute value of the difference between the two sides of \eqref{eq: energy localized} can be bounded by
		\begin{equation} \label{eq: diff trace can be bdd}\sum_{h^\nu\le |E_{j,h}-E|\le 2 h^\nu} \Big|f\Big(L\frac{E_{j,h}-E}{h}\Big)\Big|\lesssim \sum_{h^\nu\le |E_{j,h}-E|\le 2 h^\nu} \Big|\frac{L(E_{j,h}-E)}{h}\Big|^{-N}.\end{equation}
		Since $p_\delta$ is elliptic of order $m$, Proposition \ref{prop: rough bounds counting func} gives an upper bound
		\[\#\{E_{j,h}\le x\}\lesssim h^{-d}x^{d/m},\]
		which is uniform in $\delta\in [0,h^\alpha]$. Because $L=\mathcal O(|\log h|)$, we deduce that  \eqref{eq: diff trace can be bdd} is $\mathcal O(h^\infty)$.
	\end{proof}
	In practice we will only apply this result for $\nu<\frac 12$, this condition ensures that $\chi\Big(\frac{P_{h,\delta}-E}{h^\nu}\Big)$ is a pseudodifferential operator lying in an exotic class, by Theorem \ref{thm: func calc singular func}. Instead of working with this operator, we prefer localizing near the energy shell $\Sigma_E=p_0^{-1}(E)$, which is more or less equivalent.
	\begin{prop}\label{prop: energy cutoff} Let $\chi_E\in S_\nu(T^*M)$ be equal to $1$ in a $3h^\nu$-neighborhood of $\Sigma_E$. Then, if $\nu<\min(\frac 12,\alpha)$, in the notation of Lemma \ref{lem: first energy loc}, we have
		\[N_f(E,L)=\operatorname{Tr}\Big[f\Big(L\frac{\tilde \chi(P_{h,\delta})-E}{h}\Big)\operatorname{Op}_h^{M}(\chi_E)\Big]+\mathcal O(h^\infty).\]
	\end{prop}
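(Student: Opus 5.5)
Starting from Lemma \ref{lem: first energy loc} with this $\nu$, set $A:=f\big(L\frac{\tilde \chi(P_{h,\delta})-E}{h}\big)$ and $\chi_\nu:=\chi\big(\frac{P_{h,\delta}-E}{h^\nu}\big)$. Then $N_f(E,L)=\operatorname{Tr}(A\chi_\nu)+\mathcal O(h^\infty)$, and it suffices to prove
\[\operatorname{Tr}\big(A\chi_\nu(I-\operatorname{Op}_h^{M}(\chi_E))\big)=\mathcal O(h^\infty)\quad\text{and}\quad \operatorname{Tr}\big(A(1-\chi_\nu)\operatorname{Op}_h^{M}(\chi_E)\big)=\mathcal O(h^\infty).\]
The second estimate follows exactly as in the proof of Lemma \ref{lem: first energy loc}. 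The operator $A(1-\chi_\nu)$ is diagonal in the eigenbasis of $P_{h,\delta}$, with eigenvalues bounded by $C_N(Lh^{\nu-1})^{-N}$ on the window $|E_{j,h}-E|\ge h^\nu$. For eigenvalues far from $E$, the cutoff $\tilde\chi$ gives either $|E_{j,h}|$ large (where $\tilde\chi=0$ and $f(-LE/h)$ decays) or $|\tilde\chi(E_{j,h})-E|\gtrsim 1$. Combined with the polynomial counting bound of Proposition \ref{prop: rough bounds counting func}, the trace norm of $A(1-\chi_\nu)$ is $\mathcal O(h^\infty)$. Since $\operatorname{Op}_h^M(\chi_E)$ is bounded on $L^2$ uniformly in $h$ (by the $S_\nu$ calculus with $\nu<\frac12$), the second estimate follows.

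For the first estimate, by the functional calculus for exotic operators (Theorem \ref{thm: func calc singular func}), $\chi_\nu$ is a pseudodifferential operator in the class $S_\nu$. Its full symbol is supported, modulo $\mathcal O(h^\infty)$ in every seminorm, where $|p_\delta-E|\le 2h^\nu$ up to lower-order corrections, which are $\mathcal O(h^{1-2\nu})$ by Assumption \ref{assum: derivatives symbol} and are thus negligible compared to $h^\nu$. This is the key geometric point. Since $\partial_\delta p_\delta\in S^m_\beta$ uniformly, we have $|p_\delta-p_0|\lesssim \delta\le h^\alpha$, and since $\nu<\alpha$, $h^\alpha\ll h^\nu$. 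Hence $\{|p_\delta-E|\le 2h^\nu\}$ is contained in $\{|p_0-E|\le 2h^\nu+Ch^\alpha\}$. Because $\mathrm dp_0\neq0$ on $\Sigma_E$, this set lies inside the $(2+o(1))h^\nu/\inf|\mathrm dp_0|$-neighborhood of $\Sigma_E$.

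The constant matters here. I would normalize the distance to $\Sigma_E$ so that ``$3h^\nu$-neighborhood'' means $\{|p_0-E|\le 3h^\nu\}$, or else enlarge the support of $\chi$ appropriately. Thus $1-\chi_E$ vanishes on a neighborhood of the essential support of $\chi_\nu$, at distance $\gtrsim h^\nu$. By the disjoint-support property of the $S_\nu$ calculus, $\chi_\nu(I-\operatorname{Op}_h^{M}(\chi_E))=\mathcal O(h^\infty)$ from $H^{-N}$ to $H^{N}$. This follows from the composition expansion, in which each term gains $h^{1-2\nu}$. Composing with the bounded operator $A$ and using that $\mathcal O(h^\infty)_{H^{-N}\to H^N}$ operators with $N>d$ are trace class with norm $\mathcal O(h^\infty)$ gives the first estimate.

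The main obstacle is this support statement: one must track precisely that the symbol of $\chi\big(\frac{P_{h,\delta}-E}{h^\nu}\big)$ provided by Theorem \ref{thm: func calc singular func} is, up to $\mathcal O(h^\infty)$, supported in $\{|p_\delta-E|\le 2h^\nu\}$. One must also check that the exotic perturbation (derivatives of $p_\delta$ blowing up like $h^{-\beta k}$ times $\delta$) does not destroy the margin. This is where $\nu<\alpha$ and $2\beta<\alpha$ enter. I would argue via the Helffer--Sjöstrand parametrix construction for $(P_{h,\delta}-z)^{-1}$, reading off that all symbol terms are multiples of derivatives of $\chi$ evaluated at $(p_\delta-E)/h^\nu$.
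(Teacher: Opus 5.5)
Your decomposition is the paper's (up to the order of the factors), and your treatment of $\operatorname{Tr}\big(A\chi_\nu(I-\operatorname{Op}_h^{M}(\chi_E))\big)$ is correct. Every coefficient of the formal symbol from Theorem~\ref{thm: func calc singular func} is supported in $\{|p_\delta+r_\delta-E|\le 2h^\nu\}$. The bounds $|p_\delta-p_0|\lesssim h^\alpha\ll h^\nu$ and $|r_\delta|=\mathcal O(h^{2-2\beta})$ put this set inside $\{\chi_E=1\}$ with a margin of order $h^\nu$. Composition plus Lemma~\ref{lem: convert sobolev estimate into trace class estimate} then give an $\mathcal O(h^\infty)$ trace. (One slip: the support is not shifted by $\mathcal O(h^{1-2\nu})$; such a shift would not be $\ll h^\nu$ once $\nu\ge\frac13$.)

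The gap is in the other term: the claim $\|A(1-\chi_\nu)\|_{\rm tr}=\mathcal O(h^\infty)$ is false. Since $\tilde\chi$ has compact support, $A(1-\chi_\nu)$ acts as the scalar $f(-LE/h)$ on the infinite-dimensional range of $\mathbf 1_{\mathbf R\setminus\operatorname{supp}\tilde\chi}(P_{h,\delta})$. Its trace norm is therefore infinite whenever $f(-LE/h)\neq 0$. These eigenvalues do not decay in $j$, so Proposition~\ref{prop: rough bounds counting func} is of no help.

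Your dichotomy also fails. A continuous compactly supported $\tilde\chi$ with $\tilde\chi(x)=x$ on $[E-1,E+1]$ takes the value $E$ again at some $x_*$ with $|x_*-E|\ge 1$. This follows from the intermediate value theorem if $E\neq0$; if $E=0$ it happens on the whole complement of the support. Eigenvalues of $P_{h,\delta}$ near such points give eigenvalues of $A(1-\chi_\nu)$ of order one; for $E>0$, $x_*>E+1$ lies in the range of the elliptic symbol $p_0$. So even $\|A(1-\chi_\nu)\|_{L^2\to L^2}\|\operatorname{Op}_h^{M}(\chi_E)\|_{\rm tr}$ is not small, and no purely spectral estimate closes the argument. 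The paper's proof asserts the same trace-norm bound, so this step needs repair there too.

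The missing ingredient is that $\operatorname{Op}_h^{M}(\chi_E)$ is microlocalized near $\Sigma_E$, so it does not see the spectrum of $P_{h,\delta}$ away from $E$. Take an $h$-independent $\psi\in C^\infty_{\rm comp}((E-1,E+1))$ with $\psi=1$ on $[E-\frac12,E+\frac12]$, and split $\operatorname{Op}_h^{M}(\chi_E)=\psi(P_{h,\delta})\operatorname{Op}_h^{M}(\chi_E)+(1-\psi(P_{h,\delta}))\operatorname{Op}_h^{M}(\chi_E)$.

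On the range of $\psi(P_{h,\delta})$ one has $\tilde\chi(P_{h,\delta})=P_{h,\delta}$. Hence $A(1-\chi_\nu)\psi(P_{h,\delta})$ has rank $\mathcal O(h^{-d})$ and eigenvalues $\mathcal O\big((Lh^{\nu-1})^{-N}\big)$; this is where your spectral argument is legitimate.

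For the other piece, use $|\operatorname{Tr}(BC)|\le\|B\|_{L^2\to L^2}\|C\|_{\rm tr}$ with $\|A(1-\chi_\nu)\|_{L^2\to L^2}\le\sup|f|$. It remains to show $\|(1-\psi(P_{h,\delta}))\operatorname{Op}_h^{M}(\chi_E)\|_{\rm tr}=\mathcal O(h^\infty)$. This follows from Theorem~\ref{thm: functional calculus}, Proposition~\ref{prop: composition manifold} and Lemma~\ref{lem: convert sobolev estimate into trace class estimate}. Indeed, $\psi(p_\delta+r_\delta)\equiv1$ and $\psi^{(k)}(p_\delta+r_\delta)\equiv0$ for $k\ge1$ on a neighborhood of $\operatorname{supp}\chi_E$, which lies in a small neighborhood of $\Sigma_E$.
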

	\begin{proof}Let $\chi\equiv 1$ on $[-1,1]$ and $\chi\equiv 0$ outside $[-2,2]$. Then, write
		\begin{align*}
			\operatorname{Tr} f\Big(L\frac{\tilde \chi(P_{h,\delta})-E}{h}\Big)\operatorname{Op}_h^{M}(\chi_E)& = 	\operatorname{Tr}  f\Big(L\frac{\tilde \chi(P_{h,\delta})-E}{h}\Big)\operatorname{Op}_h^{M}(\chi_E) \chi\Big(\frac{P_{h,\delta}-E}{h^\nu}\Big) \\ & +	\operatorname{Tr}  f\Big(L\frac{\tilde \chi(P_{h,\delta})-E}{h}\Big)\operatorname{Op}_h^{M}(\chi_E) \Big[1-\chi\Big(\frac{P_{h,\delta}-E}{h^\nu}\Big)\Big].
		\end{align*}
		By cyclicity of the trace, and by the same argument as in Lemma \ref{lem: first energy loc}, the second term on the right-hand side is smaller than
		\[\Big\|\Big[1-\chi\Big(\frac{P_{h,\delta}-E}{h^\nu}\Big)\Big]f\Big(L\frac{\tilde \chi(P_{h,\delta})-E}{h}\Big) \Big\|_{\rm tr}\cdot \|\operatorname{Op}_h^{M}(\chi_E)\|_{L^2}=\mathcal O(h^\infty).\]
		To estimate the first term, we use Theorem \ref{thm: func calc singular func} to write
		\[ \chi\Big(\frac{P_{h,\delta}-E}{h^\nu}\Big)=\operatorname{Op}_h^{M}(q)+\mathcal O(h^\infty)_{\rm tr},\]
		for some symbol $q\in S^m_{\max(\beta,\nu)}(T^*M)$ equal to $1$ on $|p_\delta-E|\le h^\nu$ and vanishing on $|p_\delta-E|\ge 2 h^\nu$. Now, $|p_\delta-E|\le 2h^\nu$ implies $|p_0-E|\le 2h^\nu+C\delta=2h^\nu+Ch^\alpha\le 3h^\nu$ provided $h$ is small enough. It follows that $\chi_E\equiv 1$ on the support of $q$, hence by the composition formula from Proposition \ref{prop: composition manifold}, we have
		\[\operatorname{Op}_h^{M}(\chi_E)\operatorname{Op}_h^{M}(q)=\operatorname{Op}_h^{M}(q)+\mathcal O(h^\infty)_{L^2/\rm{tr}}.\]
		This finishes the proof.
	\end{proof}
	Note that by Theorem \ref{thm: functional calculus}, if $a\in C^\infty(T^*M)$ is supported on a small neighborhood of the energy shell $p_0^{-1}(E)$, we have
	\[\tilde \chi(P_{h,\delta})\operatorname{Op}_h^{M}(a)=\operatorname{Op}_h^{M}(p_\delta+r_\delta)\operatorname{Op}_h^{M}(a),\]
	so we will always work with a compactly supported symbol equal to $p_\delta$ in the region of interest.
	
\section{Classical dynamics} \label{sec: classical dynamics} In this section, we study the dynamics of the perturbed classical system. We either work on $M=\mathbf{R}^d$ or on a closed manifold $M$. The setting is the following: $\{p_\delta\}_{\delta\in [0,h^\alpha]}$ is a smoothly varying family of smooth symbols in $S^m(T^*M)$ such that $p_0$ is elliptic of order $m$ and such that $\partial_\delta^n p_\delta$ is uniformly bounded in $S^m_\beta(T^*M)$ for all $n\ge 1$.
We assume $|\delta|\le h^\alpha$ with $\beta<2\alpha$. We denote by $H_{p_\delta}$ the Hamiltonian vector field associated with $p_\delta$, and by $\Phi^t_\delta:T^*M\to T^*M$ the associated flow. \medskip

\begin{assum}[Assumptions on the classical flow]~ 
	
	\begin{enumerate}
		\item 
		Let $\Sigma_E:=p_0^{-1}(E)$. We assume that $\mathrm{d}p_0$ does not vanish on $\Sigma_E$. It implies that $\Sigma_E$ is a smooth submanifold of $T^*M$. We assume moreover that $\Sigma_E$ is closed and connected.
		\item 
		The flow $\Phi_0^t:\Sigma_E\to \Sigma_E$ is \emph{Anosov}. More precisely, there is a continuous flow invariant splitting $T_\rho\Sigma_E=\mathbf R H_{p_0}(\rho) \oplus E^u(\rho)\oplus E^s(\rho)$, such that the following holds. There are constants $C,c>0$ such that
		\begin{itemize} 
			
			\item for all $v$ in $E^u(\rho)$, one has $|\mathrm{d}\Phi^{-t}_0(\rho)v|\le C\mathrm{e}^{-ct}$ for all $t\ge 0$,
			\item for all $v$ in $E^s(\rho)$, one has $|\mathrm{d}\Phi^{t}_0(\rho)v|\le C\mathrm{e}^{-ct}$ for all $t\ge 0$.\end{itemize}
		Here $|\cdot|$ denotes any continuous norm on $T(T^*M)$. The spaces $E^u$ and $E^s$ are respectively called \emph{unstable} and \emph{stable}, and both have dimension $d-1$. Note that the \emph{weak unstable} and \emph{weak stable} spaces $E^{wu}:=\mathbf RH_{p_0}\oplus E^u$ and $E^{ws}:=\mathbf RH_{p_0}\oplus E^s$ are Lagrangian subspaces of $T(T^*M)$ for the natural symplectic form $\omega$.
	\end{enumerate}
\end{assum}

\subsection{Estimates on the perturbed flow} The derivatives of the flow are estimated as follows. There is an analogous result in \cite[Lemma 4.2]{ingremeau2023emergence}.
\begin{prop}\label{prop: short time derivative flow}
There is a constant $C>0$ such that for all $\ell\ge 0$,
\[\forall t\ge 0, \ \|\Phi^t_\delta\|_{C^\ell_\rho}\lesssim h^{-\beta(\ell-1)_+}\mathrm{e}^{C\ell t}.\]
Moreover, $|\partial_t^n\partial_\rho^\ell\Phi^t_\delta(\rho)|\lesssim h^{-\beta(\ell+n-1)}\mathrm{e}^{C\ell t}$ for all $n\ge 1$.\end{prop}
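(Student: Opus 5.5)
The plan is to argue by induction on $\ell$, using the variational equations satisfied by the derivatives of the flow together with Grönwall's inequality. Since we only care about a neighborhood of the energy shell, we may, by the remark after Proposition \ref{prop: energy cutoff}, replace $p_\delta$ by a compactly supported symbol. Then the Hamiltonian vector field $H_{p_\delta}$ obeys
\[
\|\partial^k H_{p_\delta}\|_\infty \lesssim h^{-\beta(k-1)_+}.
\]
Indeed, $p_\delta = p_0 + \int_0^\delta \partial_\delta p_{\delta'}\,\mathrm{d}\delta'$ with $\partial_\delta p \in S^m_\beta$, so $\partial^{k+1}p_\delta = \partial^{k+1}p_0 + \mathcal O(h^{\alpha-\beta(k+1)})$. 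Since $2\beta<\alpha$, this gives $\|\partial^{k+1} p_\delta\| \lesssim 1 + h^{\alpha-\beta(k+1)} \lesssim h^{-\beta(k-1)_+}$, which is the claimed bound with $k$ the number of derivatives on $H_{p_\delta}$. On a manifold one works in local charts; there is no change because of compactness.

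The cases $\ell=0$ and $\ell=1$ come first. For $\ell=0$ the flow stays in a compact set. For $\ell=1$, $\partial_t \mathrm{d}\Phi^t = (\mathrm{d}H_{p_\delta})(\Phi^t)\,\mathrm{d}\Phi^t$, and since $\|\mathrm{d}H_{p_\delta}\|\le C$ uniformly in $\delta$, Grönwall gives $\|\mathrm{d}\Phi^t\|\le \mathrm{e}^{Ct}$. For $\ell\ge 2$, differentiate the flow equation $\ell$ times with Faà di Bruno:
\[
\partial_t \partial^\ell\Phi^t = (\mathrm{d}H)(\Phi^t)\,\partial^\ell\Phi^t + \sum_{r=2}^{\ell}\sum_{\ell_1+\dots+\ell_r=\ell}(\partial^r H)(\Phi^t)\prod_{i}\partial^{\ell_i}\Phi^t .
\]
By the induction hypothesis each $\ell_i<\ell$, so the forcing term is bounded by
\[
h^{-\beta(r-1)}\prod_i h^{-\beta(\ell_i-1)_+}\mathrm{e}^{C\ell_i t}\le h^{-\beta(r-1)-\beta(\ell-r)}\mathrm{e}^{C\ell t}=h^{-\beta(\ell-1)}\mathrm{e}^{C\ell t},
\]
using $\sum_i(\ell_i-1)_+ = \ell - \#\{i\} = \ell-r$ because all $\ell_i\ge 1$. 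Duhamel's formula with the propagator of the linear part (norm $\le \mathrm{e}^{C(t-s)}$) then yields
\[
\|\partial^\ell\Phi^t\|\lesssim \int_0^t \mathrm{e}^{C(t-s)}h^{-\beta(\ell-1)}\mathrm{e}^{C\ell s}\,\mathrm{d}s\lesssim h^{-\beta(\ell-1)}\mathrm{e}^{C\ell t}
\]
for $\ell\ge2$. The constant $C$ must be chosen once, at least the Lipschitz constant of $H$; the factor $1/(C(\ell-1))$ from the integral is harmless.

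The time derivatives are handled last. Write $\partial_t^n\Phi^t = (H^{n-1}\cdot\,) $, i.e. $\partial_t^n\Phi^t = G_n(\Phi^t)$, where $G_n = (H_{p_\delta}\cdot\nabla)^{n-1}H_{p_\delta}$. Each application of $H\cdot\nabla$ costs at most $h^{-\beta}$, so $\|\partial^k G_n\|\lesssim h^{-\beta(n-1+k)}$, counting crudely. Then $\partial_\rho^\ell[G_n(\Phi^t)]$ is expanded by Faà di Bruno again. The term with $r$ derivatives on $G_n$ contributes $h^{-\beta(n-1+r)}h^{-\beta(\ell-r)}\mathrm{e}^{C\ell t}=h^{-\beta(\ell+n-1)}\mathrm{e}^{C\ell t}$, and the case $\ell=0$ is immediate.

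The main obstacle is the bookkeeping of powers of $h^{-\beta}$: one has to check that the loss is exactly $(\ell-1)_+$, and not something like $\ell$. This rests on two facts: the second derivatives of $p_\delta$ are $\mathcal O(1)$, which is where $h^{\alpha-2\beta}\le 1$ is used, and the counting identity $\sum_i(\ell_i-1)_+=\ell-r$ in the Faà di Bruno terms.
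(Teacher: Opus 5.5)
Your proof is correct and is essentially the argument the paper relies on. For the spatial bounds the paper only cites Prouff, Dyatlov--Guillarmou and Bouzouina--Robert, whose proofs are the same variational-equation and Gr\"onwall/Duhamel induction you give; there $\|p_\delta\|_{C^2}=\mathcal O(1)$ because $h^{\alpha-2\beta}\le 1$. For the time derivatives the paper uses $\partial_t\Phi^t_\delta=H_{p_\delta}\Phi^t_\delta$, which your representation $\partial_t^n\Phi^t_\delta=G_n(\Phi^t_\delta)$ makes explicit, as in the paper's later lemma giving the formula for $\partial_t^k\Phi^t_\delta$.
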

\begin{proof}The estimate on spatial derivatives follows from the general estimates of \cite[Theorem 4.5]{prouff2024egorovstheoremweylhormandercalculus}, see also \cite[Lemma C.1]{DyGu14}, or \cite[Lemma 2.2]{bouzouinarobert}. The estimate on $|\partial_t^n\partial_\rho^\ell \Phi^t_\delta(\rho)|$ is proved by induction on $n$, using $\partial_t\Phi_\delta^t=H_{p_\delta}\Phi^t_\delta$. \end{proof}

\subsubsection{A Duhamel formula for the perturbed flow} The variations of $\Phi_\delta^t$ with respect to $\delta$ are expressed as follows.
\begin{lem}\label{lem: duhamel perturbed flow}Let $\{p_\delta\}$ be a smoothly varying family of smooth Hamiltonian on $T^*M$. Denote by $\Phi_\delta^t$ the Hamiltonian flow generated by $p_\delta$. Then, for any $\rho\in T^*M$, 
\begin{equation}\label{eq: duhamel perturbed flow bis}\frac{\partial}{\partial \delta}\Phi^t_\delta(\rho)=\int_0^t \mathrm d\Phi_\delta^s\cdot H_{\partial_\delta p_\delta}(\Phi_\delta^{t-s}(\rho)) \mathrm ds.\end{equation}
Here $H_{\partial_\delta p_\delta}$ is the Hamiltonian vector field associated with $\partial_\delta p_\delta$.\end{lem}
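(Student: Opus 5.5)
The plan is to treat $Y(t):=\partial_\delta\Phi^t_\delta(\rho)$ as the solution of an inhomogeneous linear ODE along the trajectory and to solve it by variation of constants. Formula \eqref{eq: duhamel perturbed flow bis} should be read with the differential $\mathrm d\Phi^s_\delta$ taken at the point $\Phi^{t-s}_\delta(\rho)$, so that both sides lie in $T_{\Phi^t_\delta(\rho)}(T^*M)$.

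First I will justify differentiability. Since $(\delta,\rho)\mapsto H_{p_\delta}(\rho)$ is smooth, the standard theorem on smooth dependence of ODE solutions on parameters and initial data shows that $(t,\delta,\rho)\mapsto \Phi^t_\delta(\rho)$ is smooth, with mixed partial derivatives commuting. Because $p\mapsto H_p$ is linear, we have $\partial_\delta H_{p_\delta}=H_{\partial_\delta p_\delta}$. I will work in local coordinates near the compact trajectory segment, or in $\mathbf R^{2d}$. Differentiating $\partial_t\Phi^t_\delta(\rho)=H_{p_\delta}(\Phi^t_\delta(\rho))$ with respect to $\delta$ then gives
\[\partial_t Y(t)=\mathrm dH_{p_\delta}(\Phi^t_\delta(\rho))\cdot Y(t)+H_{\partial_\delta p_\delta}(\Phi^t_\delta(\rho)),\qquad Y(0)=0,\]
where $Y(0)=0$ because $\Phi^0_\delta=\mathrm{Id}$ for every $\delta$.

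Next I identify the resolvent of the homogeneous part. Differentiating $\Phi^{t}_\delta=\Phi^{t-s}_\delta\circ\Phi^{s}_\delta$ in $\rho$ shows that $R(t,s):=\mathrm d\Phi^{t-s}_\delta(\Phi^s_\delta(\rho))$ satisfies $\partial_t R(t,s)=\mathrm dH_{p_\delta}(\Phi^t_\delta(\rho))R(t,s)$ with $R(s,s)=I$. Hence $R$ is the propagator of the linearized equation. The variation of constants formula then gives
\[Y(t)=\int_0^t \mathrm d\Phi^{t-s}_\delta(\Phi^s_\delta(\rho))\cdot H_{\partial_\delta p_\delta}(\Phi^s_\delta(\rho))\,\mathrm ds .\]
Substituting $s\mapsto t-s$ yields exactly \eqref{eq: duhamel perturbed flow bis}.

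To avoid coordinates, I could alternatively use the cleaner intrinsic argument. Fix $\delta_0$ and set $G(s):=\Phi^{t-s}_{\delta}\circ\Phi^{s}_{\delta_0}(\rho)$. Then $\frac{\mathrm d}{\mathrm ds}G(s)=\mathrm d\Phi^{t-s}_\delta\cdot\big(H_{p_{\delta_0}}-H_{p_\delta}\big)(\Phi^s_{\delta_0}(\rho))$. Integrating from $0$ to $t$ gives $\Phi^t_{\delta_0}(\rho)-\Phi^t_\delta(\rho)$ as an integral. Dividing by $\delta_0-\delta$ and letting $\delta_0\to\delta$ (dominated convergence, using smoothness) gives the same formula after the substitution $s\mapsto t-s$.

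The main, though mild, obstacle is the bookkeeping: the differential must be evaluated at the correct base point, and the intrinsic version needs that difference to make sense. The intrinsic version should be done on a trivialization or in charts covering the compact trajectory, or by embedding $T^*M$ in some $\mathbf R^N$.
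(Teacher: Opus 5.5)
Your proof is correct, including your reading of \eqref{eq: duhamel perturbed flow bis} with $\mathrm d\Phi^s_\delta$ evaluated at $\Phi^{t-s}_\delta(\rho)$, but your main argument follows a different route from the paper.

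\textbf{The paper's route.} The paper works dually, on observables rather than on the trajectory. It fixes an arbitrary $a\in C^\infty(T^*M)$ and sets $u(t)=\partial_\delta(a\circ\Phi^t_\delta)=\partial_\delta(\mathrm e^{tH_{p_\delta}}a)$. This function solves the scalar equation $\partial_t u=H_{p_\delta}u+H_{\partial_\delta p_\delta}\mathrm e^{tH_{p_\delta}}a$ with $u(0)=0$. Duhamel's formula then gives
\[u(t)=\int_0^t\mathrm e^{(t-s)H_{p_\delta}}\big[H_{\partial_\delta p_\delta}\mathrm e^{sH_{p_\delta}}a\big]\mathrm ds.\]
Since $u(t)(\rho)=\mathrm da(\Phi^t_\delta\rho)\cdot\partial_\delta\Phi^t_\delta(\rho)$ for every $a$, the vector identity follows.

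\textbf{Comparison.} Because $H_{p_\delta}$ acts intrinsically on functions, the paper never needs $\mathrm dH_{p_\delta}$, charts, or an embedding. Your first argument, the variational equation solved by variation of constants with propagator $\mathrm d\Phi^{t-s}_\delta(\Phi^s_\delta\rho)$, is the classical ODE route. It produces the tangent-vector identity directly and exhibits the resolvent explicitly, at the price of the coordinate bookkeeping you mention.

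Your second argument is essentially the paper's identity in integrated, finite-difference form, written at the level of flows. Indeed $a(G(s))=\big(\mathrm e^{sH_{p_{\delta_0}}}\mathrm e^{(t-s)H_{p_\delta}}a\big)(\rho)$, so it is the interpolation $\mathrm e^{tA}-\mathrm e^{tB}=\int_0^t \mathrm e^{sA}(A-B)\mathrm e^{(t-s)B}\,\mathrm ds$.

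In that second argument the chart issue disappears entirely. As $\delta_0\to\delta$, $G(s)\to\Phi^t_\delta(\rho)$ uniformly in $s\in[0,t]$, so the whole curve $G$ eventually lies in one chart around $\Phi^t_\delta(\rho)$. You therefore need neither a covering of the trajectory nor an embedding of $T^*M$.
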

\begin{proof} Fix $a\in C^\infty(T^*M)$ and let $u(t)=\partial_\delta (\mathrm{e}^{tH_{p_\delta}}a)=\partial_\delta ( a(\Phi_\delta^t(\cdot)))$. Then, by the chain rule,
\[\partial_t u=H_{p_\delta}u+H_{\partial_\delta p_\delta} \mathrm{e}^{tH_{p_\delta}}a.\]
Since $u(0)=0$, we infer from Duhamel's formula that
\[u(t)=\int_0^t \mathrm{e}^{(t-s)H_{p_\delta}} \big[H_{\partial_\delta p_\delta} \mathrm{e}^{sH_{p_\delta}}a\big]\mathrm{d}s.\]

This holds for all $a$ thus \eqref{eq: duhamel perturbed flow bis} follows.
\end{proof}

\subsection{Adapted coordinates} In this subsection we only consider the unperturbed dynamics $\Phi^t_0$. Let $(U_i, f_i)_{i\in I}$ be a finite atlas of $M$. It gives rise to an atlas $(T^*U_i,\widetilde f_i)$ of $T^*M$.

Let $\rho\in \Sigma_E$. Fix two charts $(T^*U_i,\widetilde{f_i})$ and $(T^*U_j,\widetilde{f_j})$ near $\rho$ and $\Phi^t_0(\rho)$ respectively. The flow in coordinates is defined locally by
\[\Phi^t_{ij}:= \widetilde{f_j}\circ \Phi^t_0\circ \widetilde{f_i}^{-1}.\] 
At the quantum level, this amounts to considering the operator in charts $(f_j)_*\mathrm{e}^{-\frac{\mathrm{i}}{h} tP_h}f_i^*$. In order to write this operator as a local Fourier integral operator with a generating function having a minimal number of variables, it is necessary that the graph of the flow $(y,\eta)\mapsto (x,\xi)$ projects (locally) diffeomorphically on the $(x,\eta)$ coordinates. An equivalent condition is that the upper left block of the differential $\mathrm{d}\Phi^t_0$ is invertible. Unfortunately, this is not necessarily the case, but this always holds after performing some change of coordinates in phase space.  A convenient choice, which is adapted to the dynamics, is given in \cite[Lemma 4.3]{NZ09}, that we quote here.
\begin{lem}\label{lem: adapted bases} Let $\rho_0\in \Sigma_E$. Then, one can find a symplectic basis $(u_1,\ldots,u_d;s_1,\ldots,s_d)$ of $T_{\rho_0} M$, such that
\[u_1=H_{p_0}(\rho_0), \qquad \mathrm{Span}(u_2,\ldots,u_d)=E^u(\rho_0), \qquad \mathrm{Span}(s_2,\ldots,s_d)=E^s(\rho_0),\]
and $\omega(u_i,s_j)=\delta_{ij}$. Moreover, this choice can be made continuous in a neighborhood of $\rho$. Note that $s_1$ is automatically transverse to $T_\rho\Sigma_E$.\end{lem}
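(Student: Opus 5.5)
We construct the basis by linear algebra at each point $\rho\in\Sigma_E$ near $\rho_0$. The only dynamical input is the continuity of the splitting and the isotropy of $E^u$ and $E^s$. (The tangent space in the statement should read $T_{\rho_0}(T^*M)$.)

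\emph{Step 1: isotropy and nondegeneracy.} For $v,w\in E^s(\rho)$, invariance of $\omega$ under the Hamiltonian flow gives $\omega(v,w)=\omega(\mathrm d\Phi^t_0 v,\mathrm d\Phi^t_0 w)$ for every $t$. Letting $t\to+\infty$ and using the contraction estimate gives $\omega(v,w)=0$. The same argument with $t\to-\infty$ applies to $E^u$. Moreover $\omega(H_{p_0},v)=\pm\mathrm dp_0(v)=0$ for every $v\in T_\rho\Sigma_E$. Hence $E^{wu}$ and $E^{ws}$ are Lagrangian, and $\mathbf RH_{p_0}$ is the kernel of $\omega|_{T\Sigma_E}$. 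Consequently $\omega$ descends to a symplectic form on $T_\rho\Sigma_E/\mathbf RH_{p_0}\cong E^u\oplus E^s$, in which $E^u$ and $E^s$ are transverse Lagrangian subspaces. Therefore the pairing $E^u\times E^s\to\mathbf R$, $(u,s)\mapsto\omega(u,s)$, is nondegenerate. I expect this step to be the main point; everything after it is bookkeeping.

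\emph{Step 2: the basis.} Set $u_1=H_{p_0}(\rho)$. Choose a basis $u_2,\dots,u_d$ of $E^u(\rho)$, and let $s_2,\dots,s_d\in E^s(\rho)$ be the dual basis for the pairing of Step 1, so that $\omega(u_i,s_j)=\delta_{ij}$ for $i,j\ge2$. By Step 1, the span $V$ of $u_2,\dots,u_d,s_2,\dots,s_d$ is a symplectic subspace. Hence $W:=V^{\omega}$ is a $2$-dimensional symplectic subspace, and it contains $u_1$. Choose $s_1\in W$ with $\omega(u_1,s_1)=1$.

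It remains to check every relation of a symplectic basis.
\begin{itemize}
\item $\omega(u_i,u_j)=0$ and $\omega(s_i,s_j)=0$ for $i,j\ge2$, by isotropy.
\item $\omega(u_1,u_i)=\omega(u_1,s_j)=0$ for $i,j\ge2$, since these vectors lie in $T\Sigma_E$.
\item $\omega(s_1,\cdot)$ vanishes on $V$, because $s_1\in W=V^{\omega}$.
\end{itemize}
Finally, $\mathrm dp_0(s_1)=\pm\omega(H_{p_0},s_1)=\pm1\neq0$, so $s_1$ is transverse to $T_\rho\Sigma_E$.

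\emph{Step 3: continuity.} The bundles $E^u$ and $E^s$ are continuous, so we get a continuous local frame of $E^u$ near $\rho_0$. To build it, fix a basis $u_i(\rho_0)$ and let $u_i(\rho)$ be its image under the projection onto $E^u(\rho)$ along $E^s(\rho)\oplus\mathbf RH_{p_0}\oplus \mathbf R\sigma$, where $\sigma$ is a fixed vector field transverse to $T\Sigma_E$.

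The dual frame $s_j(\rho)$ is obtained from any continuous frame of $E^s$ by applying the inverse of the Gram matrix $(\omega(u_i,\cdot))$. This matrix is continuous and invertible, so $s_j(\rho)$ is continuous.

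For $s_1$, take the $\omega$-orthogonal projection of $\sigma$ onto $W(\rho)$. This projection is continuous because $V(\rho)$ varies continuously and stays symplectic. Its pairing with $u_1$ is nonzero near $\rho_0$, since $\mathrm dp_0(\sigma)\neq0$ and the correction lies in $V\subset T\Sigma_E$. Dividing by this pairing gives $\omega(u_1,s_1)=1$, and $s_1$ depends continuously on $\rho$.
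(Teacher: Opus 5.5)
Your proof is correct, but it takes a different route from the paper.

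The paper fixes a continuous metric $g$ for which $\mathbf{R}H_{p_0}$ is orthogonal to $E^u\oplus E^s$, defines $J$ by $\omega(\cdot,Ju)=g(u,\cdot)$, and picks a $g$-orthonormal basis $u_1=H_{p_0},u_2,\dots,u_d$ of $E^{wu}$. It then sets $s_i=Ju_i$, so $\omega(u_i,s_j)=\delta_{ij}$ is immediate and continuity is inherited from $g$ and from the splitting.

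You use only symplectic linear algebra. You derive isotropy of $E^u$ and $E^s$ rather than assuming it. You isolate the real point, namely that the pairing $E^u\times E^s\to\mathbf{R}$ is nondegenerate, seen through the reduced space $T_\rho\Sigma_E/\mathbf{R}H_{p_0}$. You then take the dual basis and find $s_1$ in the $2$-dimensional complement $V^\omega$.

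What your route buys is that every relation of a symplectic basis is checked explicitly, which the $J$-construction does not give for free. For $j\ge 2$, $Ju_j$ lies in $E^s$ exactly when $u_j$ is $g$-orthogonal to $(E^s)^\omega=\mathbf{R}H_{p_0}\oplus E^s\oplus\mathbf{R}Ju_1$. This fails for a general $g$ satisfying only $\mathbf{R}H_{p_0}\perp_g E^u\oplus E^s$, and so does $\omega(s_i,s_j)=0$. The paper's step ``$s_j=\lambda s_1+v$ with $v\in E^s$'' therefore tacitly needs a more careful choice of $g$ than the one stated.

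What the metric construction buys is normalization. The admissible bases form a compact family over the compact $\Sigma_E$, and this underlies the bounds in Lemma \ref{lem: matrice differentielle flot} that are uniform in the choice of bases. You can recover this in your setting by taking $u_2,\dots,u_d$ orthonormal for a fixed continuous metric, and $s_1$ as the normalized $\omega$-projection onto $V^\omega$ of a global transverse field such as a gradient of $p_0$.
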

\begin{proof}Fix a continuous Riemannian metric $g$ on the subbundle $T\Sigma_E=\mathbf{R}H_{p_0}\oplus E^u\oplus E^s\subset T(T^*M)$, such that $\mathbf{R}H_{p_0}$ and $E^u\oplus E^s$ are orthogonal at every point. Then extend $g$ to a continuous metric on the whole tangent space $T_\rho(T^*M)$, for $\rho\in \Sigma_E$. Let $J:T_\rho (T^*M)\to T_\rho (T^*M)$ be defined by $\omega(\cdot,Ju)=g(u,\cdot)$. Now, fix an orthonormal basis $(u_1,\ldots,u_d)$ for $g$ of $\mathbf{R}H_{p_0}(\rho_0)\oplus E^u(\rho_0)$, such that $u_1=H_{p_0}(\rho_0)$ and $E^u(\rho_0)=\mathrm{Span}(u_2,\ldots,u_d)$. Then let $s_i=Ju_i$. Since $(u_i)_{1\le i\le d}$ is orthonormal, one can readily check that 
\[\omega(u_i,s_j)=g(u_i,u_j)=\delta_{ij},\]
for all $i,j\in \{1,\ldots,d\}$, where $\delta_{ij}$ is the Kronecker symbol. It remains to verify that $\mathrm{Span}(s_2,\ldots,s_d)=E^s$. To see this, notice that if $j\ge 2$ then $s_j= \lambda s_1+v$ for some $\lambda\in \mathbf{R}$ and $v\in E^s$. Then,
\[0=\omega(u_1,s_j)=\lambda\omega(u_1,s_1)+\omega(u_1,v)=\lambda+0.\]
Here one has $\omega(u_1,v)=0$ for all $v\in E^s$ because $\mathbf{R}H_{p_0}\oplus E^s$ is Lagrangian. Thus $s_j=v\in E^s$ as we wished. \end{proof}
\begin{rem}  Beware that the choice of $(u_2,\ldots,u_n;s_1,s_2,\ldots,s_n)$ cannot be made globally continuous with respect to $\rho$. If it were, this would imply that $E^s,E^u$ are trivial bundles. However, the construction can always be made continuous locally.
\end{rem}
We quote another result of \cite{NZ09} that show the existence of local symplectic charts adapted to the dynamics that trivialize the flow.
\begin{lem}[{\cite[Lemma 4.3]{NZ09}}]\label{lem: straightening coordinates} Fix $\rho_0\in p_0^{-1}(E)$. There are local symplectic coordinates $(y_1,\ldots,y_d;\eta_1,\ldots,\eta_d)$ on $T^*M$ near $\rho_0\in T^*M$ such that
\[\eta_1(\rho)= p_0(\rho), \quad \frac{\partial}{\partial y_i}(\rho_0)=u_i, \quad \frac{\partial}{\partial \eta_i}(\rho_0)=s_i, \qquad i=1,\ldots,d.\]
In these coordinates, the unperturbed flow $\Phi_0^t$ becomes the $t$-translation along the $y_1$ coordinate.
\end{lem}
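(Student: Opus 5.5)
The plan is to build the chart in three steps: first straighten the flow near $\rho_0$ with $p_0$ as one momentum coordinate, then complete it to a symplectic chart via the Darboux–Carathéodory theorem, and finally correct the linearization at $\rho_0$ by a linear symplectic map that preserves the flow structure.

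\emph{Step 1: transversal and time coordinate.} Since $\mathrm{d}p_0(\rho_0)\neq 0$, the vector field $H_{p_0}$ does not vanish near $\rho_0$. Choose a local hypersurface $\mathcal H$ through $\rho_0$ transverse to $H_{p_0}$; for instance take $\mathcal H$ tangent at $\rho_0$ to $\mathrm{Span}(u_2,\dots,u_d,s_1,\dots,s_d)$, which is complementary to $u_1=H_{p_0}(\rho_0)$. Define $y_1(\Phi_0^t(\sigma))=t$ for $\sigma\in\mathcal H$. This gives $\{y_1,p_0\}=1$ up to the sign convention, so $H_{p_0}y_1=1$, and $p_0$ itself is invariant under the flow.

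\emph{Step 2: complete to a Darboux chart.} The functions $\eta_1=p_0$ and $y_1$ satisfy $\{y_1,\eta_1\}=1$ and have independent differentials. By the Darboux--Carathéodory theorem, they extend to a full local symplectic chart $(y_1,\dots,y_d;\eta_1,\dots,\eta_d)$. Concretely, one picks the remaining functions to be flow-invariant (they Poisson-commute with $p_0$) and to Poisson-commute appropriately with $y_1$. The symplectic reduction of $\{p_0=\eta_1\}$ by the flow supplies $2d-2$ coordinates, and these are lifted constantly along orbits. In such a chart $H_{p_0}=\partial_{y_1}$, so $\Phi_0^t$ acts as the translation $y_1\mapsto y_1+t$ with all other coordinates fixed.

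\emph{Step 3: prescribe the basis at $\rho_0$.} This is the main obstacle. At $\rho_0$ the chart already gives $\partial_{y_1}=H_{p_0}(\rho_0)=u_1$. The vectors $\partial_{y_i},\partial_{\eta_i}$ for $i\ge1$ then form some symplectic basis, and the remaining freedom must be used to match $u_i,s_i$. Take the linear symplectic map $A$ of $\mathbf R^{2d}$ sending the coordinate basis at $\rho_0$ to the images of $(u_i;s_i)$. The aim is to realize $A$ (or a correction of it) by changing coordinates in a way that keeps $\eta_1=p_0$ and keeps the flow a translation.

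Observe first that $\mathrm d\eta_1(s_1)=\omega(H_{p_0},s_1)$ up to sign, which equals $\pm1$. Moreover, for $v\in\{u_2,\dots,u_d,s_2,\dots,s_d\}$ one has $\mathrm dp_0(v)=\pm\omega(u_1,v)=0$. So the constraints $\mathrm d\eta_1(\partial_{\eta_i})=\delta_{i1}$ and $\mathrm d\eta_1(\partial_{y_i})=0$ are automatically compatible with the target basis.

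The required correction therefore lies in linear symplectic maps fixing $\partial_{y_1}$ and preserving $\mathrm d\eta_1$. I would implement it in two parts. On the reduced $(y',\eta')$ variables, apply the induced linear symplectic map of the reduced space. To adjust $\partial_{\eta_1}$, add linear terms: replace $y_1$ by $y_1+\ell(y',\eta')+c\,\eta_1$, with compensating shifts in $(y',\eta')$ proportional to $\eta_1$, generated by a quadratic generating function in the $\eta_1$ direction. Each such change is symplectic and preserves both $\eta_1=p_0$ and the translation structure, since the new coordinates are still flow-invariant or shift by $t$.

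Checking that these two operations reach every symplectic basis with $u_1=H_{p_0}$ is a short linear-algebra verification. The stabilizer of a vector together with its dual covector in $\mathrm{Sp}(2d)$ is generated by $\mathrm{Sp}(2d-2)$ and these shears. Finally, composing with the straightened chart and the invertibility of $A$ gives coordinates near $\rho_0$ with all the stated properties.
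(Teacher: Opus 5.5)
Your argument is correct, and it is the standard proof of the cited \cite[Lemma 4.3]{NZ09}. The paper gives no proof of its own, but it uses exactly your Steps 1--2 (a Darboux chart with $\tilde\xi_1=p_0$, via \cite[Theorem 21.1.6]{HorIII}) in \S\ref{sec: the short time trace}.

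Step 3, which you call the main obstacle, is in fact one line. Write $(\tilde y,\tilde\eta)$ for the chart of Step 2, and $A$ for the linear symplectic map sending its coordinate frame at $\rho_0$ to $(u_i;s_i)$. Since $\partial_{\tilde y_1}(\rho_0)=H_{p_0}(\rho_0)=u_1$, you have $Ae_{y_1}=e_{y_1}$. Using $\eta_1(v)=\omega(e_{y_1},v)$, this gives
\[\eta_1(Av)=\omega(e_{y_1},Av)=\omega(A^{-1}e_{y_1},v)=\eta_1(v).\]
So the chart $(y,\eta):=A^{-1}(\tilde y,\tilde\eta)$ is again Darboux, still has $\eta_1=\tilde\eta_1=p_0$, and has the prescribed frame at $\rho_0$. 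The translation property needs no separate care, since $H_{\eta_1}=\partial_{y_1}$ in any Darboux chart. You can therefore drop the factorization through $\mathrm{Sp}(2d-2)$ and shears, along with the stabilizer claim you left unverified.

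The same identity $\mathrm d\tilde\eta_1=\omega(u_1,\cdot)$ at $\rho_0$ also removes your ``$\pm$''. It gives $\mathrm dp_0(u_i)=0$ and $\mathrm dp_0(s_i)=\delta_{1i}$ exactly, which is what the statement needs.

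Finally, your ``concretely'' gloss in Step 2 is incomplete. Lifting reduced coordinates along $H_{p_0}$-orbits does not make them Poisson-commute with $y_1$. One instead transports Darboux coordinates from the slice $\{y_1=0,\ p_0=E\}$ by the commuting flows of $H_{p_0}$ and $H_{y_1}$. Nothing rests on this, because you invoke the theorem.
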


Then, one has the following result, essentially contained in \cite[\S 4]{NZ09}.
\begin{lem}\label{lem: matrice differentielle flot} Consider bases of $T_\rho M$ and $T_{\Phi^t_0(\rho)}M$ as in Lemma \ref{lem: adapted bases}. In these bases, the differential of the flow can be expressed
\[\mathrm{d}\Phi^t_0(\rho)=\begin{pmatrix}

1 & 0 & \beta_t & 0 \\ 
0 & A & 0 & 0 \\
0 & 0 & 1 & 0 \\
0 & 0 & 0 & A^{-\top} \end{pmatrix}.\]
Here we have $\|A^{-1}\|\le C \mathrm{e}^{-c t}$, and $|\beta_t|\le C t+1$, uniformly with respect to the choices of bases.
\end{lem}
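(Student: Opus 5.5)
We use the invariance properties of the splitting together with the symplectic structure, and compute $\mathrm{d}\Phi^t_0(\rho)$ column by column in the bases $(u_1,\ldots,u_d;s_1,\ldots,s_d)$ at $\rho$ and $(u_1',\ldots,u_d';s_1',\ldots,s_d')$ at $\Phi^t_0(\rho)$ given by Lemma \ref{lem: adapted bases}.

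\emph{Columns of $u_1$ and $E^u$.} Since $\mathrm{d}\Phi^t_0 H_{p_0}(\rho)=H_{p_0}(\Phi^t_0(\rho))$, we get $\mathrm{d}\Phi^t_0 u_1=u_1'$. The unstable bundle is flow invariant, so $\mathrm{d}\Phi^t_0$ maps $\mathrm{Span}(u_2,\ldots,u_d)$ into $\mathrm{Span}(u_2',\ldots,u_d')$; call the matrix of this restriction $A$.

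\emph{Columns of $E^s$.} Similarly $E^s$ is invariant, so $\mathrm{d}\Phi^t_0$ maps $\mathrm{Span}(s_j)_{j\ge2}$ into $\mathrm{Span}(s_j')_{j\ge 2}$ with some matrix $B$. Since $\mathrm{d}\Phi^t_0$ is symplectic, for $i,j\ge 2$,
\[\delta_{ij}=\omega(u_i,s_j)=\omega(\mathrm{d}\Phi^t_0u_i,\mathrm{d}\Phi^t_0 s_j)=(A^\top B)_{ij},\]
so $B=A^{-\top}$.

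\emph{Column of $s_1$.} First, $p_0\circ\Phi^t_0=p_0$, and $\mathrm{d}p_0(s_1)=\omega(H_{p_0},s_1)$ up to a sign convention, which equals $\omega(u_1,s_1)=1$. Likewise $\mathrm{d}p_0(s_1')=1$. Writing
\[\mathrm{d}\Phi^t_0 s_1=\sum_i a_iu_i'+\sum_i b_i s_i',\]
and noting that $\mathrm{d}p_0$ vanishes on $T\Sigma_E\ni u_i', s_j'$ ($j\ge 2$), we obtain $b_1=1$. Pairing with $u_j'$ for $j\ge2$ gives
\[\omega(u_j',\mathrm{d}\Phi^t_0 s_1)=\omega(\mathrm{d}\Phi^t_0 v,\mathrm{d}\Phi^t_0 s_1)=\omega(v,s_1)=0,\]
where $v=\mathrm{d}\Phi^{-t}_0u_j'\in E^u$. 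Hence $b_j=0$. Pairing with $s_j'$ for $j\ge 2$, and using that $\mathrm{d}\Phi^{-t}_0 s_j'\in E^s$ is $\omega$-orthogonal to $s_1$ (because $\omega(s_i,s_1)=0$ in a symplectic basis), gives $a_j=0$. The only remaining coefficient is $a_1=:\beta_t$, which confirms the claimed block form.

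\emph{Bounds.} The bound $\|A^{-1}\|\le C\mathrm{e}^{-ct}$ is the Anosov estimate on $E^u$ applied to $\mathrm{d}\Phi^{-t}_0$. This uses the fact that the adapted bases are uniformly comparable to a fixed norm: they come from a continuous metric on a compact set, so the basis vectors and their duals are bounded uniformly.

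\emph{Main obstacle: the bound on $\beta_t$.} We write $\beta_t=\omega(\mathrm{d}\Phi^t_0 s_1,s_1')$ (up to sign) and differentiate in $t$. Decompose $s_1'=\mathrm{d}\Phi^t_0(s_1)-\beta_tu_1'$. Then the growth of $\beta_t$ is governed by how $\mathrm{d}\Phi^t_0 s_1$ drifts along the flow direction. A cleaner route uses the cocycle relation
\[\beta_{t+s}(\rho)=\beta_t(\rho)+\beta_s(\Phi^t_0\rho)+\text{(correction from changing the choice of } s_1 \text{ at the intermediate point)}.\]
The correction is bounded because two admissible choices of $s_1$ differ by a vector in $T\Sigma_E$ with bounded coefficients. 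Its flow-direction component is unchanged under the flow, and its $E^u$ and $E^s$ components contribute nothing to $\beta$, since $\mathrm{d}\Phi^t_0$ preserves those subspaces. So the correction is $\mathcal O(1)$ uniformly. Combined with $\sup_{0\le s\le 1}|\beta_s|\le C$, obtained by compactness and continuity, iterating the cocycle relation gives $|\beta_t|\le Ct+C$.

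Checking that the correction is truly bounded independently of the bases is the delicate step. The change between two admissible $s_1$'s is $s_1-\tilde s_1\in T\Sigma_E$, with $u_1$-component bounded by the continuity of the construction on a compact set.
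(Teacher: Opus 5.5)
Your proposal is correct and follows essentially the same route as the paper: the block form comes from flow-invariance of $\mathbf{R}H_{p_0}$, $E^u$, $E^s$ together with symplecticity (you pair column by column with $\omega$, the paper uses the block identities $\mathbf A^\top\mathbf D=\mathrm{Id}$ and $\mathbf A\mathbf B^\top=\mathbf B\mathbf A^\top$), and the bound on $\beta_t$ comes from a cocycle relation plus uniform boundedness for $t\le 1$. The only difference is that your correction term is unnecessary: since the block form holds for \emph{any} admissible bases, the paper uses the same basis at $\Phi^t_0(\rho)$ both as target for $\mathrm{d}\Phi^t_0(\rho)$ and as source for $\mathrm{d}\Phi^s_0(\Phi^t_0(\rho))$, so multiplying the $(u_1,s_1)$-blocks $\begin{pmatrix}1&\beta\\0&1\end{pmatrix}$ gives the exact identity $\beta_{t+s}(\rho)=\beta_s(\Phi^t_0(\rho))+\beta_t(\rho)$, and subadditivity of $\sup|\beta_t|$ over points and admissible bases concludes (your correction is harmless in any case, since two admissible choices of $s_1$ differ by a multiple of $H_{p_0}$, the symplectic orthogonal of $T\Sigma_E$).
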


\begin{proof} \textbf{1.} Since $\mathrm{d}\Phi^t_0(\rho)\cdot H_{p_0}(\rho)=H_{p_0}(\Phi^t_0(\rho))$, one gets the first column. Also, $\mathrm{d}\Phi^{t}_0$ preserves the spaces $E^u$ and $E^s$, giving that $\mathrm{d}\Phi^t_0$ has the form
\[\mathrm{d}\Phi^t_0=\begin{pmatrix}

1 & 0 & \beta_t & 0 \\
0 & A & * & 0 \\
0 & 0 & * & 0 \\ 
0 & 0 & * & D \end{pmatrix}=:\begin{pmatrix}\mathbf A & \mathbf B \\ 0 & \mathbf D\end{pmatrix}.\] 

Since $\mathrm{d}\Phi^t_0$ is represented by a symplectic matrix in these coordinates one finds $\mathbf A^\top\mathbf D=\mathrm{Id}$ and $\mathbf A \mathbf B^\top=\mathbf B \mathbf A^\top$, implying
\[\mathbf D=\begin{pmatrix}
1 & 0 \\ 0 & A^{-\top}
\end{pmatrix}, \qquad \mathbf B=\begin{pmatrix}\beta_t & 0 \\ 0 & 0\end{pmatrix},\]
for some $\beta_t\in \mathbf R$, which gives the claimed expression for $\mathrm{d}\Phi^t(\rho)$.

\textbf{2.} The estimate on $\|A^{-1}\|$ is a consequence of the Anosov property of the flow. The estimate on $\beta_t$ relies on the following observation. First, $\beta_t(\rho)$ is uniformly bounded for $t\le 1$ and $\rho\in \Sigma_E$. Then, for all $t,s$, choosing bases of $T_\rho M$, $T_{\Phi_0^t(\rho)}M$ and $T_{\Phi_0^{t+s}}(\rho)$ as above, the trivial identity $\mathrm{d}\Phi_0^{t+s}(\rho)=\mathrm{d}\Phi_0^s(\Phi_0^t(\rho))\circ \mathrm{d}\Phi_0^t(\rho)$ implies, from the specific form of $\mathrm{d}\Phi_0^t$ and $\mathrm{d}\Phi_0^s$ in these bases, that one has
\[\beta_{t+s}(\rho)=\beta_s(\Phi_0^t(\rho))+\beta_t(\rho).\]
It is implicit here, but $\beta_t$ depend on the choices of bases of $T_\rho M$ and $T_{\Phi_0^t(\rho)}M$. 
Letting $\|\beta_t\|_\infty$ denote the maximum value of $|\beta_t|$ among all possible choices of $\rho\in \Sigma_E$ and orthonormal bases of $T_\rho M$ and $T_{\Phi_0^t(\rho)}M$ as described in Lemma \ref{lem: adapted bases}, one has $\|\beta_t\|_\infty\lesssim 1$ for $t\le 1$, and the following subadditivity property holds:
\[\|\beta_{t+s}\|_{\infty}\le  \|\beta_t\|_\infty+\|\beta_s\|_\infty.\]
This implies $\|\beta_t\|_\infty\lesssim C t+1$.\end{proof}

\subsection{Generating functions} One can describe the action of symplectomorphisms of $T^*\mathbf{R}^d$ by the use of generating functions, which are easier to handle.\begin{defi}\label{defi: gen func} Consider a symplectomorphism $\kappa:(y,\eta)\in T^*\mathbf{R}^d\to (x,\xi)\in T^*\mathbf{R}^d$. A function $\varphi(x,\eta)$ is a \emph{generating function} for $\kappa$ if for any $(x,\xi);(y,\eta)\in T^*\mathbf{R}^d$, one has
\[\kappa(y,\eta)=(x,\xi)\quad \Longleftrightarrow \quad (y,\xi)=(\nabla_\eta \varphi,\nabla_x\varphi).\]\end{defi}
With this definition, generating functions are only defined up to the choice of a constant.	We give another definition for flows generated by time-dependent Hamiltonians.
\begin{defi}
Consider a flow $\Psi^t$ of diffeomorphisms generated by a time-dependent Hamiltonian $\{q_t\}$. We say that $\varphi(t,x,\eta)$ is a generating function for $\{\Psi^t\}$ if for all $t$, $\varphi(t,\cdot)$ is a generating function for $\Psi^t$ in the sense of Definition \ref{defi: gen func}, and if additionally the Hamilton-Jacobi equation
\[\partial_t \varphi(t,x,\eta)=-q_t(x,\nabla_x \varphi(t,x,\eta))\]
is satisfied.\end{defi}

One can also define \emph{local generating functions} associated with local symplectomorphisms, the definition being the same up to some domain considerations. In general such a generating function need not exist but it always does locally provided the upper-left block of $\mathrm{d}\kappa$ is invertible. Precise statements will be given later.

\subsubsection{Hamiltonian flows in charts} Fix $\rho_0 \in \Sigma_E$ and let $\rho_t:=\Phi^t(\rho_0)$. For $t\ge 0$, we consider a basis $(u_1(t),\ldots,u_d(t);s_1(t),\ldots,s_d(t))$ of $T_{\rho_t}M$ as in Lemma \ref{lem: adapted bases}, that varies continuously with $t$. Note that $\rho_t=\rho_{t'}$ does not imply $u_k(t)=u_k(t')$ nor $s_k(t)=s_k(t')$. Let $t_0>0$, and fix open charts $U_i,U_j$ of $M$ such that $\rho_0\in T^*U_i$ and $\rho_{t_0}\in T^*U_j$. We consider the flow in coordinates
\[\Phi^t_{\delta,ij}:=\widetilde{f_j}\circ \Phi^t_\delta\circ \widetilde{f_i}^{-1}.\]
We are not done since we want the graph of the flow to have good projectability properties. We introduce symplectic matrices $\mathcal A_i(t)$ and $ \mathcal A_j(t)$ using the bases introduced in the previous subsection. Letting $(e_1,\ldots,e_d;f_1,\ldots,f_d)$ denote the canonical basis of $T^*\mathbf{R}^d$, we define $\mathcal A_i(t)$ (for $t$ close to $0$) by 
\begin{equation} \label{eq: def A_i(t)} \mathcal A_i(t) e_k=(\widetilde f_i)_* u_k(t), \qquad \mathcal A_i(t) f_k=(\widetilde f_i)_*s_k(t)
\end{equation}
and define $\mathcal A_j(t)$ similarly but using the chart $\widetilde f_j$, for $t$ close to $t_0$. Then, for $t$ near $t_0$, we look at the flow in new coordinates
\begin{equation} \label{eq: def twisted flow}\widetilde {\Phi}^t_{\delta,ij}:= \mathcal A_j(t_0)^{-1}\circ \widetilde{f_j}\circ \Phi^t_\delta\circ \widetilde{f_i}^{-1}\circ \mathcal A_i(0).\end{equation}
To lighten notation, we will often just write $\Phi^t_\delta$ instead of $\widetilde \Phi^t_{\delta,ij}$. One should keep in mind that we work in charts and that we have applied linear symplectic transformations. Similarly, we denote abusively $\rho_0,\rho_{t_0}$ the same points in the charts. The next proposition is a variation of \cite[Proposition 10]{Schenk}. The only difference here is that we consider singular Hamiltonians.
\begin{prop}[Projectability of the graph] \label{prop: projectability} Fix $t_0>0$, $\rho_0\in \Sigma_E$, and let $\rho_{t_0}=\Phi^{t_0}_0(\rho_0)$. We fix charts $(T^*U_i,\widetilde f_i)$, $(T^*U_j, \widetilde f_j)$ near $\rho_0$ and $\rho_{t_0}$, respectively. We still write $\rho_0$ and $\rho_{t_0}$ for the points in coordinates $\widetilde f_i(\rho_0), \widetilde f_j(\rho_{t_0})$. Points in $\mathbf{R}^{2d}$ are denoted by $(y,\eta)$ or $(x,\xi)$. There is a small but fixed constant $\varepsilon>0$, such that if 
\[r+\delta h^{-2\beta}\le \varepsilon \mathrm{e}^{-2Ct},\] then the graph of $ \Phi_{\delta}^t$ restricted to the ball $B_y(x_0,r)\times B_\eta(\xi_0,r)$, that is the set 
\[\big\{(y,\eta;x,\xi)~:~ (x,\xi)= \Phi^t_{\delta}(y,\eta), \ (y,\eta)\in B(x_0,r)\times B(\xi_0,r)\big\}, \]
projects diffeomorphically on the $(x,\eta)$ coordinates for all $|t-t_0|\le r$. Let
\[\Omega_r:=\pi_{x,\eta}\big(\mathrm{Graph}( \Phi^t_{\delta})_{|B(x_0,r)\times B(\xi_0,r)}\big)\]
be the image of the graph by this projection. Then, there is a small constant $c_1>0$, such that the following inclusion holds:
\begin{equation} \label{eq: Ball times ball in omega}B_x\big(x_1,\tfrac{c_1r}{t_0+1}\big)\times B_\eta(\xi_0,\tfrac{c_1r}{t_0+1})\subset \Omega_r.\end{equation} 
\end{prop}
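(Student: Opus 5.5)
The plan is to reduce projectability to invertibility of the upper-left block $\partial x/\partial y$ of $\mathrm{d}\Phi^t_\delta$ on the ball, uniformly and quantitatively, and then to apply a quantitative inverse function theorem to the map $(y,\eta)\mapsto(x(y,\eta),\eta)$. Write $\mathrm{d}\Phi^t_\delta(y,\eta)=\begin{pmatrix}\mathbf A_\delta&\mathbf B_\delta\\ \mathbf C_\delta&\mathbf D_\delta\end{pmatrix}$ in the twisted coordinates \eqref{eq: def twisted flow}. At the reference point with $\delta=0$ and $t=t_0$, Lemma \ref{lem: matrice differentielle flot} gives $\mathbf A_0=\mathrm{diag}(1,A)$ with $\|A^{-1}\|\le Ce^{-ct}$. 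Hence $\|\mathbf A_0^{-1}\|\lesssim 1$ uniformly in $t_0$, and this is the key gain from the adapted bases. Since the map $(y,\eta)\mapsto(x,\eta)$ has differential $\begin{pmatrix}\mathbf A&\mathbf B\\0&I\end{pmatrix}$, it suffices to show that $\|\mathbf A_\delta(\rho)-\mathbf A_0(\rho_0)\|\le \tfrac12\|\mathbf A_0^{-1}\|^{-1}$ on the ball and for $|t-t_0|\le r$. Injectivity on the convex ball then follows from the mean value inequality applied to $y\mapsto x(y,\eta)$ for fixed $\eta$; note that the $\eta$-coordinate is preserved.

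The perturbation is controlled in three steps. \textbf{(i) Spatial variation.} By Proposition \ref{prop: short time derivative flow} with $\ell=2$, $\|\partial^2_\rho\Phi^t_0\|\lesssim e^{2Ct}$, so moving within a ball of radius $r$ changes $\mathrm{d}\Phi^t_0$ by at most $re^{2Ct}$. Here we use $\delta=0$, so there is no $h^{-\beta}$ loss. \textbf{(ii) Time variation.} $\partial_t\mathrm{d}\Phi^t=\mathrm{d}H_p\circ\mathrm{d}\Phi^t$, which contributes at most $re^{Ct}$. \textbf{(iii) Variation in $\delta$.} Differentiate Lemma \ref{lem: duhamel perturbed flow} in $\rho$:
\[\partial_\delta\mathrm{d}\Phi^t_\delta=\int_0^t\Big(\partial\mathrm{d}\Phi^s_\delta\cdot(H_{\partial_\delta p_\delta}\circ\Phi^{t-s}_\delta)\,\mathrm{d}\Phi^{t-s}_\delta+\mathrm{d}\Phi^s_\delta\cdot\mathrm{d}H_{\partial_\delta p_\delta}\cdot\mathrm{d}\Phi^{t-s}_\delta\Big)\,\mathrm{d}s.\]
Here $\partial_\delta p_\delta\in S_\beta$ gives $\|\mathrm{d}H_{\partial_\delta p_\delta}\|\lesssim h^{-2\beta}$, and Proposition \ref{prop: short time derivative flow} bounds $\|\partial\mathrm{d}\Phi^s_\delta\|\lesssim h^{-\beta}e^{2Cs}$. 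Altogether this yields $\|\partial_\delta\mathrm{d}\Phi^t_\delta\|\lesssim h^{-2\beta}e^{2Ct}$, so integrating over $[0,\delta]$ gives an error of at most $\delta h^{-2\beta}e^{2Ct}$. Summing (i)--(iii), the hypothesis $r+\delta h^{-2\beta}\le\varepsilon e^{-2Ct}$ with $\varepsilon$ small (and $C$ enlarged to absorb $|t-t_0|\le r$) gives the required closeness. This proves that the graph projects diffeomorphically.

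For the inclusion \eqref{eq: Ball times ball in omega}, I would use the quantitative inverse function theorem: if $F$ is defined on $B(z_0,r)$ and $\|\mathrm{d}F-L\|\le\frac12\|L^{-1}\|^{-1}$ there, then $F(B(z_0,r))\supset B\big(F(z_0),r/(2\|L^{-1}\|)\big)$. Take $F(y,\eta)=(x,\eta)$ and $L=\begin{pmatrix}\mathbf A_0&\mathbf B_0\\0&I\end{pmatrix}$. By Lemma \ref{lem: matrice differentielle flot}, $\|\mathbf B_0\|\le|\beta_t|\le Ct+1$, so $\|L^{-1}\|\lesssim \|\mathbf A_0^{-1}\|(1+\|\mathbf B_0\|)\lesssim t_0+1$. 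This gives a ball of radius $c_1r/(t_0+1)$ around $F(\rho_0)=(x_1,\xi_0)$. One subtlety is that the center moves with $\delta$ and $t$: the shift is $O(\delta h^{-\beta}t+r)$, since $\partial_\delta\Phi$ is controlled by $\|H_{\partial_\delta p}\|\lesssim h^{-\beta}$. This shift is absorbed by shrinking $c_1$, or equivalently by stating the inclusion with respect to the actual image point.

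The main obstacle I expect is step (iii). One has to check carefully that the exotic derivatives of $p_\delta$ cost exactly $h^{-2\beta}$ and no more, which requires interpolating Proposition \ref{prop: short time derivative flow} for $\mathrm{d}\Phi_\delta$ at $\ell=2$ and controlling the Duhamel integral uniformly over long times. It also requires making sure that the charts and the matrices $\mathcal A_i,\mathcal A_j$ have uniformly bounded norms, so that the exponential constants do not deteriorate.
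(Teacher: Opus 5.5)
Your proposal is correct and is in substance the paper's proof. Both compare $\mathrm{d}\Phi^t_\delta$ on the ball with $\mathrm{d}\Phi^{t_0}_0(\rho_0)$ in the adapted coordinates, with the same error terms ($r\|\Phi^t_0\|_{C^2}$ for the spatial and time variation, and $\delta\|\partial_\delta\mathrm{d}\Phi^t_\delta\|\lesssim\delta h^{-2\beta}\mathrm{e}^{2Ct}$). Both also rely on the same input from Lemma~\ref{lem: matrice differentielle flot}, namely $\|A^{-1}\|\lesssim1$ and $|\beta_t|\lesssim1+t$.

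The only difference is packaging. The paper proves a single lower-Lipschitz bound $|F^t_\delta(\rho)-F^t_\delta(\rho')|\ge\frac{c_0}{1+t}|\rho-\rho'|$ and reads off both injectivity and the ball inclusion from it. You split these. Injectivity comes from the $\mathbf A$-block alone, since $\eta$ is preserved, so only $O(1)$ smallness is needed there. The ball inclusion comes from a quantitative inverse function theorem, which, as in the paper, needs the perturbation to be $\lesssim(1+t_0)^{-1}$, i.e.\ $C$ enlarged.

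One correction to your last step. The displacement of $F^t_\delta(\rho_0)$ from $(x_1,\xi_0)$ can be up to $r$ because $|t-t_0|\le r$, and up to $\delta h^{-\beta}\mathrm{e}^{Ct}$ from $\delta$, which the hypothesis does not bound by $r$. Neither can be absorbed into a ball of radius $c_1r/(t_0+1)$ by shrinking $c_1$. So your alternative, centering the inclusion at $F^t_\delta(\rho_0)$ or imposing extra smallness on $|t-t_0|$ and $\delta$, is the version that actually holds. The paper's proof passes over this point too.
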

\begin{rem}In particular, the last statement of the proposition says that for any $(x,\eta)\in B(x_1,\tfrac{c_1 r}{1+t})\times B(\xi_0,\tfrac{c_1 r}{1+t})$, there exists a unique $y\in B(x_0,r)$ such that $\Phi_\delta^t(y,\eta)=(x,\bullet)$.
\end{rem}

\begin{proof}Let us compute, for $\rho,\rho'\in B(\rho_0,r)$ and $|t-t_0|\le r$, using Taylor's formula:
\begin{equation}\label{eq: express diff flots}	\begin{aligned}
\Phi_\delta^t(\rho)-\Phi_\delta^t(\rho')& =\int_0^1 \mathrm{d}\Phi_\delta^t(s\rho+(1-s)\rho')(\rho-\rho')\mathrm{d}s \\
& =\int_0^1 \mathrm{d}\Phi_0^t(s\rho+(1-s)\rho')\mathrm{d}s\cdot (\rho-\rho')+\mathcal O\big(\delta\|\partial_\delta \mathrm{d}\Phi_\delta^t\|_{\infty}\cdot |\rho-\rho'|\big) \\
& =   \mathrm{d}\Phi_0^{t_0}(\rho_0)(\rho-\rho')+ \mathcal O(r \|\Phi^t_0\|_{C^2}\cdot |\rho-\rho'|)+\mathcal O\big(\delta\|\partial_\delta \mathrm{d}\Phi_\delta^t\|_{\infty}\cdot |\rho-\rho'|\big).
\end{aligned}\end{equation}
Now let us write $\rho=(y,\eta)$ and $\rho'=(y',\eta')$. Let $F_\delta^t(y,\eta)=(\pi_x(\Phi^t(y,\eta)),\eta)$. If $\mathrm{d}\Phi_\delta^t=\begin{pmatrix}
A & B \\ C & D
\end{pmatrix}$ then the differential of $F_\delta^t$ is given by
\[\mathrm{d}F_\delta^t=\begin{pmatrix}
A & B \\ 0 & \operatorname{Id}
\end{pmatrix},\]
which is invertible if and only if $A$ is invertible. By our choice of adapted coordinates in Lemma \ref{lem: adapted bases}, we deduce that $F_\delta^t$ is a local diffeomorphism provided $|t-t_0|+|\rho-\rho_0|+\delta h^{-2\beta}\le \mathrm{e}^{-2CT}$. Since $\|\Phi^t_0\|_{C^2}\lesssim \mathrm{e}^{2Ct}$ and $\|\partial_\delta \mathrm{d}\Phi_\delta^t\|\lesssim h^{-2\beta} \mathrm{e}^{2Ct}$, we infer from \eqref{eq: express diff flots} that
\[|F_\delta^t(\rho)-F_\delta^t(\rho')|\ge \big|\mathrm{d}F_0^{t_0}(\rho_0)\cdot (\rho-\rho')\big|-C(r+\delta h^{-2\beta})\mathrm{e}^{2Ct}|\rho-\rho'|.\]
Recalling the expression of $\mathrm{d}\Phi_0^{t_0}(\rho_0)$ from Lemma \ref{lem: matrice differentielle flot}, we have
\[\big|\mathrm{d}F_0^{t_0}(\rho_0)\cdot (\rho-\rho')\big|\ge \|(\mathrm{d}F_0^{t_0})^{-1}\|^{-1}\cdot |\rho-\rho'|\gtrsim \frac{1}{1+t}|\rho-\rho'|.\]
We deduce that provided 
\[(r+\delta h^{-2\beta})\le \frac{c_0}{2}(1+t)\mathrm{e}^{2C t},\]
for some small enough constant $c_0>0$, we have
\begin{equation}\label{eq: F delta inverse lip}|F_\delta^t(\rho)-F_\delta^t(\rho')|\ge \frac{c_0}{(1+t)}|\rho-\rho'|. \end{equation}
It implies that $F_\delta^t$ is injective on $B(\rho_0,r)$, thus for all $t\in B(t_0,r)$, the map $F_\delta^t:B(\rho_0,r)\to F_\delta^t(B(\rho_0,r))$ is a diffeomorphism. Moreover, if we let $(x_1,\xi_1)=\Phi_0^{t_0}(x_0,\xi_0)$, it follows from \eqref{eq: F delta inverse lip} that there is a small constant $c_1>0$ depending on $c_0$ such that 
\[B(x_1,\tfrac{c_1r}{(t+1)})\times B(\xi_0,\tfrac{c_1r}{(t+1)})\subset F_\delta^t\big(B(\rho_0,r)\big).\]

\end{proof}
This result implies the existence of a local generating function for the flow.
\begin{prop}[Existence of a generating function]\label{prop: existence gen fun} In the notation and assumptions of Proposition \ref{prop: projectability}, one can find a function $\varphi_{\delta}(t,x,\eta)$ defined for $(x,\eta)\in \Omega_{r}$ and $|t-t_0|<r$ such that for any $(y,\eta)\in B(\rho_0,\frac{c_0 r}{1+t})$, one has
\[\Phi^t_{\delta}(y,\eta)=(x,\xi)\Leftrightarrow (y,\xi)=(\nabla_\eta \varphi_{\delta}, \nabla_x\varphi_{\delta}).\]
Moreover, $\varphi_\delta$ solves the Hamilton-Jacobi equation
\begin{equation} \label{eq: hamilton jacobi eq}\partial_t \varphi_\delta(t,x,\eta)=-p_\delta(x,\nabla_x \varphi_\delta(t,x,\eta)).\end{equation}\end{prop}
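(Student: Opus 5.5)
The plan is the classical construction of a generating function from a Lagrangian graph that projects diffeomorphically, followed by a check of the Hamilton--Jacobi equation using the time-extended Lagrangian.

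\textbf{Step 1: the generating function at fixed $t$.} Fix $t$ with $|t-t_0|<r$. By Proposition \ref{prop: projectability}, the map $F^t_\delta:(y,\eta)\mapsto(x,\eta)$ is a diffeomorphism from $B(\rho_0,r)$ onto $\Omega_r$. Denote its inverse by $(x,\eta)\mapsto (Y(t,x,\eta),\eta)$, and set $\Xi(t,x,\eta)=\pi_\xi\Phi^t_\delta(Y,\eta)$. Since $\Phi^t_\delta$ is symplectic (the conjugating maps $\widetilde f$ and $\mathcal A$ are symplectic), the graph $\{(x,\xi;y,\eta)\}$ is Lagrangian for $\mathrm{d}\xi\wedge \mathrm{d}x-\mathrm{d}\eta\wedge\mathrm{d}y$. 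Equivalently, the $1$-form
\[\lambda_t:=\Xi\,\mathrm{d}x+Y\,\mathrm{d}\eta=\mathrm{d}(x\cdot\xi)-(\xi\,\mathrm{d}x-\eta\,\mathrm{d}y)+\ldots\]
pulled back to the graph and written in the coordinates $(x,\eta)$ is closed; one checks $\mathrm{d}\lambda_t=0$ directly from $\xi\,\mathrm{d}x-\eta\,\mathrm{d}y$ being closed on the graph. The domain $\Omega_r$ is simply connected: it is the image of a ball under a diffeomorphism, or one can restrict to the product of balls in \eqref{eq: Ball times ball in omega}. Hence $\lambda_t=\mathrm{d}_{x,\eta}\psi(t,\cdot)$ for a function $\psi$ that is unique up to a constant $c(t)$. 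This gives $(y,\xi)=(\nabla_\eta\psi,\nabla_x\psi)$. Smoothness in $t$ follows if $\psi$ is defined by integrating $\lambda_t$ along segments from a fixed base point in $\Omega_r$, since $Y$ and $\Xi$ depend smoothly on $t$.

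\textbf{Step 2: the Hamilton--Jacobi equation.} Set $\varphi_\delta=\psi+c(t)$ and choose $c$ so that \eqref{eq: hamilton jacobi eq} holds. Differentiating $\nabla_\eta\psi=Y$ and $\nabla_x\psi=\Xi$ in $t$, we want to show that $g:=\partial_t\psi+p_\delta(x,\nabla_x\psi)$ has zero $(x,\eta)$-gradient.

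To do this, differentiate in $t$ the identity $\Phi^t_\delta(Y(t,x,\eta),\eta)=(x,\Xi)$. The $x$-component gives
\[\partial_\xi p_\delta(x,\Xi)+\partial_yX\,\partial_tY=0,\]
and the $\xi$-component gives
\[\partial_t\Xi=-\partial_xp_\delta+\partial_y\Xi\,\partial_tY.\]
Combining these with the symmetry of the Hessian of $\psi$, as in the classical proof, yields
\[\nabla_x g=\partial_t\Xi+\partial_xp_\delta+(\nabla_x\Xi)^\top\partial_\xi p_\delta=0\quad\text{and}\quad \nabla_\eta g=0.\]

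A cleaner route uses the extended symplectomorphism on $T^*\mathbf{R}^{d+1}$, with $(t,\tau)$ and $\tau=-p_\delta$, whose graph is again Lagrangian. Then $\Xi\,\mathrm{d}x+Y\,\mathrm{d}\eta-p_\delta(x,\Xi)\,\mathrm{d}t$ is closed in $(t,x,\eta)$, and its primitive is the desired $\varphi_\delta$ directly. I would present this version. Closedness here amounts exactly to $\partial_t Y=-\nabla_\eta(p_\delta(x,\Xi))$ and $\partial_t\Xi=-\nabla_x(p_\delta(x,\Xi))$, and both follow from the flow equation $\partial_t\Phi^t_\delta=H_{p_\delta}$ evaluated at the endpoint.

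\textbf{Main obstacle.} The main obstacle is the bookkeeping of domains rather than the algebra. Proposition \ref{prop: projectability} controls the projection only on $B(\rho_0,r)$ for each $t$. The statement also asks that every $(y,\eta)\in B(\rho_0,c_0r/(1+t))$ satisfy the equivalence, so we must check that $F^t_\delta$ maps this smaller ball into the region where $\varphi_\delta$ is defined. This follows from the Lipschitz bound on $F^t_\delta$, which comes from Proposition \ref{prop: short time derivative flow}, together with injectivity \eqref{eq: F delta inverse lip}; the injectivity is what guarantees the ``$\Leftarrow$'' direction. Finally, the domain in $(t,x,\eta)$ must be simply connected, and we take it of the form $\{|t-t_0|<r\}\times(\text{ball})$.
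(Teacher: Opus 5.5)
Your preferred version is the paper's proof. The paper takes the space-time Lagrangian $\Lambda=\{(t,p_\delta(\Phi^t_\delta(y,\eta));\Phi^t_\delta(y,\eta);(y,\eta))\}$, notes that $-\tau\,\mathrm{d}t+\xi\,\mathrm{d}x+y\,\mathrm{d}\eta$ is closed on $\Lambda$ by Hamilton's equations, and takes a primitive; this gives the generating property and the Hamilton--Jacobi equation at once. One small point: your claim that closedness in the $t$-direction ``follows from the flow equation'' also uses the fixed-$t$ symmetry from your Step 1, so keep that step.

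The part to correct is your domain bookkeeping, which the paper skips by saying only ``locally exact''.

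\begin{itemize}
\item \textbf{Do not shrink the domain.} The statement wants $\varphi_\delta$ on all of $\Omega_r$, so you should not restrict to $\{|t-t_0|<r\}\times(\text{ball})$ with the ball from \eqref{eq: Ball times ball in omega}.
\item \textbf{Your Lipschitz argument fails.} You claim $F^t_\delta$ maps $B(\rho_0,\tfrac{c_0r}{1+t})$ into that product ball. But $\mathrm{d}F^t_\delta$ contains the unstable block $A$ of Lemma \ref{lem: matrice differentielle flot}. Since $\|A^{-1}\|\le C\mathrm{e}^{-ct}$, $A$ stretches every unstable direction by at least $C^{-1}\mathrm{e}^{ct}$. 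The image of the small ball therefore has extent $\gtrsim \mathrm{e}^{ct}r/(1+t)$, while the target ball has radius only $\sim r/(1+t_0)$, so the inclusion fails for large $t_0$.
\item \textbf{The fix.} The map $(t,\rho)\mapsto (t,F^t_\delta(\rho))$ is a diffeomorphism from the convex set $(t_0-r,t_0+r)\times B(\rho_0,r)$ onto $\mathcal D=\{(t,x,\eta): |t-t_0|<r,\ (x,\eta)\in\Omega_r\}$, where $\Omega_r$ depends on $t$. It is injective because each $F^t_\delta$ is, and its Jacobian is block-triangular with invertible diagonal blocks. Hence $\mathcal D$ is contractible and the closed form has a global primitive on it.
\item \textbf{The equivalence is then automatic.} Since $B(\rho_0,\tfrac{c_0r}{1+t})\subset B(\rho_0,r)$ and $F^t_\delta(B(\rho_0,r))=\Omega_r$, the equivalence on the smaller ball holds with no Lipschitz estimate. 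The ``$\Leftarrow$'' direction is just the fact that $Y(t,\cdot)$ inverts $F^t_\delta$.
\end{itemize}
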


\begin{proof}Introduce the Lagrangian
\[\Lambda=\big\{(t,p_\delta(\Phi^t_\delta(y,\eta));\Phi^t_\delta(y,\eta);(y,\eta))\big\}\subset T^*\mathbf{R}\times T^*\mathbf{R}^d\times T^*\mathbf{R}^d.\]
Then the Hamilton equations imply that the $1$-form $\omega:=-\tau\mathrm{d}t+\xi\mathrm{d}x+y\mathrm{d}\eta$ satisfies $\mathrm{d}\omega_{|\Lambda}=0$. Thus $\omega$ is locally exact and there exists a function $\varphi_\delta(t,x,\eta)$ such that $\omega=\mathrm{d}\varphi_\delta$ on $\Lambda$, in other words
\[\tau=-\partial_t \varphi_\delta, \quad \xi=\nabla_x \varphi_\delta, \quad y=\nabla_\eta \varphi_\delta,\]
which exactly means that $\varphi_\delta$ is a generating function for the flow.

\end{proof}

\subsubsection{Composition of generating function} We record a lemma about the composition of generating functions under an assumption of projectability of the graphs.

\begin{lem} \label{lem: composition generating functions} Consider two symplectomorphisms of $T^*\mathbf{R}^d$,   $\kappa_1:(x_0,\xi_0)\mapsto (x_1,\xi_1)$ and $\kappa_2:(x_1,\xi_1)\mapsto (x_2,\xi_2)$, and let $\kappa_3=\kappa_2\circ \kappa_1$. Assume that the graphs of $\kappa_i$ projects diffeomorphically on the $(x,\eta)$ coordinates for all $i=1,2,3$. Let $\varphi_i(x_{i+1},\xi_i)$ be a generating function for $\kappa_i$, for $i=1,2$. Then, a generating function for $\kappa_3$ is given by
\begin{equation} \label{eq: formule phi 3} \varphi_3(x_2,\xi_0):=\varphi_2(x_2,\xi_1)-\langle x_1,\xi_1\rangle+\varphi_1(x_1,\xi_0),\end{equation}
where $(x_1,\xi_1)$ is fixed by the condition $(\bullet,\xi_0)\overset{\kappa_1}{\longrightarrow} (x_1,\xi_1)\overset{\kappa_2}{\longrightarrow} (x_2,\bullet)$.\end{lem}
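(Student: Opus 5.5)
The plan is to verify directly that $\varphi_3$, as defined in \eqref{eq: formule phi 3}, satisfies Definition \ref{defi: gen func} for $\kappa_3$. We do this by computing its gradient and using that $\varphi_1,\varphi_2$ generate $\kappa_1,\kappa_2$.

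\textbf{Step 1: $\varphi_3$ is well defined and smooth.} Fix $(x_2,\xi_0)$. Projectability of the graph of $\kappa_3$ onto the $(x,\eta)$ coordinates gives a unique $x_0$ with $\kappa_3(x_0,\xi_0)=(x_2,\bullet)$. Setting $(x_1,\xi_1)=\kappa_1(x_0,\xi_0)$ then gives the unique intermediate point in the condition $(\bullet,\xi_0)\overset{\kappa_1}{\longrightarrow} (x_1,\xi_1)\overset{\kappa_2}{\longrightarrow} (x_2,\bullet)$. Because this projection is a diffeomorphism, the map $(x_2,\xi_0)\mapsto(x_0,x_1,\xi_1)$ is smooth, and so $\varphi_3$ is a smooth function of $(x_2,\xi_0)$.

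\textbf{Step 2: compute the gradient.} Write $x_1=x_1(x_2,\xi_0)$ and $\xi_1=\xi_1(x_2,\xi_0)$, and differentiate \eqref{eq: formule phi 3} by the chain rule:
\[
\mathrm d\varphi_3=\partial_{x_2}\varphi_2\,\mathrm dx_2+\partial_{\xi_1}\varphi_2\,\mathrm d\xi_1-\xi_1\,\mathrm dx_1-x_1\,\mathrm d\xi_1+\partial_{x_1}\varphi_1\,\mathrm dx_1+\partial_{\xi_0}\varphi_1\,\mathrm d\xi_0 .
\]
Since $\varphi_1$ generates $\kappa_1$ and $\kappa_1(x_0,\xi_0)=(x_1,\xi_1)$, we have $\nabla_{x_1}\varphi_1(x_1,\xi_0)=\xi_1$ and $\nabla_{\xi_0}\varphi_1=x_0$. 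Since $\varphi_2$ generates $\kappa_2$ and $\kappa_2(x_1,\xi_1)=(x_2,\xi_2)$, we have $\nabla_{\xi_1}\varphi_2(x_2,\xi_1)=x_1$ and $\nabla_{x_2}\varphi_2=\xi_2$. Substituting, the $\mathrm dx_1$ and $\mathrm d\xi_1$ terms cancel, leaving $\mathrm d\varphi_3=\xi_2\,\mathrm dx_2+x_0\,\mathrm d\xi_0$. Thus $\nabla_{x_2}\varphi_3=\xi_2$ and $\nabla_{\xi_0}\varphi_3=x_0$ whenever $\kappa_3(x_0,\xi_0)=(x_2,\xi_2)$, which is the forward implication in Definition \ref{defi: gen func}.

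\textbf{Step 3: the converse implication.} Suppose $(x_0,\xi_2)=(\nabla_{\xi_0}\varphi_3,\nabla_{x_2}\varphi_3)(x_2,\xi_0)$. By Step 1 there is a unique $x_0'$ with $\kappa_3(x_0',\xi_0)=(x_2,\xi_2')$. Step 2 then gives $x_0'=\nabla_{\xi_0}\varphi_3=x_0$ and $\xi_2'=\xi_2$, hence $\kappa_3(x_0,\xi_0)=(x_2,\xi_2)$.

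The only delicate point is Step 1: smoothness and uniqueness of the intermediate point $(x_1,\xi_1)$. This is exactly where the projectability hypotheses are used. Projectability of $\kappa_3$ gives existence and uniqueness of the intermediate point, and projectability of $\kappa_1,\kappa_2$ makes the expressions $\varphi_1(x_1,\xi_0)$ and $\varphi_2(x_2,\xi_1)$ meaningful. Everything after that is the short cancellation computed in Step 2.
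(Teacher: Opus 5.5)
Your proposal is correct and follows essentially the same argument as the paper: the paper differentiates $\varphi_3(x_2(s),\xi_0(s))$ along curves, uses the same cancellation coming from $\nabla_{\xi_1}\varphi_2=x_1$ and $\nabla_{x_1}\varphi_1=\xi_1$, and invokes projectability of $\kappa_3$ only at the end, whereas you parametrize by $(x_2,\xi_0)$ from the start and compute $\mathrm d\varphi_3$ directly. Your Step 3, which spells out the converse implication, is a small addition that the paper leaves implicit.
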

\begin{proof}Consider a curve $(x_0(s),\xi_0(s))$ and let $(x_1(s),\xi_1(s)):=\kappa_1(x_0(s),\xi_0(s))$ and $(x_2(s),\xi_2(s)):=\kappa_2(x_1(s),\xi_1(s))$. Then, with $\varphi_3$ defined by the formula \eqref{eq: formule phi 3} above, one has
\[\frac{\mathrm{d}}{\mathrm{d}s} \varphi_3(x_2(s),\xi_0(s))=\langle \nabla_{x_2} \varphi_2,\dot x_2\rangle +\langle \nabla_{\xi_1} \varphi_2,\dot \xi_1\rangle-\langle \dot x_1,\xi_1\rangle-\langle x_1,\dot \xi_1\rangle+\langle \nabla_{x_1} \varphi_1,\dot x_1\rangle+\langle \nabla_{\xi_0} \varphi_1,\dot \xi_0\rangle.\]
But we know already that $\nabla_{x_2}\varphi_2=\xi_2$, $\nabla_{\xi_1} \varphi_2=x_1$, $\nabla_{x_1}\varphi_1=\xi_1$, $\nabla_{\xi_0}\varphi_1=x_0$, giving
\[\langle \nabla_{x_2} \varphi_3,\dot x_2\rangle+\langle \nabla_{\xi_0} \varphi_3,\dot \xi_0\rangle=\frac{\mathrm{d}}{\mathrm{d}s} \varphi_3(x_2(s),\xi_0(s))=\langle \xi_2,\dot x_2\rangle+\langle x_0,\dot \xi_0\rangle.\]
Since the graph of $\kappa_3$ projects diffeomorphically on the $(x_2,\xi_0)$ coordinates, this implies that on the graph of $\kappa_3$ we have
\[\xi_2=\nabla_{x_2}\varphi_3, \qquad x_0=\nabla_{\xi_0}\varphi_3,\]
in other words $\varphi_3$ is a generating function for $\kappa_3$.\end{proof}
\subsection{Interpretation of the generating function} \label{subsec: gen func twisted flow} As we work in charts, we have to study the classical dynamics in coordinates, e.g. consider $\widetilde f_j\circ \Phi_\delta^t\circ \widetilde f_i^{-1}$ (and accordingly we shall study the quantum propagator in coordinates $(f_j)_*\mathrm{e}^{-\frac{\mathrm{i}}{h} tP_{h,\delta}}f_i^*$). The generating functions for the flow in coordinates are identified with actions of trajectories on the base manifold $T^*M$, modulo some chart-dependent terms that are not invariantly defined. These chart-dependent terms will eventually cancel out when computing the trace, but we have to keep track of them when constructing approximations for $\mathrm{e}^{-\frac{\mathrm{i}}{h} tP_{h,\delta}}$ in charts.

\begin{defi}Let $\alpha=\xi\mathrm{d}x$ denote the canonical $1$-form on $T^*M$. If $\gamma:[0,T]\to T^*M$ is a smooth path, one defines the \emph{action} of $\gamma$ as
\[S(\gamma):=\int_{\gamma}\alpha.\]
Let $\{q_t\}_{t\ge 0}$ be a time-dependent Hamiltonian on $T^*M$, which generates a flow $\{\Psi^t\}_{t\ge 0}$.
Let $\gamma=\{\Psi^t(\rho)\}_{t\in [0,T]}$ be an orbit of the flow $\Psi^t$. We define
\begin{equation} \label{eq: def action bis} \widehat S(\gamma):=S(\gamma)-\int_0^T q_t(\Psi^t(\rho))\mathrm{d}t.\end{equation}
\end{defi}

\begin{prop}\label{prop: generating function is action} Let $\{q_t\}$ be a time-dependent Hamiltonian on $T^*\mathbf{R}^d$, which generates a flow $\Psi^t$. Let $\psi(t,x,\eta)$ be a generating function for the flow $\Psi^t$, for $t\in [0,T]$, with initial condition $\psi(0,x,\eta)=\langle x,\eta\rangle$. Then one has
\[\Psi^t(y,\eta)=(x,\xi)\Longrightarrow \psi(t,x,\eta)-\langle y,\eta\rangle=\widehat S(\gamma),\]
where $\gamma$ denotes the piece of orbit $\{\Psi^s(y,\eta)\}_{s\in [0,t]}$.
\end{prop}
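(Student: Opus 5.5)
The approach is to differentiate along the trajectory and integrate. Fix $(y,\eta)$ and set $(x(s),\xi(s)):=\Psi^s(y,\eta)$ for $s\in[0,t]$. Consider
\[
F(s):=\psi(s,x(s),\eta)-\langle y,\eta\rangle-\widehat S\big(\{\Psi^\sigma(y,\eta)\}_{\sigma\in[0,s]}\big).
\]
At $s=0$ the path reduces to a point, so the action term vanishes. The initial condition $\psi(0,x,\eta)=\langle x,\eta\rangle$ together with $x(0)=y$ then gives $F(0)=0$. It therefore suffices to show $F'(s)=0$.

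To compute $F'$, first note that $\frac{\mathrm d}{\mathrm ds}S(\gamma_{[0,s]})=\langle \xi(s),\dot x(s)\rangle$, since $\alpha=\xi\,\mathrm dx$ is evaluated on the tangent vector of the path. Hence
\[
\frac{\mathrm d}{\mathrm ds}\widehat S=\langle\xi(s),\dot x(s)\rangle-q_s(x(s),\xi(s)).
\]
Next, by the chain rule,
\[
\frac{\mathrm d}{\mathrm ds}\psi(s,x(s),\eta)=\partial_s\psi(s,x(s),\eta)+\langle\nabla_x\psi(s,x(s),\eta),\dot x(s)\rangle,
\]
with $\eta$ held fixed. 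Since $\psi(s,\cdot)$ generates $\Psi^s$ and $\Psi^s(y,\eta)=(x(s),\xi(s))$, we have $\nabla_x\psi(s,x(s),\eta)=\xi(s)$. The Hamilton–Jacobi equation gives
\[
\partial_s\psi=-q_s(x(s),\nabla_x\psi)=-q_s(x(s),\xi(s)).
\]
The two derivatives therefore coincide, so $F'\equiv0$ and $F(t)=0$, which is the claim.

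The only delicate point is consistency: $\psi(s,\cdot)$ must generate $\Psi^s$ along the whole path, with the relation $\Psi^s(y,\eta)=(x(s),\xi(s))\Leftrightarrow(y,\xi(s))=(\nabla_\eta\psi,\nabla_x\psi)$ holding at the fixed $\eta$ for every $s\in[0,t]$. This is built into the definition of a generating function for the flow, assumed for all $t\in[0,T]$ and on a domain containing the path. Smoothness of $\psi$ in $s$, needed for the chain rule, comes from the same definition. Beyond this bookkeeping I expect no real obstacle.
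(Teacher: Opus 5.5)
Your proof is correct and is essentially the paper's argument: the paper integrates $\mathrm{d}\psi=-\tau\,\mathrm{d}t+\xi\,\mathrm{d}x+y\,\mathrm{d}\eta$ along the lifted orbit $s\mapsto (s,q_s(\Psi^s(y,\eta));\Psi^s(y,\eta);y,\eta)$ in the Lagrangian $\Lambda$, where the $y\,\mathrm{d}\eta$ term drops because $\eta$ is fixed, and the remaining terms give exactly your chain-rule computation combining $\nabla_x\psi=\xi$ with the Hamilton--Jacobi equation. Your statement $F'\equiv 0$, $F(0)=0$ is simply the fundamental-theorem-of-calculus form of that same identity.
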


\begin{proof}We reuse the notation from the proof of Proposition \ref{prop: existence gen fun}, but now allow the Hamiltonian to depend on time. Let $\Psi^t(y,\eta)=(x,\xi)$. Consider the curve $\gamma(s)=\{(s,q_s(\Psi^s(y,\eta));\Psi^s(y,\eta);y,\eta)\}_{s\in [0,t]}$ in $\Lambda$. Then, on the one hand
\[\int_{\gamma} \mathrm{d}\psi=\psi(t,x,\eta)-\psi(0,y,\eta),\]
and on the other hand
\[\int_{\gamma}\mathrm{d}\psi=\int_{\gamma}-\tau\mathrm{d}s+\xi\mathrm{d}x+y\mathrm{d}\eta.\]
Since $\eta$ is constant along the trajectory this simplifies to
\[\int_{\gamma} \psi=\int_{\gamma} \xi\mathrm{d}x-q_s\mathrm{d}s,\]
which is the sought formula.
\end{proof}
It is important for us to understand the action of the image of a path by a symplectomorphism. While there is no basic answer in general to my knowledge, some simple formulae are available in some cases.
\begin{lem}\label{lem: symplectic lift action} Let $f:\mathbf{R}^d\to \mathbf{R}^d$ be a smooth diffeomorphism and let $\widetilde f:T^*\mathbf{R}^d\to T^*\mathbf{R}^d$ denote its symplectic lift. For any path $\gamma$, we have
\[S(\widetilde f(\gamma))=S(\gamma).\]
\end{lem}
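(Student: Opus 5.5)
The key point is that the canonical $1$-form is invariant under symplectic lifts, $\widetilde f^*\alpha=\alpha$. Once this is established, the statement follows from the change of variables formula for line integrals:
\[S(\widetilde f(\gamma))=\int_{\widetilde f\circ\gamma}\alpha=\int_\gamma \widetilde f^*\alpha=\int_\gamma\alpha=S(\gamma).\]
The plan is therefore to verify $\widetilde f^*\alpha=\alpha$ and then apply this identity.

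First, I will make the lift explicit. In the convention of the paper, the symplectic lift of $f$ is
\[\widetilde f(y,\eta)=\big(f(y),\,(\mathrm{d}f(y))^{-\top}\eta\big),\]
so that if $(x,\xi)=\widetilde f(y,\eta)$, then $\langle \xi, v\rangle=\langle \eta,\mathrm{d}f(y)^{-1}v\rangle$ for every $v\in\mathbf{R}^d$.

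Next, I will compute the pullback. Write $\pi:T^*\mathbf{R}^d\to\mathbf{R}^d$ for the base projection. Then $\pi\circ\widetilde f=f\circ\pi$. Take a tangent vector $V$ at $(y,\eta)$. By definition of the tautological form,
\[(\widetilde f^*\alpha)_{(y,\eta)}(V)=\big\langle \xi,\mathrm{d}\pi(\mathrm{d}\widetilde f\,V)\big\rangle=\big\langle \xi,\mathrm{d}f(y)\,\mathrm{d}\pi V\big\rangle=\big\langle \eta,\mathrm{d}\pi V\big\rangle=\alpha_{(y,\eta)}(V).\]
The same computation in coordinates reads: along a path, $\xi\cdot\dot x=\big(\mathrm{d}f^{-\top}\eta\big)\cdot\big(\mathrm{d}f\,\dot y\big)=\eta\cdot\dot y$.

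Finally, I integrate this identity along $\gamma$. This yields $S(\widetilde f(\gamma))=S(\gamma)$ exactly, with no boundary terms. No real obstacle is expected. The only care needed is to use the correct form of the lift, namely the inverse transpose, since that is what makes the pairing invariant. The identity then holds pointwise, so it applies equally to non-closed paths.
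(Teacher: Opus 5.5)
Your proof is correct and follows the same route as the paper. The paper's argument is simply to check that the one-form $\xi\,\mathrm{d}x$ is preserved by $\widetilde f$. Your computation with the lift $\widetilde f(y,\eta)=(f(y),\mathrm{d}f(y)^{-\top}\eta)$ carries out exactly that check explicitly and then integrates along $\gamma$.
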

\begin{proof}Just check that the one form $\xi \mathrm{d}x$ is preserved by $\widetilde f$.
\end{proof}
We now look at the action of the image of $\gamma$ by a linear symplectomorphism.

\begin{defi}\label{defi: cocycle action}Let $\mathcal A=\begin{pmatrix}
A & B \\ C & D
\end{pmatrix}\in \mathrm{Sp}(\mathbf{R},2d)$. Define, for $\rho=(x,\xi)\in T^*\mathbf{R}^d$:
\[F(\mathcal A,\rho):=\langle B\xi,Cx\rangle +\frac{1}{2}\langle Cx,Ax\rangle +\frac{1}{2}\langle D\xi,B\xi\rangle.\]
\end{defi}
\begin{prop}\label{prop: action of image}Let $\mathcal A\in \operatorname{Sp}(\mathbf{R},2d)$. Let $\gamma$ be a path in $T^*\mathbf{R}^d$ with end points $\rho_0,\rho_1$. Then,
\[S(\mathcal A\gamma)-S(\gamma)=F(\mathcal A,\rho_1)-F(\mathcal A,\rho_0).\]
\end{prop}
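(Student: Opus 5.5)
The plan is to show that the pullback of the canonical $1$-form by $\mathcal A$ differs from the canonical $1$-form by an exact form:
\[\mathcal A^*(\xi\,\mathrm{d}x)-\xi\,\mathrm{d}x=\mathrm{d}F(\mathcal A,\cdot).\]
Granting this, the statement follows at once. Indeed,
\[S(\mathcal A\gamma)-S(\gamma)=\int_\gamma\big(\mathcal A^*\alpha-\alpha\big)=\int_\gamma \mathrm{d}F(\mathcal A,\cdot)=F(\mathcal A,\rho_1)-F(\mathcal A,\rho_0),\]
with $\rho_0,\rho_1$ the endpoints of $\gamma$ as in the statement. This is the same mechanism as in Lemma \ref{lem: symplectic lift action}, except that the $1$-form is now preserved only up to an exact term.

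To compute the pullback, write $(x',\xi')=\mathcal A(x,\xi)=(Ax+B\xi,\,Cx+D\xi)$. Then
\[\mathcal A^*(\xi'\,\mathrm{d}x')-\xi\,\mathrm{d}x=\langle Cx,A\,\mathrm{d}x\rangle+\langle Cx,B\,\mathrm{d}\xi\rangle+\langle D\xi,A\,\mathrm{d}x\rangle+\langle D\xi,B\,\mathrm{d}\xi\rangle-\langle \xi,\mathrm{d}x\rangle .\]
I will match these terms against $\mathrm{d}F$ using the block relations that characterize $\mathcal A\in\mathrm{Sp}(\mathbf R,2d)$: $A^\top C$ and $B^\top D$ are symmetric, and $A^\top D-C^\top B=\mathrm{Id}$.

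The matching goes term by term.
\begin{itemize}
\item Since $A^\top C$ is symmetric, $\mathrm{d}\big(\tfrac12\langle Cx,Ax\rangle\big)=\langle A^\top Cx,\mathrm{d}x\rangle=\langle Cx,A\,\mathrm{d}x\rangle$.
\item Since $B^\top D$ is symmetric, $\mathrm{d}\big(\tfrac12\langle D\xi,B\xi\rangle\big)=\langle Dξ,B\,\mathrm{d}\xi\rangle$.
\item For the cross term, $\mathrm{d}\langle B\xi,Cx\rangle=\langle C^\top B\xi,\mathrm{d}x\rangle+\langle B^\top Cx,\mathrm{d}\xi\rangle$.
\end{itemize}
The piece $\langle B^\top Cx,\mathrm{d}\xi\rangle$ is exactly the term $\langle Cx,B\,\mathrm{d}\xi\rangle$ above. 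Finally, using $A^\top D-\mathrm{Id}=C^\top B$,
\[\langle D\xi,A\,\mathrm{d}x\rangle-\langle\xi,\mathrm{d}x\rangle=\langle (A^\top D-\mathrm{Id})\xi,\mathrm{d}x\rangle=\langle C^\top B\xi,\mathrm{d}x\rangle,\]
which accounts for the remaining piece of the cross term. Summing the three items gives the claimed identity for the pullback.

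There is no real obstacle here. The only points needing care are to use the correct form of the symplectic block identities, in the version $\mathcal A^\top J\mathcal A=J$, and to keep track of transposes when moving matrices across the inner product.
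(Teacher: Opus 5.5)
Your proof is correct and is exactly the direct computation the paper has in mind (its proof just says ``a direct computation using the fact that $\mathcal A$ is symplectic''). You show $\mathcal A^*(\xi\,\mathrm{d}x)-\xi\,\mathrm{d}x=\mathrm{d}F(\mathcal A,\cdot)$ using that $A^\top C$ and $B^\top D$ are symmetric and $A^\top D-C^\top B=\mathrm{Id}$, then integrate along $\gamma$; each term-matching step checks out.
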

\begin{proof}This is a direct computation using the fact that $\mathcal A$ is symplectic.\end{proof}

\begin{prop} \label{prop: properties cocycle}  We record for the following some useful properties of $F$.

\begin{enumerate}\item $F$ satisfies the cocycle property $F(\mathcal A_2\circ \mathcal A_1,\rho)=F(\mathcal A_2,\mathcal A_1(\rho))+F(\mathcal A_1,\rho)$. In particular, if $F(\mathcal A^{-1},\mathcal A \rho)=-F(\mathcal A,\rho)$.

\item If the upper-left block $A$ of $\mathcal A$ is invertible, then the function
\begin{equation} \label{eq: gen func for symplectic map}\psi(x,\eta):=\frac{1}{2} \langle C A^{-1} x, x\rangle +\langle A^{-1} x, \eta\rangle -\frac{1}{2}\langle A^{-1}B\eta,\eta\rangle\end{equation}
is a generating function for the map $\mathcal A:(y,\eta)\mapsto ( x,\xi)$, and whenever $\mathcal A(y,\eta)=(x,\xi)$, one has
\[F(\mathcal A,(y,\eta))=\psi(x,\eta)-\langle y,\eta\rangle.\]\end{enumerate}\end{prop}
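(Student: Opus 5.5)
The plan is to derive both items from Proposition \ref{prop: action of image} together with the block relations satisfied by a symplectic matrix $\mathcal A=\begin{pmatrix}A&B\\ C&D\end{pmatrix}$. Recall that $\mathcal A^\top J\mathcal A=J$ gives that $A^\top C$ and $B^\top D$ are symmetric and that $A^\top D-C^\top B=I$. The relation $\mathcal A J\mathcal A^\top=J$ gives $AB^\top=BA^\top$. Two trivial facts will also be used: $F(\mathcal A,\cdot)$ is a quadratic form, so $F(\mathcal A,0)=0$, and $\mathcal A(0)=0$.

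\textbf{Item (1).} Fix $\rho$ and let $\gamma$ be any path from $0$ to $\rho$. Split the action difference telescopically:
\[
S(\mathcal A_2\mathcal A_1\gamma)-S(\gamma)=\big[S(\mathcal A_2(\mathcal A_1\gamma))-S(\mathcal A_1\gamma)\big]+\big[S(\mathcal A_1\gamma)-S(\gamma)\big].
\]
Apply Proposition \ref{prop: action of image} three times:
\begin{itemize}
\item to $\mathcal A_2\mathcal A_1$ on $\gamma$;
\item to $\mathcal A_2$ on the path $\mathcal A_1\gamma$, which runs from $0$ to $\mathcal A_1\rho$;
\item to $\mathcal A_1$ on $\gamma$.
\end{itemize}
All the endpoint terms at the origin vanish, and what remains is exactly $F(\mathcal A_2\mathcal A_1,\rho)=F(\mathcal A_2,\mathcal A_1\rho)+F(\mathcal A_1,\rho)$. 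For the inverse formula, take $\mathcal A_2=\mathcal A^{-1}$ and $\mathcal A_1=\mathcal A$. Since $F(\mathrm{Id},\cdot)=0$ (because $B=C=0$), we get $F(\mathcal A^{-1},\mathcal A\rho)=-F(\mathcal A,\rho)$.

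\textbf{Item (2), generating function.} Assume $A$ is invertible. First I check that the two relevant matrices are symmetric:
\begin{itemize}
\item $A^{-1}B$ is symmetric, since $AB^\top=BA^\top$ gives $A^{-1}B=B^\top A^{-\top}$;
\item $CA^{-1}=A^{-\top}(A^\top C)A^{-1}$ is symmetric because $A^\top C$ is.
\end{itemize}
With this, $\nabla_\eta\psi=A^{-1}x-A^{-1}B\eta$ and $\nabla_x\psi=CA^{-1}x+A^{-\top}\eta$.

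Now suppose $\mathcal A(y,\eta)=(x,\xi)$, i.e. $x=Ay+B\eta$ and $\xi=Cy+D\eta$. Then $y=A^{-1}(x-B\eta)=\nabla_\eta\psi$. Substituting this $y$ gives
\[
\xi=CA^{-1}x+(D-CA^{-1}B)\eta .
\]
So the identity $\xi=\nabla_x\psi$ reduces to $D-CA^{-1}B=A^{-\top}$. Multiplying by $A^\top$, this becomes $A^\top D-(A^\top C)A^{-1}B=I$. Using $A^\top C=C^\top A$, it becomes $A^\top D-C^\top B=I$, which is one of the symplectic relations. The converse direction is immediate, because $(x,\eta)\mapsto(y,\eta)$ is a bijection when $A$ is invertible.

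\textbf{Item (2), the identity $F=\psi-\langle y,\eta\rangle$.} Rather than expanding quadratics, I compare differentials. Set $G(y,\eta):=\psi(x,\eta)-\langle y,\eta\rangle$ with $x=\pi_x\mathcal A(y,\eta)$. By the generating function property just proved,
\[
\mathrm dG=\langle\xi,\mathrm dx\rangle+\langle y,\mathrm d\eta\rangle-\langle y,\mathrm d\eta\rangle-\langle\eta,\mathrm dy\rangle=\mathcal A^*(\xi\,\mathrm dx)-\eta\,\mathrm dy .
\]
On the other side, Proposition \ref{prop: action of image} applied to arbitrarily short paths shows $\mathrm dF(\mathcal A,\cdot)=\mathcal A^*(\xi\,\mathrm dx)-\xi\,\mathrm dx$, which is the same $1$-form written in the variables $(y,\eta)$. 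Both $G$ and $F(\mathcal A,\cdot)$ vanish at the origin, since $\psi(0,0)=0$. Hence they coincide on $T^*\mathbf R^d$.

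The only place requiring care is the sign and symmetry bookkeeping in the block relations. I expect no real obstacle.
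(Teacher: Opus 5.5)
Your proof is correct. The paper records this proposition without any proof; Proposition~\ref{prop: action of image} is likewise justified only as ``a direct computation''. The implicit route is therefore brute-force expansion of the quadratic forms. For item (1) that means expanding $F(\mathcal A_2\mathcal A_1,\rho)$ in terms of the blocks of the product. For item (2) it means substituting $x=Ay+B\eta$ into $\psi$ and matching $\frac12\langle CA^{-1}x,x\rangle+\frac12\langle A^{-1}B\eta,\eta\rangle$ against $F(\mathcal A,(y,\eta))$, using $D=A^{-\top}+CA^{-1}B$ and the symmetry of $A^{-1}B$ and $A^\top C$.

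You instead reduce both the cocycle law and the identity $F=\psi-\langle y,\eta\rangle$ to one fact: $F(\mathcal A,\cdot)$ is the primitive of $\mathcal A^*(\xi\,\mathrm dx)-\xi\,\mathrm dx$ that vanishes at the origin. That is exactly the content of Proposition~\ref{prop: action of image}. Only the generating-function property itself needs block-matrix algebra, namely the relation $A^\top D-C^\top B=I$ together with the two symmetry checks.

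This gives a cleaner argument with far less sign bookkeeping. It also explains why the cocycle identity holds: both sides are primitives of the same exact $1$-form with the same normalization. The cost is that you lean on Proposition~\ref{prop: action of image}, whose verification is the same kind of block computation. So the algebra is concentrated in one place rather than eliminated.
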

As a consequence, one gets the following:
\begin{cor}\label{cor: generating function as action plus difference} Let $p$ be a Hamiltonian on $T^*\mathbf{R}^d$, with associated Hamiltonian flow $\Phi^t$. Given a symplectic matrix $\mathcal A\in \operatorname{Sp}(\mathbf{R},2d)$, we define a new flow $\Phi^t_{\mathcal A}:=\mathcal A^{-1}\circ \Phi^t\circ \mathcal A$, and let $\varphi_{\mathcal A}(t,\tilde x_1,\tilde \xi_0)$ be a short time generating function for $\Phi^t_{\mathcal A}$, with initial condition $\varphi_{\mathcal A}(0,\tilde x_1,\tilde \xi_0)=\langle \tilde x_1,\tilde \xi_0\rangle$. Then, whenever $\Phi^t_{\mathcal A}(\tilde x_0,\tilde \xi_0)=(\tilde x_1,\tilde \xi_1)$, one has
\[\varphi_{\mathcal A}(t,\tilde x_1,\tilde \xi_0)-\langle \tilde x_1,\tilde \xi_0\rangle=\widehat S(\gamma)+F(\mathcal A,(\tilde x_0,\tilde \xi_0))-F(\mathcal A,(\tilde x_1,\tilde \xi_1)),\]
where $\widehat S(\gamma)$ is the action of the path $\gamma:=\{\Phi^s(x_0,\xi_0)\}_{s\in [0,t]}$. \end{cor}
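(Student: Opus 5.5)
The plan is to reduce the statement to Proposition \ref{prop: generating function is action}, applied to the conjugated flow, and then to convert the action of the conjugated path into the action of $\gamma$ using the cocycle $F$ of Definition \ref{defi: cocycle action}. One point of bookkeeping, explained in the last paragraph, does not close as the statement is written.

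\textbf{Step 1: the conjugated flow is Hamiltonian.} Since $\mathcal A$ is linear symplectic, $\Phi^t_{\mathcal A}=\mathcal A^{-1}\circ\Phi^t\circ\mathcal A$ is the Hamiltonian flow of $p\circ\mathcal A$. We set $\tilde\rho_s:=\Phi^s_{\mathcal A}(\tilde x_0,\tilde\xi_0)$ for $s\in[0,t]$ and $\tilde\gamma:=\{\tilde\rho_s\}_{s\in[0,t]}$. Then $\mathcal A\tilde\gamma$ is an orbit of $\Phi^s$, namely $\gamma=\mathcal A\tilde\gamma$, with endpoints $\rho_0=\mathcal A\tilde\rho_0$ and $\rho_1=\mathcal A\tilde\rho_t$. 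Proposition \ref{prop: generating function is action}, applied with $q_s=p\circ\mathcal A$ and the normalization $\varphi_{\mathcal A}(0,\cdot)=\langle\cdot,\cdot\rangle$, gives
\[\varphi_{\mathcal A}(t,\tilde x_1,\tilde\xi_0)-\langle\tilde x_0,\tilde\xi_0\rangle=S(\tilde\gamma)-\int_0^t p(\mathcal A\tilde\rho_s)\,\mathrm ds.\]
The Hamiltonian integral coincides with $\int_0^t p(\Phi^s(\rho_0))\,\mathrm ds$.

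\textbf{Step 2: transfer the action.} Proposition \ref{prop: action of image}, applied to $\mathcal A^{-1}$ and the path $\gamma$, gives
\[S(\tilde\gamma)=S(\gamma)+F(\mathcal A^{-1},\rho_1)-F(\mathcal A^{-1},\rho_0).\]
By the cocycle identity of Proposition \ref{prop: properties cocycle}(1), $F(\mathcal A^{-1},\mathcal A\tilde\rho)=-F(\mathcal A,\tilde\rho)$. Hence $S(\tilde\gamma)=S(\gamma)+F(\mathcal A,\tilde\rho_0)-F(\mathcal A,\tilde\rho_t)$. Substituting into Step 1 yields the identity
\[\varphi_{\mathcal A}(t,\tilde x_1,\tilde\xi_0)-\langle\tilde x_0,\tilde\xi_0\rangle=\widehat S(\gamma)+F(\mathcal A,(\tilde x_0,\tilde\xi_0))-F(\mathcal A,(\tilde x_1,\tilde\xi_1)).\]

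\textbf{Step 3: the obstacle.} The statement as worded subtracts $\langle\tilde x_1,\tilde\xi_0\rangle$ rather than $\langle\tilde x_0,\tilde\xi_0\rangle$. The two sides therefore differ by $\langle\tilde x_1-\tilde x_0,\tilde\xi_0\rangle$. The plan would first check whether this term can be absorbed:
\begin{itemize}
\item either by a different reading of the action convention (\ref{eq: def action bis}), for instance a change of primitive of $\omega$;
\item or by the freedom of adding a constant to a generating function.
\end{itemize}
Neither works. The additive constant is fixed by $\varphi_{\mathcal A}(0,\cdot)=\langle\cdot,\cdot\rangle$. A change of primitive produces boundary terms of the form $\langle x,\xi\rangle$ evaluated at the endpoints, not $\langle\tilde x_1-\tilde x_0,\tilde\xi_0\rangle$. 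Nor does the term vanish along the flow: for the free flow $p=|\xi|^2$ with $\mathcal A=I$, one has $F\equiv0$ and $\tilde x_1-\tilde x_0=2t\tilde\xi_0$, so the discrepancy equals $2t|\tilde\xi_0|^2\neq0$. Consequently this plan proves the stated identity only up to the explicit correction $\langle\tilde x_1-\tilde x_0,\tilde\xi_0\rangle$ on the right-hand side. That step is where the argument, as I can carry it out, does not close for the formula exactly as written.
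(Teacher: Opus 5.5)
Your argument is correct and follows the paper's route: apply Proposition \ref{prop: generating function is action} to the conjugated flow, then transfer the action with Proposition \ref{prop: action of image} and the cocycle identity. Your bookkeeping ($\tilde\gamma=\mathcal A^{-1}\gamma$, Hamiltonian $p\circ\mathcal A$) is actually the consistent one, whereas the paper's proof writes $\tilde\gamma=\mathcal A\gamma$ and $p\circ\mathcal A^{-1}$.

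The obstruction in Step 3 is a typo in the statement, not a gap in your proof. The paper's own proof establishes exactly your identity, with $\langle\tilde x_0,\tilde\xi_0\rangle$ subtracted. This is the same normalization as in Proposition \ref{prop: generating function is action} and in the later use in Proposition \ref{thm: short time propagator twisted}. Your free-particle example correctly shows that the version with $\langle\tilde x_1,\tilde\xi_0\rangle$ is false.
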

\begin{proof}Denote by $(x_0,\xi_0);(x_1,\xi_1)$ the endpoints of $\gamma$. Let $\tilde \gamma=\mathcal A\gamma$, and accordingly denote its endpoints by $(\tilde x_0,\tilde \xi_0);(\tilde x_1,\tilde \xi_1)$. The flow $\Phi^t_{\mathcal A}$ is associated with the Hamiltonian $p_{\mathcal A}:=p\circ \mathcal A^{-1}$. By Proposition \ref{prop: generating function is action}, one has
\[\varphi_{\mathcal A}(t,\tilde x_1,\tilde \xi_0)-\langle \tilde x_0,\tilde \xi_0\rangle=\widehat S(\tilde \gamma)=\int_{\tilde \gamma} \tilde \xi \mathrm{d}\tilde x-p\circ \mathcal A^{-1}\mathrm{d}t=S(\tilde \gamma)-\int_{\gamma} p\mathrm{d}t,\]
where we have made a change of variable in the second equality. By Proposition \ref{prop: action of image}, one has
\[S(\tilde \gamma)-S(\gamma)=F(\mathcal A,(\tilde x_0,\tilde \xi_0))-F(\mathcal A,(\tilde x_1,\tilde \xi_1)).\]
The result follows.
\end{proof}

\subsection{Iteration}\label{subsec: iteration} We anticipate the next sections and explain how one can build a long time generating function by an iteration procedure based on Lemma \ref{lem: composition generating functions}, using short time generating functions. While one can always obtain a long time generating function valid for $t\in [t_0-\varepsilon,t_0+\varepsilon]$ by applying the Poincar{\'e} lemma as in Proposition \ref{prop: existence gen fun}, there is no apparent canonical choice: if $\varphi(t,x,\eta)$ is a generating function then so is $\varphi(t,x,\eta)+c$, and the choice of $c$ is not obvious. In contrast, for short times, a natural choice---because the associated Fourier integral operator must be the identity---is to impose the initial condition $\varphi(0,x,\eta)=\langle x,\eta\rangle$.

Fix $\rho=\rho_0$ then let $\rho_k=\Phi^{k}_0(\rho)$. Let $\varphi_k(x_{k+1},\xi_k)$ be a generating function for $\Phi^{1}_\delta$ in adapted coordinates  as described in Proposition \ref{prop: existence gen fun}, defined on $B_{\pi_1(\rho_{k+1})}(\varepsilon)\times B_{\pi_2(\rho_k)}(\varepsilon)$, for some small $\varepsilon=\varepsilon(T_0)$. We define a sequence of generating functions $\phi_k(x_k,\xi_0)$, for $\xi_0\in B(0,h^\nu)$, by:
\begin{equation}\label{eq: sequence gen func} \phi_{k+1}(x_{k+1},\xi_0)=\varphi_k(x_{k+1},\xi_k)-\langle x_k,\xi_k\rangle +\phi_k(x_k,\xi_0), \end{equation}
where $(x_k,\xi_k)$ is fixed by the condition $(\bullet,\xi_0)\overset{\Phi^{k}}{\longrightarrow} (x_k,\xi_k) \overset{\Phi^{1}}{\longrightarrow} (x_{k+1},\bullet)$. The next proposition quantifies Lemma \ref{lem: composition generating functions} by precising the domain of definition of the generating functions.
It is proved by using the same estimates as in the proof of Proposition \ref{prop: projectability}.
\begin{prop}\label{prop: iterated generating function} There is a constant $\mathfrak c=\mathfrak c(\alpha,\beta)>0$ and a large constant $C>0$ such that for $k\le \mathfrak c(\alpha,\beta)|\log h|$, letting $R_k=\mathrm{e}^{-Ck}h^\nu$, the function $\phi_k$ is well-defined on $B_{\pi_1(\rho_k)}(R_k)\times B_{\pi_2(\rho_0)}(h^\nu)$ and generates the flow at time $k$. That is, for any $(x,\eta)\in B(\pi_1(\rho_k),R_k)\times B(\pi_2(\rho_0),R_0)$, there is a unique $y\in B(\pi_1(\rho_0),R_0)$ such that $\Phi^k(y,\eta)=(x,\bullet)$.
\end{prop}

\begin{rem}We can also consider time-dependent generating functions. We can compose the generating function for $\Phi^{k}_\delta$ with a generating function for $\Phi^{1+s}_\delta$, for $|s|\le \mathrm{e}^{-Ck}$. This produces a generating function for $\Phi^{k+1+s}_\delta$, for $|s|\le c \mathrm{e}^{-Ck}$.\end{rem}

\subsection{Estimates on the generating function} In the rest of the section we fix a time threshold $T=\mathfrak c|\log h|$ and work in the regime $|t|\le T$. We fix $\rho_0\in T^*M$ and a time $|t_0|\le T$. We consider the flow in adapted coordinates defined in \eqref{eq: def twisted flow}. To lighten the notation, we will still denote it by $\Phi_\delta^t$. Similarly, we still write $\rho_0$ for the point in coordinates $\mathcal A_i(0)^{-1}(\widetilde f_i(\rho_0))$. In the whole subsection, we assume $|t-t_0|\le r$ and $|\rho-\rho_0|\le r$ with
\begin{equation} \label{eq: range of param} r+\delta h^{-2\beta}\ll \mathrm{e}^{-2CT}.\end{equation}
Then, in the notation of Proposition \ref{prop: existence gen fun}, for any $|t-t_0|\le r$, the flow map $\Phi^t_\delta$ admits a local generating function $\varphi_\delta(t,x,\eta)$, defined for $(x,\eta)\in \Omega_r$.

\begin{lem}\label{lem: derivees coeff matrix} We work in adapted coordinates near $\rho_0$ and $\Phi^{t_0}(\rho_0)$. Write the differential of the flow as $\mathrm{d}\Phi_\delta^t:=\begin{pmatrix}
A & B \\ C & D
\end{pmatrix}$.  Then, $\|A^{-1}\|_{C^\ell}\lesssim h^{-\beta \ell} \mathrm{e}^{2C \ell t}$. Also, any product of the matrices $A,B,C,D,A^{-1}$ has $C^\ell$-norm controlled by $h^{-\beta \ell} \mathrm{e}^{2C \ell t}$, for $\ell\ge 1$.

\end{lem}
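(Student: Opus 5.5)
The plan is to read off the bounds on the entries $A,B,C,D$ of $\mathrm{d}\Phi^t_\delta$ from Proposition \ref{prop: short time derivative flow}, and then treat $A^{-1}$ by differentiating the identity $AA^{-1}=\mathrm{Id}$.

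\textbf{Step 1: the entries themselves.} The entries of $A,B,C,D$ are first derivatives of $\Phi^t_\delta$. Their $C^\ell$ norms are therefore bounded by $\|\Phi^t_\delta\|_{C^{\ell+1}}$. By Proposition \ref{prop: short time derivative flow}, this is $\lesssim h^{-\beta\ell}\mathrm{e}^{C(\ell+1)t}$. The extra factor $\mathrm{e}^{Ct}$ at order zero is absorbed by enlarging the constant, giving $\mathrm{e}^{2C\ell t}$ for $\ell\ge1$ after renaming $C$. The conjugation by the fixed symplectic matrices $\mathcal A_i(0)$ and $\mathcal A_j(t_0)$ and by the chart maps only changes constants. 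The adapted bases of Lemma \ref{lem: adapted bases} are uniformly bounded, being orthonormal for a fixed continuous metric.

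\textbf{Step 2: $A^{-1}$ at order zero.} From \eqref{eq: express diff flots}, in the regime \eqref{eq: range of param} we have
\[A=A_0+\mathcal O\big((r+\delta h^{-2\beta})\mathrm{e}^{2Ct}\big),\]
where $A_0$ is the upper-left block of $\mathrm{d}\Phi_0^{t_0}(\rho_0)$. By Lemma \ref{lem: matrice differentielle flot}, $A_0=\operatorname{diag}(1,A)$ with $\|A^{-1}\|\lesssim 1$, so $\|A_0^{-1}\|\lesssim 1$. A Neumann series then gives $\|A^{-1}\|\lesssim 1$ uniformly on the region considered.

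\textbf{Step 3: derivatives of $A^{-1}$.} Differentiating $AA^{-1}=\mathrm{Id}$ gives
\[\partial(A^{-1})=-A^{-1}(\partial A)A^{-1}.\]
By induction, $\partial^\ell(A^{-1})$ is a sum of terms
\[A^{-1}(\partial^{\ell_1}A)A^{-1}\cdots(\partial^{\ell_r}A)A^{-1},\qquad \ell_1+\cdots+\ell_r=\ell,\ \ell_i\ge1.\]
Each factor $\partial^{\ell_i}A$ is $\lesssim h^{-\beta\ell_i}\mathrm{e}^{2C\ell_i t}$ by Step 1, using $(\ell_i+1-1)_+=\ell_i$. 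The product is therefore $\lesssim h^{-\beta\ell}\mathrm{e}^{2C\ell t}$, since $\|A^{-1}\|\lesssim1$ by Step 2.

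\textbf{Step 4: products.} For a product of the matrices $A,B,C,D,A^{-1}$ (a bounded number of factors), apply the Leibniz rule. Derivatives of total order $\ell$ are distributed among the factors. Each factor is bounded at order $0$ and obeys the bound $h^{-\beta\ell_i}\mathrm{e}^{2C\ell_it}$ at order $\ell_i\ge1$. The exponents add to $h^{-\beta\ell}\mathrm{e}^{2C\ell t}$.

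The main obstacle is Step 2. The uniform invertibility of $A$ relies on the perturbation and the spatial and time displacement being small compared with $\mathrm{e}^{-2CT}$, which is exactly what \eqref{eq: range of param} provides. The derivative bounds are then bookkeeping with the $h^{-\beta(\ell-1)_+}$ scaling.
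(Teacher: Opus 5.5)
Your proposal follows the same route as the paper: bound the entries of $A,B,C,D$ by $\|\Phi^t_\delta\|_{C^{\ell+1}}$ via Proposition \ref{prop: short time derivative flow}, use a uniform bound on $\|A^{-1}\|$, expand $\partial^\ell(A^{-1})$ into products of $A^{-1}$ and the $\partial^{\ell_i}A$, and conclude for products by the Leibniz rule. Your Neumann-series argument for $\|A^{-1}\|\lesssim 1$ under \eqref{eq: range of param} is actually more careful than the paper's bare citation of Lemma \ref{lem: matrice differentielle flot}.

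One small correction concerns Step 4. The undifferentiated factors $A,B,C,D$ are not $\mathcal O(1)$: for instance $\|A\|$ can grow like $\mathrm{e}^{Ct}$ along the unstable directions, and $B$ like $1+t$. However, only boundedly many factors occur and $\ell\ge 1$, so these extra factors are absorbed by enlarging $C$. That is all the loosely stated claim about products requires.
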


\begin{proof}The $C^\ell$ norms of the matrices $A,B,C,D$ are controlled by the $C^{\ell+1}$ norm of $\Phi^t$ thus are $\lesssim h^{-\beta \ell} \mathrm{e}^{C(\ell+1)t}$ by Proposition \ref{prop: short time derivative flow}. We compute $\partial^\ell(A^{-1})$ and notice that because $\|A^{-1}\|\le 2$ (this is Lemma \ref{lem: matrice differentielle flot}), we have for $\ell\ge 1$:
\[\|\partial^\ell A^{-1}\|\lesssim \sup_{\substack{\ell_1+\ldots+\ell_n=\ell \\ \ell_i\ge 1}} \prod_{i=1}^n\|\partial^{\ell_i} A\|\lesssim \sup_{\ell_1+\ldots+\ell_n=\ell} \prod_{i=1}^n h^{-\beta \ell_i}\mathrm{e}^{C(\ell_i+1)t}\lesssim h^{-\beta \ell}\mathrm{e}^{2C \ell t}. \]
The result on the products follows.\end{proof}
This implies the following estimates on the space derivatives of $\varphi_\delta$.
\begin{lem}[Space derivatives of $\varphi_\delta$]\label{lem: space derivatives generating function} For all $\ell\ge 2$, one has
\begin{equation} \label{eq: space derivative gen fun} \|\varphi_\delta\|_{C^\ell_{x,\eta}}\lesssim h^{-\beta(\ell-2)}(1+t)\mathrm{e}^{2C(\ell-2)t}.\end{equation}
\end{lem}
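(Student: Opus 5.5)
The plan is to express second derivatives of $\varphi_\delta$ through the blocks of $\mathrm{d}\Phi^t_\delta$, and then differentiate that formula using Lemma \ref{lem: derivees coeff matrix}.

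\textbf{Second derivatives.} Write $\mathrm{d}\Phi^t_\delta=\begin{pmatrix} A & B\\ C & D\end{pmatrix}$, evaluated at the point $(y,\eta)=(\nabla_\eta\varphi_\delta(t,x,\eta),\eta)$. By Proposition \ref{prop: existence gen fun}, the relations $y=\nabla_\eta\varphi_\delta$ and $\xi=\nabla_x\varphi_\delta$ hold on the graph. Differentiate the identity $x=\pi_x\Phi^t_\delta(y,\eta)$ with $y=y(x,\eta)$, exactly as in the linear case \eqref{eq: gen func for symplectic map}. This gives
\[\partial_x y=A^{-1},\qquad \partial_\eta y=-A^{-1}B.\]
From $\xi=\pi_\xi\Phi^t_\delta(y,\eta)$ we then get
\[\nabla_x^2\varphi_\delta=CA^{-1},\qquad \nabla_x\nabla_\eta\varphi_\delta=(A^{-1})^\top,\qquad \nabla_\eta^2\varphi_\delta=-A^{-1}B.\]
So every second derivative of $\varphi_\delta$ is a product of the matrices $A,B,C,A^{-1}$, composed with the map $\Theta:(x,\eta)\mapsto(\nabla_\eta\varphi_\delta,\eta)$.

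\textbf{Base case $\ell=2$.} By \eqref{eq: range of param} and the Taylor argument of Proposition \ref{prop: projectability}, $\mathrm{d}\Phi^t_\delta$ differs from $\mathrm{d}\Phi^{t_0}_0(\rho_0)$ by a small quantity. Lemma \ref{lem: matrice differentielle flot} then gives $\|A^{-1}\|\lesssim1$ and $\|B\|\lesssim 1+t$ (through $\beta_t$), with $C$ small. This yields $\|\varphi_\delta\|_{C^2}\lesssim 1+t$.

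\textbf{Induction for $\ell\ge3$.} Write $\partial^{\ell}\varphi_\delta=\partial^{\ell-2}\big(G\circ\Theta\big)$, where $G$ is one of the products above. Apply the Faà di Bruno formula:
\[\partial^{\ell-2}(G\circ\Theta)=\sum_{r}\sum_{\ell_1+\dots+\ell_r=\ell-2}(\partial^rG)(\Theta)\prod_i\partial^{\ell_i}\Theta.\]
Lemma \ref{lem: derivees coeff matrix} gives $\|\partial^r G\|\lesssim h^{-\beta r}\mathrm{e}^{2Crt}$ for $r\ge1$. Since $\partial^{\ell_i}\Theta$ involves $\partial^{\ell_i+1}\varphi_\delta$, and $\ell_i+1\le\ell-1$, the induction hypothesis gives
\[\|\partial^{\ell_i}\Theta\|\lesssim h^{-\beta(\ell_i-1)}(1+t)\mathrm{e}^{2C(\ell_i-1)t}\quad\text{for }\ell_i\ge2,\]
and $\|\partial^{\ell_i}\Theta\|\lesssim 1+t$ for $\ell_i=1$. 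Adding exponents, the generic term is bounded by
\[h^{-\beta(r+\sum(\ell_i-1))}\mathrm{e}^{2C(r+\sum(\ell_i-1))t}(1+t)^r=h^{-\beta(\ell-2)}\mathrm{e}^{2C(\ell-2)t}(1+t)^r.\]

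\textbf{Main obstacle: the $(1+t)^r$ factors.} These come from the first-order derivatives of $\Theta$, and if left alone they give $(1+t)^{\ell-1}$ rather than the claimed single factor $(1+t)$. I would absorb them into the exponential, using $(1+t)^{r}\lesssim \mathrm{e}^{\epsilon t}$ after slightly enlarging $C$; the constant $C$ in the statement is not the same as before, so this costs nothing. A sharper alternative uses the block structure from Lemma \ref{lem: matrice differentielle flot}: the growing entry $\beta_t$ appears only in the $\eta_1$-direction of $\partial_\eta y$, and one could track this. I expect the crude absorption to suffice. The other point to check is the composition with $\Theta$ itself: derivatives of $A^{-1}$ are taken in $(y,\eta)$, so $\Theta$ has to be fed in through Faà di Bruno as above rather than ignored.
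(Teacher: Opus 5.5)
Your proposal is correct and follows essentially the paper's argument. The paper writes $\mathrm{d}(\nabla_{x,\eta}\varphi_\delta)=M(\nabla_{x,\eta}\varphi_\delta,\eta)$, with $M$ built from $A^{-1},\,-A^{-1}B,\,CA^{-1},\,D-CA^{-1}B$ evaluated at $(y,\eta)$, and inducts on $\ell$ via Lemma \ref{lem: derivees coeff matrix}; this is the same scheme as your composition with $\Theta$ plus Fa\`a di Bruno. The paper's one-line induction silently drops the $(1+t)^r$ factors you point out, and your absorption into the exponential fixes this. It is cleanest to carry $(1+t)^{\ell-1}$ through the induction and absorb it only at the end, so the enlarged constant in the exponent does not grow with $\ell$.
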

\begin{proof}We let $G(x,\eta):=(\xi(x,\eta),y(x,\eta))=\nabla_{x,\eta} \varphi_\delta(t,x,\eta)$. Writing $\mathrm{d}\Phi^t=\begin{pmatrix} A & B \\ C & D\end{pmatrix}$, one has
\[\mathrm{d}G(x,\eta)=\begin{pmatrix}A^{-1}& -A^{-1}B \\ C A^{-1} & D-CA^{-1}B\end{pmatrix},\]
where the matrix on the right-hand side is evaluated at $(y(x,\eta),\eta)$. We define a function that depends artificially on the variable $\xi$:
\[M(\xi,y,\eta):=\begin{pmatrix}A^{-1}& -A^{-1}B \\ C A^{-1} & D-CA^{-1}B\end{pmatrix}(y,\eta).\]
Then, one can express
\begin{equation} \label{eq: dG= M(G)} \mathrm{d}G (x,\eta)=M(G(x,\eta),\eta).\end{equation}
By Lemma \ref{lem: derivees coeff matrix}, we have $\|M\|_{C^\ell_{\xi,y,\eta}}\lesssim h^{-\beta \ell} (1+t)\mathrm{e}^{2C \ell t}$ for all $\ell\ge 0$. By induction on $\ell$ using \eqref{eq: dG= M(G)} we obtain 
\[\forall \ell\ge 1, \quad \|G\|_{C^\ell_{x,\eta}}\lesssim h^{-\beta(\ell-1)}(1+t)\mathrm{e}^{2C(\ell-1)t}.\]
This is equivalent to \eqref{eq: space derivative gen fun} because $G=\nabla \varphi_\delta$.
\end{proof}
Using repeatedly the identity $\partial_t \varphi_\delta=-p_\delta(x,\nabla_x \varphi_\delta)$, Lemma \ref{lem: space derivatives generating function} allows getting estimates on the time derivatives of $\varphi_\delta$.
\begin{lem}[Incorporating time derivatives] \label{lem: spacetime derivatives generating function} For all $\ell+n\ge 2$, one has
\begin{equation} \label{eq: time der phi t}\|\partial_t^n \varphi_\delta\|_{C^\ell_{x,\eta}}\lesssim h^{-\beta(\ell+n-2)} (1+t)\mathrm{e}^{2C(\ell+n-2)t}.\end{equation}
At $t=0$ the estimates improve slightly:
\begin{equation} \label{eq: time der phi t improves}\forall n\ge 1, \qquad \|\partial_t^n \varphi_\delta(t=0)\|_{C^\ell_{x,\eta}}\lesssim h^{-\beta(\ell+n-3)_+}.\end{equation}
\end{lem}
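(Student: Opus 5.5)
We argue by induction on $n$, using the Hamilton--Jacobi equation \eqref{eq: hamilton jacobi eq}, $\partial_t\varphi_\delta=-p_\delta(x,\nabla_x\varphi_\delta)$, to trade each time derivative for space derivatives. The case $n=0$ is exactly Lemma \ref{lem: space derivatives generating function}. For $n\ge 1$, differentiate \eqref{eq: hamilton jacobi eq} $(n-1)$ times in $t$ and $\ell$ times in $(x,\eta)$. By the Fa\`a di Bruno formula in the notation fixed in the introduction, $\partial_t^{n-1}\partial^\ell_{x,\eta}\big[p_\delta(x,\nabla_x\varphi_\delta)\big]$ is a sum of terms
\[
(\partial^r p_\delta)(x,\nabla_x\varphi_\delta)\cdot\prod_{i=1}^r \partial_t^{n_i}\partial^{\ell_i}_{x,\eta}G,
\]
where $G=(x,\nabla_x\varphi_\delta)$, $\sum n_i=n-1$, $\sum\ell_i=\ell$, and $n_i+\ell_i\ge 1$.

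We bound each factor separately. Since $p_\delta=p_0+\mathcal O(\delta)$ in $S^m_\beta$, we have $\|\partial^r p_\delta\|_\infty\lesssim h^{-\beta(r-2)_+}$: the term $p_0$ is $h$-independent, and $\delta h^{-\beta r}\le h^{\alpha-\beta r}$, which we want to be $\lesssim h^{-\beta(r-2)_+}$. The latter holds since $2\beta<\alpha$. For the factors, the component $x$ of $G$ contributes only when $n_i=0$ and $\ell_i=1$, giving a bounded term. The component $\nabla_x\varphi_\delta$ is a derivative of $\varphi_\delta$ of total order $n_i+\ell_i+1$, so by the induction hypothesis (all orders $n_i\le n-1$) it is $\lesssim h^{-\beta(n_i+\ell_i-1)_+}(1+t)\mathrm{e}^{2C(n_i+\ell_i-1)t}$.

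Summing the exponents, $(r-2)_++\sum_i(n_i+\ell_i-1)_+\le (r-2)_++(n+\ell-1)-\#\{i: n_i+\ell_i\ge 2\}$. We then check case by case, with $r$ factors each of order $\ge1$, that this is at most $n+\ell-2$, which gives the power of $h$. The exponential factors combine in the same way. The factors $(1+t)$ multiply, and $(1+t)^r$ is absorbed by enlarging $C$, since $(1+t)\lesssim \mathrm{e}^{\epsilon t}$. This requires some care because the exponential rate $2C$ must be fixed; we enlarge it once at the start, uniformly over the finitely many $(\ell,n)$ considered. The $r\le 2$ terms need $n+\ell\ge2$, which is precisely the hypothesis of the statement.

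At $t=0$ we have $\varphi_\delta(0)=\langle x,\eta\rangle$, so $G(0)=(x,\eta)$ is linear and all its higher derivatives vanish. Hence $\partial_t\varphi_\delta(0)=-p_\delta(x,\eta)$, whose $C^\ell$ norm is $h^{-\beta(\ell-2)_+}$, matching $h^{-\beta(\ell+1-3)_+}$. Inductively, $\partial_t^n\varphi_\delta(0)$ is a polynomial expression in derivatives of $p_\delta$ composed along the flow. Each time derivative costs one $h^{-\beta}$ through $H_{p_\delta}$, as in Proposition \ref{prop: short time derivative flow}, but the first two derivatives of $p_\delta$ are free, which yields \eqref{eq: time der phi t improves}. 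The main obstacle is this bookkeeping of exponents in the Fa\`a di Bruno sum, in particular the improved $t=0$ count, where one must use that the factors coming from $\nabla_x\varphi_\delta(0)$ vanish beyond first order.
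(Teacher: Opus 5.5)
Your argument is the paper's: induction on the number of time derivatives through the Hamilton--Jacobi equation and Fa\`a di Bruno, with Lemma~\ref{lem: space derivatives generating function} as base case. At $t=0$ you run the same induction with the improved bounds, using that $\partial^{\ell_i}_{x,\eta}G(0)=0$ for $\ell_i\ge 2$.

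One bookkeeping fix is needed. Your displayed bound with $\#\{i : n_i+\ell_i\ge 2\}$ is too lossy; it already fails for $r=1$, $n+\ell=2$, and for $r=2$ with two unit-order factors. Use instead the exact identity $\sum_i(n_i+\ell_i-1)=n+\ell-1-r$, which holds because every factor has order $\ge 1$. Together with $(r-2)_+\le r-1$ it gives the $h$-exponent $n+\ell-2$. It also leaves a slack $\mathrm{e}^{2C(r-1)t}$ that absorbs $(1+t)^{r-1}$ as soon as $2C\ge 1$, so no $(\ell,n)$-dependent enlargement of $C$ is needed.
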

\begin{proof}\textbf{1.} The proof of \eqref{eq: time der phi t} goes by induction on $n$. We use the shorthand $\mathbf x:=(x,\eta)$. For $n=1$, we have $\partial_t \varphi_\delta=-p_\delta(x,\nabla_x \varphi_\delta)$, then
\[\|\partial_t \varphi_\delta\|_{C^\ell_{\mathbf x}}\lesssim h^{-\beta(\ell-1)}(1+t) \mathrm{e}^{2C(\ell-1)t}\qquad \forall \ell \ge 1,\]
which proves the result for $n=1$. Assume that \eqref{eq: time der phi t} holds up to rank $n\ge 1$. Starting from
\[\partial_t^{n+1}\partial_{\mathbf x}^\ell \varphi_\delta=-\partial_t^n\partial_{\mathbf x}^\ell  \big(p_\delta(x,\nabla_x\varphi_\delta)\big),\]
we obtain by Fa\`a di Bruno's formula:
\[\big|\partial_t^{n+1}\partial_{\mathbf x}^\ell \varphi_\delta\big|\lesssim \sum_{k=1}^{\ell+n} \|p_\delta\|_{C^k}\sum_{\substack{\ell_1+\ldots+\ell_k=\ell \\ n_1+\ldots+n_k=n \\ \substack \ell_i+n_i\ge 1}} \prod_{i=1}^k \|\partial_t^{n_i} \varphi_\delta\|_{C_{\mathbf x}^{\ell_i+1}}.\]
By the assumptions on $p_\delta$ and the induction hypothesis, we infer
\begin{equation}\label{eq: induction proof bounds derivatives phi}\big|\partial_t^{n+1}\partial_{\mathbf x}^\ell \varphi_\delta\big|\lesssim \sum_{k=1}^{\ell+n} h^{-\beta(k-2)_+}\sum_{\substack{\ell_1+\ldots+\ell_k=\ell \\ n_1+\ldots+n_k=n \\ \substack \ell_i+n_i\ge 1}} \prod_{i=1}^k h^{-\beta(\ell_i+n_i-1)_+} \mathrm{e}^{2C(\ell_i+n_i-1)t}.\end{equation}
Any product of exponentials appearing on the right-hand side of \eqref{eq: induction proof bounds derivatives phi} is smaller than $\mathrm{e}^{2C(\ell+n-1)t}$. Also, we use
\[(k-2)_++(\ell+n-k)\le \ell+n-1, \quad \forall k\ge 1,\]
to deduce \eqref{eq: time der phi t} at rank $n+1$.

\textbf{2.} The proof of the improved estimate at $t=0$ also works by induction. For $n=1$, using $\partial_t \varphi_\delta(0,x,\eta)=-p_\delta(x,\eta)$, we have
\[\|\partial_t \varphi_\delta(t=0)\|_{C^\ell}\lesssim \|p_\delta\|_{C^\ell}\lesssim h^{-\beta(\ell-2)_+}.\]	
Assume that \eqref{eq: time der phi t improves} holds up to rank $n\ge 1$. Then, \eqref{eq: induction proof bounds derivatives phi} becomes
\[\big|\partial_t^{n+1}\partial_{\mathbf x}^\ell  p_\delta(x,\nabla_x\varphi_\delta(t=0))\big|\lesssim \sum_{k=1}^{\ell+n}h^{-\beta(k-2)_+}\sum_{\substack{\ell_1+\ldots+\ell_k=\ell \\ n_1+\ldots+n_k=n \\ \substack \ell_i+n_i\ge 1}} \prod_{i=1}^k h^{-\beta(\ell_i+n_i-2)_+}.\]
This time we use
\[(k-2)_++\sum(\ell_i+n_i-2)_+\le \big( \big(k+\sum(\ell_i+n_i)\big)-2\big)_+\le (\ell+n-2)_+\]
to conclude.
\end{proof}
Lemma \ref{lem: spacetime derivatives generating function} implies the following estimates on the function $a_0:=|\partial_{x\eta}^2 \varphi_\delta|^{\frac 12}$, which will be the leading amplitude appearing in the FIO parametrix for the quantum propagator.
\begin{prop}\label{prop: derivees a0} Define $a_0(t,x,\eta):=\det \Big(\frac{\partial^2\varphi_\delta}{\partial x\partial \eta}\Big)^{\frac 12}$. Then, we have
\[\forall \ell\ge 0, \quad |\partial_{t,x,\eta}^\ell a_0|\lesssim a_0 h^{-\beta \ell}\mathrm{e}^{2C \ell t}.\]
\end{prop}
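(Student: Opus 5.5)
The plan is to work with $D:=\det\big(\partial^2_{x\eta}\varphi_\delta\big)$ and reduce the bound on $a_0=D^{1/2}$ to a bound on the logarithmic derivatives of $D$. From the proof of Lemma \ref{lem: space derivatives generating function}, $\partial_\eta\nabla_x\varphi_\delta=\partial_\eta \xi(x,\eta)$. Up to transposition, the mixed Hessian $\partial^2_{x\eta}\varphi_\delta$ is therefore the block $A^{-1}$ of $\mathrm{d}G$, evaluated at $(y(x,\eta),\eta)$, where $A$ is the upper-left block of $\mathrm{d}\Phi^t_\delta$. Hence $D=\det A(y(x,\eta),\eta)^{-1}$. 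Since the graph projects diffeomorphically (Proposition \ref{prop: projectability}), $D$ never vanishes, so we may take $D>0$ after fixing the sign convention. Then $a_0=\exp\big(\tfrac12\log D\big)$.

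By Fa\`a di Bruno's formula, $\partial^\ell a_0$ equals $a_0$ times a sum of products $\prod_i \partial^{\ell_i}\big(\tfrac12\log D\big)$ with $\sum\ell_i=\ell$ and $\ell_i\ge1$. It therefore suffices to prove
\[|\partial^{k}_{t,x,\eta}\log D|\lesssim h^{-\beta k}\mathrm{e}^{2Ckt},\qquad k\ge1,\]
since the product then gives $h^{-\beta\ell}\mathrm{e}^{2C\ell t}$.

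For the spatial derivatives, I would use Jacobi's formula $\partial\log D=\operatorname{Tr}\big((\partial^2_{x\eta}\varphi_\delta)^{-1}\partial\,\partial^2_{x\eta}\varphi_\delta\big)=\operatorname{Tr}\big(A\,\partial(A^{-1})\big)$, where the matrices are composed with the map $(x,\eta)\mapsto(y(x,\eta),\eta)$. Higher derivatives are then sums of products of derivatives of $A$ and $A^{-1}$, composed with $(y,\eta)$, whose derivatives are controlled by $\|G\|_{C^\ell}$. Lemma \ref{lem: derivees coeff matrix} bounds products of $A,A^{-1}$ in $C^k$ by $h^{-\beta k}\mathrm{e}^{2Ckt}$. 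Lemma \ref{lem: space derivatives generating function} bounds $\|G\|_{C^j}$ by $h^{-\beta(j-1)}(1+t)\mathrm{e}^{2C(j-1)t}$.

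Counting powers in the chain rule for $k$ derivatives gives total $h$-loss $\beta k$, because each factor $\partial^{j}(y,\eta)$ costs $\beta(j-1)$ and each derivative falling on $A$ costs $\beta$. The polynomial factors $(1+t)$ are absorbed by enlarging $C$, since $t\lesssim\mathrm{e}^{ct}$. The zeroth-order factor is handled by $\|A\|\lesssim\mathrm{e}^{Ct}$, which can be absorbed for $k\ge1$ after enlarging $C$ again. I would also note that $\|A^{-1}\|\lesssim1$ holds near $t_0$ by Lemma \ref{lem: matrice differentielle flot} together with the perturbation estimate in \eqref{eq: range of param}.

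For time derivatives, I would use the Hamilton--Jacobi equation \eqref{eq: hamilton jacobi eq}. Differentiating it in $x,\eta$ gives
\[\partial_t\partial^2_{x\eta}\varphi_\delta=-\partial^2_{x\eta}\big(p_\delta(x,\nabla_x\varphi_\delta)\big).\]
Then $\partial_t\log D=\operatorname{Tr}\big(A\,\partial_t(A^{-1})\big)$, and Lemma \ref{lem: spacetime derivatives generating function} gives each additional $\partial_t$ a cost $h^{-\beta}\mathrm{e}^{2Ct}$, exactly like a spatial derivative. Mixed derivatives are handled by the same Leibniz/Fa\`a di Bruno bookkeeping, with the exponent $\ell+n-2$ of \eqref{eq: time der phi t} applied to third and higher derivatives of $\varphi_\delta$.

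The main obstacle is this bookkeeping. One must check that every term in the expansion of $\partial^k\log D$ carries at most $h^{-\beta k}$, with no extra loss from the inverse matrix. The check rests on the fact that $\partial^2_{x\eta}\varphi_\delta$ itself is $O(1)$ with $O(1)$ inverse, while each extra derivative costs one factor $h^{-\beta}$, as given by \eqref{eq: time der phi t} with $\ell+n\ge3$.
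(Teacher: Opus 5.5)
Your argument is correct and is essentially the route the paper intends. The paper gives no separate proof and simply says the bound follows from Lemma \ref{lem: spacetime derivatives generating function}: derivatives of $(\det M)^{1/2}$, with $M=\partial^2_{x\eta}\varphi_\delta$, equal $a_0$ times polynomial expressions in $M^{-1}$ and derivatives of $M$, and each derivative costs $h^{-\beta}\mathrm{e}^{2Ct}$.

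One correction to your last paragraph: $M^{-1}$ is the block $A$ of $\mathrm{d}\Phi^t_\delta$, which contains the unstable expansion and is $O(\mathrm{e}^{Ct})$, not $O(1)$. As you already observed earlier, this costs no power of $h$. It is absorbed by enlarging $C$, since $\partial^k\log D$ contains at most $k$ such factors.
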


\subsubsection{Control of the intermediate points} Let $\rho_0\in T^*M$ and $0<s_0<t_0$. Set $\rho_{s_0}=\Phi^{s_0}(\rho_0)$ and $\rho_{t_0}=\Phi^{t_0}(\rho_0)$. We consider bases of $T_{\rho_0}(T^*M)$,  $T_{\rho_{s_0}}(T^*M)$ and $T_{\rho_{t_0}}(T^*M)$ adapted to the dynamics as constructed in Lemma \ref{lem: adapted bases}. In turn, we consider the flow maps $\Phi^s_\delta$ and $\Phi_\delta^t$ in these adapted coordinates, for $s$ close to $s_0$ and $t$ close to $t_0$, respectively.

In adapted coordinates, the flow at time $t$ is projectable on the $(x_t,\xi_0)$ coordinates, for $|t-t_0|\le \varepsilon \mathrm{e}^{-2C t_0}$. Thus we can define a map
\[(x_t,\xi_0)\mapsto (x_0(x_t,\xi_0),\xi_t(x_t,\xi_0)),\]
and also a map 
\[(x_t,\xi_0)\mapsto (x_s(x_t,\xi_0),\xi_s(x_t,\xi_0)):=\Phi^s_\delta(x_0(x_t,\xi_0),\xi_0),\]
that associates to the endpoints $(x_t,\xi_0)$ the intermediate point $(x_s,\xi_s)$ in adapted coordinates near $\rho_{s_0}$. Letting $\varphi_\delta(t,x,\eta)$ be a generating function for the time $t$ flow in adapted coordinates, we may write $(x_s,\xi_s)=\Phi_\delta^s(\nabla_\eta \varphi_\delta(t,x_t,\xi_0),\xi_0)=\Phi_\delta^{s-t}(x_t,\nabla_x \varphi_\delta(t,x_t,\xi_0))$, which implies, using Proposition \ref{prop: short time derivative flow} and Lemma \ref{lem: spacetime derivatives generating function} together with Fa\`a di Bruno's formula that
\begin{equation} \label{eq: C ell norm interm point ell ge 1}\forall \ell\ge 1, \quad \|x_s\|_{C^\ell_{t,x_t,\xi_0}}, \|\xi_s\|_{C^\ell_{t,x_t,\xi_0}}\lesssim h^{-\beta(\ell-1)_+}\mathrm{e}^{C\ell t}. \end{equation}
Actually, we have some slightly better results for $\ell=1$, that we describe here.
\begin{prop}\label{prop: intermediate points} 

Let $(x_s,\xi_s)$ be defined above. Write $x_s=(x_s^1,x_s')\in \mathbf{R}\times \mathbf{R}^{d-1}$ and $\xi_0=(\xi_0^1,\xi_0')\in \mathbf{R}\times \mathbf{R}^{d-1}$. 

\begin{enumerate} \item All the partial derivatives
\begin{equation} \label{eq: intermediate points}
\frac{\partial x_s'}{\partial \xi_0}, \quad \frac{\partial x_s^1}{\partial \xi_0'}, \quad \frac{\partial x_s}{\partial x_t}, \quad \frac{\partial \xi_s}{\partial \xi_0}, \quad \frac{\partial \xi_s}{\partial x_t}\end{equation}
are bounded by $\mathcal O(1)$. The partial derivative $\frac{\partial x_s^1}{\partial \xi_0^1}$ is bounded by $\mathcal O(1+|t-s|)$. Combined with \eqref{eq: C ell norm interm point ell ge 1}, there is a constant $C>0$ such that
\[\forall \ell\ge 1, \quad \|x_s\|_{C^\ell_{t,x_t,\xi_0}}, \|\xi_s\|_{C^\ell_{t,x_t,\xi_0}}\lesssim h^{-\beta(\ell-1)_+}(1+|t-s|)\mathrm{e}^{C(\ell-1)t}.\]

\item If $|s-t|=\mathcal O(1)$, the partial derivatives $\frac{\partial x_s(t,x,\eta)}{\partial t}, \frac{\partial \xi_s(t,x,\eta)}{\partial t}$ are $\mathcal O(1)$. \end{enumerate}\end{prop}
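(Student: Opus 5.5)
The plan is to write the intermediate point as the composition of the two-piece flow with the inverse of the projection map, and read off the first derivatives from the block structure in Lemma \ref{lem: matrice differentielle flot}. We first treat $\delta=0$ at the base points $(s_0,t_0,\rho_0)$, and then control the $\delta$-perturbation and the displacement from the base points as in the proof of Proposition \ref{prop: projectability}.

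\textbf{Linear algebra at $\delta=0$.} In the adapted bases at $\rho_0,\rho_{s_0},\rho_{t_0}$, Lemma \ref{lem: matrice differentielle flot} gives
\[
\mathrm{d}\Phi^{s}_0(\rho_0)=\begin{pmatrix}1&0&\beta_s&0\\0&A_1&0&0\\0&0&1&0\\0&0&0&A_1^{-\top}\end{pmatrix},
\qquad
\mathrm{d}\Phi^{t-s}_0(\rho_{s_0})=\begin{pmatrix}1&0&\beta'&0\\0&A_2&0&0\\0&0&1&0\\0&0&0&A_2^{-\top}\end{pmatrix}.
\]
Here $\|A_i^{-1}\|\lesssim 1$, $|\beta_s|\lesssim 1+s$ and $|\beta'|\lesssim 1+|t-s|$.

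The map $(x_t,\xi_0)\mapsto(x_0,\xi_t)$ has differential obtained by inverting $F$. Since the flow is block-diagonal up to the $\beta$ entries, this gives $\partial x_0'/\partial x_t'=A^{-1}$ with $A=A_2A_1$, and $\partial x_0^1/\partial x_t^1=1$. The crucial entry is $\partial x_0^1/\partial\xi_0^1=-(\beta_s+\beta')$. The transverse components $x_0'$ do not depend on $\xi_0$.

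The intermediate point is then $(x_s,\xi_s)=\Phi^s(x_0,\xi_0)$. Composing, we get $x_s^1=x_0^1+\beta_s\xi_0^1$, so
\[
\frac{\partial x_s^1}{\partial\xi_0^1}=-\beta',
\]
which is of size $\mathcal O(1+|t-s|)$. This is the cancellation that removes the dependence on the total time $t$.

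The remaining entries follow in the same way:
\begin{itemize}
\item $\partial x_s'/\partial x_t'=A_1A^{-1}=A_2^{-1}=\mathcal O(1)$, and $\partial x_s^1/\partial x_t^1=1$.
\item $\partial x_s'/\partial\xi_0=0$ and $\partial x_s^1/\partial\xi_0'=0$.
\item $\partial\xi_s/\partial\xi_0$ is $1$ in the first component and $A_1^{-\top}=\mathcal O(1)$ in the others.
\item $\partial\xi_s/\partial x_t=0$.
\end{itemize}
So every listed derivative is $\mathcal O(1)$ at the base point, except $\partial x_s^1/\partial\xi_0^1=\mathcal O(1+|t-s|)$.

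\textbf{Perturbation step.} This is the main obstacle. We must show that for $|t-t_0|\le r$, $|\rho-\rho_0|\le r$ and $\delta\le h^\alpha$, the true derivatives differ from the base-point values by $\mathcal O(1)$. Naively the errors are $(r+\delta h^{-2\beta})\mathrm{e}^{2CT}$ multiplied by $\|(\mathrm{d}F)^{-1}\|\lesssim 1+t$.

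We handle this in three ways:
\begin{itemize}
\item \emph{Flow derivatives.} Taylor expansion, as in \eqref{eq: express diff flots}, together with Lemma \ref{lem: duhamel perturbed flow} and Proposition \ref{prop: short time derivative flow}, shows that $\mathrm{d}\Phi^s_\delta$ and $\mathrm{d}\Phi^{t-s}_\delta$ differ from the matrices above by $\mathcal O((r+\delta h^{-2\beta})\mathrm{e}^{2CT})$.
\item \emph{Inverse of $\mathrm{d}F$.} A Neumann series expansion of $(\mathrm{d}F_\delta^t)^{-1}$ around $(\mathrm{d}F_0^{t_0})^{-1}$ gives an error of order $(1+t)^2(r+\delta h^{-2\beta})\mathrm{e}^{2CT}$.
\item \emph{Smallness.} By the standing assumption \eqref{eq: range of param}, this error is $\ll 1$.
\end{itemize}
Hence the bounds of item (1) survive the perturbation.

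\textbf{Higher derivatives and item (2).} Combining item (1) with \eqref{eq: C ell norm interm point ell ge 1} gives the $C^\ell$ bound directly for $\ell\ge 2$. The case $\ell=1$ is exactly item (1), which also covers the factor $(1+|t-s|)$ in the claimed estimate.

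For item (2) we write $(x_s,\xi_s)=\Phi^{s-t}_\delta(x_t,\xi_t(t,x_t,\xi_0))$. Differentiating in $t$ gives
\[
-H_{p_\delta}(x_s,\xi_s)+\mathrm{d}\Phi^{s-t}_\delta\cdot(0,\partial_t\xi_t).
\]
The first term is bounded. For the second, $\partial_t\xi_t$ is obtained from $\partial_t\nabla_x\varphi_\delta=-\nabla_x\big(p_\delta(x,\nabla_x\varphi_\delta)\big)$, which is controlled by the $\ell=1$ case of Lemma \ref{lem: spacetime derivatives generating function} combined with the block structure. When $|s-t|=\mathcal O(1)$, the factor $\mathrm{d}\Phi^{s-t}_\delta$ is itself $\mathcal O(1)$, so both partial derivatives in time are $\mathcal O(1)$.
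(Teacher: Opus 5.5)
Your proposal is correct and follows essentially the same route as the paper. There, the Jacobian of $(x_t,\xi_0)\mapsto(x_s,\xi_s)$ is written as $\begin{pmatrix} A_sA_t^{-1} & -A_sA_t^{-1}B_t+B_s\\ C_sA_t^{-1} & D_s-C_sA_t^{-1}B_t\end{pmatrix}$ and evaluated at $(\rho_0,s_0,t_0,\delta=0)$ via Lemma \ref{lem: matrice differentielle flot}, which exhibits the same cancellation $\beta_{s_0}-\beta_{t_0}$ you find; the paper then simply asserts that the bounds persist under the smallness condition \eqref{eq: range of param}, and proves item (2) exactly as you do, by differentiating $\rho_s=\Phi^{s-t}(x,\nabla_x\varphi)$ in $t$. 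Your perturbation step is more explicit than the paper's one-line justification. The factors such as $(1+t)^2$, and the norm of $\mathrm{d}\Phi^s_\delta$ when you compose back to $(x_s,\xi_s)$, are absorbed by enlarging the constant $C$ in \eqref{eq: range of param}, which the paper's conventions allow.
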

\begin{proof} 
\textbf{1.} Consider the flow maps in adapted coordinates, for $|s-s_0|\le \mathrm{e}^{-2CT}$ and $|t-t_0|\le \mathrm{e}^{-2CT}$, respectively:
\[\mathrm{d}\Phi^s=\begin{pmatrix}
A_s & B_s \\ C_s & D_s
\end{pmatrix}, \qquad \mathrm{d}\Phi^t=\begin{pmatrix}
A_t & B_t \\ C_t & D_t
\end{pmatrix}.\] 
Given endpoints $(x_t,\xi_0)$, we define associated points $(x_0,\xi_t,x_s,\xi_s)$ by the conditions
\[(x_0,\xi_0)\overset{\Phi^s}{\longrightarrow}(x_s,\xi_s)\overset{\Phi^{t-s}}{\longrightarrow}(x_t,\xi_t).\]
Then, one has
\begin{equation} \label{eq: partial s x_s par rapport t}\begin{pmatrix}
\frac{\partial x_s}{\partial x_t}	&   \frac{\partial x_s}{\partial \xi_0} \\
\frac{\partial \xi_s}{\partial x_t}	& \frac{\partial \xi_s}{\partial \xi_0}
\end{pmatrix}=\begin{pmatrix}
A_sA_t^{-1}&   -A_sA_t^{-1}B_t+B_s \\
C_sA_t^{-1}& D_s-C_sA_t^{-1}B_t
\end{pmatrix}=:M(x_0,\xi_0,s,t;\delta).\end{equation}

\textbf{2.} We claim that $\|M\|\lesssim (1+|s-t|)$ in operator norm. At the point $\rho_0$, for $s=s_0,t=t_0$ and for $\delta=0$, we have
\[M(\rho_0,s_0,t_0,\delta=0)=\begin{pmatrix}A_{s_0} A_{t_0}^{-1} & -A_{s_0} A_{t_0}^{-1}B_{t_0}+B_{s_0} \\ 0 & D_{s_0}
\end{pmatrix}.\]
The upper-left block is actually the upper left-block of $\big(\mathrm{d}\Phi_{t_0-s_0}\big)^{-1}$ in adapted coordinates, which has norm $\lesssim 1$. In the notation of Lemma \ref{lem: matrice differentielle flot}, the upper-right block is simply $\begin{pmatrix}
\beta_{s_0}-\beta_{t_0} & 0 \\ 0 & 0
\end{pmatrix}$, which is bounded by $1+|t_0-s_0|$, by the proof of Lemma \ref{lem: matrice differentielle flot}. Finally, the lower-right block is bounded by $\lesssim 1$ in norm, by Lemma \ref{lem: matrice differentielle flot}. This implies the proposition at $(\rho_0,s_0,t_0,\delta=0)$. The estimates persist for different $(x_0,\xi_0),s,t,\delta$ provided
\[|(x_0,\xi_0)-\rho_0|+|s-s_0|+|t-t_0|+\delta h^{-2\beta}\le \mathrm{e}^{-2CT},\]
by the proof of Lemma \ref{lem: matrice differentielle flot}.

\textbf{3.} For the statement about time derivatives, letting $\rho_s=(x_s,\xi_s)$ depend on endpoints $(x,\xi_0)$ and the time $t$, we have
\[\rho_s(t,x,\xi_0)=\Phi^{s-t}(x,\nabla_x \varphi(t,x,\xi_0)).\]
Then,
\[\partial_t \rho_s=-H_p(\rho_s)+\mathrm{d}\Phi^{s-t}\cdot (0,\partial_t \nabla_x\varphi(t,x,\xi_0)).\]
The quantity $\partial_t \nabla_x \varphi=\nabla_x (-p(x,\nabla_x \varphi))$ is bounded by $\|p\|_{C^1}\cdot \|\nabla_x^2 \varphi\|$ which is $\mathcal O(1)$ (indeed $\nabla_x^2 \varphi$ is the upper half-block of the matrix $\mathrm{d}G$ appearing in the proof of Lemma \ref{lem: space derivatives generating function}). Because $|s-t|=\mathcal O(1)$ and the Hamiltonian vector field is bounded independently of $h$, we deduce $|\partial_t \rho_s|=\mathcal O(1)$.

\end{proof}

\subsection{$\delta$-derivatives of the generating function} We now give a formula for the $\delta$-derivative of $\varphi_\delta$. \begin{prop}\label{prop: delta derivative gen fun} Let $\{p_\delta\}$ be a smoothly varying family of smooth Hamiltonians on $T^*\mathbf{R}^d$, and let $\{\Phi_\delta^t\}$ be the associated family of Hamiltonian flows. Let $\varphi_\delta$ be a short-time generating function for the flow $\Phi_\delta^t$, with initial condition $\varphi_\delta(0,x,\eta)=\langle x,\eta\rangle$. Then for all $t$ small enough, one has
\begin{equation}\label{eq: delta derivative generating function} \partial_\delta \varphi_\delta(t,x,\eta)=-\int_0^t \frac{\partial p_\delta}{\partial \delta}(\Phi^s_\delta(\nabla_\eta \varphi_\delta(t,x,\eta),\eta))\mathrm{d}s.\end{equation}
\end{prop}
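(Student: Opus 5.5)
The plan is to use the action interpretation of the generating function (Proposition \ref{prop: generating function is action}) and differentiate it in $\delta$, with the endpoint data $(x,\eta)$ held fixed. Write $y_\delta=\nabla_\eta\varphi_\delta(t,x,\eta)$ and let $\gamma_\delta(s)=\Phi^s_\delta(y_\delta,\eta)$ for $s\in[0,t]$. This is the unique orbit with $\pi_x\gamma_\delta(t)=x$ and initial momentum $\eta$. By Proposition \ref{prop: generating function is action},
\[\varphi_\delta(t,x,\eta)=\langle y_\delta,\eta\rangle+\int_0^t\Big(\langle \xi_\delta(s),\dot x_\delta(s)\rangle-p_\delta(\gamma_\delta(s))\Big)\mathrm{d}s,\]
where $\gamma_\delta(s)=(x_\delta(s),\xi_\delta(s))$. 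The right-hand side depends smoothly on $\delta$, because $y_\delta$ does (this follows from the projectability in Proposition \ref{prop: projectability} and the implicit function theorem).

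Next I would differentiate in $\delta$. Set $v(s)=\partial_\delta\gamma_\delta(s)=(\partial_\delta x_\delta,\partial_\delta\xi_\delta)$. This is the classical first-variation computation. Differentiating the integrand gives
\[\langle\partial_\delta\xi_\delta,\dot x_\delta\rangle+\langle\xi_\delta,\partial_s\partial_\delta x_\delta\rangle-\mathrm{d}p_\delta\cdot v-\partial_\delta p_\delta(\gamma_\delta).\]
Integrating $\langle\xi_\delta,\partial_s\partial_\delta x_\delta\rangle$ by parts and using Hamilton's equations $\dot x=\partial_\xi p_\delta$ and $\dot\xi=-\partial_x p_\delta$, the terms $\langle\partial_\delta\xi,\dot x\rangle-\langle\dot\xi,\partial_\delta x\rangle$ cancel exactly against $\mathrm{d}p_\delta\cdot v$. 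What remains is the boundary term $[\langle\xi_\delta,\partial_\delta x_\delta\rangle]_0^t$ together with $-\int_0^t\partial_\delta p_\delta(\gamma_\delta(s))\mathrm{d}s$.

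The boundary terms should vanish. At $s=t$, we have $x_\delta(t)=x$ fixed, so $\partial_\delta x_\delta(t)=0$. At $s=0$, we have $x_\delta(0)=y_\delta$ and $\xi_\delta(0)=\eta$, so the boundary contribution is $-\langle\eta,\partial_\delta y_\delta\rangle$. This cancels the derivative $\langle\partial_\delta y_\delta,\eta\rangle$ of the first term. Hence
\[\partial_\delta\varphi_\delta(t,x,\eta)=-\int_0^t\partial_\delta p_\delta\big(\Phi^s_\delta(\nabla_\eta\varphi_\delta(t,x,\eta),\eta)\big)\mathrm{d}s,\]
which is \eqref{eq: delta derivative generating function}. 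One point needs care: in Proposition \ref{prop: generating function is action}, the term $\partial_\delta\widehat S$ is taken along the $\delta$-dependent orbit, but the Hamiltonian is also differentiated explicitly. This explicit differentiation is what produces the $\partial_\delta p_\delta$ term.

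I expect the main obstacle to be justifying smoothness in $\delta$ and the validity of the action formula on the whole relevant domain. For short $t$, both follow directly: the normalization $\varphi_\delta(0)=\langle x,\eta\rangle$ is $\delta$-independent, and the map $(y,\eta)\mapsto(\pi_x\Phi^t_\delta(y,\eta),\eta)$ is a diffeomorphism depending smoothly on $\delta$.

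As a cross-check, I would also verify the formula through Hamilton–Jacobi. Differentiating \eqref{eq: hamilton jacobi eq} in $\delta$ shows that $u=\partial_\delta\varphi_\delta$ solves the transport equation $\partial_t u+\partial_\xi p_\delta\cdot\nabla_x u=-\partial_\delta p_\delta(x,\nabla_x\varphi_\delta)$ with $u(0)=0$. Integrating along characteristics, which are exactly the orbits $\gamma_\delta$ with $\eta$ fixed, gives the same expression.
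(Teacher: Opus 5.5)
Your argument is correct and is essentially the paper's proof: you differentiate the action representation of Proposition \ref{prop: generating function is action} in $\delta$, integrate $\langle \xi_s,\partial_\delta \dot x_s\rangle$ by parts, cancel the variational terms with Hamilton's equations, and kill the boundary terms using $x_t=x$ and $\xi_0=\eta$. Your Hamilton--Jacobi cross-check is a valid alternative route that the paper does not use: $\partial_\delta\varphi_\delta$ solves a transport equation with zero initial data, which you integrate along the characteristics $s\mapsto \pi_x\Phi^s_\delta(y,\eta)$.
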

\begin{proof} Let $(x_s,\xi_s):=\Phi_\delta^s(\nabla_\eta \varphi_\delta(t,x,\eta),\eta)$ for $s\in [0,t]$. By differentiating with respect to $\delta$ the formula
\[\varphi_\delta(t,x,\eta)-\langle x_0,\eta\rangle=\int_0^t \langle \dot x_s,\xi_s\rangle -p_\delta(x_s,\xi_s)\mathrm{d}s\]
proved in Proposition \ref{prop: generating function is action}, we obtain
\[\frac{\partial \varphi_\delta}{\partial \delta}=\langle \partial_\delta x_0,\eta\rangle+\int_0^t\big( \langle \partial_\delta \dot x_s,\xi_s\rangle+\langle \dot x_s,\partial_\delta \xi_s\rangle -(\partial_\delta p_\delta)(x_s,\xi_s)-\langle \nabla_{x,\xi} p_\delta, (\partial_\delta x_s,\partial_\delta \xi_s)\rangle \big) \mathrm{d}s.\]
By an integration by parts, 
\[\langle \partial_\delta x_0,\eta\rangle+\int_0^t\langle \partial_\delta \dot x_s,\xi_s\rangle\mathrm{d}s=\langle \partial_\delta x_0,\eta\rangle+ \big[\langle\partial_\delta x_s,\xi_s\rangle\big]_{s=0}^{s=t}-\int_0^t\langle \partial_\delta  x_s,\dot \xi_s\rangle\mathrm{d}s=-\int_0^t\langle \partial_\delta  x_s,\dot \xi_s\rangle\mathrm{d}s.\]
The first two terms in the second line have canceled out because $x_t=x$ is independent of $\delta$ and $\xi_0=\eta$. Also, using the Hamilton equations
\[\dot x_s =\frac{\partial p_\delta}{\partial \xi_s}, \quad \dot \xi_s=-\frac{\partial p_\delta}{\partial x_s}, \]
we obtain
\[-\langle \partial_\delta  x_s,\dot \xi_s\rangle+\langle \dot x_s,\partial_\delta \xi_s\rangle=\langle \nabla p_\delta,(\partial_\delta x_s,\partial_\delta \xi_s)\rangle,\]
and eventually
\[\frac{\partial \varphi_\delta}{\partial \delta}=-\int_0^t(\partial_\delta p_\delta)(x_s,\xi_s) \mathrm{d}s.\]

\end{proof}
A similar formula holds for the $\delta$-derivative of the long-time generating function from \S \ref{subsec: iteration} obtained by composing short time generating functions, due to the following elementary lemma.
\begin{lem}\label{lem: delta derivative composition gen fun} Consider some $\delta$-dependent maps $\kappa_1,\kappa_2$, admitting generating functions $\varphi_1,\varphi_2$. Let $\kappa_3=\kappa_2\circ \kappa_1$ and assume that the graph of $\kappa_3$ is projectable, so that one can define the composition $\varphi_3$ of $\varphi_2$ with $\varphi_1$ using Lemma \ref{lem: composition generating functions}. Then
\[\partial_\delta \varphi_3(x_2,\xi_0)=(\partial_\delta \varphi_2)(x_2,\xi_1)+(\partial_\delta\varphi_1)(x_1,\xi_0),\]
where $(x_1,\xi_1)=(x_1^\delta,\xi_1^\delta)$ depends on $\delta$ and is fixed by the condition
\[(\bullet,\xi_0)\overset{\kappa_1}{\longrightarrow}(x_1,\xi_1)\overset{\kappa_2}{\longrightarrow}(x_2,\bullet).\]

\end{lem}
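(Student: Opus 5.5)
The plan is to differentiate the composition formula \eqref{eq: formule phi 3} with respect to $\delta$ at fixed endpoints $(x_2,\xi_0)$, and to use the fact that the intermediate point $(x_1,\xi_1)$ is a critical point of the expression being composed, so that its $\delta$-dependence drops out.

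First, we check that $(x_1,\xi_1)$ depends smoothly on $\delta$. It is fixed by the conditions $\xi_1=\nabla_{x_1}\varphi_1(x_1,\xi_0)$ and $x_1=\nabla_{\xi_1}\varphi_2(x_2,\xi_1)$. The projectability of the graph of $\kappa_3$ makes this system nondegenerate in $(x_1,\xi_1)$: its linearization is invertible exactly when the upper-left block of $\mathrm{d}\kappa_3$ is invertible. The implicit function theorem then gives smooth dependence. Alternatively, we can simply note that $(x_1,\xi_1)=\kappa_1(\nabla_{\xi_0}\varphi_3,\xi_0)$ is a composition of smooth $\delta$-dependent maps.

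Next, we set
\[
\Psi_\delta(x_1,\xi_1):=\varphi_2(x_2,\xi_1)-\langle x_1,\xi_1\rangle+\varphi_1(x_1,\xi_0),
\]
with $x_2$ and $\xi_0$ frozen. Its partial derivatives are
\[
\partial_{x_1}\Psi_\delta=\nabla_{x_1}\varphi_1-\xi_1,\qquad \partial_{\xi_1}\Psi_\delta=\nabla_{\xi_1}\varphi_2-x_1,
\]
and both vanish exactly at the intermediate point $(x_1^\delta,\xi_1^\delta)$ by the defining conditions. Since $\varphi_3(x_2,\xi_0)=\Psi_\delta(x_1^\delta,\xi_1^\delta)$, the chain rule gives
\[
\partial_\delta\varphi_3=(\partial_\delta\Psi_\delta)(x_1^\delta,\xi_1^\delta)+\nabla_{x_1,\xi_1}\Psi_\delta\cdot\partial_\delta(x_1^\delta,\xi_1^\delta).
\]
The second term vanishes by criticality. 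The first term equals $(\partial_\delta\varphi_2)(x_2,\xi_1)+(\partial_\delta\varphi_1)(x_1,\xi_0)$, because $\langle x_1,\xi_1\rangle$ carries no explicit $\delta$-dependence. This is the claim.

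The only real point is the smoothness of $\delta\mapsto(x_1^\delta,\xi_1^\delta)$, which requires the generating functions themselves to be $C^1$ in $\delta$. This holds in our setting: the flows $\Phi^t_\delta$ are smooth in $\delta$ by Lemma \ref{lem: duhamel perturbed flow}, and the construction of Proposition \ref{prop: existence gen fun} with the normalization $\varphi_\delta(0,x,\eta)=\langle x,\eta\rangle$ depends smoothly on $\delta$. Beyond that, the argument is a stationary-point (envelope) computation with no obstacle.
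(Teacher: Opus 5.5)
Your proof is correct and is essentially the paper's argument: the paper differentiates the composition formula in $\delta$ and cancels the terms containing $\partial_\delta x_1^\delta$ and $\partial_\delta \xi_1^\delta$ using $\nabla_{\xi_1}\varphi_2=x_1^\delta$ and $\nabla_{x_1}\varphi_1=\xi_1^\delta$, which is exactly your criticality (envelope) observation. Your justification that $\delta\mapsto(x_1^\delta,\xi_1^\delta)$ is smooth, via the implicit function theorem and the projectability of the graph of $\kappa_3$, supplies a detail the paper leaves implicit.
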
 
\begin{proof} Start from
\[\varphi_3(x_2,\xi_0)=\varphi_2(x_2,\xi_1^\delta)-\langle x_1^\delta,\xi_1^\delta\rangle+\varphi_1(x_1^\delta,\xi_0).\]
We differentiate this identity with respect to $\delta$:
\[\partial_\delta \varphi_3=\partial_\delta \varphi_2 +\langle \partial_{\xi_1}\varphi_2,\partial_\delta \xi_1^\delta\rangle -\langle \partial_\delta x_1^\delta,\xi_1^\delta\rangle-\langle x_1^\delta,\partial_{\delta}\xi_1^\delta\rangle+\partial_\delta \varphi_1+\langle \partial_{x_1}\varphi_1,\partial_\delta x_1^\delta\rangle.\]
Since $\tfrac{\partial \varphi_2}{\partial \xi_1}=x_1^\delta$ and $\tfrac{\partial \varphi_1}{\partial x_1}=\xi_1^\delta$, this simplifies to
\[\partial_\delta \varphi_3(x_2,\xi_0)=\partial_\delta \varphi_2(x_2,\xi_1)+\partial_\delta\varphi_1(x_1,\xi_0),\]
as we wished.
\end{proof}

\subsection{Estimates on $\delta$-derivatives}
					
			In the following, given vector valued functions $F^{(1)},\ldots,F^{(n)}$ and integers $\ell_1,\ldots,\ell_n$, the notation $\partial_x^{\ell_1}F^{(1)}\cdots \partial_x^{\ell_n}F^{(n)}$ represents any linear combination of terms of the form
			\begin{equation} \label{eq: symbol prod derivatives} \prod_{i=1}^n \partial_x^{\alpha_i} F^{(i)}_{j_i},\end{equation}
						where $\alpha_i$ is a multiindex of length $\ell_i$ and $F^{(i)}_{j_i}$ denotes a component of the vector $F^{(i)}$. The coefficients in the linear combination are independent of the $F^{(i)}$. These expressions may change from line to line.

						\begin{prop}[$\delta$-derivatives of the flow] \label{prop: bound delta derivatives of the flow} Let $\{p_\delta\}_{\delta\in [0,h^\alpha]}$ be a smoothly varying family of smooth symbols either on $T^*\mathbf{R}^d$ or on $T^*M$ for a closed manifold $M$. Fix $E\in \mathbf R$ and define the union of the energy shells \[\Theta:=\bigcup_{\delta\in [0,h^\alpha]} p_\delta^{-1}(E-1,E+1).\]
							We assume that $\|p_0\|_{C^\ell(\Theta)}\lesssim 1$ and $\|\partial_\delta^n p_\delta\|_{C^\ell(\Theta)}\lesssim h^{-\beta \ell}$ for all $n\ge 1$. Let $T\ge 1$. For all $n+k+\ell\ge 1$ and $|t|\le T$, we have
							\begin{equation} \label{eq: delta deriv flow bdd} \forall \rho\in \Theta, \qquad \Big|\partial_\delta^n \partial^\ell_\rho \partial_t^k \Phi^t_\delta(\rho)\Big|\lesssim T^n\mathrm{e}^{C(n+\ell) t}h^{-\beta(\ell+k+2n-1)_+}.\end{equation}
							If working on $T^*M$ the inequalities are understood in charts.
						\end{prop}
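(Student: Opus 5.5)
We argue by induction on $n$, the number of $\delta$-derivatives. The base case $n=0$ (with $\ell+k\ge 1$) is exactly Proposition \ref{prop: short time derivative flow}: spatial derivatives give $\mathrm{e}^{C\ell t}h^{-\beta(\ell-1)_+}$, and each time derivative costs at most one extra factor $h^{-\beta}$. This is consistent with \eqref{eq: delta deriv flow bdd}. Time derivatives are then handled as in Proposition \ref{prop: short time derivative flow}. Using $\partial_t\Phi^t_\delta=H_{p_\delta}(\Phi^t_\delta)$ and Fa\`a di Bruno, a $\partial_t^k$ is traded for compositions of derivatives of $H_{p_\delta}$ with $\delta,\rho$-derivatives of the flow with fewer time derivatives. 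Since $\|\partial_\delta^{m} H_{p_\delta}\|_{C^j}\lesssim h^{-\beta(j+1)}$ for $m\ge1$, and $\|H_{p_0}\|_{C^j}\lesssim 1$, each time derivative costs at most $h^{-\beta}$. So it suffices to treat $k=0$ and then induct on $k$ with the same bookkeeping.

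For $k=0$ we differentiate Duhamel's formula. By Lemma \ref{lem: duhamel perturbed flow}, or rather its equivalent form obtained by differentiating $\partial_t\Phi^t_\delta=H_{p_\delta}\circ\Phi^t_\delta$ in $\delta$, the quantity $u=\partial_\delta\Phi^t_\delta(\rho)$ solves the linear inhomogeneous equation
\[\partial_t u=\mathrm{d}H_{p_\delta}(\Phi^t_\delta)\,u+H_{\partial_\delta p_\delta}(\Phi^t_\delta),\qquad u(0)=0.\]
The propagator of the homogeneous part is $\mathrm{d}\Phi^{t}_\delta(\Phi^{-s}\cdot)$, which has norm $\lesssim\mathrm{e}^{Ct}$. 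The source term has size $h^{-\beta}$. This gives $|\partial_\delta\Phi^t_\delta|\lesssim t\,\mathrm{e}^{Ct}h^{-\beta}$, matching $n=1,\ell=0$ (since $2n-1=1$). For general $(n,\ell)$ we apply $\partial_\delta^{n-1}\partial_\rho^\ell$ to this equation. The highest-order term $\mathrm{d}H_{p_\delta}\cdot\partial_\delta^{n}\partial_\rho^\ell\Phi$ stays on the left-hand side, and Gr\"onwall (equivalently, the variation-of-constants formula with the linearized flow) turns the remaining terms into a time integral of a forcing term. By Fa\`a di Bruno, the forcing consists of products
\[(\partial_\delta^{m}\partial^{j}_\rho H_{p_\delta})(\Phi^s_\delta)\cdot\prod_{i=1}^{j}\partial_\delta^{n_i}\partial_\rho^{\ell_i}\Phi^s_\delta,\]
where $m+\sum n_i=n$ and $\sum\ell_i=\ell$, and the term with $m=0$, $j=1$, $(n_1,\ell_1)=(n,\ell)$ is excluded. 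Each lower-order factor is controlled by the induction hypothesis, ordered lexicographically in $(n,\ell)$.

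Exponents of $h$ are bounded as follows. The coefficient contributes $h^{-\beta(j+1)}$ if $m\ge1$ and $h^{-\beta(j-1)_+}$ if $m=0$. Each factor contributes $h^{-\beta(\ell_i+2n_i-1)_+}$. Summing the exponents:
\begin{itemize}
\item In the case $m\ge 1$ we get $(j+1)+\sum(\ell_i+2n_i-1)\le 1+\ell+2(n-m)\le \ell+2n-1$.
\item In the case $m=0$ we get $(j-1)+\sum(\ell_i+2n_i-1)=\ell+2n-1$.
\end{itemize}
The positive parts only matter when some $\ell_i+2n_i=0$, and such factors are just $\Phi$ itself, which is bounded. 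So the exponents close up. The factor $T^n$ arises because each $\delta$-derivative comes with one time integral $\int_0^t$ of a source term. The exponential weights multiply to $\mathrm{e}^{C(\sum(n_i+\ell_i))s}\le\mathrm{e}^{C(n+\ell)s}$, and composing with the linearized propagator adds one more $\mathrm{e}^{C(t-s)}$. This is absorbed by enlarging $C$, the usual harmless loss also present in Proposition \ref{prop: short time derivative flow}.

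The main obstacle is this exponent accounting, specifically checking that the linearized propagator's growth and the nested integrals produce exactly $T^n\mathrm{e}^{C(n+\ell)t}$ and not a worse power such as $\mathrm{e}^{C(n+\ell+1)t}$. To handle it, I will bound the propagator by $\mathrm{e}^{C_0(t-s)}$ with a fixed $C_0$, and choose the constant $C$ in the statement large compared with $C_0$, so that the extra exponential factors are absorbed. A second point to check is that the trajectory stays in $\Theta$ for $\delta\in[0,h^\alpha]$, where the symbol bounds hold. This follows from energy conservation, $p_\delta(\Phi^t_\delta\rho)=p_\delta(\rho)$, for $\rho$ in a slightly smaller shell. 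On $T^*M$, all of the above is carried out in the charts, using the bounded transition maps.
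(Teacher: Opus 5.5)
Your route differs from the paper's. The paper never differentiates the ODE. It differentiates the integrated Duhamel formula of Lemma \ref{lem: duhamel perturbed flow},
\[\partial_\delta\Phi^t_\delta(\rho)=\int_0^t\partial_\rho\Phi^s_\delta(\Phi^{t-s}_\delta(\rho))\,\partial_\rho\partial_\delta p_\delta(\Phi^{t-s}_\delta(\rho))\,\mathrm{d}s,\]
$n$ times in $\delta$ and $\ell$ times in $\rho$. As a result, $\delta$- and $\rho$-derivatives of the linearized flow $\partial_\rho\Phi^s_\delta$ composed with $\Phi^{t-s}_\delta$ appear, and the induction runs on $n$ alone, using all $\rho$-derivatives at lower levels. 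You instead keep the top-order term in the linear part and re-solve by variation of constants, with the \emph{undifferentiated} linearized flow as propagator, inducting lexicographically in $(n,\ell)$. Your $h$-exponent count is correct: both the $m\ge1$ and $m=0$ cases give at most $\ell+2n-1$. Your scheme only needs the norm bound $\|\mathrm{d}\Phi^{t-s}_\delta\|\lesssim\mathrm{e}^{C_0(t-s)}$ and never derivatives of the propagator, so the bookkeeping is lighter. What the paper's representation buys is transparent $T$-counting: every integrand contains a factor $\partial_\rho\partial_\delta p_\delta$, so each time integral is paid for by one $\delta$-derivative.

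That is precisely the step where your argument, as written, goes wrong. Your $m=0$ forcing terms, for instance $\mathrm{d}^2H_{p_\delta}(\Phi^s_\delta)[\partial_\delta^n\partial_\rho^{\ell-1}\Phi^s_\delta,\partial_\rho\Phi^s_\delta]$, contain no $\delta$-derivative of the Hamiltonian. They already carry $T^n$ from the induction hypothesis, yet they are still integrated over $[0,t]$. Bounding that integral by $t\le T$ gives $T^{n+1}$. Since step $(n,\ell)$ feeds step $(n,\ell+1)$, the power of $T$ then keeps growing with $\ell$. So ``each $\delta$-derivative comes with one time integral'' is false for your decomposition.

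The fix is the exponential gap you already mention, but it must be used \emph{inside} the integral instead of bounding the integral by $t$. Take $C\ge 2C_0$. The $m=0$ forcing is then $\lesssim T^n\mathrm{e}^{C(n+\ell)s}h^{-\beta(\ell+2n-1)}$, and
\[\int_0^t\mathrm{e}^{C_0(t-s)}\mathrm{e}^{C(n+\ell)s}\,\mathrm{d}s\le\frac{\mathrm{e}^{C(n+\ell)t}}{C(n+\ell)-C_0}.\]
The pure source term $\partial_\delta^nH_{p_\delta}(\Phi^s_\delta)$ gives $\int_0^t\mathrm{e}^{C_0(t-s)}\,\mathrm{d}s\le\mathrm{e}^{C_0t}/C_0$. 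With this, the integrals cost no power of $t$, the induction closes, and you get directly the $T$-free bound \eqref{eq: delta derivatives spacetime flow}.

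One small slip: in the time-derivative step the relevant bound is $\|H_{p_\delta}\|_{C^j}\lesssim h^{-\beta(j-1)_+}$, not $\|H_{p_0}\|_{C^j}\lesssim1$. With the correct bound the count still closes, with exponent $\ell+k+2n-2$.
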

						\begin{rem}Using the Duhamel formula of \eqref{eq: duhamel perturbed flow bis} for the perturbed flow, it is in principle possible to express $\partial_\delta^n \partial^\ell_\rho \partial_t^k \Phi^t_\delta(\rho)$ as a sum of integrals of products of $(\delta,\rho)$ derivatives of $p_\delta$ and $\rho$-derivatives of $\Phi^t_\delta$ evaluated at different times. Because \eqref{eq: duhamel perturbed flow bis} involves splitting the $t$ variable as $t_0+t_1=t$, the expression of $\partial_\delta^n \partial^\ell_\rho \partial_t^k \Phi^t_\delta(\rho)$ (which is obtained by iterating this kind of procedure) involves some integrals over tuples of times that satisfy some recursive structure that can be encoded in a combinatorial tree. Because such an expression is fairly intricate and we do not need it in the present paper, we refrain from writing it here. Instead, the bound \eqref{eq: delta deriv flow bdd} is obtained directly by induction, without obtaining the exact expression of the derivative.
						\end{rem}
						\begin{rem}Up to increasing the constant $C$, for $n+\ell+k\ge 1$, we have
							\begin{equation}\label{eq: delta derivatives spacetime flow} \forall \rho\in \Theta, \qquad \Big|\partial_\delta^n \partial^\ell_\rho \partial_t^k \Phi^t_\delta(\rho)\Big|\lesssim \mathrm{e}^{C(n+\ell) t}h^{-\beta(\ell+k+2n-1)_+}, \end{equation}
							which we shall use in the following.
						\end{rem}
						\begin{proof} We prove the result on $T^*\mathbf{R}^d$, we will explain below how to deal with the case of closed manifolds. 
							
							\textbf{1.} We first prove \eqref{eq: delta derivatives spacetime flow} for $k=0$. The result is known for $n=0$. Then, the proof goes by induction on $n$. In the convention described at the beginning of the subsection for products of the form \eqref{eq: symbol prod derivatives}, one can rewrite \eqref{eq: duhamel perturbed flow bis} as
							\[\partial_\delta \Phi_\delta^t(\rho)=\int_0^t \partial_\rho\Phi_\delta^{s}(\Phi_\delta^{t-s}(\rho))\partial_{\rho}\partial_\delta p_\delta(\Phi_\delta^{t-s}(\rho))\mathrm{d}s.\]
							By differentiating in $\delta$ and $\rho$, one obtains
							\begin{equation} \label{eq: partial rho ell partial delta n+1 phi}\partial_\rho^\ell\partial_\delta^{n+1} \Phi_\delta^t(\rho)=\int_0^t \partial_\delta^n\partial_\rho^\ell\big[\partial_\rho\Phi_\delta^{s}(\Phi_\delta^{t-s})\partial_{\rho}\partial_\delta p_\delta(\Phi_\delta^{t-s}(\rho))\big]\mathrm{d}s.\end{equation}
							Let $f_\delta$ be either $\partial_\rho\Phi_\delta^s$ or $\partial_\rho\partial_\delta p$. Then,
							\[\partial_\delta^n\partial_\rho^\ell \big[f_\delta(\Phi_\delta^{t-s}(\rho))\big]=\sum_{n_1+n_2=n}\partial_\rho^\ell \partial_{\delta_1}^{n_1}\partial_{\delta_2}^{n_2} \big[f_{\delta_1}(\Phi_{\delta_2}^{t-s}(\rho))\big]\Big|_{\delta_1=\delta_2=\delta},\]
							which equals, by Fa\`a di Bruno's formula and the product rule
							\begin{equation}\label{eq: derivee n ieme f delta} \begin{aligned}
									\partial_\delta^n\partial_\rho^\ell \big[f_\delta(\Phi_\delta^{t-s}(\rho))\big]& =\sum_{n_1+n_2=n}\partial_\rho^\ell\Big[\sum_{k=1}^{n_2} (\partial_\rho^{k} \partial_\delta^{n_1} f_\delta)(\Phi_\delta^{t-s}(\rho))\sum_{r_1+\ldots+r_k=n_2} \prod_{i=1}^k \partial_\delta^{r_i}\Phi_\delta^{t-s}(\rho)\Big] \\
									&=\sum_{n_1+n_2=n}\sum_{\ell_1+\ell_2=\ell}\Big[\sum_{k=1}^{n_2} \partial_\rho^{\ell_1}\big[(\partial_\rho^{k} \partial_\delta^{n_1} f_\delta)(\Phi_\delta^{t-s}(\rho))\big] \partial_\rho^{\ell_2}\sum_{r_1+\ldots+r_k=n_2} \prod_{i=1}^k \partial_\delta^{r_i}\Phi_\delta^{t-s}(\rho)\Big].
							\end{aligned}\end{equation}
							By the induction hypothesis, one has
							\begin{equation} \label{eq: partial ell 2 flow} \Big|\partial_\rho^{\ell_2}\sum_{r_1+\ldots+r_k=n_2} \prod_{i=1}^k \partial_\delta^{r_i}\Phi_\delta^{t-s}(\rho)\Big|\lesssim T^{n_2}h^{-\beta(\ell_2+2n_2-k)}\mathrm{e}^{C(n_2+\ell_2)(t-s)},\end{equation}
							and
							\[\big|\partial_\rho^{\ell_1}\big[(\partial_\rho^{k} \partial_\delta^{n_1} f_\delta)(\Phi_\delta^{t-s}(\rho))\big]\big|\lesssim\sum_{m=1}^{\ell_1}\big|\partial_\rho^{m+k}\partial_\delta^{n_1}f_\delta\big|\cdot\prod_{r_1+\ldots+r_m=\ell_1} \big|\partial_\rho^{r_i}\Phi_\delta^{t-s}(\rho)\big|.\]
							For $f_\delta=\partial_\rho\Phi_\delta^s$, we have by the induction hypothesis
							\begin{equation} \label{eq: for f = partial Phi}\big|\partial_\rho^{\ell_1}\big[(\partial_\rho^{k} \partial_\delta^{n_1} \partial_\rho\Phi_\delta^s)(\Phi_\delta^{t-s}(\rho))\big]\big|\lesssim h^{-\beta(k+2n_1+\ell_1)}T^{n_1}\mathrm{e}^{C(\ell_1+k+n_1+1)s}\mathrm{e}^{C\ell_1(t-s)},\end{equation}
							while for $f_\delta=\partial_\rho\partial_\delta p_\delta$:
							\begin{equation} \label{eq: for f = partial p}\big|\partial_\rho^{\ell_1}\big[(\partial_\rho^{k} \partial_\delta^{n_1}\partial_\rho\partial_\delta p_\delta)(\Phi_\delta^{t-s}(\rho))\big]\big|\lesssim h^{-\beta(k+\ell_1+1)} \mathrm{e}^{C\ell_1(t-s)}.\end{equation}
							Recalling \eqref{eq: derivee n ieme f delta} and combining \eqref{eq: partial ell 2 flow} with \eqref{eq: for f = partial Phi} or \eqref{eq: for f = partial p}, we end up with
							\begin{itemize}
								\item $\big|\partial_\delta^n\partial_\rho^\ell \big[\partial_\rho\Phi_\delta^s(\Phi_\delta^{t-s}(\rho))\big]\big|\lesssim T^n h^{-\beta(\ell+2n)}\mathrm{e}^{C(n+\ell+1)t}$.
								\item $|\partial_\delta^n\partial_\rho^\ell \big[\partial_\delta\partial_\rho p(\Phi_\delta^{t-s}(\rho))\big]|\lesssim T^n h^{-\beta(\ell+2n+1)} \mathrm{e}^{C(n+\ell)(t-s)}$.
							\end{itemize}
							By expanding the derivative of the product in \eqref{eq: partial rho ell partial delta n+1 phi}, we obtain eventually
							\[\big|\partial_\rho^\ell\partial_\delta^{n+1} \Phi_\delta^t(\rho)\big|\le T^{n+1} h^{-\beta(\ell+2(n+1)-1)} \mathrm{e}^{C(n+\ell)t}.\]
							
							\textbf{2.} The case $k\ge 1$ follows from Lemma \ref{lem: formula time derivative Phi} and Lemma \ref{lem: intermediate bound delta deriv p} below, together with the case $k=0$ that we have just proved above.
							
							\textbf{3.} We explain how to adapt the proof when $\mathbf{R}^d$ is replaced by a closed manifold $M$. Fix a finite atlas $(U_\ell,f_\ell)$ of $M$, and let $(T^*U_\ell,\widetilde f_\ell)$ denote the associated atlas of $T^*M$. Fix $\rho\in \Theta$. Up to rescaling time, for each $k\ge 0$ one can find an index $\ell_k$ such that the piece of trajectory $\{\Phi^t_0(\rho)\}_{t\in [k-1,k+1]}$ is entirely contained in $T^*U_{\ell_k}$. We then define flow maps in coordinates:
							\[\Phi^s_{\ell_k,\ell_k'}=\widetilde f_{\ell_{k'}}\circ \Phi^s_\delta\circ\widetilde f_{\ell_k}^{-1},\]
							whenever the composition makes sense. Also, define the Hamiltonian in coordinates $p_{\ell_k}:=p\circ \widetilde f_{\ell_k}^{-1}$. 
							Then, the identity \eqref{eq: duhamel perturbed flow bis} becomes in charts 
							\[\frac{\partial \Phi^t_{\ell_0,\ell_n}}{\partial \delta}(\rho)=\sum_{k=0}^n\int_{k}^{k+1}\mathbf 1_{s\le t} \mathrm{d}\Phi_{\ell_k,\ell_n}^{t-s} \cdot H_{\partial_\delta p_{\ell_k}}(\Phi^s_{\ell_0,\ell_k}(\rho))\mathrm{d}s.\]
							This expression only involves the flow maps in charts along the trajectories and the symbol in charts. The above proof can thus be adapted to this new setting.
						
						\end{proof}
						
						We prove the missing estimate in the proof of \textbf{2.} above.
						\begin{lem}\label{lem: formula time derivative Phi}For all $k\ge 1$,
							\[\partial_t^k \Phi_\delta^t(\rho)=\sum_{(\mu_i)} \prod_i (\partial^{\mu_i}_xp_\delta)(\Phi_\delta^t(\rho)),\]
							where the sum runs over tuples of integers $\mu_i\ge 1$ satisfying $\sum_i \mu_i=2k-1$ and $\sum_i (\mu_i-1)=k-1$.
						\end{lem}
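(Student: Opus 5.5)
The plan is to argue by induction on $k$, using only the Hamilton equation $\partial_t\Phi^t_\delta(\rho)=H_{p_\delta}(\Phi^t_\delta(\rho))$ and the chain rule. The statement should be read in the convention fixed at the start of this subsection: the right-hand side denotes a linear combination, with universal coefficients, of products of components of derivatives of $p_\delta$ evaluated at $\Phi^t_\delta(\rho)$. Here $\partial^{\mu}$ means any derivative of order $\mu$ in the phase space variables $(x,\xi)$.

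For $k=1$ we have $\partial_t\Phi^t_\delta=H_{p_\delta}(\Phi^t_\delta)=(\partial_\xi p_\delta,-\partial_x p_\delta)(\Phi^t_\delta)$. This is a single factor with $\mu_1=1$, so $\sum\mu_i=1=2k-1$ and $\sum(\mu_i-1)=0=k-1$.

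For the inductive step, assume that $\partial_t^k\Phi^t_\delta$ is a combination of products $\prod_i(\partial^{\mu_i}p_\delta)(\Phi^t_\delta)$ with $\sum\mu_i=2k-1$ and $\sum(\mu_i-1)=k-1$. Differentiate once more in $t$. By the Leibniz rule, $\partial_t$ falls on exactly one factor, and by the chain rule
\[\partial_t\big[(\partial^{\mu_j}p_\delta)(\Phi^t_\delta)\big]=(\partial^{\mu_j+1}p_\delta)(\Phi^t_\delta)\cdot\partial_t\Phi^t_\delta=(\partial^{\mu_j+1}p_\delta)(\Phi^t_\delta)\cdot(\partial^{1}p_\delta)(\Phi^t_\delta).\]
So each new term is obtained by replacing $\mu_j$ by $\mu_j+1$ and adjoining a new factor with $\mu=1$.

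It remains to track the two indices. The quantity $\sum\mu_i$ increases by $1+1=2$, giving $2k+1=2(k+1)-1$. The quantity $\sum(\mu_i-1)$ increases by $1$ from the raised factor and by $0$ from the new factor with $\mu=1$, giving $k=(k+1)-1$. Both constraints therefore propagate, which closes the induction.

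The main point needing care is consistency: $\sum(\mu_i-1)=k-1$ together with $\sum\mu_i=2k-1$ forces the number of factors to be exactly $k$. This agrees with the count above, since each time derivative adds exactly one factor. Note that the base step requires a uniform $t$-independent expression; this holds because the flow is autonomous, and in the $T^*M$ setting the identity is understood in charts, with the chart contributions absorbed into the coefficients.
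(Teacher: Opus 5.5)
Your proposal is correct and follows the same route as the paper, which simply says to induct on $k$ using $\partial_t\Phi^t_\delta=H_{p_\delta}(\Phi^t_\delta)$ and the chain rule. Your tracking of the two index sums (each time derivative raises one $\mu_j$ by one and adjoins a new factor with $\mu=1$) supplies exactly the bookkeeping the paper leaves implicit.
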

						\begin{proof}Perform an induction on $k\ge 1$ using $\partial_t \Phi^t_\delta=H_{p_\delta}\Phi^t_\delta$.
						\end{proof}
						\begin{lem}\label{lem: intermediate bound delta deriv p} For all $\mu\ge 1$, for all $n,\ell\ge 0$, we have
							\[|\partial_\delta^n\partial_\rho^\ell \big[(\partial_\rho^\mu p_\delta)(\Phi)\big]|\lesssim (1+h^{-\beta(\mu-2)}) T^n h^{-\beta(2n+\ell)}\mathrm{e}^{C(n+\ell)t}.\]
						\end{lem}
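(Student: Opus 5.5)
The plan is to expand $\partial_\delta^n\partial_\rho^\ell$ of the composition $(\partial_\rho^\mu p_\delta)(\Phi_\delta^t(\rho))$ with the two-parameter trick already used in the proof of Proposition \ref{prop: bound delta derivatives of the flow}. We write
\[
\partial_\delta^n\partial_\rho^\ell\big[(\partial_\rho^\mu p_\delta)(\Phi_\delta^t)\big]=\sum_{n_1+n_2=n}\partial_\rho^\ell\partial_{\delta_1}^{n_1}\partial_{\delta_2}^{n_2}\big[(\partial^\mu_\rho p_{\delta_1})(\Phi^t_{\delta_2})\big]\Big|_{\delta_1=\delta_2=\delta},
\]
and then apply Fa\`a di Bruno's formula in the joint variables $(\rho,\delta_2)$ together with the Leibniz rule. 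A typical term then has the form
\[
(\partial_\rho^{\mu+m}\partial_\delta^{n_1}p_\delta)(\Phi^t_\delta)\prod_{i=1}^m \partial_\rho^{\ell_i}\partial_\delta^{r_i}\Phi^t_\delta,
\]
where $1\le m\le \ell+n_2$ (or $m=0$), $\sum\ell_i=\ell$, $\sum r_i=n_2$, and $\ell_i+r_i\ge 1$.

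We bound each factor separately. For the symbol factor, the hypotheses give two cases. If $n_1=0$, we get $|\partial^{\mu+m}p_0|+|\partial^{\mu+m}(p_\delta-p_0)|\lesssim 1+h^\alpha h^{-\beta(\mu+m)}$. If $n_1\ge1$, we get $\lesssim h^{-\beta(\mu+m)}$. For the flow factors, we use the bound \eqref{eq: delta deriv flow bdd} in the case $k=0$, which was proved in step 1 independently of the present lemma. This gives
\[
|\partial_\rho^{\ell_i}\partial_\delta^{r_i}\Phi^t_\delta|\lesssim T^{r_i}\mathrm{e}^{C(r_i+\ell_i)t}h^{-\beta(\ell_i+2r_i-1)_+}.
\]
Since $\ell_i+r_i\ge 1$, the positive part can be dropped except in the case $(\ell_i,r_i)=(1,0)$, where the exponent is $0$ rather than $-\beta\cdot 0$. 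In all cases the exponent is at most $\beta(\ell_i+2r_i-1)$.

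Multiplying these bounds, the exponentials combine into $\mathrm{e}^{C(n_2+\ell)t}\le \mathrm{e}^{C(n+\ell)t}$ and the powers of $T$ into $T^{n_2}\le T^n$. The power of $h^{-\beta}$ then requires exponent bookkeeping, which I expect to be the only delicate point. In the case $n_1\ge 1$, the total exponent is at most
\[
(\mu+m)+\sum(\ell_i+2r_i-1)=\mu+\ell+2n_2\le \mu-2+\ell+2n,
\]
because $n_1\ge1$ gives $2n_2\le 2n-2$. In the case $n_1=0$, the total exponent is at most
\[
\max\big(0,\mu+m-\alpha/\beta\big)+\ell+2n-m.
\]
Here the assumption $2\beta<\alpha$ gives $\alpha/\beta>2$, so this is bounded by $\max(\ell+2n-m,\ \mu-2+\ell+2n)$. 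With the prefactor $1+h^{-\beta(\mu-2)}$, this yields the claimed bound $\lesssim(1+h^{-\beta(\mu-2)})T^nh^{-\beta(2n+\ell)}\mathrm{e}^{C(n+\ell)t}$. The remaining case $m=0$ occurs only when $\ell=n_2=0$, and it is immediate: the symbol factor alone is bounded by $1+h^{-\beta(\mu-2)}$ when $n=0$, and by $h^{-\beta\mu}\le h^{-\beta(\mu-2+2n)}$ when $n\ge 1$.

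The main obstacle is the case $n_1=0$ with large $m$, where the perturbation $p_\delta-p_0$ could be large. There we must use $\delta\le h^\alpha$ to absorb two powers of $h^{-\beta}$. If this fails for $\mu$ small, we fall back on $|\partial^{\mu+m}p_0|\lesssim1$ and the slack $-m$ in the exponent. On a closed manifold, the argument is carried out in charts exactly as in step 3 of the previous proof.
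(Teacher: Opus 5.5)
Your proof is correct and follows essentially the same route as the paper. Both arguments use Fa\`a di Bruno's formula: you apply it jointly in $(\rho,\delta_2)$, while the paper applies it first in $\delta$ and then uses Leibniz plus Fa\`a di Bruno in $\rho$. Both bound the symbol factor by $1+h^{\alpha-\beta(\mu+m)}\lesssim 1+h^{-\beta(\mu+m-2)}$ when $n_1=0$ and by $h^{-\beta(\mu+m)}$ when $n_1\ge 1$, use the $k=0$ flow estimates from step 1 of Proposition~\ref{prop: bound delta derivatives of the flow}, and arrive at the same exponent count $\mu+\ell+2n_2$. Your caveat about the positive part at $(\ell_i,r_i)=(1,0)$ is unnecessary, since $\ell_i+2r_i-1\ge 0$ whenever $\ell_i+r_i\ge 1$.
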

						\begin{proof}By Fa\`a di Bruno's formula, one has
							\[\partial_\delta^n\partial_\rho^\ell \big[(\partial_\rho^\mu p_\delta)(\Phi^t_\delta)\big]
							=\sum_{n_1+n_2=n}\sum_{\ell_1+\ell_2=\ell}\sum_{r=0}^{n_2}\sum_{\substack{m_1+\ldots+m_r=n_2, \\ m_i\ge 1}} \partial_\rho^{\ell_1}\big[(\partial_\rho^{\mu+r}\partial_\delta^{n_1}p_\delta)(\Phi^t_\delta)\big]\cdot \partial_\rho^{\ell_2}\prod_{m_1+\ldots+m_r=n_2}\partial_\delta^{m_i}\Phi^t_\delta. \]
							By the assumptions on $p_\delta$ and the estimates on the flow, by Fa\`a di Bruno's formula, we have an upper bound (this relies on the fact that $\mu\ge 1$):
							\begin{equation}\label{eq: deriv first term proof time der} \begin{aligned}\big|\partial_\rho^{\ell_1}\big[(\partial_\rho^{\mu+r}\partial_\delta^{n_1}p_\delta)(\Phi_\delta^t)\big]\big|& \lesssim T^{n_1}\mathrm{e}^{C\ell_1 t} \left\{\begin{array}{ll}(1+h^{-\beta(\mu+r+\ell_1-2)}) & \text{if $n_1=0$} \\ 
										h^{-\beta(\mu+r+\ell_1)}& \text{if $n_1\ge 1$}\end{array}\right. \\
									& \lesssim T^{n_1} \mathrm{e}^{C\ell_1t} (1+h^{-\beta(\mu+r+\ell_1+2n_1-2)}).\end{aligned}\end{equation}
							Meanwhile, by the first part of the proof of Proposition \ref{prop: bound delta derivatives of the flow}:
							\begin{equation} \label{eq: deriv second term proof time der}\Big|\partial_\rho^{\ell_2}\prod_{m_1+\ldots+m_r=n_2}\partial_\delta^{m_i}\Phi_\delta^t\Big|\lesssim  h^{-\beta(2n_2+\ell_2-r)}T^{n_2} \mathrm{e}^{C(n_2+\ell_2)t}.\end{equation}
							The result follows by combining \eqref{eq: deriv first term proof time der} with \eqref{eq: deriv second term proof time der}.
						\end{proof}

					\subsection{Controlling $\delta$-derivatives of generating functions} In the rest of the section, we allow the Hamiltonian to depend on two auxiliary parameters $\delta,\omega$. Although the current work is only concerned with a family of $\delta$-dependent Hamiltonians, the next result will prove useful to study the model of random Schr\"odinger operators of \cite{ingremeau2023emergence}. \medskip
					
					\noindent \textbf{Notation.} If $\mathbf s=(s_1,\ldots,s_r)$ is an array of real number, the notation $\Phi_\delta^{\mathbf s}$ represents the flow $\Phi_\delta^{s_i}$ for some $i\in \{1,\ldots,r\}$ that may change from line to line and even in the same formula. For example $\partial_\rho^3\Phi_{\delta}^{\mathbf s}\partial_\rho \Phi_\delta^{\mathbf s}$ might represent $\partial_\rho^3 \Phi_{\delta}^{s_1}\partial_\rho \Phi_\delta^{s_4}$. We denote by $\mathrm{d}\mathbf s$ the Lebesgue measure on $\mathbf{R}^r$.
					\begin{prop}[Higher $\delta$-derivatives of the generating function]\label{prop: higher der gn func} Let $p=p_{\delta,\omega}$ be a smooth Hamiltonian on $T^*\mathbf{R}^d$ that depends on two parameters $\delta,\omega$, and let $V=\partial_\omega p$. We assume that the flow $\Phi^t$ admits a local generating function $\varphi(t,x,\eta)$. Let $\mathbf x=(x,\eta)$ and introduce $G(t,\mathbf x)=(\nabla_\eta \varphi(t,x,\eta),\eta)$. Then, for all $n\ge 1$, one has
						\begin{equation}\label{eq: formula high deriv gen fun}\forall \ell\ge 0, \quad \partial_{\mathbf x}^\ell\partial_\delta^{n-1}\partial_{\omega} \varphi=\sum_{r=1}^n\int_{[0,t]^r} \sum_{\mathbf p} \partial_\delta^N\partial_\rho^{L}V\cdot \prod_i \partial_\delta^{n_i}\partial_\rho^{\ell_i}\Phi_\delta^{\mathbf s}\cdot \prod_j \partial_\delta^{m_j}\partial_\rho^{q_j}p\cdot \prod_k \partial_{\mathbf x}^{u_k}G ~\mathrm{d}\mathbf s.\end{equation}
						Here we evaluate $\partial_{\mathbf x}^{u_k}G$ at $(t,\mathbf x)$, $\partial_\delta^{n_i}\partial_\rho^{\ell_i}\Phi_\delta^{\mathbf s}$ at $G(t,\mathbf x)$, and $\partial_\delta^N\partial_\rho^{L}V$ and $ \partial_\delta^{m_j}\partial_\rho^{q_j}p$ at $\Phi_\delta^{\mathbf s}(G(t,\mathbf x))$.
						
						It is understood that each of the sums in \eqref{eq: formula high deriv gen fun} runs over families 
						\[\mathbf p=\big\{N,L,\{n_i,\ell_i\},\{m_j,q_j\},\{u_k\}\big\}\] with $N,L\ge 0$, $\ell_i+n_i\ge 1$, $q_j\ge 0$, $m_j\ge 1$ and $u_k\ge 1$, such that the following holds:
						\begin{itemize}
							\item $\mathbf C:=L+2N+\sum(\ell_i+2n_i-1)+\sum q_j+(m_j-1)+\sum (u_k-1)\le \ell+2n-2$.
							\item $\mathbf D:=\sum (\ell_i+n_i)+\sum (u_k-1)\le \ell+2n-2$. 
						\end{itemize}
						
					\end{prop}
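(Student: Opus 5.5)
The plan is an induction on $n$, with $\ell$ arbitrary. The base case is the analogue of Proposition \ref{prop: delta derivative gen fun} with $\omega$ in place of $\delta$. The same integration by parts shows
\[\partial_\omega\varphi(t,x,\eta)=-\int_0^t V\big(\Phi^s_\delta(G(t,\mathbf x))\big)\,\mathrm{d}s,\]
where, as in that proof, the boundary terms vanish because $x_t=x$ and $\xi_0=\eta$ are independent of $\omega$. Applying $\partial_{\mathbf x}^\ell$ and expanding with Fa\`a di Bruno's formula twice, first for $V\circ\Phi^s_\delta$ and then for $\Phi^s_\delta\circ G$, produces terms of the form
\[\partial_\rho^L V\cdot\prod_i\partial_\rho^{\ell_i}\Phi^s_\delta\cdot\prod_k\partial_{\mathbf x}^{u_k}G,\]
with $r=1$, $N=0$ and no factors of $p$. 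Here $\sum_i\ell_i=\ell_{\rm tot}$, the number of factors $\partial_\rho^{\ell_i}\Phi$ equals $L$, and $\sum_k u_k$ is controlled by the $\mathbf x$-derivatives distributed. A direct count gives $\mathbf C=L+\sum(\ell_i-1)+\sum(u_k-1)$. For the chain $V\circ\Phi\circ G$ one has $\sum(\ell_i-1)+L=\sum\ell_i$, and each $\partial^{\ell_i}\Phi$ is fed by $\ell_i$ factors of $G$. So $\mathbf C$ reduces to $\sum_k u_k=\ell$. This is $\le \ell+2n-2=\ell$ for $n=1$, and $\mathbf D\le\ell$ likewise. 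Part of this check is to confirm that the bookkeeping conventions of the statement are consistent with this count.

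For the induction step, assume formula \eqref{eq: formula high deriv gen fun} holds at rank $n$ for all $\ell$, and apply $\partial_\delta$ to each term. The Leibniz rule distributes $\partial_\delta$ over the factors, and there are four cases to check:
\begin{enumerate}[(i)]
\item If $\partial_\delta$ hits $\partial_\delta^N\partial_\rho^LV$ or $\partial_\delta^{m_j}\partial_\rho^{q_j}p$ explicitly, then $N$ or $m_j$ increases by one and $\mathbf C$ increases by $2$ or by $1$.
\item If it hits the argument $\Phi^{\mathbf s}_\delta(G)$ of $V$ or $p$, the chain rule yields an extra $\partial_\rho$ on that factor together with a new factor $\partial_\delta\Phi^{\mathbf s}_\delta$ (at $\ell_i=0$, $n_i=1$) plus the term coming through $\partial_\delta G$. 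The quantity $\mathbf C$ increases by $1+(2-1)=2$.
\item If it hits $\partial_\delta^{n_i}\partial_\rho^{\ell_i}\Phi_\delta^{\mathbf s}$, we get $n_i+1$ together with the argument term, again a $+2$ increase.
\item If it hits $G$, we need a formula for $\partial_\delta G=(\nabla_\eta\partial_\delta\varphi,0)$. Here I would use Proposition \ref{prop: delta derivative gen fun} itself (with $\partial_\delta p$ in the role of $V$): $\partial_{\mathbf x}^{u}\partial_\delta G=\partial_{\mathbf x}^{u+1}\partial_\delta\varphi$, which is the $n=1$ case of the formula with $\ell$ replaced by $u+1$ and $V$ replaced by $\partial_\delta p$. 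This introduces a new time integral, so $r$ increases by one (whence $r\le n$), and it contributes factors $\partial_\rho^{q}\partial_\delta p$ with $m_j=1$ and some further $G$-factors.
\end{enumerate}

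Each case must raise $\mathbf C$ and $\mathbf D$ by at most $2$, matching the increase of $\ell+2n-2$ by $2$. In case (iv), replacing $\partial^{u_k}G$ (which contributed $u_k-1$) by the $n=1$ expansion at order $u_k+1$ yields terms with $\mathbf C\le u_k+1$, so the net change is $u_k+1-(u_k-1)=2$. Similarly, $\mathbf D$ changes by at most $2$. The argument of $V$ or $p$ contains $G$, which is accounted for in (ii). The $\mathbf x$-derivatives are handled by commuting: the rank-$n$ formula already holds for all $\ell$, so we differentiate the formula for $\partial_{\mathbf x}^\ell\partial_\delta^{n-1}\partial_\omega\varphi$ in $\delta$.

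The main obstacle will be keeping the combinatorial weights $\mathbf C,\mathbf D$ consistent across every Leibniz/Fa\`a di Bruno branch, especially case (iv) and the $\partial_\delta$ acting on the argument $\Phi_\delta^{\mathbf s}(G)$, where both $\partial_\delta\Phi$ and $\partial_\delta G$ appear. To manage this, I would first verify a small lemma stating that the weight is additive under the substitution rules (i)--(iv), each adding at most $2$. The induction then closes formally.
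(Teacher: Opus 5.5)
Your proposal is correct and is essentially the paper's own proof: induction on $n$, with the base case obtained from $\partial_\omega\varphi=-\int_0^t V(\Phi^s_\delta(G))\,\mathrm{d}s$ and two applications of Fa\`a di Bruno's formula. The inductive step is the same case analysis on which factor $\partial_\delta$ hits, with $\partial_{\mathbf x}^{u}\partial_\delta G$ expanded through the $n=1$ formula (using $\partial_\delta p$ in place of $V$), which raises $r$ by one and raises $\mathbf C,\mathbf D$ by at most $2$.

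The one piece of bookkeeping you left implicit also closes. This is the $\partial_\delta G$ branch produced when $\partial_\delta$ hits the argument $\Phi^{\mathbf s}_\delta(G)$ in (ii)--(iii). There is no pre-existing $u_k$ factor to remove there, and the branch still adds at most $2$ to both $\mathbf C$ and $\mathbf D$: one unit comes from the extra $\rho$-derivative, and one from $\partial_{\mathbf x}\partial_\delta\varphi=-\int_0^t(\partial_\rho\partial_\delta p)\,\partial_\rho\Phi^{s'}_\delta\,\partial_{\mathbf x}G\,\mathrm{d}s'$.
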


					\begin{proof}\textbf{1.} We first show the result for $n=1$. Let us differentiate \eqref{eq: delta derivative generating function} $\ell$ times in the $\mathbf x:=(x,\eta)$ variables:
						\[\partial_{\mathbf x}^\ell \partial_\omega \varphi=-\int_0^t \partial_{\mathbf x}^{\ell}\big[V(\Phi_\delta^s(G))\big]\mathrm{d}s=-\int_0^t\sum_{r=0}^{\ell}\sum_{\substack{\ell_1+\ldots+\ell_r=\ell \\ \ell_i\ge 1}} (\partial_{\rho}^rV)\cdot \prod \partial_{\mathbf x}^{\ell_i} (\Phi_\delta^s(G))\mathrm{d}s,\]
						where $G$ is evaluated at $(t,\mathbf x)$. For each $\ell_i$, by Fa\`a di Bruno's formula again
						\[ \partial_{\mathbf x}^{\ell_i} (\Phi_\delta^s(G))=\sum_{k_i=0}^{\ell_i}\sum_{\substack{u_i^{(1)}+\ldots+u_i^{(k_i)}=\ell_i \\ u_i^{(j)}\ge1 }}\partial_\rho^{k_i}\Phi_\delta^s(G)\cdot \prod_{j=1}^{k_i} \partial_{\mathbf x}^{u_i^{(j)}} G,\]
						where $k_i+\sum (u_i^{(j)}-1)=\ell_i$. In total, we check that for each term of the sum
						\[\mathbf C:=r+\sum_i \Big[(k_i-1)+\sum_j (u_i^{(j)}-1)\Big]=r+\sum_i (\ell_i-1)=\ell.\]
						\[\mathbf D:=\sum_i \Big[k_i+\sum_j (u_i^{(j)}-1)\Big]=\sum_i \ell_i=\ell.\]

						\textbf{2.} Assume by induction that \eqref{eq: formula high deriv gen fun} holds up to rank $n$. Then, for $r\in \{1,\ldots,n\}$, compute
						\[\partial_\delta\Big[\int_{[0,t]^r} \sum_{\mathbf p} \partial_\delta^N\partial_\rho^{L}V\cdot \prod \partial_\delta^{n_i}\partial_\rho^{\ell_i}\Phi_\delta^{\mathbf s}\cdot \prod \partial_\delta^{m_j}\partial_\rho^{q_j}p\cdot \prod \partial_{\mathbf x}^{u_k}G~\mathrm{d}\mathbf s\Big].\]
						We look at the action of $\partial_\delta$ on the product, and see how we must update $r$ and the tuple $\mathbf p$, and accordingly how $\mathbf C$ and $\mathbf D$ are modified.
						\begin{itemize}
							\item If $\partial_\delta$ hits $(\partial_\delta^N\partial_\rho^L V)(\Phi_\delta^{\mathbf s}(G))$ then we compute
							\[\partial_\delta(\partial_\delta^N\partial_\rho^L V)(\Phi_\delta^{\mathbf s}(G))=(\partial_\delta^{N+1}\partial_\rho^L V)(\Phi_\delta^{\mathbf s}(G))+(\partial_\delta^N\partial_\rho^{L+1}V)\cdot \big(\partial_\delta \Phi_\delta^{\mathbf s} +\partial_\rho \Phi_\delta^{\mathbf s}\cdot \partial_\delta G).\]
							Since 
							\[\partial_\delta G=\partial_{\mathbf x}\int_0^t -\partial_\delta p(\Phi^s_\delta(G))\mathrm{d}s=-\int_0^t(\partial_\rho\partial_\delta p)\cdot \partial_\rho \Phi^s_\delta\cdot \partial_{\mathbf x}G\mathrm{d}s,\]
							for each of the three terms of the sum we find that
							\begin{itemize}
								\item either $N$ increases by $1$ and $\mathbf C\leftarrow\mathbf C+2$.
								\item either $L$ increases by $1$ and we add $(n_\star,\ell_\star)=(1,0)$, so $\mathbf C\leftarrow\mathbf C+1$ and $\mathbf D\leftarrow\mathbf D+1$.
								\item $L$ increases by $1$, we add $(n_\star,\ell_\star)=(0,1)$ and $(n_\star',\ell_\star')=(0,1)$, $(m_\star,q_\star)=(1,1)$ and $u_\star=1$, so $\mathbf C\leftarrow 2$ and $\mathbf D\leftarrow \mathbf D+2$. Also $r\leftarrow r+1$.
							\end{itemize}
							\item If $\partial_\delta$ hits $\partial_\delta^{n_i}\partial_\rho^{\ell_i}\Phi_\delta^{\textbf{s}}$ then we find 
							\[(\partial_\delta^{n_i+1}\partial_\rho^{\ell_i}\Phi_\delta^{\textbf{s}})+\partial_\delta^{n_i}\partial_\rho^{\ell_i+1}\Phi^{\textbf{s}}_\delta\cdot \int_0^t(\partial_\rho\partial_\delta p)\cdot \partial_\rho \Phi^s_\delta\cdot \partial_{\mathbf x}G\mathrm{d}s,\]
							where the last integral is just $\partial_\delta G$. We separate the two terms
							\begin{itemize}
								\item In the first case, we replace $(n_i,\ell_i)$ by $(n_{i}+1,\ell_i)$ so $\mathbf C\leftarrow\mathbf C+2$ and $\mathbf D\leftarrow\mathbf D+1$
								\item In the second case, we replace $(n_i,\ell_i)$ by $(n_{i},\ell_i+1)$ and add $(m_\star,q_\star)=(1,1)$ and $(n_\star,\ell_\star)=(0,1)$ and $u_\star=1$, so $\mathbf C\leftarrow\mathbf C+2$ and $\mathbf D\leftarrow\mathbf D+2$. Also we replace $r\leftarrow r+1$.
							\end{itemize}

							\item If $\partial_\delta$ hits  $(\partial_\delta^{m_j}\partial_\rho^{q_j}p)(\Phi_\delta^{\mathbf s}(G))$ then we obtain
							\[(\partial_\delta^{m_j+1}\partial_\rho^{q_j}p)+(\partial_\delta^{m_j}\partial_\rho^{q_j+1}p)\cdot(\partial_\delta \Phi_\delta^{\mathbf s}+\partial_\rho\Phi_\delta^{\mathbf s}\cdot \partial_\delta G).\]
							\begin{itemize}
								\item the first term replaces $(m_j,q_j)$ by $(m_{j}+1,q_j)$ so $\mathbf C\leftarrow\mathbf C+1$.
								\item the second term replaces $(m_j,q_j)$ by $(m_j,q_{j}+1)$ and adds $(n_\star,\ell_\star)=(1,0)$ so $\mathbf C\leftarrow\mathbf C+2$ and $\mathbf D\leftarrow\mathbf D+1$.
								\item the last term replaces $(m_j,q_j)$ by $(m_j,q_{j}+1)$, adds $(n_\star,\ell_\star)=(0,1)$ and adds $(m_\star,q_\star)=(1,1)$ and $(n_\star',\ell_\star')=(0,1)$ and $u_\star=1$, so $\mathbf C\leftarrow\mathbf C+2$ and $\mathbf D\leftarrow\mathbf D+2$.
								
							\end{itemize}
							\item If $\partial_\delta$ hits $\partial_{\mathbf x}^{u_\star} G$ then we use the result for $n=1$:
							\[\partial_{\delta}\partial_{\mathbf x}^{u_\star+1} \varphi=\int_0^t \partial_\delta^{N'}\partial_\rho^{L'}\partial_\delta p\cdot \prod \partial_\delta^{n_i'}\partial_\rho^{\ell_i'}\Phi_\delta^s\cdot \prod \partial_\delta^{m_j'}\partial_\rho^{q_j'}p\cdot \prod \partial_{\mathbf x}^{u_k'}G\mathrm{d}s,\]
							and here we have
							\[\mathbf C':=L'+N'+\sum(\ell_i'+2n_i'-1)+\sum q_j'+(m_j'-1)+\sum (u_k'-1)\le u_\star+1+2-2=u_\star+1\]
							and
							\[\mathbf D':=\sum (\ell_i'+n_i')+\sum (u_k'-1)\le u_\star+1.\]
							So we update $\mathbf p$ by removing $u_\star$, we add $(m_\star,q_\star)=(N'+1,L')$ and all the $\{\ell_i',n_i'\},\{q_j',m_j'\}$,$\{u_k'\}$ to the family $\mathbf p$. The new value of $\mathbf C$ is
							\[\mathbf C'\le \mathbf C-(u_\star-1)+(u_\star+1)\le \mathbf C+2.\]
							The new value of $\mathbf D$ is
							\[\mathbf D'-(u_\star-1)+(u_\star+1)=\mathbf D+2.\]
							Lastly, we update $r\leftarrow r+1$.
						\end{itemize}
						This finishes the proof.

					\end{proof}
					
					\begin{rem} Consider the case where $\mathbf{R}^d$ is replaced by a closed manifold $M$ and $p_\delta$ is a Hamiltonian on $T^*M$. We reuse the notation of the step \textbf{3.} in the proof of Proposition \ref{prop: bound delta derivatives of the flow}. For $t\in (n-1,n+1)$, let $\Phi^t_{\ell_0,\ell_n}:=\widetilde f_{\ell_n}\circ \Phi_\delta^t\circ \widetilde f_{\ell_0}^{-1}$ be the flow map in coordinates, and let $\varphi(t,x,\eta)$ be a local generating function for $\Phi^t_{\ell_0,\ell_n}$. Let $G(t,\mathbf x)=(\nabla_\eta \varphi(t,x,\eta),\eta)$. The starting point is the formula
						\[\partial_\omega \varphi(t,\mathbf x)=-\sum_k\int_{k}^{k+1}\mathbf 1_{s\le t} \frac{\partial p_{\ell_k}}{\partial \omega}(\Phi_{\ell_0,\ell_k}^s(G(t,\mathbf x)))\mathrm{d}s,\]
						which only involves objects defined on $\mathbf{R}^d$.
						Then, one can run again the above proof and obtain a similar expression for $\partial_{\mathbf x}^\ell\partial_\delta^{n-1}\partial_{\omega} \varphi$. We denote by $\mathcal J_r(t)$ the collection of sets of the form  \[[k_1,\max(k_1+1,t)]\times \cdots [k_r,\max(k_r+1,t)],\]
						with $(k_1,\ldots,k_r)\in \mathbf N^r$. We obtain an expression
						\[\partial_{\mathbf x}^\ell\partial_\delta^{n-1}\partial_\omega \varphi=\sum_{r=1}^n\sum_{J\in \mathcal J_r(t)}\int_{J} \sum_{\mathbf p} \partial_\delta^N\partial_\rho^{L}\partial_\omega p_{\bullet}\cdot \prod_i \partial_\delta^{n_i}\partial_\rho^{\ell_i}\Phi_{\ell_0,\bullet}^{\mathbf s}\cdot \prod_j \partial_\delta^{m_j}\partial_\rho^{q_j}p_{\bullet}\cdot \prod_k \partial_{\mathbf x}^{u_k}G ~\mathrm{d}\mathbf s.\]
						Here for each factor of the product, the index $\bullet$ represents an index of the chart $\Xi=\{(U_\ell,f_\ell)\}$, that may be different for different factors. The tuples $\mathbf p$ satisfy the same assumptions as in Proposition \ref{prop: higher der gn func}.

					\end{rem}
					
					In the rest of this section, in view of future application in \cite{Moyspecstat}, we make the following assumption on the Hamiltonian.
					
					\begin{assum}Assume that $p=p_{\delta,\omega}$ now depends on two parameters $(\delta,\omega)$, and that $p_{0,\omega}=:p_0$ is independent of $\omega$. Moreover, assume that in a compact neighborhood of the energy shell $p_0^{-1}(E)$, we have for all $\ell\ge 0$:
						\[\forall n\ge 1,\forall m\in \{0,1\}, \quad |\partial_\rho^\ell \partial_\delta^n \partial_\omega^m p_{\delta,\omega}|\lesssim h^{-\beta \ell}, \quad \text{and} \quad \quad |\partial_\rho^\ell\partial_\omega p_{\delta,\omega}|\lesssim h^{-\beta (\ell+2)}.\]
						Moreover, assume that $\partial_\omega p_{\delta,\omega}$ is supported in a ball of radius $\le h^\beta$.
					\end{assum}
					For example, we may have $p_{\delta,\omega}=p_\delta+ \delta \omega V$, with $\delta\in [0,h^\alpha]$ and $2\beta<\alpha$, where $V$ is supported in a ball of radius $h^\beta$ and satisfies $\|V\|_{C^\ell}\lesssim h^{-\beta \ell}$. Note that under these assumptions, for each fixed $\omega$, the family of symbols $\{p_{\delta,\omega}\}$ satisfies the assumptions \ref{assum: classical flow}--\ref{assum: derivatives symbol} thus all the previous results apply to $p_{\delta,\omega}$. We now have a family of flows $\Phi^t=\Phi^t_{\delta,\omega}$ and families of generating functions $\varphi=\varphi_{\delta,\omega}$. By applying Proposition \ref{prop: higher der gn func}, we obtain the following corollary.
					
					\begin{cor}\label{cor: higher der gn func} Let $\varphi(t,x,\eta)$ be the long time generating function for the flow in coordinates defined in \S \ref{subsec: iteration}. For all $n\ge 1$ and $\ell\ge 0$, one has:
						\[|\partial_\delta^n\partial_{\mathbf x}^\ell \varphi|\lesssim h^{-\beta(\ell+2n-2)}(1+t) \mathrm{e}^{C(\ell+2n-2)t}.\] 
						Moreover, for all $n\ge 1$ and $\ell\ge 0$, we have
						\[|\partial_\delta^{n-1}\partial_{\omega}\partial_{\mathbf x}^\ell \varphi|\lesssim h^{-\beta(\ell+2n-5)}(1+t) \mathrm{e}^{C(\ell+2n-2)t}.\]
					\end{cor}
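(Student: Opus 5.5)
The plan is to plug the a priori bounds already established into the structural formula of Proposition \ref{prop: higher der gn func}, term by term. I would do this first for a short-time generating function, that is for $t$ in a window $(k-1,k+1)$ covered by charts as in the Remark following that proposition. I would then pass to the long-time generating function of \S\ref{subsec: iteration} by means of Lemma \ref{lem: delta derivative composition gen fun}.

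For the first estimate I apply the proposition with the roles of $\delta$ and $\omega$ merged, i.e. $V=\partial_\delta p_\delta$. This gives $\partial_\delta^n\partial_{\mathbf x}^\ell\varphi$ as a sum over $r\le n$ of integrals over $[0,t]^r$ (or over the sets in $\mathcal J_r(t)$ on a manifold). The factors are bounded as follows:
\begin{itemize}
\item $|\partial_\delta^N\partial_\rho^L V|\lesssim h^{-\beta L}$, by Assumption \ref{assum: derivatives symbol};
\item $|\partial_\delta^{n_i}\partial_\rho^{\ell_i}\Phi^{\mathbf s}|\lesssim h^{-\beta(\ell_i+2n_i-1)}\mathrm e^{C(\ell_i+n_i)t}$, by \eqref{eq: delta derivatives spacetime flow};
\item $|\partial_\delta^{m_j}\partial_\rho^{q_j}p|\lesssim h^{-\beta q_j}$, since $m_j\ge1$;
\item $|\partial_{\mathbf x}^{u_k}G|\lesssim h^{-\beta(u_k-1)}(1+t)\mathrm e^{2C(u_k-1)t}$, by Lemma \ref{lem: space derivatives generating function} (with the $(1+t)$ factor only for $u_k=1$).
\end{itemize}
The total power of $h^{-\beta}$ is then at most $\mathbf C-2N-\sum(m_j-1)\le \mathbf C\le \ell+2n-2$, because the parameter $\delta$ plays both roles. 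The exponential growth is controlled by $\mathbf D\le \ell+2n-2$, up to enlarging $C$. The time integrals contribute at most $t^r\le t^n$, and this polynomial loss is absorbed into $\mathrm e^{Ct}$ after increasing $C$. One factor $(1+t)$ is kept from the $G$-bounds (or from a single time integral when no $G$ appears), which gives the claimed shape $(1+t)\mathrm e^{C(\ell+2n-2)t}$. For the long-time function, Lemma \ref{lem: delta derivative composition gen fun} writes $\partial_\delta\phi_k$ as a sum of $k\le \mathfrak c|\log h|$ short-time $\partial_\delta\varphi_j$ evaluated at intermediate points. Higher $\delta$- and $\mathbf x$-derivatives then hit those intermediate points, whose derivatives are controlled by \eqref{eq: C ell norm interm point ell ge 1} and Proposition \ref{prop: intermediate points}. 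Faà di Bruno with the same counting gives the same bound, with at most a factor $k\lesssim 1+t$ from the sum.

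For the second estimate I run the same count with $V=\partial_\omega p$. The only change is the factor $\partial_\delta^N\partial_\rho^L V$, which now costs $h^{-\beta(L+2)}$ when $N=0$ and $h^{-\beta L}$ when $N\ge1$ (this is why the definition of $\mathbf C$ charges $2N$). A naive count therefore gives $h^{-\beta(\ell+2n-2)}$, so three more powers of $h^\beta$ must be gained. Here I would use the support assumption: $\partial_\omega p$ is supported in a ball of radius $h^\beta$. The trajectory $s\mapsto\Phi^s(G(t,\mathbf x))$ crosses that ball transversally, since $H_{p_0}\neq0$ near $\Sigma_E$ and the flow direction is nondegenerate in adapted coordinates. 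So for each fixed remaining time variable, the $s$-integral of the factor containing $V$ runs over a set of measure $\mathcal O(h^\beta)$, or $\mathcal O(h^\beta)$ per visit, with at most $\mathcal O(t)$ visits in time $t$ after separating returns using the Anosov geometry and absorbing the count into $\mathrm e^{Ct}$.

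This gains only one power. The main obstacle is recovering the remaining two, and I have not checked that the sketch below works. My plan is to exploit that $\mathbf C\le \ell+2n-2$ includes the $+2$ coming from $V$'s singular scaling only when no derivative has fallen on a $\delta$-factor. Concretely, I would redo the induction of Proposition \ref{prop: higher der gn func} while tracking separately the exponent attached to $V$. The base case $n=1$ would be handled directly: integrating by parts along the flow, $V=H_{p}(\text{antiderivative of }V\text{ along orbits})$, trades $\rho$-derivatives on $V$ for a gain $h^{\beta}$ on the measure of the set of times. This should produce $h^{-\beta(\ell-3)}$ for $n=1$, which the induction then propagates with the stated $h^{-\beta(\ell+2n-5)}$. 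If this refinement fails I would fall back on verifying the exponent bookkeeping case by case in the induction step.
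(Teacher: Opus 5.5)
Your treatment of the first estimate matches the paper's proof. You insert \eqref{eq: delta derivatives spacetime flow}, Lemma \ref{lem: space derivatives generating function} and the symbol bounds into \eqref{eq: formula high deriv gen fun}, then use $\mathbf C,\mathbf D\le \ell+2n-2$. For the long-time function, the paper does not differentiate the sum of short-time pieces. By Lemma \ref{lem: delta derivative composition gen fun}, the composed generating function still satisfies $\partial_\delta\varphi=-\int_0^t\partial_\delta p(\Phi^s(G))\,\mathrm ds$ along the whole orbit, so \eqref{eq: formula high deriv gen fun}, in the chart form of the remark following Proposition \ref{prop: higher der gn func}, applies directly. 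Your route would also need bounds on the $\delta$-derivatives of the intermediate points, which \eqref{eq: C ell norm interm point ell ge 1} and Proposition \ref{prop: intermediate points} do not provide.

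The real gap is in the second estimate. You charge the $V$-factor $h^{-\beta(L+2)}$ when $N=0$, and then try to close the resulting deficit with an unverified integration by parts. What you missed is that $\partial_\omega p$ is \emph{small}. Since $p_{0,\omega}=p_0$ does not depend on $\omega$, one has $\partial_\omega p_{\delta,\omega}=\int_0^\delta\partial_\delta\partial_\omega p_{\delta',\omega}\,\mathrm d\delta'$. Hence $|\partial_\rho^L\partial_\omega p|\lesssim \delta h^{-\beta L}\le h^{\alpha-\beta L}\le h^{-\beta(L-2)}$, because $\delta\le h^\alpha$ and $2\beta<\alpha$. This is the model $p_\delta+\delta\omega V$, and it is exactly the bound used in \eqref{eq: bound partial der p}; the printed exponent $\ell+2$ is weaker than what the hypotheses give.

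Combined with $|\partial_\delta^N\partial_\rho^L\partial_\omega p|\lesssim h^{-\beta L}$ for $N\ge 1$, the $V$-factor always costs at most $h^{-\beta(L+2N-2)}$. This is the real reason $\mathbf C$ charges $2N$: each $\delta$-derivative landing on $\partial_\omega p$ destroys its $h^{2\beta}$ prefactor. The pointwise count therefore gives $h^{-\beta(\mathbf C-2)}\le h^{-\beta(\ell+2n-4)}$. The single power $h^{\beta}$ from the time the orbit spends in $\operatorname{Supp}\partial_\omega p$, which you described correctly, then gives exactly $h^{-\beta(\ell+2n-5)}$. That is the paper's argument.

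Your integration-by-parts idea could not have produced the extra powers. A primitive of $V$ along flow lines has size $\lesssim h^\beta\sup|V|$ and loses $h^{-\beta}$ per transverse derivative, so it reproduces the same single gain as the support argument. Note also that with your own cost assignment the $N=0$ terms give $h^{-\beta(\mathbf C+2)}$, i.e.\ up to $h^{-\beta(\ell+2n)}$. So your bookkeeping of a three-power deficit was itself off.
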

					Note that the proof of the second item crucially relies on the fact that an orbit of the flow $\{\Phi^t_\delta\}_{t\in [0,T]}$ spends at most a time $\lesssim T h^{\beta}$ in the support of $V=\partial_\omega p$.

					\begin{proof} The estimates follow from the formula \eqref{eq: formula high deriv gen fun}, along with the upper bound \eqref{eq: delta derivatives spacetime flow} on derivatives of the flow and the bound from Lemma \ref{lem: space derivatives generating function} on the space derivatives of the generating function.
					\end{proof}

					We now incorporate time derivatives to the picture.
					
					\begin{prop}[Controlling all derivatives of the generating function]\label{prop: control all deriv gen fun} One has
						\begin{equation} \label{eq: controlling all der no omega}\forall n\ge 0, \quad \forall \ell,k\ge 0, \quad |\partial_\delta^n\partial_{\mathbf x}^\ell\partial_t^k \varphi|\lesssim h^{-\beta(\ell+k+2n-2)_+}\mathrm{e}^{C(\ell+2n)t}. \end{equation}
						and
						\begin{equation} \label{eq: controlling all der with omega}\forall n\ge 1, \quad \forall \ell,k\ge 0,|\partial_\delta^{n-1}\partial_{\omega}\partial_x^{\ell}\partial_t^k \varphi|\lesssim h^{-\beta(\ell+k+2n-5)}\mathrm{e}^{C(\ell+2n)t}. \end{equation}
					\end{prop}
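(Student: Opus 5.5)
The plan is to reduce everything to the case $k=0$, already covered by Lemma \ref{lem: space derivatives generating function} (for $n=0$) and Corollary \ref{cor: higher der gn func} (for $n\ge 1$). The tool is the Hamilton--Jacobi equation \eqref{eq: hamilton jacobi eq}, $\partial_t\varphi=-p_{\delta,\omega}(x,\nabla_x\varphi)$, used to trade time derivatives for space derivatives. We argue by induction on $k$, as in the proof of Lemma \ref{lem: spacetime derivatives generating function}. The long time generating function of \S\ref{subsec: iteration} still satisfies \eqref{eq: hamilton jacobi eq} locally in $t$ (see the remark after Proposition \ref{prop: iterated generating function}), so this identity is available.

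For $k\ge 1$ we write $\partial_\delta^n\partial_{\mathbf x}^\ell\partial_t^{k}\varphi=-\partial_\delta^n\partial_{\mathbf x}^\ell\partial_t^{k-1}\big[p_\delta(x,\nabla_x\varphi)\big]$ and expand with Fa\`a di Bruno's formula, treating $\delta$ both as an argument of $p_\delta$ and through $\nabla_x\varphi$. This gives a sum of terms
\[(\partial_\delta^{m}\partial_\rho^{r}p_\delta)(x,\nabla_x\varphi)\cdot\prod_{i=1}^{r}\partial_\delta^{n_i}\partial_{\mathbf x}^{\ell_i+1}\partial_t^{k_i}\varphi ,\]
with $m+\sum n_i=n$, $\sum \ell_i\le\ell$ (the remaining $\mathbf x$-derivatives fall on the $x$ slot of $p_\delta$, which is harmless), $\sum k_i=k-1$ and $\ell_i+n_i+k_i\ge 1$. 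By Assumption \ref{assum: derivatives symbol}, $|\partial_\delta^m\partial_\rho^r p_\delta|\lesssim h^{-\beta(r-2)_+}$ if $m=0$ and $\lesssim h^{-\beta r}=h^{-\beta(r+2m-2)}$ if $m=1$. For $m\ge 2$ the bound is $h^{-\beta r}$, which is even better than $h^{-\beta(r+2m-2)}$. In all cases the bound is $h^{-\beta(r+2m-2)_+}$.

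The induction hypothesis bounds each factor by $h^{-\beta(\ell_i+1+k_i+2n_i-2)_+}\mathrm{e}^{C(\ell_i+1+2n_i)t}$. The exponent bookkeeping then rests on
\[(r+2m-2)_+ +\sum_i(\ell_i+k_i+2n_i-1)_+\le (\ell+k+2n-2)_+ .\]
This holds because the left side is at most $(r+2m-2)_++\ell+(k-1)+2(n-m)-r$, and $r\ge 1$. The case $r+2m<2$, namely $m=0,r=1$, is checked directly and yields exactly the loss of one power matching the replacement of $\partial_t$ by a first derivative. The exponential factors multiply to at most $\mathrm{e}^{C(\ell+2n+r)t}$. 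Since $r\le \ell+k+n$, the stated growth $\mathrm{e}^{C(\ell+2n)t}$ is recovered by enlarging $C$: a time derivative of $\varphi$ produces no new exponential, because $\partial_t\varphi$ is a composition with the bounded symbol $p_\delta$ and $\nabla_x\varphi$ has $C^0$ bounds uniform in $t$. This gives \eqref{eq: controlling all der no omega}.

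For \eqref{eq: controlling all der with omega} we run the same induction with one extra $\partial_\omega$. That derivative falls either on $p$, producing $\partial_\omega\partial_\delta^m\partial_\rho^r p$, or on one factor $\partial_\delta^{n_i}\partial_{\mathbf x}^{\ell_i+1}\partial_t^{k_i}\varphi$. In the second case the induction hypothesis, or Corollary \ref{cor: higher der gn func} for $k_i=0$, supplies the improved exponent $-5$ in place of $-2$ for that factor. \textbf{The main obstacle} is the first case. The bound $|\partial_\rho^r\partial_\omega p|\lesssim h^{-\beta(r+2)}$ is worse than what we want, and there is no time integral in this term to exploit the short sojourn in $\operatorname{supp}V$ that was used for Corollary \ref{cor: higher der gn func}. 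I would count exponents carefully: with the $h^{-\beta(r+2)}$ loss, the sum is at most $\ell+k+2n-2+$ const. I expect the constant to fit, possibly after reading $(\cdot)$ with a positive part or using the $\partial_\delta\partial_\omega$ symbol bound $h^{-\beta r}$ when $m\ge 1$. If it does not fit, the fallback is to keep $\partial_t\varphi=-p(x,\nabla_x\varphi)$ unexpanded in the $\omega$ variable and estimate that term pointwise directly, since a pointwise evaluation of $V$ costs only $h^{-\beta(r+2)}$ with no $T$ factor.
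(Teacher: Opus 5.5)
Your proof of \eqref{eq: controlling all der no omega} is the paper's argument. You induct on $k$ through $\partial_t\varphi=-p(x,\nabla_x\varphi)$, and your $h$-power count is correct; the paper only packages the chain rule into Lemma \ref{lem: intermediate lemma partial delta p}. Your reason for discarding the extra $\mathrm{e}^{Crt}$ is not valid, however. For $\ell=n=0$ no enlargement of $C$ absorbs it. Also, $C^0$ bounds on $\nabla_x\varphi$ do not control the derivatives $\nabla_x^j\varphi$ that the time derivatives generate. If you track the exponential with the same count as the power of $h$, your inequality closes and yields a factor $\mathrm{e}^{C(\ell+k+2n)t}$, with $k$ in the exponent. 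That is harmless for fixed $k$ and $t\le\mathfrak c|\log h|$, and the paper's own exponential bookkeeping is no tighter, but it is what you have actually shown.

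The genuine gap is \eqref{eq: controlling all der with omega}. You do not prove it, and the remedies you suggest cannot work with the input you use. Take $(n,\ell,k)=(1,0,1)$, where the target is $|\partial_\omega\partial_t\varphi|\lesssim h^{2\beta}$. Here $\partial_\omega\partial_t\varphi=-(\partial_\omega p)(G)-\partial_\xi p(G)\cdot\partial_\omega\nabla_x\varphi$ contains $(\partial_\omega p)(G)$ itself. Starting from $|\partial_\omega p|\lesssim h^{-2\beta}$, no exponent count (with or without positive parts) and no pointwise estimate gives $h^{2\beta}$.

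The missing input is that $\partial_\omega p$ is \emph{small}, and this follows from the other hypotheses. Since $p_{0,\omega}=p_0$, you have $\partial_\omega p=\int_0^\delta\partial_\delta\partial_\omega p$. Hence $\|\partial_\omega p\|_{C^r}\lesssim\delta h^{-\beta r}\le h^{-\beta(r-2)}$, because $\delta\le h^\alpha\le h^{2\beta}$. In the model $p_\delta+\delta\omega V$ this is just $\partial_\omega p=\delta V$. The displayed bound with ``$+2$'' is a misprint. The paper uses $h^{-\beta(r-2)}$ in the case $m=0$, $m^\star=1$ of \eqref{eq: bound partial der p} to obtain \eqref{eq: one has for omega =omegai p}, which is what its proof of this estimate invokes.

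With this bound your case split closes, now with $m+\sum n_i=n-1$.
\begin{itemize}
\item If $\partial_\omega$ falls on $p$, the $r$ factors contribute at most $h^{-\beta(\ell+k+2n-3-2m-r)}$. The symbol $\partial_\omega\partial_\delta^m\partial_\rho^r p$ costs $h^{-\beta(r-2)}$ if $m=0$ and $h^{-\beta r}$ if $m\ge1$. The total exponent is therefore $\le\ell+k+2n-5$.
\item If $\partial_\omega$ falls on factor $i_0$, that factor is $\lesssim h^{-\beta(\ell_{i_0}+k_{i_0}+2n_{i_0}-2)}$, by induction or by Corollary \ref{cor: higher der gn func}. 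The total exponent is $(r+2m-2)_++\ell+k+2n-4-2m-r\le\ell+k+2n-5$, since $r\ge1$.
\end{itemize}
So no sojourn-time argument is needed once $k\ge1$. The factor $h^{\beta}$ that the time integral supplied at $k=0$ is exactly the extra $h^{-\beta}$ the target concedes per time derivative.
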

					\begin{proof} Let us now denote $G(t,\mathbf x)=(x,\nabla_x \varphi)$.
						
						\textbf{1.} We begin with \eqref{eq: controlling all der no omega}. For $n=0$, \eqref{eq: controlling all der no omega} is already known by Lemma \ref{lem: spacetime derivatives generating function}, thus we can assume $n\ge 1$. We perform an induction on $k$. For $k=0$ this is corollary \ref{cor: higher der gn func}. For $k=1$ there is nothing to do because $\partial_t \varphi=-p(G)$ and we can invoke Lemma \ref{lem: intermediate lemma partial delta p} below. Assume that the result holds up to rank $k\ge 1$, then use
						\[\partial_{\mathbf x}^\ell\partial_\delta^n \partial_t^{k+1} \varphi=-\partial_{\mathbf x}^\ell\partial_\delta^n \partial_t^{k} p(G).\]
						By Fa\`a di Bruno's formula:
						\[\partial_t^k p(G)=\sum_{j=1}^k \partial_\rho^jp(G)\sum_{k_1+\ldots+k_j=k}\prod_{i=1}^j \partial_t^{k_i}G.\]
						Then by the product rule and the chain rule
						\[\partial_{\mathbf x}^\ell\partial_\delta^n \partial_t^{k+1} \varphi=-\sum_{j=1}^k \sum_{k_1+\ldots+k_j=k}\sum_{r+\sum \ell_i=\ell}\sum_{m+\sum n_i=n} \partial_{\mathbf x}^{r}\partial_\delta^m(\partial_\rho^jp (G))\cdot \prod_{i=1}^j \partial_{\mathbf x}^{\ell_i}\partial_\delta^{n_i}\partial_t^{k_i}G.\]
						By Lemma \ref{lem: intermediate lemma partial delta p} below, since $j\ge 1$ we have
						\begin{equation} \label{eq: truc1}|\partial_{\mathbf x}^{r}\partial_\delta^m(\partial_\rho^jp (G))|\lesssim h^{-\beta(r+2m+j-1)}\mathrm{e}^{C(r+2m)t},\end{equation}
						and by the induction hypothesis:
						\begin{equation} \label{eq: truc2} \begin{aligned}\Big|\prod_i \partial_{\mathbf x}^{\ell_i}\partial_\delta^{n_i}\partial_t^{k_i}G\Big|& \lesssim \prod_i h^{-\beta(\ell_i+k_i+2n_i-1)} \mathrm{e}^{C(\ell_i+2n_i)t}\\ &\lesssim h^{-\beta((k-j)+(\ell-r)+2(n-m))}\mathrm{e}^{C(\ell-r+2n-2m)t}. \end{aligned}\end{equation}
						The product of the two terms \eqref{eq: truc1}--\eqref{eq: truc2} is $\lesssim h^{\beta(\ell+2n+k-1)}=h^{-\beta(\ell+2n+(k+1)-2)}$, as we wished.
						
						\textbf{2.} The proof of \eqref{eq: controlling all der with omega} also goes by induction on $k$ and follows the same lines, but also uses \eqref{eq: one has for omega =omegai p}.
					\end{proof}

					It remains to prove the next lemma.
					\begin{lem}\label{lem: intermediate lemma partial delta p} Let $G(t,\mathbf x)=(x,\nabla_x \varphi(t,x,\eta))$. Let $n\ge 1, \ell\ge 0$. Then,
						\begin{equation} \label{eq: one has for omega =delta p} \forall \mu\ge 0, \quad \big|\partial_{\mathbf x}^\ell\partial_\delta^{n}(\partial_\rho^{\mu}p_{\delta,\omega}(G))\big|\lesssim h^{-\beta(\ell+\mu+2n-1)}\mathrm{e}^{C(\ell+2n)t},\end{equation}
						and
						\begin{equation} \label{eq: one has for omega =omegai p} \forall \mu\ge 1, \quad  \big|\partial_{\mathbf x}^\ell \partial_\delta^{n-1}\partial_{\omega_i} (\partial_\rho^{\mu}p_\delta(G))\big|\lesssim h^{-\beta(\ell+\mu+2n-4)}\mathrm{e}^{C(\ell+2n)t},\end{equation}
						Also, if $\ell\ge 0$ and $\mu\ge 1$ then
						\begin{equation}\label{eq: when no delta der on p} \quad \big|\partial_{\mathbf x}^\ell(\partial_\rho^{\mu}p_{\delta,\omega}(G))\big|\lesssim h^{-\beta(\ell+\mu-1)}\mathrm{e}^{C\ell t}.\end{equation}
					\end{lem}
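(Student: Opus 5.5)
The plan is to expand each composite derivative with Fa\`a di Bruno's formula and the product rule, and then bound every factor with estimates already available. Write $G=(x,\nabla_x\varphi)$. For $\partial_{\mathbf x}^\ell\partial_\delta^n\big[(\partial_\rho^\mu p)(G)\big]$ we split $\partial_\delta$ into the explicit one on $p$ (order $n_1$) and the ones on $G$ (order $n_2$), with $n_1+n_2=n$. This gives a sum of terms
\[(\partial_\rho^{\mu+j}\partial_\delta^{n_1}p)(G)\cdot\prod_{i=1}^j \partial_{\mathbf x}^{\ell_i}\partial_\delta^{m_i}G,\qquad \sum_i \ell_i=\ell,\quad \sum_i m_i=n_2,\quad \ell_i+m_i\ge1 .\]
The derivatives of $G$ are derivatives of $\nabla_x\varphi$. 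We control them with Corollary \ref{cor: higher der gn func} when $m_i\ge1$, which gives
\[|\partial_{\mathbf x}^{\ell_i}\partial_\delta^{m_i}G|\lesssim h^{-\beta(\ell_i+2m_i-1)}\mathrm e^{C(\ell_i+2m_i)t}.\]
When $m_i=0$ we use Lemma \ref{lem: space derivatives generating function}, which gives $h^{-\beta(\ell_i-1)}\mathrm e^{C\ell_i t}$ (and $O(1)$ when $\ell_i=1$). The symbol factor is bounded by the assumptions on $p$: the bound is $h^{-\beta(\mu+j)}$ if $n_1\ge1$, and $1+h^{-\beta(\mu+j-2)}$ if $n_1=0$.

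Next comes the exponent bookkeeping. Suppose $n_1\ge1$ and every $G$-factor is counted with its $-1$. The total power of $h^{-\beta}$ is at most
\[(\mu+j)+\sum_i(\ell_i+2m_i-1)=\mu+\ell+2n_2 \le \mu+\ell+2n-2 .\]
This is even better than claimed. Suppose instead $n_1=0$, so $n_2=n$. The power is at most
\[(\mu+j-2)_+ + \ell+2n-j .\]
This is $\le \ell+\mu+2n-1$ whenever $\mu+j\ge2$. When $\mu+j\le1$, the first term vanishes and we get $\ell+2n-j\le \ell+2n-1+\mu$ because $j\ge1$ is forced by $n\ge1$.

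For \eqref{eq: when no delta der on p} there are no $\delta$-derivatives. We have $\mu\ge1$, and the power is $(\mu+j-2)_++\ell-j\le \ell+\mu-1$. The exponentials multiply to at most $\mathrm e^{C(\ell+2n)t}$ because the indices add up.

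For \eqref{eq: one has for omega =omegai p}, the $\partial_\omega$ either falls on $p$ or on one factor of $G$.

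If it falls on $p$, we use the assumption $|\partial_\rho^k\partial_\omega p|\lesssim h^{-\beta(k+2)}$, or $h^{-\beta k}$ when $\delta$-derivatives are present. In the worst case $n_1=0$, the factor is $h^{-\beta(\mu+j+2)}$ against the $G$-factors $h^{-\beta(\ell+2(n-1)-j)}$. This totals $\ell+\mu+2n$.

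This is the main obstacle. The claimed exponent $\ell+\mu+2n-4$ is four powers better. The naive pointwise bound loses, and one must exploit that $\partial_\omega p$ is supported in an $h^\beta$-ball. I would try to use the restriction $\mu\ge1$ together with the smallness coming from integration over time: the only places where the $\omega$-dependence enters $G$ are the Duhamel-type integrals, where the orbit spends time $\lesssim Th^\beta$ in the support. Failing a direct gain, I would reorganise so that the pointwise $\partial_\omega p$ term is paired with the improved estimate of Corollary \ref{cor: higher der gn func}.

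If instead $\partial_\omega$ falls on a $G$-factor, we use the second estimate of Corollary \ref{cor: higher der gn func}, $h^{-\beta(\ell_i+2m_i-4)}$, which yields a gain of $3$. Combined with the remaining counting, this gives the stated bound. I expect the matching of exponents in the first subcase to need care, possibly a weaker but sufficient version.
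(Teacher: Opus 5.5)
Your treatment of \eqref{eq: one has for omega =delta p} and \eqref{eq: when no delta der on p} is correct and is essentially the paper's argument: Fa\`a di Bruno in $(\mathbf x,\delta,\omega)$, Lemma \ref{lem: space derivatives generating function} and Corollary \ref{cor: higher der gn func} for the $G$-factors, then exponent counting. The same holds for the subcase of \eqref{eq: one has for omega =omegai p} where $\partial_\omega$ lands on a $G$-factor.

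The gap is the remaining subcase of \eqref{eq: one has for omega =omegai p}, where $\partial_\omega$ lands on $p$ and no $\delta$-derivative does. The remedy you propose cannot work there, because this lemma contains no time integral. The term is the pointwise value $(\partial_\rho^{\mu+j}\partial_\omega p)(G)$, and the $h^\beta$-support of $\partial_\omega p$ gives nothing pointwise. The gain from the short time an orbit spends in that support is already encoded in the second estimate of Corollary \ref{cor: higher der gn func}, which you use in the other subcase. In fact, for $n=1$, $\ell=0$ the expansion contains $(\partial_\rho^\mu\partial_\omega p)(G)$ with no $G$-factor at all. The only other term is $(\partial_\rho^{\mu+1}p)(G)\,\partial_\omega G$, which is already within the claimed bound. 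So \eqref{eq: one has for omega =omegai p} amounts to the pointwise bound $|\partial_\rho^\mu\partial_\omega p|\lesssim h^{-\beta(\mu-2)}$. With only $h^{-\beta(\mu+2)}$ available, no reorganisation can recover the four missing powers.

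The missing idea is that $\partial_\omega p$ is itself small, and this follows from the standing hypotheses. Since $p_{0,\omega}=p_0$ does not depend on $\omega$, $\partial_\omega p$ vanishes at $\delta=0$, while $|\partial_\rho^k\partial_\delta\partial_\omega p|\lesssim h^{-\beta k}$. Integrating over $\delta\in[0,h^\alpha]$ and using $\alpha>2\beta$ gives
\[|\partial_\rho^k\partial_\omega p_{\delta,\omega}|\lesssim h^{\alpha-\beta k}\le h^{-\beta(k-2)}.\]
This is exactly the mechanism that gave you $|\partial_\rho^k p_\delta|\lesssim 1+h^{-\beta(k-2)}$ in \eqref{eq: one has for omega =delta p}. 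The ``$h^{-\beta(\ell+2)}$'' printed in the assumption is a slip for $h^{-\beta(\ell-2)}$. The paper's proof uses $h^{-\beta(\mu+r+k-2)}$ in the case $(m,m^\star)=(0,1)$, and the model $p_\delta+\delta\omega V$ confirms this reading.

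With this bound your own count closes. The symbol factor contributes $\mu+j-2$, and the $G$-factors contribute $\sum_i(\ell_i+2m_i-1)=\ell+2(n-1)-j$, for a total of exactly $\ell+\mu+2n-4$.
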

					\begin{proof}
						
						\textbf{1.} We first prove \eqref{eq: when no delta der on p}. The result follows from Fa\`a di Bruno's formula:
						\[\partial_{\mathbf x}^{\ell}(\partial_\rho^\mu p(G))=\sum_{r=0}^{\ell}\sum_{\ell_1+\ldots+\ell_r=\ell}|\partial^{r+\mu}_\rho p|\cdot \prod_{i=1}^r \big|\partial_{\mathbf x}^{\ell_i}G\big|.\]
						This is
						\begin{itemize}
							\item $\le h^{-\beta(\mu-2)_+}$ if $\ell=0$, hence $\le h^{-\beta(\ell+\mu-1)}$ since $\mu\ge 1$.
							\item $\le h^{-\beta(\ell-1)}h^{-\beta(\mu-1)_+}+h^{-\beta(\ell+\mu-2)}\lesssim h^{-\beta(\ell+\mu-2)}$ if $\ell\ge 1$.
						\end{itemize}

						\textbf{2.} We now assume $n\ge 1$. Let $\zeta\in \{\delta,\omega\}$. One has
						\[\partial_\delta^{n-1}\partial_\zeta \big(\partial_\rho^{\mu}p_\delta)(G)=\sum_{r,(m,m^\star)}\sum_{\{n_i,n_i^\star\}_{i\in I}} (\partial_\rho^{\mu+r}\partial_\delta^m\partial_\zeta^{m^\star}p_\delta)\cdot \prod_i \partial_\delta^{n_i}\partial_\zeta^{n_i^\star}G,\]
						where:
						
						\begin{itemize}
							\item $r+m+m^\star+\sum_{i\in I} (n_i+n_i^\star-1)=n$.
							\item $m+\sum n_i=n-1$.
							\item $m^\star+\sum n_i^\star=1$.
							\item $r\ge 0$ and $n_i+n_i^\star\ge 1$ for all $i\in I$.
						\end{itemize}
						The case $r=0$ corresponds to the case where $I$ is empty. As so,
						\begin{equation} \partial_{\mathbf x}^\ell \partial_\delta^{n-1}\partial_\zeta \big(\partial_{\rho}^{\mu}p_\delta)(G)=\sum_{\ell_1+\ell_2=\ell} \sum_{r,(m,m^\star)}\sum_{\{n_i,n_i^\star\}_{i\in I}} \partial_{\mathbf x}^{\ell_1}\Big[(\partial_\rho^{\mu+r}\partial_\delta^m\partial_\zeta^{m_\star}p_\delta(G)\Big]\cdot \partial_{\mathbf x}^{\ell_2} \Big[\prod_{i\in I} \partial_\delta^{n_i}\partial_\zeta^{n_i^\star}G\Big].\end{equation}
						Let us develop the first term, using Fa\`a di Bruno's formula:
						\begin{equation} \partial_{\mathbf x}^{\ell_1}\Big[\partial_\rho^{\mu+r}\partial_\delta^m\partial_\zeta^{m_\star}p_\delta(G)\Big]=\sum_{k=0}^{\ell_1}\sum_{L_1+\ldots+L_k=\ell_1} \partial_\rho^{k+\mu+r}\partial_\delta^m\partial_\zeta^{m^\star}p_\delta\cdot \Big[ \prod_{j=1}^{\ell_1} \partial_{\mathbf x}^{L_j}G\Big]. \end{equation}
						By the previous Corollary \ref{cor: higher der gn func} on space derivatives, we have
						\begin{equation} \label{eq: bound prod d G} \Big|\prod_{j=1}^{\ell_1}\partial_{\mathbf x}^{L_j}G\Big|\lesssim h^{-\beta(\ell_1-k)}\mathrm{e}^{C(\ell_1-k)t}.\end{equation}
						Now we must distinguish whether $\zeta=\omega$ or $\zeta=\delta$.
						
						\textbf{Case 1: $\zeta=\omega$.} First, by the assumption on $p_{\delta,\omega}$:
						\begin{equation} \label{eq: bound partial der p} |\partial_x^{\mu+k+r}\partial_\delta^m\partial_{\omega}^{m_\star}p_\delta| \le \left\{
							\begin{array}{ll} 1+h^{-\beta(\mu+r+k-2)} & \text{if $m+m^\star=0$,} \\
								h^{-\beta(\mu+r+k-2)}	  &	\text{if $m=0$ and $m^\star=1$,} \\
								h^{-\beta(\mu+r+k)}     & \text{if $m\ge 1$.} \end{array}\right.\end{equation}
						By Corollary \ref{cor: higher der gn func} again, one has
						\begin{equation} \label{eq: bound prod G n i star} \begin{aligned}
								\Big|\partial_{\mathbf x}^{\ell_2}\Big[\prod_i \partial_\delta^{n_i}\partial_\omega^{n_i^\star}G\Big]\Big|
								& \le \Big|\sum_{\sum_i \ell_2^{(i)}=\ell_2}\prod_i \partial_{\mathbf x}^{\ell_2^{(i)}}\partial_\delta^{n_i}\partial_{\omega}^{n_i^\star} G\Big|\\
								&\lesssim \sum_{\sum_i \ell_2^{(i)}=\ell_2}\Big(\prod_{i~:~n_i^\star=0} h^{-\beta(\ell_2^{(i)}+2n_i-1)}\Big)\Big(\prod_{i~:~n_i^\star=1}  h^{-\beta(\ell_2^{(i)}+2n_i+2n_i^\star-4)}\Big) \\
								& \lesssim h^{-\beta (\ell_2+\sum n_i+\sum(n_i-1)-|n^\star|)}\mathrm{e}^{C(\ell_2+\sum n_i+\sum (n_i-1)+ 2|n^\star|)t},
						\end{aligned}\end{equation}
						where $|n^\star|=\sum n_i^\star\in \{0,1\}$. From the conditions on $m,m^\star,r,\{n_i,n_i^\star\}_{i\in I}$, we have
						\[\sum n_i+\sum (n_i-1)=2n-2m-r-2.\] By combining \eqref{eq: bound prod d G},\eqref{eq: bound partial der p},\eqref{eq: bound prod G n i star}, we find that for any $k\in \{0,\ldots,\ell_1\}$, the product
						\[|\partial_x^{\mu+k+r}\partial_\delta^m\partial_{\omega}^{m_\star}p_\delta|\cdot \Big|\prod_{j=1}^{\ell_1}\partial_{\mathbf x}^{L_j}G\Big|\cdot\Big|\partial_{\mathbf x}^{\ell_2}\prod_{i\in I}   \partial_\delta^{n_i}\partial_\omega^{n_i^\star}G\Big|\]
						is bounded by\[\lesssim\mathrm{e}^{C(\ell+2n)t} h^{-\beta (\ell-k)} h^{-\beta(2n-r-2m-2-|n^\star|)}\cdot \left\{
						\begin{array}{ll} 1+h^{-\beta(\mu+r+k-2)} & \text{if $m+m^\star=0$,} \\
							h^{-\beta(\mu+r+k-2)}	  &	\text{if $m=0$ and $m^\star=1$,} \\
							h^{-\beta(\mu+r+k)}     & \text{if $m\ge 1$.} \end{array}\right.\]
						We control the three contributions separately.
						\begin{itemize}
							\item If $m+m^\star=0$ then $|n^\star|=1$ and $r\ge 1$, leading to
							\[h^{-\beta (\ell-k)} h^{-\beta(2n-r-2m-2-|n^\star|)}\lesssim h^{-\beta(\ell-k+2n-r-3)}\lesssim h^{-\beta(\ell+2n-4)},\]
							and also
							\[h^{-\beta (\ell-k)} h^{-\beta(2n-r-2m-2-|n^\star|)}h^{-\beta(\mu+r+k-2)}\lesssim h^{-\beta(\ell+2n+\mu-5)}.\]
							\item For $(m,m^\star)=(0,1)$, one has $|n^\star|=0$ hence
							\[h^{-\beta (\ell-k)} h^{-\beta(2n-r-2m-2-|n^\star|)}h^{-\beta(\mu+r+k-2)} \lesssim h^{-\beta(\ell+2n+\mu-4)}.\]
							\item For $m\ge 1$, one has
							\[h^{-\beta (\ell-k)} h^{-\beta(2n-r-2m-2-|n^\star|)} h^{-\beta(\mu+r+k)}\lesssim h^{-\beta(\ell+2n+\mu-4)}.\]
						\end{itemize}
						In each case, the contribution is $\lesssim h^{-\beta(\ell+2n+\mu-4)}$. This proves \eqref{eq: one has for omega =omegai p}.
						
						\textbf{Case 2: $\zeta=\delta$.} 
						In this case we just replace $m\leftarrow m+m^\star$ and replace $n_i\leftarrow n_i+n_i^\star$ and work with
						\begin{itemize}
							\item $r+m+\sum (n_i-1)=n$.
							\item $m+\sum n_i=n$.
							\item $r\ge 0$ and $n_i\ge 1$. In particular $r=0$ if and only if $I$ is empty.
						\end{itemize}
						Then, one has
						\[|\partial_\rho^{\mu+k+r}\partial_\delta^{m}p_\delta| \le \left\{
						\begin{array}{ll} 1+h^{-\beta(\mu+r+k-2)} & \text{if $m=0$,} \\
							h^{-\beta(\mu+r+k)}	  &	\text{if $m\ge 1$.} \end{array}\right.\]
						Also,
						\[\Big|\partial_{\mathbf x}^{\ell_2}\Big[\prod_i \partial_\delta^{n_i}G\Big]\Big|\lesssim h^{-\beta (\ell_2+2n-2m-r)}\mathrm{e}^{C(\ell_2+2n-2m-r)t}.\]
						Eventually, each term of the form
						\[|\partial_x^{\mu+k+r}\partial_\delta^mp_\delta|\cdot \Big|\prod_{j=1}^{\ell_1}\partial_{\mathbf x}^{L_j}G\Big|\cdot\Big|\partial_{\mathbf x}^{\ell_2}\prod_{i\in I}   \partial_\delta^{n_i}G\Big|\]
						is bounded by
						\[\lesssim\mathrm{e}^{C(\ell+2n)t} h^{-\beta (\ell-k)} h^{-\beta(2n-r-2m)}\cdot \left\{
						\begin{array}{ll} 1+h^{-\beta(\mu+r+k-2)} & \text{if $m=0$,} \\
							h^{-\beta(\mu+r+k)}	  &	\text{if $m\ge 1$.} \end{array}\right.\]
						As before, by noticing that $m=0$ forces $r\ge 1$, we deduce that all the terms are $\lesssim h^{-\beta(\ell+2n+\mu-1)}$. This finishes the proof of \eqref{eq: one has for omega =delta p}.
					\end{proof}

	\section{Fourier Integral Operators} \label{sec: FIO}

We recall some elements of the local theory of (semiclassical) Fourier integral operators. We only consider phase functions of the form $\Phi(x,y,\eta)=\varphi(x,\eta)-\langle y,\eta\rangle$, which are associated with local symplectomorphisms $\kappa:(y,\eta)\mapsto (x,\xi)$ whose graph project diffeomorphically on the $(x,\eta)$ coordinates. A possible reference is \cite[\S 4]{NZ09}. The setting below is slightly exotic: we consider $h$-dependent phase functions that may blow up as $h\to 0$. Such a situation was already encountered by Blair \cite{Blair2022} who obtained a long-time van Vleck formula for the kernel of quantum propagators on the Euclidean space. In contrast to our setting, the Hamiltonians considered by Blair do not blow up as $h\to 0$, however the tools developed in \cite{Blair2022} allow dealing with this new situation. We refer to the appendix for the notions of pseudodifferential calculus used in this section.

\begin{defi}Let $\varphi(x,\eta)$ be a local generating function of a symplectomorphism $\kappa$ defined on a subset $U_x\times U_\eta\subset \mathbf{R}^{2d}$ (that is, one has $\kappa(\nabla_\eta \varphi,\eta)=(x,\nabla_x \varphi)$ for all $(x,\eta)\in U_x\times U_\eta$), and let $a_h(x,y,\eta)\in S^{\rm comp}_\beta(T^*\mathbf{R}^d)$ be a symbol supported on $U_x\times U_\eta$ in the $(x,\eta)$ variables. One defines a \emph{Fourier integral operator} associated with $\kappa$, with phase $\varphi$ and amplitude $a$, by
	\begin{equation} \label{eq: defi FIO nice form}\mathcal I[\varphi,a] u(x):=\frac{1}{(2\pi h)^d}\int_{\mathbf{R}^{2d}} \mathrm{e}^{\frac{\mathrm{i}}{h}(\varphi(x,\eta)-\langle y,\eta\rangle)}a_h(x,y,\eta)u(y)\mathrm{d}y\mathrm{d}\eta.\end{equation}
	The operator $\mathcal I[\varphi,a]$ is called a \emph{local semiclassical quantization} of $\kappa$. We will allow the phase and amplitude to depend on a time parameter $t$, but will still denote by $\mathcal I[\varphi,a]$ the resulting operator.\end{defi}

\begin{ex} When $\varphi(x,\eta)=\langle x,\eta\rangle$, one recovers pseudodifferential operators. In particular, if $\tilde a(x,y,\eta)=a\big(\frac{x+y}{2},\eta\big)$ then 
	\[\mathcal I[\langle x,\eta\rangle,\tilde a]=\mathrm{Op}_h^{\rm w}(a).\]\end{ex}
\subsection{A stationary phase lemma}
We quote a result of Blair \cite{Blair2022} that gives a quantitative stationary phase expansion of certain oscillatory integrals with singular phase functions and amplitudes, in a slightly simplified version.

\begin{prop}[Stationary phase estimate, {\cite[Theorem 1.1]{Blair2022}}] \label{prop: stat phase blair} Let $\phi_h(x)$ be a phase function and $u_h(x)$ be an amplitude depending on a semiclassical parameter $h$, and consider the oscillatory integral
	\[I_h:=\int_{\mathbf{R}^d} \mathrm{e}^{\frac{\mathrm{i}}{h} \phi_h(x)}u_h(x)\mathrm{d}x.\]
	Let $\sigma,\mu\in [0,1/2)$, and assume $5\mu+2\sigma<1$. Set $\rho:=\sigma+\mu$. We make the following assumptions:
	\begin{itemize}\item $\|\phi_h\|_{C^\ell}\le C_\ell h^{-\mu-\sigma(\ell-2)}$ for $\ell\ge 2$.
		\item $\phi_h$ admits a single critical point at $x=0$, and $\big\|(\nabla^2\phi_h(0))^{-1}\big\|\lesssim 1$.
		\item The amplitude $u_h$ is supported in a ball of radius $\varepsilon h^\rho$ for $\varepsilon$ small enough depending only on the constants involved in the upper bounds on $\|\phi\|_{\dot C^\ell}$ for $\ell\le 3$. Moreover, we assume $\|u_h\|_{C^\ell}\lesssim h^{-\rho \ell}$.\end{itemize} Then, we have the following expansion in powers of $h$:
	\begin{equation}\label{eq: stat phase blair}I_h=(2\pi h)^{\frac d2}\frac{\mathrm{e}^{\frac{\mathrm{i}}{h} \phi_h(0)+ \frac{\mathrm{i}\pi}{4}\operatorname{sgn}(\nabla^2\phi_h(0))}}{|\det(\nabla^2 \phi_h(0))|^{\frac 12}}\sum_{j=0}^{N-1}h^j\mathcal L_{j,h}{u_h}\Big|_{x=0}+ \mathcal O(h^{(1-2\rho)N+\frac d2}).\end{equation}
	The error depends only on the implied constants in the estimates of $\|\phi_h\|, \|u_h\|$.
	The operators $\mathcal L_{j,h}$ are given by
	\begin{equation} \label{eq: L j h u from blair}\mathcal L_{j,h}u=\sum_{\ell-m=j}\sum_{2\ell\ge 3m} \frac{\mathrm{i}^{-j}2^{-\ell}}{\ell!m!} \big\langle \big(\nabla^2\phi_h(0)\big)^{-1}D_x,D_x\big\rangle^{\ell}(g_h^m u),\end{equation}
	where $g_h$ is obtained by removing the quadratic part of $\phi_h$:
	\[g_h(x)=\phi_h(x)-\phi_h(0)-\frac{1}{2}\langle \nabla^2\phi_h(0) x,x\rangle. \]
\end{prop}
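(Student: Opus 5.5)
The plan follows the classical proof of the stationary phase lemma, keeping track of how the singular bounds on $\phi_h$ and $u_h$ propagate. Since $u_h$ is supported in a ball of radius $\varepsilon h^\rho$, Taylor expansion of $\nabla\phi_h$ at $0$ gives $|\nabla\phi_h(x)-\nabla^2\phi_h(0)x|\lesssim \|\phi_h\|_{C^3}|x|^2\lesssim h^{-\mu-\sigma}|x|^2\le \varepsilon h^{\rho-\mu-\sigma}|x|=\varepsilon|x|$. Because $\|(\nabla^2\phi_h(0))^{-1}\|\lesssim 1$, this makes $0$ a nondegenerate critical point, and it is the only one on the support once $\varepsilon$ is small relative to the $C^3$ constant. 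We then write $\phi_h(x)=\phi_h(0)+\frac12\langle Qx,x\rangle+g_h(x)$ with $Q=\nabla^2\phi_h(0)$. Since $g_h$ vanishes to third order at $0$, on the support we have $|\partial^k g_h(x)|\lesssim h^{-\mu-\sigma(k-2)}|x|^{(3-k)_+}$ for $k\le 3$ and $\lesssim h^{-\mu-\sigma(k-2)}$ for $k\ge 3$. In particular $h^{-1}|g_h|\lesssim h^{-1-\mu-\sigma}|x|^3\lesssim \varepsilon^3 h^{3\rho-1-\mu-\sigma}$, which is not small in general. So we cannot simply expand $\mathrm{e}^{\mathrm{i} g_h/h}$ in a plain Taylor series. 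Instead we keep it as a factor and use the Fourier-transform form of the Gaussian integral.

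Concretely, Plancherel and the formula for the Fourier transform of $\mathrm{e}^{\frac{\mathrm{i}}{2h}\langle Qx,x\rangle}$ give
\[\int \mathrm{e}^{\frac{\mathrm{i}}{2h}\langle Qx,x\rangle}v(x)\,\mathrm{d}x=(2\pi h)^{d/2}\frac{\mathrm{e}^{\frac{\mathrm{i}\pi}{4}\operatorname{sgn}Q}}{|\det Q|^{1/2}}\big(\mathrm{e}^{-\frac{\mathrm{i} h}{2}\langle Q^{-1}D,D\rangle}v\big)(0).\]
We apply this with $v=\mathrm{e}^{\mathrm{i} g_h/h}u_h$. Expanding the operator exponential to order $K$, the remainder is bounded by $h^{K}\|v\|_{H^{2K+d/2+1}}$ up to constants. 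Each derivative falling on $\mathrm{e}^{\mathrm{i} g_h/h}$ produces $h^{-1}\partial g_h$. On the support $|h^{-1}\partial g_h|\lesssim h^{-1-\mu-\sigma}h^{2\rho}=h^{\sigma+\mu-1}\cdot h^{0}$, so this looks bad. The key is the standard bookkeeping behind \eqref{eq: L j h u from blair}: evaluating at $x=0$, a term $\langle Q^{-1}D,D\rangle^\ell(g_h^m u)$ vanishes unless $2\ell\ge 3m$, because $g_h^m$ vanishes to order $3m$. This is why we expand $\mathrm{e}^{\mathrm{i} g_h/h}=\sum_{m<M}(\mathrm{i} g_h/h)^m/m!+R_M$. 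We then estimate the Taylor remainder $R_M$ by $|g_h/h|^M$ together with its derivatives. Each such derivative costs at most $h^{-\rho}$, since $|\partial g_h/g_h|\lesssim |x|^{-1}+\dots$ and on the support derivatives effectively cost $h^{-\rho}$ (the localization scale). The exponents work out because $\mu+3\sigma$ and $5\mu+2\sigma<1$ make $h^{-1}|g_h|\lesssim h^{3\rho-1-\mu-\sigma}\cdot(\text{support size})$ as a gain whenever paired with enough powers of $h$ from the Gaussian operator.

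For the main terms, each $h^j\mathcal L_{j,h}u_h(0)$ with $\ell-m=j$ and $2\ell\ge3m$ is bounded using $\|g_h^m u_h\|_{C^{2\ell}}$ at $0$. Derivatives on $u_h$ cost $h^{-\rho}$, and derivatives on $g_h$ cost $h^{-\mu-\sigma(k-2)}$ with at least $3$ needed per factor. This gives $h^{\ell-m}h^{-m\mu}h^{-\sigma(2\ell-2m)}\le h^{j(1-2\rho)}$ up to adjusting. Summing the terms with $j\ge N$ and collecting all remainders gives $\mathcal O(h^{(1-2\rho)N+d/2})$. The main obstacle is this exponent bookkeeping, showing that every discarded term (the tail of the Gaussian operator expansion and $R_M$) gains $h^{(1-2\rho)}$ per order under $5\mu+2\sigma<1$. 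I would take $M$ and $K$ large in terms of $N$ and check the worst case, which is when all derivatives land on a single $g_h$ factor. Since this is precisely \cite[Theorem 1.1]{Blair2022}, the proof in the paper simply invokes that reference.
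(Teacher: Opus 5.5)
Your Gaussian formula, the selection rule $2\ell\ge 3m$ (from $g_h^m$ vanishing to order $3m$), and the bound $h^j\partial^{2\ell}(g_h^mu_h)(0)=\mathcal O(h^{j(1-2\rho)})$ for the main terms are all fine. The gap is in the step meant to control what you discard. You propose to bound the Taylor remainder $R_M=\mathrm{e}^{\mathrm{i} g_h/h}-\sum_{m<M}(\mathrm{i} g_h/h)^m/m!$ ``by $|g_h/h|^M$, with derivatives costing $h^{-\rho}$''. Neither half holds on $\operatorname{Supp}u_h$.

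First, as you computed yourself, $|g_h/h|$ reaches $\varepsilon^3h^{2\rho-1}\gg 1$ near $|x|\sim\varepsilon h^\rho$. So $|R_M|\le|g_h/h|^M/M!$ is a loss growing with $M$: the trivial bound is $\int|R_Mu_h|\lesssim h^{(2\rho-1)M+\rho d}$. Second, $R_M=\frac{(\mathrm{i} g_h/h)^M}{(M-1)!}\int_0^1(1-s)^{M-1}\mathrm{e}^{\mathrm{i} sg_h/h}\,\mathrm{d}s$ still contains the oscillating factor. Its derivatives cost $h^{-1}|\nabla g_h|\sim h^{\rho-1}$ there, not $h^{-\rho}$; the ratio $|\partial g_h|/|g_h|$ is irrelevant once $g_h$ sits in an exponent. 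Paired with the Gaussian operator, which gains only $h$ per two derivatives, this gives $h\cdot h^{2(\rho-1)}=h^{2\rho-1}$ per order, i.e.\ a loss. Your closing sentence is not an argument: the quantity $h^{3\rho-1-\mu-\sigma}=h^{2\rho-1}$ you write is itself a loss. Note also that the hypothesis $5\mu+2\sigma<1$ is never actually used in your proposal, which signals that the real mechanism is missing. (The paper gives no proof here and just quotes Blair.)

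The missing idea is a second localization at an intermediate scale $h^\gamma$, with $\tfrac{1+\rho}{3}\le\gamma<\tfrac{1-\mu}{2}$. Split $u_h=\chi(x/h^\gamma)u_h+(1-\chi(x/h^\gamma))u_h$.

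\emph{Inner piece.} On $|x|\le 2h^\gamma$ one checks $|\partial^k(g_h/h)|\lesssim h^{-\gamma k}$ for all $k\ge0$; this uses $3\gamma\ge 1+\rho$. So $v=\mathrm{e}^{\mathrm{i} g_h/h}\chi(x/h^\gamma)u_h$ is a genuine symbol at scale $h^\gamma$, and the tail of $\mathrm{e}^{-\frac{\mathrm{i} h}{2}\langle Q^{-1}D,D\rangle}v(0)$ gains $h^{1-2\gamma}$ per order. The first $K$ terms only involve derivatives of order $<2K$ of $\mathrm{e}^{\mathrm{i} g_h/h}u_h$ at $0$. There you may replace $\mathrm{e}^{\mathrm{i} g_h/h}$ by its Taylor polynomial \emph{exactly} once $3M\ge 2K$, since $R_M$ vanishes to order $3M$. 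Thus $R_M$ never has to be estimated as an integral. Regrouping by $j=\ell-m$ gives the $\mathcal L_{j,h}$; for $j<N$ one has $\ell\le 3j$, so $K\ge 3N$ suffices.

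\emph{Annulus.} On $h^\gamma\le|x|\le\varepsilon h^\rho$, integrate by parts against the full phase with $L=\frac{h}{\mathrm{i}}|\nabla\phi_h|^{-2}\nabla\phi_h\cdot\nabla$, using $|\nabla\phi_h(x)|\ge c|x|$. Derivatives of $\nabla\phi_h/|\nabla\phi_h|^2$ cost $h^{-\mu}|x|^{-1}$, using $h^{-\sigma}\le|x|^{-1}$ there. So each integration by parts gains $h^{1-\mu}|x|^{-2}\le h^{1-\mu-2\gamma}$, which requires $\gamma<(1-\mu)/2$.

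The window $\frac{1+\sigma+\mu}{3}\le\gamma<\frac{1-\mu}{2}$ is nonempty exactly when $5\mu+2\sigma<1$; this is where the hypothesis enters. A minor slip: for $k<3$ your displayed bound on $\partial^kg_h$ should read $h^{-\mu-\sigma}|x|^{3-k}$, though you use the correct bound afterwards.
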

As noted by Blair, a potential drawback of this result is the condition imposed on the support of $u_h$. However, with additional assumptions on the phase function, this condition can be removed.

\begin{prop}[Stationary phase, bis] \label{prop: stat phase bis}In the setting of Proposition \ref{prop: stat phase blair}, if $|\nabla \phi_h(x)|\ge \frac 12 |x|$, then the expansion \eqref{eq: stat phase blair} holds under the weaker assumption $\mathrm{Supp}(u_h)\subset \{|x|\le 1\}$, but the error in \eqref{eq: stat phase blair} becomes $\mathcal O_N(h^{(1-2\rho-\mu)N})$. \end{prop}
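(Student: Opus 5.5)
The natural approach is to reduce to Proposition \ref{prop: stat phase blair} by a smooth partition of unity splitting $\{|x|\le 1\}$ into a small ball around the critical point and an annular region where the phase is non-stationary. Fix $\chi\in C_c^\infty(\mathbf{R}^d)$ equal to $1$ on $B(0,1/2)$ and supported in $B(0,1)$. Write $u_h=u_h\chi(x/(\varepsilon h^\rho))+u_h(1-\chi(x/(\varepsilon h^\rho)))=:u_h^{(1)}+u_h^{(2)}$. The first piece satisfies all hypotheses of Proposition \ref{prop: stat phase blair}: it is supported in a ball of radius $\varepsilon h^\rho$ and $\|u_h^{(1)}\|_{C^\ell}\lesssim h^{-\rho\ell}$ by the Leibniz rule. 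Its contribution therefore gives exactly the expansion \eqref{eq: stat phase blair}. Moreover, $\mathcal L_{j,h}$ is a differential operator evaluated at $x=0$, where $\chi(x/(\varepsilon h^\rho))\equiv 1$ nearby, so the coefficients are those of $u_h$ itself.

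The main step is to show that $I_h^{(2)}:=\int \mathrm{e}^{\frac{\mathrm{i}}{h}\phi_h}u_h^{(2)}\,\mathrm{d}x=\mathcal O(h^{(1-2\rho-\mu)N})$. On the support of $u_h^{(2)}$ we have $|x|\ge \varepsilon h^\rho/2$, so $|\nabla\phi_h|\ge \frac12|x|\gtrsim h^\rho$. I would integrate by parts $N$ times using $L={}^{t}\big(\frac{h}{\mathrm{i}}\frac{\nabla\phi_h\cdot\nabla}{|\nabla\phi_h|^2}\big)$, which satisfies $L^{t}\mathrm{e}^{\mathrm{i}\phi_h/h}=\mathrm{e}^{\mathrm{i}\phi_h/h}$. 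Each application produces $h$ times either a derivative of the amplitude, which costs $h^{-\rho}$ from the cutoff scale, or a derivative of $\nabla\phi_h/|\nabla\phi_h|^2$. Bounding the latter is the delicate part. Derivatives of $\nabla\phi_h$ of order $k\ge1$ are $\lesssim h^{-\mu-\sigma(k-1)}$, and each division by $|\nabla\phi_h|$ costs $|x|^{-1}$. A careful count, which I would organize as in the standard non-stationary phase lemma by tracking homogeneity in $|x|^{-1}$, should show that each integration by parts gains a factor bounded by
\[
\frac{h}{|x|}\Big(h^{-\rho}+\frac{h^{-\mu}}{|x|}+h^{-\mu-\sigma}\Big).
\]
Here $\nabla^2\phi_h=\mathcal O(h^{-\mu})$, and third derivatives cost an extra $h^{-\sigma}$, which is at most $h^{-\rho}$. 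Using $|x|\gtrsim h^\rho$ this factor is $\lesssim h^{1-2\rho-\mu}$, since $\mu\le\rho$.

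The $|x|$-dependence is the thing to handle carefully, because the region extends out to $|x|\le 1$. The $|x|^{-N}$ weights are integrable at infinity, and near $|x|\sim h^\rho$ they give exactly the bound above. To avoid losing a volume factor I would integrate dyadically in shells $|x|\sim 2^j h^\rho$ and sum the resulting geometric series. This gives $I_h^{(2)}=\mathcal O(h^{(1-2\rho-\mu)N})$, up to renaming $N$. This error dominates the remainder from Proposition \ref{prop: stat phase blair} (after adjusting $N$), which yields the stated error. Strictly, we also have $|\nabla\phi_h|\ge\frac12|x|$ only on $\operatorname{Supp}u_h$, which is all that is used.

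I expect the main obstacle to be the bookkeeping in the repeated integration by parts. It has to be checked that the losses from differentiating $|\nabla\phi_h|^{-2}$ never exceed the claimed $h^{-\mu}|x|^{-1}$ per step, for which the uniform lower bound $|\nabla\phi_h|\gtrsim|x|$ together with $\|\phi_h\|_{C^2}\lesssim h^{-\mu}$ is exactly what is needed.
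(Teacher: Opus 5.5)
Your proposal is correct and follows the paper's proof. You use the same splitting with the cutoff $\chi(x/(\varepsilon h^\rho))$, apply Proposition~\ref{prop: stat phase blair} to the inner piece, and use a nonstationary phase bound on the outer piece. The only difference is bookkeeping. The paper rescales $x\mapsto h^\rho x$, so that $\tilde\phi_h(x)=h^{\mu-2\rho}\phi_h(h^\rho x)$ has bounded derivatives of order $\ge 2$ and $|\nabla\tilde\phi_h|\gtrsim h^{\mu}$ at semiclassical parameter $\tilde h=h^{1+\mu-2\rho}$, and then quotes a standard nonstationary phase lemma; this replaces your by-hand $|x|$-weighted integration by parts. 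Your dyadic shells are harmless but unnecessary, since the support lies in the unit ball and so has bounded volume.
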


\begin{proof}We just have to show that the contribution of the integral outside of $\{x:|x|\le \varepsilon h^\rho\}$ is negligible. To do so, introduce a cutoff $\chi\in C^\infty_{\rm comp}([-1,1])$ with $\chi\equiv 1$ on $[-\frac 12,\frac 12]$. Then, decompose $I_h=I_h^{(1)}+I_h^{(2)}$ with
	\[I_h^{(1)}=\int_{\mathbf{R}^d} \mathrm{e}^{\frac{\mathrm{i}}{h} \phi_h(x)}u_h(x)\chi(\frac{x}{\varepsilon h^\rho})\mathrm{d}x, \quad  \text{and} \quad I_h^{(2)}=\int_{\mathbf{R}^d} \mathrm{e}^{\frac{\mathrm{i}}{h} \phi_h(x)}u_h(x)\big(1-\chi(\frac{x}{\varepsilon h^\rho})\big)\mathrm{d}x.\]
	One obtains an asymptotic expansion for $I_h^{(1)}$ using Proposition \ref{prop: stat phase blair}. It remains to control $I_h^{(2)}$. This is an easy consequence of nonstationary phase estimates. Indeed, by a change of variables, one has
	\[I_h^{(2)}=h^\rho\int_{\mathbf{R}^d} \mathrm{e}^{\frac{\mathrm{i}}{h} \phi_h(h^\rho x)}u_h(h^\rho x)\big(1-\chi(\frac{x}{\varepsilon})\big)\mathrm{d}x=h^\rho\int \mathrm{e}^{\frac{\mathrm{i}}{\tilde h} \tilde \phi_h(x)} \tilde u_h(x)(1-\chi(\frac{x}{\varepsilon}))\mathrm{d}x,\]
	where we have set $\tilde \phi_h(x):=h^{\mu-2\rho}\phi_h(h^\rho x)$, $\tilde u_h(x):=u_h(h^\rho x)$ and $\tilde h:=h^{1+\mu-2\rho}$. The advantage is that now the derivatives of $\tilde \phi_h$ of order $\ge 2$ are uniformly bounded independently of $h$, and $\tilde u_h$ has all its derivatives bounded uniformly in $h$. Also, on the support of the integrand, one has 
	\[|\nabla \tilde \phi_h(x)|\ge h^{\mu-\rho}|\nabla\phi_h(h^\rho x)|\ge h^{\mu}\frac \varepsilon4.\]
	A nonstationary phase estimate, see e.g. \cite[Theorem 2.33]{NonnenmacherLectures}, leads to
	\[|I_h^{(2)}|\lesssim_{\!\! N,\varepsilon} \tilde h^N h^{-2\mu N}=h^{(1-2\rho-\mu)N}.\]
	This finishes the proof. \end{proof}


\subsection{Composition of FIOs} We give a composition formula for Fourier integral operators with phases and amplitudes lying in exotic symbol classes. The setting is somewhat \emph{ad hoc}, but it encompasses all the situations that we will encounter in the next sections. Motivated by the study of quantum propagators, we also consider a continuous family of Fourier integral operators parameterized by the time variable, by letting $\mathcal U_i=\mathcal U_i(t)$ depend on $t$, for $t$ in some interval $ [t_0-\varepsilon,t_0+\varepsilon]$. We have the following composition formula.

\begin{prop}[Composition of FIOs] \label{thm: composition FIO} Let $T\ge 1$. Fix $\sigma\in [0,1/2)$. Let $\kappa_1,\kappa_2(t,\cdot)$ be two symplectomorphisms, with $\kappa_2$ allowed to depend on a time variable $t$. We assume that the graphs of all the maps $\kappa_1$, $\kappa_2(t,\cdot)$, $\kappa_3(t,\cdot):=\kappa_2(t,\cdot)\circ \kappa_1$ are projectable, for $t$ close to $t_0$, in a sense specified below. Let $\varphi_2(t,x_2,\xi_1)$ and $\varphi_1(x_1,\xi_0)$ be local generating functions for $\kappa_2(t,\cdot)$ and $\kappa_1$ respectively. Following Lemma \ref{lem: composition generating functions}, we define
	\[\varphi_3(t,x_2,\xi_0):=\varphi_2(t,x_2,\xi_1)-\langle x_1,\xi_1\rangle+\varphi_1(x_1,\xi_0),\]
	where $(x_1,\xi_1)=:z_c(\mathbf x)$ depends on $\mathbf x=(t,x_2,x_0,\xi_0)$ and is fixed by the condition
	\[(\bullet,\xi_0)\overset{\kappa_1}{\longrightarrow}(x_1,\xi_1)\overset{\kappa_2}{\longrightarrow} (x_2,\bullet).\]
	We consider two Fourier integral operators
	\[\mathcal U_1=\mathcal I[\varphi_1,a_h^{(1)}], \qquad \mathcal U_2(t)=\mathcal I[\varphi_2,a_h^{(2)}],\]
	Assume $h^{-\beta}\mathrm{e}^{CT}\le h^{-\sigma}$. We make the following assumptions.
	
	\begin{enumerate}\item \label{assum 1 on diffeo} \emph{Projectability.} All the graphs of $\kappa_1,\kappa_2,\kappa_3$ are projectable when we restrict to the support of $a_h^{(2)}(x_2,x_1,\xi_1)a_h^{(1)}(x_1,x_0,\xi_0)$. Moreover we have $\big|\frac{\partial \xi_1(x_1,\xi_0)}{\partial x_1}\big|\le c_1$ and $\big| \frac{\partial x_1(x_2,\xi_1)}{\partial \xi_1}\big|\le c_2$ with $c_1$ small enough depending on $c_2$, on the supports of the symbols $a_h^{(1)}$ and $a_h^{(2)}$, respectively. 
	
		\item \label{assum phase function} \emph{Control on the phase.} $\|\varphi_i\|_{C^\ell_{t,x_i,\xi_{i-1}}}\lesssim T h^{-\sigma(\ell-2)}$ for $\ell\ge 2$ and $\|z_c\|_{C^\ell_{t,x_2,\xi_0}}\lesssim  T h^{-\sigma(\ell-1)}$ for $\ell\ge 1$. 
		
		\item \emph{Control on the amplitude}. \label{assum amplitude} the amplitudes $a_h^{(i)}$ have compact support and satisfy $\|a_h^{(i)}\|_{C^\ell}\lesssim h^{-\nu \ell}$ for $i=1,2$, for some $\nu\in [0,1/2)$.
	\end{enumerate}
Then, letting $\mathcal U_3(t)=\mathcal U_2(t)\mathcal U_1$, one has
\begin{equation} \label{eq: U_3(t) as FIO} \mathcal U_3(t)=\mathcal I[\varphi_3,a_h^{(3)}]+R_N,\end{equation}
where:
\begin{itemize}\item Letting $\mathbf x=(t,x_2,x_0,\xi_0)$ and $z=(x_1,\xi_1)$ the symbol $a_h^{(3)}$ admits an expansion $a_h^{(3)}=\sum_{j\le N} h^ja_j^{(3)}$, where
	\begin{equation}\label{eq: symbol aj composition FIO} a_j^{(3)}(\mathbf x)=\sum_{m+i+k=j} \sum_{|\alpha|\le 2m} c_{m,\alpha}(\mathbf x)\partial_z^\alpha \big(a_i^{(2)}a_k^{(1)}\big)\Big|_{z=z_c(\mathbf x)}.\end{equation}
	The coefficients $c_{m,\alpha}$ and the point $z_c$ are controlled by
	\begin{equation}\label{eq: estimee coefficient derivee j composition FIO} \|c_{m,\alpha}\|_{C^\ell_{\mathbf x}}\le C_{m,\alpha,\ell} T^{(\ell+2m-|\alpha|)}h^{-\sigma(\ell+2m-|\alpha|)}, \qquad \|z_c\|_{C^\ell_{\mathbf x}}\le C_{j,\ell} T h^{-\sigma (\ell-1)_+}.\end{equation}
	\item The leading part of the symbol is given by
	\begin{equation} \label{eq: leading part symbol composition} a_0^{(3)}(\mathbf x)=\frac{a_0^{(2)}a_0^{(1)}(\mathbf x,z_c(\mathbf x)) }{|\det^{\frac 12}(\nabla^2_z \phi_h(\mathbf x,z_c(\mathbf x)))|},\end{equation}
	where $\phi_h(\mathbf x,z)=\varphi_2-\langle x_1,\xi_1\rangle+\varphi_1$.
	\item The error satisfies
	\[\|R_N\|_{L^2/\rm tr}\le C_{N}T^{3N} h^{(1-2\max(\sigma,\nu))N-C}.\]
	The constant $C_N$ only depends on a finite number (depending on $N$) of the implicit constants appearing in the estimates on $\|\varphi_i\|_{C^\ell}$, $\|a_h^{(i)}\|_{C^\ell}$ and $\|z_c\|_{C^\ell}$.
	\end{itemize}\end{prop}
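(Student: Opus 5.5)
The plan is to write the kernel of $\mathcal U_3(t)=\mathcal U_2(t)\mathcal U_1$ directly as an oscillatory integral in the intermediate variables and then apply the singular stationary phase results. Multiplying the kernels \eqref{eq: defi FIO nice form}, the kernel of $\mathcal U_3(t)$ at $(x_2,x_0)$ is
\[\frac{1}{(2\pi h)^{2d}}\int \mathrm{e}^{\frac{\mathrm{i}}{h}(\varphi_2(t,x_2,\xi_1)-\langle x_1,\xi_1\rangle+\varphi_1(x_1,\xi_0)-\langle x_0,\xi_0\rangle)}a_h^{(2)}(x_2,x_1,\xi_1)a_h^{(1)}(x_1,x_0,\xi_0)\,\mathrm{d}x_1\mathrm{d}\xi_1\mathrm{d}\xi_0.\]
So it has the form $\mathcal I[\cdot,\cdot]$ once the inner integral
\[J(\mathbf x):=\int_{\mathbf{R}^{2d}} \mathrm{e}^{\frac{\mathrm{i}}{h}\phi_h(\mathbf x,z)}a_h^{(2)}a_h^{(1)}\,\mathrm{d}z,\qquad z=(x_1,\xi_1),\]
is expanded by stationary phase in $z$, with $\mathbf x$ treated as a parameter. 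The critical point condition $\nabla_z\phi_h=0$ reads $\xi_1=\nabla_{x_1}\varphi_1$ and $x_1=\nabla_{\xi_1}\varphi_2$. This is exactly the matching condition defining $z_c(\mathbf x)$, and its critical value is $\varphi_3$ by Lemma \ref{lem: composition generating functions}.

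The first step is to check the hypotheses of Propositions \ref{prop: stat phase blair} and \ref{prop: stat phase bis} after recentering at $z_c(\mathbf x)$. The Hessian is
\[\nabla_z^2\phi_h=\begin{pmatrix}\partial^2_{x_1}\varphi_1 & -I\\ -I & \partial^2_{\xi_1}\varphi_2\end{pmatrix}.\]
By assumption \eqref{assum 1 on diffeo}, the blocks $\partial_{x_1}\xi_1=\partial^2_{x_1}\varphi_1$ and $\partial_{\xi_1}x_1=\partial^2_{\xi_1}\varphi_2$ are bounded by $c_1,c_2$ with $c_1c_2$ small. A Schur complement then gives $\|(\nabla^2_z\phi_h)^{-1}\|\lesssim 1$ uniformly, and its signature is $0$, so no extra Maslov phase appears. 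Assumption \eqref{assum phase function} gives $\|\phi_h\|_{C^\ell}\lesssim Th^{-\sigma(\ell-2)}$. I take $\mu$ to absorb $T$ (since $T\lesssim|\log h|$) and $\rho=\max(\sigma,\nu)$. For the lower bound $|\nabla_z\phi_h(z)|\ge\frac12|z-z_c|$ required by Proposition \ref{prop: stat phase bis}, I use Taylor's formula around $z_c$. The Hessian is uniformly invertible, and because $c_1$ is small the Hessian varies little over the support, so the lower bound holds there. This yields \eqref{eq: stat phase blair} with $\phi_h(z_c)=\varphi_3-\langle x_0,\xi_0\rangle$. The factor $(2\pi h)^{d}$ from the $2d$-dimensional stationary phase cancels one $(2\pi h)^{-d}$, and the leading term gives \eqref{eq: leading part symbol composition}.

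The second step is the structure of the lower order terms. Expanding $\mathcal L_{j,h}$ from \eqref{eq: L j h u from blair}, each term is $\langle(\nabla^2\phi_h)^{-1}D,D\rangle^{\ell}(g_h^m u)$ evaluated at $z_c$ with $\ell-m=j$. The Taylor remainder $g_h$ vanishes to third order at $z_c$, so derivatives falling on $g_h^m$ contribute $\partial^{\ge3}\phi_h$ factors. Distributing the $2\ell$ derivatives gives the form \eqref{eq: symbol aj composition FIO}, where $c_{m,\alpha}$ are polynomials in $\partial_z^{k}\phi_h(\mathbf x,z_c(\mathbf x))$ and in the entries of the inverse Hessian. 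The $C^\ell_{\mathbf x}$ bounds \eqref{eq: estimee coefficient derivee j composition FIO} should follow from Fa\`a di Bruno applied to $\mathbf x\mapsto \partial^k\phi_h(\mathbf x,z_c(\mathbf x))$, using the bounds on $\varphi_i$ and $z_c$ from assumption \eqref{assum phase function}. Counting derivatives, each $\mathbf x$-derivative costs $Th^{-\sigma}$, and the $2m-|\alpha|$ derivatives not falling on the amplitudes each cost $h^{-\sigma}$ as well.

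The third step controls the remainder $R_N$. Its kernel is $h^{-d}$ times the $\mathcal O(T^{3N}h^{(1-2\max(\sigma,\nu))N})$ stationary phase error, taken uniformly in $\mathbf x$ over compact support. Differentiating under the integral gives the same bounds for $\mathbf x$-derivatives, with losses of $h^{-C}$. A Schur test, or Hilbert--Schmidt bounds combined with compact support, then converts this into the stated $L^2$ and trace norm bounds with a fixed $h^{-C}$ loss.

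I expect the main obstacle to be the first step. The difficulty is applying Blair's estimate uniformly in the parameter $\mathbf x$, with the support condition handled through Proposition \ref{prop: stat phase bis}, while tracking how the $C^\ell$ bounds on $z_c$ propagate through the coefficients $c_{m,\alpha}$. If the lower bound on $|\nabla_z\phi_h|$ fails globally on the support, I would first localize with a partition of unity in $z$ at scale $h^{\rho}$. The pieces away from $z_c$ would then be treated by nonstationary phase, as in the proof of Proposition \ref{prop: stat phase bis}.
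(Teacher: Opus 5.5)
Your proposal follows the paper's proof essentially step for step. You do stationary phase in $z=(x_1,\xi_1)$ via Proposition~\ref{prop: stat phase bis}, with the factor $T$ absorbed into $h^{-\mu}$ (this is exactly what produces the $T^{3N}$ in the error) and $\max(\sigma,\nu)$ in place of $\sigma$. You then use the third-order Taylor structure of $g_h$ to read off the form and bounds of the $c_{m,\alpha}$, and convert the compactly supported kernel bound into $L^2$/trace-class bounds.

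One correction: the global bound $|\nabla_z\phi_h(z)|\gtrsim|z-z_c|$ does not follow from the Hessian ``varying little''. The block $\partial^2_{\xi_1}\varphi_2$ is only bounded by $c_2$, and third derivatives can be as large as $Th^{-\sigma}$. It follows instead from writing $\nabla_z\phi_h(z)=\bar H(z-z_c)$ with $\bar H=\int_0^1\nabla^2_z\phi_h(z_c+s(z-z_c))\,\mathrm{d}s$. This averaged Hessian keeps the same block form (off-diagonal $-I$, diagonal blocks bounded by $c_1$ and $c_2$), so it is uniformly invertible because $c_1c_2$ is small. Your partition-of-unity fallback would not help on its own, since nonstationary phase away from $z_c$ needs precisely this bound.
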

	
		\begin{proof}[Proof of Proposition \ref{thm: composition FIO}] 
We denote by $z$ the variable $(x_1,\xi_1)$, and by $\mathbf x$ the variable $(t,x_2,x_0,\xi_0)$. Let us set
\[\phi_{h}(z,\mathbf x)=\varphi_2(t,x_2,\xi_1)-\langle x_1,\xi_1\rangle +\varphi_1(x_1,\xi_0),\]
and 
\[b_h(z,\mathbf x):=a_h^{(2)}(x_2,x_1,\xi_1)a_h^{(1)}(x_1,x_0,\xi_0).\]
Then, the Schwartz kernel of $\mathcal U_3(t)$ is given by
\[K(t,x_2,x_0)=\frac{1}{(2\pi h)^{2d}}\int \mathrm{e}^{-\frac{\mathrm{i}}{h}\langle x_0,\xi_0\rangle}\left(\int \mathrm{e}^{\frac{\mathrm{i}}{h}\phi_h(z,\mathbf x)}b_h(z,\mathbf x)\mathrm{d}z\right)\mathrm{d}\xi_0.\]
\textbf{1.} We perform a stationary phase expansion in the $z$ variable. Let us check the assumptions of Proposition \ref{prop: stat phase bis}. The gradient of the phase is given by
\[\nabla_z \phi_h(x_1,\xi_1)=\begin{pmatrix}\nabla_{x_1}\varphi_1(x_1,\xi_0)-\xi_1 \\ \nabla_{\xi_1}\varphi_2(t,x_2,\xi_1)-x_1\end{pmatrix}.\]
It vanishes on the support of $b_h$ if and only if $\kappa_1(\bullet,\xi_0)=(x_1,\xi_1)$ and $\kappa_2(x_1,\xi_1)=(x_2,\bullet)$. This implies $\kappa_3(\bullet,\xi_0)=(x_2,\bullet)$; by the projectability assumption on the graph, we deduce that there is a unique critical point given by 
\[z_c(\mathbf x)=\kappa_2^{-1}(x_2,\partial_{x_2}\varphi_3)=\kappa_1(\partial_{\xi_0}\varphi_3,\xi_0).\]
The Hessian of the phase is given by
\begin{equation} \label{eq: hessian of phase in compo lemma} \nabla^2_z\phi_h=\begin{pmatrix}\frac{\partial \xi_1(x_1,\xi_0)}{\partial x_1} & -\mathrm{Id} \\ -\mathrm{Id} & \frac{\partial x_1(x_2,\xi_1)}{\partial \xi_1}\end{pmatrix}.\end{equation}
By the assumption (\ref{assum 1 on diffeo}), the Hessian $\nabla^2_z\phi_h$ is invertible on the support of $b_h$, with inverse satisfying
\begin{equation} \label{eq: bound inverse hess}\|(\nabla^2 \phi_h)^{-1}\|\lesssim 1.\end{equation}
Since $\nabla_z \phi_h(z_c)=0$, assumption (\ref{assum 1 on diffeo}) also implies
\[|\nabla_z \phi_h(z)|\ge \frac 12|z-z_c|,\]
for all $z$ in the support. Finally, since $\nabla^2_z\phi_h(z_c)$ can be continuously deformed in $\mathrm{GL}_{2d}(\mathbf{R})$ to the matrix $J=\begin{pmatrix}
	0 & -\mathrm{Id} \\ -\mathrm{Id} & 0
\end{pmatrix}$, one has $ \operatorname{sgn} \nabla^2_z \phi_h(z_c)=\operatorname{sgn}J=0.$

By assumption (\ref{assum phase function}) and \eqref{eq: hessian of phase in compo lemma}, the derivatives of the phase with respect to $z$ are bounded by a constant times $Th^{-\sigma(\ell-2)}$ for $\ell\ge 2$. 
Similarly, by assumption (\ref{assum amplitude}), the amplitude $b_h(z,\mathbf x)$ is compactly supported as a function of $z$ and satisfies $\|b_h\|_{C^\ell_z}\lesssim h^{-\nu\ell}$.

By applying Proposition \ref{prop: stat phase bis} with $T=h^{-\nu}$, one finds
\[K(t,x_2,x_0)=\frac{1}{(2\pi h)^d}\int \mathrm{e}^{\frac{\mathrm{i}}{h} (\varphi_3(t,x_2,\xi_{0})-\langle x_0,\xi_0\rangle)}\sum_{j=0}^{N-1} \mathcal L_{j,h,\mathbf x} b_h\Big|_{z=z_c(t,x_2,\xi_0)} \mathrm{d}\xi_0+R_N(t,x_2,x_0),\]
where the remainder is compactly supported and satisfies pointwise estimates
\[R_N(t,x_2,x_0)=\mathcal O(T^{3N}h^{(1-2\max(\sigma,\nu))N-C}).\]
This readily implies $\|R_N(t)\|_{L^2/\rm tr}\lesssim T^{3N}h^{(1-2\max(\sigma,\nu))N-C}$.

\textbf{2.} To proceed, we need to write down explicitly the operators $\mathcal L_{j,h,\mathbf x}$ appearing in the stationary phase expansion. Consider the function $g$ obtained by subtracting from $\phi_h$ its quadratic part at $z=z_c$, that is
\[g_h(\mathbf x,z):=\phi_h(\mathbf x,z)-\phi_h(\mathbf x,z_c(\mathbf x))-\frac{1}{2}\langle \nabla^2_z \phi_h(\mathbf x,z_c(\mathbf x))(z-z_c(\mathbf x)),z-z_c(\mathbf x)\rangle,\]
Then, the operator $\mathcal L_{j,h,\mathbf x}$ writes (\cite[Theorem 1.1]{Blair2022})
\[\mathcal L_{j,h,\mathbf x}u(z)=\frac{1}{|\det(\nabla^2_z\phi_h)|^{\frac 12}}\sum_{k-m=j}\sum_{2k\ge 3m} \frac{\mathrm{i}^{-j}2^{-k}}{k!m!}\left\langle \big(\nabla_z^2\phi_h(\mathbf x,z_c(\mathbf x))\big)^{-1} D_z,D_z\right\rangle^k (g_h^m(\mathbf x,z) u(z)).\]
In particular $\mathcal L_{0,h,\mathbf x}=\displaystyle \frac{1}{|\det(\nabla^2_z\phi_h(\mathbf x,z_c(\mathbf x)))|^{\frac 12}}$. By Fa\`a Di Bruno's formula, along with the estimates on $\phi_h$ and $z_c$, the function
\[H:\mathbf x\mapsto \nabla_z^2 \phi_h(\mathbf x,z_c(\mathbf x))\]
satisfies $\|H\|_{C^\ell}\lesssim T^{\ell+1}h^{-\sigma \ell}$ for $\ell\ge 1$. Thus, the differential operators appearing in the definition of $\mathcal L_{j,h,\mathbf x}$ can be written
\begin{equation} \label{eq: H moins un Dz Dz}\langle H(\mathbf x)^{-1} D_z,D_z\rangle^k=\sum_{|\alpha|=2k} a_\alpha(\mathbf x) \frac{\partial^{|\alpha|}}{\partial z^\alpha},\end{equation}
for some coefficients $a_\alpha$ that satisfy 
\begin{equation} \label{eq: the coeffs a alpha} \|a_\alpha(\mathbf x)\|_{C^\ell}\lesssim T^{2\ell}h^{-\sigma \ell}.\end{equation}

We turn to $g_h(\mathbf x,z)$. By Taylor's formula, one can write
\begin{equation}\label{eq: taylor g_h} g_h(\mathbf x,z)=\sum_{|\gamma|=3} (z-z_c(\mathbf x))^\gamma R_\gamma(\mathbf x,z),\end{equation}
where the sum runs over multi-indices $\gamma$ of length $3$, and
\[R_\gamma(\mathbf x,z)=\int_0^1 \frac{(1-s)^2}{2!} \frac{\partial^3}{\partial z^\gamma} \phi_h(\mathbf x,sz+(1-s)z_c(\mathbf x))\mathrm ds.\]
From the estimates $\|\phi_h\|_{C^\ell}\lesssim Th^{-\sigma(\ell-2)_+}$ and $\|z_c\|_{C^\ell}\lesssim Th^{-\sigma(\ell-1)_+}$, one finds
\begin{equation} \label{eq: Cell norm Rgamma}\forall \ell\ge 0, \quad \|R_\gamma\|_{C^\ell}\lesssim T^{\ell+1} h^{-\sigma(\ell+1)}.\end{equation}
Taking \eqref{eq: taylor g_h} to $m$-th power, we get
\[g_h^m(\mathbf x,z)=\sum_{|\bm \gamma|=3m} (z-z_c(\mathbf x))^{\bm \gamma} R_{\bm \gamma},\]
where the sum runs over tuples $\bm \gamma=(\gamma_1,\ldots,\gamma_m)\in (\mathbf N^d)^m$ satisfying $|\gamma_i|=3$ for all $i$, and we have used the shorthand $R_{\bm \gamma}=\prod_{i=1}^m R_{\gamma_i}$. One can check from \eqref{eq: Cell norm Rgamma} that if $|\bm \gamma|=3m$ then
\begin{equation} \label{eq: Cell norm R bold gamma}\|R_{\bm \gamma}\|_{C^\ell}\lesssim T^{\ell+m}h^{-\sigma(\ell+m)}.\end{equation}
By the product rule, for $|\alpha|=2k\ge 3m$, we have
\[\frac{\partial^{2k}}{\partial z^{\alpha}} (g_h^m u)\big|_{z=z_c(\mathbf x)}=\sum_{\alpha_1+\alpha_2=\alpha}\partial_z^{\alpha_1}g_h^m(\mathbf x,z)\big|_{z=z_c(\mathbf x)}\cdot \partial_z^{\alpha_2} u\big|_{z=z_c(\mathbf x)}.  \]
Now, to get nonzero terms when evaluating $\partial_z^{\alpha_1}(z-z_c(\mathbf x))^{\bm \gamma}R_{\bm \gamma}$ at $z=z_c$, at least $3m$ derivatives need to hit the products $(z-z_c)^{\bm \gamma}$. Thus, if $|\alpha_1|=j$ we have
\[\frac{\partial^j}{\partial z^{\alpha_1}}(g_h^m u)\big|_{z=z_c}=\sum_{|\bm \gamma|=3m\le j} \sum_{|\beta|=j-3m} C_{\beta,\bm \gamma}\partial_z^{\beta} R_{\bm \gamma}\big|_{z=z_c}.\]
Here the $C_{\beta,\bm \gamma}$ are combinatorial coefficients. It follows that
\[\frac{\partial^{2k}}{\partial z^{\alpha}} (g_h^m u)\big|_{z=z_c(\mathbf x)}=\sum_{k_1+k_2=2k}\sum_{|\bm \gamma|=3m\le k_1} \sum_{|\alpha_1|=k_1-3m}\sum_{|\alpha_2|=k_2} \partial_z^{\alpha_1}R_{\bm \gamma}\cdot \partial_z^{\alpha_2}u\Big|_{z=z_c(\mathbf x)}.\]
Gathering everything, one finds
\[\mathcal L_{j,h,\mathbf x}u=\sum_{|\alpha|\le 2j} c_{j,\alpha}(\mathbf x)\partial^{\alpha}_{z} u\Big|_{z=z_c(\mathbf x)}.\]
Here, for $|\alpha|=k_2$, the coefficients $c_{j,\alpha}$ are of the form
\begin{equation} \label{eq: formula for c j alpha} c_{j,\alpha}=\sum_{k-m=j}\sum_{2k\ge 3m}\sum_{k_1+k_2=2k} \sum_{|\beta|=2k}\sum_{|\bm \gamma|=3m\le k_1} a_\beta(\mathbf x)\partial^{k_1-3m}R_{\bm \gamma}.\end{equation}
By \eqref{eq: Cell norm R bold gamma} and the bound \eqref{eq: the coeffs a alpha}, we have
\[\|c_{j,\alpha}\|_{C^\ell_{\mathbf x}}\le \sum_{k-m=j}\sum_{2k\ge 3m}\sum_{k_1+k_2=2k} T^{2(\ell+k_1-2m)}h^{-\sigma(\ell+k_1-2m)},\]
it remains to notice that the condition on the indices forces $k_1-2m= 2j-k_2$ to conclude that
\[\|c_{j,\alpha}\|_{C^\ell_{\mathbf x}}\le C_{j,\alpha,\ell} T^{2(\ell+2j-|\alpha|)} h^{-\sigma(\ell+2j-|\alpha|)}.\]
\textbf{3.} We are ready to define the symbol $a^{(3)}$. By the above computation, we have
\[\sum_{j=0}^{N-1} h^j\mathcal L_{j,h,\mathbf x} b_h=\sum_{j=0}^{N-1} h^j\sum_{|\alpha|\le 2j}c_{j,\alpha}(\mathbf x)\partial^{\alpha}_{z} b_h\Big|_{z=z_c(\mathbf x)}.\]
We expand $b_h=\sum_{i\le N,k\le N} h^{i+k}a_i^{(2)}a_k^{(1)}$, then for $j<N$, set
\[a_j^{(3)}:=\sum_{j'+i+k=j} \sum_{|\alpha|\le 2j'} c_{j',\alpha}(\mathbf x)\partial_z^\alpha \big(a_i^{(2)}a_k^{(1)}\big)\Big|_{z=z_c(\mathbf x)}.\]
Letting $a^{(3)}_h:=\sum_{j< N} h^j a_j^{(3)}$, one arrives at
\begin{align*}\sum_{j<N}h^j\mathcal L_{j,h,\mathbf x}b_h& =a_h^{(3)}+\sum_{j<N}\sum_{i,k\le N, \ i+k\ge N} h^{i+k+j} \sum_{j'+i+k=j} \sum_{|\alpha|\le 2j'} c_{j',\alpha}(\mathbf x)\partial_z^\alpha \big(a_i^{(2)}a_k^{(1)}\big)\Big|_{z=z_c(\mathbf x)} \\ & = a_h^{(3)}+\mathcal O(T^{2N}h^{N(1-2\max(\sigma,\nu))}),\end{align*}
where the sum on the right-hand side has been estimated using \eqref{eq: estimee coefficient derivee j composition FIO} and the assumptions on $a^{(1)},a^{(2)}$. The error $\mathcal O(T^{2N}h^{N(1-2\max(\sigma,\nu))})$ has compact support in $(x_2,x_0)$ thus can be incorporated in the remainder $R_N$.
\end{proof}

\subsubsection{Two particular cases} As a corollary, we specify the result to the case where $\mathcal U_1$ or $\mathcal U_2$ is a semiclassical pseudodifferential operator with compactly supported symbol.

\begin{prop}Let $\chi\in C^\infty_{\rm comp}(\mathbf{R}^{2d})$ satisfy $\|\chi\|_{C^\ell}\lesssim h^{-\nu\ell}$. Then,
\[\mathrm{Op}_h^{\rm w}(\chi)\mathcal U_1=\mathcal I[\varphi_1,b_h]+\mathcal O(T^{3N}h^{N(1-2\max(\sigma+\varepsilon))-C}),\]
with $b_h=\sum_{j\le N}h^j b_j$ and $b_0=\chi(x_1,\partial_{x_1} \varphi_1(x_1,\xi_0))a_0(x_1,x_0,\xi_0)$. Also,
\[\mathcal U_1\mathrm{Op}_h^{\rm w}(\chi)=\mathcal I[\varphi_1,c_h]+\mathcal O(T^{3N}h^{N(1-2\max(\sigma+\varepsilon))-C}),\]
with $c_h=\sum_{j\le N}h^jc_j$ and $c_0=a_0(x_1,x_0,\xi_0)\chi(x_0,\xi_0)$.\end{prop}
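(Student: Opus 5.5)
We treat both identities as special cases of Proposition \ref{thm: composition FIO}, by writing $\mathrm{Op}_h^{\rm w}(\chi)$ itself as a Fourier integral operator. By the example following the definition of $\mathcal I[\varphi,a]$,
\[\mathrm{Op}_h^{\rm w}(\chi)=\mathcal I\big[\langle x,\eta\rangle,\tilde\chi\big],\qquad \tilde\chi(x,y,\eta)=\chi\big(\tfrac{x+y}{2},\eta\big).\]
Here the phase $\langle x,\eta\rangle$ generates the identity map. Its $C^\ell$ norms vanish for $\ell\ge 3$, and $\|\tilde\chi\|_{C^\ell}\lesssim h^{-\nu\ell}$.

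For $\mathrm{Op}_h^{\rm w}(\chi)\mathcal U_1$ we apply Proposition \ref{thm: composition FIO} with $\kappa_2=\mathrm{Id}$, $\varphi_2=\langle x_2,\xi_1\rangle$, $a^{(2)}=\tilde\chi$, and $\kappa_1,\varphi_1,a^{(1)}$ those of $\mathcal U_1$. We check the hypotheses in order.

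\emph{Projectability.} $\kappa_3=\kappa_1$, so its graph is projectable. On the identity factor $\partial x_1/\partial\xi_1=0$, so $c_2=0$ and the smallness condition on $c_1$ is vacuous.

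\emph{Composed phase.} Given $(x_2,\xi_0)$, the critical point is $z_c=(x_2,\partial_{x_2}\varphi_1(x_2,\xi_0))$. Hence
\[\varphi_3=\langle x_2,\xi_1\rangle-\langle x_2,\xi_1\rangle+\varphi_1(x_2,\xi_0)=\varphi_1(x_2,\xi_0),\]
so the phase is unchanged.

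\emph{Derivative bounds on $z_c$.} These follow from those on $\nabla\varphi_1$, since $z_c=(x_2,\nabla_x\varphi_1)$.

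\emph{Leading amplitude.} The Hessian is
\[\nabla_z^2\phi_h=\begin{pmatrix}\partial^2_{x_1}\varphi_1 & -\mathrm{Id}\\ -\mathrm{Id} & 0\end{pmatrix},\]
whose determinant has modulus $1$. By \eqref{eq: leading part symbol composition}, $b_0=\tilde\chi(x_2,x_2,\partial_{x_2}\varphi_1)\,a_0=\chi(x_1,\partial_{x_1}\varphi_1)\,a_0$ once the names of the variables are matched.

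The other ordering $\mathcal U_1\mathrm{Op}_h^{\rm w}(\chi)$ is symmetric, with $\kappa_1=\mathrm{Id}$, $\varphi_1=\langle x_1,\xi_0\rangle$, $a^{(1)}=\tilde\chi$.
\begin{itemize}
\item Now $\partial\xi_1/\partial x_1=0$, so $c_1=0$ and the projectability condition holds trivially.
\item The critical point is $z_c=(\partial_{\xi_0}\varphi_1(x_2,\xi_0),\xi_0)$, and again $\varphi_3=\varphi_1$.
\item The Hessian again has determinant of modulus one, so the leading amplitude is $a_0\cdot\tilde\chi(x_1,x_0,\xi_0)$ with $x_1=\partial_{\xi_0}\varphi_1$.
\end{itemize}

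The main point to handle is this last amplitude. The statement writes $c_0=a_0\chi(x_0,\xi_0)$, but the amplitude produced is evaluated at $x_1=\partial_{\xi_0}\varphi_1$ rather than at $x_0$, and the Weyl symbol $\tilde\chi$ is evaluated at the midpoint $\tfrac{x_1+x_0}{2}$. Amplitudes depending on $(x,y,\eta)$ are defined modulo terms vanishing on the critical set $y=\nabla_\eta\varphi$. We reduce to a $y$-independent amplitude (respectively a $y$-dependent one, as needed) by the standard integration by parts. We write $y-\nabla_\eta\varphi=-h\,\mathrm{i}^{-1}\partial_\eta$ acting on the phase $e^{\frac{\mathrm i}{h}(\varphi-\langle y,\eta\rangle)}$ and move the derivative onto the amplitude. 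Each step gains $h$ and loses at most $h^{-\nu}$ or $h^{-\sigma}$ per derivative. This shows that only the value on the critical set, $\chi(x_0,\xi_0)$, matters at leading order, with lower-order corrections absorbed into $c_j$ for $j\ge1$.

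The remainder bounds and the polynomial factors $T^{3N}$ come directly from Proposition \ref{thm: composition FIO}, with $\max(\sigma,\nu)$ in place of the exponent in the statement.
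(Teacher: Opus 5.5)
Your proposal is correct and is exactly the argument the paper intends: the paper gives this statement without proof, as a direct corollary of Proposition \ref{thm: composition FIO}, obtained by writing $\mathrm{Op}_h^{\rm w}(\chi)=\mathcal I[\langle x,\eta\rangle,\chi(\tfrac{x+y}{2},\eta)]$ with the identity phase. Your further observation is also correct. For $\mathcal U_1\mathrm{Op}_h^{\rm w}(\chi)$, the composition formula literally produces $a_0(x_1,\partial_{\xi_0}\varphi_1,\xi_0)\,\chi(\tfrac{\partial_{\xi_0}\varphi_1+x_0}{2},\xi_0)$, which agrees with the stated $c_0$ only on the critical set and must be reduced by the usual integration by parts in $\eta$; this is a refinement the paper leaves implicit.
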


\subsection{Short time WKB method}\label{sec: short time WKB}

In this section, we provide a WKB approximation of the quantum propagator $\mathrm{e}^{-\frac{\mathrm{i}}{h} tP_{h}}$ for short times, when $P_h$ has compactly supported symbol.
We follow the proof presented in \cite[Chapter 10]{zworski2012semiclassical}. The novelty here is that the Hamiltonian belongs to the class of exotic symbols $S_\beta$, 
hence much of the work consists of carefully tracking dependencies on the derivatives of the Hamiltonian in the course of the proof. 

\subsubsection{The setting} Let $p(x,\xi)$ be a smooth symbol on $T^*\mathbf{R}^d$ with compact support. We assume that $p$ has a particular structure, namely,
\begin{equation}p=p_\delta+r_\delta, \end{equation}
where $p_\delta,r_\delta$ both depend on $\delta$ and are compactly supported. Moreover, we assume that
\begin{equation} \label{eq: assumptions p WKB} \|p_\delta\|_{C^\ell}\lesssim h^{-\beta(\ell-2)_+}, \qquad \|r_\delta\|_{C^\ell}\lesssim h^2h^{-\beta(\ell+2)}.\end{equation}
The motivation behind this assumption is that  in coordinate charts, the operator considered in Theorem \ref{prop: long time gutz} is precisely of the form $\operatorname{Op}_h^{\rm w}(p)$ with $p$ as above, see Proposition \ref{prop: egorov exotic}.

Let $\Phi^t_\delta$ denote the Hamiltonian flow generated by $p_\delta$. Then for small $t$, we have $\Phi^t_\delta(y,\eta)=(y,\eta)$ outside a compact set. Thus, for short time $t$, say $|t|\le 1$, the flow $\Phi^t_\delta$ admits a global generating function $\varphi_\delta(t,x,\eta)$, which depends implicitly on $h$, such that $\varphi_\delta(0,x,\eta)=\langle x,\eta\rangle$ and $\varphi_\delta(t,x,\eta)=\langle x,\eta\rangle$ outside a compact set. Recall from Lemmas \ref{lem: space derivatives generating function}--\ref{lem: spacetime derivatives generating function} that we have 
\begin{equation} \label{eq: sec WKB estimate phi}\forall \ell\ge 2, \quad \|\varphi_\delta\|_{C^\ell_{t,x,\eta}}\le h^{-\beta(\ell-2)}, \end{equation}
with a slight improvement at $t=0$.

\begin{prop} \label{prop: short time wkb}	Let $P_h:=\mathrm{Op}_h^{\rm w}(p)$, and consider the unitary group $U(t):=\mathrm{e}^{-\frac{\mathrm{i}}{h}tP_h}:L^2(\mathbf R^d)\to L^2(\mathbf R^d)$ defined as the solution to the Cauchy problem
\begin{equation} \label{eq: equation U(t)} \left\{\begin{array}{ll}U(0)=\mathrm{Id} \\ (hD_t +P_h)U(t)=0\end{array}\right..\end{equation}
Then, we have for $|t|\le 1$:
\begin{equation} \label{eq: short time parametrix}U(t)=\mathcal I[\varphi_\delta,a_h]+\mathcal R_N(t).\end{equation}
The amplitude expands as $a_h=\sum_{j=0}^N h^j a_j$, and the leading term $a_0$ is given by
\[a_0(t,x,y,\eta)=|\det(\partial_{x\eta}^2 \varphi_\delta)|^{\frac 12},\]
and the $C^\ell$ norms of the symbols $a_j$ are estimated by
\[\|a_j\|_{C^\ell_{t,x,\eta}}\lesssim h^{-\beta(\ell+2j)}.\]
Finally, the error satisfies $\|\mathcal R_N(t)\|_{L^2}\lesssim h^{N(1-2\beta)-C}$, where $C$ is a dimensional constant.
\end{prop}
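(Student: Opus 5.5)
We follow the WKB construction of \cite[Chapter 10]{zworski2012semiclassical}, tracking the exotic losses throughout. We look for $U(t)$ in the form $\mathcal I[\varphi_\delta,a_h]$, where $\varphi_\delta$ is the global short-time generating function with $\varphi_\delta(0,x,\eta)=\langle x,\eta\rangle$. Applying $hD_t+P_h$ to the kernel $\mathrm{e}^{\frac{\mathrm{i}}{h}(\varphi_\delta(t,x,\eta)-\langle y,\eta\rangle)}a_h(t,x,\eta)$, we compute the action of $\mathrm{Op}_h^{\rm w}(p)$ on $\mathrm{e}^{\frac{\mathrm{i}}{h}\varphi_\delta(t,\cdot,\eta)}a$. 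Writing the Weyl kernel and performing stationary phase in the intermediate variables, we use the version of Proposition \ref{prop: stat phase bis} with $\sigma=\beta$ and $\mu=0$; the Hessian is of the standard form $\begin{pmatrix}0&-I\\-I&\ast\end{pmatrix}$. This gives
\[P_h\big(\mathrm{e}^{\frac{\mathrm{i}}{h}\varphi}a\big)=\mathrm{e}^{\frac{\mathrm{i}}{h}\varphi}\Big(p(x,\nabla_x\varphi)a+\frac hi\Big(\langle\partial_\xi p(x,\nabla_x\varphi),\partial_x a\rangle+\tfrac12\operatorname{tr}\big(\partial_\xi^2p\,\partial_x^2\varphi\big)a\Big)+\sum_{k\ge2}h^kL_k a\Big)+\mathcal O(h^{N(1-2\beta)-C}).\]
Here $L_k$ is a differential operator of order $\le 2k$ whose coefficients involve derivatives of $p$ and $\varphi_\delta$. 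By \eqref{eq: assumptions p WKB} and \eqref{eq: sec WKB estimate phi}, the coefficients of the terms of order $|\alpha|$ in $L_k$ are $\mathcal O(h^{-\beta(2k-|\alpha|)})$ in every $C^\ell$ norm, up to an additional factor $h^{-\beta\ell}$. The remainder $r_\delta$ contributes only at order $h^2\cdot h^{-4\beta}$ or better, so it is absorbed in $L_2$ and the higher terms.

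The order $h^0$ term cancels by the Hamilton--Jacobi equation \eqref{eq: hamilton jacobi eq}. At order $h^1$ we obtain the transport equation
\[\partial_t a_0+\langle\partial_\xi p_\delta(x,\nabla_x\varphi_\delta),\partial_x a_0\rangle+\tfrac12\operatorname{tr}\big(\partial^2_\xi p_\delta\,\partial_x^2\varphi_\delta\big)a_0=0,\qquad a_0(0)=1.\]
Differentiating the Hamilton--Jacobi equation in $\eta$ shows that $|\det\partial^2_{x\eta}\varphi_\delta|^{1/2}$ solves it (the standard Liouville/Jacobi argument). The higher transport equations are
\[\big(\partial_t+\langle\partial_\xi p_\delta,\partial_x\rangle+c\big)a_j=-\sum_{k\ge2,\,k+i=j+1}L_ka_i,\qquad a_j(0)=0.\]
We solve them by integrating along the characteristics $x\mapsto$ (the $x$-component of the flow in the $(x,\eta)$ parametrization), which are controlled in $C^\ell$ by Proposition \ref{prop: short time derivative flow} and Proposition \ref{prop: intermediate points} for $|t|\le1$. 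An induction on $j$ then yields $\|a_j\|_{C^\ell}\lesssim h^{-\beta(\ell+2j)}$. The reason is that each $L_k$ costs at most $h^{-2\beta k}$ when it acts on $a_i$, with $k+i=j+1$ and $k\ge2$, which gives $h^{-\beta(2k+2i)}\le h^{-\beta(2j+2)}$. This is too crude, so the bookkeeping must be sharpened. The structure of $L_k$ splits the $2k$ derivatives between $a$ (order $|\alpha|$) and the coefficients (costing $h^{-\beta(2k-|\alpha|)}$). The $h^k$ prefactor must then be paired so that the $h^j$ term costs $h^{-2\beta j}$: the induction tracks exactly the index shift $j=i+k-1$, and the cost is $h^{-\beta(2k-2)}\cdot h^{-2\beta i}$ because the order-$2$ part of $p_\delta$ is uniformly bounded. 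We expect this careful accounting of $h^{-\beta}$ powers, using the improved estimate \eqref{eq: time der phi t improves} at small $t$ and the fact that $p_\delta$ has bounded second derivatives, to be the main obstacle. It is the same counting as in Lemma \ref{lem: spacetime derivatives generating function}.

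Finally, we set $\widetilde U(t)=\mathcal I[\varphi_\delta,\sum_{j\le N}h^ja_j]$. Then $(hD_t+P_h)\widetilde U(t)=\mathcal I[\varphi_\delta,e_N]$ with $\|e_N\|_{C^\ell}\lesssim h^{N+1}h^{-\beta(2N+2+\ell)}$, and $\widetilde U(0)=\mathrm{Id}$ because $\varphi_\delta(0)=\langle x,\eta\rangle$ and $a_0(0)=1$, $a_j(0)=0$. Such an FIO has $L^2$ norm $\lesssim h^{-C}\sup_{\ell\le C_d}\|e_N\|_{C^\ell}$ (a crude Schur/kernel bound with compact support, since $\varphi_\delta=\langle x,\eta\rangle$ outside a compact set). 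Duhamel's formula
\[U(t)-\widetilde U(t)=\frac{\mathrm{i}}{h}\int_0^tU(t-s)(hD_s+P_h)\widetilde U(s)\,\mathrm{d}s,\]
together with the unitarity of $U$, then yields $\|\mathcal R_N(t)\|_{L^2}\lesssim h^{N(1-2\beta)-C}$, which completes the proof.
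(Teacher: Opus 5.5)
Your architecture (ansatz $\mathcal I[\varphi_\delta,a_h]$, Hamilton--Jacobi at order $h^0$, transport equations, Liouville for $a_0$, Duhamel plus unitarity) is the paper's. The gap is the one genuinely quantitative step, $\|a_j\|_{C^\ell}\lesssim h^{-\beta(\ell+2j)}$, which you leave unproved. You rightly note that the coefficient bound $h^{-\beta(2k-|\alpha|)}$ for $L_k$ only gives $h^{-\beta(2j+2)}$. The sharpened bound $h^{-\beta(2k-2-|\alpha|)}$ is then asserted, and the reasons you give do not yield it. First, \eqref{eq: time der phi t improves} is an improvement at $t=0$ only and is useless uniformly in $|t|\le 1$. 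Second, ``bounded second derivatives of $p_\delta$'' helps only if every term of $L_k$ differentiates $p$ at least twice. A naive count of Blair's formula does not show this: a term of $\langle H^{-1}D,D\rangle^{k+m}(g^m u)$ with all derivatives on the cubic remainder $g$ of the phase, e.g.\ $(\partial^3\varphi_\delta)^4\,p\,a$ in $L_2$ ($m=4$), would cost $h^{-4\beta}$ and break the induction. The missing observation is structural. For the phase $\langle x-y,\xi\rangle+\varphi_\delta(y)$, the inverse Hessian at the critical point is $\bigl(\begin{smallmatrix}0&-I\\-I&-\partial^2\varphi_\delta\end{smallmatrix}\bigr)$ in the $(y,\xi)$ variables. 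Hence every term of $\langle H^{-1}D,D\rangle=-2D_y\cdot D_\xi-\langle\partial^2\varphi_\delta D_\xi,D_\xi\rangle$ contains a $D_\xi$. Since neither $a$ nor $g$ depends on $\xi$, the symbol $p(\tfrac{x+y}{2},\xi)$ receives $r\ge k+m\ge 2$ derivatives. Put $n_i\ge 3$ derivatives on the $i$-th copy of $g$ and $|\alpha|$ on $a$, so that $r+\sum n_i+|\alpha|=2k+2m$. The cost is then $h^{-\beta(r-2)}h^{-\beta(\sum n_i-2m)}=h^{-\beta(2k-2-|\alpha|)}$, which is what your induction needs. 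The same count applies to $r_\delta$, whose leading contribution $r_\delta(x,\nabla_x\varphi_\delta)a$ is $\mathcal O(h^2h^{-2\beta})$; your stated $h^2h^{-4\beta}$ would again be too large.

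The paper avoids this bookkeeping altogether in Proposition \ref{prop: form of the differential operator}. It uses $\varphi_\delta(x)-\varphi_\delta(y)=\langle F(x,y),x-y\rangle$ and expands $p$ in $\xi$ only to second order around $\nabla_x\varphi_\delta$. This gives exactly $\mathrm{e}^{-\frac{\mathrm{i}}{h}\varphi_\delta}(hD_t+P_h)\mathrm{e}^{\frac{\mathrm{i}}{h}\varphi_\delta}=h(\partial_t+V+\frac12\operatorname{div}V+hR)$. The entire tail is one operator $R=\mathrm{Op}_h^{\rm w}(r_2)+\mathrm{Op}_h^{\rm w}(r_1)\partial_x+\mathrm{Op}_h^{\rm w}(r_0)\partial_x^2$ with $\|r_i\|_{C^\ell}\lesssim h^{-\beta(\ell+i)}$; the quadratic remainder carries $\nabla_\xi^2 p_\delta$, which is where bounded second derivatives enter. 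The recursion is then $a_{j+1}\leftarrow Ra_j$, and Lemma \ref{lem: pseudo tape sur fonction lisse} gives exactly one loss of $h^{-2\beta}$ per step. Your series route works once the count above is supplied, at the cost of controlling every $L_k$.

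Two smaller corrections. For the Weyl quantization the order-$h$ term is $\frac{h}{\mathrm{i}}(V+\frac12\operatorname{div}V)a$, with $\operatorname{div}V=\operatorname{tr}(\partial_x\partial_\xi p_\delta)+\operatorname{tr}(\partial_\xi^2p_\delta\,\partial_x^2\varphi_\delta)$ at $(x,\nabla_x\varphi_\delta)$. You dropped $\frac12\operatorname{tr}(\partial_x\partial_\xi p_\delta)$, and without it $|\det\partial^2_{x\eta}\varphi_\delta|^{1/2}$ does not solve your transport equation. Also, $e_N$ is not compactly supported ($a_0\equiv 1$ where $\varphi_\delta=\langle x,\eta\rangle$). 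The final $L^2$ bound therefore needs the paper's splitting into a compactly supported FIO piece, estimated by Cauchy--Schwarz, and a pseudodifferential piece, estimated by Calder\'on--Vaillancourt.
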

The proof of Proposition \ref{prop: short time wkb} involves several steps: 
\begin{enumerate}
\item Work out the WKB method: inject the oscillatory integral Ansatz $\mathcal I[\varphi_\delta,a_h]$ in the equation \eqref{eq: equation U(t)}, then find a sequence of transport equations for $(a_j)$. This step is standard, but we need to track the symbol class of the different functions that appear in the computations.
\item Solve the sequence of transport equations for $(a_j)$, and give estimates on $\|a_j\|_{C^\ell}$. This is also classical.
\item Show the remainder estimate $\|\mathcal R_N(t)\|_{L^2}=\mathcal O(h^{N(1-2\beta)-C})$.
\end{enumerate}
We follow \cite[Chapter 10]{zworski2012semiclassical}.
\begin{prop}	\label{prop: form of the differential operator} Define a $(t,\eta)$-dependent vector field
\begin{equation}
	\label{eq: define V} V(x;t,\eta):=\sum_j \partial_{\xi_j}p_\delta(x,\nabla_x \varphi_\delta(t,x,\eta))\partial_{x_j}.
\end{equation}
Then, as operators acting on functions in the $x$ variable: \begin{equation}\label{eq: equation for ansatz in WKB}\mathrm{e}^{-\frac{\mathrm{i}}{h} \varphi_\delta}(hD_t+P_h)\mathrm{e}^{\frac{\mathrm{i}}{h} \varphi_\delta}=h\left(\partial_t +V+\frac{1}{2}\mathrm{div}(V) +h R\right)
	,\end{equation}
where:
\begin{itemize}
	\item $R=R(t,\eta)$ is a pseudodifferential operator acting on functions in the $x$ variable, of the form
	\begin{equation} \label{eq: R is of the form} R:=\mathrm{Op}_h^{\rm w}(r_2)+\mathrm{Op}_h^{\rm w}(r_1)\partial_x +\mathrm{Op}_h^{\rm w}(r_0)\partial_{xx}^2,\end{equation}
	and we have estimates on the symbols:
	\begin{equation} \label{eq: estimates on ri}\|r_i\|_{C^\ell_{t,x,\xi,\eta}}\le h^{-\beta(\ell+i)}, \quad i=0,1,2.\end{equation}
\end{itemize}
\end{prop}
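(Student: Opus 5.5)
The plan is to compute the conjugated operator $\mathrm{e}^{-\frac{\mathrm{i}}{h}\varphi_\delta}P_h\mathrm{e}^{\frac{\mathrm{i}}{h}\varphi_\delta}$ exactly. Write $P_h$ through its Weyl kernel and expand the phase difference by a first-order Taylor formula with integral remainder. The term $hD_t$ is immediate: $\mathrm{e}^{-\frac{\mathrm{i}}{h}\varphi_\delta}hD_t\mathrm{e}^{\frac{\mathrm{i}}{h}\varphi_\delta}=h\partial_t/\mathrm{i}+\partial_t\varphi_\delta$, and after multiplying conventions are matched this gives $h\partial_t$ plus $\partial_t\varphi_\delta$. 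The Hamilton--Jacobi equation \eqref{eq: hamilton jacobi eq} will cancel $\partial_t\varphi_\delta$ against the principal term $p_\delta(x,\nabla_x\varphi_\delta)$ coming from $P_h$. For a function $u(x)$ we have
\[
\mathrm{e}^{-\frac{\mathrm{i}}{h}\varphi(x)}P_h(\mathrm{e}^{\frac{\mathrm{i}}{h}\varphi}u)(x)=\frac{1}{(2\pi h)^d}\int \mathrm{e}^{\frac{\mathrm{i}}{h}\langle x-y,\xi\rangle+\frac{\mathrm{i}}{h}(\varphi(y)-\varphi(x))}p\Big(\frac{x+y}{2},\xi\Big)u(y)\,\mathrm{d}y\mathrm{d}\xi .
\]
I will write $\varphi(y)-\varphi(x)=\langle y-x,\Xi(x,y)\rangle$ with $\Xi(x,y)=\int_0^1\nabla\varphi(x+s(y-x))\,\mathrm{d}s$. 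After the shift $\xi\mapsto\xi+\Xi(x,y)$, this turns the operator into a pseudodifferential operator in the $x$ variable with $(x,y)$-dependent symbol $p\big(\tfrac{x+y}{2},\xi+\Xi(x,y)\big)$. This is the Kuranishi trick, as in \cite[Chapter 10]{zworski2012semiclassical}.

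Next, I Taylor expand this amplitude in $\xi$ around $\xi=0$ to second order, with integral remainder. The order-$0$ term is $p\big(\tfrac{x+y}2,\Xi\big)$. Reducing it to the diagonal gives $p(x,\nabla\varphi)$ plus corrections. The linear term $\langle\partial_\xi p(\tfrac{x+y}{2},\Xi),\xi\rangle$ becomes $hV$ plus a symmetrization term. The Weyl midpoint symmetry together with $\partial_y\Xi|_{y=x}=\tfrac12\nabla^2\varphi$ produces exactly $\tfrac h2\mathrm{div}V$: since $\mathrm{div}\,V=\partial_x\cdot[\partial_\xi p(x,\nabla\varphi)]$ contains both $\partial_x\partial_\xi p$ and $\partial^2_\xi p\,\nabla^2\varphi$, the first-order $(x-y)$ corrections reproduce these with coefficient $\tfrac12$. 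Everything else carries at least a factor $h^2$. Each factor $(x-y)$ or $\xi$ in the integrand is converted by integration by parts into $hD_\xi$ or $hD_y$. The leftover terms are then grouped as $h^2\big(\mathrm{Op}_h^{\rm w}(r_2)+\mathrm{Op}_h^{\rm w}(r_1)\partial_x+\mathrm{Op}_h^{\rm w}(r_0)\partial_{xx}^2\big)$, using the exact Taylor remainder in $\xi$ of order $2$ to produce the $\partial_{xx}^2$ term. Finally, I convert the $(x,y)$-symbols back to Weyl symbols using the standard $x,y$-quantization-to-Weyl formula, with the exotic-class bounds from the appendix.

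The contribution of $r_\delta$ is handled separately. Since $\|r_\delta\|_{C^\ell}\lesssim h^{2-\beta(\ell+2)}$, conjugation only costs derivatives of $\varphi$ of order $\ge2$, which are $O(h^{-\beta(\ell-2)})$ by \eqref{eq: sec WKB estimate phi}. So $r_\delta$ contributes directly to $h^2\,\mathrm{Op}_h^{\rm w}(r_2)$ with the required bound $h^{-\beta(\ell+2)}$.

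The main obstacle is the symbol bookkeeping behind \eqref{eq: estimates on ri}. Each of $r_i$ is a sum of products of
\begin{itemize}
\item $\partial_\xi^a\partial_x^b p_\delta$ evaluated at $(\tfrac{x+y}{2},\xi+\Xi)$, with $a+b\ge 2$ for the leftover terms;
\item derivatives of $\Xi$, which involve $\nabla^{k}\varphi$ for $k\ge2$.
\end{itemize}
Using $\|p_\delta\|_{C^\ell}\lesssim h^{-\beta(\ell-2)_+}$, $\|\varphi\|_{C^\ell}\lesssim h^{-\beta(\ell-2)}$ and Faà di Bruno, I will count that the coefficient of $\partial_x^{2-i}$ involves at most $i$ "excess" derivatives. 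For $r_2$ the worst term is the third and fourth $\xi$/$x$-derivatives of $p_\delta$, which cost $h^{-2\beta}$ beyond the baseline. Each further $C^\ell$ derivative costs $h^{-\beta}$. This yields $h^{-\beta(\ell+i)}$. One must check that the reduction from $(x,y)$-symbols to Weyl symbols preserves these bounds. It does, because $2\beta<1$, so each $h D_\xi\partial_y$ step gains $h^{1-2\beta}$, and the expansion can be truncated with an $O(h^\infty)$ error absorbed into $r_2$.
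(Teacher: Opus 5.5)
Your argument is correct. It shares the paper's skeleton:
\begin{itemize}
\item Kuranishi conjugation producing the amplitude $p(\tfrac{x+y}{2},\xi+\Xi(x,y))$ (your $\Xi$ is the paper's $F$);
\item cancellation of $\partial_t\varphi_\delta$ via Hamilton--Jacobi;
\item a second-order Taylor expansion in $\xi$ followed by integrations by parts;
\item a final conversion of $(x,y)$-symbols into Weyl symbols, with the exotic bounds preserved.
\end{itemize}

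The difference is the order of the two reductions. The paper first replaces $F(x,y)$ by $\nabla_x\varphi_\delta(t,\tfrac{x+y}{2},\eta)$. The error is quadratic in $x-y$, so two integrations by parts in $\xi$ turn it into $h^2\mathrm{Op}_h^{\rm w}(r_2)$. What remains is the genuine Weyl symbol $q=p(x,\xi+\nabla_x\varphi_\delta)$, whose part linear in $\xi$ quantizes to the symmetrized operator $V+\tfrac12\operatorname{div}V$ with no computation.

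You instead expand the $(x,y)$-amplitude in $\xi$ directly, and this is cleaner than you suggest. The $\xi$-independent term quantizes \emph{exactly} to multiplication by $p(x,\nabla_x\varphi_\delta(x))$, because the $\xi$-integral is a delta function on the diagonal. The linear term $\langle b(x,y),\xi\rangle$, with $b=\partial_\xi p_\delta(\tfrac{x+y}{2},\Xi)$, quantizes exactly, after one integration by parts in $y$, to $\tfrac{h}{\mathrm i}\big(b(x,x)\cdot\nabla+(\nabla_y\cdot b)(x,x)\big)$. Using $\partial_y\Xi|_{y=x}=\tfrac12\nabla^2\varphi_\delta$, this equals $\tfrac{h}{\mathrm i}(V+\tfrac12\operatorname{div}V)$. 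So the ``corrections'' you anticipate for these two terms vanish identically. The only contributions to $R$ are the $r_\delta$-term and the quadratic Taylor remainder, which gives $r_0,r_1,r_2$ after two $y$-integrations by parts.

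The paper's ordering gets the $\tfrac12\operatorname{div}V$ term for free. Yours never needs the midpoint-comparison term $e$ or its auxiliary functions $G$ and $H_{ij}$.

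Your remark about matching conventions is warranted. Since $hD=\tfrac{h}{\mathrm i}\partial$, the identity actually holds with prefactor $\tfrac{h}{\mathrm i}$ and with $R$ replaced by $\mathrm iR$. The paper makes the same slip, and it is immaterial for the transport equations.
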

\begin{proof} In the course of the proof, the functions $r_0,r_1,r_2$ may vary from line to line, but they will always satisfy \eqref{eq: estimates on ri}.

\textbf{1.} Let $u=u(x)\in C^\infty_{\rm comp}(\mathbf{R}^d)$. We first treat $t,\eta$ as parameters and look for an expression of
\[\mathrm{e}^{-\mathrm{i} \varphi_\delta/h} P_h \mathrm{e}^{\mathrm{i} \varphi_\delta/h}u(x).\]
Let $F(x,y;t,\eta)$ be defined by
\[F(x,y;t,\eta)=\int_0^1 \nabla_x\varphi_\delta(t,sx+(1-s)y,\eta)\mathrm ds,\]
so that $\varphi_\delta(t,x,\eta)-\varphi_\delta(t,y,\eta)=\langle F(x,y;t,\eta),x-y\rangle$.
Then,
\begin{equation} \label{eq: bounds F WKB}\|F\|_{C^\ell_{t,x,y,\eta}}\lesssim h^{-\beta(\ell-1)_+},\end{equation}
and the conjugation of $P_h$ by the multiplication operator by $\mathrm{e}^{\frac{\mathrm{i}}{h} \varphi_\delta}$ can be expressed
\begin{equation} \label{eq: conjugated Ph}\mathrm{e}^{-\mathrm{i} \varphi_\delta/h} P_h \mathrm{e}^{\mathrm{i} \varphi_\delta/h}u(x)=\frac{1}{(2\pi h)^d}\int \mathrm{e}^{\frac{\mathrm{i}}{h} \langle x-y,\xi\rangle} p\left(\frac{x+y}{2},\xi +F(x,y;t,\eta)\right) u(y)\mathrm dy\mathrm d\xi.\end{equation}
Define
\begin{equation}\label{eq: definitjon q} q(x,\xi;t,\eta)=p(x,\xi+\nabla_x \varphi_\delta(t,x,\eta)).\end{equation}
Then, the symbol in the oscillatory integral \eqref{eq: conjugated Ph} writes as
\begin{equation} \label{eq: one can write}p\left(\frac{x+y}{2},\xi +F(x,y;t,\eta)\right)- q\left(\frac{x+y}{2},\xi;t,\eta\right)=e(x,y,\xi;t,\eta),\end{equation}
with
\begin{equation} \label{eq: defi symbole e} e(x,y,\xi;t,\eta)=\left\langle G(x,y,\xi;t,\eta);F(x,y;t,\eta)-\nabla_x \varphi_\delta\big(t,\frac{x+y}{2},\eta\big)\right\rangle,\end{equation}
and where we have set
\begin{equation} \label{eq: defi G}G(x,y,\xi;t,\eta)=\int_0^1 \nabla_\xi p\Big(\frac{x+y}{2},\xi+ sF(x,y;t,\eta)+(1-s)\nabla_x \varphi_\delta\big(t,\frac{x+y}{2},\eta\big)\Big)\mathrm{d}s.\end{equation}
By \eqref{eq: conjugated Ph}, \eqref{eq: definitjon q} and \eqref{eq: one can write}, we see that
\[\mathrm{e}^{-\mathrm{i} \varphi_\delta/h} P_h \mathrm{e}^{\mathrm{i} \varphi_\delta/h}=\mathrm{Op}_h^{\rm w}(q(\operatorname{\cdot};t,\eta))+\operatorname{Op}_h(e(\cdot;t,\eta)).\]
By \eqref{eq: bounds F WKB}, \eqref{eq: sec WKB estimate phi}, \eqref{eq: assumptions p WKB}, and using Fa\`a di Bruno's formula, the function $G$ from \eqref{eq: defi G} satisfies
\begin{equation} \label{eq: borne G_t} \|G\|_{C^\ell_{t,x,y,\xi,\eta}}	\lesssim h^{-\beta(\ell-1)_+}.\end{equation}
By a Taylor expansion of $\nabla_x \varphi_\delta$ near $\frac{x+y}{2}$, the right term appearing in the scalar product \eqref{eq: defi symbole e} may be rewritten
\[F(x,y;t,\eta)-\nabla_x \varphi_\delta\big(t,\frac{x+y}{2},\eta\big)=\sum_{i,j} (x_i-y_i)(x_j-y_j)H_{ij}(x,y;t,\eta),\]
where $(H_{ij})_{i,j}$ is a family of vector valued functions, which are estimated using \eqref{eq: sec WKB estimate phi}:
\begin{equation} \label{eq: controle H_t}\|H_{ij}\|_{C^\ell_{t,x,y,\eta}}\lesssim \|\varphi_\delta\|_{C^{\ell+3}_{t,x,y,\eta}}\lesssim h^{-\beta(\ell+1)}.\end{equation}
Going back to \eqref{eq: defi symbole e}, we have obtained
\[e(x,y,\xi;t,\eta)=\sum_{i,j}(x_i-y_i)(x_j-y_j) \langle G(x,y,\xi;t,\eta),H_{ij}(x,y;t,\eta)\rangle.\]
Thanks to the presence of the quadratic terms $(x_i-y_i)(x_j-y_j)$, we can integrate by parts twice with respect to the $\xi$ variable, giving \[\mathrm{Op}_h(e(\cdot;t,\eta))=h^2\mathrm{Op}_h\big(\tilde b(\cdot;t,\eta)\big)\] with 
\[\tilde b(x,y,\xi;t,\eta)=\sum_{i,j} \big\langle \partial_{\xi_i\xi_j}^2 G(x,y,\xi;t,\eta),H_{ij}(x,y;t,\eta)\big\rangle.\]
Using \eqref{eq: borne G_t} and \eqref{eq: controle H_t} we find
\begin{equation} \label{eq: controle b_t}\|\tilde b\|_{C^\ell_{t,x,y,\xi,\eta}}\lesssim h^{-\beta(\ell+2)}.\end{equation}
By \cite[Theorem 4.20]{zworski2012semiclassical}, we have
\[\mathrm{Op}_h(\tilde b(\cdot;t,\eta))=\mathrm{Op}_h^{\rm w }( b(\cdot;t,\eta)),\]
where $ b(x,\xi;t,\eta)$ is defined by
\[ b(x,\xi;t,\eta):=\mathrm{e}^{\mathrm{i} h \langle D_z,D_\xi\rangle } \tilde b\big(x-\frac z2,x+\frac z2,\xi;t,\eta\big)\big|_{|z=0}.\]
Moreover, \eqref{eq: controle b_t} implies a similar control on $b$:
\begin{equation} \label{eq: estimate tilde b}\|b\|_{C^\ell_{t,x,\xi,\eta}} \lesssim h^{-\beta(\ell+2)}.\end{equation}
To summarize, we have obtained
\begin{equation} \label{eq: to summarize}\mathrm{e}^{-\mathrm{i} \varphi_\delta/h} P_h \mathrm{e}^{\mathrm{i} \varphi_\delta/h}=\mathrm{Op}_h^{\rm w}(q(\cdot;t,\eta))+\mathrm{Op}_h^{\rm w}\big( r_2(\cdot;t,\eta)\big)
	,\end{equation}
with $q=p(x,\xi+\nabla_x\varphi_\delta)$ and $r_2$ estimated by \eqref{eq: estimates on ri}. \medskip

\textbf{2.} We incorporate time derivatives. By \eqref{eq: to summarize}, we have
\[(hD_t+P_h) \mathrm{e}^{\frac{\mathrm{i}}{h}\varphi_\delta}=\mathrm{e}^{\frac{\mathrm i}{h}\varphi_\delta}\left(\partial_t \varphi_\delta+\frac{h}{\mathrm{i}}\partial_t +\mathrm{Op}_h^{\rm w}(q(x,\xi;t,\eta)) +h^2\mathrm{Op}_h^{\rm w}(r_2)\right).\]
Recall the Hamilton--Jacobi equation
\[\partial_t \varphi_\delta(t,x,\eta)=-p_\delta(x,\nabla_x\varphi_\delta(t,x,\eta)).\]
Introducing 
\[f(x,\xi;t,\eta):=p(x,\xi+\nabla_x \varphi_\delta(t,x,\eta))-p_\delta(x,\nabla_x \varphi_\delta(t,x,\eta)),\]
we can then rewrite
\[\partial_t \varphi_\delta+q=f,\]
where $q$ has been defined in \eqref{eq: definitjon q}. Recall that $p=p_\delta+r_\delta$. By Taylor's formula, we can expand
\begin{equation} \label{eq: can expand} f(x,\xi;t,\eta)=\xi\cdot \nabla_\xi p_\delta(x,\nabla_x \varphi_\delta(t,x,\eta))+\langle Q(x,\xi)\xi,\xi\rangle+r_\delta(x,\xi+\nabla_x \varphi_\delta(t,x,\eta)),\end{equation}
where $Q=Q(\cdot;t,\eta)$ is given by
\[Q(x,\xi;t,\eta)=\int_0^1 (1-s)\nabla_\xi^2 p_\delta\big(x,(1-s)\nabla_x \varphi_\delta(t,x,\eta)+s\xi\big) \mathrm ds,\]
and satisfies
\begin{equation} \label{eq: bound Q} \|Q\|_{C^\ell_{t,x,\xi,\eta}}\lesssim h^{-\beta \ell}.\end{equation}
Similarly, the function $r_\delta(x,\xi+\nabla_x \varphi_\delta(t,x,\eta))$ is estimated by
\[\|r_\delta(x,\xi+\nabla_x \varphi_\delta(t,x,\eta))\|_{C^\ell}\lesssim h^2h^{-\beta(\ell+2)}.\]
We can thus incorporate the quantity $h^{-2}r_\delta(x,\xi+\nabla_x\varphi_\delta)$ in the term $r_2$.
At this point, we have
\[\mathrm{e}^{-\frac{\mathrm i}{h}\varphi_\delta}(hD_t+P_h) \mathrm{e}^{\frac{\mathrm{i}}{h}\varphi_\delta}=\frac{h}{\mathrm{i}}\partial_t+\mathrm{Op}_h^{\rm w}(L(x,\xi;t,\eta)))+ \mathrm{Op}_h^{\rm w}(\langle Q\xi,\xi\rangle)+h^2\mathrm{Op}_h^{\rm w}(r_2),\]
with $L(x,\xi;t,\eta):=\langle \xi,\nabla_\xi p_\delta(x,\nabla_x \varphi_\delta(t,x,\eta))\rangle$. This last term is linear in $\xi$, thus its Weyl quantization is given by \cite[Theorem 4.5]{zworski2012semiclassical}:
\begin{equation}
	\begin{aligned}
		\mathrm{Op}_h^{\rm w}(L)u(x)& =\frac 12\sum_{j=1}^n\left(hD_{x_j}(\partial_{\xi_j}p_\delta(x,\nabla_x \varphi_\delta) u)+(\partial_{\xi_j}p_\delta(x,\nabla_x\varphi_\delta)) hD_{x_j}u\right) \\
		& = h\Big(V+\frac 12\operatorname{div}V\Big) u,
	\end{aligned}
\end{equation}
where $V$ has been introduced in \eqref{eq: define V}. Lastly, thanks to the quadratic term $\xi^2$ by performing two integrations by parts and using the estimate \eqref{eq: bound Q}, we get that the operator
\[h^{-2}\mathrm{Op}_h^{\rm w}(\langle Q\xi,\xi\rangle)\]
is of the same form as $R$ in \eqref{eq: R is of the form}. This concludes the proof.
\end{proof}

\subsubsection{The sequence of transport equations} Recall that we are looking for a FIO representation of the propagator $\mathrm{e}^{-\frac{\mathrm{i}}{h} tP_h}\approx\mathcal I[\varphi_\delta,a_h]$. Recalling Proposition \ref{prop: form of the differential operator}, we impose
\begin{equation} \label{eq: equation on ah}\Big(\partial_t+V+\frac{1}{2}\operatorname{div}(V)+hR\Big)a_h=0.\end{equation}
Formally, we write $a_h=\sum_{j=0}^{\infty}h^j a_j$ then plug this series in \eqref{eq: equation on ah}. Identifying coefficients of equal powers of $h$, we find that the sequence $(a_j)_{j\ge 0}$ must solve
\begin{equation}\label{eq: sequence of transport equations amplitude} \left\{\begin{array}{ll} \big(\partial_t +V+\frac{1}{2}\mathrm{div}(V)\big)a_0(t,x;\eta)=0, \vspace{0.5em} \\ 
	\left(\partial_t +V+\frac{1}{2}\mathrm{div}(V)\right)a_{j+1}(t,x;\eta)=-Ra_j(t,x;\eta). \end{array}\right.\end{equation}
	Recall that $R$ is a pseudodifferential operator depending on $t$ and $\eta$ acting on functions in the $x$ variable. It is more natural to formulate the first equation geometrically in the language of half-densities. Given $\eta\in \mathbf R^d$, consider the horizontal Lagrangian $\Lambda_\eta=\{(x,\eta)~ | ~ x\in \mathbf R^d\}=\mathbf{R}^d\times \{\eta\}$. For short times, the evolved Lagrangian $\Lambda_\eta(t):=\Phi^t_\delta(\Lambda_\eta)\subset \mathbf{R}^{2d}$ projects diffeomorphically on the $x$ coordinates. The function $a_h(x;t,\eta)$ can then be seen as a function on $\Lambda_\eta(t)$, where $\Lambda_\eta(t)$ is parameterized by the $x$-coordinates. For $s<t$, we introduce the flow generated by the time-dependent vector field $V(x;t,\eta)$:
	\[\phi_{s,t}:\left\{ \begin{array}{ll}\pi_x\Lambda_\eta(s)\to \pi_x\Lambda_\eta(t) \\ 
x \mapsto \pi_x\big(\Phi^t(\nabla_\eta \varphi_\delta(s,x,\eta),\eta)\big)\end{array}\right..\]
The first equation in \eqref{eq: sequence of transport equations amplitude} then expresses the fact that the half-density $a_0(x;t,\eta) |\mathrm{d}x|^{\frac 12}$ on $\Lambda_\eta(t)$ is invariant under this flow. Indeed, one has
\[\big(\partial_t +V+\frac{1}{2}\mathrm{div}(V)\big) a_0(x;t,\eta)  |\mathrm{d}x|^{\frac 12}=(\partial_t +\mathcal L_{V})\big(a_0(x;t,\eta)|\mathrm{d}x|^{\frac 12}\big),\]
where $\mathcal L_{V}$ denotes the Lie derivative of the vector field $V(x;t,\eta)$ acting on half-densities.

We record the following estimate on the flow $\phi_{s,t}$, which follows from Proposition \ref{prop: short time derivative flow} and Lemma \ref{lem: spacetime derivatives generating function}.
\begin{lem}\label{lem ; estimate phist} For all $\ell\ge 0$, one has for $|t|<t_0$,
\begin{equation}\label{borne phist}\|\phi_{s,t}\|_{C^\ell_{s,t,x,\eta}}\lesssim h^{-\beta(\ell-1)_+}.
\end{equation}\end{lem}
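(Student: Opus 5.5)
The map $\phi_{s,t}$ is the composition of two maps whose derivatives we already control. Write
\[\phi_{s,t}(x)=\pi_x\circ \Phi^t_\delta\big(Y(s,x,\eta),\eta\big), \qquad Y(s,x,\eta):=\nabla_\eta \varphi_\delta(s,x,\eta).\]
The plan is to bound each factor separately and then combine them with Fa\`a di Bruno's formula, keeping track of the powers of $h^{-\beta}$. Since $|t|<t_0$ with $t_0$ fixed, all exponential factors $\mathrm{e}^{C\ell t}$ coming from Proposition \ref{prop: short time derivative flow} and Lemma \ref{lem: spacetime derivatives generating function} are $\mathcal O(1)$, so only the $h$-powers matter.

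\textbf{Bounds on the two factors.}

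\emph{Inner map.} Lemma \ref{lem: spacetime derivatives generating function} gives $\|\partial_s^n\varphi_\delta\|_{C^{\ell}_{x,\eta}}\lesssim h^{-\beta(\ell+n-2)_+}$. Hence the map $(s,x,\eta)\mapsto (Y,\eta)$ satisfies $\|(Y,\eta)\|_{C^k_{s,x,\eta}}\lesssim h^{-\beta(k-1)_+}$ for $k\ge 1$, because one derivative is already spent in $\nabla_\eta$.

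\emph{Outer map.} Proposition \ref{prop: short time derivative flow} gives
\[|\partial_t^n\partial_\rho^j\Phi^t_\delta|\lesssim h^{-\beta(j+n-1)_+}.\]
For $n=0$ this is the stated bound $h^{-\beta(j-1)_+}$. For $n\ge 1$ the stated exponent is $h^{-\beta(j+n-1)}$, which is the same thing because $j+n-1\ge 0$.

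\textbf{Combination.} Take a derivative $\partial_t^{n}\partial_{(s,x,\eta)}^{m}$ with $n+m=\ell$. Fa\`a di Bruno's formula writes it as a sum of terms
\[(\partial_t^n\partial_\rho^r\Phi^t_\delta)(Y,\eta)\cdot\prod_{i=1}^r \partial^{m_i}(Y,\eta), \qquad \sum_i m_i=m,\ m_i\ge 1.\]
Each such term is bounded by
\[h^{-\beta(r+n-1)_+}\prod_i h^{-\beta(m_i-1)}=h^{-\beta[(r+n-1)_++m-r]}.\]
When $r\ge 1$ or $n\ge 1$, the exponent equals $n+m-1=\ell-1$. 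The case $r=0$ only occurs when $m=0$, and then the exponent is $(n-1)_+$. In every case the bound is $h^{-\beta(\ell-1)_+}$, which is \eqref{borne phist}. Projecting with $\pi_x$ is linear, so it does not affect these estimates.

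\textbf{Main obstacle.} The only delicate point is the bookkeeping that shows no spurious extra factor $h^{-\beta}$ appears when $t$-derivatives and $\rho$-derivatives mix. This is exactly the combinatorial check above, using the ``$-1$'' saving available for both factors. One must also make sure that $\nabla_\eta\varphi_\delta(s,x,\eta)$ stays in the domain where the generating function is defined, which holds for short times by the global generating function constructed in \S\ref{sec: short time WKB}.
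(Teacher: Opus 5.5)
Your proposal is correct and takes the same route as the paper. The paper only asserts that the bound follows from Proposition \ref{prop: short time derivative flow} and Lemma \ref{lem: spacetime derivatives generating function}. Your Fa\`a di Bruno bookkeeping, which writes $\phi_{s,t}=\pi_x\circ\Phi^t_\delta\circ(\nabla_\eta\varphi_\delta,\eta)$ and uses the ``$-1$'' saving in both factors to get total exponent $(\ell-1)_+$, is exactly the computation the paper leaves implicit.
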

\subsubsection{Solving the transport equations} We now write $a_0(t,x,\eta)$ instead of $a_0(x;t,\eta)$. At time $0$, we fix $a_0(0,x,\eta)\equiv 1$. Then, $a_0$ is given by
\[a_0(t,x,\eta)|\mathrm{d}x|^{\frac 12}=(\phi_{0,t})_* |\mathrm{d}x|^{\frac 12}= |\det \mathrm{d}_x\phi_{0,t}^{-1}|^{\frac 12} |\mathrm{d}x|^{\frac 12},\]
Since $\phi_{0,t}^{-1}(x,\eta)=(\nabla_\eta \varphi_\delta(t,x,\eta),\eta)$, we infer
\[\mathrm{d}_x\phi_{0,t}^{-1}(x,\eta)=\frac{\partial^2\varphi_\delta}{\partial x\partial \eta}(t,x,\eta),\]
and one gets eventually
\[a_0(t,x,\eta)=\big|\partial_{x,\eta}^2 \varphi_\delta\big|^{\frac 12}.\]
The higher order transport equations \eqref{eq: sequence of transport equations amplitude} are solved using Duhamel's formula. Setting $a_j(0,x,\eta)\equiv 0$ for all $j\ge 1$, the solution is given by
\begin{equation} \label{eq: defi aj+1} a_{j+1}(t,x,\eta):=-\int_0^t (R_sa_j(s,\cdot))\circ \phi_{s,t}^{-1}(x;\eta) |\det \mathrm d_x\phi_{s,t}(x;\eta)|^{-\frac 12}\mathrm ds.\end{equation}
The symbols $a_j$ are estimated in the following proposition.
\begin{prop}\label{short time estimate aj} For all $j$, we have, uniformly in $|t|\le t_0$:
\begin{equation}\label{eq: borne bj short time} \forall \ell\ge 0, \quad \|a_j\|_{C^\ell_{t,x,\eta}}\lesssim  h^{-\beta(\ell+2j)}.
\end{equation}
Moreover, we have improved estimates at $t=0$:
\begin{equation}\label{eq: bornes aj short time} \forall n\ge 1, \quad \|\partial_t^n a_j(t=0)\|_{C^\ell_{x,\eta}}\lesssim h^{-\beta(\ell+n+2j-1)}.\end{equation}\end{prop}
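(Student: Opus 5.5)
We argue by induction on $j$, using the explicit formulas for $a_0$ and the Duhamel representation \eqref{eq: defi aj+1}.

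\textbf{Base case $j=0$.} Here $a_0=|\det \partial^2_{x\eta}\varphi_\delta|^{1/2}$. The matrix $\partial^2_{x\eta}\varphi_\delta$ equals $A^{-1}$ from the proof of Lemma \ref{lem: space derivatives generating function}. For $|t|\le t_0$ it stays close to the identity, so the determinant is bounded below and the square root is smooth on its range. By Lemma \ref{lem: spacetime derivatives generating function} we have $\|\partial^2_{x\eta}\varphi_\delta\|_{C^\ell_{t,x,\eta}}\lesssim h^{-\beta\ell}$. Fa\`a di Bruno then gives $\|a_0\|_{C^\ell}\lesssim h^{-\beta\ell}$; this is also Proposition \ref{prop: derivees a0} with $t$ bounded.

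For the improvement at $t=0$, we note that $a_0(0,\cdot)\equiv1$, so only terms carrying at least one $t$-derivative appear. Using \eqref{eq: time der phi t improves}, $\partial_t^n\partial^2_{x\eta}\varphi_\delta(0)$ is $O(h^{-\beta(\ell+n-1)_+})$ in $C^\ell$. Fa\`a di Bruno applied to a smooth function of these entries then yields $h^{-\beta(\ell+n-1)}$.

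\textbf{Inductive step.} Assume \eqref{eq: borne bj short time} holds for $a_j$. By \eqref{eq: R is of the form}–\eqref{eq: estimates on ri}, $R_s a_j$ is a sum of three terms $\mathrm{Op}_h^{\rm w}(r_i)\partial_x^{2-i}a_j$. The $C^\ell$ norm of $\partial_x^{2-i}a_j$ is $\lesssim h^{-\beta(\ell+2-i+2j)}$. Applying $\mathrm{Op}_h^{\rm w}(r_i)$ with $\|r_i\|_{C^\ell}\lesssim h^{-\beta(\ell+i)}$ costs an additional $h^{-\beta i}$ and preserves $C^\ell$ bounds with the usual $h^{-\beta\ell}$ loss. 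We justify this by writing $\mathrm{Op}_h^{\rm w}(r)u$ as the symbol of a composition in the $S_\beta$ calculus, with $u$ viewed as a symbol independent of $\xi$, using $2\beta<1$. Hence $\|R_sa_j\|_{C^\ell}\lesssim h^{-\beta(\ell+2j+2)}$.

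Next we compose with $\phi_{s,t}^{-1}$, whose $C^\ell$ norms are $\lesssim h^{-\beta(\ell-1)_+}$ by Lemma \ref{lem ; estimate phist}, and multiply by the Jacobian factor $|\det \mathrm d_x\phi_{s,t}|^{-1/2}$, which has $C^\ell$ norm $\lesssim h^{-\beta\ell}$. By Fa\`a di Bruno and the product rule, the integrand is bounded in $C^\ell_{s,t,x,\eta}$ by $h^{-\beta(\ell+2(j+1))}$. Differentiating the $\int_0^t$ in $t$ produces boundary terms of the same kind. This gives \eqref{eq: borne bj short time} for $a_{j+1}$.

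\textbf{Improved estimate at $t=0$.} Since $a_{j+1}(0)=0$, we have $\partial_t a_{j+1}(0)=-R_0a_j(0)$. Higher derivatives satisfy $\partial_t^n a_{j+1}(0)=-\partial_t^{n-1}\big[(R_t a_j)\circ\phi_{t,t}^{-1}\cdots\big]|_{t=0}$ plus terms coming from the integral, which vanish at $t=0$. It is cleaner to use the transport equation directly: $\partial_t a_{j+1}=-(V+\tfrac12\operatorname{div}V)a_{j+1}-Ra_j$. Differentiating this $n-1$ times in $t$ and evaluating at $0$ expresses $\partial_t^na_{j+1}(0)$ through lower time derivatives of $a_{j+1}(0)$, which we handle by induction on $n$, and through $\partial_t^{k}a_j(0)$ with $k\le n-1$, which we handle by the induction on $j$. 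The coefficients involve $t$-derivatives of $V$ and of $r_i$ at $t=0$, and these also enjoy the improved bounds from \eqref{eq: time der phi t improves}.

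\textbf{Main obstacle.} The bookkeeping at $t=0$ is the delicate part. The term $Ra_j$ costs $h^{-2\beta}$ times $h^{-\beta(\ell+n-1+2j-1)}$ from $\partial_t^{n-1}a_j(0)$. This gives exactly $h^{-\beta(\ell+n+2(j+1)-1)}$, but only provided we use the improvement for $a_j$ and that the symbols $r_i$ do not lose more at $t=0$. We must check the latter by tracking their construction from $\varphi_\delta$ in Proposition \ref{prop: form of the differential operator}.
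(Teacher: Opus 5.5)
Your proof of \eqref{eq: borne bj short time} follows the paper's argument. Your composition-calculus justification of the action of $\mathrm{Op}_h^{\rm w}(r_i)$ on $C^\ell$ plays the role of Lemma \ref{lem: pseudo tape sur fonction lisse}. Your treatment of $\partial_t^n a_0(0)$ via \eqref{eq: time der phi t improves} is also the paper's.

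For the improvement at $t=0$ for $a_{j+1}$ you take a different route: you differentiate the transport equation and induct on $n$. That route does close, but the ``main obstacle'' you flag rests on an arithmetic slip. Indeed $h^{-2\beta}\cdot h^{-\beta(\ell+n+2j-2)}=h^{-\beta(\ell+n+2j)}$, which is one power \emph{better} than the target $h^{-\beta(\ell+n+2j+1)}$. You need neither the improvement for $a_j$ nor any special behaviour of the $r_i$ at $t=0$. Because $a_{j+1}(0)=0$, only $n-1$ time derivatives fall on the source $R_ta_j$. The general bounds \eqref{eq: borne bj short time} for $a_j$ and \eqref{eq: estimates on ri} for $r_i$ (which already include $t$-derivatives) then give $\|\partial_t^{n-1}(R_ta_j)(0)\|_{C^\ell}\lesssim h^{-\beta(\ell+2+(n-1)+2j)}$. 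This is exactly the claimed exponent. The $V$-terms close the same way, using the general bound $\|V\|_{C^q_{t,x,\eta}}\lesssim h^{-\beta(q-1)_+}$, your induction on $n$, and $a_{j+1}(0)=0$.

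The paper avoids the induction on $n$ entirely. Write \eqref{eq: defi aj+1} as $a_{j+1}(t)=-\int_0^t h_j(t,s)\,\mathrm{d}s$. Then
\[
\partial_t^n a_{j+1}(0)=-\sum_{i=0}^{n-1}\frac{\mathrm{d}^{n-1-i}}{\mathrm{d}t^{n-1-i}}\big[(\partial_t^i h_j)(t,t)\big]\Big|_{t=0},
\]
which is a combination of derivatives of $h_j$ of total order $n-1$ in $(t,s)$. Hence $\|\partial_t^n a_{j+1}(0)\|_{C^\ell_{x,\eta}}\lesssim\|h_j\|_{C^{\ell+n-1}}\lesssim h^{-\beta(\ell+n+2(j+1)-1)}$, by the bound on the integrand you had already proved.

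This is also where your first attempt went astray. You kept only the $i=0$ term, but the terms with $i\ge 1$ do not vanish at $t=0$, since the integrand depends on $t$ through $\phi_{s,t}$. They are nonetheless of the same order and cause no loss.

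In short, your route avoids the explicit Duhamel formula at the cost of a double induction and bookkeeping on $V$ and $r_i$. The paper's route gets the gain of one power for free from the single integration in time.
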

The proof relies on the following lemma.
\begin{lem}\label{lem: pseudo tape sur fonction lisse}

There is a constant $C$ such that for $N$ large enough (depending on the dimension $d$), for any $r\in C^\infty(T^*\mathbf{R}^d)$ with uniformly bounded derivatives, for all $u\in C^\infty(\mathbf{R}^d)$, one has 
\[\|\mathrm{Op}_h^{\rm w}(r)u\|_{C^\ell_x}\lesssim \sum_{k=0}^N h^k\Big(\sum_{i+j=2k+\ell } \|r\|_{C^{i}_{x,\xi}} \|u\|_{C^j_x}\Big)
+h^{N}\sum_{i+j\le 2N+\ell+C} \|r\|_{C^i_{x,\xi}}\|u\|_{C^j_x}.\]
\end{lem}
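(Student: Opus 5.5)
We use the standard Taylor-expansion proof of the asymptotic expansion for the Weyl quantization, while tracking how many derivatives land on $r$ and how many on $u$. First we write
\[\mathrm{Op}_h^{\rm w}(r)u(x)=\frac{1}{(2\pi h)^d}\int \mathrm{e}^{\frac{\mathrm{i}}{h}\langle x-y,\xi\rangle} r\Big(\frac{x+y}{2},\xi\Big)u(y)\,\mathrm{d}y\,\mathrm{d}\xi .\]
After the change of variables $y=x+z$, the $x$-derivatives fall only on $r(x+\tfrac z2,\xi)$ and $u(x+z)$. So $\partial_x^\ell \mathrm{Op}_h^{\rm w}(r)u$ is a finite sum of terms $\mathrm{Op}_h^{\rm w}$-type oscillatory integrals with amplitude $\partial_x^{\ell_1}r\cdot\partial^{\ell_2}u$, $\ell_1+\ell_2=\ell$. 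It therefore suffices to treat $\ell=0$, with $r$ and $u$ replaced by these derivatives; this accounts for the index shift by $\ell$ in the bound.

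For $\ell=0$ we view the integral in $(z,\xi)$ as an oscillatory integral with the nondegenerate quadratic phase $-\langle z,\xi\rangle$, whose critical point is $(0,0)$ in the $z$-variable. Exact quadratic stationary phase, i.e.\ $\mathrm{e}^{\mathrm{i}h\langle D_z,D_\xi\rangle}$ as in \cite[Theorem 4.17]{zworski2012semiclassical}, gives
\[\mathrm{Op}_h^{\rm w}(r)u(x)=\sum_{k<N}\frac{(\mathrm{i}h)^k}{k!}\langle D_z,D_\xi\rangle^k\big[r(x+\tfrac z2,\xi)u(x+z)\big]\big|_{z=0}\ \text{(integrated against the $\xi$-Fourier side)}+\text{remainder}.\]
A cleaner approach is to Taylor expand $u(x+z)r(x+\tfrac z2,\xi)$ in $z$ to order $2N$. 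Each monomial $z^\gamma$ turns into $(hD_\xi)^\gamma$ after integration by parts in $\xi$. The $\xi$-integral $\frac{1}{(2\pi h)^d}\int \mathrm{e}^{-\frac{\mathrm{i}}{h}\langle z,\xi\rangle}\partial_\xi^\gamma r\,\mathrm{d}\xi$ is a Fourier transform, and when it is paired with the remaining $z$ integral it produces terms $h^{k}\partial_x^{i}\partial_\xi^{i'}r\,\partial^j u$ at $\xi$ fixed by Fourier inversion. The bookkeeping follows the proof of \cite[Theorem 4.17]{zworski2012semiclassical}: the $k$-th term carries exactly $k$ $z$-derivatives and $k$ $\xi$-derivatives, split between $r$ and $u$, so the total is $i+j=2k$, or $2k+\ell$ after the first reduction. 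This gives the main sum.

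The main obstacle is the remainder, where we need bounds only in terms of finitely many $C^i$ norms, uniform in $h$, rather than the symbol-class estimates usually quoted. We plan to write the Taylor remainder in integral form, $\int_0^1 (1-s)^{N-1}\langle D_z,D_\xi\rangle^N \mathrm{e}^{\mathrm{i}sh\langle D_z,D_\xi\rangle}(\cdots)\mathrm{d}s$, and use the $L^\infty$ estimate for $\mathrm{e}^{\mathrm{i}sh\langle D_z,D_\xi\rangle}$ from \cite[Theorem 4.17]{zworski2012semiclassical}. That estimate loses a fixed number $C=C(d)$ of derivatives in the Sobolev/$L^\infty$ bound, uniformly in $s\in[0,1]$ and $h\le 1$. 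Since $r$ has bounded derivatives but $u$ need not decay, we first localize. Because the final bound is pointwise in $x$, we insert a cutoff $\chi(z)$: the part $(1-\chi(z))$ is handled by repeated integration by parts in $\xi$ using $|z|\ge 1$, which gives $\mathcal O(h^N)$ times finitely many derivatives of $r$ and $u$. The part $\chi(z)$ is compactly supported in $z$, where the $L^\infty$ estimate applies. The result is the remainder $h^N\sum_{i+j\le 2N+\ell+C}\|r\|_{C^i}\|u\|_{C^j}$.
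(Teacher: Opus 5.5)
Your argument is essentially the paper's. You do quadratic stationary phase at the critical point $(z,\xi)=(0,0)$: the $k$-th term is the pointwise evaluation $\frac{(\mathrm{i}h)^k}{k!}\langle D_z,D_\xi\rangle^k a\big|_{(z,\xi)=(0,0)}$, with $a(z,\xi)=r(x+\tfrac z2,\xi)u(x+z)$ and no further $\xi$-integration, which gives exactly $i+j=2k$. Everything else is handled by non-stationary phase. Substituting $y=x+z$ is a tidier way to handle $\ell\ge 1$ than the paper's extra integrations by parts in $y$.

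The difference lies in the cutoff. The paper cuts off jointly in $(z,\xi)$, so its far region is a single non-stationary phase estimate with integrand $\mathcal O\big(h^N(|z|+|\xi|)^{-N}\big)$, integrable on $\{|z|+|\xi|\ge 1\}$ once $N>2d$. You cut off in $z$ only, and for the near part you quote a uniform $L^\infty$ bound for $\mathrm{e}^{\mathrm{i}sh\langle D_z,D_\xi\rangle}$ acting on functions with bounded derivatives.

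That choice leaves one step that fails as written. On $\{|z|\ge 1\}$, integrating by parts in $\xi$ alone gives decay $h^M|z|^{-M}$ in $z$. But $r$ does not decay in $\xi$, so the remaining $\xi$-integral over $\mathbf{R}^d$ diverges.

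You must also integrate by parts in $z$, using $\big(1-h^2\sum_j\partial_{z_j}^2\big)^{M'}\mathrm{e}^{-\frac{\mathrm{i}}{h}\langle z,\xi\rangle}=\langle\xi\rangle^{2M'}\mathrm{e}^{-\frac{\mathrm{i}}{h}\langle z,\xi\rangle}$ with $2M'>d$. This only costs $z$-derivatives of $r$, $u$, $1-\chi$, and of the factors $z_j/|z|^2$ produced by the $\xi$-integrations, all bounded on $|z|\ge\tfrac12$.

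Alternatively, note that the $L^\infty$ bound you quote holds for every $b$ with bounded derivatives; its own proof is exactly a near/far splitting. So you may apply the Taylor remainder formula for $\mathrm{e}^{\mathrm{i}h\langle D_z,D_\xi\rangle}$ directly to $a$ and drop both the cutoff $\chi(z)$ and the far region.
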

\begin{proof}We explain the case $\ell=0$. Start from
\[\mathrm{Op}_h^{\rm w}(r)u(x)=\frac{1}{(2\pi h)^d}\int_{\mathbf{R}^{2d}} \mathrm{e}^{\frac{\mathrm{i}}{h}\langle x-y,\xi\rangle}r\big(\frac{x+y}{2},\xi\big)u(y)\mathrm{d}y\mathrm{d}\xi.\]
We split the integral into two parts, one of which localized near $(y-x,\xi)\equiv 0$ and the other one on $|(y-x)|+|\xi|\ge 1$. The first part is estimated by a stationary phase expansion. The second part is dealt with by a non-stationary phase estimate, using the fact that $(|y-x|+|\xi|)^{-N}$ is integrable on $\{(y,\xi)~:~|(y-x)|+|\xi|\ge 1\}\subset \mathbf{R}^{2d}$ when $N>2d$. The case of higher derivatives is dealt with similarly, up to performing some additional integrations by parts in the $y$ variable.  \end{proof}
We now turn to the proof of Proposition \ref{short time estimate aj}.
\begin{proof}[Proof of Proposition \ref{short time estimate aj}]The estimates on $\|a_0\|_{C^\ell}$ have already been obtained in Proposition \ref{prop: derivees a0}.
\textbf{1.} By induction, assume that \eqref{eq: borne bj short time} holds up to index $j$. Then, by Lemma \ref{lem: pseudo tape sur fonction lisse} and \eqref{eq: estimates on ri}, one has
\begin{equation}\label{borne Raj} \|R_sa_{j}(s,\cdot)\|_{C^\ell_{s,x,\eta}}\lesssim h^{-\beta(\ell+2j+2)}
.\end{equation}
In turn, using Fa\`a di Bruno's formula and combining \eqref{borne Raj} with \eqref{borne phist}, one finds that for any $\ell\ge 0$, 
\begin{equation}\label{borne Raj fois phist}\big\|R_sa_j(s,\cdot)\circ \phi_{s,t}^{-1}\big\|_{C^\ell_{s,t,x,\eta}}\lesssim h^{-\beta(\ell+2j+2)}.\end{equation}
Combining \eqref{borne Raj} with \eqref{borne phist}, by letting
\[h_j(t,s,x,\eta):=(R_sa_j(s,\cdot))\circ \phi_{s,t}^{-1}(x;\eta) |\det \mathrm d_x\phi_{s,t}(x;\eta)|^{-\frac 12},\]
we find
\begin{equation} \label{eq: bound h_j(s,t,x,eta)} \big\|h_j\big\|_{C^\ell_{s,t,x,\eta}}\lesssim h^{-\beta(\ell+2j+2)}.\end{equation}
By the definition \eqref{eq: defi aj+1} of $a_{j+1}$, we have
\begin{equation} \label{eq: aj+1 in terms of hj} a_{j+1}(t,x,\eta)=\int_0^t h_j(t,s,x,\eta)\mathrm{d}s,\end{equation}
thus we can express derivatives of $a_{j+1}$ in terms of derivatives of $h$, and find for $|t|\le t_0$:
\[\|a_{j+1}\|_{C^\ell_{t,x,\eta}}\lesssim \|h_j\|_{C^{\ell}_{t,s,x\eta}}\lesssim h^{-\beta(\ell+2(j+1))}.\]
This completes the induction.

\textbf{2.} We show the improved estimates \eqref{eq: bornes aj short time} at $t=0$. The improved estimates for the derivatives of $\varphi_\delta$ at $t=0$ from Lemma \ref{lem: spacetime derivatives generating function} yield
\[\|\partial_t^n a_0(0,x,\eta)\|_{C^\ell_{x,\eta}}\lesssim h^{-\beta(\ell+n-1)_+}.\]
For the higher-order symbols, we use \eqref{eq: aj+1 in terms of hj} to obtain, for any $n\ge 1$,
\[\partial_t^n a_{j+1}(0,x,\eta)=\sum_{i+j\le n-1} \partial_t^i\partial_s^j h(t,s,x,\eta)\Big|_{t=s=0},\]
it then follows from \eqref{eq: bound h_j(s,t,x,eta)} that
\[\|\partial_t^n a_{j+1}(0,x,\eta)\|_{C^\ell_{x,\eta}}\le \|h\|_{C^{n-1+\ell}}\lesssim h^{-\beta(\ell+n+2j+2-1)}=h^{-\beta(\ell+n+2(j+1)-1)},\]
which finishes the proof.
\end{proof}

\subsubsection{Controlling the remainder} We let $a_h=\sum_{j\le N} h^ja_j$ denote the symbol truncated at order $N$, and let $U^N(t):=\mathcal I[\varphi_\delta,a_h]$. To control the difference between $U^N(t)$ and the true propagator $U(t)=\mathrm{e}^{-\frac{\mathrm{i}}{h} tP_{h}}$ 
we will use the following lemma, which follows from the unitarity of the group $\mathrm{e}^{-\frac{\mathrm{i}}{h} tP_{h}}$ together with Duhamel's formula.
\begin{lem}Assume $(hD_t+P_h)\mathcal R(t)=\mathcal Z(t)$. Then,
\[\|\mathcal R(t)\|\le \|\mathcal R(0)\|+\frac{1}{h}\int_0^t \|\mathcal Z(s)\|\mathrm{d}s,\]
where the norms are either the operator norm on $L^2$ or the trace-class norm.
\end{lem}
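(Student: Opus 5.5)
The plan is a Duhamel argument: write $\mathcal R(t)$ in terms of its initial value and the propagated forcing $\mathcal Z$, then use unitarity of $U(t)=\mathrm{e}^{-\frac{\mathrm{i}}{h}tP_h}$. The relevant fact is that $P_h$ is self-adjoint, so $U(t)$ is unitary and solves \eqref{eq: equation U(t)}.

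Rather than differentiating $\mathcal R(t)$ directly, set $W(t):=U(t)^{-1}\mathcal R(t)=U(-t)\mathcal R(t)$. By the product rule and $hD_tU(-t)=U(-t)P_h$ (since $hD_t U(t)=-P_hU(t)$),
\[hD_tW(t)=U(-t)\big(P_h\mathcal R(t)+hD_t\mathcal R(t)\big)=U(-t)\mathcal Z(t).\]
In other words $\partial_tW(t)=\frac{\mathrm{i}}{h}U(-t)\mathcal Z(t)$, because $hD_t=\frac{h}{\mathrm{i}}\partial_t$. Integrating from $0$ to $t$ gives
\[\mathcal R(t)=U(t)\mathcal R(0)+\frac{\mathrm{i}}{h}\int_0^tU(t-s)\mathcal Z(s)\,\mathrm{d}s.\]

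Next we take norms. Multiplying on the left by a unitary operator preserves both the $L^2$ operator norm and the trace norm; the latter because $\|VA\|_{\rm tr}=\operatorname{Tr}|VA|$ and $|VA|=|A|$ when $V$ is unitary. The triangle inequality for the Bochner integral then gives
\[\|\mathcal R(t)\|\le\|\mathcal R(0)\|+\frac1h\int_0^t\|\mathcal Z(s)\|\,\mathrm{d}s.\]

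The only delicate point is justification. We need $t\mapsto\mathcal R(t)$ to be differentiable in a sense compatible with the product rule, and the integral to make sense in the chosen norm. In the intended application, $\mathcal R(t)=U(t)-\mathcal I[\varphi_\delta,a_h]$ with compactly supported smooth kernels, and $P_h$ has a compactly supported symbol, so $P_h$ is bounded on $L^2$ and $U(t)$ is norm-differentiable. It is then enough to assume $\mathcal R$ is $C^1$ in $t$ with values in bounded or trace-class operators; the computation above holds strongly, for example on $C^\infty_{\rm comp}$ followed by density, and extends to the trace-class case by the same argument.
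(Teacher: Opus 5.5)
Your proof is correct and is essentially the paper's argument: the paper justifies this lemma only by invoking Duhamel's formula together with the unitarity of $\mathrm{e}^{-\frac{\mathrm{i}}{h}tP_h}$. You write that argument out explicitly, by conjugating with $U(-t)$, integrating in time, and using that left multiplication by a unitary preserves both the $L^2$ operator norm and the trace norm.
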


\begin{prop}Set $\mathcal R^N(t)=U(t)-U^N(t)$. Then, $\|\mathcal R^N(0)\|=0$ and $(hD_t+P_h)\mathcal R^N(t)=\mathcal Z^N(t)$ with $\|\mathcal Z^N(t)\|_{L^2}\lesssim h^{N(1-2\beta)-C}$. Thus,
\[\mathcal R^N(t)=\mathcal O(h^{N(1-2\beta)-C})_{L^2}.\]
\end{prop}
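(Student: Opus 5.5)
The plan is to plug the WKB ansatz into the equation and read off $\mathcal Z^N$ from Proposition \ref{prop: form of the differential operator}. At $t=0$ we have $\varphi_\delta(0,x,\eta)=\langle x,\eta\rangle$, $a_0(0)\equiv 1$ and $a_j(0)\equiv 0$ for $j\ge 1$. Hence $U^N(0)=\mathcal I[\langle x,\eta\rangle,1]=\mathrm{Id}$, which gives $\mathcal R^N(0)=0$. Since $(hD_t+P_h)U(t)=0$, we get $\mathcal Z^N(t)=-(hD_t+P_h)U^N(t)$. It therefore suffices to compute $(hD_t+P_h)$ applied to the kernel of $U^N(t)$ in the $x$ variable, with $(y,\eta)$ as parameters.

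By \eqref{eq: equation for ansatz in WKB}, the kernel of $\mathcal Z^N(t)$ is
\[-\frac{h}{(2\pi h)^d}\int \mathrm{e}^{\frac{\mathrm{i}}{h}(\varphi_\delta(t,x,\eta)-\langle y,\eta\rangle)}\Big(\partial_t+V+\tfrac12\operatorname{div}V+hR\Big)a_h\,\mathrm{d}\eta.\]
The transport equations \eqref{eq: sequence of transport equations amplitude} make the terms in $h^0,\dots,h^N$ cancel telescopically. What remains is
\[\Big(\partial_t+V+\tfrac12\operatorname{div}V+hR\Big)a_h=h^{N+1}Ra_N.\]
So $\mathcal Z^N(t)=-h^{N+2}\,\mathcal I[\varphi_\delta,Ra_N]$, where $Ra_N$ is understood with the $x$-action of the operator $R(t,\eta)$. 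By Lemma \ref{lem: pseudo tape sur fonction lisse} together with \eqref{eq: estimates on ri} and Proposition \ref{short time estimate aj}, we have $\|Ra_N\|_{C^\ell_{x,\eta}}\lesssim h^{-\beta(\ell+2N+2)}$. The same holds for the derivatives in $y$, which are trivial, since $a_N$ does not depend on $y$. Because the symbols are compactly supported, we may use $\varphi_\delta=\langle x,\eta\rangle$ outside a compact set to cut off and obtain compact support in $(x,\eta)$.

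The main obstacle is bounding the $L^2$ norm of an FIO whose amplitude has derivatives growing like $h^{-\beta\ell}$. The simplest route is the crude one. The kernel is an integral over a compact $\eta$-region, so Schur's test gives
\[\|\mathcal I[\varphi_\delta,b]\|_{L^2}\lesssim h^{-d}\|b\|_{L^\infty}\times(\text{volume}).\]
This yields $\|\mathcal Z^N\|\lesssim h^{N+2-d-\beta(2N+2)}=h^{N(1-2\beta)-C}$. The tail, where the amplitude is not compactly supported in $y$, needs care. Here I would integrate by parts in $\eta$, using $\nabla_\eta\varphi_\delta-y\neq0$ away from the graph. Each such integration costs at most $h^{1-\beta}$, which is acceptable. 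Alternatively, one can note that only $y$ in a compact set matters, since outside the support $U^N$ acts as the identity, up to harmless adjustments.

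Finally, the Duhamel lemma stated just before gives
\[\|\mathcal R^N(t)\|\le\frac1h\int_0^t\|\mathcal Z^N(s)\|\,\mathrm ds\lesssim h^{N(1-2\beta)-C-1}.\]
Absorbing the $-1$ into $C$ proves the claim.
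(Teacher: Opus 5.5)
Your architecture matches the paper's. You show $U^N(0)=\mathrm{Id}$, let the transport equations telescope so that $(hD_t+P_h)U^N(t)$ is a power of $h$ times $\mathcal I[\varphi_\delta,Ra_N]$, bound this FIO crudely on $L^2$, and conclude with the Duhamel lemma. The gap is in the $L^2$ bound, on the part of the amplitude that is not compactly supported.

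$Ra_N$ is not compactly supported in $(x,\eta)$. Where $\varphi_\delta=\langle x,\eta\rangle$ one has $a_0\equiv 1$, but $R$ acts through Weyl quantizations of symbols built from $p$, and these are nonlocal in $x$. So $Ra_N$ does not vanish there. For instance, if $|\eta|$ is so large that $\mathbf R^d\times\{\eta\}$ misses $\operatorname{supp}p$, then $V=0$, $a_0\equiv 1$, and $h^2R$ reduces to $\operatorname{Op}_h^{\rm w}(p(x,\xi+\eta))$. This maps the constant $1$ to a function that is small but in general nonzero.

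Hence ``cutting off to obtain compact support'' leaves a second operator, $\mathcal I[\langle x,\eta\rangle,(1-\psi)Ra_N]$, whose amplitude lives on an unbounded set. Nothing in your proposal controls it:
\begin{itemize}
\item The Schur bound $h^{-d}\|b\|_{L^\infty}\times(\text{volume})$ needs a support of finite volume.
\item The kernel $\int \mathrm{e}^{\frac{\mathrm i}{h}\langle x-y,\eta\rangle}b\,\mathrm d\eta$ is not even absolutely convergent without decay of $b$ in $\eta$.
\item You only have $C^\ell$ bounds on $Ra_N$, not decay.
\end{itemize}
Your remark that $U^N$ ``acts as the identity'' there does not help, since the operator to be estimated is $\mathcal Z^N$, which is nonzero there.

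The missing observation is that the phase is linear on this region, so this piece is a pseudodifferential operator. Its symbol satisfies $\|h^{N+2}(1-\psi)Ra_N\|_{C^\ell}\lesssim h^{N+2}h^{-\beta(2N+2+\ell)}$. Since $\beta<1/2$, the Calder\'on--Vaillancourt theorem gives an $\mathcal O(h^{N(1-2\beta)})$ bound using only these derivative estimates. This is exactly how the paper treats its term $\mathcal Z_2(t)$.

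On the compactly supported piece your Schur-test argument does work: integrating by parts in $\eta$ gives decay like $(h^{1-\beta}/|y|)^k$ for large $|y|$. It is more work than needed, though. The paper writes $\mathcal Z_1(t)v(x)$ as an $\eta$-integral against the semiclassical Fourier transform $\mathcal F_h v(\eta)$. It then uses Cauchy--Schwarz in $\eta$, compact support in $x$, and Plancherel, so no decay of the kernel in $y$ is ever required.
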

\begin{proof}We have $U^N(0)=\mathcal I[\langle x,\eta\rangle,1]=\operatorname{Id}$ thus $\mathcal R^N(0)$ vanishes. By Proposition \ref{prop: form of the differential operator} and \eqref{eq: defi aj+1}, one has
\[(hD_t+P_h)U^N(t)=h^{N+1}\mathcal I[\varphi_\delta, Ra_N].\]
We split the two contributions. Let $\psi$ be a smooth compactly supported function equal to $1$ in a neighborhood of $(t,x,\eta)=(0,0,0)$, such that $\varphi_\delta(t,x,\eta)\equiv 1$ outside the support of $\psi$. Then write
\[\mathcal I[\varphi_\delta, Ra_N]=\mathcal I[\varphi_\delta, \psi Ra_N]+\mathcal I[\langle x,\eta\rangle, (1-\psi)Ra_N]:=\mathcal Z_1(t)+\mathcal Z_2(t).\]
\textbf{1. Estimating $\|\mathcal Z_1(t)\|$.} Denoting $\mathcal F_h(v)$ the $h$-Fourier transform of a function $v$, we have
\[\mathcal Z_1(t)v(x)=\frac{h^{N+1}}{(2\pi h)^d}\int \mathrm{e}^{\frac{\mathrm i}{h} \varphi_\delta(t,x,\eta)} \psi(t,x,\eta)Ra_{N} \mathcal F_h(v)(\eta)\mathrm d\eta,\]
then by the Cauchy--Schwarz inequality and the fact that $\psi$ has compact support in $x$:
\[\|\mathcal Z_1(t)v\|_{L^2}\lesssim  h^{N+1-d}\|Ra_N\|_{C^0} \|\mathcal F_h(v)\|_{L^2}\lesssim h^{N+2-2d}h^{-2\beta N} \|v\|_{L^2}.\]
Here we have used Proposition \ref{short time estimate aj} to control $\|Ra_N\|_{C^0}$. Thus, for some dimensional constant $C_d$, we have
\[\|\mathcal Z_1(t)\|_{L^2\to L^2}\lesssim h^{N(1-2\beta)-C_d}.\]

\textbf{2. Estimating $\|\mathcal Z_2(t)\|$.} Here we use the fact that $\mathcal Z_2(t)$ is a pseudodifferential operator with amplitude satisfying $\|h^{N+1}(1-\psi)Ra_N\|_{C^\ell}\lesssim h^{N+1}h^{-\beta(2N+\ell)}$. It then follows from the Calder\'on--Vaillancourt theorem that $\|\mathcal Z_2(t)\|_{L^2}=\mathcal O(h^{N(1-2\beta)-C_d})$.

\end{proof}

\section{The short time trace} \label{sec: the short time trace} We are now ready to compute the trace near $0$. We are in the setting of Theorem \ref{prop: long time gutz} : $\{p_\delta\}_{\delta\in[0,h^\alpha]}$ is a $\delta$-dependent family of symbols such that $p_0\in S^m(T^*M)$ is elliptic and $\partial_\delta^n p_\delta\in S^m_{\beta}(T^*M)$ for all $n\ge 1$. We work in a regime $2\beta<\alpha$ and $h\ll 1$. Let $P_{h,\delta}=\operatorname{Op}_h^{M}(p_\delta)$.

\begin{prop}\label{prop: short time trace} Let $f$ be a test function with smooth compactly supported Fourier transform in a sufficiently small neighborhood of $0$. Let $\chi_E\in S^0_{\nu}(T^*M)$ be a cutoff in energy, supported in $p_0^{-1}(E-2h^\nu,E+2h^\nu)$ and equal to $1$ in $p_0^{-1}([E-h^\nu,E+h^\nu])$. Following Proposition \ref{prop: energy cutoff}, we assume $\nu<\min(\frac 12,\alpha)$. Let $\sigma=\max(\beta,\nu)$. Then
\[\operatorname{Tr} f\Big(\frac{P_{h,\delta}-E}{h}\Big) \mathrm{Op}^M_h(\chi_E)=\frac{1}{(2\pi h)^{d-1}}\sum_{k=0}^N \mu_{k,\delta} \widehat f^{(k)}(0) +\mathcal O_N(h^{(1-2\sigma)N-C_d}\|\widehat f\|_{C^{2N+C_d}}),\]
for some dimensional constant $C_d$. The coefficients $\mu_{k,\delta}$ depend on $p_\delta$ and are controlled by 
\[|\mu_{k,\delta}|\lesssim h^{\lfloor \tfrac{k+1}{2}\rfloor}.\]
\end{prop}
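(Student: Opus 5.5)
We start from the Fourier representation
\[
f\Big(\frac{P_{h,\delta}-E}{h}\Big)=\int_{\mathbf R}\widehat f(t)\,\mathrm{e}^{\frac{\mathrm i}{h}t(P_{h,\delta}-E)}\,\mathrm dt ,
\]
so the trace becomes
\[
\int \widehat f(t)\,\mathrm{e}^{-\frac{\mathrm i}{h}tE}\operatorname{Tr}\big(\mathrm{e}^{\frac{\mathrm i}{h}tP_{h,\delta}}\operatorname{Op}_h^M(\chi_E)\big)\,\mathrm dt ,
\]
with $|t|\le\varepsilon$ small. As noted after Proposition~\ref{prop: energy cutoff}, we may replace $P_{h,\delta}$ microlocally near $\Sigma_E$ by the quantization of a compactly supported symbol equal to $p_\delta+r_\delta$. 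A finite partition of unity $\sum_i\chi_i=1$ adapted to the atlas then reduces the trace to a sum of traces in charts. By the Egorov-type result (Proposition~\ref{prop: egorov exotic}), each chart operator has the form $\operatorname{Op}_h^{\rm w}(p)$ with $p$ satisfying \eqref{eq: assumptions p WKB}.

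In each chart, apply the short-time parametrix of Proposition~\ref{prop: short time wkb} (with $t\mapsto -t$), together with the composition with $\operatorname{Op}_h^{\rm w}(\chi_E\chi_i)$ from the two particular cases of Proposition~\ref{thm: composition FIO}. The $L^2$ remainder is $\mathcal O(h^{N(1-2\beta)-C})$; since we compose with a compactly microlocalized cutoff, it becomes a trace-class error $\mathcal O(h^{(1-2\sigma)N-C_d})$. The main term is then the oscillatory integral
\[
\frac{1}{(2\pi h)^d}\int \widehat f(t)\,\mathrm{e}^{\frac{\mathrm i}{h}(\varphi_\delta(t,x,\eta)-\langle x,\eta\rangle-tE)}\,b_h(t,x,\eta)\,\mathrm dt\,\mathrm dx\,\mathrm d\eta ,
\]
with $b_h$ containing the cutoffs.

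Next, perform stationary phase in $(t,s)$, where $s$ is the variable transverse to the energy. The phase is
\[
\psi=\varphi_\delta-\langle x,\eta\rangle-tE .
\]
By Proposition~\ref{prop: generating function is action}, its critical points in $(x,\eta)$ require $\Phi^t_\delta(x,\eta)$ to return to the same point. For $0<|t|\le\varepsilon$ there are no periodic orbits on the energy shell, and $\mathrm dp_0\neq0$, so the only critical set is $t=0$, where
\[
\nabla_{x,\eta}\psi=\mathcal O(t).
\]
The cleanest route is therefore to expand in $t$ using $\varphi_\delta(t)=\langle x,\eta\rangle-tp_\delta(x,\eta)+\mathcal O(t^2)$, with the improved derivative bounds \eqref{eq: time der phi t improves} at $t=0$. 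This gives
\[
\psi=-t\big(p_\delta(x,\eta)-E\big)+t^2 g(t,x,\eta),
\]
where $g$ is controlled by those bounds. Now change variables in $(x,\eta)$ near $\Sigma_E$ to $(\lambda,\rho)$ with $\lambda=p_\delta-E$, and apply stationary phase in $(t,\lambda)$ through Proposition~\ref{prop: stat phase bis}. The Hessian is nondegenerate and has signature $0$, and the factor $h^{1}$ it produces turns $(2\pi h)^{-d}$ into $(2\pi h)^{-(d-1)}$.

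The expansion terms take the form $h^j\mathcal L_j$ applied to $\widehat f(t)b_h$ at $t=0$. They contain $t$-derivatives of $\widehat f$ up to order $2j$, together with derivatives of $b_h$ and of $g$. Collecting them by the order $k$ of the derivative on $\widehat f$ defines $\mu_{k,\delta}$, namely Liouville-type integrals over $\Sigma_E$ of the corresponding coefficients.

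The main obstacle is the bound $|\mu_{k,\delta}|\lesssim h^{\lfloor (k+1)/2\rfloor}$, uniformly in $\delta$ despite the exotic blow-up. For this we use the structure of the stationary phase operator $\langle H^{-1}D,D\rangle^\ell(g^m\,\cdot)$. Because the cross term of the Hessian pairs $\partial_t$ with $\partial_\lambda$, a factor $\partial_t^k$ must come with $h^\ell$ where $2\ell\ge k$, up to contributions from $g^m$. Each $g$ factor already carries $t^2$, so $k$ time derivatives cost at least $\lceil k/2\rceil$ powers of $h$. The powers $h^{-\beta(\cdot)}$ coming from $\lambda$-derivatives of $b_h$ and $g$ are absorbed into the error, or kept inside $\mu_{k,\delta}$ as $h$-dependent coefficients; they are dominated since $2\sigma<1$. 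The truncation at order $N$ then gives the stated remainder $\mathcal O(h^{(1-2\sigma)N-C_d}\|\widehat f\|_{C^{2N+C_d}})$. I expect the careful power counting here, especially the parity giving $\lfloor (k+1)/2\rfloor$ rather than $\lfloor k/2\rfloor$, to need the most checking. It likely uses the fact that $\mu_{0,\delta}=\mu_{\rm L}(\Sigma_E)+\mathcal O(h)$-type corrections vanish at odd order by a symmetry argument.
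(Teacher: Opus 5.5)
Your route differs from the paper's in how the critical set is located. The paper conjugates by a local FIO straightening $p_0$ to $\xi_1$, freezes $(x,\xi')$, and finds the unique critical point $z_\delta=(0,\xi_1^\delta)$ in $(t,\xi_1)$ via the quantitative implicit function theorem (Lemma~\ref{lem: implicit func}). Your variable $\lambda=p_\delta-E$ puts the critical point at $(t,\lambda)=(0,0)$ directly. That is a legitimate alternative, but two steps are not justified as written.

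The smaller one: Proposition~\ref{prop: stat phase bis} needs $|\nabla_{t,\lambda}\psi|\gtrsim|(t,\lambda)|$ on the whole support $|t|\le\varepsilon$, at fixed $\rho$. The improved bounds \eqref{eq: time der phi t improves} hold only at $t=0$. For $t\neq0$, Lemma~\ref{lem: spacetime derivatives generating function} only gives $\|\partial_t^2\varphi_\delta\|_{C^1}\lesssim h^{-\beta}$, i.e.\ $\partial_\lambda\psi=-t+\mathcal O(t^2h^{-\beta})$, which says nothing once $|t|\gtrsim h^{\beta}$. The absence of short periodic orbits does not help either: it concerns the full gradient in $(t,x,\eta)$, not the two components $(\partial_t\psi,\partial_\lambda\psi)$. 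You need a separate argument. For instance, with $y=\nabla_\eta\varphi_\delta(t,x,\eta)$, up to a sign
$\partial_\lambda\psi=\omega\big(\Phi^t_\delta(y,\eta)-(y,\eta),\partial_\lambda(x,\eta)\big)=t\,\mathrm dp_\delta(\partial_\lambda(x,\eta))+\mathcal O(t^2)=t+\mathcal O(t^2)$,
with constants depending only on $\|p_\delta\|_{C^2}\lesssim1$. Combine this with $\partial_t\psi=-\lambda+\mathcal O(t)$.

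The substantive gap is the estimate $|\mu_{k,\delta}|\lesssim h^{\lfloor(k+1)/2\rfloor}$, which you flag but do not prove. First, your own count already gives $j\ge\lceil k/2\rceil=\lfloor(k+1)/2\rfloor$, so no parity problem remains. Nor does any symmetry make odd orders vanish: for $p=|\xi|^2$ on a flat torus, $\mu_{1,0}$ is a nonzero multiple of $h\,\partial_E\mu_{\rm L}(p^{-1}(E))$ when $d\neq2$.

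The real issue is the exotic loss at the minimal order $j=\lceil k/2\rceil$, whose coefficient must be $\mathcal O(1)$. The phrase ``dominated since $2\sigma<1$'' only handles larger $j$, and a leading term cannot be absorbed into the error. With generic bounds (stationary-phase coefficients $c_{j,\alpha}=\mathcal O(h^{-\beta(2j-|\alpha|)})$ as in Proposition~\ref{thm: composition FIO}, each derivative of the amplitude costing $h^{-\sigma}$), the $h^j$-contribution to $\widehat f^{(k)}(0)$ is only $\mathcal O(h^{j-\sigma(2j-k)})$. For odd $k$ and $2j=k+1$ this is $h^{\frac{k+1}{2}-\sigma}$, which violates the claim. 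This already happens for $k=1$, in the term $D_t\widehat f\cdot D_\lambda b_h$.

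The missing ingredient is the paper's use of improved estimates at the critical point $t=0$:
\begin{itemize}
\item for the phase, \eqref{eq: time der phi t improves};
\item for the amplitude, $a_0(0,\cdot)=1$ and \eqref{eq: bornes aj short time}, so that $\|a_h\|_{C^\ell}\lesssim h^{-\beta(\ell-1)_+}$ there;
\item for the cutoff, $\chi_E\equiv1$ near $p_\delta^{-1}(E)$ because $|p_\delta-p_0|\lesssim h^\alpha\ll h^\nu$, so no factor $h^{-\nu}$ appears.
\end{itemize}
Together these give $|c_{j,\alpha}|\lesssim h^{-\beta(2j-1-|\alpha|)_+}$ as in \eqref{eq: bornes cj alpha stat phase 0}. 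Hence the contribution is $\mathcal O(h^{j}h^{-\beta(2j-k-1)_+})$, and summing over $2j\ge k$ gives exactly $h^{\lfloor(k+1)/2\rfloor}$.

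The same bookkeeping goes through in your variables. The inverse of $(x,\eta)\mapsto(p_\delta-E,\rho)$ has $C^\ell$ norms $\lesssim h^{-\beta(\ell-2)_+}$ and its Jacobian $\lesssim h^{-\beta(\ell-1)_+}$, so the improvements survive the change of variables. But this bookkeeping has to be carried out.
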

\begin{rem}The proof also yields an expansion in powers of $h$, but we prefer to highlight the expansion in derivatives of $\widehat f$ evaluated at $0$.
\end{rem}
\begin{rem}The cutoff function $\chi_E$ is designed to ensure that the energy shell $p_\delta^{-1}(E)$ is contained in $\{\chi_E=1\}$ for all $\delta\in [0,h^\alpha]$. Therefore, derivatives of $\chi_E$ will not contribute to the stationary phase expansion below, but only to the remainder terms.
\end{rem}

We will use in the proof a quantitative version of the implicit function theorem that can be found in a note of Liverani \cite{liverani2016implicit}. 

\begin{lem}[Quantitative implicit function theorem, \cite{liverani2016implicit}]\label{lem: implicit func}Let $F(x,y)\in C^\infty(\mathbf{R}^{m+n},\mathbf{R}^m)$ be a smooth function, such that $\partial_x F(0,0)$ is invertible. Let $r_1,r_2>0$ be such that 
\[\forall x,y\in B(0,r_1)\times B(0,r_2), \qquad \|\operatorname{Id}-(\partial_x F(0,0))^{-1}\partial_x F(x,y)\big\|\le \frac 12.\]
Set
\[C_{r_1,r_2}:= \sup_{x,y\in B(0,r_1)\times B(0,r_2)} \|\partial_y F(x,y)\|, \quad M=\|\partial_x F(0,0)^{-1}\|.\]
Then, there is a function $h(y)$ defined for $y\in B(0,\frac{r_1}{2 C_{r_1,r_2}M})$ and taking values in $B(0,r_1)$ such that
\[\forall (x,y)\in B(0,r_1)\times B\big(0,\frac{r_1}{2 C_{r_1,r_2}M}\big),\quad   F(x,y)=0\Leftrightarrow x=h(y).\] \end{lem}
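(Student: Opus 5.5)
The plan is to recast $F(x,y)=0$ as a fixed point problem for a Newton-type map with frozen derivative, and apply the Banach fixed point theorem uniformly in $y$. As is implicit in the statement (otherwise $h(0)$ need not exist), I assume $F(0,0)=0$.

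Set $A:=\partial_x F(0,0)$, so that $\|A^{-1}\|=M$. For each fixed $y\in B\big(0,\frac{r_1}{2C_{r_1,r_2}M}\big)\cap B(0,r_2)$, define
\[T_y(x):=x-A^{-1}F(x,y).\]
Since $A^{-1}$ is invertible, $F(x,y)=0$ if and only if $T_y(x)=x$. The differential is $\partial_x T_y(x,y)=\operatorname{Id}-A^{-1}\partial_xF(x,y)$, which by hypothesis has norm $\le\frac12$ on $B(0,r_1)\times B(0,r_2)$. The ball $B(0,r_1)$ is convex, so the mean value inequality gives
\[|T_y(x)-T_y(x')|\le \tfrac12|x-x'| \quad \text{for } x,x'\in B(0,r_1).\]

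Next I check that $T_y$ maps the ball into itself. Using $F(0,0)=0$ and the mean value inequality in $y$ along the segment $[0,y]\subset B(0,r_2)$, we get $|F(0,y)|\le C_{r_1,r_2}|y|$. Hence
\[|T_y(x)|\le |T_y(x)-T_y(0)|+|A^{-1}F(0,y)|\le \tfrac12|x|+MC_{r_1,r_2}|y|<\tfrac{r_1}{2}+\tfrac{r_1}{2}=r_1 .\]
To have a complete metric space, I would work on a closed ball $\overline B(0,r')$ with $r'<r_1$ chosen close enough to $r_1$ that $\frac12 r'+MC_{r_1,r_2}|y|\le r'$. This is possible because $|y|$ lies strictly below $\frac{r_1}{2C_{r_1,r_2}M}$.

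The Banach fixed point theorem then yields a unique fixed point $h(y)\in \overline B(0,r')\subset B(0,r_1)$. Uniqueness holds on all of $B(0,r_1)$, since two zeros $x,x'$ there would satisfy $|x-x'|\le\frac12|x-x'|$. This proves the equivalence $F(x,y)=0\Leftrightarrow x=h(y)$.

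For regularity of $h$, note that the norm bound gives invertibility of $\partial_xF(x,y)$ on the whole box through the Neumann series for $A^{-1}\partial_xF=\operatorname{Id}-(\operatorname{Id}-A^{-1}\partial_xF)$. The classical implicit function theorem then shows that $h$ is smooth near each point, and local uniqueness ensures these local solutions coincide with $h$.

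The only delicate point is the self-mapping step, namely the choice of closed ball and the strict inequality on $|y|$; the rest is routine.
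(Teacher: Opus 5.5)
Your proof is correct, but it does not follow the paper's route. The paper gives no argument of its own. It quotes Liverani's lemma for $r_1=r_2$ and handles unequal radii by applying that lemma to $F(x,\tfrac{r_2}{r_1}y)$. This multiplies the $\partial_yF$ bound by $\tfrac{r_2}{r_1}$, leaves $M$ unchanged, and after undoing the rescaling gives the radius $\tfrac{r_1}{2C_{r_1,r_2}M}$. You instead run the frozen-derivative contraction $T_y(x)=x-A^{-1}F(x,y)$ directly with two radii. The constants $\tfrac12$ and $\tfrac{r_1}{2C_{r_1,r_2}M}$ in the statement are exactly what your estimate $|T_y(x)|\le\tfrac12|x|+MC_{r_1,r_2}|y|$ requires, so you need neither the rescaling nor the external reference.

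The paper's route is shorter and inherits whatever the reference proves. Yours is self-contained and exposes two hypotheses the statement leaves implicit:
\begin{itemize}
\item $F(0,0)=0$. This holds in every application in the paper, since the base point there is a critical point of the unperturbed phase.
\item $|y|<r_2$. Nothing is assumed about $F$ for $|y|\ge r_2$, so the conclusion can only hold on $B\big(0,\min(r_2,\tfrac{r_1}{2C_{r_1,r_2}M})\big)$. Your restriction to this intersection is the correct domain, not a loss.
\end{itemize}

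Your smoothness paragraph is also worth keeping. It uses a Neumann series to invert $\partial_xF$ on the whole box, applies the classical implicit function theorem at $(h(y_0),y_0)$, and then identifies the local branch with $h$ by uniqueness in $B(0,r_1)$ plus continuity. The paper later differentiates the critical point $z_\delta$ in $\delta$, although the lemma as stated asserts no regularity of $h$.
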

\begin{rem}In the above statement all the norms are the standard operator norms on matrices. The lemma in \cite{liverani2016implicit} is stated in the case $r_1=r_2$. The case where $r_1$ may differ from $r_2$ as presented here follows by a simple rescaling argument, namely by considering $F(x,\tfrac{r_2}{r_1}y)$ instead of $F$. \end{rem}
We turn to the proof of the short time trace formula.
\begin{proof}[Proof of Proposition \ref{prop: short time trace}] \textbf{1.} Let $\{\chi_{\rho_j}\}$ be a $h$-independent smooth partition of unity of a neighborhood of the energy shell $p_0^{-1}(E)$. We assume that each $\chi_{\rho_j}$ is supported in a ball $B(\rho_j,\varepsilon)$ with $\varepsilon$ small enough to ensure that for each $j$, the ball $B(\rho_j,2\varepsilon)$ is entirely supported in a single local chart. Further, we assume that $\widehat f$ is supported in a sufficiently small neighborhood of $0$ to ensure that for all $t\in \operatorname{Supp}\widehat f$ we have $\Phi_\delta^t(B(\rho_j,\varepsilon))\subset B(\rho_j,2\varepsilon)$. Setting $\chi_{E,j}=\chi_E\chi_{\rho_j}$ we decompose $\chi_E=\sum \chi_{E,j}$. Then,
\[\operatorname{Tr}\int_{\mathbf{R}} \mathrm{e}^{-\frac{\mathrm{i}}{h} tP_{h,\delta}}\mathrm{e}^{\frac{\mathrm{i}}{h} tE} \widehat f(t) \mathrm{Op}^M_h(\chi_E)\mathrm{d}t=\sum_j \operatorname{Tr} A_j,\]
where we have set
\[A_j=\int_{\mathbf{R}} \mathrm{e}^{-\frac{\mathrm{i}}{h} tP_{h,\delta}} \mathrm{Op}^M_h(\chi_{E,j})\mathrm{e}^{\frac{\mathrm{i}}{h} tE} \widehat f(t) \mathrm{d}t.\]
We compute the trace of each $A_j$ separately. Fix an index $j$, and consider a chart $(U,f,\chi')$ such that $\chi_{\rho_j}\prec \chi'$. By applying the Egorov theorem from Proposition \ref{prop: egorov exotic}, because the flow remains in the chart for short times, if $\widehat f(t)$ is supported on a sufficiently small neighborhood of $0$, we can harmlessly insert cutoffs between the different factors of the product and obtain
\[\operatorname{Tr} A_j=\operatorname{Tr} \int_{\mathbf{R}} \chi' \mathrm{e}^{-\frac{\mathrm{i}}{h} tP_{h,\delta}}(\chi')^2 \mathrm{Op}^M_h(\chi_{E,j})\chi' \mathrm{e}^{\frac{\mathrm{i}}{h} tE} \widehat f(t) \mathrm{d}t+\mathcal O(h^\infty).\]
We can now look at the operators in charts. Let 
\[V_{j}(t):=f_*\chi'U(t)f^*\chi', \qquad \mathrm{Op}_h^{\rm w}(\overline{\chi}_{E,j}):=f_*\chi'\mathrm{Op}_h^M(\chi_{E,j})f^*\chi',\]
then the trace above equals
\begin{equation} \label{eq: trace above equals}\operatorname{Tr}A_j= \operatorname{Tr} \int_{\mathbf{R}} V_j(t) \mathrm{Op}^{\rm w}_h(\overline{\chi}_{E,j})\mathrm{e}^{\frac{\mathrm{i}}{h} tE} \widehat f(t) \mathrm{d}t+\mathcal O(h^\infty). \end{equation}
We compute each term of the sum \eqref{eq: trace above equals} separately.

\textbf{2.} Rather than studying $V_j(t)$ directly, we will work in coordinates in which the unperturbed flow acts by translation along the $x_1$ coordinate. To do so, choose a local symplectomorphism $\kappa:(x,\xi)\mapsto (\tilde x,\tilde \xi)$ such that $\kappa_*p_0=\tilde \xi_1$ locally---such a chart exists by Darboux's theorem because $\mathrm{d}p_0$ is nonvanishing on $\Sigma_E$, see \cite[Theorem 21.1.6]{HorIII}. In these new coordinates, for short times, the unperturbed dynamics are generated by $\varphi_0(t,\tilde x,\tilde \eta)=t\tilde \eta_1+\langle \tilde x,\tilde \eta\rangle$.

On the quantum side, the change of variables induced by $\kappa$ corresponds to conjugating $V_j(t)$ by a unitary Fourier integral operator $\mathcal T_\kappa:L^2(\mathbf{R}^d)\to L^2(\mathbf{R}^d)$ that local quantizes $\kappa$, see Proposition \ref{prop: egorov bis} and the discussion after it. Define
\[W_j(t):=\mathcal T_\kappa\circ V_{j}(t)\circ \mathcal T_\kappa^{*},\]
and note that
\[\mathcal T_\kappa\circ \mathrm{Op}_h^{\rm w}(\overline{\chi}_{E,j})\circ \mathcal T_\kappa^{*}=\mathrm{Op}_h^{\rm w}(\widetilde{\chi}_{E,j})+\mathcal O(h^\infty), \quad \widetilde \chi_{E,j}=\overline{\chi}_{E,j}\circ \kappa^{-1}+\mathcal O(h^{1-2\sigma}).\]
Then,
\[\operatorname{Tr}V_j(t)\mathrm{Op}_h^{\rm w}(\overline{\chi}_{E,j})=\operatorname{Tr} W_j(t)\mathrm{Op}_h^{\rm w}(\widetilde{\chi}_{E,j})+\mathcal O(h^\infty).\]
By Proposition \ref{prop: egorov bis}, we have for short times:
\[hD_t W_j(t)\mathrm{Op}_h^{\rm w}(\widetilde{\chi}_{E,j})=\mathrm{Op}_h^{\rm w}(\tilde p_j) W_j(t)\mathrm{Op}_h^{\rm w}(\widetilde{\chi}_{E,j})+\mathcal O(h^\infty),\]
for some symbol $\tilde p_j$ given by
\[\tilde p_j=\big((\chi')^2p_\delta\circ \widetilde f^{-1}\big)\circ \kappa^{-1}+r_j,\]
where $r_j$ corresponds to the standard $\mathcal O(h^2)$ remainder in the Weyl calculus and satisfies $\|r_j\|_{C^\ell}\lesssim h^2h^{-\beta(\ell+2)}$. At $t=0$ we have by the composition formula
\[W_j(0)\mathrm{Op}_h^{\rm w}(\widetilde{\chi}_{E,j})=((\chi')^2\circ \kappa) \mathrm{Op}_h^{\rm w}(\widetilde{\chi}_{E,j})+\mathcal O(h^\infty)=\mathrm{Op}_h^{\rm w}(\widetilde{\chi}_{E,j})+\mathcal O(h^\infty).\]

The assumptions of Proposition \ref{prop: short time wkb} are verified, we thus obtain a FIO parametrix
\[W_j(t)\mathrm{Op}_h^{\rm w}(\overline{\chi}_{E,j})=W_j^N(t)+\mathcal R^N_j(t)\]
where\[W_j^N(t)u(x)=\frac{1}{(2\pi h)^d}\int \mathrm{e}^{\frac{\mathrm{i}}{h} (\varphi_\delta (t,x,\eta)-\langle x,\eta\rangle)}a_h^\delta(t,x,y,\eta)u(y)\mathrm{d}y\mathrm{d}\eta,\]
and 
\begin{itemize}
\item $\varphi_\delta$ is a generating function for the perturbed flow in the new coordinates.
\item $a_h^\delta=\sum_{j\le N} h^j a_j^\delta$, with $a_0^\delta=\chi_{E,j}(y,\eta)|\partial_{x\eta}^2 \varphi_\delta(t,x,\eta)|^{\frac 12}$ (here we view $\chi_{E,j}$ as a function in charts). Moreover, we have the estimates $\|a_j^\delta\|_{C^\ell_{t,x,y,\eta}}\lesssim h^{-\beta(\ell+2j)}$, which improve at $t=0$ to
\[\forall n\ge 1, \quad \|\partial_t^n a_j^\delta(t=0)\|_{C^\ell_{x,\eta}}\lesssim h^{-\beta(\ell+n+2j-1)}.\]
\item The remainder is controlled by $\mathcal R_j^N(t)=\mathcal O(h^{N(1-2\beta)-C})_{L^2/\rm tr}.$
\end{itemize}\bigskip
\textbf{3.} Denote by $K(x,y)$ the kernel of the operator
\[\int_{\mathbf R} W_j^N(t) \mathrm{e}^{\frac{\mathrm{i}}{h}tE}\widehat{f}(t)\mathrm{d}t.\]
Then, 
\[\operatorname{Tr} A_j=\int_{\mathbf{R}^d} K(x,x)\mathrm{d}x+\mathcal O(h^{N(1-2\sigma)-C}).\]
On the diagonal, the kernel is given by
\begin{equation} \label{eq: Bj(x,x)} K(x,x)= \frac{1}{(2\pi h)^d}\iint \mathrm{e}^{\frac{\mathrm{i}}{h} (\varphi_\delta(t,x,\xi)-\langle x,\xi\rangle+tE)} a_h^\delta(t,x,x,\xi) \widehat f(t) \mathrm{d}\xi\mathrm{d}t\end{equation}
Let $\Psi(\cdot;\delta)$ and $u_h(\cdot;\delta)$ denote the phase and amplitude of the oscillatory integral \eqref{eq: Bj(x,x)}. To compute the trace, we isolate the $(t,\xi_1)$ variables. Writing $\xi=(\xi_1,\xi')\in \mathbf{R}\times \mathbf{R}^{d-1}$, one has
\begin{equation} \label{eq: tr aj equals}\int K(x,x)\mathrm{d}x=\frac{1}{(2\pi h)^d} \int_{x, \xi'} \Big(\int_{t,\xi_1}\mathrm{e}^{\frac{\mathrm{i}}{h} \Psi(t, x,\xi;\delta)} u_h(t, x, \xi;\delta)\mathrm{d}t\mathrm{d} \xi_1\Big) \mathrm{d}x\mathrm{d}\xi'.\end{equation}
We fix $(x,\xi')$ and perform a stationary phase in the $z=(t,\xi_1)$ variables. \medskip

\noindent \textbf{Critical points of the phase.} Let us set $F(z,\delta)=\nabla_{z}\Psi$. When $\delta=0$, one has $\Psi( z,\delta)=t(E-\xi_1)$ thus $F(z,0)=(E-\xi_1,-t)$ and there is a single critical point $z_0=(t^0,\xi^0_1)=(0,E)$, which does not depend on $(x,\xi')$. We now check that the assumptions of Lemma \ref{lem: implicit func} are verified. Set $r_1=ch^{\beta}$ and $r_2=ch^{2\beta}$ for some small constant $c$. Then,
\begin{itemize}
\item Setting $z_0=(0,E)$, we have $\partial_z F(z_0,0)=\begin{pmatrix}
0 & -1 \\ -1 & 0
\end{pmatrix}$.
\item For $(z,\delta)\in B(z_0,r_1)\times B(0,r_2)$, we have
\begin{equation} \begin{aligned}
	\big\|\operatorname{Id}-\big(\partial_z F(0,0)\big)^{-1} \partial_z F(z,\delta)\big\|& \lesssim \|\partial_z F(0,0)-\partial_z F(z,\delta)\|\le \|\partial_z^2 F\|r_1+ \|\partial_\delta \partial_z F\| r_2\\ & \lesssim h^{-\beta} ch^{\beta}+h^{-2\beta} ch^{2\beta}\lesssim c.
	\end{aligned}\end{equation}
	By taking $c$ small enough, we find
	\[	\big\|\operatorname{Id}-\big(\partial_z F(0,0)\big)^{-1} \partial_z F(z,\delta)\big\|\le \frac 12.\]
	\item One has $C_{r_1,r_2}\lesssim \|\partial_\delta \varphi_\delta\|_{C^1}\lesssim h^{-\beta}$ and $M=1$. 
\end{itemize}
By Lemma \ref{lem: implicit func}, because $\frac{r_1}{2 C_{r_1,r_2}M}\ge c' h^{2\beta}$ for some other constant $c'$, we deduce that for all $|\delta|\le c'h^{2\beta}$, there is a unique point $z_\delta=(t^\delta,\xi_1^\delta)\in B(z_0, ch^\beta)$ such that $F(z_\delta,\delta)=0$. In particular this is true for $\delta\in [0,h^\alpha]$ and $h$ small enough because $2\beta<\alpha$. The derivative of $z_\delta$ with respect to $\delta$ is given by
\[\partial_\delta z_\delta=-\big(\nabla^2_{t, \xi_1}\Psi(z_\delta,\delta)\big)^{-1}\partial_\delta\nabla_{t,\xi_1} \Psi( z_\delta,\delta)=\mathcal O(h^{-\beta}),\]
where we have used Proposition \ref{prop: control all deriv gen fun} to control derivatives of the generating function. In fact, one can show that $t^\delta=0$ for all $\delta$. Indeed, the phase is given by 
\[\Psi(t, x, \xi;\delta)= \varphi_\delta(t, x, \xi)-\langle  x, \xi\rangle +tE,\]
where $\varphi_\delta$ is a generating function for the perturbed flow in the new coordinates, with the initial condition $ \varphi_\delta(0, x, \xi)=\langle  x, \xi\rangle$. For fixed $(x,\xi')$, one has $\partial_{\xi_1}\Psi_{|t=0}=0$. Moreover, 
\begin{equation} \label{eq: t derivative phase at 0} \partial_t \Psi(t=0)=-p_\delta(x,\xi)+E=-\xi_1+E +\mathcal O(\delta),\end{equation}
which vanishes at a single point $ \xi_1(x,\xi';\delta)$, by the implicit function theorem. This is straightforward because the map $ \xi_1\mapsto  p_\delta(x,\xi_1,\xi')$ is strictly increasing. The critical point $z_\delta=(t^\delta,\xi^\delta)$ being unique, this forces $t^\delta=0$.

\subsection{Stationary phase estimate.}	We now fix $\delta\in [0,h^\alpha]$. In the following we will use implicitly the fact that the function $\chi_E$ is equal to $1$ near the critical point $z_\delta$. In particular, derivatives of $\chi_E$ do not appear in the stationary phase expansion below---although they appear in the error term. The Hessian of the phase at the critical point $z_\delta$ is given by
\begin{equation} \label{eq: hessian at crit point} \nabla^2_{t, \xi_1}\Psi(z_\delta,\delta)=\begin{pmatrix}\langle \nabla_\xi p_\delta,\nabla_x p_\delta\rangle & -\partial_{ \xi_1} p_\delta \\ -\partial_{ \xi_1} p_\delta &0\end{pmatrix}
. \end{equation}
In particular, $\det \nabla_z^2 \Psi(z_\delta,\delta)=-|\partial_{\xi_1} p_\delta(x,\xi_1^\delta,\xi')|^2$. 
We have the following estimates on the phase and the amplitude. Recall that $\sigma=\max(\beta,\nu)$. First, $\|\Psi\|_{ C^\ell}\lesssim h^{-\sigma(\ell-2)_+}$ and  $\|u_h\|_{C^\ell}\lesssim h^{-\sigma \ell}$ by the assumption on $\chi_E$. For any $z\in B(0,1)$, we have by Taylor's formula
\[\nabla_z\Psi(z,\delta)=\int_0^1 \nabla^2_z\Psi(sz+(1-s)z_\delta,\delta) (z-z_\delta)\mathrm{d}s.\]
By Proposition \ref{prop: control all deriv gen fun}, we have 
\[\nabla_z^2\Psi(sz+(1-s)z_\delta,\delta)=\nabla_z^2\Psi(sz+(1-s)z_\delta,0)+\mathcal O(\delta h^{-2\beta})=\begin{pmatrix}
0 & -1 \\ -1 & 0
\end{pmatrix}+\mathcal O(\delta h^{-2\beta}).\] Since $\delta h^{-2\beta}\ll 1$, this implies
\[\forall z\in B(0,1), \quad |\nabla_z \Psi(z,\delta)|\ge \frac 12 |z-z_\delta|.\]
In particular, $z_\delta$ is the unique critical point of the phase in $B(0,1)$. Note that the phase vanishes at the critical point. We can now apply Proposition \ref{prop: stat phase bis} to obtain
\begin{equation} \label{eq: we have applied prop stat phase}\int_{t, \xi_1}\mathrm{e}^{\frac{\mathrm{i}}{h} \Psi(t, x, \xi;\delta)} u_h(t, x, \xi;\delta)\mathrm{d}t\mathrm{d} \xi_1=2\pi h\frac{1}{|\det(\nabla^2 \Psi(z_\delta))|^{\frac 12}}\sum_{j=0}^{N-1} h^j\mathcal L_{j,h}u_h\Big|_{z=z_\delta}+\mathcal O(h^{N(1-2\sigma)+1}).\end{equation}
Here $\mathcal L_{0,h}=1$, and more generally $\mathcal L_{j,h}u_h\big|_{z=z_\delta}$ takes the form
\[\mathcal L_{j,h}u_h\big|_{z=z_\delta}=\sum_{|\alpha|\le 2j} c_{j,\alpha} \partial^{\alpha}u_h(z_\delta).\]
When $\delta=0$ we simply have $\mathcal L_{j,h}=(\partial_t\cdot\partial_{\xi_1})^j$. By repeating the proof of Theorem \ref{thm: composition FIO}, we see that  
\begin{equation} \label{eq: bornes cj alpha stat phase 0} \left\{\begin{array}{ll} |c_{j,\alpha}|\lesssim 1 & \text{when $|\alpha|=2j$,} \\ |c_{j,\alpha}|\lesssim h^{-\beta(2j-1-|\alpha|)} & \text{when  $|\alpha|<2j$.} \end{array}\right.\end{equation}
Note that there is an improvement compared to Theorem \ref{thm: composition FIO} due to the improved estimates on the derivatives of the phase function at $t=0$. Let us explain how the proof can be modified accordingly. Using $|\partial_t^n\partial_{x,\eta}^\ell \varphi(t,x,\eta)_{|t=0}|\lesssim h^{-\beta(\ell+n-3)_+}$ (compared to the bound $h^{-\beta(\ell+n-2)_+}$ for $t\neq 0$ in general), \eqref{eq: Cell norm Rgamma}-\eqref{eq: Cell norm R bold gamma} become
\[\|R_\gamma\|_{C^\ell_{t=0}}\lesssim h^{-\beta \ell},\qquad \|R_{\bm \gamma}\|_{C^\ell_{t=0}} \lesssim h^{-\beta \ell}.\]
Note that for $\bm \gamma=0$ we have $R_{\bm \gamma}=1$. By the formula \eqref{eq: formula for c j alpha} for $c_{j,\alpha}$, rewritten in a slightly different form, we have
\[|c_{j,\alpha}|\le \sum_{\substack{m\le 2j-|\alpha|}} \sum_{|\bm \gamma|=3m\le k_1} \|R_{\bm \gamma}\|_{\dot C^{2j-|\alpha|-m}},\]
where $\|\cdot\|_{\dot C^\ell}$ denotes the homogeneous $C^\ell$ norm. If $|\alpha|<2j$, the term $m=0$ does not contribute to the sum, because $R_0=1$ and thus $\|R_0\|_{\dot C^{2j-|\alpha|}}=0$. This implies \eqref{eq: bornes cj alpha stat phase 0}.

\medskip

\textbf{4.} Recall that the amplitude is given by $u_h=a_h \widehat f(t)$, and that at $z=z_\delta$ we have we have $\|a_h\|_{C^\ell}\lesssim h^{-\beta(\ell-1)_+}$. By Leibniz formula, one can write
\begin{equation} \label{eq: by Leibniz L j h u h}\mathcal L_{j,h}u_h(z_\delta)=(\mathcal L_{j,h}a_h)(z_\delta) \widehat f(0)+\sum_{1\le |\alpha|\le 2j} c_{j,\alpha}\sum_{\alpha_1+\alpha_2=\alpha, \ \alpha_2>0} \partial^{\alpha_1}a_h(z_\delta)\cdot (\partial^{\alpha_2}\widehat f(t))_{|t=0}.\end{equation}
From \eqref{eq: bornes cj alpha stat phase 0} and the bounds on $a_h$, we have for all $|\alpha_2|\ge 1$:
\[|c_{j,\alpha}\partial^{\alpha_1}a_h|\lesssim h^{-\beta(2j-|\alpha_2|-1)_+}.\]
Summing over all $j\le N$ we obtain
\[\sum_{2j\ge |\alpha_2|}h^j |c_{j,\alpha}\partial^{\alpha_1}a_h|\le \sum_{2j\ge |\alpha_2|} h^j h^{-\beta(2j-|\alpha_2|-1)_+}.\]
By summing all the traces \eqref{eq: trace above equals}, it follows that the coefficients $\mu_{k,\delta}$ in Proposition \ref{prop: short time trace} are bounded by
\[|\mu_{k,\delta}|\le \sum_{2j\ge k} h^j h^{-\beta(2j-k-1)_+}.\]
In the even case, we get
\[|\mu_{2k,\delta}|\le \sum_{j\ge k} h^jh^{-\beta(2j-2k-1)_+}=h^k+\sum_{j\ge k+1}h^{j(1-2\beta)}h^{\beta(2k+1)}\lesssim h^k+h^{(k+1)(1-2\beta)}h^{\beta(2k+1)}\lesssim h^k.\]
In the odd case, we have
\[|\mu_{2k+1,\delta}|\le \sum_{j\ge k+1} h^j h^{-\beta(2j-2k-2)}\le h^{(1-2\beta)(k+1)}h^{\beta(2k+2)}\le h^{k+1}.\]
So in any case $|\mu_{k,\delta}|\lesssim h^{\lfloor \frac{k+1}{2}\rfloor}$, as we wished. \end{proof}

\section{The long time propagator} \label{sec: long time propagation} In this section, we obtain a long time WKB approximation of the quantum propagator $U(t):=\mathrm{e}^{-\frac{\mathrm{i}}{h} tP_{h,\delta}}$. The first step is to perform a localization along the orbits of the classical flow. We will only work with positive times, the case of negative times is the same but one needs considering negative periods.

\subsection{Microlocalization along the orbits} We start with an application of Egorov theorem \ref{prop: egorov exotic}.

\begin{lem}\label{lem: micro along orbit lemma} Let $a\in S_\nu^{\rm comp}(T^*M)$ and assume $\sigma:=\max(\beta,\nu)<1/2$. Let $b\in S^{\nu}_{\rm comp}(T^*M)$ satisfy $b\equiv 1$ on $\Phi^1_\delta(\mathrm{Supp}(a))$ for all $|\delta|\le h^\alpha$. Then, for all $N\ge 0$, one has
\[U(1)\mathrm{Op}^M_h(a)=\mathrm{Op}^M_h(b)U(1)\mathrm{Op}^M_h(a)+\mathcal O_N(h^{N(1-2\sigma)-C})_{L^2/\rm tr}.\]
Here the error is either the $L^2$ operator norm or the trace class norm.

\end{lem}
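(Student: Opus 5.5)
The natural route is to conjugate by the propagator and use the exotic Egorov theorem (Proposition \ref{prop: egorov exotic}). Write
\[
U(1)\mathrm{Op}^M_h(a)=U(1)\mathrm{Op}^M_h(a)U(1)^{-1}\,U(1),
\qquad
\mathrm{Op}^M_h(b)U(1)\mathrm{Op}^M_h(a)=\mathrm{Op}^M_h(b)\,U(1)\mathrm{Op}^M_h(a)U(1)^{-1}\,U(1).
\]
Egorov gives
\[
U(1)\mathrm{Op}^M_h(a)U(1)^{-1}=\mathrm{Op}^M_h(a_1)+\mathcal O(h^{N(1-2\sigma)-C})_{L^2},
\]
where $a_1=\sum_{j<N}h^j a_{1,j}$, with $a_{1,0}=a\circ\Phi^{-1}_\delta$. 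Each $a_{1,j}$ lies in $S_\sigma$, with $\|a_{1,j}\|_{C^\ell}\lesssim h^{-\sigma(\ell+2j)}$, and, crucially, is supported in $\Phi^1_\delta(\mathrm{Supp}\,a)$. The loss $h^{-2\sigma j}$ is compensated by $h^j$, which explains the error exponent $N(1-2\sigma)$. The support property holds because each correction term is obtained by transporting along $\Phi^t_\delta$, $t\in[0,1]$, derivatives of the previous terms and of $p_\delta$. Flowing forward, these all stay inside $\Phi^1_\delta(\mathrm{Supp}\,a)$.

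The main step is then
\[
(1-\mathrm{Op}^M_h(b))\,\mathrm{Op}^M_h(a_1)=\mathcal O(h^{N(1-2\sigma)-C}).
\]
Since $1-b$ vanishes identically on a set containing $\mathrm{Supp}\,a_1$, the composition formula of Proposition \ref{prop: composition manifold} for exotic symbols in $S_\sigma$ shows that every term of the symbolic expansion vanishes. Only the remainder survives, and it is of size $h^{M(1-2\sigma)}$ for any $M$. One concern: $b\in S_\nu$ equals $1$ exactly on $\Phi^1_\delta(\mathrm{Supp}\,a)$ but need not equal $1$ on a neighborhood of it. However, the expansion involves $\partial^\alpha(1-b)\,\partial^\beta a_1$ evaluated pointwise, so these products still vanish: all derivatives of $1-b$ vanish in the interior of the set where $b\equiv1$, and by continuity they vanish on its closure, which contains $\mathrm{Supp}\,a_1$.

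Combining, and using that $U(1)$ is unitary,
\[
U(1)\mathrm{Op}^M_h(a)-\mathrm{Op}^M_h(b)U(1)\mathrm{Op}^M_h(a)=(1-\mathrm{Op}^M_h(b))\big(\mathrm{Op}^M_h(a_1)+\mathcal O(h^{N(1-2\sigma)-C})\big)U(1)=\mathcal O(h^{N(1-2\sigma)-C})_{L^2},
\]
where we also use that $\mathrm{Op}^M_h(b)$ is bounded uniformly in $h$ by Calder\'on--Vaillancourt.

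For the trace-class bound, first insert a compactly supported cutoff. Take $\tilde a\in S_\nu^{\rm comp}$ with $\tilde a\equiv1$ on $\mathrm{Supp}\,a$, so that $\mathrm{Op}^M_h(a)=\mathrm{Op}^M_h(a)\mathrm{Op}^M_h(\tilde a)+\mathcal O(h^\infty)_{\rm tr}$, using the compactness of supports. Since $\|\mathrm{Op}^M_h(\tilde a)\|_{\rm tr}\lesssim h^{-d}$, the $L^2$ bound converts into a trace-norm bound at the cost of adjusting $C$. To keep the remainder on the right, run the argument on $U(1)\mathrm{Op}^M_h(a)\mathrm{Op}^M_h(\tilde a)$.

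The main obstacle is the support claim in the Egorov expansion: one has to check, from the construction in the appendix, that the lower-order terms $a_{1,j}$ in the exotic calculus are supported in $\Phi^1_\delta(\mathrm{Supp}\,a)$ uniformly for $|\delta|\le h^\alpha$, and not merely in an $h^\sigma$-neighborhood of it. If Proposition \ref{prop: egorov exotic} only provides a slightly larger support, the fix is to shrink: apply the result with $b$ replaced by the hypothesis on a marginally enlarged set, or interpolate with a cutoff $b'\prec b$.
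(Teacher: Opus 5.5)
Your proof is correct and is essentially the paper's argument: Egorov (Proposition \ref{prop: egorov exotic}) followed by the composition formula. The only difference is that the roles of $a$ and $b$ are mirrored.

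The paper rewrites the claim as $\mathrm{Op}^M_h(a)=U(-1)\mathrm{Op}^M_h(b)U(1)\mathrm{Op}^M_h(a)+\ldots$ and conjugates $\mathrm{Op}^M_h(b)$ backward. It then uses that the Egorov corrections $B_{j,n}\mathbf D^j_n b$ vanish where $b\equiv 1$, because $\mathbf D^j_n 1=0$. You instead conjugate $\mathrm{Op}^M_h(a)$ forward, which is Egorov at time $-1$ and follows by applying the proposition to $-p$, and you use support preservation. The step you flag as the main obstacle is not one: the explicit form $\mathbf a_t=a+\sum B_{j,n}(t)\mathbf D^j_n a$ in that proposition shows the corrections are differential operators applied to $a$, so $\mathrm{Supp}\,a_1\subset\Phi^1_\delta(\mathrm{Supp}\,a)$ exactly, with no enlargement of the support.
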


\begin{proof} Since $U(1)$ is unitary, the result  is equivalent to
\[\mathrm{Op}^M_h(a)=U(-1)\mathrm{Op}^M_h(b)U(1)\mathrm{Op}^M_h(a)+\mathcal O(h^{N(1-2\sigma)-C})_{L^2/\rm tr},\]
but then this follows from Proposition \ref{prop: egorov exotic} and the composition formula from Proposition \ref{prop: composition manifold}. \end{proof}
By a finite induction, we obtain the following:
\begin{prop}[Microlocalization along the orbits]\label{prop: cutoffs along the orbit} Fix $\rho=(x,\xi)\in\Sigma_E$, and let $\rho_k:=\Phi^{k}_0(\rho)$. Let $\nu\in (0,1/2)$ and set $\sigma:=\max(\beta,\nu)$. Let $R_k:=\mathrm{e}^{Ck}h^\nu$, for some sufficiently large constant $C$. Let $\chi$ be a smooth cutoff such that $\chi\equiv 1$ in $[0,1]$ and $\chi\equiv 0$ outside $[0,2]$. Introduce $\chi_{\rho_k}:=\chi\big(\frac{d(\rho,\rho_k)}{R_k}\big)$. There is a constant $\mathfrak c(\alpha,\beta,\nu)$ such that for $n\le \mathfrak c|\log h|$, one has
\[U(n)\mathrm{Op}_h^M(\chi_{\rho_0})=\prod_{k=1}^n U(1)\mathrm{Op}_h^M(\chi_{\rho_k})+R_n.\]
Here the product is read from left to right and the error is controlled by
\[\big\|R_n\big\|_{L^2/\rm tr}\le (1+C_0 h^{\frac 12-\sigma})^{n-1}h^{N}.\]\end{prop}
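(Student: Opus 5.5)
The plan is to prove the statement by induction on $n$, inserting one cutoff per unit time step with Lemma \ref{lem: micro along orbit lemma}. The key point is that this lemma only requires $b\equiv 1$ on $\Phi^1_\delta(\mathrm{Supp}\, a)$. Here the relevant symbol $a$ will be a product of previously inserted cutoffs, and the propagated support stays inside $\{\chi_{\rho_k}=1\}$ because the radii grow like $\mathrm{e}^{Ck}$.

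\textbf{Step 1 (classical inclusion).} First check that
\[
\Phi^1_\delta\big(\mathrm{Supp}\,\chi_{\rho_{k-1}}\big)\subset\{\chi_{\rho_k}=1\}
\quad\text{for } |\delta|\le h^\alpha,\ k\le \mathfrak c|\log h|.
\]
Indeed, $\Phi^1_0$ is Lipschitz with constant $\le \mathrm{e}^{C_1}$. By Proposition \ref{prop: bound delta derivatives of the flow}, $|\Phi^1_\delta-\Phi^1_0|\lesssim \delta h^{-\beta}\le h^{\alpha-\beta}$. Hence a ball of radius $2R_{k-1}$ around $\rho_{k-1}$ is mapped into a ball around $\rho_k$ of radius $2\mathrm{e}^{C_1}R_{k-1}+Ch^{\alpha-\beta}$. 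This is at most $R_k$ once $C\ge C_1+\log 4$, since $h^{\alpha-\beta}\ll h^\nu$ whenever $\nu<\alpha-\beta$. If needed, shrink $\nu$ or use $2\beta<\alpha$. Then choose $\mathfrak c$ so that $R_n=\mathrm{e}^{Cn}h^\nu\le h^{\nu'}$ for some $\nu'>0$. This keeps the cutoffs in an $S_{\sigma'}$ class with $\sigma'<1/2$, uniformly in $k\le n$.

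\textbf{Step 2 (induction).} Write $B_{n}:=\prod_{k=1}^{n}U(1)\mathrm{Op}_h^M(\chi_{\rho_k})$, read from left to right. The case $n=1$ is Lemma \ref{lem: micro along orbit lemma} with $a=\chi_{\rho_0}$ and $b=\chi_{\rho_1}$. For the inductive step, write
\[
U(n+1)\mathrm{Op}_h^M(\chi_{\rho_0})=U(1)\,U(n)\,\mathrm{Op}_h^M(\chi_{\rho_0})=U(1)B_n+U(1)R_n .
\]
Then insert $\mathrm{Op}_h^M(\chi_{\rho_{n+1}})$ next to the last factor $U(1)\mathrm{Op}_h^M(\chi_{\rho_n})$ using Lemma \ref{lem: micro along orbit lemma}. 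This incurs an error $\mathcal O(h^N)$, absorbed by increasing $N$. Multiplied by the remaining factors, it contributes at most $\prod\|\mathrm{Op}_h^M(\chi_{\rho_k})\|$. By Calder\'on--Vaillancourt for $S_{\sigma}$ symbols, $\|\mathrm{Op}_h^M(\chi_{\rho_k})\|_{L^2}\le 1+C_0h^{\frac12-\sigma}$, since the symbol is bounded by $1$ and the correction is of size $h^{1/2-\sigma}$. Unitarity of $U(1)$ shows that the old error $U(1)R_n$ has the same norm as $R_n$. Each step therefore multiplies the existing bound by at most $(1+C_0h^{\frac12-\sigma})$ and adds a fresh $h^N$ term. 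Summing these gives
\[
\|R_n\|\le n(1+C_0h^{\frac12-\sigma})^{n-1}h^{N},
\]
and the factor $n\lesssim|\log h|$ is absorbed by replacing $N$ with $N-1$. For the trace-class bound, place the single trace-class error factor appropriately and keep the rest in operator norm.

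\textbf{Main obstacle.} The delicate point is uniformity: Lemma \ref{lem: micro along orbit lemma} has to be applied with constants independent of $k$. This requires the cutoffs $\chi_{\rho_k}$ to lie in a fixed class $S_{\sigma}$, and it is the constraint $R_k\le h^{\nu'}$ that limits $n\le\mathfrak c|\log h|$. A second requirement is that the product $(1+C_0h^{1/2-\sigma})^{n}$ stays bounded, and this is automatic for $n\lesssim|\log h|$.
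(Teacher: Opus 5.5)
Your proposal is correct and is essentially the paper's proof. Both argue by induction on $n$: insert $\mathrm{Op}_h^M(\chi_{\rho_{n+1}})$ next to the leftmost block $U(1)\mathrm{Op}_h^M(\chi_{\rho_n})$ via Lemma \ref{lem: micro along orbit lemma}, use unitarity of $U(1)$ for the inherited error, and use $\|\mathrm{Op}_h^M(\chi_{\rho_k})\|_{L^2}\le 1+C_0h^{\frac12-\sigma}$ for the fresh one. (That norm bound is a sharp G\aa rding/$T^*T$ fact rather than plain Calder\'on--Vaillancourt, which only gives $C_d\|\chi\|_\infty$. Even the crude factor $C_d^{\,n}$ would be a fixed power of $h^{-1}$ for $n\le\mathfrak c|\log h|$, so this is harmless.)

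There are two minor differences. The paper avoids your extra factor $n$ by taking $N'$ so large that the fresh error is at most $C_0h^{\frac12-\sigma}h^N$, which it absorbs into the geometric factor without lowering $N$. Your explicit check of the classical inclusion also brings out the condition $\nu<\alpha-\beta$, which is forced by centering the cutoffs on the unperturbed orbit and which the paper uses only implicitly.
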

\begin{rem}The choice $R_k:=\mathrm{e}^{Ck}h^\nu$ simply ensures that $\Phi^{1}_\delta(B(\rho_k,R_k))\subset B(\rho_{k+1},R_{k+1})$ for $k\le \mathfrak c|\log h|$ and all $\delta\in [0,h^\alpha]$.
\end{rem}
We recall before the proof that $\|\operatorname{Op}_h^{M}(\chi_{\rho_k})\|_{L^2}\le (1+C_0h^{1/2-\nu})$ and $\|\operatorname{Op}_h^{M}(\chi_{\rho_k})\|_{\rm tr}\lesssim h^{-d}$.
\begin{proof}The proof works by induction on $n$. For $n=1$ there is nothing to do. Assume the result holds up to index $n$. Then, since $U(1)$ is unitary, we have
\[U(n+1)\mathrm{Op}_h^M(\chi_{\rho_0})=U(1)\prod_{k=1}^n U(1)\mathrm{Op}_h^M(\chi_{\rho_k})+R_n,\]
with $\|R_n\|_{\rm tr}\le (1+C_0h^{\frac 12-\nu})^nh^{N}$. We rewrite the product on the right-hand side as
\[U(1) U(1)\operatorname{Op}_h^{M}(\chi_{\rho_n})\prod_{k=1}^{n-1} U(1)\operatorname{Op}_h^{M}(\chi_{\rho_k}).\]
By construction of $(R_k)_{k\ge 0}$, one has $\chi_{\rho_{n+1}}\equiv 1$ on $\Phi^{1}_\delta(\mathrm{Supp}(\chi_{\rho_n}))$. We can thus invoke Lemma \ref{lem: micro along orbit lemma} to obtain
\[U(1)\mathrm{Op}_h^M(\chi_{\rho_n})=\mathrm{Op}_h^M(\chi_{\rho_{n+1}}) U(1)\mathrm{Op}_h^M(\chi_{\rho_n})+\mathcal O_{N'}(h^{N'(1-2\sigma)-C})_{L^2/\rm tr},\]
where $N'$ will be chosen below. Eventually, one finds
\[U(n+1)\mathrm{Op}_h^M(\chi_{\rho_0})=\prod_{k=1}^{n+1} U(1)\mathrm{Op}_h(\chi_{\rho_k})+ R_{n+1},\]
with 
\[\|R_{n+1}\|\le \big(1+C_0 h^{\frac 12-\nu}\big)^{n-1}C_{N'} h^{N'(1-2\sigma)-C} +(1+C_0 h^{\frac 12-\nu})^{n-1}h^{N}.\]
Taking $N'$ large enough, one gets
\[\|R_{n+1}\|_{L^2/\rm tr} \le \big(1+C_0h^{\frac 12-\nu}\big)^{n-1}\cdot \big(C_{N'}h^{N'(1-2\sigma)-C}+h^{N}\big).\]
Now, we can take $N'$ large enough (depending on $N$) and $h<h_0(N)$ small enough to ensure $C_{N'}h^{N'(1-2\sigma)-C}+h^{N}\le (1+C_0 h^{\frac 12-\sigma})h^{N}$. This finishes the proof.
\end{proof}

\begin{rem} If $n\le \mathfrak c |\log h|$, one has
\[\|R_n\|_{L^2/\rm tr}\le (1+C_0h^{\frac 12-\nu})^{n}h^{N}\lesssim_{\!\! \mathfrak c} h^N,\]
Thus the error is controlled up to logarithmic times.\end{rem}

\subsection{Localization around periodic orbits} The following procedure is identical to \cite[\S 7.2]{Schenk} and similar to \cite[\S 3.6]{JakobsonWeyl2007}. For times $|t|\le T_{\rm max}:=\mathfrak c|\log h|$, if $\chi_\rho$ is localized on a ball of radius $\approx h^{\nu}=:\mathrm{e}^{-CT_{\rm max}}$, we will have a good representation of the propagator $U(t)\mathrm{Op}_h^M(\chi_\rho)$ as a single oscillatory integral for $|t-t_0|\le h^\nu=:\mathrm{e}^{-CT_{\rm max}}$. It is therefore natural to split the time window $[-T_{\rm max},T_{\rm max}]$ in smaller windows of width $h^\nu$. Thus, let $\omega$ be a smooth function compactly supported on $[-1,1]$, and such that
\[1\equiv\sum_{n\in \mathbf Z} \omega(t-n).\]
Let $\omega_\nu:=\omega(h^{-\nu}t)$. Then, denoting $t_n:=n h^\nu $, one has
\[1\equiv \sum_{n\in \mathbf Z} \omega_\nu(t-t_n).\]
We will compute separately, for all $n$, the trace of
\begin{equation} \label{eq: defi An}A_n:=\int \mathrm{e}^{-\frac{\mathrm{i}}{h} tP_{h,\delta}}\mathrm{e}^{\frac{\mathrm{i}}{h}tE}\omega_\nu(t-t_n)\widehat f(t/L)\mathrm{d}t.\end{equation}

For $|t_n|\ge \frac{T_0}{2}$ (where $T_0>0$ is the minimal period of a nontrivial periodic orbit), denote by $\mathcal P_n$ the set of periodic orbits $\gamma\subset \Sigma_E$ of the unperturbed flow whose periods lie in the interval $[t_n-2h^\nu,t_n+2h^\nu]$. By the separating criterion of \cite[Lemma 9]{Schenk}, such orbits are distant from each other in phase space by at least $\mathrm{e}^{-Ct_n}$. Let $\nu_1\in (0,1/2)$ and set 
\[\Omega_n:=\bigcup_{T_\gamma\in \mathcal P_n} \Theta(\gamma,h^{\nu_1}),\]
where $\Theta(\gamma,h^{\nu_1})$ denotes the $h^{\nu_1}$-neighborhood of the geodesic $\gamma$ in $T^*M$, say for the Sasaki metric. We take $\mathfrak c$ small enough to ensure that $\Theta(\gamma,h^{\nu_1})\cap \Theta(\gamma',h^{\nu_1})=\varnothing$ for $\gamma\neq \gamma'$. The complement $\Sigma_E\backslash \Omega_n$ can then be covered by balls of radius $\sim h^\nu$.

The next proposition shows that the trace is negligible away from periodic orbits.
\begin{prop}[Trace outside periodic orbits] \label{prop: trace outside periodic orb} Let $\rho\in \Sigma_E\backslash \Omega_n$. Let $\max(\beta,\nu)<1$, let $\chi_\rho\in S_\nu$ satisfy $\operatorname{Supp}\chi_\rho\subset B(\rho,h^\nu)$, and assume $h^\nu \mathrm{e}^{Ct}\ll h^{\nu_1}$. Then,
\[\forall t\in [t_{n-1},t_{n+1}], \quad \operatorname{Tr} U(t)\mathrm{Op}_h^M(\chi_\rho)=\mathcal O(h^\infty).\]
The estimate is uniform in $|t|\le T_{\rm max}$. \end{prop}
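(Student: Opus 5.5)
The plan is to write the operator $U(t)\mathrm{Op}_h^M(\chi_\rho)$, for $t$ in the window $[t_{n-1},t_{n+1}]$, as a single Fourier integral operator in adapted coordinates. Its trace is then an oscillatory integral whose phase has no critical point, because $\rho$ lies away from periodic orbits, and a nonstationary phase argument gives $\mathcal O(h^\infty)$.

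\textbf{Step 1: FIO representation.} First I use Proposition \ref{prop: cutoffs along the orbit} to factor $U(n)\mathrm{Op}_h^M(\chi_\rho)$, up to $\mathcal O(h^N)_{\rm tr}$, as a product of the operators $U(1)\mathrm{Op}_h^M(\chi_{\rho_k})$. Each factor is written in charts, and conjugated by the metaplectic operators associated with the matrices $\mathcal A_i$ of \eqref{eq: def A_i(t)}. Proposition \ref{prop: short time wkb} turns each factor into a short time FIO. I then compose the factors by iterating Proposition \ref{thm: composition FIO}, with the generating function $\phi_k$ of Proposition \ref{prop: iterated generating function}. The remaining fractional time $t-n\in[-1,1]$ is handled by one more short time factor, as in the remark following that proposition. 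The result is $\mathcal I[\varphi_\delta,a_h]$ plus an error $\mathcal O(h^{N(1-2\sigma)-C})_{\rm tr}$. Here $\varphi_\delta(t,x,\eta)$ generates $\Phi^t_\delta$ in adapted coordinates near $\rho$ and $\Phi^{t}_0(\rho)$, and the amplitude is supported in $(y,\eta)\in B(\rho,h^\nu)$ with $\|a_h\|_{C^\ell}\lesssim h^{-\sigma'\ell}$, where $\sigma'<1/2$ because $h^{-\beta}\mathrm{e}^{CT_{\max}}\le h^{-\sigma'}$ for $\mathfrak c$ small. When computing the trace I must still undo the chart change between the chart at $\rho$ and the chart at $\Phi^t_0(\rho)$. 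If $\Phi^t_0(\rho)$ is far from $\rho$, then $\operatorname{Tr}=\mathcal O(h^\infty)$ directly, by Lemma \ref{lem: micro along orbit lemma}, since the cutoffs have disjoint supports. So I may assume $d(\Phi^t_0(\rho),\rho)$ is small, and then the two charts can be taken to be the same, up to a fixed linear symplectic map that is itself quantized by a metaplectic operator.

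\textbf{Step 2: the trace integral and its phase.} The kernel on the diagonal gives
\[\operatorname{Tr}=\frac{1}{(2\pi h)^d}\int \mathrm{e}^{\frac{\mathrm{i}}{h}(\varphi_\delta(t,x,\eta)-\langle x,\eta\rangle)}a_h(t,x,x,\eta)\,\mathrm{d}x\,\mathrm{d}\eta .\]
Its $(x,\eta)$-gradient is $(\xi-\eta,\,y-x)$, where $(x,\xi)=\Phi^t_\delta(y,\eta)$. This gradient vanishes exactly at fixed points of $\Phi^t_\delta$ in the support. The key claim is a quantitative lower bound
\[|\nabla_{x,\eta}\Psi|\gtrsim |\Phi^t_\delta(\rho')-\rho'|\ge c\,h^{\nu_1}\mathrm{e}^{-Ct}\gg h^{\sigma'+\epsilon}\]
on the support. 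To prove it I would first show that $|\Phi^t_0(\rho')-\rho'|\gtrsim h^{\nu_1}$ for $\rho'\in B(\rho,2h^\nu)$. This is the step I expect to be the main obstacle. My approach is an Anosov closing lemma argument, in the spirit of \cite[Lemma 9]{Schenk}. If $|\Phi^t_0(\rho')-\rho'|\le \epsilon_0$, then a periodic orbit $\gamma$ exists with period $T_\gamma$ within $C|\Phi^t_0(\rho')-\rho'|$ of $t$ and with $d(\rho',\gamma)\le C|\Phi^t_0(\rho')-\rho'|$. This orbit belongs to $\mathcal P_{n}$, or to an adjacent window, and so $\rho'\in\Theta(\gamma,h^{\nu_1})$ as soon as $C|\Phi^t_0(\rho')-\rho'|<h^{\nu_1}$. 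That contradicts $\rho\notin\Omega_n$, since $h^\nu\ll h^{\nu_1}$. The closing constants are uniform on $\Sigma_E$. One subtlety is that the closing lemma compares the period to $t$ only up to a shift along the flow direction. That is harmless here, because $\mathcal P_n$ includes periods in a $2h^\nu$-neighbourhood of $t_n$. Nearby energy shells $p_0^{-1}(E')$ with $|E'-E|\lesssim h^\nu$ are covered by the structural stability of Anosov flows. I then pass to $\delta\neq0$ using $|\Phi^t_\delta-\Phi^t_0|\lesssim \delta h^{-\beta}\mathrm{e}^{Ct}\ll h^{\nu_1}$, which follows from Lemma \ref{lem: duhamel perturbed flow} together with $2\beta<\alpha$. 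Finally I convert the displacement $|\Phi^t_\delta(\rho')-\rho'|$ into the bound on the gradient $(\xi-\eta,y-x)$, losing at most a factor $\mathrm{e}^{-Ct}(1+t)^{-1}$ through the inverse-Lipschitz bound \eqref{eq: F delta inverse lip}.

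\textbf{Step 3: nonstationary phase.} I rescale as in the proof of Proposition \ref{prop: stat phase bis}, and repeatedly integrate by parts with the operator $\frac{h}{\mathrm{i}}\frac{\nabla\Psi\cdot\nabla}{|\nabla\Psi|^2}$. Each integration gains a factor $h\,|\nabla\Psi|^{-1}h^{-\sigma'}$, using the derivative bounds on $\varphi_\delta$ from Lemma \ref{lem: space derivatives generating function}. By Step 2 this factor is at most $h^{1-\nu_1-\sigma'-o(1)}$, which is a positive power of $h$ provided $\nu_1+\sigma'<1$. That condition can be arranged by choosing $\mathfrak c$ and $\nu$ small. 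Iterating $N$ times and taking $N$ arbitrary yields $\mathcal O(h^\infty)$, uniformly for $|t|\le T_{\max}$.
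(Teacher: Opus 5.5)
Your key input, the Anosov closing lemma ruling out near-returns of $\Phi^t$ close to $\rho$, is the same as the paper's, but you embed it in a much heavier argument.

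\textbf{How the paper argues.} The paper never writes $U(t)\mathrm{Op}_h^M(\chi_\rho)$ as a Fourier integral operator. By a variant of Proposition~\ref{prop: cutoffs along the orbit} it inserts a cutoff $b$ equal to $1$ on $B(\Phi^t(\rho),R_t)$ and supported in $B(\Phi^t(\rho),2R_t)$. Here $R_t=\mathrm{e}^{Ct}h^\nu$ is large enough to contain $\Phi^t_\delta(\operatorname{Supp}\chi_\rho)$ for every $\delta\in[0,h^\alpha]$, so that $U(t)\mathrm{Op}_h^M(\chi_\rho)=\mathrm{Op}_h^M(b)U(t)\mathrm{Op}_h^M(\chi_\rho)+\mathcal O(h^\infty)_{\rm tr}$. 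Cyclicity and unitarity then bound the trace by $\|\mathrm{Op}_h^M(\chi_\rho)\mathrm{Op}_h^M(b)\|_{\rm tr}+\mathcal O(h^\infty)$. The closing lemma is applied only at the single point $\rho\in\Sigma_E$, to get $d(\rho,\Phi^t(\rho))>3R_t$. The supports are then disjoint, and Proposition~\ref{prop: composition manifold} with Lemma~\ref{lem: convert sobolev estimate into trace class estimate} gives $\mathcal O(h^\infty)$.

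\textbf{What your extra steps add.} Your own Step~2 bound at $\rho'=\rho$, namely $d(\Phi^t_0(\rho),\rho)\gtrsim h^{\nu_1}\gg\mathrm{e}^{Ct}h^\nu$, already puts you in the ``far'' case of your Step~1. So none of the following is needed:
\begin{itemize}
\item the long-time parametrix;
\item the matching of adapted frames at $\rho$ and $\Phi^t(\rho)$, which you only sketch and which needs a projectability check after composing with a metaplectic operator;
\item the nonstationary phase of Step~3.
\end{itemize}
Likewise, the bound on all of $B(\rho,2h^\nu)$ follows from the bound at $\rho$ via $|\Phi^t(\rho')-\rho'|\ge|\Phi^t(\rho)-\rho|-(1+\mathrm{e}^{Ct})|\rho'-\rho|$. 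Structural stability on nearby energy shells is therefore unnecessary. For this statement your route buys nothing beyond the paper's, which uses only the exotic pseudodifferential calculus.

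\textbf{A step that fails.} Your claim that the period shift produced by the closing lemma is harmless does not hold as written. The lemma gives a periodic orbit of period $t+\tau$ with $|\tau|\lesssim|\Phi^t_0(\rho')-\rho'|$. In your contradiction argument this displacement can be as large as $h^{\nu_1}\gg 2h^\nu$. The orbit then need not lie in $\mathcal P_n$, and lying in an adjacent window $\mathcal P_{n\pm1}$ gives no contradiction with $\rho\notin\Omega_n$, since $\Omega_n$ is built from $\mathcal P_n$ alone.

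Concretely, suppose $\rho$ lies on a periodic orbit of period $t_n+3h^\nu$ and is $h^{\nu_1}$-far from every orbit of $\mathcal P_n$. Then $\rho\notin\Omega_n$, while $|\Phi^{t_n}_0(\rho)-\rho|\approx 3h^\nu|H_{p_0}(\rho)|\ll h^{\nu_1}$, so your target lower bound is false.

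The paper's proof runs into the same point in milder form. Its closing-lemma step only gives $|\tau|\le CR_t=C\mathrm{e}^{Ct}h^\nu$, which can also exceed $2h^\nu$. For either argument to close, the period window defining $\mathcal P_n$ (hence $\Omega_n$) must be enlarged from $2h^\nu$: to width $\gtrsim\mathrm{e}^{CT_{\rm max}}h^\nu$ for the paper's target, or $\gtrsim h^{\nu_1}$ for yours.
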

\begin{proof} \textbf{1.} Let $R_t:=\mathrm{e}^{Ct}h^\nu$, and let $b\in S_{\nu_1}(T^*M)$ satisfy $b\equiv 1$ on $B(\Phi^t(\rho),R_t)$ and $b\equiv 0$ outside $B(\Phi^t(\rho),2R_t)$. Then, by a slight variation of Proposition \ref{prop: cutoffs along the orbit}, one has
\[U(t)\mathrm{Op}_h^M(\chi_\rho)=\mathrm{Op}_h^M(b)U(t)\mathrm{Op}_h^M(\chi_\rho)+\mathcal O(h^\infty)_{\rm tr}.\]
By cyclicity of the trace, one finds
\[\operatorname{Tr} U(t)\mathrm{Op}_h^M(\chi_\rho)=\operatorname{Tr}(U(t)\mathrm{Op}_h^M(\chi_\rho)\mathrm{Op}_h^M(b))+\mathcal O(h^\infty).\]
Since $U(t)$ is unitary we infer
\[\big|\operatorname{Tr} U(t)\mathrm{Op}_h^M(\chi_\rho)\big|\le \big\|\mathrm{Op}_h^M(\chi_\rho)\mathrm{Op}_h^M(b)\big\|_{\rm tr}+\mathcal O(h^\infty).\]\medskip

\noindent \textbf{2.} We claim that $b$ and $\chi_\rho$ have disjoint support. If not, then one has $d(\rho,\Phi^t(\rho))\le 3R_t$. Then, by \cite[Lemma 8.3.13]{Lefeuvre}, there is some $|\tau|\le CR_t\le h^{\nu_1}$ and a periodic point $\rho_\star\in \Sigma_E$ of period $t+\tau$ such that
\[d_{\Sigma_E}(\rho,\rho_\star)\le 3R_t.\]
This contradicts the fact that $d(\rho,\rho_\star)\ge h^{\nu_1}$, since $R_t\ll h^{\nu_1}$ by assumption. \medskip

\noindent\textbf{3.} Now that we know that $\mathrm{Supp}(\chi_\rho)\cap \mathrm{Supp}(b)=\varnothing$, we can combine Proposition \ref{prop: composition manifold} with Lemma \ref{lem: convert sobolev estimate into trace class estimate} to obtain
\[\big\|\mathrm{Op}_h^M(\chi_\rho)\mathrm{Op}_h^M(b)\big\|_{\rm tr}=\mathcal O(h^\infty),\]
which concludes the proof.
\end{proof}
Proposition \ref{prop: trace outside periodic orb} shows that we can further refine the previous partition. For each $\gamma\in \mathcal P_n$, we let $\chi_\gamma\in S_{\nu_1}(T^*M)$ be supported on $\Omega(\gamma,2h^{\nu_1})$ satisfy $\chi_\gamma\equiv 1$ on $\Omega(\gamma,h^{\nu_1})$. Then, letting
\[A_{n,\gamma}:=\int \mathrm{e}^{-\frac{\mathrm{i}}{h}tP_h}\mathrm{e}^{\frac{\mathrm{i}}{h} tE}\omega_\nu(t-t_n)\widehat f(t/L)\mathrm{d}t \operatorname{Op}_h^{M}(\chi_\gamma),\]
we have
\[\operatorname{Tr}A_n=\sum_{\gamma\in \mathcal P_n} \operatorname{Tr} A_{n,\gamma} +\mathcal O(h^\infty).\]
We can thus study each operator $A_{n,\gamma}$ separately.
\subsubsection{Building a partition of unity along the orbit}The following lemma states that the intersection of a periodic orbit with a ball of small enough radius is either empty or consists of a single line segment. \begin{lem}\label{lem: orbit does not pass too close to itself} There are constants $C,c_0>0$ such that the following holds. Let $\gamma\subset \Sigma_E$ be a primitive periodic orbit of the flow, of period $T$. Then, for all $t,s\in \mathbf R/T\mathbf Z$ satisfying $|t-s|\ge \mathrm{e}^{-CT}$, we have $d_{T^*M}(\gamma(t),\gamma(s))\ge c_0 \mathrm{e}^{-CT}$.\end{lem}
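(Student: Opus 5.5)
The plan is to split into two regimes according to the time separation $|t-s|$ (taken mod $T$), and to handle the hard regime with the Anosov closing lemma and the uniqueness of periodic orbits in its conclusion.

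\textbf{Short separation.} Suppose $\mathrm{e}^{-CT}\le |t-s|\le \tau_0$, where $\tau_0$ is a fixed small constant with $\tau_0<T_0/2$. Along $\Sigma_E$ the Hamiltonian vector field $H_{p_0}$ is nonvanishing and has bounded derivatives. Hence for a fixed small $\tau_0$ the map $u\mapsto \gamma(s+u)$ is uniformly bi-Lipschitz on $[-\tau_0,\tau_0]$, by a flow-box argument. This gives $d(\gamma(t),\gamma(s))\ge c|t-s|\ge c\,\mathrm{e}^{-CT}$.

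\textbf{Large separation.} Now suppose $|t-s|\ge\tau_0$ mod $T$, and assume for contradiction that $d(\gamma(t),\gamma(s))=\varepsilon$ is small. Set $\rho=\gamma(s)$ and $\tau=t-s\in[\tau_0,T-\tau_0]$. Then $d(\rho,\Phi^\tau_0(\rho))=\varepsilon$, so $\rho$ is $\varepsilon$-almost periodic with period $\tau$. Apply the Anosov closing lemma, as used in the proof of Proposition \ref{prop: trace outside periodic orb} via \cite[Lemma 8.3.13]{Lefeuvre}. For $\varepsilon$ below a fixed threshold, it gives a periodic point $\rho_\star$ of period $\tau'$ with $|\tau'-\tau|\le C\varepsilon$. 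Its orbit stays $C\varepsilon$-close to $\Phi^u_0(\rho)=\gamma(s+u)$ for $u\in[0,\tau]$, which is the shadowing part of the closing lemma.

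\textbf{Obtaining the contradiction.} Expansivity of Anosov flows (or the uniqueness part of the closing/shadowing lemma) shows that two orbits which stay $\eta$-close for all time coincide up to a small time shift. The orbit of $\rho_\star$ shadows $\gamma$ only on a window of length $\tau$, not for all time, so this cannot be used directly. Instead I would use the quantitative separation criterion already quoted, \cite[Lemma 9]{Schenk}: distinct periodic orbits with period $\le T$ are $\mathrm{e}^{-CT}$-separated. The orbit of $\rho_\star$ has period $\tau'\le T$ and passes within $C\varepsilon$ of $\gamma$.
\begin{itemize}
\item If $\varepsilon\le c_0\mathrm{e}^{-CT}$ is small enough, the separation criterion forces the orbit of $\rho_\star$ to equal $\gamma$. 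Then $\gamma$ has period dividing $\tau'$, so $T\le\tau'\le \tau+C\varepsilon<T$, since $\tau\le T-\tau_0$. This is a contradiction.
\item The case $\tau$ close to $0$ is excluded because $\tau\ge\tau_0$, which also prevents $\rho_\star$ from being a trivial orbit.
\end{itemize}
Hence $d(\gamma(t),\gamma(s))>c_0\mathrm{e}^{-CT}$, and combining with the short-separation case, after adjusting constants, gives the lemma.

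\textbf{Main obstacle.} The delicate step is making Schenck's separation criterion quantitative with the right exponent. The distance between the two orbits is measured between the specific points $\rho_\star$ and $\gamma(s)$, and the periods compared are $\tau'$ and $T$. I expect to need the form "two periodic orbits $\gamma_1,\gamma_2$ with periods $\le T$ and $d(\gamma_1,\gamma_2)\le c\,\mathrm{e}^{-CT}$ coincide." If this form is unavailable, I would prove it directly in the local adapted coordinates of Lemma \ref{lem: straightening coordinates}. Near a point of a periodic orbit, the linearized Poincaré map $P$ is hyperbolic. Its fixed-point equation $x-P^\sharp(x)=0$, with $P^\sharp$ the nonlinear return map, has derivative $I-P$ satisfying $\|(I-P)^{-1}\|\lesssim 1$ on the hyperbolic splitting. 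Its $C^2$ norm is bounded by $\mathrm{e}^{CT}$ (Proposition \ref{prop: short time derivative flow}). The quantitative inverse function theorem (Lemma \ref{lem: implicit func}) then gives uniqueness of fixed points in a ball of radius $\sim\mathrm{e}^{-CT}$, which is the required statement.
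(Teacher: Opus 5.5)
Your argument follows the paper's proof. Short time separations are handled by the flow-box (bi-Lipschitz) estimate. For long ones, the Anosov closing lemma produces a periodic orbit of period strictly less than $T$ within $O(\varepsilon)$ of $\gamma$, contradicting the $\mathrm{e}^{-CT}$-separation of distinct periodic orbits. The paper reduces to $t<T/2$ where you use $\tau\le T-\tau_0$, which makes no difference.

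One caveat concerns your fallback for the separation criterion. Uniqueness of fixed points of a single return map only detects a second periodic orbit when one period is close to a multiple of the other. Here $\tau'$ cannot be a multiple of $T$, and $T$ is in general not close to a multiple of $\tau'$, so that argument does not give the form you need. If you have to prove it yourself, use expansivity of the Anosov flow instead. The function $u\mapsto d(\gamma(s+u),\gamma(t+u))$ is $T$-periodic. Once $d(\gamma(s),\gamma(t))\le\eta\,\mathrm{e}^{-C'T}$, it is at most $\eta$ on $[-T,T]$, hence everywhere. Expansivity then forces $t-s$ to lie within $O(\eta)$ of $T\mathbf Z$, and the flow-box estimate finishes. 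This proves the lemma directly, without the closing lemma or the separation criterion.
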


\begin{proof}Let $T_0\gg 1$ be large enough. We can assume that $\gamma$ has period $T\ge T_0$ because there are finitely many orbits with length $\le T_0$. Suppose, for contradiction, that there exists a pair $(t,s)$ as above, and without loss of generality assume $s=0$. Then, there is some $t\in (\mathrm{e}^{-CT},T/2)$ such that $d(\Phi^t(\rho),\rho)\le \mathrm{e}^{-CT}$. Actually we have automatically $t\gtrsim 1$ because of injectivity radius considerations. Up to replacing $\mathrm{e}^{-CT}$ by $C_0\mathrm{e}^{-CT}$ we can in turn assume $t\ge T_0$. 
But then the Anosov closing Lemma \cite[Lemma 8.3.13]{Lefeuvre} tells us that there is a periodic point $\rho_\star$ with period $t+\tau$, with $|\tau|\le C_1 \mathrm{e}^{-CT}$, and such that $\mathrm{dist}(\gamma,\rho_\star)\le C_0\mathrm{e}^{-CT}$. In particular $\rho_\star$ has period $t+\tau\le T/2+\tau< T$ thus $\rho_\star$ does not belong to $\gamma$. We reach a contradiction to the orbit separation property if $C$ has been taken large enough.
\end{proof}
Lemma \ref{lem: orbit does not pass too close to itself} ensures that for each primitive periodic orbit $\gamma\subset \Sigma_E$ with period $T_\gamma\le \mathfrak c |\log h|$, we can place points $\rho_{\gamma,j}:=\Phi^{j h^{\nu_1}}(\rho_0)$ on the orbit $\gamma$, and construct associated functions $\chi_{\gamma,j}\in C^\infty_{\rm comp}(\rho_{\gamma,j},Ch^{\nu_1})$ such that $\sum \chi_{\gamma,j}\equiv 1$ on $\Omega(\gamma,h^{\nu_1})$, and such that
\[\gamma\cap B(\rho_{\gamma,j},Ch^{\nu_1})=\{\Phi^{s}(\rho_{\gamma,j})~:~ s\in [-1,1]\}\cap B(\rho_{\gamma,j},Ch^{\nu_1}).\] 
Concretely, in small balls of radius $\lesssim h^{\nu_1}$, the orbit $\gamma$ looks like a single line. Further, we define operators
\[A_{n,\gamma,j}:=\int \mathrm{e}^{-\frac{\mathrm{i}}{h}tP_h}\mathrm{e}^{\frac{\mathrm{i}}{h} tE}\omega_\nu(t-t_n)\widehat f(t/L)\mathrm{d}t \operatorname{Op}_h^{M}(\chi_{\gamma,j}),\]
so that
\[\operatorname{Tr}A_{n,\gamma}=\sum_j \operatorname{Tr}A_{n,\gamma,j}+\mathcal O(h^\infty).\]
We will study each $A_{n,\gamma,j}$ separately. In the next section we will show the following results.
\begin{prop}\label{prop: trace A n gamma j} For $\varepsilon>0$ small enough, one can find $\mathfrak c=\mathfrak c(\alpha,\beta,\varepsilon)>0$ such that for $T_\gamma\le \mathfrak c|\log h|$, in the notation of Theorem \ref{prop: long time gutz}, one has
\[\operatorname{Tr} A_{n,\gamma,j}=\frac{1}{|I-P_\gamma|^{1/2}}\mathrm{e}^{\frac{\mathrm{i}}{h} S(\gamma_\delta)}\widehat f(T_\gamma/L)\int_{\gamma} \chi_{\gamma,j}+\mathcal O(h^\varepsilon).\]
\end{prop}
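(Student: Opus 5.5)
Since $\chi_{\gamma,j}$ is supported in a ball of radius $\lesssim h^{\nu_1}$ around $\rho_{\gamma,j}$ in which $\gamma$ is a single segment, we work entirely in adapted coordinates near $\rho_{\gamma,j}$, as in \eqref{eq: def twisted flow}. We choose the straightening coordinates of Lemma \ref{lem: straightening coordinates} at $\rho_{\gamma,j}$, with symplectic frame given by Lemma \ref{lem: adapted bases}, so the flow direction is $y_1$, $\eta_1=p_0$, and the unstable and stable directions are $y'$ and $\eta'$.

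\textbf{Step 1: long time parametrix.} We first localize along the orbit with Proposition \ref{prop: cutoffs along the orbit} (using a finer partition of $\chi_{\gamma,j}$ into $h^\nu$-balls if needed). We then write $U(t)\operatorname{Op}_h^M(\chi_{\gamma,j})$, for $|t-t_n|\le h^\nu$, as a product of $\approx t_n$ short time propagators $U(1)$ in charts. Each factor is conjugated by the metaplectic operators quantizing the matrices $\mathcal A_i(k)$, and is replaced by its parametrix $\mathcal I[\varphi_\delta,a_h]$ from Proposition \ref{prop: short time wkb}. We compose these iteratively with Proposition \ref{thm: composition FIO}. The projectability hypotheses hold because of Proposition \ref{prop: projectability}. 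The phase bounds with $\sigma=\beta+C\mathfrak c$ come from Proposition \ref{prop: control all deriv gen fun}, and the $z_c$ bounds from Proposition \ref{prop: intermediate points}. The resulting phase is the iterated generating function $\phi_k$ of \eqref{eq: sequence gen func} (Proposition \ref{prop: iterated generating function}). The leading amplitude is $|\det \partial^2_{x\eta}\phi|^{1/2}$ times the cutoff, the remainder is $\mathcal O(h^{\varepsilon})$ in trace norm, and the errors accumulate only linearly in $t_n\lesssim|\log h|$. Going back to the original chart, we pick up the metaplectic conjugation $\mathcal M_{\gamma}$ relating the frames at $\rho_{\gamma,j}$ at times $0$ and $T_\gamma$. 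Its Maslov phase is postponed to \S\ref{sec: metaplectic}; here we only need it to be a unitary metaplectic operator with explicit kernel.

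\textbf{Step 2: trace as an oscillatory integral.} We take the trace against $\mathrm{e}^{\frac{\mathrm i}{h}tE}\omega_\nu(t-t_n)\widehat f(t/L)$. This gives
\[
\frac{1}{(2\pi h)^d}\int \mathrm{e}^{\frac{\mathrm i}{h}(\phi(t,x,\eta)-\langle x,\eta\rangle+tE)}a(t,x,\eta)\,\mathrm dt\,\mathrm dx\,\mathrm d\eta,
\]
where $\phi$ includes the quadratic generating function of the linear part, via Proposition \ref{prop: properties cocycle}. The variable $x_1$ is transversal to the orbit and is left unintegrated. We apply stationary phase (Proposition \ref{prop: stat phase bis}) in the remaining $2d$ variables $(t,\xi_1,x',\eta')$.

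The critical points are exactly the points $(x,\eta)$ with $\Phi^t_\delta(x,\eta)=(x,\eta)$ and $p_\delta=E$, namely the perturbed orbit $\gamma_\delta$ at time $t=T_{\gamma_\delta}$. We locate the critical point by the quantitative implicit function theorem (Lemma \ref{lem: implicit func}), starting from the $\delta=0$ critical point. The needed smallness is $\delta\,\|\partial_\delta\nabla\phi\|\lesssim h^{\alpha-2\beta}\mathrm{e}^{CT}\ll 1$, which holds for $\mathfrak c$ small.

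The Hessian at $\delta=0$ is block-structured by Lemma \ref{lem: matrice differentielle flot}. The $(t,\xi_1)$ block is as in \eqref{eq: hessian at crit point}, and the $(x',\eta')$ block comes from $I-P_\gamma$. Its inverse has norm $\lesssim 1$ uniformly thanks to the hyperbolic structure, since $A$ is expanding. Its determinant produces $|\det(I-P_\gamma)|^{-1/2}$, times the Jacobian factor $|\partial_{\xi_1}p|^{-1}$. That factor cancels against the $(t,\xi_1)$ block together with the amplitude $a_0$, leaving $\mathrm dx_1$, which integrated against $\chi_{\gamma,j}$ gives $\int_\gamma\chi_{\gamma,j}$. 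The perturbation changes the Hessian only by $\mathcal O(h^{\alpha-2\beta}\mathrm{e}^{CT})$, so $\det(I-P_{\gamma_\delta})$ may be replaced by $\det(I-P_\gamma)$ at cost $\mathcal O(h^\varepsilon)$.

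\textbf{Step 3: the phase.} At the critical point, Corollary \ref{cor: generating function as action plus difference} and Lemma \ref{lem: symplectic lift action} give phase $\widehat S(\gamma_\delta)+tE=S(\gamma_\delta)$, since $p_\delta=E$ on $\gamma_\delta$. The cocycle terms $F(\mathcal A,\cdot)$ telescope along the orbit by Proposition \ref{prop: properties cocycle}(1) and vanish because the orbit closes. The signature of the Hessian is combined with the metaplectic conjugation to give the Maslov factor, which is absorbed in the statement's notation.

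\textbf{Main obstacle.} The hard part is Step 1 together with the uniformity needed in Step 2. Composing $\sim|\log h|$ FIOs whose phases have $C^\ell$ norms growing like $h^{-\beta(\ell-2)}\mathrm{e}^{C\ell t}$ requires keeping $\sigma<1/2$ and the stationary phase remainder at $h^{(1-2\sigma)N-C}T^{3N}$. This is what fixes $\mathfrak c(\alpha,\beta,\varepsilon)$. I would handle it by taking $\nu,\nu_1$ slightly below $1/2-\beta$ and $\mathfrak c$ so small that $\mathrm{e}^{C\mathfrak c|\log h|}\le h^{-\varepsilon'}$. I would also check that the amplitude's higher terms $h^ja_j$ contribute $\mathcal O(h^{j(1-2\sigma)})$, so that only $a_0$ survives at precision $h^\varepsilon$.
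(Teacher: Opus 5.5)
Your plan follows the paper's proof: a long-time parametrix obtained by composing metaplectically conjugated short-time FIOs (Proposition \ref{prop: propag until Ehrenfest}), then extraction of the metaplectic monodromy and straightening near the base point. After that comes stationary phase in $z=(t,x',\eta)$ with $x_1$ frozen, location of the perturbed critical point via Lemma \ref{lem: implicit func}, identification of the critical value with $S(\gamma_\delta)$, and replacement of the $\delta$-dependent amplitude and Hessian by the unperturbed ones at cost $\mathcal O(\delta h^{-2\beta-\varepsilon})$.

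The step that fails as you wrote it is the prefactor computation in Step 2, which is the real content of the statement. In the straightened coordinates $\eta_1=p_0$, so there is no Jacobian $|\partial_{\xi_1}p|^{-1}$. The $(t,\xi_1)$ block of $\nabla_z^2\Psi_0$ is $\begin{pmatrix}0&-1\\-1&*\end{pmatrix}$, of determinant $-1$. The $(x',\eta')$ block is $\begin{pmatrix}0&A^{-\top}-I\\A^{-1}-I&0\end{pmatrix}$. So the Hessian alone gives $|\det\nabla_z^2\Psi_0|^{-1/2}=|\det(I-A^{-1})|^{-1}$, and this is \emph{not} $|\det(I-P_\gamma)|^{-1/2}$. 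Indeed
\[|\det(I-P_\gamma)|=|\det(A-I)|\,|\det(A^{-\top}-I)|=|\det A|\,|\det(I-A^{-1})|^2,\]
so the two differ by the exponentially large factor $|\det A|^{1/2}$. That factor is removed by the leading amplitude $a_0=|\det\partial^2_{x\eta}\phi|^{1/2}=|\det A|^{-1/2}$, since the upper-left block of $\mathrm d\Phi_0^{T_\gamma}$ is $\mathrm{diag}(1,A)$. Because $a_0$ is exponentially small, it cannot ``cancel'' an $\mathcal O(1)$ Jacobian as you claim. The correct bookkeeping is $a_0\,|\det\nabla_z^2\Psi_0|^{-1/2}=|\det A|^{-1/2}|\det(I-A^{-1})|^{-1}=|\det(I-P_\gamma)|^{-1/2}$, which is the identity the paper uses.

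There are two smaller points. First, $x_1$ is the coordinate \emph{along} the orbit, not transversal to it. The unperturbed critical set is the segment $\{t=T_\gamma,\ \xi_1=E,\ x'=\eta'=0\}$ parametrized by $x_1$. That is why $x_1$ must be frozen (the phase is degenerate in that direction), and why the $x_1$-integral produces $\int_\gamma\chi_{\gamma,j}$.

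Second, the Hessian contributes no Maslov phase. Each block above is a nondegenerate quadratic form vanishing on a half-dimensional subspace, hence has signature $0$; this is the point of working in unstable/stable coordinates. The whole factor $\mathrm e^{-\mathrm i\frac\pi2\mathfrak m_\gamma}$ comes from $\mathcal M_{\ell_0}(T_\gamma)=\mathrm e^{-\mathrm i\frac\pi2\mathfrak m_\gamma}\mathcal M_{\ell_0}(0)$ (Lemma \ref{lem: the winding number is maslov}). This is a scalar once the frames at times $0$ and $T_\gamma$ are chosen equal, so no extra quadratic generating function needs to be folded into $\phi$. The non-orientable case is handled by the extra rotation of \S\ref{subsec: elucidating Maslov}.
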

\begin{rem}In fact, the proof gives an expansion of $\operatorname{Tr} A_{n,\gamma,j}$ in powers of $h$ with increasingly better error as $h\to 0$, but we shall only need the first term in our future application.
\end{rem}
By summing over $j$, we obtain, after possibly decreasing $\varepsilon$ and $\mathfrak c$:
\begin{prop} \label{prop: trace A n gamma} There is a constant $\varepsilon_0=\varepsilon_0(\alpha,\beta)$ such that for any $0<\varepsilon<\varepsilon_0$, one can find $\mathfrak c=\mathfrak c(\alpha,\beta,\varepsilon)>0$ such that for $T_\gamma\le \mathfrak c|\log h|$, one has
\[\operatorname{Tr} A_{n,\gamma}=\frac{T_\gamma^\#}{|I-P_\gamma|^{1/2}} \mathrm{e}^{\frac{\mathrm{i}}{h} S(\gamma_\delta)}\widehat f(T_\gamma/L)+\mathcal O(h^\varepsilon).\]\end{prop}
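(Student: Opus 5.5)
The statement to prove is Proposition \ref{prop: trace A n gamma}. The plan is to deduce it from Proposition \ref{prop: trace A n gamma j} by summing over the index $j$ of the partition of unity $\{\chi_{\gamma,j}\}$ built along $\gamma$. The main term comes from the identity $\sum_j \int_\gamma \chi_{\gamma,j} = T_\gamma^\#$, and the remainder comes from counting how many pieces there are.

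First, recall that $\operatorname{Tr}A_{n,\gamma}=\sum_j \operatorname{Tr}A_{n,\gamma,j}+\mathcal O(h^\infty)$. The $h^\infty$ error is uniform because, after the cutoff $\omega_\nu(t-t_n)$, the time integral runs over a window of length $2h^\nu$. The points $\rho_{\gamma,j}=\Phi^{jh^{\nu_1}}(\rho_0)$ are spaced by $h^{\nu_1}$ along a primitive orbit of length $T^\#_\gamma\le T_\gamma\le \mathfrak c|\log h|$. Hence the number of indices is
\[J\lesssim T_\gamma^\# h^{-\nu_1}\lesssim |\log h|\,h^{-\nu_1}.\]
Next I apply Proposition \ref{prop: trace A n gamma j} to each $j$ with some exponent $\varepsilon'$, and must check that its implied constant is uniform in $j$ and in $\gamma$. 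Uniformity in $j$ holds because each $A_{n,\gamma,j}$ is treated by the same local construction (adapted coordinates, long time FIO parametrix, stationary phase), with constants depending only on the bounds of Section \ref{sec: classical dynamics}.

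Summing then gives a main term
\[\frac{\mathrm{e}^{\frac{\mathrm{i}}{h} S(\gamma_\delta)}\widehat f(T_\gamma/L)}{|I-P_\gamma|^{1/2}}\sum_j\int_\gamma\chi_{\gamma,j}.\]
On a neighborhood of $\gamma$ the cutoffs satisfy $\sum_j\chi_{\gamma,j}\equiv 1$, and $\int_\gamma$ is taken with respect to the time parametrization of the primitive orbit. So $\sum_j\int_\gamma\chi_{\gamma,j}=T_\gamma^\#$, which is the claimed main term. The Maslov phase is understood as absorbed, as in the statement.

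The remainder is bounded by $J\cdot h^{\varepsilon'}\lesssim |\log h|\,h^{\varepsilon'-\nu_1}$. This is the main obstacle: a crude per-piece error $h^{\varepsilon'}$ must beat the number of pieces. To handle it, I would first choose $\nu_1>0$ small but fixed, with $\nu_1<\varepsilon'$. I would then invoke Proposition \ref{prop: trace A n gamma j} with $\varepsilon'=\varepsilon+2\nu_1$, shrinking $\mathfrak c$ accordingly; this is allowed since that proposition holds for every small enough exponent, with $\mathfrak c$ depending on it. This yields a total error $\mathcal O(h^{\varepsilon})$ for all $\varepsilon<\varepsilon_0:=\varepsilon_0'-2\nu_1$.

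If instead the per-piece error from Proposition \ref{prop: trace A n gamma j} scales like the size of the support, i.e. is $\mathcal O(h^{\varepsilon'}\int_\gamma\chi_{\gamma,j})$, which its stationary phase proof naturally gives, the summation is immediate and costs only a factor $T_\gamma^\#\lesssim|\log h|$. That factor is absorbed by slightly decreasing $\varepsilon$. Either way one obtains the stated formula after possibly decreasing $\varepsilon$ and $\mathfrak c$.
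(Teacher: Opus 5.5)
Your proposal is correct and follows the paper's argument. You sum Proposition \ref{prop: trace A n gamma j} over the pieces $\chi_{\gamma,j}$, use $\sum_j\int_\gamma\chi_{\gamma,j}=T_\gamma^\#$ (along the primitive orbit) for the main term, and absorb the loss from the number of pieces by decreasing $\varepsilon$ and $\mathfrak c$. The paper bounds that loss only loosely, as a factor $\mathrm{e}^{CT}\le h^{-\varepsilon}$; your explicit count $|\log h|\,h^{-\nu_1}$, together with your remark that the per-piece error naturally scales like $\int_\gamma\chi_{\gamma,j}$, is a more careful accounting of the same step.
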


\subsection{Adapted coordinates} \label{subsec: adapted coordinate} Fix a finite atlas $\{(U_\ell,f_\ell)\}$ of $M$. Since $M$ is orientable, we can assume that $\det (\mathrm{d}(f_\ell^{}\circ f_{\ell'}^{-1}))>0$ on $f_\ell'(U_\ell\cap U_{\ell'})$ for all $\ell,\ell'$. We consider an associated quadratic partition of unity $\{\chi_\ell\}$, i.e. $\sum \chi_\ell^2=1$. Also, we take $\chi_{\ell}'\in C^\infty_{\rm comp}(U_\ell,[0,1])$ such that $\chi_\ell\prec \chi_\ell'$ and $\sum \chi_\ell'\le 2$.

Let $\rho_0\in \Sigma_E$. Up to reparameterizing the time, for all $k\ge 0$, one can find an index $\ell_k$ such that the piece of trajectory $\{\Phi^t_0(\rho_0)~:~ t\in (k-1,k+1)\}$ is entirely contained in $T^* U_{\ell_k}$. We obtain a sequence of indices $(\ell_k)_{k\ge 0}$ that depends on the starting point $\rho_0$. We let $\rho_t:=\Phi^t(\rho_0)$ and $\rho_t^k:=\widetilde f_{\ell_k}(\rho_t)$ for $t\in (k-1,k+1)$.

Assume $t\in (k-1,k+1)$. Recalling Lemmas \ref{lem: adapted bases} and \ref{lem: matrice differentielle flot}, there exists a symplectic basis $(u_1(t),\ldots,u_d(t);s_{1}(t),\ldots,s_{d}(t))$ of $T_{\rho_t} T^*M$ adapted to the decomposition $E^{wu}\oplus (E^{wu})^\bot$. We define $\mathcal A_{\ell_k}(t)\in \mathrm{Sp}(2d,\mathbf{R})$ as the matrix expressing this basis in the canonical coordinates associated with the chart $(T^* U_{\ell_k},\widetilde f_{\ell_k})$. Denoting $(e_1,\ldots,e_d;f_1,\ldots,f_d)$ the canonical basis of $T^*\mathbf{R}^d$, we have 
When $t\in (k-1,k+1)\cap (k,k+2)$, the matrices $\mathcal A_{\ell_k}(t)$ and $\mathcal A_{\ell_{k+1}}(t)$ are related by (we write $g_k:=f_{\ell_{k+1}}\circ f_{\ell_k}^{-1}$):
\begin{equation} \label{eq: transition eq A ell k}\mathcal A_{\ell_{k+1}}(t)=\mathrm{d} \widetilde{g_k}(\rho_t^k)\mathcal A_{\ell_k}(t)=\begin{pmatrix}
\mathrm{d}g_k & 0 \\ * & \mathrm{d}g_k^{-\top}
\end{pmatrix} \mathcal A_{\ell_k}(t).\end{equation}

Choose a basis $(V_1(t),\ldots,V_d(t))$ of the vertical space $\mathbf V(\rho_t)$, varying continuously in $t$, and define the associated basis of the horizontal space $\mathbf H(\rho_t)$ by
\[(H_1(t),\ldots,H_d(t)):=(JV_1(t),\ldots,JV_d(t)).\]
We define a symplectic matrix $\mathcal B_{\ell_k}(t)$ as the change-of-basis matrix from $\mathbf H(\rho_t)\oplus \mathbf V(\rho_t)$ to the canonical basis $(\partial_{x_i},\partial_{\xi_i})$ of $T_{\rho_t}T^*M$ associated with the chart $(T^*U_{\ell_k}, \widetilde f_{\ell_k})$. It has the block form
\[\mathcal B_{\ell_{k}}(t):=\begin{pmatrix}
P_k(t) & 0 \\ Q_k(t) & R_k(t)
\end{pmatrix},\]
moreover one can arrange $P_k$ to have positive determinant because $M$ is orientable.
\begin{lem}\label{lem: Bk Ak inde chart} On overlaps of charts we have \[\mathcal B_{\ell_{k+1}}(t)\mathcal A_{\ell_{k+1}}(t)=\mathcal B_{\ell_{k}}(t)\mathcal A_{\ell_k}(t).\]
This matrix represents the basis $(u_1(t),\ldots,u_d(t);s_1(t),\ldots,s_d(t))$ in the decomposition $\mathbf H\oplus \mathbf V$, and shall be denoted $\mathcal S(t)$.
\end{lem}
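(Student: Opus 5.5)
The plan is to show that $\mathcal B_{\ell}(t)\mathcal A_{\ell}(t)$ is, for each chart index, the matrix of one and the same chart-free basis, namely the adapted basis $(u_i(t);s_i(t))$ written in the frame $\mathbf H(\rho_t)\oplus\mathbf V(\rho_t)$. Then the equality on overlaps follows at once.

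For a fixed chart $\ell_k$, the matrix $\mathcal A_{\ell_k}(t)$ sends the canonical basis $(e_i;f_i)$ of $T^*\mathbf R^d$ to the coordinates of $(u_i(t);s_i(t))$ in the chart frame $(\partial_{x_i},\partial_{\xi_i})$ of $\widetilde f_{\ell_k}$. This is how it is defined in \eqref{eq: def A_i(t)}. Likewise, $\mathcal B_{\ell_k}(t)$ relates the frame $(H_i(t);V_i(t))$ and the chart frame. The first thing I will do is fix the direction convention for $\mathcal B_{\ell_k}$ so that the product is meaningful: it should act as the matrix taking chart coordinates to $\mathbf H\oplus\mathbf V$ coordinates (the inverse of the matrix whose columns are $(\widetilde f_{\ell_k})_*H_i,(\widetilde f_{\ell_k})_*V_i$). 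With that convention, $\mathcal B_{\ell_k}\mathcal A_{\ell_k}$ is simply the coordinate matrix of $(u_i;s_i)$ in the basis $(H_i;V_i)$.

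The key observation is that the vectors $u_i(t),s_i(t),H_i(t),V_i(t)\in T_{\rho_t}T^*M$ are defined intrinsically. The vectors $u_i,s_i$ come from Lemma \ref{lem: adapted bases} with a continuous choice in $t$. The $V_i$ are a continuous frame of the vertical bundle, and $H_i=JV_i$. None of them depends on the chart index $\ell_k$. Only the two coordinate representations depend on the chart.

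The main check is the following. Passing from chart $\ell_k$ to chart $\ell_{k+1}$ multiplies coordinate vectors by $\mathrm d\widetilde{g_k}(\rho^k_t)$. By \eqref{eq: transition eq A ell k}, $\mathcal A_{\ell_{k+1}}=\mathrm d\widetilde{g_k}\,\mathcal A_{\ell_k}$. By the same token, the chart description of the $(H,V)$ frame transforms so that $\mathcal B_{\ell_{k+1}}=\mathcal B_{\ell_k}\,(\mathrm d\widetilde{g_k})^{-1}$. Multiplying these two relations, the factors $\mathrm d\widetilde{g_k}$ cancel:
\[\mathcal B_{\ell_{k+1}}(t)\mathcal A_{\ell_{k+1}}(t)=\mathcal B_{\ell_k}(t)(\mathrm d\widetilde{g_k})^{-1}\mathrm d\widetilde{g_k}\,\mathcal A_{\ell_k}(t)=\mathcal B_{\ell_k}(t)\mathcal A_{\ell_k}(t).\]

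The only delicate point is bookkeeping. The block form $\begin{pmatrix}P_k&0\\Q_k&R_k\end{pmatrix}$ and the orientation normalization $\det P_k>0$ must be compatible with the inverse convention above. In particular, the intrinsic frame $(H_i,V_i)$ must not be reselected per chart, which would break the cancellation. Since $\mathrm d\widetilde{g_k}$ preserves the vertical space and $\det\mathrm dg_k>0$ by the choice of an oriented atlas, the chart representations of the single global frame keep the required block structure and the positivity of $\det P_k$. So no conflict arises, and the common matrix $\mathcal S(t)$ is well defined.
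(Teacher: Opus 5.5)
Your proposal is correct and matches the paper's implicit reasoning. The paper gives no separate proof: it only notes that $\mathcal B_{\ell_k}(t)\mathcal A_{\ell_k}(t)$ is the coordinate matrix of the intrinsic basis $(u_i(t);s_i(t))$ in the intrinsic frame $(H_i(t);V_i(t))$, and your cancellation $\mathcal B_{\ell_{k+1}}\mathcal A_{\ell_{k+1}}=\mathcal B_{\ell_k}(\mathrm d\widetilde{g_k})^{-1}\mathrm d\widetilde{g_k}\,\mathcal A_{\ell_k}$ makes that chart-independence explicit. You also read $\mathcal B_{\ell_k}$ the intended way, as the map from chart coordinates to $\mathbf H\oplus\mathbf V$ coordinates, and you correctly point out that the cancellation relies on choosing the frame $(V_i(t))$ once along the trajectory rather than once per chart.
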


\subsection{Quantum conjugation}

We now quantize the family of symplectic matrices $\{\mathcal A_{\ell_k}(t)\}$ into a family of metaplectic operators $\{\mathcal M_{\ell_k}(t)\}$. The required material on metaplectic operators is provided in Section \ref{sec: metaplectic}. Recall that $g_n:=f_{\ell_{n+1}}\circ f_{\ell_n}^{-1}$.

\begin{defi}[Construction of $\{\mathcal M_k(t)\}$] \label{def: metaplectic Mk} Fix a metaplectic operator $\mathcal M_{\ell_0}(0)$ that quantizes $\mathcal A_{\ell_0}(0)$ (the only ambiguity is the choice of sign). Then, extend it continuously to obtain $\mathcal M_{\ell_0}(t)$ quantizing $\mathcal A_{\ell_0}(t)$ for $t\in [0,2)$. Assume that $\{\mathcal M_{\ell_k}(t)\}_{t\in (k-1,k+1)}$ has been defined for $k=0,\ldots,n$. Fix $s\in (n,n+1)$. Then, motivated by \eqref{eq: transition eq A ell k}, set
\[\mathcal M_{\ell_{n+1}}(s):=\mu_h^+(\mathrm{d}\widetilde g_n)\mathcal M_{\ell_n}(s),\]
where $\mu_h^+(\mathrm{d}\widetilde g_n)$ is the metaplectic operator quantizing $\mathrm{d}\widetilde g_n\big(\widetilde{f_{\ell_n}}(\rho_s)\big)$ obtained using formula \eqref{met zworski}. Then, extend continuously $\mathcal M_{\ell_{n+1}}(s)$ to $\mathcal M_{\ell_{n+1}}(t)$ for $t\in (n-1,n+2)$.\end{defi}

We provide an alternative but equivalent definition which avoids discontinuities at the chart transitions. Because
\[\mathrm{d}\widetilde g_n=\begin{pmatrix}
\mathrm{d}g_n & 0 \\ * & \mathrm{d}g_n^{-\top}
\end{pmatrix} \quad \text{and} \quad \det \mathrm{d}g_n>0,\] we can find a deformation $\{\mathcal C_{n,s}\}_{s\in [0,1]}$ from the identity to $\mathrm{d}\widetilde g_n$ in the space of symplectic matrices. Moreover, we can take $\mathcal C_{n,s}$ in a block form $\begin{pmatrix}
* & 0 \\ * & *
\end{pmatrix}$. The advantage is that the path $\{\mathcal C_{n,s}\mathcal A_{\ell_n}(n+1)\}_{s\in [0,1]}$ provides a continuous interpolation between $\mathcal A_{\ell_n}(n+1)$ and $\mathcal A_{\ell_{n+1}}(n+1)$.
\begin{defi}[Alternative definition of $\{\mathcal M_{\ell_k}(t)\}$]\label{defi: tilde A (t)} The concatenation (denoted by $\star$) of paths
\[\cdots \star \{\mathcal A_{\ell_n}(t)\}_{t\in [n,n+1]}\star \{\mathcal C_{n-1,s}\mathcal A_{\ell_{n-1}}(n)\}_{s\in [0,1]}\star \cdots 
\{\mathcal C_{0,s}\mathcal A_{\ell_0}(1)\}_{s\in [0,1]}\star \{\mathcal A_{\ell_0}(t)\}_{t\in [0,1]}\]
is a continuous path in $\mathrm{Sp}(2d,\mathbf{R})$ which can be quantized (uniquely up to the choice of starting point) into a continuous path in $\operatorname{Mp}_h(2d,\mathbf{R})$. The operators $\mathcal M_{\ell_n}(t)$ obtained in this way coincide with those of Definition \ref{def: metaplectic Mk}, provided the starting points $\mathcal M_{\ell_0}(0)$ are the same. \end{defi} 

Let  $\rho_0\in \gamma\subset \Sigma_E$ be a periodic point of the flow, of period $T_\gamma$. Up to a reparameterization of time we can assume that $T_\gamma$ is an integer, which simplifies notation in the indexing of charts. Since $\rho_0$ is periodic, we can take $\ell_{T_\gamma+1}=\ell_0$ in the sequence of charts above. For now, we assume that the weak-unstable distribution is orientable along the periodic orbit, which ensures that we can take $\mathcal A_{\ell_0}(T_\gamma)=\mathcal A_{\ell_0}(0)$ in \S \ref{subsec: adapted coordinate}. We will explain the necessary changes when the orientability assumption is dropped in \S \ref{subsec: elucidating Maslov}.

\begin{lem}\label{lem: the winding number is maslov} Assume that the weak-unstable distribution $E^{wu}$ is orientable along the orbit $\gamma$. Then, we have
\[\mathcal M_{\ell_0}(T_\gamma)=\mathrm{e}^{-\mathrm{i}\frac{\pi}{2}\mathfrak m_\gamma}\mathcal M_{\ell_0}(0),\]
where $\mathfrak m_\gamma$ is the Maslov index of the periodic orbit $\gamma$, given in Definition \ref{defi: maslov index periodic orbit}.
\end{lem}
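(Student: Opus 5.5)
The plan is to reduce the statement to a purely linear-algebraic fact about the loop of symplectic matrices $\{\mathcal S(t)\}_{t\in[0,T_\gamma]}$ from Lemma \ref{lem: Bk Ak inde chart}. We then use the standard fact that a metaplectic lift of a closed loop in $\mathrm{Sp}(2d,\mathbf R)$ returns to its starting point multiplied by $\mathrm{e}^{-\mathrm{i}\frac\pi2 k}$, where $k$ is a Maslov-type winding index of the loop.

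\textbf{Step 1: the path is a loop.} By Definition \ref{defi: tilde A (t)}, $\mathcal M_{\ell_0}(t)$ is the continuous metaplectic lift of the concatenated path $\widetilde{\mathcal A}(t)$. This path is built from the $\mathcal A_{\ell_k}(t)$ and the interpolating pieces $\mathcal C_{k,s}\mathcal A_{\ell_k}(k+1)$. Because $\ell_{T_\gamma+1}=\ell_0$ and, by the orientability assumption, $\mathcal A_{\ell_0}(T_\gamma)=\mathcal A_{\ell_0}(0)$, the path $\widetilde{\mathcal A}$ is closed. Hence $\mathcal M_{\ell_0}(T_\gamma)$ is a lift of the same matrix as $\mathcal M_{\ell_0}(0)$. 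Since $\operatorname{Mp}\to\operatorname{Sp}$ is a double cover, the monodromy is determined by the class of the loop in $\pi_1(\mathrm{Sp}(2d,\mathbf R))\cong\mathbf Z$. Concretely, it is $\mathrm{e}^{-\mathrm{i}\pi k/2}$ only after we track the phase of the explicit formula \eqref{met zworski}. I would compute this monodromy via the kernel formula for metaplectic operators with invertible upper-left block. There the phase is $\sqrt{\det A}^{-1}$ continued along the path, and $\arg\det$ jumps by $\pm\pi/2$ each time the upper-left block becomes singular, as in Section \ref{sec: metaplectic}.

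\textbf{Step 2: remove the chart dependence.} Write $\widetilde{\mathcal A}(t)=\mathcal B(t)^{-1}\mathcal S(t)$, where $\mathcal B$ is the lower block-triangular matrix of Lemma \ref{lem: Bk Ak inde chart}, suitably interpolated on the $\mathcal C_{k,s}$ pieces; on those pieces $\mathcal C_{k,s}$ itself is lower triangular. All the matrices $\mathcal B_{\ell_k}(t)$, $\mathcal C_{k,s}$ and $\mathrm d\widetilde g_k$ have the form $\begin{pmatrix}*&0\\ *&*\end{pmatrix}$ with positive-determinant upper-left block, because $M$ is orientable. They therefore preserve the vertical Lagrangian $\mathbf V$, and they lie in a contractible subset on which the metaplectic lift has no phase ambiguity: $\mu_h^+$ uses the positive square root. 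Consequently the winding of $\widetilde{\mathcal A}$ equals that of $\mathcal S(t)$ relative to $\mathbf V$.

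\textbf{Step 3: identify the index.} The singular set that matters is where the upper-left block of $\widetilde{\mathcal A}(t)$ is singular. This happens exactly when $\mathcal S(t)\,\mathrm{span}(e_i)=E^{wu}(\rho_t)$ fails to be transverse to $\mathbf V(\rho_t)$. The count of signed crossings of $E^{wu}(\rho_t)$ with the vertical Maslov cycle along $\gamma$ is the winding number in Definition \ref{defi: maslov index periodic orbit}. Matching the sign convention so that each crossing contributes $\mathrm{e}^{-\mathrm{i}\pi/2}$ gives the result. Alternatively, I would use the homotopy invariance of the Maslov index of the loop $t\mapsto E^{wu}(\rho_t)$ in the Lagrangian Grassmannian, together with the relation $\operatorname{Mp}$ monodromy $=\mathrm{e}^{-\mathrm{i}\pi\mu/2}$ for the standard generator, which can be checked on the rotation loop $\mathrm{e}^{sJ}$.

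The main obstacle is Step 3's normalization: proving the monodromy is a fourth root of unity determined by the Lagrangian loop of $E^{wu}$ alone, not by the full symplectic loop, and fixing signs. I would handle this by homotoping the loop $\mathcal S(t)$ within loops having fixed image Lagrangian $E^{wu}$. The stabilizer of a Lagrangian deformation-retracts onto $\mathrm{GL}^+$-type factors, which lift trivially. This reduces the problem to the model loop, where the formula \eqref{met zworski} can be computed directly.
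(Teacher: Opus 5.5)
There is a genuine gap at the step that carries the whole content of the lemma: showing that the monodromy of the lift equals $\mathrm{e}^{-\mathrm{i}\frac\pi2\mathfrak m_\gamma}$. You never establish it, and the mechanism you propose is not available.

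\textbf{Your primary route (crossings).} Formula \eqref{met zworski} carries the prefactor $|\det A|^{-1/2}$ and is only defined on $\{\det A\neq0\}$. Section~\ref{sec: metaplectic} contains no rule for continuing the lift through $\{\det A=0\}$. Continuity of the real function $\det A$ cannot tell you whether the phase jumps by $+\frac\pi2$ or $-\frac\pi2$ at a crossing, and that direction is exactly what matters:
\begin{itemize}
\item Since $\mathcal M_{\ell_0}(T_\gamma)$ and $\mathcal M_{\ell_0}(0)$ lie over the same matrix in a double cover, the monodromy is automatically $\pm1$. So neither ``fourth roots of unity'' nor sign conventions are at stake.
\item The lemma is therefore the parity statement that the lift closes up iff $\mathfrak m_\gamma/2$ is even.
\item Two crossings in the same direction give $-1$; two in opposite directions give $+1$.
\end{itemize}
To make this route work you would have to prove that each jump is governed by the signature of the crossing form. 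You would also have to handle non-regular crossings, where $E^{wu}\cap\mathbf V$ has dimension $>1$ or the form is degenerate. Saying ``matching the sign convention'' does not do this.

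\textbf{Your fallback (homotopy plus a model loop).} It has the same hole at its normalization step. The map $\mathcal A\mapsto(A,CA^{-1},A^{-1}B)$ identifies $\{\mathcal A\in\mathrm{Sp}(2d,\mathbf R):\det A\neq0\}$ with $\mathrm{GL}(d,\mathbf R)$ times two copies of the symmetric matrices. This set retracts onto $\{\mathrm{diag}(O,O):O\in\mathrm{O}(d)\}$, where the unitary determinant is constant, so every loop in it is trivial in $\pi_1(\mathrm{Sp}(2d,\mathbf R))$. Consequently any model loop that detects the nontrivial monodromy must cross $\{\det A=0\}$, and cannot be computed ``directly'' from \eqref{met zworski}. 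Moreover $s\mapsto\mathrm{e}^{sJ}$ is $d$ times the generator, so for even $d$ its lift closes and it detects nothing. The skeleton would close if you invoked two facts: that $\operatorname{Mp}_h(\mathbf R,2d)$ is the connected double cover, and that the Maslov index of $t\mapsto\mathcal S(t)\,\mathrm{span}(e_i)$ is $\pm2$ times the class of $\mathcal S$ in $\pi_1(\mathrm{Sp})\cong\mathbf Z$. Your proposal establishes neither.

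\textbf{The missing idea: compute in a representation without caustics.} This is what the paper does. After replacing the chart loop by $\mathcal S(t)$ (your Step 2), the lift is $\mathtt B_h^*\nu_h(\overline{\mathcal S(t)_c})\mathtt B_h$. Its kernel carries the prefactor $(\det\overline{P_t})^{-1/2}$, where $P_t$ is the upper-left block of the complexified matrix and is invertible for every symplectic matrix (Lemma~\ref{lem: matrices P Q}). The argument then runs:
\begin{itemize}
\item The continuous lift is just a continuous branch of this square root, so the monodromy is $(\det\overline{P_0})^{1/2}/(\det\overline{P_{T_\gamma}})^{1/2}$ (Lemma~\ref{lem: lifting loops}).
\item Lemma~\ref{lem: expression winding det Pt} gives $\deg\det P_t=\deg\det(A_t+\mathrm{i}C_t)$, where $\binom{A_t}{C_t}$ is the frame of $E^{wu}$ in $\mathbf H\oplus\mathbf V$.
\item Since $\mathfrak m_\gamma=\deg\det\big((A_t-\mathrm{i}C_t)(A_t+\mathrm{i}C_t)^{-1}\big)=-2\deg\det(A_t+\mathrm{i}C_t)$, this yields $\mathrm{e}^{-\mathrm{i}\frac\pi2\mathfrak m_\gamma}$ with no crossing analysis at all.
\end{itemize}
This also disposes of your stated ``main obstacle'': $\deg\det(A_t+\mathrm{i}C_t)$ involves only the first $d$ columns and is unchanged by $\mathrm{GL}^+(d,\mathbf R)$-reframings.

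Your Step 2 reduction from the chart loop to $\mathcal S(t)$ is fine in substance. One correction: the lower block-triangular group with $\det P>0$ is not contractible, since it retracts onto $\mathrm{SO}(d)$. What you actually use, correctly, is that $\mu_h^+$ is a continuous section over it, so loops in it lift to closed loops.
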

\begin{proof}By Lemma \ref{lem: expression winding det Pt} and Remark \ref{rem: maslov is invariant under}, the Maslov index of the loop of matrices from Definition \ref{defi: tilde A (t)} coincides with that of the path $\mathcal S(t)=\mathcal B_{\ell_k(t)}\mathcal A_{\ell_k(t)}$ from Lemma \ref{lem: Bk Ak inde chart}. Writing the complexified matrix $\mathcal S(t)_c$ as
\[\mathcal S(t)_c=\begin{pmatrix}
P_t & Q_t \\ \overline{Q_t} & \overline{P_t}
\end{pmatrix},\]
Lemma \ref{lem: lifting loops} yields 
\[\mathcal M_{\ell_0}(T_\gamma)=\frac{(\det \overline{P_0})^{1/2}}{(\det \overline{P_{T_\gamma}})^{1/2}}\mathcal M_{\ell_0}(0).\]
Here one has chosen a continuous determination of $t\mapsto \det P_t$ along the path. The winding of $\det P_t$ along the orbit gives the Maslov index $\mathfrak m_\gamma$, see \S\ref{subsec: elucidating Maslov}.
\end{proof}

Let $\chi_{\rho_0}\in S_\nu(T^*M)$ be supported in $B(\rho_0,h^\nu)$. Our goal is to compute the trace of the operator
\[U(t)\mathrm{Op}_h^M(\chi_{\rho_0}),\]
for $t$ close to $T_\gamma$. In the notation of Proposition \ref{prop: cutoffs along the orbit}, define short-time propagators in coordinates:
\begin{equation} \label{eq: defi Vk(t)} V_k(t)=(f_{\ell_{k+1}})_*\chi_{\ell_{k+1}}\operatorname{Op}_h^{M}(\chi_{\rho_{k+1}})U(t)\operatorname{Op}_h^{M}(\chi_{\rho_k})(f_{\ell_k})^*\chi_{\ell_k}, \quad \text{for } |t|\le 1\end{equation}
Then, by Proposition \ref{prop: cutoffs along the orbit}, we have for $|s|\le 1$:
\[U(T_\gamma+s)\operatorname{Op}_h^{M}(\chi_{\rho_0})=(f_{\ell_{0}})^*V_{T_\gamma}(s)V_{T_\gamma-1}(1)\cdots V_0(1)(f_{\ell_0})_*+\mathcal O(h^\infty)_{\rm tr}.\]
The short time propagators $V_k(t)$ can be expressed as $h$-FIOs with phase functions of the form $\varphi(x,\eta)-\langle y,\eta\rangle$, but may not compose well due to the possible formation of caustics. To overcome this issue, we conjugate the propagators by the metaplectic operators introduced above. Note that the idea of expressing the quantum propagator in stable-unstable coordinates to reveal a simple geometrical interpretation of the Maslov indices already appeared in the physics literature \cite{creagh1990,robbins1991}. We set
\begin{equation} \label{eq: defi Wk(t)}W_k(t)=\mathcal M_{\ell_{k+1}}(k+1)^*V_k(t)\mathcal M_{\ell_k}(k),\qquad \text{for $k\le T_\gamma$}.\end{equation}
Then,
\[U(T_\gamma+s)\mathrm{Op}_h^M(\chi_{\rho_0})=(f_{\ell_0})^* \mathcal M_{\ell_{0}}(T_\gamma) W_{T_\gamma}(s)W_{T_\gamma-1}(1)\cdots W_0(1)\mathcal M_{\ell_0}(0)^*(f_{\ell_0})_*+\mathcal O_{\rm tr}(h^\infty).\]
By Lemma \ref{lem: the winding number is maslov}, we have $\mathcal M_{\ell_0}(T_\gamma)=\mathrm{e}^{-\mathrm{i}\frac{\pi}{2}\mathfrak m_\gamma}\mathcal M_{\ell_0}(0)$, thus by taking the traces in the expression above, we obtain
\begin{equation} \label{eq: by taking the traces}\operatorname{Tr} U(T_\gamma+s)\mathrm{Op}_h^M(\chi_{\rho_0})=\mathrm{e}^{-\mathrm{i}\frac{\pi}{2}\mathfrak m_\gamma} \operatorname{Tr}W_{T_\gamma}(s)W_{T_\gamma-1}(1)\cdots W_0(1)+\mathcal O(h^\infty).\end{equation}
\subsection{FIO representation of the short time twisted propagator} The next proposition, which gives a short-time FIO representation for the propagator $W_0(t)$, is obtained by combining Proposition \ref{prop: short time wkb}, the composition formula for FIOs from Proposition \ref{thm: composition FIO} and the results of \S \ref{subsec: gen func twisted flow} to give the interpretation of the phase.
\begin{prop}[Short time propagator in adapted coordinates] \label{thm: short time propagator twisted} Let $\chi_{\rho_0}\in S_\nu(T^*M)$ be as above, and let $\sigma:=\max(\beta,\nu)$. Then, for $|t|\le 1$:
\[W_0(t)=\mathcal I[\varphi_{0,1},a_h]+\mathcal O_N(h^{N(1-2\sigma)-C})_{L^2/\rm tr},\]
where:
\begin{itemize}
\item $\varphi_{0,1}$ is a generating function for the map 
\[\Phi^t_{0,1}:=\mathcal A_{\ell_1}(1)^{-1}\circ \widetilde{f_{\ell_1}}\circ\Phi^t_\delta\circ \widetilde{f_{\ell_0}}^{-1}\circ \mathcal A_{\ell_0}(0).\]
\item If $\Phi^t_{0,1}(x_0,\xi_0)=(x_1,\xi_1)$, then the phase $\varphi(t,x_1,\xi_0)-\langle x_0,\xi_0\rangle$ is given by
\[\varphi(t,x_1,\xi_0)-\langle x_0,\xi_0\rangle=\widehat S(\gamma) - F(\mathcal A_{\ell_1}(1),(x_1,\xi_1))+F(\mathcal A_{\ell_0}(0),(x_0,\xi_0)),\]
where $\widehat S(\gamma)$ is the action (defined in \eqref{eq: def action bis}) of the perturbed orbit $\{\Phi^s_\delta(\rho)\}_{s\in[0,t]}\subset T^*M$, with $\rho=\widetilde f_{\ell_0}^{-1}(\mathcal A_{\ell_0}(0)(x_0,\xi_0))$, and $F$ is the cocycle from Definition \ref{defi: cocycle action}.
\item The symbol $a_h$ is compactly supported, and admits a finite expansion $a_h=\sum_{j<N} h^j a_j$, with 
\[a_0=\chi_{\rho_1}\big(x_1,\nabla_{x_1} \varphi_{0,1}(t,x_1,\xi_0)\big)|\partial_{x_1\xi_0}^2\varphi_{0,1}(t,x_1,\xi_0)|^{\frac 12}\chi_{\rho_0}(x_0,\xi_0).\]
Here, to simplify notation, we have written $\chi_{\rho_i}$ instead of $\chi_{\rho_i}\circ \widetilde f_{\ell_i}^{-1}\circ \mathcal A_{\ell_i}(i)$ for $i\in \{0,1\}$.
\item The symbols are estimated by
\[\|a_j\|_{C^\ell_{t,x_1,x_0,\xi_0}}\lesssim h^{-\sigma(\ell+2j)}.\] \end{itemize}\end{prop}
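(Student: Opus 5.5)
The approach is to factor $W_0(t)$ into a short-time propagator in the chart, conjugated by two metaplectic operators and multiplied by two pseudodifferential cutoffs. Each factor is represented as an FIO and the factors are composed with Proposition \ref{thm: composition FIO}.

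By \eqref{eq: defi Vk(t)} and \eqref{eq: defi Wk(t)}, and up to inserting harmless cutoffs $\chi'$ in the charts (using Egorov, Proposition \ref{prop: egorov exotic}),
\[W_0(t)=\mathcal M_{\ell_1}(1)^*\,(f_{\ell_1})_*\chi_{\ell_1}\operatorname{Op}_h^M(\chi_{\rho_1})\,U(t)\,\operatorname{Op}_h^M(\chi_{\rho_0})(f_{\ell_0})^*\chi_{\ell_0}\,\mathcal M_{\ell_0}(0).\]
The chart transition $\widetilde f_{\ell_1}\circ \widetilde f_{\ell_0}^{-1}$ is the symplectic lift of a diffeomorphism. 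Its quantization $(f_{\ell_1})_*(f_{\ell_0})^*$ is therefore an FIO with phase $\langle g(y),\eta\rangle$-type and action preserved by Lemma \ref{lem: symplectic lift action}. The middle factor is then handled by Proposition \ref{prop: short time wkb}, which applies to the symbol of $P_{h,\delta}$ in charts of the form \eqref{eq: assumptions p WKB}. This gives $\mathcal I[\varphi_\delta,a_h]$, with $\varphi_\delta(0)=\langle x,\eta\rangle$ and $a_0=|\partial^2_{x\eta}\varphi_\delta|^{1/2}$, up to $\mathcal O(h^{N(1-2\beta)-C})$. Composing with the cutoffs via the two particular cases after Proposition \ref{thm: composition FIO} inserts $\chi_{\rho_1}(x_1,\nabla_{x_1}\varphi)$ and $\chi_{\rho_0}(x_0,\xi_0)$ into $a_0$. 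It also raises the symbol class to $h^{-\sigma(\ell+2j)}$ with $\sigma=\max(\beta,\nu)$.

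Next comes the metaplectic conjugation. By Egorov for metaplectic operators (Section \ref{sec: metaplectic}), $\mathcal M^*\,\mathcal I[\varphi,a]\,\mathcal M$ is an FIO associated with $\mathcal A_{\ell_1}(1)^{-1}\circ\kappa\circ\mathcal A_{\ell_0}(0)$, which is exactly $\Phi^t_{0,1}$. To represent it with a single generating function $\varphi_{0,1}(t,x_1,\xi_0)$, I will use Proposition \ref{prop: projectability}: in adapted coordinates the graph of $\Phi^t_{0,1}$ projects onto $(x_1,\xi_0)$, for $t$ near $1$ and $\delta h^{-2\beta}\ll1$. For $t$ near $0$ the same holds, because $\mathcal A_{\ell_1}(1)^{-1}\mathrm d\widetilde g\,\mathcal A_{\ell_0}(0)$ has invertible upper-left block by Lemma \ref{lem: matrice differentielle flot} with $\beta_t$ bounded.

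Concretely, I will write $\mathcal M_{\ell_0}(0)$ and $\mathcal M_{\ell_1}(1)^*$ as FIOs whose quadratic phases are given by \eqref{eq: gen func for symplectic map}. When the upper-left block is not invertible, I will first factor each into a product of metaplectic operators with invertible blocks. I then compose three times using Proposition \ref{thm: composition FIO}, whose hypotheses follow from Lemmas \ref{lem: space derivatives generating function}--\ref{lem: spacetime derivatives generating function} and Proposition \ref{prop: intermediate points}. Lemma \ref{lem: composition generating functions} guarantees that the resulting phase is a generating function for $\Phi^t_{0,1}$.

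The leading amplitude is $a_0^{(2)}a_0^{(1)}/|\det\nabla_z^2\phi|^{1/2}$, from \eqref{eq: leading part symbol composition}. I will identify it with $|\partial^2_{x_1\xi_0}\varphi_{0,1}|^{1/2}$ through the standard identity for composed generating functions, namely that the determinant of the Hessian of $\phi$ at $z_c$ factors the mixed Hessians. The signature of that Hessian is $0$, as in the proof of Proposition \ref{thm: composition FIO}. The sign ambiguity of the metaplectic lift is absorbed into $\mathcal M$ itself, since Definition \ref{def: metaplectic Mk} already fixes it.

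The phase interpretation follows from Corollary \ref{cor: generating function as action plus difference} and Proposition \ref{prop: properties cocycle}, applied to the two different matrices at the two ends. The composed phase minus $\langle x_0,\xi_0\rangle$ equals $\widehat S$ of the orbit in the chart. By Proposition \ref{prop: action of image}, passing to adapted coordinates adds the cocycle terms $F(\mathcal A_{\ell_0}(0),(x_0,\xi_0))-F(\mathcal A_{\ell_1}(1),(x_1,\xi_1))$. Lemma \ref{lem: symplectic lift action} then makes the chart action equal to the action on $T^*M$.

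I expect the main obstacle to be the bookkeeping in the metaplectic step. The cocycle constants must match exactly, not just up to an additive constant. I would pin them down at $t=0$ and $\delta=0$: there $W_0(0)$ is essentially $\mathcal M^*\mathcal M$ composed with the chart change, so the phase is forced and no constant is free. The Hamilton--Jacobi equation and Proposition \ref{prop: generating function is action} then propagate this normalization in $t$.
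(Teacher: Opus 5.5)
There is a genuine gap in the metaplectic step. You propose to write $\mathcal M_{\ell_0}(0)$ and $\mathcal M_{\ell_1}(1)^*$ individually as FIOs, factoring them if needed, and compose them with the chart propagator via Proposition~\ref{thm: composition FIO}. That proposition needs every intermediate composite graph to be projectable, plus the smallness condition in its hypothesis (1), which is also what forces the Hessian signature to vanish.

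Whatever order you compose in, some intermediate composite pairs a lone adapted frame $\mathcal A_{\ell_i}(\cdot)^{\pm1}$ with a map not adapted to it. For instance, composing from the right produces $\widetilde f_{\ell_1}\circ\Phi^t_\delta\circ\widetilde f_{\ell_0}^{-1}\circ\mathcal A_{\ell_0}(0)=\mathcal A_{\ell_1}(1)\circ\Phi^t_{0,1}$. By Lemma~\ref{lem: matrice differentielle flot} (at $\delta=0$), the upper-left block of its differential is the upper-left block of $\mathcal A_{\ell_1}(1)$ times an invertible matrix. So it is invertible iff $E^{wu}(\rho_1)=\mathrm{Span}(u_1(1),\ldots,u_d(1))$ is transverse to the vertical fibre. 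By the remark in \S\ref{subsec: elucidating Maslov}, this transversality must fail somewhere along every orbit with $\mathfrak m_\gamma\neq 0$. It is exactly the caustic that the metaplectic conjugation was introduced to remove.

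Factoring each $\mathcal M$ into pieces with invertible upper-left blocks does not help, since the obstruction lies in the composites, not the factors. Compositions far from the identity also produce Hessian signature factors $\mathrm{e}^{\mathrm{i}\frac{\pi}{4}\operatorname{sgn}}$ and $\mu_h^\pm$ sign choices, whose cancellation between the two ends you would still have to prove. Your appeal to signature $0$ ``as in the proof of Proposition~\ref{thm: composition FIO}'' is only valid under its hypothesis (1).

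The missing idea is never to compose with a lone metaplectic operator. Instead, conjugate exactly and pair the $\mathcal M$'s so that every factor is close to the identity.
\begin{itemize}
\item The paper first conjugates the Hamiltonian by $\mathcal M_{\ell_0}(0)$ using the exact Egorov property \eqref{eq: egorov metaplectique}. Proposition~\ref{prop: short time wkb} then applies directly to $p\circ\widetilde f_{\ell_0}^{-1}\circ\mathcal A_{\ell_0}(0)$, whose short-time flow has projectable graph; this gives $\mathbf W_0(t)$.
\item It then inserts $\mathcal M_{\ell_0}(1)\mathcal M_{\ell_0}(1)^*$ and writes $W_0(t)$, up to cutoffs, as $[\mathcal M_{\ell_1}(1)^*(f_{\ell_1})_*(f_{\ell_0})^*\mathcal M_{\ell_0}(1)]\,[\mathcal M_{\ell_0}(1)^*\mathcal M_{\ell_0}(0)]\,\mathbf W_0(t)$.
\item The middle factor is $\mu_h^+(\mathcal A_{\ell_0}(1)^{-1}\mathcal A_{\ell_0}(0))$, with the sign forced by continuity of the lift.
\item The first factor equals $\mathcal M_{\ell_0}(1)^*\mu_h^+(\mathrm d\tilde g(\rho_1)^{-1})(g^{-1})^*\mathcal M_{\ell_0}(1)$ by Lemma~\ref{lem: adjoint of mu h (A)}. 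Proposition~\ref{prop: parametrix W2(t)} realizes it as the time-one solution of an evolution starting at the identity, hence a near-identity FIO.
\end{itemize}
Every composition then meets the hypotheses of Proposition~\ref{thm: composition FIO}. The additive constants come out exactly from Proposition~\ref{prop: properties cocycle}(2), Lemma~\ref{lem: FIO rep mu h + circ tilde g}, and the cocycle identity with $\mathcal A_{\ell_1}(1)=\mathrm d\tilde g(\rho_1)\mathcal A_{\ell_0}(1)$.

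Your proposed normalization at $t=0$ would carry no information anyway. Since $\chi_{\rho_0}$ and $\chi_{\rho_1}$ are localized near distinct points, $W_0(0)=\mathcal O(h^\infty)$.
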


\subsection*{Strategy of the proof} The proof proceeds in three steps.
\begin{itemize}
\item Compute $\mathbf W_0(t)=\mathcal M_{\ell_0}(0)^* \chi_{\ell_0}(f_{\ell_0})_*U(t)\mathrm{Op}_h^M(\chi_\rho)f_{\ell_0}^* \chi_{\ell_0} \mathcal M_{\ell_0}(0)$.
\item Compute $\mathbf W_1(t)=\mathcal M_{\ell_0}(1)^* (f_{\ell_0})_*U(t)f_{\ell_0}^* \mathcal M_{\ell_0}(0)$.
\item Compute $\mathbf W_2(t)=\mathcal M_{\ell_1}(1)^* (f_{\ell_1})_*U(t)f_{\ell_0}^* \mathcal M_{\ell_0}(0)$.\end{itemize}

\subsubsection{Computation of $\mathbf W_0$} By the Egorov Theorem \ref{prop: egorov bis}, $\mathbf W_0(t)$ satisfies the evolution equation
\[hD_t\mathbf W_0(t)=-\mathrm{Op}_h^{\rm w}\big((\chi_{\ell_0}^2 p)\circ \widetilde f_{\ell_0}^{-1}\circ \mathcal A_{\ell_0}(0)+ r\big)\mathbf W_0(t),\]
where $r$ satisfies $\|r\|_{C^\ell}\lesssim h^2h^{-\beta(\ell+2)}$. By Proposition \ref{prop: short time wkb}, we obtain
\[\mathbf W_0(t)=\mathcal I[\varphi_0,a_h^{(N)}]+\mathcal O(h^{N(1-2\beta)-C})_{L^2/\rm tr},\]
where:
\begin{itemize}
\item $\varphi_{0}$ is a local generating function for the Hamiltonian flow generated by $p_\delta\circ \widetilde f_{\ell_0}^{-1}\circ \mathcal A_{\ell_0}(0)$ .
\item $a_h^{(N)}=\sum_{j<N} h^j a_j$, with $a_0(t,x,y,\eta)=|\partial_{x\eta}^2 \varphi(t,x,\eta)|^{\frac 12}{\chi_\rho}(y,\eta)$, where $\chi_\rho$ is viewed as a function in the chart. The symbols are estimated by $\|a_j\|_{C^{\ell}_{t,x,\eta}}\lesssim h^{-\beta(\ell+2j)}$.
\item Whenever $(x_0,\xi_0)$ is mapped to $(x_1,\xi_1)$ by the flow $\mathcal A_{\ell_0}(0)^{-1}\circ\widetilde f_{\ell_0}\circ \Phi^t_\delta\circ \widetilde f_{\ell_0}^{-1}\circ \mathcal A_{\ell_0}(0)$, one has
\[\varphi_0(t,x_1,\xi_0)-\langle x_0,\xi_0\rangle=\widehat S(\gamma)-F(\mathcal A_{\ell_0}(0),(x_1,\xi_1))+
F(\mathcal A_{\ell_0}(0),(x_0,\xi_0)).\]
Here $\gamma$ denotes the corresponding piece of orbit on $T^*M$.
\end{itemize}

\subsubsection{Computation of $\mathbf W_1$}
The operator $\mathbf W_1(t)$ is obtained by composing $\mathcal M_{\ell_0}(1)^*\mathcal M_{\ell_0}(0)$ with $\mathbf W_0(t)$. By definition, we have
\[\mathcal M_{\ell_0}(1)^*\mathcal M_{\ell_0}(0)=\mu_h^+(\mathcal A_{\ell_0}(1)^{-1}\mathcal A_{\ell_0}(0))=\mathcal I[\psi,|\partial_{x,\eta}^2 \psi|^{\frac 12}],\] where $\mu_h^+(\bullet)$ is given by formula \eqref{met zworski}, and $\psi$ is the generating function from \eqref{eq: gen func for symplectic map}. 
We now compute the composition $\mathbf W_1(t)=\mathcal M_{\ell_0}(1)^*\mathcal M_{\ell_0}(0) \mathbf W_0(t)$. By Theorem \ref{thm: composition FIO} for the composition of FIOs, we find:
\begin{equation} \label{eq: parametrix W1(t)} \mathbf W_1(t)=\mathcal I[\psi,|\partial_{x,\eta}^2\psi|^{\frac 12}]\circ \mathcal I[\varphi_0,a_h^{(N)}]=\mathcal I[\varphi_1,b_h^{(N)}]+\mathcal O(h^{N(1-2\beta)-C})_{L^2/\rm tr},\end{equation}
where:
\begin{itemize}
\item $\varphi_1$ is the generating function for $\mathcal A_{\ell_0}(1)^{-1}\circ (f_{\ell_0})_*\circ \Phi^t_\delta \circ (f_{\ell_0})^*\circ \mathcal A_{\ell_0}(0)$ obtained by composing $\psi$ with $\varphi_0$ as in Lemma \ref{lem: composition generating functions}, that is 
\[\varphi_1(t,x_2,\xi_0)=\psi(x_2,\xi_1)-\langle x_1,\xi_1\rangle+\varphi_0(x_1,\xi_0)\big|_{(x_1,\xi_1)=(x_1(x_2,\xi_0),\xi_1(x_2,\xi_0))}.\]
\item One has $b_h^{(N)}=\sum_{j<N}h^jb_j$ with
\[b_0(t,x_2,\xi_0,x_0)=|\partial_{x_2\xi_0} \varphi_1|^{\frac 12} \chi_\rho(x_0,\xi_0).\] 
Moreover, 
\[\|b_j\|_{C^\ell}\lesssim h^{-\beta(\ell+2j)}.\]
\item By Proposition \ref{prop: properties cocycle}, we see that whenever $(\bullet,\xi_0)$ is mapped to $(x_2,\bullet)$ by $\mathcal A_{\ell_0}(1)^{-1}\circ\widetilde f_{\ell_0}\circ \Phi^t\circ \widetilde f_{\ell_0}^{-1}\circ \mathcal A_{\ell_0}(0)$, one has \[\varphi_1(t,x_2,\xi_0)-\langle x_0,\xi_0\rangle =\widehat S(\gamma)-F(\mathcal A_{\ell_0}(1),(x_2,\xi_2))+F(\mathcal A_{\ell_0}(0),(x_0,\xi_0)),\]
where $\gamma=\{\Phi^s_\delta(\rho)\}_{s\in[0,t]}\subset T^*M$, with $\rho=\widetilde f_{\ell_0}^{-1}(\mathcal A_{\ell_0}(0)(x_0,\xi_0))$.
\end{itemize} 

\subsubsection{Computation of $\mathbf W_2(t)$} 

We need to compose $\mathcal M_{\ell_1}(1)^* (f_{\ell_1})_*(f_{\ell_0})^* \mathcal M_{\ell_0}(1)$ with $\mathbf W_1(t)$. The first step is to find an oscillatory integral representation of $\mathcal M_{\ell_1}(1)^* (f_{\ell_1})_*(f_{\ell_0})^* \mathcal M_{\ell_0}(1)$. This operator quantizes the symplectic transformation $\mathcal A_{\ell_1}(1)^{-1}\widetilde {f_{\ell_1}}\widetilde{f_{\ell_0}}^{-1}\mathcal A_{\ell_0}(1)$. Let $g:=f_{\ell_1}\circ f_{\ell_0}^{-1}$. By definition, we have $\mathcal A_{\ell_1}(1)=\mathrm{d}\tilde g(\rho_1)\circ \mathcal A_{\ell_0}(1)$ and $\mathcal M_{\ell_1}(1)=\mu_h^+\left( \mathrm{d}\tilde g(\rho_1)\right)\mathcal M_{\ell_0}(1)$, where $\mu_h^+$ is defined in \eqref{met zworski}. By Lemma \ref{lem: adjoint of mu h (A)}, we find
\[\mathcal M_{\ell_1}(1)^*=\mathcal M_{\ell_0}^*(1)\mu_h^+(\mathrm{d}\tilde g(\rho_1)^{-1}),\]
and one eventually finds
\[\mathcal M_{\ell_1}(1)^* (f_{\ell_1})_*(f_{\ell_0})^* \mathcal M_{\ell_0}(1)=\mathcal M_{\ell_0}(1)^* \mu_h^+(\mathrm{d}\tilde g(\rho_1)^{-1}) (g^{-1})^* \mathcal M_{\ell_0}(1).\]
\begin{lem}\label{lem: FIO rep mu h + circ tilde g} One has
\[\mu_h^+(\mathrm{d}\tilde g(\rho_1)^{-1}) (g^{-1})^*=\mathcal I[\varphi_2,\alpha_h],\]
where $\varphi_2$ is the generating function of the map $\mathrm{d}\tilde g(\rho_1)^{-1}\circ \mathrm{d}\tilde g$ described in the proof below, and $\alpha_h=|\partial_{x,\eta}^2\varphi_2|^{1/2}+\mathcal O(h)$. Moreover, whenever $(y,\eta)$ is mapped to $(x,\xi)$ by $\mathrm{d}\tilde g(\rho_1)^{-1}\circ \mathrm{d}\tilde g$, we have
\begin{equation} \label{eq: express gen fun as action for prelemma} \varphi_2(x,\eta)-\langle y,\eta\rangle=F(\mathrm{d}\tilde g(\rho_1)^{-1}, \tilde g(y,\eta)).\end{equation}
\end{lem}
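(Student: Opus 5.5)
We will write both factors as explicit oscillatory integrals and compose them by hand; the only phase involved is quadratic plus a nonlinear change of variables, so no genuine stationary phase expansion is needed beyond a linear one. First, $(g^{-1})^*u(x)=u(g^{-1}(x))$ is the quantization of the symplectic lift $\tilde g(y,\eta)=(g(y),\mathrm{d}g(y)^{-\top}\eta)$. It can be written as an FIO with the generating function $\langle g^{-1}(x),\eta\rangle$ in the $(x,\eta)$ variables: Fourier inversion gives
\[
(g^{-1})^*u(x)=\frac{1}{(2\pi h)^d}\int \mathrm{e}^{\frac{\mathrm{i}}{h}(\langle g^{-1}(x),\eta\rangle-\langle y,\eta\rangle)}u(y)\,\mathrm{d}y\,\mathrm{d}\eta .
\]
For a more convenient amplitude we may instead use Kuranishi's trick, but this is not essential. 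On the other side, $\mathrm{d}\tilde g(\rho_1)^{-1}$ has block-lower-triangular form
\[
\begin{pmatrix} A & 0\\ C & A^{-\top}\end{pmatrix},\qquad A=\mathrm{d}g(\rho_1)^{-1}.
\]
Its upper-left block is therefore invertible, and by \eqref{met zworski} and Proposition \ref{prop: properties cocycle}(2), $\mu_h^+(\mathrm{d}\tilde g(\rho_1)^{-1})$ acts as $u\mapsto|\det A|^{-1/2}\mathrm{e}^{\frac{\mathrm{i}}{2h}\langle CA^{-1}x,x\rangle}u(A^{-1}x)$. The sign ambiguity is fixed by $\mu_h^+$, and $\det A>0$ since $M$ is orientable. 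This is a multiplication by a quadratic phase composed with a linear change of variables.

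The composition is thus exactly
\[
u\mapsto|\det A|^{-1/2}\mathrm{e}^{\frac{\mathrm{i}}{2h}\langle CA^{-1}x,x\rangle}u\big(g^{-1}(A^{-1}x)\big).
\]
We write it as an FIO by inserting Fourier inversion in $u$. The phase is then $\varphi_2(x,\eta)=\frac12\langle CA^{-1}x,x\rangle+\langle g^{-1}(A^{-1}x),\eta\rangle$, and the amplitude is $|\det A|^{-1/2}$. Differentiating, $\nabla_\eta\varphi_2=g^{-1}(A^{-1}x)$ and $\nabla_x\varphi_2=CA^{-1}x+A^{-\top}\mathrm{d}g^{-1}(A^{-1}x)^{\top}\eta$. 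This is precisely the graph of $\mathrm{d}\tilde g(\rho_1)^{-1}\circ\tilde g$, so $\varphi_2$ is a generating function in the sense of Definition \ref{defi: gen func}. (In the statement $\mathrm{d}\tilde g$ should be read as $\tilde g$.)

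The amplitude is not exactly $|\partial^2_{x\eta}\varphi_2|^{1/2}$, since $\partial^2_{x\eta}\varphi_2=\mathrm{d}(g^{-1}\circ A^{-1})(x)$ varies with $x$. To convert, we replace the amplitude $c(x)=|\det A|^{-1/2}$, a function of $x$ only, by one of the form $|\partial^2_{x\eta}\varphi_2|^{1/2}+\mathcal O(h)$. The standard route is to rewrite $u(y)$ through the change of variables and do a Kuranishi-type manipulation: the difference between $c(x)$ and $|\det\mathrm{d}(g^{-1}A^{-1})(x)|^{1/2}$ is absorbed by changing the amplitude variable from $x$ to $(x,y,\eta)$. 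We then use that on the critical set $y=g^{-1}(A^{-1}x)$, and trade $(x,y)$-dependence for $h$-corrections via the usual reduction $a(x,y,\eta)\mapsto a(x,\nabla_\eta\varphi_2,\eta)+h(\cdots)$ by integration by parts in $\eta$.

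This amplitude bookkeeping is the main delicate point. It is where the claimed leading term $|\partial^2_{x\eta}\varphi_2|^{1/2}$ (the half-density normalization) comes out: writing $u(y)\,\mathrm{d}y$ with Jacobian $|\det\mathrm{d}g^{-1}|$ combined with $|\det A|^{-1/2}$ gives exactly the half-density factor, up to $\mathcal O(h)$ terms. Only the leading term is needed, since the symbols involved are $h$-independent and smooth.

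Finally, for the identity \eqref{eq: express gen fun as action for prelemma}, let $(x,\xi)=\mathrm{d}\tilde g(\rho_1)^{-1}(\tilde g(y,\eta))$ and set $z=g(y)=A^{-1}x$. Then $\varphi_2(x,\eta)-\langle y,\eta\rangle=\frac12\langle CA^{-1}x,x\rangle$. Comparing with Definition \ref{defi: cocycle action} for the matrix $\mathrm{d}\tilde g(\rho_1)^{-1}$, whose $B$-block is $0$, we get
\[
F\big(\mathrm{d}\tilde g(\rho_1)^{-1},(z,\zeta)\big)=\tfrac12\langle Cz,Az\rangle=\tfrac12\langle CA^{-1}x,x\rangle
\]
using $x=Az$. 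Equivalently, this is Lemma \ref{lem: symplectic lift action} (lifts preserve action, contributing no term) combined with Proposition \ref{prop: properties cocycle}(2). This gives the stated formula.
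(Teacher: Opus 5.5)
Your exact formula for the composite, $u\mapsto|\det A|^{-1/2}\mathrm{e}^{\frac{\mathrm{i}}{2h}\langle CA^{-1}x,x\rangle}u(g^{-1}(A^{-1}x))$, is correct. So are the phase $\varphi_2$, the check that it generates $\mathrm{d}\tilde g(\rho_1)^{-1}\circ\tilde g$, and the verification of \eqref{eq: express gen fun as action for prelemma}. This is more direct than the paper's route, which composes two FIOs with Proposition \ref{thm: composition FIO} and then invokes Proposition \ref{prop: generating function is action}, Lemma \ref{lem: symplectic lift action} and Proposition \ref{prop: properties cocycle}.

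The gap is the amplitude step, which cannot work as you describe it. For $\mathcal I[\varphi,a]$, the value $a(x,\nabla_\eta\varphi(x,\eta),\eta)$ modulo $\mathcal O(h)$ is determined by the operator. Integration by parts in $\eta$ only removes factors vanishing on $y=\nabla_\eta\varphi$. For amplitudes depending on $(x,\eta)$ alone, the $h$-Fourier transform of the kernel in $y$ returns $\mathrm{e}^{\frac{\mathrm{i}}{h}\varphi(x,\eta)}a(x,\eta)$ exactly. Your operator is exactly $\mathcal I[\varphi_2,|\det A|^{-1/2}]$, whereas $|\partial^2_{x\eta}\varphi_2|^{1/2}=|\det A|^{-1/2}|\det\mathrm{d}g(y)|^{-1/2}$ at $y=g^{-1}(A^{-1}x)$. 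These differ at order one unless $|\det\mathrm{d}g|=1$, so no Kuranishi-type manipulation can bridge them.

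Your heuristic about the Jacobian of $u(y)\,\mathrm{d}y$ fails for the same reason. Substituting $z=g(y)$ in the Fourier inversion formula gains a factor $|\det\mathrm{d}g(y)|$. Bringing the phase $\langle A^{-1}x-g(y),\zeta\rangle$ back to the form $\varphi_2(x,\eta)-\langle y,\eta\rangle$ requires $\eta=G(x,y)^{\top}\zeta$, where $G(x,y)=\int_0^1\mathrm{d}g(y_s)\,\mathrm{d}s$ and $y_s$ runs along the segment from $y$ to $g^{-1}(A^{-1}x)$. This costs $|\det G|^{-1}$, which equals $|\det\mathrm{d}g(y)|^{-1}$ on the critical set. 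The two factors cancel, and the leading amplitude stays $|\det A|^{-1/2}$.

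The missing input is that the charts are isochore. This is the atlas used to define $\mathrm{Op}_h^M$ in the appendix, and it is also what makes the plain pullbacks $(f_\ell)^*$, $(f_\ell)_*$ unitary on $L^2$. Isochore charts give $|\det\mathrm{d}g|\equiv 1$, hence $|\det A|=1$. Then $|\partial^2_{x\eta}\varphi_2|=|\det A|^{-1}|\det\mathrm{d}g(y)|^{-1}=1$, so your exact formula already yields $\alpha_h\equiv 1=|\partial^2_{x\eta}\varphi_2|^{1/2}$, with no $\mathcal O(h)$ correction and no further argument.

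Without isochore charts, the amplitude claim is false for the function pullback $(g^{-1})^*$; one would need the half-density version $|\det\mathrm{d}g^{-1}|^{1/2}(g^{-1})^*$. With this one-line replacement for your amplitude paragraph, your argument is complete. It gives an exact formula where the paper appeals to the general composition formula \eqref{eq: leading part symbol composition}.
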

\begin{proof} Write $(g^{-1})^*=\mathcal I[\langle g^{-1}(x),\eta\rangle,a_h]$ and $\mu_h^+(\mathrm{d}\tilde g(\rho_1)^{-1})=\mathcal I[\psi,b_h]$, where $\psi$ is the generating function from Proposition \ref{prop: properties cocycle} associated with the symplectic matrix $\mathrm{d}\tilde g(\rho_1)^{-1}$ and $b_h$ is the amplitude from \eqref{met zworski}. Let $\varphi_2$ be the composition of these two generating functions described in Lemma \ref{lem: composition generating functions}. We then compute the composition $\mathcal I[\langle g^{-1}(x),\eta\rangle,a_h]\circ \mathcal I[\psi,b_h]$ using Theorem \ref{thm: composition FIO}. One obtains \eqref{eq: express gen fun as action for prelemma} by combining Proposition \ref{prop: generating function is action}, Lemma \ref{lem: symplectic lift action} and Proposition \ref{prop: properties cocycle} again.
\end{proof}
\begin{prop}\label{prop: parametrix W2(t)} We have a FIO representation
\[\mathcal M_{\ell_1}(1)^* (g^{-1})^*\mathcal M_{\ell_0}(1) u(x)=\mathcal I[\varphi_3,\alpha_h],\]
with $\alpha_h=|\partial_{x,\eta}^2 \varphi_3|^{\frac 12}+\mathcal O(h)$. Whenever $(y,\eta)$ is mapped to $(x,\xi)$ by the map $\mathcal A_{\ell_1}(1)^{-1}\circ \tilde g\circ \mathcal A_{\ell_0}(1)$, one has
\begin{equation} \label{eq: phi 4 as diff cocycle} \varphi_3(x,\eta)-\langle y,\eta\rangle=-F(\mathcal A_{\ell_1}(1),(x,\xi))+F(\mathcal A_{\ell_0}(1),(y,\eta)).\end{equation}\end{prop}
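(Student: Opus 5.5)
We sandwich the FIO of Lemma \ref{lem: FIO rep mu h + circ tilde g} between the two metaplectic factors and compose with Proposition \ref{thm: composition FIO}, then read off the phase with the cocycle identities of Proposition \ref{prop: properties cocycle}. From the computation preceding Lemma \ref{lem: FIO rep mu h + circ tilde g},
\[\mathcal M_{\ell_1}(1)^* (g^{-1})^*\mathcal M_{\ell_0}(1)=\mathcal M_{\ell_0}(1)^*\,\mathcal I[\varphi_2,\alpha_h]\,\mathcal M_{\ell_0}(1).\]
The metaplectic operators $\mathcal M_{\ell_0}(1)$ and $\mathcal M_{\ell_0}(1)^*=\mu_h^+(\mathcal A_{\ell_0}(1)^{-1})$ quantize $\mathcal A_{\ell_0}(1)$ and $\mathcal A_{\ell_0}(1)^{-1}$. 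Each has a representation $\mathcal I[\psi,|\partial^2_{x\eta}\psi|^{1/2}]$, with $\psi$ the quadratic generating function \eqref{eq: gen func for symplectic map}, provided its upper-left block is invertible. If this fails, I would first split $\mathcal A_{\ell_0}(1)$ as a product of two symplectic matrices with invertible upper-left blocks, which is always possible, and compose in two stages.

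The key steps, in order, are as follows.

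First, compose $\mathcal I[\varphi_2,\alpha_h]$ with $\mathcal M_{\ell_0}(1)$ on the right, and then compose the result with $\mathcal M_{\ell_0}(1)^*$ on the left, each time via Proposition \ref{thm: composition FIO}. All phases involved are $h$-independent and smooth: they come from quadratic forms and from the smooth chart change $g$. So we can take $\sigma=0$ there, and the leading amplitude \eqref{eq: leading part symbol composition} reduces to $|\partial^2_{x\eta}\varphi_3|^{1/2}$. To see this, use the standard identity $|\det\partial^2\varphi_a|^{1/2}|\det\partial^2\varphi_b|^{1/2}/|\det\nabla_z^2\phi|^{1/2}=|\det\partial^2\varphi_{3}|^{1/2}$ for composed generating functions. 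It follows by differentiating the composition formula of Lemma \ref{lem: composition generating functions}, and it is the same computation that gives $a_0$ in Proposition \ref{prop: short time wkb}. Lower order terms are $\mathcal O(h)$.

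Second, identify the phase. Write $\kappa=\mathcal A_{\ell_0}(1)^{-1}\circ\mathrm d\tilde g(\rho_1)^{-1}\circ\tilde g\circ\mathcal A_{\ell_0}(1)$, and note that $\mathcal A_{\ell_1}(1)=\mathrm d\tilde g(\rho_1)\mathcal A_{\ell_0}(1)$, so $\kappa=\mathcal A_{\ell_1}(1)^{-1}\circ\tilde g\circ\mathcal A_{\ell_0}(1)$. Iterating Lemma \ref{lem: composition generating functions}, the quantity $\varphi_3(x,\eta)-\langle y,\eta\rangle$ is the sum of the phase differences of the three pieces. Let $(y,\eta)\mapsto(y',\eta')=\mathcal A_{\ell_0}(1)(y,\eta)$, then $\mapsto(x',\xi')=\mathrm d\tilde g(\rho_1)^{-1}\tilde g(y',\eta')$, then $\mapsto(x,\xi)$. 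The three contributions are:
\begin{itemize}
\item $F(\mathcal A_{\ell_0}(1),(y,\eta))$, by Proposition \ref{prop: properties cocycle}(2);
\item $F(\mathrm d\tilde g(\rho_1)^{-1},\tilde g(y',\eta'))$, by \eqref{eq: express gen fun as action for prelemma};
\item $F(\mathcal A_{\ell_0}(1)^{-1},(x',\xi'))=-F(\mathcal A_{\ell_0}(1),(x,\xi))$, by the cocycle property.
\end{itemize}
Since $\mathcal A_{\ell_0}(1)(x,\xi)=(x',\xi')$ and $\mathcal A_{\ell_1}(1)(x,\xi)=\mathrm d\tilde g(\rho_1)(x',\xi')=\tilde g(y',\eta')$, the cocycle property gives
\[F(\mathcal A_{\ell_1}(1),(x,\xi))=F(\mathrm d\tilde g(\rho_1),(x',\xi'))+F(\mathcal A_{\ell_0}(1),(x,\xi))=-F(\mathrm d\tilde g(\rho_1)^{-1},\tilde g(y',\eta'))+F(\mathcal A_{\ell_0}(1),(x,\xi)).\]
Summing the three contributions therefore yields exactly \eqref{eq: phi 4 as diff cocycle}. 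Note that no constant ambiguity appears, because every generating function is normalized, either by its explicit formula or by Lemma \ref{lem: FIO rep mu h + circ tilde g}.

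The main obstacle is justifying the projectability hypotheses of Proposition \ref{thm: composition FIO} at each stage, including the smallness conditions on $\partial\xi_1/\partial x_1$ and $\partial x_1/\partial\xi_1$. The resolution should be that $\kappa$ is $C^1$-close to the identity near the relevant point: its differential there is $\mathcal A_{\ell_0}(1)^{-1}\mathrm d\tilde g(\rho_1)^{-1}\mathrm d\tilde g(\rho_1)\mathcal A_{\ell_0}(1)=\mathrm{Id}$. Hence on a small enough fixed neighborhood all graphs project well. I would localize with cutoffs, which is harmless since the operator is only applied after the microlocal cutoffs $\chi_{\rho_k}$, and handle the intermediate metaplectic compositions by splitting as above if needed.
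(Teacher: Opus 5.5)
Your phase bookkeeping in the second step is correct. The three contributions $F(\mathcal A_{\ell_0}(1),(y,\eta))$, $F(\mathrm d\tilde g(\rho_1)^{-1},\tilde g(y',\eta'))$ and $-F(\mathcal A_{\ell_0}(1),(x,\xi))$ do recombine, via the cocycle property, into \eqref{eq: phi 4 as diff cocycle}. The gap is in the first step, and it is not the obstacle you address in your last paragraph.

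Set $\kappa_g:=\mathrm d\tilde g(\rho_1)^{-1}\circ\tilde g$. Your intermediate operator $\mathcal I[\varphi_2,\alpha_h]\,\mathcal M_{\ell_0}(1)$ quantizes $\kappa_g\circ\mathcal A_{\ell_0}(1)$. Since $\mathrm d\kappa_g=\mathrm{Id}$ at (the chart image of) $\rho_1$, the differential of this map at the base point is $\mathcal A_{\ell_0}(1)$ itself. Its graph therefore projects onto the $(x,\eta)$ coordinates only if the upper-left block of $\mathcal A_{\ell_0}(1)$ is invertible, i.e.\ only if $E^{wu}(\rho_1)$ is transverse to the vertical space. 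That is exactly the condition whose failure produces the Maslov crossings, and the paper does not assume it. If you compose on the left first, you need invertibility of the lower-right block instead.

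So neither Lemma \ref{lem: composition generating functions} nor Proposition \ref{thm: composition FIO} is available at the intermediate stage. The fact that the final map $\kappa$ is $C^1$-close to the identity says nothing about the intermediate composites. Splitting $\mathcal A_{\ell_0}(1)=\mathcal B\mathcal C$ does not help either if you still compose ``right, then left'': the intermediate map is again $\kappa_g\circ\mathcal B\mathcal C$. A minor point as well: $\mathcal M_{\ell_0}(1)^*=\mu_h^+(\mathcal A_{\ell_0}(1)^{-1})$ is only provided by Lemma \ref{lem: adjoint of mu h (A)} in the block-triangular case. Work with the pair $\mathcal M_{\ell_0}(1)$, $\mathcal M_{\ell_0}(1)^*$ directly, so that any sign ambiguity cancels.

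Your route can be repaired as follows. Factor $\mathcal A_{\ell_0}(1)=\mathcal B_1\cdots\mathcal B_k$ with each $\mathcal B_i$ close to the identity, for instance by subdividing a path from $\mathrm{Id}$ in $\mathrm{Sp}(2d,\mathbf R)$. Then conjugate in nested fashion, $\mathcal B_k^{-1}\big(\cdots(\mathcal B_1^{-1}\kappa_g\mathcal B_1)\cdots\big)\mathcal B_k$. Every intermediate map then has differential close to $\mathcal B_i^{\pm1}$ or to $\mathrm{Id}$, so projectability and hypothesis (1) of Proposition \ref{thm: composition FIO} hold at each stage. Your cocycle computation then telescopes over the factors.

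The paper does the continuous version of this and never composes FIOs with the metaplectic factors. It deforms $g$ to the identity and writes $\mu_h^+(\mathrm d\tilde g(\rho_1)^{-1})(g^{-1})^*$ as the time-one propagator of $\mathrm{Op}_h^{\rm w}(p_s+h^2r_s)$. It then conjugates the generator by the exact metaplectic Egorov theorem, which requires no projectability. The resulting flow $\tilde\kappa_s=\mathcal A_{\ell_0}(1)^{-1}\kappa_s\mathcal A_{\ell_0}(1)$ has differential $\mathrm{Id}$ at the base point for every $s$, so the WKB construction yields a single FIO $\mathcal I[\varphi_3,\alpha_h]$. The phase identity is then read off from Proposition \ref{prop: generating function is action}, Proposition \ref{prop: action of image} and the cocycle property.
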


\begin{proof}	\textbf{1.} By considering a smooth deformation $\{g_s\}_{s\in [0,1]}$ from $\operatorname{Id}$ to $g$, we can realize the unitary operator $\mu_h^+(\mathrm{d}\tilde g(\rho_1)^{-1}) (g^{-1})^*$ as the solution at time $1$ of 
\begin{equation}\label{eq: derivee U(s) pb aux} \left\{\begin{array}{ll} U(0)=\mathrm{Id} \\  hD_sU(s)=\mathrm{Op}_h^{\rm w}(p_s+h^2r_s)U(s)\end{array}\right.,\end{equation}
where, denoting $\kappa_s=\mathrm{d}\tilde g_s^{-1}(\rho_1)\circ \widetilde g_s$, the symbol $p_s$ is a time-dependent Hamiltonian satisfying $H_{p_s}=\partial_s \kappa_s$, and the error $r_s$ has uniformly bounded derivatives. By using the exact Egorov theorem for metaplectic operators, this shows that the operator $\mathcal M_{\ell_0}(1)^* \mu_h^+(\mathrm{d}\tilde g^{-1}) (g^{-1})^* \mathcal M_{\ell_0}(1)$ is the solution at time $1$ of 	\[\left\{\begin{array}{ll} V(0)=\mathrm{Id} \\ hD_s V(s)=\mathrm{Op}_h^{\rm w}(q_s+h^2r_s')V(s)\end{array}\right.,\]
where $q_s=p_s\circ \mathcal A_{\ell_0}(1)$ and $r_s'$ has uniformly bounded derivatives again. The operator $V(s)$ quantizes the symplectic transformation $\tilde \kappa_s:=\mathcal A_{\ell_0}(1)^{-1}\circ \kappa_s\circ \mathcal A_{\ell_0}(1)$. Note that $\tilde \kappa_s$ is close to a translation in $T^*\mathbf{R}^d$ provided we stay in a small neighborhood of $\rho_1$. In particular $\tilde \kappa_s$ has a projectable graph. Therefore, we have a FIO representation
\[V(s)=\mathcal I[\varphi_3(s,\cdot),b],\]
where $\varphi_3(s,x,\eta)$ is a generating function of the conjugated flow $\tilde \kappa_s$ and $b\sim \sum h^j b_j$ with $b_0=|\partial_{x,\eta}\varphi_3|^{\frac 12}$. We set $\varphi_3(x,\eta):=\varphi_3(1,x,\eta)$.

\textbf{2.} Let us finally prove \eqref{eq: phi 4 as diff cocycle}. By Proposition \ref{prop: generating function is action},  whenever $(y,\eta)$ is mapped to $(x,\xi)$ by the flow $\{\tilde \kappa_s\}_{s\in [0,1]}$ generated by $(q_s)$, we have
\begin{equation} \label{eq: phi 4 as action in proof}\varphi_3(1,x,\eta)-\langle y,\eta\rangle=\int_\gamma \xi\mathrm{d}x-q_s\mathrm{d}s,\end{equation}
where $\gamma(s)=\tilde \kappa_s(y,\eta)$. Since $q_s=p_s\circ \mathcal A_{\ell_0}(1)$, letting $\tilde \gamma=\mathcal A_{\ell_0}(1)\gamma$, we have
\begin{equation} \label{eq: int gamma qs int gamma ps}\int_\gamma q_s\mathrm{d}s=\int_{\tilde \gamma} p_s\mathrm{d}s.\end{equation}
Letting $(\tilde y,\tilde \eta)=\mathcal A_{\ell_0}(1)(y,\eta)$ and $(\tilde x,\tilde \xi)=\mathcal A_{\ell_0}(1)(x,\xi)$ denote the end points of $\tilde \gamma$, we also have
\begin{equation} \label{eq: we also have}F(\mathrm{d}\tilde g(\rho_1)^{-1},\tilde g(\tilde y,\tilde \eta))=\varphi_2(\tilde x,\tilde \eta)-\langle \tilde x,\tilde \eta\rangle=\int_{\tilde \gamma} \xi\mathrm{d}x-p_s\mathrm{d}s,\end{equation}
where $\varphi_2$ is the generating function from Lemma \ref{lem: FIO rep mu h + circ tilde g}. By combining \eqref{eq: phi 4 as action in proof}, \eqref{eq: int gamma qs int gamma ps}, \eqref{eq: we also have}, we obtain
\[\varphi_3(1,x,\eta)-\langle y,\eta\rangle=\int_\gamma \xi\mathrm{d}x-\int_{\tilde \gamma} \tilde \xi\mathrm{d}\tilde x +F(\mathrm{d}\tilde g(\rho_1)^{-1},\tilde g(\tilde y,\tilde \eta)).\]
Since $\tilde \gamma=\mathcal A_{\ell_0}(1)\gamma$, the difference of actions can be expressed using Proposition \ref{prop: action of image} as
\[S(\tilde \gamma)-S(\gamma)=-F(\mathcal A_{\ell_0}(1)^{-1},(\tilde x,\tilde \xi))+F(\mathcal A_{\ell_0}(1)^{-1},(\tilde y,\tilde \eta)).\]
Recall that $(\tilde x,\tilde \xi)=\mathrm{d}\tilde g(\rho_1)^{-1}(\tilde g(\tilde y,\tilde \eta))$. Playing with the cocycle property of $F$ and using the fact that $\mathcal A_{\ell_1}(1)=\mathrm{d}\tilde g(\rho_1)\circ \mathcal A_{\ell_0}(1)$, we obtain
\[\varphi_3(x,\eta)-\langle y,\eta\rangle=-F(\mathcal A_{\ell_1}(1),(x,\xi))+F(\mathcal A_{\ell_0}(1),(y,\eta)).\]

\end{proof}

\subsubsection{Proof of Proposition \ref{thm: short time propagator twisted}}

To finish the proof of Proposition \ref{thm: short time propagator twisted}, we combine \eqref{eq: parametrix W1(t)} with Proposition \ref{prop: parametrix W2(t)} and use the composition Theorem \ref{thm: composition FIO}.

\subsection{Propagation up to logarithmic times} We now compose the short-time approximations of the propagators from Proposition \ref{thm: short time propagator twisted} to obtain a long time parametrix for $\mathrm{e}^{-\frac{\mathrm{i}}{h}tP_{h,\delta}}$.
\begin{prop}\label{prop: propag until Ehrenfest} Fix $\varepsilon>0$ and assume $T_\gamma\le \mathfrak c|\log h|$ so as to ensure $\mathrm{e}^{CT}\le h^{-\varepsilon}$. Set $\sigma=\max(\beta,\nu)+\varepsilon<1/2$. 
	Recall the definition of the operators $W_k$ from \eqref{eq: defi Wk(t)}. For all $1\le n\le T_\gamma$, for $|t|\le 2h^{\nu}$, the propagator at time $n+t$ is given by
	\[W_{n-1}(1+t)\cdots W_0(1)=\mathcal I[\phi_n,a_h^{(n-1)}]+\mathcal O(h^{N(1-2\sigma)-C})_{L^2/\rm tr}.\]
	Here, $\phi_n=\phi_n(t,x_n,\xi_0)$ is a generating function for the flow in adapted coordinates
	\[\mathcal A_{\ell_n}(n)^{-1}\widetilde f_{\ell_n} \Phi^{n+t}_\delta \widetilde f_{\ell_0}^{-1}\mathcal A_{\ell_0}(0),\qquad \forall |t|\le h^{\nu },\]
	and has been estimated in Lemma \ref{lem: spacetime derivatives generating function}. The amplitude expands as $a_h^{(n-1)}=\sum_{j=0}^{N-1} h^j a_j^{(n-1)}$, and the first term is given by
	\begin{equation} \label{eq: the first term is given by Ehrenfest}a_0^{(n-1)}(t,x_n,x_0,\xi_0)=|\partial_{x_n,\xi_0}^2 \phi_n|^{\frac 12} \chi_{\rho_n}(x_n,\xi_n(t,x_n,\xi_0))\chi_{\rho_0}(x_0,\xi_0).\end{equation}
	The symbols $a_j^{(n-1)}$ are estimated in the following way: 
	\begin{equation} \label{eq: borne ajk avec Cjl} \|a_j^{(n-1)}\|_{C^\ell_{t,x_n,x_0,\xi_0}}\le C_{j,\ell} h^{-\beta(\ell+2j)}\mathrm{e}^{C(\ell +2j)T}. \end{equation}
	In particular, if $n\le T\le \mathfrak c(\alpha,\beta,\varepsilon)|\log h|$, we have
	\[\|a_h^{(n-1)}\|_{C^\ell}\le C_{n,\ell} h^{-\sigma \ell}.\]
			\end{prop}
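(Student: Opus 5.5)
The plan is an induction on $n$, composing the short-time parametrices of Proposition \ref{thm: short time propagator twisted} one at a time with the composition formula Proposition \ref{thm: composition FIO}.

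\textbf{Base case.} For $n=1$ the statement is Proposition \ref{thm: short time propagator twisted} applied to $W_0(1+t)$.

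\textbf{Inductive step.} Suppose $W_{n-1}(1)\cdots W_0(1)=\mathcal I[\phi_n(0,\cdot),a_h^{(n-1)}]+\mathcal O(h^{N(1-2\sigma)-C})$. Then
\[
W_n(1+t)\,W_{n-1}(1)\cdots W_0(1)=W_n(1+t)\,\mathcal I[\phi_n,a_h^{(n-1)}]+W_n(1+t)\,\mathcal O(\cdots).
\]
The last term keeps the same size, because $W_n$ has norm $\le 1+Ch^{1/2-\nu}$. The accumulated factor $(1+Ch^{1/2-\nu})^{n}$ is harmless for $n\lesssim|\log h|$, exactly as in Proposition \ref{prop: cutoffs along the orbit}.

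For the main term, apply Proposition \ref{thm: composition FIO} with $\kappa_1$ the flow at time $n$ and $\kappa_2(t,\cdot)$ the time-$(1+t)$ flow, both in adapted coordinates. The composed phase is the $\phi_{n+1}$ of \eqref{eq: sequence gen func}. By Lemma \ref{lem: composition generating functions} it generates the flow at time $n+1+t$, and Proposition \ref{prop: iterated generating function} guarantees it is well defined on the support of the amplitudes. The hypotheses of the composition formula are checked as follows.
\begin{itemize}
\item Projectability of all three graphs follows from Proposition \ref{prop: projectability}. The smallness of $\partial\xi_1/\partial x_1$ for $\kappa_1$ holds because, in adapted coordinates, the Lagrangian $\kappa_1(\{\xi_0\})$ is close to the weak-unstable direction, which is horizontal (Lemma \ref{lem: matrice differentielle flot}: the lower-left block is $0$ and $A^{-1}$ is small). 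This is precisely why one conjugates by the $\mathcal M_{\ell_k}$, so that no caustics form.
\item The phase and critical-point bounds $\|\varphi_i\|_{C^\ell}\lesssim Th^{-\sigma(\ell-2)}$ and $\|z_c\|_{C^\ell}\lesssim Th^{-\sigma(\ell-1)}$ come from Proposition \ref{prop: control all deriv gen fun} and Proposition \ref{prop: intermediate points}. There $h^{-\beta}\mathrm{e}^{CT}\le h^{-\sigma}$ by the choice $\mathrm{e}^{CT}\le h^{-\varepsilon}$.
\end{itemize}

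\textbf{Leading amplitude.} The composition formula gives $a_0^{(n)}=a_0^{(n-1)}\,a_0(W_n)\,|\det\nabla_z^2\phi_h|^{-1/2}$ evaluated at $z_c$. By the multiplicativity of the Jacobians $\det\partial^2_{x\eta}$ under composition of generating functions (a chain-rule computation via the Hessian \eqref{eq: hessian of phase in compo lemma} and a Schur complement), this equals $|\partial^2_{x_{n+1}\xi_0}\phi_{n+1}|^{1/2}$. The cutoffs collapse to $\chi_{\rho_{n+1}}\chi_{\rho_0}$, because the intermediate $\chi_{\rho_k}$ equal $1$ on the propagated support. This is the same nesting property used in Proposition \ref{prop: cutoffs along the orbit}.

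\textbf{Main obstacle: uniform symbol estimates.} The hard part is proving \eqref{eq: borne ajk avec Cjl} with constants uniform in $n$, not growing like $C^n$. Iterating \eqref{eq: symbol aj composition FIO} naively multiplies constants at each step. I would avoid this as follows.
\begin{itemize}
\item Take the leading term directly from its closed form above, and estimate it with Proposition \ref{prop: derivees a0} together with the estimates on $\phi_n$, giving $h^{-\beta\ell}\mathrm{e}^{C\ell T}$.
\item Treat the higher-order terms $a_j^{(n)}$ via a Duhamel-type sum over the step $k$ where the correction arises. Each correction is transported by the subsequent Lagrangian flow, whose derivatives are bounded by $h^{-\beta\ell}\mathrm{e}^{C\ell T}$ independently of the number of steps.
\item The only non-uniform cost is a factor $n\le T$ per order $j$, which is absorbed into $\mathrm{e}^{C(\ell+2j)T}$.
\end{itemize}
Finally, with $\mathrm{e}^{CT}\le h^{-\varepsilon}$, the bound \eqref{eq: borne ajk avec Cjl} becomes $h^{-\sigma\ell}$ for $j=0$. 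For $j\ge1$ it becomes $h^{-\sigma(\ell+2j)}$, which is the stated bound on the full symbol after the factor $h^j$.
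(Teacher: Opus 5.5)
Your proposal matches the paper's proof: induction on $n$ via the FIO composition formula, the Schur-complement identity $\det\nabla^2_{z_n}\psi_h=\det(A^{-1})\det(P^{-1})\det(AP+BR)$ for the leading amplitude, and a discrete Duhamel expansion of $a_j^{(n)}$. In that expansion at most $j$ steps carry nontrivial corrections, and the trivial stretches collapse to ratios of leading amplitudes controlled by Proposition~\ref{prop: derivees a0}, so only a polynomial loss in $T$ appears, which is absorbed into $\mathrm{e}^{C(\ell+2j)T}$. One minor imprecision: the smallness of $\partial\xi_1/\partial x_1=CA^{-1}$ comes from the vanishing lower-left block in adapted coordinates, not from $A^{-1}$ being small, since its entry in the flow direction is $1$.
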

			\begin{rem}The fact that we take the same parameter $\nu$ in the assumption $|t|\le 2h^\nu$ and $\chi_\rho\in S_\nu(T^*M)$ is pretty much irrelevant, but it has the advantage to reduce the number of letters used.
			\end{rem}

			\begin{proof}The result for $n=1$ is the content of Proposition \ref{thm: short time propagator twisted}. Let us now assume that the result holds up to index $n<T$. Then, by the induction hypothesis and the result for short times, we have, for small $|t|$:
				\[W_{n}(1+t)W_{n-1}(1)\cdots W_0(1)=\mathcal I[\varphi,b_h^{(n)}]\circ \mathcal I[\phi_n,a_h^{(n-1)}]+\mathcal O(h^{N(1-2\sigma)-C})_{L^2/\rm tr}.\]
				Here $\varphi(t,x_{n+1},\xi_n)$ is a generating function for the flow at time $1+t$ and $\phi_n(x_n,\xi_0)$ is a generating function for the flow at time $n$, while $b_h^{(n)}$ and $a_h^{(n-1)}$ are the appropriate amplitudes.
				The generating functions $\varphi$ and $\phi_n$ compose correctly provided $T\le \mathfrak c|\log h|$ with $\mathfrak c$ small enough, thanks to Proposition \ref{prop: iterated generating function}. We can then use the composition Theorem \ref{thm: composition FIO} to compute $\mathcal I[\varphi,b_h^{(n)}]\circ \mathcal I[\phi_n,a_h^{(n-1)}]$ and obtain
				\[W_{n}(1+t)W_{n-1}(1)\cdots W_0(1)=\mathcal I[\phi_{n+1},a_h^{(n)}]+\mathcal O(h^{N(1-2\sigma)-C})_{L^2/\rm tr},\]
				where in the notation of Theorem \ref{thm: composition FIO}:
				\begin{itemize}
					\item $\phi_{n+1}(t,x_{n+1},\xi_0)$ is obtained by the formula of Lemma \ref{lem: composition generating functions} and generates the flow at time $n+1+t$.
					\item We have a finite expansion $a_h^{(n)}=\sum_{j<N} h^j a_j^{(n)}$, with
					\begin{equation}\label{eq: recursion aj3} a_j^{(n)}:=\sum_{m+i+k=j} \sum_{|\alpha|\le 2m} c_{m,\alpha}(\mathbf x)\partial_z^\alpha \big(b_i^{(n)} a_k^{(n-1)}\big)\Big|_{z=z_c(\mathbf x)}.\end{equation}
					The functions $c_{m,\alpha}$ and $z_c$ are estimated in Theorem \ref{thm: composition FIO} and Proposition \ref{prop: intermediate points} respectively:
					\begin{equation} \label{eq: bounds cj zc in proof long time} \|c_{m,\alpha}\|_{C^\ell_{\mathbf x}}\le C_{m,\ell} h^{-\beta(\ell+2m-|\alpha|)}\mathrm{e}^{CT(\ell+2m-|\alpha|)}, \qquad \|z_c\|_{C^\ell}\le C_{\ell} T h^{-\beta(\ell-1)_+}\mathrm{e}^{C(\ell-1)T}.\end{equation}
				\end{itemize}
				Recall the formula \eqref{eq: leading part symbol composition} for the leading-order symbol $a_0^{(n)}$. Let $\psi_{h}(t,x_{n+1},\xi_n,x_n,\xi_0):=\varphi(t,x_{n+1},\xi_n)-\langle x_n,\xi_n\rangle+ \phi_n(x_n,\xi_0)$ denote the phase function appearing in the stationary phase computation. To obtain \eqref{eq: the first term is given by Ehrenfest} at rank $n+1$, we need to show that
				\begin{equation}\label{eq: relations hessiennes recurrence}\frac{|\partial_{x_{n+1}, \xi_n}^2 \varphi|^{\frac 12}\cdot |\partial_{x_n,\xi_0}^2 \phi_n|^{\frac 12}}{|\nabla^2_{z_n} \psi_h|^{\frac 12}}=|\partial_{x_{n+1},\xi_0}\phi_{n+1}|^{\frac 12}.\end{equation}
				Here the Hessian of $\psi_h$ on the left-hand side is taken with respect to the $z_n=(\xi_n,x_n)$ variables, and evaluated at the critical point $z_n^c(x_{n+1},\xi_0):=(\xi_n^c(x_{n+1},\xi_0),x_n^c(x_{n+1},\xi_0))$. To see \eqref{eq: relations hessiennes recurrence}, let
				\[\mathrm{d}\Phi^{1+t}=\begin{pmatrix}
					A & B \\ C & D
				\end{pmatrix}, \qquad \mathrm{d}\Phi^n=\begin{pmatrix}
					P & Q \\ R & S
				\end{pmatrix}.\]
				Then, one has
				\[\partial_{x_{n+1}, \xi_n}^2 \varphi=\frac{\partial x_n(x_{n+1},\xi_n)}{\partial {x_{n+1}}}=A^{-1}, \qquad \partial_{x_n,\xi_0}^2 \phi_n=\frac{\partial x_0(x_n,\xi_0)}{\partial x_n}=P^{-1}.\]
				The hessian of $\psi_h$ in the $z_n=(x_n,\xi_n)$ variable can be expressed
				\[\nabla^2_{z_n}\psi_h=\begin{pmatrix} \partial_{\xi_n\xi_n}^2 \varphi & -\operatorname{Id} \\ -\operatorname{Id} & \partial_{x_nx_n}^2 \phi_n
				\end{pmatrix}=\begin{pmatrix}
					-A^{-1}B & -\mathrm{Id} \\ -\mathrm{Id} & RP^{-1}
				\end{pmatrix}.\]
				We deduce
				\[\det(\nabla^2_{z_n}\psi_h)=\det(-\mathrm{Id} -A^{-1}BRP^{-1})=\det(A^{-1})\det(P^{-1})\det(AP+BR).\]
				Since the matrix $AP+BR$ is the upper left block of $\mathrm{d}\Phi^{n+1+t}$, it is nothing but $\partial_{x_{n+1},\xi_0}^2 \phi_{n+1}$. This implies altogether the identity \eqref{eq: relations hessiennes recurrence}. The estimates on the symbol $a_j^{(n)}$ are proved in the next subsection.

			\end{proof}
			
			\subsubsection{Estimating the $C^\ell$ norms of the symbol $a_j^{(n)}$.} We prove the estimates on the symbol. Assume $|t|\le T$ with $T\ge 1$.
			\begin{prop}\label{prop: estimees aj n} While $n\le T$, the $j$-th coefficient $a_j^{(n)}$ satisfies
				\begin{equation} \label{eq: bound ajn long time}\|a_j^{(n)}\|_{C^\ell_{t,x_{n+1},\xi_0}}\lesssim |a_0^{(n)}|h^{-\beta(\ell+2j)}\mathrm{e}^{C(\ell+2j)T}.\end{equation}
			\end{prop}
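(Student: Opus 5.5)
Our plan is an induction on $n$ driven by the recursion \eqref{eq: recursion aj3}. We will not track $a_j^{(n)}$ directly; instead we track the normalized quotients $\tilde a_j^{(n)}:=a_j^{(n)}/a_0^{(n)}$ and prove that $\|\tilde a_j^{(n)}\|_{C^\ell}\lesssim h^{-\beta(\ell+2j)}\mathrm{e}^{C(\ell+2j)T}$, with constants independent of $n$ for $n\le T$. The target bound \eqref{eq: bound ajn long time} then follows from the Leibniz rule together with the estimate $|\partial^\ell a_0^{(n)}|\lesssim a_0^{(n)}h^{-\beta\ell}\mathrm{e}^{C\ell T}$, which is Proposition \ref{prop: derivees a0} applied to $\phi_{n+1}$ (the cutoffs $\chi_{\rho_k}$ lie in $S_\nu$ and are harmless). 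The point of normalizing is that $a_0^{(n)}$ itself may be exponentially small or large in $n$, whereas the quotients remain of order one.

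For the induction step, we divide \eqref{eq: recursion aj3} by $a_0^{(n)}(\mathbf x)$. By \eqref{eq: leading part symbol composition} and \eqref{eq: relations hessiennes recurrence}, $a_0^{(n)}(\mathbf x)$ equals $b_0^{(n)}a_0^{(n-1)}|\det\nabla^2_z\psi_h|^{-1/2}$ evaluated at $z=z_c(\mathbf x)$. Each term then becomes $c_{m,\alpha}(\mathbf x)\,|\det\nabla^2\psi_h|^{1/2}\,(b_0a_0^{(n-1)})^{-1}\partial_z^\alpha(b_i a_k^{(n-1)})|_{z_c}$. Writing $a_k^{(n-1)}=a_0^{(n-1)}\tilde a_k^{(n-1)}$ and $b_i=b_0\tilde b_i$, we distribute $\partial_z^\alpha$ and use $|\partial^\gamma a_0^{(n-1)}|\lesssim a_0^{(n-1)}h^{-\beta|\gamma|}\mathrm{e}^{C|\gamma| T}$ (and similarly for $b_0$). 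This reduces everything to products of quantities that are each bounded by powers of $h^{-\beta}\mathrm{e}^{CT}$. The Hessian determinant is $\mathcal O(1)$ together with its derivatives, at the cost of $h^{-\beta}\mathrm{e}^{CT}$ per derivative, by \eqref{eq: hessian of phase in compo lemma} and Proposition \ref{prop: intermediate points}. We next count exponents. The coefficient $c_{m,\alpha}$ costs $h^{-\beta(2m-|\alpha|)}$ and the $|\alpha|$ derivatives cost $h^{-\beta|\alpha|}$, for a total of $h^{-2\beta m}$. Combined with $h^{-2\beta(i+k)}$ from the induction hypothesis, this gives $h^{-2\beta j}$ when $m+i+k=j$. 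The $\ell$ outer $\mathbf x$-derivatives land either on $c_{m,\alpha}$ or, via the chain rule through $z_c$, on the amplitudes; by \eqref{eq: bounds cj zc in proof long time}, each contributes one factor of $h^{-\beta}\mathrm{e}^{CT}$. Note that the cases $j=0$, $(i,k)=(0,j)$ give back $\tilde a_j^{(n-1)}\circ z_c$ exactly, with coefficient one.

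The main obstacle is preventing the constants from accumulating over the $n\le T\sim|\log h|$ steps: a naive induction multiplies the constant by a fixed factor at each step and produces $C^n=h^{-\mathfrak c\log C}$. To overcome this, we separate each step into the "transport" term $\tilde a_j^{(n-1)}\circ z_c$ and the remaining terms, which involve only $\tilde a_k^{(n-1)}$ with $k<j$ or carry at least one extra derivative factor. In $C^0$, the transport term does not grow at all, so the recursion takes the form $A_j^{(n)}\le A_j^{(n-1)}+K\sum_{k<j}A_k^{(n-1)}h^{-2\beta(j-k)}\mathrm{e}^{2C(j-k)T}$. Summing over $n\le T$ introduces only polynomial factors $T^j$, and these are absorbed by enlarging $C$ in $\mathrm{e}^{CT}$. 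For $C^\ell$ norms, composition with $z_c$ adds a factor $\|z_c\|_{C^1}^\ell\lesssim T^\ell$ (Proposition \ref{prop: intermediate points}) at each step. Iterating this factor is where we would be most careful. We would try to apply the chain rule to the full composed map $\mathbf x\mapsto z_c^{(n)}\circ\cdots$, rather than step by step, so that its derivatives are controlled directly by Proposition \ref{prop: short time derivative flow} as $h^{-\beta(\ell-1)}\mathrm{e}^{C\ell T}$. This is precisely the form of the target bound.
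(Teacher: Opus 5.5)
Your proposal is essentially the paper's argument: dividing by $a_0^{(n)}$ does the same job as the ratios $\mathcal D^{n',n}=a_0^{(n'-1)}/a_0^{(n)}$ of \eqref{eq: defi Dn'n}, and your ``chain rule through the full composed map'' is the discrete Duhamel expansion \eqref{eq: by iterating down}, in which the transport steps are collapsed and the at most $j$ nontrivial insertions are evaluated directly at the intermediate points $z_p^c(x_{n+1},\xi_0)$, whose derivatives are bounded uniformly in $p$ by Proposition \ref{prop: intermediate points} and \eqref{eq: C ell norm interm point ell ge 1} (rather than by Proposition \ref{prop: short time derivative flow} directly). To make the sketch a proof, replace the step-by-step induction on $n$, which you rightly note cannot close for $\ell\ge 1$, by an induction on $j$ based on the unrolled identity $\tilde a_j^{(n)}=\tilde a_j^{(0)}\circ Z_{n,0}+\sum_{p=1}^{n}S_j^{(p)}\circ Z_{n,p}$, where $Z_{n,p}:(x_{n+1},\xi_0)\mapsto(x_{p+1}^c(x_{n+1},\xi_0),\xi_0)$ and the sources $S_j^{(p)}$ involve $\tilde a_k^{(p-1)}$ only for $k<j$; the sum over $p\le T$ then costs a single factor $T$ per level, which is absorbed by enlarging $C$ as in Lemma \ref{prop: bound Fq}.
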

			This result, and its proof, are similar to \cite[Proposition 4.1]{NZ09} or \cite[Lemma 13]{Schenk}. We denote by $z_p=(x_p,\xi_p)$ the points in the $p$-th representation of the short-time propagator. Given end points $x_{n},\xi_0$, for each $p\in \{0,\ldots,n\}$, there is a point $z_p=z_p^c(x_n,\xi_0)$ fixed by the condition
			\[(\bullet,\xi_0)\overset{\Phi^p}{\longrightarrow} z_p\overset{\Phi^{n-p}}{\longrightarrow}(x_n,\bullet).\]
			We have shown in Proposition \ref{prop: intermediate points} that for all $p\in \{0,\ldots,n\}$, one has
			\[\|z_p\|_{C^\ell_{x_n,\xi_0}}\lesssim Th^{-\beta(\ell-1)_+}\mathrm{e}^{C(\ell-1)_+T}. \]
			Recall that for $t$ close to $0$, we have
			\begin{equation} \label{eq: rappel formule aj n} a_j^{(n)}=\sum_{\substack{m+i+k=j \\ k\le j}} \sum_{|\alpha|+|\beta|\le 2m} c_{m,\alpha,\beta}^{(n)} \big(\partial_{z_n}^\alpha b_i^{(n)}\cdot \partial_{z_n}^{\beta} a_k^{(n-1)}\big)\Big|_{z_n=z_n^c(t,x_{n+1},\xi_0)}. \end{equation}
			In this expression:
			\begin{itemize}
				\item $a_j^{(n)}$ is evaluated at $(n+1+t,x_{n+1},\xi_0)$
				\item $c_{m,\alpha,\beta}^{(n)}$ is evaluated at $(1+t,x_{n+1},x_n,\xi_n,\xi_0)$, and satisfies by  \eqref{eq: estimee coefficient derivee j composition FIO}:
				\begin{equation} \label{eq: cm alpha beta}\|c_{m,\alpha,\beta}^{(n)}\|_{C^\ell}\lesssim h^{-\beta(\ell+2m-|\alpha|-|\beta|)} \mathrm{e}^{C(\ell+2m-|\alpha|-|\beta|)T}.\end{equation}
				Here we have slightly increased $C$ to obtain \eqref{eq: cm alpha beta} from \eqref{eq: estimee coefficient derivee j composition FIO}.
				\item $b_i^{(n)}$ is evaluated at $(1+t,x_{n+1},\xi_n)$,
				\item $a_k^{(n-1)}$ is evaluated at $(n,x_n,\xi_0)$.
			\end{itemize} A direct induction on $n$ using \eqref{eq: rappel formule aj n} leads to poor estimates on $\|a_j^{(n)}\|_{C^\ell}$. For this reason, we first provide a nicer expression of $a_j^{(n)}$ in the spirit of a discrete Duhamel formula. To simplify notation, we forget about the cutoffs that have been inserted along the orbit in the formulas. Taking them into account does not affect the estimates.
			For $n\ge 1$, given a tuple $\gamma_n:=(m_n,\alpha_n,\beta_n,i_n)$, we let
			\begin{equation} \label{eq: defi d gamma n } d_{\gamma_n}^{(n)}(x_{n+1},x_n,\xi_n,\xi_0):=c_{m_n,\alpha_n,\beta_n}^{(n)} \big(\partial_{z_n}^{\alpha_n} b_{i_n}^{(n)}\big).\end{equation}
			For $n=0$, we also set 
			\begin{equation} \label{eq: defi d0} d^{(0)}(x_1,x_0,\xi_0)=a_{k_1}^{(0)}.\end{equation} By iterating down \eqref{eq: rappel formule aj n}, one has
			\begin{equation} \label{eq: by iterating down}
				\begin{aligned}
					a_j^{(n)}=\sum_{(\vec m,\vec \alpha,\vec\beta,\vec i),k_1} d_{\gamma_n}^{(n)}\partial_{z_n}^{\beta_n}\big[
					d_{\gamma_{n-1}}^{(n-1)} \partial_{z_{n-1}}^{\beta_{n-1}}\big[\cdots \partial_{z_2}^{\beta_2}\big[ d_{\gamma_1}^{(1)}\partial_{z_1}^{\beta_1}a_{k_1}^{(0)}\big]_{z_1=z_1^c(x_2,\xi_0)}\big]_{z_2=z_2^c(x_3,\xi_0)}\cdots\big]_{z_n=z_n^c(x_{n+1},\xi_0)}
			\end{aligned}\end{equation}
			where the sum runs over tuples $\vec m=(m_1,\ldots,m_n)$, $\vec \alpha=(\alpha_1,\ldots,\alpha_n)$, $\vec \beta=(\beta_1,\ldots,\beta_n),k_1$ that satisfy $k_1+\sum m_p+i_p=j$, $\sum |\alpha_p|+|\beta_p|\le 2\sum m_p$. Due to these conditions, there are at most $j$ indices $p\in \{1,\ldots,n\}$ such that $(m_p,i_p)\neq (0,0)$, therefore the sum \eqref{eq: by iterating down} runs over $\mathcal O_j(n^{Cj})$ tuples $(\vec m,\vec \alpha,\vec i,k_1)$ for some positive $C$. Fix such a tuple and let
			\[n\ge n_j>n_{j-1}>\cdots >n_1\ge 1\]
			be the indices such that $(m_{n_p},i_{n_p})\neq 0$. Also, let $n_{j+1}=n$ and $n_0=0$. For $n_j<p<n_{j+1}$, we can gather the terms $d_{\gamma_p}^{(p)}=d_0^{(p)}$. For $n<n'$ define
			\begin{equation} \label{eq: defi Dn'n} \mathcal D^{n',n}(x_{n'},\xi_0):=\prod_{n<p<n'} d_{0}^{(p)}(x_{p+1},\xi_p)_{x_{p+1}=x_{p+1}^c(x_{n'},\xi_0)}=\frac{a_0^{(n'-1)}(x_{n'},\xi_0)}{a_0^{(n)}(x_{n+1},\xi_0)}\Big|_{x_{n+1}=x_{n+1}^c(x_n',\xi_0)}. \end{equation}
			Also, let 
			\begin{equation} \label{eq: def f n q +1}f_{n_{q+1}}=f_{n_{q+1}}(x_{n_{q+1}},\xi_0):=\mathcal D^{n_{q+1},n_q}(x_{n_{q+1}},\xi_0)d_{\gamma_{n_q}}^{(n_q)}(x_{n_{q}+1},x_{n_q},\xi_{n_q},\xi_0),\end{equation}
			where the right-hand side is evaluated at $x_{n_q+1}^c,x_{n_q}^c,\xi_{n_q}^c$ fixed by the end points $x_{n_{q+1}}$ and $\xi_0$. Beware of the difference between $n_{q+1}$ and $n_q+1$. Then, the summands on the right-hand side of \eqref{eq: by iterating down} can be written
			\[f_{n_{j+1}}\big[\partial_{z_{n_j}}^{\beta_{n_j}}\big[ f_{n_j}\big[\partial_{z_{n_{j-1}}}^{(n_{j-1})}\big[\cdots \partial_{z_{n_2}}^{\beta_{n_2}}\big[f_{n_2}\big[\partial_{z_{n_1}}^{\beta_{n_1}}f_{n_1}\big]_{z_{n_1}=z_{n_1}^c(z_{n_2})}\big]_{z_{n_2}=z_{n_2}^c(z_{n_3})}\cdots\big]_{z_{n_j}=z_{n_j}^c(z_{n_{j+1}})}.\]
			This quantity is equal to $F_{n_{j+1}}(x_{n_{j+1}},\xi_0)$, where $(F_{n_q})_{q\ge 1}$ is defined by induction:
			\begin{equation}\left\{\begin{array}{ll}F_{n_1}(x_{n_{1}},\xi_0)=f_{n_1}(x_{n_1},\xi_0),&  \\
					F_{n_{q+1}}(x_{n_{q+1}},\xi_0)=f_{n_{q+1}}\big[\partial_{z_{n_q}}^{\beta_{n_q}} F_{n_q}\big]_{z_{n_q}=z_{n_q}^c(z_{n_{q+1}})}& \text{for $q\ge 1$}\end{array}\right..
			\end{equation}
			The functions $F_{n_q}$ will be estimated in Proposition \ref{prop: bound Fq} below. Before, we show
			\begin{lem}\label{lem: bound the fj} Set $n_0=0$, $i_{n_0}=k_1$ and $m_{n_0}=\alpha_{n_0}=\beta_{n_0}=0$. For all $q\ge 0$, the function $f_{n_{q+1}}:=\mathcal D^{n_{q+1},n_q}d_{\gamma_{n_{q}}}^{(n_{q})}$, viewed as a function of $(x_{n_{q+1}},\xi_0)$, satisfies
				\[\|f_{n_{q+1}}\|_{C^\ell_{x_{n_{q+1}},\xi_0}}\lesssim |\mathcal D^{n_{q+1},n_q}| h^{-\beta (\ell+2m_{n_q}+2i_{n_q}-|\beta_{n_q}|)} T^\ell \mathrm{e}^{CT(\ell+2m_{n_q}-|\beta_{n_q}|)}.\]
			\end{lem}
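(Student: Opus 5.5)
The plan is to bound the two factors of $f_{n_{q+1}}=\mathcal D^{n_{q+1},n_q}\,d^{(n_q)}_{\gamma_{n_q}}$ separately, each viewed as a function of $(x_{n_{q+1}},\xi_0)$, and then combine them by the Leibniz rule. The only dependence on these variables comes through the intermediate points $z_p^c(x_{n_{q+1}},\xi_0)$. By Proposition \ref{prop: intermediate points} these satisfy $\|z_p^c\|_{C^\ell}\lesssim T h^{-\beta(\ell-1)_+}\mathrm{e}^{C(\ell-1)T}$. Fa\`a di Bruno's formula therefore turns any bound of the form $\|g\|_{C^\ell_z}\lesssim A\,h^{-\beta\ell}\mathrm{e}^{C\ell T}$ for a function $g$ of the intermediate variables into $\|g\circ z^c\|_{C^\ell}\lesssim A\,T^\ell h^{-\beta \ell}\mathrm{e}^{C'\ell T}$. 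This is where the factor $T^\ell$ comes from, since each first derivative of $z^c$ costs at most $T$.

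\textbf{Step 1: the factor $\mathcal D^{n_{q+1},n_q}$.} I use the second expression in \eqref{eq: defi Dn'n}, a ratio of leading amplitudes $a_0^{(n_{q+1}-1)}/a_0^{(n_q)}$ evaluated along the critical points. Each $a_0^{(n)}$ equals $|\det\partial^2_{x\xi}\phi|^{1/2}$ times cutoffs, by \eqref{eq: the first term is given by Ehrenfest}. Proposition \ref{prop: derivees a0}, which applies equally to the long-time generating function via Lemma \ref{lem: spacetime derivatives generating function}, gives $|\partial^\ell a_0|\lesssim a_0\,h^{-\beta\ell}\mathrm{e}^{C\ell T}$. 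The same relative bound then holds for $1/a_0$ and for the ratio. After composing with $z^c$ as above, I get $\|\mathcal D\|_{C^\ell}\lesssim|\mathcal D|\,T^\ell h^{-\beta\ell}\mathrm{e}^{C\ell T}$. The cutoffs $\chi_{\rho_p}$ are in $S_\nu$ with $\nu\le\sigma$; following the paper's convention they are dropped, or their losses are absorbed by enlarging $C$.

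\textbf{Step 2: the factor $d^{(n_q)}_{\gamma_{n_q}}=c^{(n_q)}_{m,\alpha,\beta}\,\partial^{\alpha}_{z}b^{(n_q)}_{i}$.} By \eqref{eq: cm alpha beta}, $\|c\|_{C^\ell}\lesssim h^{-\beta(\ell+2m-|\alpha|-|\beta|)}\mathrm{e}^{C(\ell+2m-|\alpha|-|\beta|)T}$. By Proposition \ref{thm: short time propagator twisted}, $\|\partial^\alpha b_i\|_{C^\ell}\lesssim h^{-\beta(\ell+|\alpha|+2i)}$, and these are short-time symbols with no exponential growth. Multiplying, the $|\alpha|$ exponents cancel, leaving $h^{-\beta(\ell+2m+2i-|\beta|)}\mathrm{e}^{C(\ell+2m-|\beta|)T}$. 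Composing with $z^c$ adds the factor $T^\ell$ and changes only the constant $C$. For $q=0$ the factor is $a^{(0)}_{k_1}$, handled directly by Proposition \ref{thm: short time propagator twisted}.

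\textbf{Step 3: combining.} I apply Leibniz to the product and sum the exponents, which gives exactly the stated bound with prefactor $|\mathcal D|$. The main obstacle is Step 1. A naive estimate of $\mathcal D$ as a product of up to $n\sim T$ factors $d_0^{(p)}$ would produce a loss exponential in $n$ for each derivative. It is essential to use the telescoped ratio form together with the relative derivative bound on $a_0$, so that the loss is only $h^{-\beta\ell}\mathrm{e}^{C\ell T}$ relative to $|\mathcal D|$. I would verify that this relative bound persists after composition with $z^c$ and with bounded cutoffs near the orbit.
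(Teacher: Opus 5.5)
Your proposal is correct and follows essentially the same route as the paper: you use the ratio form of $\mathcal D^{n_{q+1},n_q}$ together with the relative derivative bound on $a_0$ from Proposition \ref{prop: derivees a0}, you bound $d^{(n_q)}_{\gamma_{n_q}}$ via \eqref{eq: cm alpha beta} and the short-time amplitude estimates (so that the $|\alpha_{n_q}|$ contributions cancel), and you compose with the intermediate points through Fa\`a di Bruno, with the factor $T^\ell$ coming from the first derivatives of $z^c$. The only cosmetic difference is that you compose each factor separately and then combine them by Leibniz, whereas the paper first bounds both factors as functions of the unevaluated intermediate variables $(x_{n_q+1},x_{n_q},\xi_{n_q},\xi_0)$ and composes once.
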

			Up to increasing $C$, we can absorb the polynomial factor $T^\ell$ into the exponential, which we shall do in the following.
			\begin{proof}
				
				\textbf{1.} For $q=0$, we have $f_{n_1}=\mathcal D^{n_1,0}d^{(0)}=\mathcal D^{n_1,n_0}a_{k_1}^{(0)}$ and our convention for $\gamma_{{n_0}}$ gives the result with the same computations as below.
				
				\textbf{2.} We consider the case $q\ge 1$. First recall from \eqref{eq: defi Dn'n} that
				\[\mathcal D^{n_{q+1},n_q}(x_{n_{q+1}},\xi_0):=\frac{a_0^{(n_{q+1}-1)}(x_{n_{q+1}},\xi_0)}{a_0^{(n_q)}(x_{n_q+1},\xi_0)}\Bigg|_{x_{n_q+1}=x_{n_q+1}^c(x_{n_{q+1}},\xi_0)}.\]
				Because $\|a_0^{(n)}\|_{C^\ell}\lesssim a_0 h^{-\beta \ell}\mathrm{e}^{C\ell T}$ (this is Proposition \ref{prop: derivees a0}), the right-hand side, viewed as a function of the variables $x_{n_{q+1}},x_{n_q+1},\xi_0$ (where we have not evaluated $x_{n_{q}+1}=x_{n_q+1}^c(x_{n_{q+1}},\xi_0)$) has derivatives of order $\ell$ bounded by a constant times
				\[|\mathcal D^{n_{q+1},n_q}(x_{n_{q+1}},\xi_0)|\cdot h^{-\beta\ell}\mathrm{e}^{C\ell T}.\]
				Similarly, viewing $d_{\gamma_{n_q}}^{(n_q)}$ as a function of $x_{n_q+1},x_{n_q},\xi_{n_q},\xi_0$, we have
				\[\|d_{\gamma_{n_q}}^{(n_q)}\|_{C^\ell_{x_{n_q+1},x_{n_q},\xi_{n_q},\xi_0}}\lesssim h^{-\beta(\ell+2m_{n_q}+2i_{n_q}-|\beta_{n_q}|)}\mathrm{e}^{CT (\ell+2m_{n_q}-|\alpha_{n_q}|-|\beta_{n_q}|)}.\]
				Note that it still holds for $q=0$. To obtain the bounds on $\|f_{n_{q+1}}\|_{C^\ell_{x_{n_{q+1}},\xi_0}}$, we use Fa\`a di Bruno's formula together with the bound $\|x_n\|_{C^\ell_{x_{n'},\xi_0}}\lesssim T h^{-\beta(\ell-1)_+}\mathrm{e}^{C(\ell-1)T}$ for $n\le n'$ to obtain
				\begin{align*}
					\|f_{n_{q+1}}\|_{C^\ell_{x_{n_{q+1}},\xi_0}}& \lesssim |\mathcal D^{n_{q+1},n_q}(x_{n_{q+1}},\xi_0)|\times \\ & \sum_{r=1}^\ell h^{-\beta (r+m_{n_q}+2i_{n_q}-|\beta_{n_q}|)} \mathrm{e}^{CT(r+2m_{n_q}-|\alpha_{n_q}|-|\beta_{n_q}|)}\prod_{\ell_1+\ldots+\ell_r=\ell} \big(T h^{-\beta(\ell_i-1)}\mathrm{e}^{C(\ell_i-1)T}\big) \\
					& \lesssim |\mathcal D^{n_{q+1},n_q}| h^{-\beta (\ell+2m_{n_q}+2i_{n_q}-|\beta_{n_q}|)} \sum_{r=1}^\ell T^r \mathrm{e}^{CT(\ell+2m_{n_q}-|\beta_{n_q}|)} \\
					& \lesssim |\mathcal D^{n_{q+1},n_q}| h^{-\beta (\ell+2m_{n_q}+2i_{n_q}-|\beta_{n_q}|)} T^\ell \mathrm{e}^{CT(\ell+2m_{n_q}-|\beta_{n_q}|)}.
				\end{align*}
			
			\end{proof}
		
			\begin{lem} \label{prop: bound Fq} Let $C'>C$ where $C$ is as in the statement of Lemma \ref{lem: bound the fj}. Define $M_q:=\sum_{q'\le q} m_{n_{q'}}$ and $I_q=\sum_{q'\le q} i_{n_{q'}}$. Then for all $q\ge 0$:
				\begin{equation} \label{eq: C ell norm F_q} \|F_{n_{q+1}}\|_{C^\ell_{x_{n_{q+1}},\xi_0}}\lesssim_{\!\! q} |a_0^{(n_{q+1})}|h^{-\beta(\ell+2M_{q}+2I_{q})} \mathrm{e}^{C'T(\ell+2M_q+2I_q)}.\end{equation}
			\end{lem}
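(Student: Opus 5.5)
I will argue by induction on $q$, using the recursive definition $F_{n_{q+1}}=f_{n_{q+1}}\big[\partial_{z_{n_q}}^{\beta_{n_q}}F_{n_q}\big]_{z_{n_q}=z_{n_q}^c(z_{n_{q+1}})}$ together with Lemma \ref{lem: bound the fj}.

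For the base case $q=0$ we have $F_{n_1}=f_{n_1}=\mathcal D^{n_1,0}a_{k_1}^{(0)}$. Lemma \ref{lem: bound the fj} with the convention $m_{n_0}=\beta_{n_0}=0$, $i_{n_0}=k_1$ gives the bound $|\mathcal D^{n_1,0}|h^{-\beta(\ell+2k_1)}\mathrm{e}^{CT\ell}$. Here I interpret the product $|\mathcal D^{n_1,0}|\cdot|a_0^{(0)}|$ as $|a_0^{(n_1)}|$, using the telescoping identity \eqref{eq: defi Dn'n} (and the factor $|a_0^{(0)}|$ that is absorbed in the bound on $a_{k_1}^{(0)}$ from Proposition \ref{short time estimate aj} and Proposition \ref{prop: derivees a0}). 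This gives \eqref{eq: C ell norm F_q} at $q=0$ with $M_0=0$, $I_0=k_1$. (The index shift between $a_0^{(n_{q+1}-1)}$ and $a_0^{(n_{q+1})}$ only costs a bounded factor $|b_0|$ or $|b_0|^{-1}$, which is harmless.)

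For the inductive step, assume \eqref{eq: C ell norm F_q} holds at rank $q-1$. I first bound $G:=\partial_{z_{n_q}}^{\beta_{n_q}}F_{n_q}$ as a function of $(z_{n_q},\xi_0)$. A subtlety is that $F_{n_q}$ is a function of $(x_{n_q},\xi_0)$, and derivatives in $z_{n_q}$ should be read through this parametrization. The induction hypothesis with $\ell+|\beta_{n_q}|$ derivatives gives
\[\|G\|_{C^\ell}\lesssim |a_0^{(n_q)}|h^{-\beta(\ell+|\beta_{n_q}|+2M_{q-1}+2I_{q-1})}\mathrm{e}^{C'T(\ell+|\beta_{n_q}|+2M_{q-1}+2I_{q-1})}.\]
Next I compose with $z_{n_q}=z_{n_q}^c(x_{n_{q+1}},\xi_0)$. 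By Fa\`a di Bruno and Proposition \ref{prop: intermediate points}, where $\|z^c\|_{C^\ell}\lesssim Th^{-\beta(\ell-1)_+}\mathrm{e}^{C(\ell-1)T}$, the exponents of $h^{-\beta}$ are preserved, and we only pick up powers $T^\ell$. These are absorbed by taking $C'$ slightly larger than $C$. Finally I use the Leibniz rule to multiply by $f_{n_{q+1}}$, whose $C^\ell$ norm is given by Lemma \ref{lem: bound the fj}. Adding exponents, $|\beta_{n_q}|$ cancels against $-|\beta_{n_q}|$, leaving $h^{-\beta(\ell+2M_q+2I_q)}$. In the exponential, $C'$ dominates $C$, so the bound closes. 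The amplitude prefactors combine as $|\mathcal D^{n_{q+1},n_q}|\,|a_0^{(n_q)}|\simeq|a_0^{(n_{q+1})}|$, again by the telescoping identity \eqref{eq: defi Dn'n} evaluated at the critical points.

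I expect the main obstacle to be the exponential constants. Each step of the induction threatens to replace $C'$ by $C'+C$, which would give growth like $\mathrm{e}^{qCT}$. The point is that the cumulative exponent $\ell+2M_q+2I_q$ increases by exactly the new orders $2m_{n_q}+2i_{n_q}$, so a single constant $C'>C$ suffices. The only loss is the $q$-dependent implicit constant, which is permitted since $q\le j$, and this is why the statement is written $\lesssim_q$. A second delicate point is the accounting of the ratio $a_0^{(n')}/a_0^{(n)}$: its derivatives must be bounded relative to its own size, via Proposition \ref{prop: derivees a0}, not in absolute terms.
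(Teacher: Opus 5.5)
Your proposal is correct and follows the paper's proof essentially step for step. The shared ingredients are: induction on $q$ with the base case read off from Lemma~\ref{lem: bound the fj}; the induction hypothesis applied at order $\ell+|\beta_{n_q}|$; Fa\`a di Bruno for the composition with $z_{n_q}^c$ and the Leibniz rule against $f_{n_{q+1}}$; cancellation of $\pm|\beta_{n_q}|$; telescoping of the amplitude prefactors via \eqref{eq: defi Dn'n}; and $C'>C$ to close the exponential bookkeeping. Your remark on the one-step shift $a_0^{(n_{q+1}-1)}$ versus $a_0^{(n_{q+1})}$ is a detail the paper glosses over.

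One caveat, which you share with the paper: in the top Fa\`a di Bruno term $r=\ell$ there is no exponential slack. So the factor $T^\ell$ coming from $\|z^c\|_{C^1}\lesssim T$ is not literally absorbed by a single fixed $C'>C$. This is harmless: let the constant increase slightly at each of the finitely many ($q\le j$) induction steps while staying below the prescribed $C'$.
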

			\begin{proof}For $q=0$, this is Lemma \ref{lem: bound the fj}. Assume that the results holds up to index $q-1$. Recall
				\[F_{n_{q+1}}=f_{n_{q+1}}\big[\partial_{z_{n_q}}^{\beta_{n_q}} F_{n_q}\big]_{z_{n_q}=z_{n_q}^c(z_{n_{q+1}})}.\]
				By the induction hypothesis, one has
				\[\|\partial_{z_{n_q}}^{\beta_{n_q}} F_{n_q}\|_{C^\ell}\lesssim a_0^{(n_q)}T^{q+1} h^{-\beta(\ell+2M_{q-1}+2I_{q-1}+|\beta_{n_q}|)}\mathrm{e}^{C'T(\ell+2M_{q-1}+2I_{q-1}+|\beta_{n_q}|)},\]
				and by Lemma \ref{lem: bound the fj}, 
				\[\|f_{n_{q+1}}\|_{C^\ell}\lesssim |\mathcal D^{n_{q+1},n_q}|h^{-\beta(\ell+2m_{n_q}+2i_{n_q}-|\beta_{n_q}|)} \mathrm{e}^{C'T(\ell+2m_{n_q}+2i_{n_q}-|\beta_{n_q}|)}.\]
				Since $m_{n_q}+M_{q-1}=M_q$ and $i_{n_q}+I_{q-1}=I_q$, by taking the product and using Fa\`a di Bruno's formula, we deduce (note that the terms $-|\beta_{n_q}|$ and $+|\beta_{n_q}|$ cancel out):
				\[\|F_{n_{q+1}}\|_{C^\ell}\lesssim |\mathcal D^{n_{q+1},n_0}| h^{-\beta(\ell+2M_q+2I_q)}\mathrm{e}^{C'T(2M_q+2I_q)}\sum_{r=1}^\ell \mathrm{e}^{C'rT}\sum_{\ell_{1}+\ldots+\ell_r=\ell}\prod_{1\le i\le r} T \mathrm{e}^{C(\ell_i-1)T}.\]
				The sum on the right-hand side is smaller than
				\[\sum_{r=1}^\ell \mathrm{e}^{C'rT}T^r \mathrm{e}^{C(\ell-r)T}\lesssim \mathrm{e}^{C'\ell T},\]
				since we have taken $C'>C$.
			\end{proof}
			
			We finally turn to the proof of Proposition \ref{prop: estimees aj n}.
			\begin{proof}[Proof of Proposition \ref{prop: estimees aj n}] For $j=0$, this is Proposition \ref{prop: derivees a0} so there is nothing to do. Thus, we can assume $j\ge 1$.
				
				\textbf{1.} We first consider the case where no derivative falls on the time variable. Then we can invoke Proposition \ref{prop: bound Fq} with $q=j$ (recall that $n_{j+1}=n$ and $M_j+I_j=j$) to obtain
				\begin{equation} \label{eq: bound F n j +1}\|F_{n_{j+1}}\|_{C^\ell}\le C_{j,\ell} h^{-\beta(\ell+2j)} \mathrm{e}^{C'(\ell+2j) T}.\end{equation}
				We sum the $\mathcal O(T^{Cj})$ terms from the sum \eqref{eq: by iterating down}, each of which is bounded as in \eqref{eq: bound F n j +1}. The polynomial factor $T^{Cj}$ can be absorbed in the exponent and we obtain \eqref{eq: bound ajn long time}. 
				
				\textbf{2.} To incorporate time derivatives, one can simply use, for $t$ close to $1$: 
				\[a_j^{(n)}(n+t,x_{n+1},\xi_0)=\sum_{\substack{m+i+k=j \\ k\le j}} \sum_{|\alpha|+|\beta|\le 2m} c_{m,\alpha,\beta}^{(n)} \big(\partial_{z_n}^\alpha b_i^{(n)}(t)\cdot \partial_{z_n}^{\beta} a_k^{(n-1)}(n)\big)\Big|_{z_n=z_n^c(t,x_{n+1},\xi_0)}.\]
				Then the result follows from the estimates on $\|b_i^{(n)}\|_{C^\ell_{t,x_{n+1},x_n}}$, on $\|a_k^{(n-1)}\|_{C^\ell_{x_n,\xi_0}}$ obtained in \textbf{1.}, and on $\|z_n\|_{C^\ell_{t,x_{n+1},\xi_0}}$ from Proposition \ref{prop: intermediate points}.
			\end{proof}

			\subsection{The long time trace formula} \label{sec: long time trace} In this section, we prove the long time trace formula of Proposition \ref{prop: trace A n gamma j}. Recall that $\gamma$ is a periodic orbit of the flow, with period $T_\gamma$, that $\rho_0$ is a point of the orbit and $\chi_{\rho_0}\in S_\nu(T^*M)\cap C^\infty_{\rm comp}(B(\rho,h^\nu))$ is a cutoff localizing in a small ball.

			By \eqref{eq: by taking the traces} and Proposition \ref{prop: propag until Ehrenfest}, we know that for times $|s|\le 2h^\nu$, one has
			\[\operatorname{Tr} U(T_\gamma+s) \mathrm{Op}_h^{M}(\chi_{\rho_0})=\mathrm{e}^{-\mathrm{i}\frac{\pi}{2}\mathfrak m_\gamma}\operatorname{Tr} \mathcal I[\varphi_\delta,a_h^\delta]+\mathcal O(h^{N(1-2\sigma)-C}),\]
			where $\varphi_\delta$ is a generating function for the flow $\mathcal A_{\ell_0}(0)^{-1}\circ \widetilde f_{\ell_0}\circ \Phi^{T_\gamma+s}_\delta\circ \widetilde f_{\ell_0}^{-1}\circ \mathcal A_{\ell_0}(0)$. \medskip
			
			\textbf{1.} To compute the trace, it is convenient to first straighten the piece of unperturbed orbit starting from $\rho_0$. Recalling Lemma \ref{lem: straightening coordinates}, let $\kappa$ be a local symplectomorphism of $T^*\mathbf{R}^d$, which is tangent to the identity at $\tilde \rho_0:=\mathcal A_{\ell_0}(0)^{-1}\widetilde f_{\ell_0}(\rho_0)$, and such that in the new coordinates $(\tilde x,\tilde \xi)=\kappa(x,\xi)$, the short-time unperturbed flow 
			\[\kappa\circ \mathcal A_{\ell_0}(0)^{-1}\circ \widetilde f_{\ell_0}\circ \Phi^s_0\circ \widetilde f_{\ell_0}^{-1}\circ \mathcal A_{\ell_0}(0)\circ \kappa^{-1}\]
			becomes the $s$-translation along the $\tilde x_1$ coordinate, for times $|s|\le 1$. Since $\mathrm{d}\kappa$ is tangent to the identity at $\tilde \rho_0$, the graph of $\kappa$ is locally projectable hence by using the results of \S \ref{subsec: egorov time dep}, we can build a local semiclassical quantization $\mathcal T_\kappa$ of $\kappa$ such that $\mathcal T_{\kappa}^*\mathcal T_\kappa=\mathrm{Id}$. Moreover, $\mathcal T_\kappa$ and $\mathcal T_\kappa^*$ both admit local FIO representations 
			\[\mathcal T_\kappa=\mathcal I[\psi_1,\alpha_h^{(1)}]+\mathcal O(h^\infty)\qquad \text{and}\qquad \mathcal T_\kappa^*=\mathcal I[\psi_{2},\alpha_h^{(2)}]+\mathcal O(h^\infty),\] where $\psi_1,\psi_2$ are generating functions for $\kappa$ and $\kappa^{-1}$ respectively. The composition of these generating functions is simply $\langle x,\eta\rangle$. Since $\mathcal T_\kappa$ is unitary, one has
			\[\operatorname{Tr} \mathcal I[\varphi_\delta,a_h^\delta]=\operatorname{Tr} \mathcal T_{\kappa}^* \mathcal I[\varphi_\delta,a_h^\delta]\mathcal T_\kappa.\]
			The composition formula for FIOs shows that the conjugated operator $\mathcal T_{\kappa}^* \mathcal I[\varphi_\delta,a_h^\delta]\mathcal T_\kappa$ has a further FIO representation
			\[\mathcal T_{\kappa}^* \mathcal I[\varphi_\delta,a_h^\delta]\mathcal T_\kappa=\mathcal I[\tilde \varphi_\delta,\tilde a_h^\delta]+\mathcal O(h^N),\]
			where $\tilde \varphi_\delta$ is a generating function of the perturbed flow
			\[\kappa\circ \mathcal A_{\ell_0}(0)^{-1}\circ \widetilde f_{\ell_0}\circ \Phi^{t}_\delta\circ \widetilde f_{\ell_0}^{-1}\circ \mathcal A_{\ell_0}(0)\circ \kappa^{-1},\]
			obtained by composing $\psi_2$, $\varphi_\delta$ and $\psi_1$. The leading symbol is given by $\tilde a_0=|\partial_{x\eta}^2\tilde \varphi_\delta|^{\frac 12}\chi_{\rho_0}(x,\eta)$. Here again we write $\chi_{\rho_0}$ instead of the function in coordinates by an abuse of notation.
			
			\textbf{2.} We now compute the trace. We drop the tilde on $x$ and $\eta$ to alleviate notation. Let $\Psi_\delta(t,x,\eta):=\tilde \varphi_\delta(t,x, \eta)-\langle x,\eta\rangle+ tE$ and $u_h(t,x,\eta):=\tilde a_h^\delta(t,x,x,\eta) \widehat f(t/L)\omega_\nu(t-t_n)$ denote the phase and amplitude of the propagator, then
			\[\operatorname{Tr}\mathcal I[\tilde \varphi,\tilde a_h^\delta]=\frac{1}{(2\pi h)^d}\int_{\mathbf R^{2d+1}} \mathrm{e}^{\frac{\mathrm{i}}{h} \Psi_\delta(t,x,\eta)}u_h(t,x,\eta)\mathrm{d}x\mathrm{d}\eta\mathrm{d}t.\]
			We perform a stationary phase expansion. The critical manifold is given by 
			\begin{equation} \label{eq: the crit manifold is}\big\{(x,\eta)\in \mathrm{Supp}(\chi_{\rho_0}), \ t\in \mathrm{Supp}(\omega_\nu(\cdot-t_n))~:~ \partial_t \tilde \varphi_\delta=E, \ x=\nabla_\eta \tilde \varphi_\delta, \ \eta=\nabla_x \tilde \varphi_\delta\big\}.\end{equation}
			Because $\tilde \varphi_\delta$ is a generating function for the flow, the condition \eqref{eq: the crit manifold is} means that $(x,\eta)$ is a periodic point of period $t$ of the flow $\Phi^t_\delta$. When $\delta=0$, by the separation property of periodic orbits---and because in the new coordinates the unperturbed flow is just the $s$-translation along the coordinate $x_1$---the critical set is given by $\{t=T_\gamma, \xi_1=E, \ (x_i, \xi_i)_{i\ge 2}=0 \}$. With this fact in mind, we rewrite the trace as
			\[\operatorname{Tr}\mathcal I[\tilde \varphi_\delta,\tilde a_h^\delta]=\frac{1}{(2\pi h)^d}\int_{x_1} \left( \int_{t,x', \eta}  \mathrm{e}^{\frac{\mathrm{i}}{h} \Psi_\delta(t,x,\eta)}u_h(t,x,\eta) \mathrm{d} x'\mathrm{d} \eta \mathrm{d}t\right)\mathrm{d} x_1+\mathcal O(h^\infty).\]
			We view $x_1$ as a fixed parameter and consider the integral in the variable $z=(t,x',\eta)$. In the unperturbed case, there is a unique critical point $z_0=(T_\gamma,E,0)$ of the phase. We set $F(z,\delta)=\nabla_z\Psi$. Let us check the assumptions of the quantitative implicit function theorem \ref{lem: implicit func}. We fix $r_1,r_2>0$, to be chosen below.
			\begin{itemize}
				\item Let $(z,\delta)\in B(z_0,r_1)\times B(0,r_2)$. We have
				\[\|\operatorname{Id}-(\partial_zF(z_0,0))^{-1}\partial_zF(z,\delta)\|\le \|\partial_z F(z_0,0)^{-1}\|\cdot \big(\sup_z\|\partial_z^2 F(z,0)\|r_1+\sup_{z,\delta}\|\partial_\delta\partial_z F\|r_2\big).\]
				By Proposition \ref{prop: intermediate points}, we have $\|\partial_z F(z_0,0)^{-1}\|\lesssim 1$. Moreover, by the estimates on the generating function from Proposition \ref{prop: control all deriv gen fun}, we have
				\[\|\partial_z^2 F\|r_1+\|\partial_\delta\partial_z F\|r_2\lesssim r_1 \mathrm{e}^{Ct}+r_2 \mathrm{e}^{Ct}h^{-2\beta}.\]
				Let $\varepsilon$ satisfy $\mathrm{e}^{CT}=h^{-\varepsilon}$ then define $r_1=ch^{\varepsilon}$ and $r_2=ch^{2\beta+\varepsilon}$ for some small enough constant $c$. This ensures
				\[\|\operatorname{Id}-(\partial_z F(z_0,0))^{-1}\partial_zF(z,\delta)\|\le \frac 12,\]
				for $(z,\delta)\in B(z_0,r_1)\times B(0,r_2)$
				\item We have
				\[C_{r_1,r_2}=\sup_{|z-z_0|\le r_1, \ |\delta|\le r_2} \|\partial_\delta F\|\lesssim h^{-\beta} h^{-\varepsilon},\quad \|M\|=\|\partial_z F(z_0,0)^{-1}\|\lesssim 1,\]
				so that
				\[\frac{r_1}{2C_{r_1,r_2}M}\gtrsim \frac{ch^{\varepsilon}}{2 h^{-\beta-\varepsilon}}\gtrsim h^{\beta+2\varepsilon}.\]
				
			\end{itemize}
			By Lemma \ref{lem: implicit func}, we deduce that for all $|\delta|\le h^{\beta+2\varepsilon}$, there is a unique critical point $z_\delta=(T_\delta,x'_\delta,\eta_\delta)$ of the phase (which depends additionally on $x_1$), moreover
			\begin{equation} \label{eq: derivee zdelta}\partial_\delta z_\delta=-\big(\nabla^2_z \Psi_\delta(z_\delta)\big)^{-1} \partial_\delta \nabla_z \Psi_\delta(z_\delta).\end{equation}
			After lifting back to the manifold $T^*M$, the point $(x_1,x_\delta',\eta_\delta)$ corresponds to a periodic point of the perturbed flow $\Phi^t_\delta$, lying on some periodic orbit $\gamma_\delta$ with period $T_\delta$, contained in the energy level $p_\delta^{-1}(E)$. Therefore, the phase $\Psi_\delta$ at the critical point coincides with the action of $\gamma_\delta$, which we denote $S(\gamma_\delta)$. A byproduct of the proof is that any periodic orbit $\gamma_0$ of the unperturbed flow gives rise to a unique continuous deformation of periodic orbits $\{\gamma_\delta\}$ of the perturbed flow.

			We now perform a stationary phase expansion in the $z$ variable using Proposition \ref{prop: stat phase bis}, and find
			\[
			\operatorname{Tr}\mathcal I[\tilde \varphi,\tilde a_h^\delta]=\mathrm{e}^{\frac{\mathrm{i}}{h}S(\gamma_\delta)}\int_{x_1} \frac{a_0^\delta(z_\delta(x_1),x_1)}{|\nabla^2_z\Psi_\delta(z_\delta)|}\widehat f(T_\delta/L) \omega_\nu(T_\delta-t_n)\mathrm{d}x_1+O(h^{1-2\beta -\varepsilon}).
			\]
			Here we have stressed the dependence of the amplitude in $\delta$. We replace the amplitude by the unperturbed amplitude. Proposition \ref{prop: control all deriv gen fun} and the estimate 
			$|\partial_\delta z_\delta|\lesssim h^{-\beta}\mathrm{e}^{2C t}$ (which follows from \eqref{eq: derivee zdelta}) imply
			\[ \frac{a_0^\delta(z_\delta(x_1),x_1)}{|\nabla^2_z\Psi_\delta(z_\delta)|}\widehat f(T_\delta/L) \omega_\nu(T_\delta-t_n)= \frac{a_0^0(z_0(x_1),x_1)}{|\nabla^2_z\Psi_0(z_0)|}\widehat f(T_\gamma/L) \omega_\nu(T_\gamma-t_n)+\mathcal O(\delta h^{-2\beta-\varepsilon}),\]
			again for $T_\gamma\le \mathfrak c |\log h|$ where $\mathfrak c$ may have decreased.
			At this point, we have obtained
			\[\operatorname{Tr}\mathcal I[\tilde \varphi,\tilde a_h^\delta]=\mathrm{e}^{\frac{\mathrm{i}}{h}S(\gamma_\delta)}\widehat f(T_\gamma/L) \omega_\nu(T_\gamma-t_n)\int_{x_1} \frac{a_0^\delta(z_0(x_1),x_1)}{|\nabla^2_z\Psi_0(z_0)|}\mathrm{d}x_1+O(h^{\alpha-2\beta -\varepsilon}).\]
			Since in the new coordinates the short-time unperturbed flow is just the translation along the $x_1$ coordinate, the Poincar\'e map $\mathrm{d}\Phi^{T_\gamma}_0(\Phi^s(\rho))$ in the new coordinates is independent of $|s|\le 1$; thus it is given by the matrix at $s=0$ in the basis $(\partial_{x_1},\partial_{\xi_1}; (\partial_{x_i})_{i\ge 2},(\partial_{\xi_j})_{j\ge 2})$, which is, by Lemma \ref{lem: matrice differentielle flot}, of the form
			\begin{equation} \label{eq: is given by the matrix at 0}\mathrm{d}\Phi_0^{T_\gamma}=\begin{pmatrix}
					1 & \beta_{T_\gamma} & 0 & 0 \\ 
					0 & 1  & 0 & 0 \\
					0 & 0 & A & 0 \\
					0 & 0 & 0 & A^{-\top} 
			\end{pmatrix}\end{equation}
			at the point $\rho_0$ in coordinates. Note that we have $\operatorname{Span}\big(\{\partial_{x_i},\partial_{\xi_i}\}_{i\ge 2}\big)=E_s\oplus E_u$ at $\rho_0$, in coordinates.
			In particular the lower right block of \eqref{eq: is given by the matrix at 0} is conjugated to the linearized Poincar\'e map of the orbit, which is denoted by $P_\gamma$.
			The Hessian of $\Psi_0$ in the $(t,\xi_1; x',\xi')$ variables is given by
			\[\nabla^2\Psi_0=\begin{pmatrix}
				0 & 1 & 0 & 0 \\
				1 & 0 & 0 & 0 \\
				0 & 0 &0 & A^{-\top}-\mathrm{Id}\\
				0 & 0 & A^{-1} -\mathrm{Id}& 0
			\end{pmatrix}\]
			Observe that
			\[|\mathrm{Id}- P_\gamma|=\begin{vmatrix}A - \mathrm{Id} & 0 \\ 0 & D-\mathrm{Id}\end{vmatrix}=|A|\begin{vmatrix}\mathrm{Id}-A^{-1} & 0 \\ 0& D-\mathrm{Id}\end{vmatrix}=|A|\cdot |\nabla^2_z \Psi_0(z_0)|.\]
			We infer
			\[\frac{|A^{-1}|}{|\nabla^2\Psi_0(z_0)|}=\frac{1}{|\mathrm{Id}- P_\gamma|}.\]
			Since $|\partial_{x\eta}\tilde \varphi_0|^{1/2}=|A^{-1}|^{\frac 12}$, the leading term in the stationary phase expansion is given by
			\[\operatorname{Tr}\mathcal I[\tilde \varphi,\tilde a_h^\delta]=\frac{\mathrm{e}^{\frac{\mathrm{i}}{h} S(\gamma_\delta)}}{|I-P_\gamma|^{\frac 12}}\widehat f(T_\gamma/L)\omega_\nu(T_\gamma-t_n)\int_{x_1} \chi_{\rho_0} \mathrm{d}x_1+\mathcal O(\delta h^{-2\beta -\varepsilon}),\]
			where again we write abusively $\chi_{\rho_0}$ instead of the function in coordinates. The integration along $x_1$ in coordinates is simply the integration along the flow line, thus we end up with
			\[\operatorname{Tr}\mathcal I[\tilde \varphi,\tilde a_h^\delta]=\frac{\mathrm{e}^{\frac{\mathrm{i}}{h} S(\gamma_\delta)}}{|I-P_\gamma|^{\frac 12}}\widehat f(T_\gamma/L)\omega_\nu(T_\gamma-t_n)\int_{\gamma} \chi_{\rho_0}+\mathcal O(\delta h^{-2\beta -\varepsilon}).\]
			This finishes the proof of Proposition \ref{prop: trace A n gamma j}. Proposition \ref{prop: trace A n gamma} follows by summing over the contributions of each small ball along the orbit. This may multiply the error by a factor $\mathrm{e}^{CT}$, which is $\le h^{-\varepsilon}$ provided $T\le \mathfrak c(\alpha,\beta,\varepsilon)|\log h|$.
		
			\subsection{Maslov factors} \label{subsec: elucidating Maslov} Finally, we give an interpretation of the Maslov factor appearing in the trace formula
			
			\subsubsection{Maslov index of a periodic orbit}
			Let $(M,g)$ be a smooth Riemannian manifold. The cotangent bundle $\pi:T^*M\to M$ is equipped with a natural metric coming from the Riemannian structure on $M$, called the \emph{Sasaki metric} and denoted $g_S$, and a canonical \emph{symplectic form} $\omega$. The \emph{vertical bundle} $\mathbf V\subset T(T^*M)$ is defined by $\mathbf V=\ker \mathrm{d}\pi$. The Levi--Civita connection then provides a \emph{horizontal bundle} $\mathbf H\subset T(T^*M)$, moreover there is a natural identification $T(T^*M)=\mathbf H\oplus \mathbf V\simeq TM\oplus TM$.
			\begin{defi}We define an \emph{almost complex structure} $J$ on $T(T^*M)$ by setting 
				\[\omega(J u,\cdot)=g_S(u,\cdot), \qquad \forall u\in T(T^*M).\]
				It satisfies $J^2=-1$ and is given in the decomposition $\mathbf H\oplus \mathbf V\simeq TM\oplus TM$ of $T(T^*M)$ by the matrix $\begin{pmatrix} 0 & 1 \\ -1 & 0\end{pmatrix}$. \end{defi}

			Recall that a subspace $\Lambda\subset T_{(x,\xi)}(T^*M)$ is \emph{Lagrangian} if $\omega_{|\Lambda}=0$. Note that a Lagrangian subspace automatically has dimension $d$. If $\Lambda$ is Lagrangian, then $J\Lambda$ coincides with the orthogonal of $\Lambda$ for the metric $g_S$. Indeed for any $u,v\in \Lambda$, one has
			\[g_S(u,Jv)=\omega(u,J^2v)=-\omega(u,v)=0.\]
			Following the work of Maslov \cite{bookMaslov} on the WKB approximation, Arnold \cite{Arnold1967} introduced an index for closed loops of Lagrangian subspaces:
			\begin{defi}Let $\gamma:[0,1]\to T^*M$ be a closed curve. Fix a smoothly varying orthonormal basis $\mathcal E(t)=(V_1(t),\ldots,V_d(t))$ of $\mathbf V$ along $\gamma$. 
				Also, let $J\mathcal E(t):=(JV_1(t),\ldots,JV_d(t))$ denote a corresponding orthonormal basis of $\mathbf H$, so that $(\mathcal E(t),J\mathcal E(t))$ is a symplectic basis of $T_{\gamma(t)}(T^*M)$.
				
				Let $t\mapsto \Lambda(t)\subset T_{\gamma(t)}(T^*M)$ be a closed curve of Lagrangian subspaces. Fix a smoothly varying orthonormal basis $\mathcal F(t):=(\lambda_1(t),\ldots,\lambda_d(t))$ of $\Lambda(t)$, which is expressed in the basis of $\mathbf H\oplus \mathbf V$ above as a matrix
				\[\begin{pmatrix}
					A_t \\
					C_t
				\end{pmatrix}.\]
				Note that $A_t+\mathrm{i}C_t$ is automatically unitary. The \emph{Arnold--Maslov index} of the loop $\{\Lambda(t)\}$ is defined as the degree of the map
				\[t\mapsto \det\big((A_t-\mathrm{i} C_t)(A_t+\mathrm{i} C_t)^{-1}\big)\in \mathbf C^\times.\]\end{defi}
			We make a few comments.
			\begin{rem}\label{rem: maslov is invariant under} The quantity $\det\big((A_t-\mathrm{i} C_t)(A_t+\mathrm{i} C_t)^{-1}\big)$ is invariant under the following transformations:
				\begin{itemize}
					\item applying an orthogonal change of variable to the frame $(\lambda_1(t),\ldots,\lambda_d(t))$ (which need not preserve the orientation).
					\item applying an orthogonal change of variable to the frame $(V_1(t),\ldots,V_d(t))$.\end{itemize}
				Notably, the index is defined without any orientability assumption on $\mathbf V$ or $\Lambda(t)$.\end{rem}
			\begin{rem}Assume $\mathbf V$ is orientable. If $\Lambda(t)$ is orientable then we can choose $\mathcal F(0)=\mathcal F(1)$, in which case the index is even. Otherwise we can choose 
				\[\mathcal F(1)=(-\lambda_1(0),\lambda_2(0),\ldots,\lambda_d(0))\] and the index is odd.\end{rem}
			\begin{rem} The matrix $A_t$ is invertible along the trajectory if and only if $\Lambda(t)$ remains transverse to $\mathbf V(t)$. In this case, the index of the loop vanishes. Indeed, if $\det A_t$ is nonvanishing, we have
				\[\operatorname{deg} \det(A_t-\mathrm{i}C_t)=\operatorname{deg} \det(\operatorname{Id}-\mathrm{i}C_tA_t^{-1}).\]
				Since $\begin{pmatrix}
					A_t & -C_t \\ C_t & A_t
				\end{pmatrix}$ is symplectic, the matrix $C_tA_t^{-1}$ is real symmetric thus has real spectrum. In particular, the matrices $\operatorname{Id}-\mathrm{i} \varepsilon C_tA_t^{-1}$ are invertible for all $t$ and all $\varepsilon\in [0,1]$. By deformation, we obtain
				\[\operatorname{deg}\det (\operatorname{Id}-\mathrm{i}C_tA_t^{-1})=0.\]
			\end{rem}
			\begin{defi}[Maslov index of a periodic orbit]\label{defi: maslov index periodic orbit} Let $\gamma\subset \Sigma_E$ be a hyperbolic periodic orbit of the Hamiltonian flow generated by a symbol $p$. The \emph{Maslov index} of $\gamma$, denoted $\mathfrak{m}_\gamma$, is the index of the loop of Lagrangian subspaces $E^{wu}(t)=\mathbf{R}H_p\oplus E^u$ along $\gamma(t)$. \end{defi}
			\begin{ex}Consider the geodesic flow on an oriented surface $M$. Let $X$ denote the geodesic vector field on $S^*M$, and $V$ be the vertical vector field (which generates the rotation in the fibers). Lastly, let $X_\bot:=[V,X]$ denote the horizontal vector field. Let $\gamma$ be a hyperbolic periodic orbit of the flow, with period $T_\gamma$. Then $E^u(\gamma(t))\subset \mathbf RX_\bot\oplus \mathbf RV$, and one can write $E^u(\gamma(t))=\mathbf R U(t)$ with $U(t)=a(t)X_\bot+c(t)V$. We can ensure $U(T_\gamma)=\pm U(0)$ with a sign $+$ if $E^u$ is orientable along the orbit, and sign $-$ otherwise. This is immaterial as what really counts is the class $[U(t)]\in \mathbf R^2/\{\pm 1\}$. Identifying $\mathbf RX_\bot\oplus \mathbf RV\simeq \mathbf C$, we write $U(t)=r(t)\mathrm{e}^{\mathrm{i}\theta(t)}$ with $\theta(t)$ continuous. The fact that $[U(T_\gamma)]=[U(0)]$ implies $\theta(T_\gamma)=\theta(0)\pmod \pi$. The Maslov index of the orbit is
				\[\mathfrak m_\gamma=-\frac{\theta(T_\gamma)-\theta(0)}{\pi}\in \mathbf Z.\]
			\end{ex}
			\begin{rem}In the above example, the index $\mathfrak m_\gamma$ counts the number of crossings of $E^{wu}(t)$ with the vertical distribution. A crossing in the clockwise direction counts positively in $\mathfrak m_\gamma$ and negatively otherwise---we keep this convention in order to have consistent notation with the physics literature \cite{creagh1990,robbins1991}. This picture persists in higher dimension. In this case a \emph{crossing} is a time for which $E^{wu}(t)\cap \mathbf V(t)\neq \{0\}$. One associates to any crossing point $t_0$ a \emph{crossing form}, which is a quadratic form $\Gamma(\Lambda,t_0)$ on $\Lambda(t_0)\cap \mathbf V$, whose signature determines the contribution of the crossing to the Maslov index. In the case where the crossing points are all regular (i.e. give rise to a nondegenerate crossing form), one can show that the Maslov index of the path is given by minus the sum of the signatures of $\Gamma(\Lambda,t_i)$ running over all the crossing points $t_i$ along the orbit. For more details on this, see Robbin--Salamon \cite{RobbinSalamon}.	\end{rem}
			\subsubsection{Non-orientable weak-unstable manifold} We explain the necessary changes in the proof of the trace formula when the weak unstable manifold is non-orientable. In this case, one can always choose $(u_1(T_\gamma),\ldots,u_d(T_\gamma))=(-u_1(0),u_2(0),\ldots,u_d(0))$, so that
			\[\mathcal A_{\ell_0}(T_\gamma)=\begin{pmatrix}
				-1 & 0 & 0 & 0\\ 0 & \operatorname{Id} & 0&  0 \\ 0 & 0& -1 & 0 \\ 0 & 0 &0 & \operatorname{Id}
			\end{pmatrix}\mathcal A_{\ell_0}(0).\]
			Thus we need to further evolve the path of matrices to land back on $\mathcal A_{\ell_0}(0)$. To do so, we introduce symplectic rotation matrices
			\[\mathcal B(s)=\begin{pmatrix}
				\cos(s) & 0 & \sin(s) & 0 \\
				0 & I_{d-1} & 0 & 0 \\
				-\sin(s) & 0 & \cos(s) & 0 \\
				0 & 0 & 0 & I_{d-1}
			\end{pmatrix}=:\begin{pmatrix}
				C(s) & S(s) \\ -S(s) & C(s)
			\end{pmatrix}.\] 
			Then, the concatenation $\{\mathcal A(T_\gamma)\mathcal B(s)\}_{s\in [0,\pi]}\star\{\mathcal A(t)\}_{t\in [0,T_\gamma]}$ becomes a closed path of symplectic matrices. We set $\mathcal A(T_\gamma+s):=\mathcal A(T_\gamma)\mathcal B(s)$ for $s\in [0,\pi]$. Then,
			\[\mu_h(\mathcal A(T_\gamma+\pi))=\varepsilon(\gamma) \mu_h(\mathcal A(0)),\]
			for some $\varepsilon(\gamma)\in \{\pm 1\}$. Letting $\mathcal A(t)_c=\begin{pmatrix}
				P(t) & Q(t) \\ \overline{Q}(t) & \overline{P}(t)
			\end{pmatrix}$ denote the complexified matrix, we see that for $s\in [0,\pi]$, one has 
				\[\det(P(T_\gamma+s))=\det(P(T_\gamma))\cdot \mathrm{e}^{-\mathrm{i} s}.\]
				Thus, the degree of the map $t\in [0,T_\gamma+\pi]\mapsto \frac{1}{(\det \overline P_t)^{1/2}}$ is given by
				\[-\frac 14+\deg \{t\in [0,T_\gamma]\mapsto (\det P_t)^{1/2}\}=-\frac 14-\frac 14\deg \{t\in [0,T_\gamma]\mapsto (\det(A_t-\mathrm{i}C_t)(A_t+\mathrm{i}C_t)^{-1})\}.\]
				\[=-\frac 14-\frac 14\mathfrak m_\gamma,\]
				where $\mathfrak m_\gamma$ is the Maslov index of the orbit.
				It follows that $\mu_h(\mathcal A(T_\gamma+\pi))^*=\mathrm{e}^{\mathrm{i}\frac{\pi}{2}} \mathrm{e}^{\mathrm{i} \frac{\pi}{2}\mathfrak m_\gamma}\mu_h(\mathcal A(0))^*$.
				Now, by \cite[eq. (4.26), eq. (4.47)]{Folland} applied with $n=1$, letting $s\mapsto \mu_h(\mathcal B(s))\in \operatorname{Mp}_h(\mathbf{R},2d)$ denote the lift of $s\mapsto \mathcal B(s)\in \operatorname{Sp}(\mathbf{R},2d)$ starting at $\mu_h(\mathcal B(0))=\operatorname{Id}$, we have 
				\[\mu_h(\mathcal B(\pi)) u(x_1,\ldots,x_d)=\mathrm{e}^{-\mathrm{i}\frac{\pi}{2}}u(-x_1,x_2,\ldots,x_d).\]
				It follows that 
				\[\mu_h(\mathcal B(\pi))^* u(x_1,\ldots,x_d)=\mathrm{e}^{\mathrm{i}\pi/2}u(-x_1,\ldots,x_d),\]
				in particular,
				\[\mu_h(\mathcal B(\pi))^*\mathcal I[\varphi_\delta,a_h](x,y)=\mathrm{e}^{\mathrm{i}\frac{\pi}{2}}\int \mathrm{e}^{\frac{\mathrm{i}}{h}(\varphi_\delta(-x_1,x',\eta)-\langle y,\eta\rangle)}a_h(t,-x_1,x',y,\eta)\mathrm{d}\eta=\mathrm{e}^{\mathrm{i} \frac{\pi}{2}}\mathcal I[\tilde \varphi_\delta,\tilde a_h],\]
				where $\tilde \varphi_\delta$ is a generating function for $\mathcal A_{\ell_0}(0)^{-1} \widetilde f_{\ell_0}\Phi_\delta^t \widetilde f_{\ell_0}^{-1}\mathcal A_{\ell_0}(0)$. At the quantum level:
				\[\mathrm{e}^{\mathrm{i}\frac{\pi}{2}} \mathrm{e}^{\mathrm{i}\frac{\pi}{2} \mathfrak m_\gamma}\mu_h(\mathcal A(0))^* \mathrm{e}^{-\frac{\mathrm{i}}{h} tP_{h,\delta}}\operatorname{Op}_h^{M}(\chi_\rho) \mu_h(\mathcal A(0))=\mathrm{e}^{\mathrm{i} \frac{\pi}{2}}\mathcal I[\tilde \varphi_\delta,\tilde a_h].\]
				Thus the $\mathrm{e}^{\mathrm{i}\frac{\pi}{2}}$ factors cancel out and we are left with
				\[\operatorname{Tr} \mathrm{e}^{-\frac{\mathrm{i}}{h} tP_{h,\delta}}\operatorname{Op}_h^{M}(\chi_\rho)=\mathrm{e}^{-\mathrm{i}\frac{\pi}{2}\mathfrak m_\gamma} \operatorname{Tr} \mathcal I[\tilde \varphi_\delta,a_h]+\mathcal O(h^\infty).\]
				It is worth mentioning that when $M$ is orientable, the Maslov index $\mathfrak m_\gamma$ is even if and only if $E^{wu}$ is orientable along the orbit $\gamma$.
				
				\section{Metaplectic operators} \label{sec: metaplectic}
				
				For long times, the quantum propagator may fail to be expressed a FIO of the form \eqref{eq: defi FIO nice form}, due to the formation of caustics, which are locations where the graph of the flow no longer projects diffeomorphically onto the coordinates $(x,\eta)$. One way to address this issue is to perform a symplectic change of coordinates $\kappa:(x,\xi)\mapsto (\tilde x,\tilde \xi)$ in phase space, so that the graph becomes projectable in the new coordinates $(\tilde x,\eta)$. At the level of operators this amounts to composing $U(t)$ with a unitary Fourier integral operator $\widehat \kappa$ quantizing the transformation $\kappa$, meaning studying $\widehat \kappa \circ U(t)$. The action $\widehat \kappa$ on a pseudodifferential operator $\mathrm{Op}_h^{\rm w}(a)$ is given by the Egorov theorem
				\[\widehat \kappa^*\circ \mathrm{Op}_h^{\rm w}(a)\circ \widehat \kappa=\mathrm{Op}_h^{\rm w}(b), \qquad b=a\circ \kappa+\mathcal O(h^2).\] 
				When $\kappa$ is a linear symplectomorphism of $\mathbf R^{2d}$, it can be quantized into a \emph{metaplectic operator}, which has the advantage of yielding an exact Egorov theorem.
				\begin{equation}\label{eq: egorov metaplectique} \widehat \kappa^*\circ \mathrm{Op}_h^{\rm w}(a)\circ \widehat \kappa=\mathrm{Op}_h^{\rm w}(a\circ \kappa).\end{equation}
				Such an operator $\widehat \kappa$ is unique up to multiplication by a complex number of modulus 1. Let us now give a more precise definition.
				
				\begin{defi}[Metaplectic operators] Let $\mathcal A\in \mathrm{Sp}(\mathbf R,2d)$. A \emph{metaplectic operator} associated with $\mathcal A$ is a unitary operator $\mu_h(\mathcal A):L^2(\mathbf R^d)\to L^2(\mathbf R^d)$, satisfying
					\[\mu_h( \mathcal A)^*\mathrm{Op}_h^{\rm w}(a)\mu_h(\mathcal A)=\mathrm{Op}_h^{\rm w}(a\circ \mathcal A), \qquad \text{for all $a\in \mathcal S(\mathbf R^{2d})$}.\]
					Such an operator always exists, and is uniquely defined, up to multiplication by a complex number of modulus $1$. We denote by $\mathrm{Met}_h(\mathbf R^d)$ the set of metaplectic operators on $L^2(\mathbf R^d)$. \end{defi}
				
				\begin{ex}Here are some examples of metaplectic operators:
					\begin{itemize}\item The semiclassical Fourier transform
						\[\mathcal F_h:u\mapsto \frac{1}{(2\pi h)^{d/2}}\int_{\mathbf{R}^d} \mathrm{e}^{ -\frac{\mathrm i}{h} \langle x,\xi\rangle }u(x)\mathrm dx\]
						is a metaplectic operator quantizing the quarter turn $J=\begin{pmatrix}
							0 & \mathrm{Id} \\ -\mathrm{Id} & 0
						\end{pmatrix}$. 
						
						\item Assume $\mathcal A=\begin{pmatrix}A & B \\ C & D\end{pmatrix}\in \operatorname{Sp}(\mathbf{R},2d)$ satisfies $\det(A)\neq 0$. Let $\psi(x,\eta):=\frac 12\langle CA^{-1}x,x\rangle +\langle A^{-1}x,\eta\rangle -\frac 12\langle A^{-1}B\eta,\eta\rangle$. Then, the operator
						\begin{equation} \label{eq: def mu h A upper left block invert}\mu_h^+(\mathcal A)u(x):= \frac{|\det(A)|^{-\frac 12}}{(2\pi h)^d}\int_{\mathbf{R}^{2d}} \mathrm{e}^{\frac{\mathrm i}{h} (\psi(x,\eta)-\langle y,\eta\rangle)} u(y)\mathrm dy\mathrm d\eta\end{equation}
						is a metaplectic operator associated with $\mathcal A$.
				\end{itemize}\end{ex}
				If $\mathcal A$ and $\mathcal B$ are two symplectic linear maps then
				\[\mu_h(\mathcal A)\mu_h(\mathcal B)=c_{\mathcal A,\mathcal B}~\mu_h(\mathcal A\mathcal B), \qquad  \text{with}~ |c_{\mathcal A,\mathcal B}|=1.\]
				There is unfortunately no hope of finding a homomorphism $\mu_h:\mathrm{Sp}(\mathbf R,2d)\to \mathrm{Met}_h(\mathbf R^d)$, that is, satisfying $c_{\mathcal A,\mathcal B}=1$ for all $\mathcal A,\mathcal B$. However, one can build a double valued map $\mu_h:\mathrm{Sp}(\mathbf R,2d)\to \mathrm{Met}_h(\mathbf R^d)$, with image denoted $\mathrm{Mp}_h(\mathbf R,2d)$, such that $\mu_h(I_{2d})=\pm \mathrm{Id}_{L^2(\mathbf R^d)}$ and
				\[\mu_h(\mathcal A\mathcal B)=\pm \mu_h(\mathcal A)\mu_h(\mathcal B), \qquad \forall \mathcal A,\mathcal B\in \mathrm{Sp}(\mathbf R,2d).\]
				Alternatively, one can think of $\mu_h$ as a single-valued map $\mathrm{Sp}(\mathbf R,2d)\to \mathrm{Met}_h(\mathbf R^d)/\{\pm \mathrm{Id}\}$. It turns out that $\mathrm{Mp}_h(\mathbf R,2d)$ is a realization of the universal double cover of $\mathrm{Sp}(\mathbf R,2d)$, known as the \emph{metaplectic group}, in the space of unitary operators on $L^2(\mathbf{R}^d)$. We will give an explicit description of $\mathrm{Mp}_h(\mathbf R,2d)$ in the following. 

In the rest of the section, we follow the presentation of Folland \cite{Folland}. There are a few differences:
				\begin{itemize}
					\item Folland uses a different convention for the Fourier transform than ours. Therefore, factors $\sqrt{2 \pi h}$ appear here and there compared to his work.
					\item In Folland's approach, the conjugation by a metaplectic operator associated with a linear symplectomorphism $\mathcal A$ transforms a point $(x^*,\xi^*)\in \mathbf R^{2d}$ into $\mathcal A(x^*,\xi^*)$, while here it shall transform the linear form $l:=\langle x^*,\cdot\rangle +\langle \xi^*,\cdot\rangle$ into $l\circ \mathcal A$, or in other words transform $(x^*,\xi^*)$ into $\mathcal A^\top (x^*,\xi^*)$.
					\item Folland lets symplectic matrices act on $(\xi,x)$, while we let them act on $(x,\xi)$. Therefore, at some points we have to conjugate symplectic matrices by the swap $(x,\xi)\mapsto (\xi,x)$ given by the matrix $K=\begin{pmatrix}
						0 & \mathrm{Id} \\ \mathrm{Id} & 0
					\end{pmatrix}$.
				\end{itemize}
				
				\subsection{Detour by the Fock representation} While there are no available general explicit formulas (to my knowledge) for $\mu_h:\mathrm{Sp}(\mathbf R,2d)\to U(L^2(\mathbf R^d))$, there is an explicit formula in \emph{Fock representation}. Introduce the \emph{Fock space}
				\[\mathcal F(\mathbf C^d):=L^2(\mathbf C^d,\mathrm{e}^{-\frac{|z|^2}{h}}\mathrm dz)=\Big\{F:\mathbf C^d\to \mathbf C~:~(h\pi)^{-d}\int_{\mathbf C^d} |F(z)|^2 \mathrm{e}^{-\frac{|z|^2}{h}}\mathrm dz<+\infty\Big\}.\]
				Also, let $\mathcal H(\mathbf C^d)$ be the subspace of $\mathcal F$ consisting of holomorphic functions, that is
				\[\mathcal H(\mathbf C^d)=\big\{f\in \mathcal F~:~ \partial_{\bar z_j}f=0, ~ j=1,\ldots,d\big\}.\]
				The space $\mathcal F$ comes with a Hermitian product
				\[\langle F,G\rangle_{\mathcal F}=(h\pi)^{-d}\int_{\mathbf C^d} F(z)\overline{G(z)}\mathrm{e}^{-\frac{|z|^2}{h}}\mathrm dz.\]
				The \emph{Bargmann transform} is defined by 
				\[\mathtt{B}_hf(z):=\frac{1}{(\pi h)^{\frac{3d}{4}}}\int_{\mathbf R^d} \mathrm{e}^{\frac 1h (-\frac 12 z^2 +\sqrt 2 zx -\frac 12 x^2)}f(x)\mathrm dx.\]
				The operator $\mathtt B_h$ is an isometry $L^2(\mathbf R^{d})\to \mathcal F(\mathbf C^d)$ with image $\mathcal H(\mathbf C^d)$. Folland \cite{Folland} uses a different convention and considers
				\[\mathtt B_{\rm F} f(z):=2^{\frac d4}\int_{\mathbf R^d} \mathrm{e}^{-\frac{\pi}{2} z^2-2\pi xz -\pi x^2} f(x)\mathrm dx,\]
				which is an isometry from $L^2(\mathbf R^d)$ to $L^2(\mathbf C^d, \mathrm{e}^{-\pi |z|^2}\mathrm dz)$. These two transforms are related by the following formula:
				\[\mathtt B_h f(z)=\Big(\frac{2}{\pi h}\Big)^{\frac d4} \Big[\mathtt B_{\rm F} \big(f(\sqrt{2\pi h}\bullet)\big)\Big](z/\sqrt{\pi h}).\]
				In other words, letting $T_\alpha$ denote the operator $u\mapsto (T_\alpha u: x\mapsto u(\alpha x))$ (acting either on functions defined on $\mathbf R^d$ or $\mathbf C^d$), one has
				\[\mathtt B_h=\Big(\frac{2}{\pi h}\Big)^{\frac d4} T_{\sqrt{\pi h}}^{-1}\circ \mathtt B_{\rm F}\circ T_{\sqrt{2\pi h}}.\]
				Note that $(\frac{2}{\pi h})^{\frac d4}$ is the correct factor as it ensures that $\mathtt B_h$ and $\mathtt B_{\rm F}$ are both isometries from $L^2(\mathbf R^d)$ to the respective spaces $L^2(\mathbf C^d,\mathrm{e}^{-\frac{|z|^2}{h}}\mathrm dz)$ and $L^2(\mathbf C^d,\mathrm{e}^{-\pi|z|^2}\mathrm dz)$.
				
				\subsection{Action of pseudodifferential operators in Fock space}Given a point $(x^*,\xi^*)\in T^*\mathbf R^d$, we introduce the linear form $l(x,\xi)=\langle x^*,x\rangle+\langle \xi^*,\xi\rangle $ and from now on liberally identify the linear form $l$ with the point $(x^*,\xi^*)$. Recall that for any Schwartz symbol $a\in \mathcal S(\mathbf R^{2d})$, the Weyl quantization of $a$ is given by \cite[Lemma 4.10]{zworski2012semiclassical}:
				\[\mathrm{Op}_h^{\rm w}(a)=\frac{1}{(2\pi h)^{2d}}\int_{\mathbf R^{2d}} \hat a(l)\mathrm{e}^{\frac{\mathrm i}{h} l(x,hD)}\mathrm dl,\]
				where $\hat a(l)=\int_{\mathbf R^{2d}} \mathrm{e}^{-\frac{\mathrm i}{h} l(x,\xi)}a(x,\xi)\mathrm dx\mathrm d\xi$. Letting $\rho_h(\xi^*,x^*):=\mathrm{e}^{\frac{\mathrm i}{h} l(x,hD)}$, it is therefore enough to understand the action of the operators $\rho_h(\xi^*,x^*)$ in Fock space. Given $w=\xi^* +\mathrm{i} x^*$, we introduce
				\[\beta_h(w):=\mathtt B_h\circ \rho_h(\xi^*,x^*)\circ \mathtt B_h^*. \]
				Then, we have the following formula, obtained by rescaling \cite[(1.71)]{Folland}
				\[\beta_h(w) F(z)=\mathrm{e}^{\frac 1h(-\frac{|w|^2}{4} -\frac{z\bar w}{\sqrt 2})} F\big(z+\tfrac{w}{\sqrt 2}\big).\]
				
				\begin{defi}[Complexified matrix] Given a symplectic matrix $\mathcal A$, one defines the \emph{complexified} matrix
					\[\mathcal A_c:=\mathcal W \mathcal A \mathcal W^{-1},\]
					where $\mathcal W$ is the unitary matrix
					\[\mathcal W:=\frac{1}{\sqrt 2}\begin{pmatrix}I & \mathrm{i} I \\ I & -\mathrm{i} I\end{pmatrix}, \qquad \mathcal W^{-1}=\frac{1}{\sqrt 2}\begin{pmatrix}I &  I \\ -\mathrm{i}I & \mathrm{i} I\end{pmatrix}.\]
					The image of $\mathrm{Sp}(2d,\mathbf{R})$ under the mapping $\mathcal A\mapsto \mathcal A_c$ is denoted by $\mathrm{Sp}_c(2d,\mathbf{R})$.\end{defi}
				The complexified matrix expresses the action of $\mathcal A$ in the complex coordinates $(z,\bar z)=(\frac{\xi-\mathrm{i} x}{\sqrt 2},\frac{\xi+\mathrm{i} x}{\sqrt 2})$. Indeed,
				\[\mathcal A\begin{pmatrix}
					\xi \\ x
				\end{pmatrix}=\begin{pmatrix}
					\xi' \\ x'
				\end{pmatrix} \quad \Longleftrightarrow\quad \mathcal A_c\begin{pmatrix}
					z \\ \overline z
				\end{pmatrix}=\begin{pmatrix}
					z' \\ \overline{z'}
				\end{pmatrix}.\]
				Explicitly, if $\mathcal A=\begin{pmatrix} A & B \\ C & D\end{pmatrix}$ then $\mathcal A_c=\begin{pmatrix}
					P & Q \\ \overline Q& \overline{P}
				\end{pmatrix}$, with $2P=A+\mathrm{i}C-\mathrm{i} B+D$ and $2Q=A+\mathrm{i} C+ \mathrm{i} B-D$. We recall the following facts. \begin{lem}{{\cite[(4.18)]{Folland}}} \label{lem: matrices P Q} Let $\mathcal A_c=\begin{pmatrix}P & Q \\ \overline Q & \overline P\end{pmatrix}$ with $P,Q$ complex matrices. Then, 
					\begin{enumerate}[(i)]
						\item The matrix $P$ is invertible, and $\|P\|\ge 1$;
						\item  $\|P^{-1}Q\|^2=\|\overline Q P^{-1}\|^2=1-\|P\|^{-2}<1$;
						\item  $P^{-1}Q$ and $\overline QP^{-1}$ are symmetric.
					\end{enumerate}
				\end{lem}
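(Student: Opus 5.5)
The plan is to compute everything directly from the explicit relation $\mathcal A_c=\mathcal W\mathcal A\mathcal W^{-1}$ and from the fact that $\mathcal A$ is real symplectic. Both facts are encoded in two algebraic identities for $\mathcal A_c$.

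\textbf{Step 1: the two identities.} Since $\mathcal A$ is real, $\mathcal A_c$ commutes with the conjugation-swap $(z,w)\mapsto(\bar w,\bar z)$; this is exactly why the blocks have the form $\begin{pmatrix}P&Q\\ \overline Q&\overline P\end{pmatrix}$. Write the symplectic condition as $\mathcal A^\top J\mathcal A=J$. Transport it through $\mathcal W$ using the quadratic form $\omega$ and the Hermitian structure. One finds that $\mathcal A_c$ preserves the indefinite Hermitian form $G=\operatorname{diag}(I,-I)$, that is, $\mathcal A_c^*G\mathcal A_c=G$ (note $\mathcal W J\mathcal W^{-1}$ is $\pm\mathrm{i}G$). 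One also finds that $\mathcal A_c^\top\begin{pmatrix}0&I\\-I&0\end{pmatrix}\mathcal A_c=\begin{pmatrix}0&I\\-I&0\end{pmatrix}$. Expanding these blockwise gives
\[P^*P-Q^\top\overline Q=I,\qquad P^*Q-Q^\top\overline P=0,\]
together with the companion identities from $\mathcal A_c G\mathcal A_c^*=G$ (valid since $\mathcal A_c^{-1}$ preserves $G$ as well):
\[PP^*-QQ^*=I,\qquad PQ^\top=QP^\top.\]

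\textbf{Step 2: items (i) and (iii).} From $P^*P=I+Q^\top\overline Q\ge I$ we get $|Pv|\ge|v|$ for all $v$. Hence $P$ is invertible and $\|P\|\ge1$, which is (i). For (iii), multiply $PQ^\top=QP^\top$ on the left by $P^{-1}$ and on the right by $P^{-\top}$. This gives $Q^\top P^{-\top}=P^{-1}Q$, i.e. $(P^{-1}Q)^\top=P^{-1}Q$. Similarly, $P^*Q=Q^\top\overline P$ (or its conjugate $P^\top\overline Q=Q^*P$) yields symmetry of $\overline QP^{-1}$.

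\textbf{Step 3: item (ii), the expected main obstacle.} Set $Z=P^{-1}Q$. From $PP^*-QQ^*=I$ we get
\[I-ZZ^*=P^{-1}(PP^*-QQ^*)P^{-*}=(P^*P)^{-1}.\]
Therefore
\[\|Z\|^2=\lambda_{\max}(ZZ^*)=1-\lambda_{\min}\big((P^*P)^{-1}\big)=1-\|P\|^{-2},\]
since $\lambda_{\min}((P^*P)^{-1})=1/\lambda_{\max}(P^*P)=\|P\|^{-2}$. This is $<1$ by (i). For $\overline QP^{-1}$, the symmetry from (iii) gives $\overline QP^{-1}=(\overline QP^{-1})^\top=P^{-\top}Q^*$. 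Its norm equals that of its transpose-conjugate, and the same computation with $P^*P-Q^\top\overline Q=I$ in place of $PP^*-QQ^*=I$ gives $\|\overline QP^{-1}\|^2=1-\|P^{-1}\|_{\ldots}$ reduced again to $1-\|P\|^{-2}$ (using $\|P\|=\|P^\top\|$).

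The delicate part is getting the sign and transpose conventions of the two invariant forms right under the specific $\mathcal W$ and the paper's $(x,\xi)$ versus $(\xi,x)$ ordering. I would check these on the example of the quarter turn $J$ and of a diagonal matrix $\operatorname{diag}(A,A^{-\top})$ before trusting the block identities.
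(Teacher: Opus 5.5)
Your argument is correct. It is essentially the standard proof in Folland that the paper cites without reproducing: you derive $PP^*-QQ^*=I$, $PQ^\top=QP^\top$ and their companions $P^*P-Q^\top\overline Q=I$, $P^*Q=Q^\top\overline P$ from $\mathcal A_c\in\mathrm{Sp}_c(2d,\mathbf{R})$, then read off invertibility, symmetry and the norm identity. The only blemish is the garbled final formula in Step 3, which should read $(\overline QP^{-1})^*(\overline QP^{-1})=P^{-*}Q^\top\overline QP^{-1}=I-(PP^*)^{-1}$; this gives $\|\overline QP^{-1}\|^2=1-\|P\|^{-2}$ directly, without needing the symmetry from (iii).
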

				
				\begin{defi}[Metaplectic operators in Fock representation] Let $\mathcal A_c=\begin{pmatrix} P & Q \\ \overline Q & \overline{P} \end{pmatrix}\in \mathrm{Sp}_c(2d,\mathbf{R})$. We define
					\begin{equation}\label{eq:metaplectic fock space}\nu_h(\mathcal A_c)F(z):=(\pi h)^{-d}\int_{\mathbf C^d} K_{\mathcal A_c}(z,w) F(w)\mathrm{e}^{-\frac{|w|^2}{h}}\mathrm dw,\end{equation}
					where \[K_{\mathcal A_c}(z,w)=(\det P)^{-\frac 12}\exp\Big( \frac 1h \big( \frac 12 z \overline Q P^{-1}z+\bar w P^{-1}z -\frac 12 \bar w P^{-1}Q \bar w\big)\Big).\]
				\end{defi}
				\begin{rem} In the definition of $\nu_h(\mathcal A_c)$ one has to choose a square root of $\det(P)$ hence $\nu_h(\mathcal A_c)$ is defined up to a change of sign. In the following, when writing $\nu_h(\mathcal A_c)$ we implicitly choose a value of $\det(P)^{\frac 12}$.
				\end{rem}
				The operator $\nu_h(\mathcal A_c)$ satisfies the following properties.\begin{prop}[{\cite[(4.37)]{Folland}}] \label{prop: proprs metaplectic fock} For all $\mathcal A_c,\mathcal B_c\in \mathrm{Sp}_c(2d,\mathbf{R})$, the following holds
					\begin{enumerate}[(i)]
						\item $\nu_h(\mathcal A_c)$ is unitary on $\mathcal F(\mathbf C^d)$;
						\item 
						$\nu_h(\mathcal A_c)\beta_h(w)\nu_h(\mathcal A_c)^*=\beta_h(\mathcal A_cw)$ for all $w\in \mathbf C^d$;
						\item $\nu_h(\mathcal A_c)\nu_h(\mathcal B_c)=\pm \nu_h(\mathcal A_c \mathcal B_c)$.
				\end{enumerate}\end{prop}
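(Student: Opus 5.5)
The statement to prove is Proposition \ref{prop: proprs metaplectic fock}: items (i)--(iii) for the Fock-space operators $\nu_h(\mathcal A_c)$. I would not redo Folland's analysis. Instead I would reduce to \cite[(4.37)]{Folland} by the rescaling that already relates $\mathtt B_h$ to $\mathtt B_{\rm F}$.

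\textbf{Step 1 (dictionary).} Set $S:=T_{\sqrt{\pi h}}$, acting on functions on $\mathbf C^d$. Then $S$ intertwines the weighted spaces $L^2(\mathbf C^d,\mathrm{e}^{-\pi|z|^2}\mathrm dz)$ and $\mathcal F(\mathbf C^d)$, up to the explicit constant coming from $\mathrm dz$, which has real dimension $2d$. I would write Folland's metaplectic kernel $K^{\rm F}_{\mathcal A_c}(z,w)=(\det P)^{-1/2}\exp\big(\pi(\tfrac12 z\overline QP^{-1}z+\bar wP^{-1}z-\tfrac12\bar wP^{-1}Q\bar w)\big)$ and check directly that
\[\nu_h(\mathcal A_c)=S^{-1}\circ\nu_{\rm F}(\mathcal A_c)\circ S.\]
Since $K$ is a quadratic exponential, the substitution $z\mapsto z/\sqrt{\pi h}$, $w\mapsto w/\sqrt{\pi h}$ turns $\pi(\cdots)$ into $\frac1h(\cdots)$. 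The normalisations $(\pi h)^{-d}$ and $\mathrm{e}^{-|w|^2/h}$ should match exactly; verifying this is the main bookkeeping. The possible conventions mismatch ($K$-swap or transpose noted earlier in the section) affects only which matrix is called $\mathcal A_c$. Because $\mathcal W$ and $\mathcal A_c$ are the same objects in both normalisations, no conjugation of $\mathcal A_c$ should be needed here.

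\textbf{Step 2 (Weyl operators).} In the same way I would check $\beta_h(w)=S^{-1}\beta_{\rm F}(w/\sqrt{\pi h})\,S$ up to the same scaling of the argument. This is essentially how the displayed formula for $\beta_h$ was obtained from \cite[(1.71)]{Folland}. The map $w\mapsto \mathcal A_c w$ is linear, so it commutes with the rescaling of $w$.

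\textbf{Step 3 (transfer).} Conjugation by a fixed operator that is unitary up to a scalar preserves the algebraic identities. Folland's unitarity of $\nu_{\rm F}$ therefore gives (i), after checking that the constant in $S$ cancels between $S$ and $S^{-1}$. Folland's covariance relation gives (ii), and his group law up to sign gives (iii). For (iii) the sign ambiguity comes only from the choice of $(\det P)^{1/2}$, and $S$ does not touch it.

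The main obstacle I expect is Step 1: matching Folland's conventions. These are his Fourier normalisation, his action on $(\xi,x)$ versus the point/linear-form duality listed above, and the direction of the conjugation in (ii), which must be $\nu\beta\nu^*$ rather than $\nu^*\beta\nu$. If a convention mismatch appears, I would absorb it by replacing $\mathcal A_c$ with its adjoint or inverse, using that $\mathrm{Sp}_c(2d,\mathbf{R})$ is a group stable under these operations. As a sanity check I would test the case $\mathcal A_c=\mathrm{diag}(U,\overline U)$ with $U$ unitary, where $\nu_h$ is just $F\mapsto(\det U)^{-1/2}F(U^{-1}z)$.
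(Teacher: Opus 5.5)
Your proposal is correct and follows the paper's own route: the paper gives no separate proof, only the citation of Folland's (4.37) and the rescaling already used to relate $\mathtt B_h$ to $\mathtt B_{\rm F}$. The dilation $T_{\sqrt{\pi h}}$ is in fact an exact isometry between the two Gaussian-weighted spaces (both weights are probability measures, so no constant has to cancel), it carries Folland's kernel and Weyl operators exactly onto $\nu_h(\mathcal A_c)$ and $\beta_h(w)$, and $w\mapsto \mathcal A_c w=Pw+Q\overline{w}$ commutes with the real rescaling of $w$.

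Two small caveats. First, what transfers is unitarity and covariance on the holomorphic subspace $\mathcal H(\mathbf C^d)$, which is Folland's Fock space; $\nu_h(\mathcal A_c)$ annihilates the orthogonal complement of $\mathcal H(\mathbf C^d)$ in the full weighted $L^2$, so (i) cannot hold there. Second, your fallback of absorbing a convention mismatch by replacing $\mathcal A_c$ with its inverse would not be legitimate, since the kernel is fixed by $(P,Q)$. It is also not needed: integrating by parts in $w$ and differentiating the kernel in $z$ gives
\[
\nu_h(\mathcal A_c)\,\sqrt h\,\partial_z=\big(P^{\top}\sqrt h\,\partial_z-\overline{Q}^{\top}z/\sqrt h\big)\,\nu_h(\mathcal A_c).
\]
Together with $\beta_h(w)=\exp\big(\tfrac{1}{\sqrt2}(w\cdot\partial_z-\overline{w}\cdot z/h)\big)$, this yields (ii) in exactly the stated direction, including the $Q$-dependence that your unitary-diagonal sanity check does not test.
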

				It is an enjoyable feature of working in Fock space to have an explicit formula for $\nu_h(\mathcal A_c)$, for any matrix $\mathcal A_c\in \operatorname{Sp}(2d,\mathbf{R})$. In the next section, we explain how to build the metaplectic representation $\mu_h$ by using the results on $\nu_h$.
				\subsection{Building the metaplectic representation} Recall that we are looking for an operator $\mu_h(\mathcal A)$ such that
				\begin{equation} \label{eq: requirement mu h}\mu_h(\mathcal A)^*\circ \mathrm{e}^{\frac{\mathrm i}{h} l(x,hD)}\circ \mu_h(\mathcal A)=\mathrm{e}^{\frac{\mathrm i}{h} (l\circ \mathcal A)(x,hD)},\qquad \text{for any $l\in \mathbf R^{2d}$}.\end{equation}
				If $l=\langle x^*,\cdot\rangle+\langle \xi^*,\cdot\rangle$ then $l\circ \mathcal A(x,\xi)=\langle \tilde x^*,x\rangle+\langle \tilde \xi^*,\xi\rangle$ with $\binom{\tilde x^*}{\tilde \xi^*}=\mathcal A^\top \binom{x^*}{\xi^*}$, or $\binom{\tilde \xi^*}{\tilde x^*}=K\mathcal A^\top K\binom{\xi}{x}$ where $K=\begin{pmatrix}
					0 & 1 \\ 1 & 0
				\end{pmatrix}$. Recalling the notation $\rho_h(\xi^*,x^*)=\mathrm{e}^{\frac{\mathrm i}{h} l(x,hD)}$, \eqref{eq: requirement mu h} is equivalent to
				\begin{equation} \label{eq: requirement bis}\mu_h(\mathcal A)^* \rho_h(\xi^*,x^*)\mu_h(\mathcal A)=\rho_h(K \mathcal A^\top K(\xi^*,x^*)) \quad \forall (x^*,\xi^*)\in \mathbf{R}^{2d}.\end{equation}
				We now look for a symplectic matrix $\mathcal B$ such that 
				\[\mu_h(\mathcal A)=\mathtt B_h^* \nu_h(\mathcal B_c)\mathtt B_h\]
				satisfies the above requirement. From what's above, we must have
				\[\mathtt B_h^* \nu_h(\mathcal B_c)^*\mathtt B_h\circ  \rho_h(\xi^*,x^*)\circ \mathtt B_h^* \nu_h(\mathcal B_c)\mathtt B_h=\rho_h(K\mathcal A^\top K (\xi^*,x^*)).\]
				Letting $w=\xi^*+\mathrm{i} x^*$, \eqref{eq: requirement bis} is in turn equivalent to
				\[\nu_h(\mathcal B_c)^*\beta_h(w) \nu_h(\mathcal B_c)=\mathtt B_h\rho_h((K\mathcal A^\top K)_cw)\mathtt B_h^*.\]
				Using Proposition \ref{prop: proprs metaplectic fock}, this holds for $\mathcal B=K\mathcal A^\top K$. Note that for $\mathcal A=\begin{pmatrix}
					A & B \\ C & D
				\end{pmatrix}$ we have
				\[K\mathcal A^\top K=\begin{pmatrix}A & -B \\ -C & D\end{pmatrix},\]
				and $(K\mathcal A^\top K)_c=\overline{\mathcal A_c}$. This leads us to the following definition.
				\begin{defi}[Metaplectic representation] For $\mathcal A\in \mathrm{Sp}(2d,\mathbf{R})$ define
					\[\mu_h(\mathcal A):=\mathtt B_h^* \nu_h(\overline{\mathcal A_c})\mathtt B_h.\]
					Again, $\mu_h(\mathcal A)$ is defined up to a change of sign, or can be viewed as a set with two elements. The image of $\operatorname{Sp}(2d,\mathbf{R})$ under $\mu_h$ is denoted by $\operatorname{Mp}_h(\mathbf{R},2d)$.\end{defi}
				Proposition \ref{prop: proprs metaplectic fock} for $\nu_h$ translates to the following result on $\mu_h$.
				\begin{prop}For all $\mathcal A,\mathcal B$ in $\mathrm{Sp}(2d,\mathbf{R})$, we have
					\begin{enumerate}[(i)]
						\item $\mu_h(\mathcal A)$ is unitary on $L^2(\mathbf R^d)$;
						\item $\mu_h(\mathcal A B)=\pm \mu_h(\mathcal A)\mu_h(\mathcal B)$;
						\item For any $a\in \mathcal S(\mathbf{R}^n)$,
						\[\mu_h(\mathcal A)^* \mathrm{Op}_h^{\rm w}(a)\mu_h(\mathcal A)=\mathrm{Op}_h^{\rm w}(a\circ \mathcal A).\]
					\end{enumerate}
				\end{prop}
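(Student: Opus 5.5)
The plan is to transfer each property from $\nu_h$ on Fock space (Proposition \ref{prop: proprs metaplectic fock}) to $\mu_h(\mathcal A)=\mathtt B_h^*\nu_h(\overline{\mathcal A_c})\mathtt B_h$. Two structural facts will be used throughout.

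First, $\mathtt B_h$ is an isometry $L^2(\mathbf R^d)\to\mathcal F(\mathbf C^d)$ with image $\mathcal H(\mathbf C^d)$. Consequently $\mathtt B_h^*\mathtt B_h=\mathrm{Id}$, and $\Pi:=\mathtt B_h\mathtt B_h^*$ is the orthogonal projector onto $\mathcal H(\mathbf C^d)$.

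Second, $\nu_h(\mathcal A_c)$ maps $\mathcal F$ into $\mathcal H$, because the kernel $K_{\mathcal A_c}(z,w)$ is holomorphic in $z$. Being unitary with inverse $\pm\nu_h(\mathcal A_c^{-1})$ (by (iii) of Proposition \ref{prop: proprs metaplectic fock}, which is of the same type), it therefore restricts to a unitary bijection of $\mathcal H$. This gives the identities $\Pi\nu_h=\nu_h$ and $\nu_h\Pi=\Pi\nu_h\Pi=\nu_h\Pi$ on $\mathcal H$, which we will use to commute $\Pi$ past $\nu_h$.

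For (i), compute
\[\mu_h(\mathcal A)^*\mu_h(\mathcal A)=\mathtt B_h^*\nu_h^*\,\Pi\,\nu_h\mathtt B_h=\mathtt B_h^*\nu_h^*\nu_h\mathtt B_h=\mathrm{Id},\]
and similarly $\mu_h(\mathcal A)\mu_h(\mathcal A)^*=\mathtt B_h^*\nu_h\Pi\nu_h^*\mathtt B_h=\mathtt B_h^*\Pi\mathtt B_h=\mathrm{Id}$. The last step uses that $\nu_h^*$ also preserves $\mathcal H$.

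For (ii), first note that $(\mathcal A\mathcal B)_c=\mathcal W\mathcal A\mathcal B\mathcal W^{-1}=\mathcal A_c\mathcal B_c$, and hence $\overline{(\mathcal A\mathcal B)_c}=\overline{\mathcal A_c}\,\overline{\mathcal B_c}$. Then
\[\mu_h(\mathcal A)\mu_h(\mathcal B)=\mathtt B_h^*\nu_h(\overline{\mathcal A_c})\,\Pi\,\nu_h(\overline{\mathcal B_c})\mathtt B_h=\mathtt B_h^*\nu_h(\overline{\mathcal A_c})\nu_h(\overline{\mathcal B_c})\mathtt B_h,\]
and (iii) of Proposition \ref{prop: proprs metaplectic fock} turns the right-hand side into $\pm\mu_h(\mathcal A\mathcal B)$.

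For (iii), the approach is to check the identity on the exponentials $\rho_h(\xi^*,x^*)=\mathrm e^{\frac{\mathrm i}{h}l(x,hD)}$ and then integrate against $\hat a$.

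The key identity to establish is $\mathtt B_h\rho_h(\xi^*,x^*)\mathtt B_h^*=\beta_h(w)$ on $\mathcal H$, with $w=\xi^*+\mathrm i x^*$. This is the rescaled Folland formula, and it also shows that $\beta_h(w)$ preserves $\mathcal H$. Combining it with (ii) of Proposition \ref{prop: proprs metaplectic fock}, applied to $\nu_h(\mathcal B_c)^*$ with $\mathcal B=K\mathcal A^\top K$ (so that $\mathcal B_c=\overline{\mathcal A_c}$), gives
\[\mu_h(\mathcal A)^*\rho_h(\xi^*,x^*)\mu_h(\mathcal A)=\mathtt B_h^*\nu_h(\mathcal B_c)^*\beta_h(w)\nu_h(\mathcal B_c)\mathtt B_h=\rho_h\big(K\mathcal A^\top K(\xi^*,x^*)\big),\]
which is the form \eqref{eq: requirement bis} of the requirement. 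Here the $\Pi$'s are absorbed as in (i). One point needs care: Proposition \ref{prop: proprs metaplectic fock} gives the conjugation $\nu\beta\nu^*$, whereas we need $\nu^*\beta\nu$. So I would apply it to $\mathcal B_c^{-1}$ and use $\nu_h(\mathcal B_c)^*=\pm\nu_h(\mathcal B_c^{-1})$, which follows from unitarity together with (iii).

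Writing $\mathrm{Op}_h^{\rm w}(a)=(2\pi h)^{-2d}\int\hat a(l)\,\mathrm e^{\frac{\mathrm i}{h}l(x,hD)}\,\mathrm dl$, we then get
\[\mu_h(\mathcal A)^*\mathrm{Op}_h^{\rm w}(a)\mu_h(\mathcal A)=(2\pi h)^{-2d}\int\hat a(l)\,\mathrm e^{\frac{\mathrm i}{h}(l\circ\mathcal A)(x,hD)}\,\mathrm dl.\]
The change of variables $l'=l\circ\mathcal A$ has Jacobian $|\det\mathcal A|=1$, and $\hat a(l'\circ\mathcal A^{-1})=\widehat{a\circ\mathcal A}(l')$ by the substitution $(x,\xi)\mapsto\mathcal A(x,\xi)$ in the Fourier integral. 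This yields $\mathrm{Op}_h^{\rm w}(a\circ\mathcal A)$. The integral converges strongly since $a\in\mathcal S$.

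The main obstacle I expect is the bookkeeping of conventions, rather than any analysis. One must verify $\mathtt B_h\rho_h\mathtt B_h^*=\beta_h$ with exactly the sign and scaling conventions used here (Folland's normalisation, his $(\xi,x)$ ordering, and the transpose and $K$-swap), and confirm that the adjoint-versus-inverse issue produces $K\mathcal A^\top K$ rather than its inverse. I would settle these by checking on generators: diagonal matrices, shears $\begin{pmatrix}I&0\\C&I\end{pmatrix}$, and the quarter turn $J$, where $\mu_h$ is respectively a dilation, multiplication by a Gaussian chirp, and the semiclassical Fourier transform $\mathcal F_h$, for which (iii) can be verified directly.
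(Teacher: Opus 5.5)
Your strategy is the paper's: everything is transported from Proposition \ref{prop: proprs metaplectic fock} through $\mathtt B_h$. Your bookkeeping with $\Pi=\mathtt B_h\mathtt B_h^*$ in (i)--(ii), and your observation that (iii) needs $\nu^*\beta\nu$ rather than $\nu\beta\nu^*$, are more careful than the paper's one-line ``translates''.

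There is, however, one false identity in step (iii), copied from the paper's derivation: $\mathcal B=K\mathcal A^\top K$ does \emph{not} satisfy $\mathcal B_c=\overline{\mathcal A_c}$.
\begin{itemize}
\item In fact $K\mathcal A^\top K=\begin{pmatrix}D^\top&B^\top\\ C^\top&A^\top\end{pmatrix}$.
\item The matrix whose complexification is $\overline{\mathcal A_c}$ is $\begin{pmatrix}A&-B\\-C&D\end{pmatrix}$, by $2P=A+D+\mathrm i(C-B)$ and $2Q=A-D+\mathrm i(B+C)$.
\item Since $\mathcal A^{-1}=\begin{pmatrix}D^\top&-B^\top\\-C^\top&A^\top\end{pmatrix}$, that matrix equals $K\mathcal A^{-\top}K$, and therefore $(K\mathcal A^\top K)_c=\overline{\mathcal A_c}^{\,-1}$.
\end{itemize}
For example, take $d=1$ and the rotation $\mathcal A=\begin{pmatrix}\cos\theta&-\sin\theta\\ \sin\theta&\cos\theta\end{pmatrix}$. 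Then $K\mathcal A^\top K=\mathcal A$, so $(K\mathcal A^\top K)_c=\mathrm{diag}(\mathrm e^{\mathrm i\theta},\mathrm e^{-\mathrm i\theta})$, while $\overline{\mathcal A_c}=\mathrm{diag}(\mathrm e^{-\mathrm i\theta},\mathrm e^{\mathrm i\theta})$.

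Taken literally, your two labels contradict each other. With $\mathcal B=K\mathcal A^\top K$, your (correct) inversion step yields $\beta_h(\mathcal B_c^{-1}w)$, i.e.\ $\rho_h(K\mathcal A^{-\top}K(\xi^*,x^*))$. That is the Egorov identity for $a\circ\mathcal A^{-1}$, not for $a\circ\mathcal A$.

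The repair is to keep the definition $\mathcal B_c:=\overline{\mathcal A_c}$ (so $\mathcal B=K\mathcal A^{-\top}K$) and use the correct relation after moving the adjoint:
\[\nu_h(\overline{\mathcal A_c})^*\beta_h(w)\nu_h(\overline{\mathcal A_c})=\beta_h\big(\overline{\mathcal A_c}^{\,-1}w\big)=\beta_h\big((K\mathcal A^\top K)_cw\big).\]
This is exactly \eqref{eq: requirement bis}, because $\mathcal W(\xi^*,x^*)=(w,\bar w)/\sqrt 2$ makes $\mathcal C_cw$ correspond to $\mathcal C(\xi^*,x^*)$. With this, your displayed conclusion holds. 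The rest of (iii), namely integrating against $\hat a$ and substituting $l'=l\circ\mathcal A$, goes through as you wrote it.

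The paper's own derivation reaches the right definition only because its miscomputation $K\mathcal A^\top K=\begin{pmatrix}A&-B\\-C&D\end{pmatrix}$ cancels this same adjoint/inverse slip. This is precisely the point you flagged, and your check on generators would have caught it.

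A smaller remark concerns unitarity on Fock space. $\nu_h(\mathrm{Id})$ has the reproducing kernel $\mathrm e^{\bar wz/h}$, so $\nu_h(\mathcal A_c)=\nu_h(\mathcal A_c)\Pi$. It is unitary on $\mathcal H(\mathbf C^d)$ and vanishes on $\mathcal H^\perp$; it is not unitary on all of $\mathcal F$. So ``unitary on $\mathcal F$ with range in $\mathcal H$'' is an inconsistent premise. The identities you actually use ($\Pi\nu_h=\nu_h$, and $\nu_h^*\nu_h=\mathrm{Id}$ on $\mathcal H$) are nonetheless the right ones.
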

				In certain cases one can give an explicit formula for the operator(s) $\mu_h(\mathcal A)$.
				\begin{prop}\label{prop: met zworski is correct} Assume that the upper-left block of $\mathcal A$ satisfies $\det(A)\neq 0$. Then, the two elements of $\mu_h(\mathcal A)$ are given by
					\begin{equation}\label{met zworski}\mu_h^{\pm}(\mathcal A) u(x)=\pm \frac{|\det(A)|^{-\frac 12}}{(2\pi h)^d}\int_{\mathbf{R}^{2d}} \mathrm{e}^{\frac{\mathrm{i}}{h}(\psi(x,\eta)-\langle x,\eta\rangle)} u(y)\mathrm dy\mathrm d\eta,\end{equation}
					with
					\[\psi(x,\eta)=\frac 12\langle CA^{-1}x,x\rangle +\langle A^{-1}x,\eta\rangle -\frac 12\langle A^{-1}B \eta,\eta\rangle.\]
					In other words, the formula \eqref{eq: def mu h A upper left block invert} gives the correct phase.
				\end{prop}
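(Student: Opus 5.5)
The plan is to compare \eqref{met zworski} with the Fock-space definition $\mu_h(\mathcal A)=\mathtt B_h^*\nu_h(\overline{\mathcal A_c})\mathtt B_h$. (The $\langle x,\eta\rangle$ in the phase of \eqref{met zworski} is a misprint for $\langle y,\eta\rangle$, consistent with \eqref{eq: def mu h A upper left block invert}.) The key fact is that a unitary operator intertwining the Heisenberg operators as in \eqref{eq: requirement mu h} is unique up to a unimodular constant, because the representation $l\mapsto \mathrm{e}^{\frac{\mathrm i}{h}l(x,hD)}$ is irreducible (Stone--von Neumann). The proof therefore splits into two steps. First, show that the operator $T$ defined by \eqref{met zworski} satisfies \eqref{eq: requirement mu h} and is unitary. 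Then $T=c\,\mu_h(\mathcal A)$ with $|c|=1$. Second, pin down $c=\pm1$.

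\textbf{Step 1.} Write $T=\mathcal I[\psi,|\det A|^{-1/2}]$. Since $\psi$ is quadratic, $\nabla_x\psi=CA^{-1}x+A^{-\top}\eta$ and $\nabla_\eta\psi=A^{-1}x-A^{-1}B\eta$. By Proposition \ref{prop: properties cocycle}(2), $\psi$ generates $\mathcal A$.

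For linear $l$, check directly on the kernel that
\[
(l\circ\mathcal A)(x,hD)\,T=T\,l(x,hD).
\]
Apply $x_j$ and $hD_{x_j}$ to the kernel. Using $hD_x \mathrm{e}^{\frac{\mathrm i}{h}\psi}=\nabla_x\psi\,\mathrm{e}^{\frac{\mathrm i}{h}\psi}$, integration by parts in $\eta$ turns $\nabla_\eta\psi$ into $y$ and $\eta$ into $hD_y$ acting on $u$. This gives the identity at the level of the generators. Exponentiating it (both sides are one-parameter groups) yields $\mathrm{e}^{\frac{\mathrm i}{h}(l\circ\mathcal A)}T=T\,\mathrm{e}^{\frac{\mathrm i}{h}l}$.

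For unitarity, note that $T^*T$ commutes with every $\rho_h$, since $\mathcal A$ is invertible and the same argument applies to $T^*$. So $T^*T$ is a scalar by irreducibility. Evaluating on a Gaussian identifies this scalar as $1$; this is where the normalization $|\det A|^{-1/2}$ is used. Hence $T=c\,\mu_h(\mathcal A)$ with $|c|=1$.

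\textbf{Step 2.} Fixing the constant is the main obstacle. The approach is to compute $\mathtt B_h T \mathtt B_h^*$ and compare it with $\nu_h(\overline{\mathcal A_c})$. Both kernels are Gaussians in $(z,\bar w)$. Their quadratic parts must agree by Step 1, so only the prefactors need comparing, and it suffices to evaluate at a single point: take $z=w=0$, i.e.\ compare $\langle T\phi_0,\phi_0\rangle$ for the ground state $\phi_0=\mathtt B_h^*1$. On the Fock side this pairing equals $(\det\overline P)^{-1/2}$, with $\overline P=\tfrac12(A-\mathrm iC+\mathrm iB+D)$. On the $T$ side it is an explicit Gaussian integral in $(x,y,\eta)$. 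That integral equals $|\det A|^{-1/2}$ times $\det(\cdot)^{-1/2}$ of a complex symmetric matrix with positive definite real part, where the square root is taken by continuation from the real positive case.

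One then needs the determinant identity relating that matrix to $\overline P$ and $A$. With it the two expressions agree up to a fourth root of unity, and the continuous choice of branch along the path forces $c=\pm1$. To avoid tracking branches, I would argue by continuity instead. On the connected set $\{\det A>0\}$ (or each component of $\{\det A\ne0\}$), both $T$ and the chosen lift of $\mu_h$ depend strongly continuously on $\mathcal A$. Hence $c$ is continuous with $c^2$ locally constant. At $\mathcal A=I$ we have $T=\mathrm{Id}=\pm\mu_h(I)$. For the components with $\det A<0$, connect to $\mathrm{diag}(A_0,A_0^{-\top})$ with $A_0$ a reflection, where $T$ is the pullback operator $u\mapsto|\det A_0|^{-1/2}u(A_0^{-1}x)$. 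A direct Fock computation (with $P$ real there) shows that this operator is in $\mu_h$ with sign $\pm1$. This gives $c\in\{\pm1\}$ everywhere on $\{\det A\ne0\}$, which proves the claim.
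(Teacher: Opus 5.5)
\paragraph{The base-point evaluation is wrong, and the statement fails for $\det A<0$.}
Your argument breaks at its last step, and this cannot be repaired: the statement itself is false when $\det A<0$.

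Take your base point $\mathcal A=\operatorname{diag}(A_0,A_0^{-\top})$ with $A_0=\operatorname{diag}(-1,I_{d-1})$. Then $B=C=0$, so $Q=0$ and $P=\tfrac12(A_0+A_0^{-\top})=A_0$. This $P$ is real, but $\det\overline P=-1$, so the prefactor $(\det\overline P)^{-1/2}$ in the kernel of $\nu_h(\overline{\mathcal A_c})$ equals $\pm\mathrm i$. The reproducing property of $\mathrm e^{\bar w\cdot z/h}$ gives $\nu_h(\overline{\mathcal A_c})F(z)=\pm\mathrm i\,F(A_0z)$. Since $\mathtt B_h$ commutes with orthogonal changes of variables, the two elements of $\mu_h(\mathcal A)$ are $\pm\mathrm i\,u(A_0^{-1}x)$, not $\pm u(A_0^{-1}x)$.

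This agrees with the paper's own formula $\mu_h(\mathcal B(\pi))u=\mathrm e^{-\mathrm i\pi/2}u(-x_1,x')$ in \S\ref{subsec: elucidating Maslov}; note that $\mathcal B(\pi)$ has upper-left block $\operatorname{diag}(-1,I_{d-1})$, which is invertible. It is also forced by topology. For $d=1$, the lifts of $-I_2$ square to $-\mathrm{Id}$, because the rotation loop generates $\pi_1(\operatorname{Sp}(2,\mathbf R))$. So they cannot be $\pm$ the parity operator.

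Hence your constant is $c=\pm\mathrm i$ on $\{\det A<0\}$. The correct statement replaces $|\det A|^{-1/2}$ by a complex square root $(\det A)^{-1/2}$, and the proposition as written holds exactly when $\det A>0$. That is the case needed for Lemma \ref{lem: adjoint of mu h (A)} and for the transition matrices $\mathrm d\widetilde g_n$, where $\det\mathrm dg_n>0$.

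\paragraph{The continuity step does not close, even for $\det A>0$.}
A continuous $c$ with $|c|=1$ need not have $c^2$ locally constant. For that you must first know that $c$ takes values in a finite set, which is exactly the determinant identity you postpone.

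A clean way to settle everything is an exact factorization. Write $\mathcal A=L_S\,\Delta_A\,U_R$ with $S=CA^{-1}$, $R=A^{-1}B$, and
\[
L_S=\begin{pmatrix}I&0\\S&I\end{pmatrix},\qquad \Delta_A=\operatorname{diag}(A,A^{-\top}),\qquad U_R=\begin{pmatrix}I&R\\0&I\end{pmatrix}.
\]
Reading off the kernel, the operator $T$ of \eqref{met zworski} factors exactly as $T=T_L\,T_\Delta\,T_U$. Here $T_L$ is multiplication by $\mathrm e^{\frac{\mathrm i}{2h}\langle Sx,x\rangle}$, $T_\Delta u=|\det A|^{-1/2}u(A^{-1}\cdot)$, and $T_U$ is the Fourier multiplier $\mathrm e^{-\frac{\mathrm i}{2h}\langle R\,hD,hD\rangle}$. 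This also gives unitarity for free.

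Now pair each factor with $\phi_0=\mathtt B_h^*1$ and compare with $\langle\mu_h(\cdot)\phi_0,\phi_0\rangle=(\det\overline P)^{-1/2}$.
\begin{itemize}
\item For $L_S$ one gets $\det(I-\tfrac{\mathrm i}{2}S)^{-1/2}=(\det\overline P)^{-1/2}$. The same holds for $U_R$ with $\det(I+\tfrac{\mathrm i}{2}R)^{-1/2}$. So both factors have constant $\pm1$.
\item For $\Delta_A$, the pairing $\langle T_\Delta\phi_0,\phi_0\rangle$ is positive. On the other hand, $\det\overline P=(\det A)^{-1}\det\bigl(\tfrac{I+A^\top A}{2}\bigr)$ is real with the sign of $\det A$.
\end{itemize}
Hence $c^2=\operatorname{sgn}\det A$.

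\paragraph{A smaller slip in Step 1.}
The kernel computation gives $l(x,hD)\,T=T\,(l\circ\mathcal A)(x,hD)$, which is what \eqref{eq: requirement mu h} requires. The identity you wrote, $(l\circ\mathcal A)(x,hD)\,T=T\,l(x,hD)$, would make $T$ a quantization of $\mathcal A^{-1}$.
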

				We mention the following consequence.
				\begin{lem}\label{lem: adjoint of mu h (A)} Let $\mathcal A=\begin{pmatrix}
						A & 0 \\ C & D
					\end{pmatrix}\in \operatorname{Sp}(\mathbf{R},2d)$ with $\det(A)>0$. Then, the adjoint of $\mu_h^+(\mathcal A)$ is given by
					\[\mu_h^+(\mathcal A)^*=\mu_h^+(\mathcal A^{-1}).\]
				\end{lem}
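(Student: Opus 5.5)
The plan is to reduce to the Fock-space facts already recorded, since $\mu_h(\mathcal A)$ is defined there. We need two things: that $\mu_h^+(\mathcal A^{-1})$ is one of the two operators in $\mu_h(\mathcal A^{-1})$, and that it has the correct sign.

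First we identify the set. By unitarity and the multiplicativity $\mu_h(\mathcal A\mathcal A^{-1})=\pm\mu_h(\mathcal A)\mu_h(\mathcal A^{-1})$ with $\mu_h(I)=\pm\operatorname{Id}$, we get $\mu_h(\mathcal A)^*=\mu_h(\mathcal A)^{-1}\in\mu_h(\mathcal A^{-1})$ as sets of two elements. Moreover $\mathcal A^{-1}=\begin{pmatrix}A^{-1}&0\\-D^{-1}CA^{-1}... \end{pmatrix}$ is again lower block triangular, since its upper-left block is $D^\top=A^{-1}$ (symplecticity gives $A^\top D=\mathrm{Id}$). In particular $\det(A^{-1})>0$, so Proposition~\ref{prop: met zworski is correct} applies and $\mu_h(\mathcal A^{-1})=\{\mu_h^\pm(\mathcal A^{-1})\}$. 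Hence $\mu_h^+(\mathcal A)^*=\pm\mu_h^+(\mathcal A^{-1})$, and the only task left is to fix the sign.

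To fix the sign we compute explicitly. For $B=0$ the generating function is $\psi(x,\eta)=\tfrac12\langle CA^{-1}x,x\rangle+\langle A^{-1}x,\eta\rangle$, so the $\eta$-integral in \eqref{met zworski} produces a delta function:
\[\mu_h^+(\mathcal A)u(x)=|\det A|^{-1/2}\,\mathrm{e}^{\frac{\mathrm i}{2h}\langle CA^{-1}x,x\rangle}\,u(A^{-1}x).\]
This is a unimodular Gaussian multiplier composed with a normalized pullback. Its adjoint is obtained by the change of variables $y=A^{-1}x$:
\[\mu_h^+(\mathcal A)^*v(y)=|\det A|^{1/2}\,\mathrm{e}^{-\frac{\mathrm i}{2h}\langle CA^{-1}Ay,Ay\rangle}\,v(Ay).\]
On the other side, $\mathcal A^{-1}=\begin{pmatrix}A^{-1}&0\\C'&A^\top\end{pmatrix}$ with $C'=-A^\top CA^{-1}$. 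Applying the same formula gives
\[\mu_h^+(\mathcal A^{-1})v(y)=|\det A|^{1/2}\,\mathrm{e}^{\frac{\mathrm i}{2h}\langle C'Ay,y\rangle}\,v(Ay).\]
The exponents agree because $\langle C'Ay,y\rangle=-\langle CA^{-1}Ay,Ay\rangle=-\langle Cy,Ay\rangle$. So the two operators coincide with a $+$ sign, which is the claimed identity.

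The main point to watch is the delta-function computation (the factors $(2\pi h)^d$ must cancel correctly) and the exact form of the lower-left block of $\mathcal A^{-1}$. Everything else is bookkeeping. Note also that this argument makes the set-level identification of the first paragraph superfluous, since the explicit formulas settle the identity directly.
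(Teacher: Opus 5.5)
Your proof is correct, but it takes a different route from the paper's.

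\textbf{The paper's route.} The paper argues softly. Since $\det A>0$, it connects $\mathcal A$ to the identity through lower block-triangular symplectic matrices $\mathcal A(t)$ with $\det A(t)>0$. Both $\mu_h^+(\mathcal A(t))^*$ and $\mu_h^+(\mathcal A(t)^{-1})$ quantize $\mathcal A(t)^{-1}$, so they differ by a sign. That sign is constant in $t$ by continuity and equals $+$ at $t=0$.

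\textbf{Your route.} You use $B=0$ to evaluate the $\eta$-integral by Fourier inversion. This writes $\mu_h^+(\mathcal A)$ explicitly as the unimodular factor $\mathrm{e}^{\frac{\mathrm i}{2h}\langle CA^{-1}x,x\rangle}$ times the normalized pullback $|\det A|^{-1/2}u(A^{-1}x)$. You then compare its adjoint with $\mu_h^+(\mathcal A^{-1})$ directly. The phases match because $C^\top=A^\top CA^{-1}$, which follows from the symmetry of $A^\top C$.

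\textbf{What each buys.} Your computation is self-contained. It needs neither the double-cover structure of $\operatorname{Mp}_h$ nor a continuity argument. It also never uses $\det A>0$, so it proves the identity for every invertible $A$ when $B=0$.

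The paper's argument uses lower-triangularity only to ensure two things: that $\mathcal A(t)^{-1}$ also has an invertible upper-left block, and that a path to the identity exists. It therefore adapts to other connected families through the identity where no closed-form simplification is available.

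\textbf{Two small remarks.}
\begin{itemize}
\item Your delta-function step implicitly uses the phase $\psi(x,\eta)-\langle y,\eta\rangle$ of \eqref{eq: def mu h A upper left block invert}, which is correct. The $\langle x,\eta\rangle$ printed in \eqref{met zworski} is a misprint, as the sentence after that formula indicates.
\item The matrix in your first paragraph is garbled. Its lower row should be $(-A^\top CA^{-1},\,A^\top)$, which is what you correctly use later.
\end{itemize}
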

				\begin{proof}Since $\det(A)>0$, we can find a continuous deformation $\mathcal A(t)=\begin{pmatrix}
						A(t) & 0 \\ C(t) & D(t)
					\end{pmatrix}$ from $\mathcal A(0)=\operatorname{Id}$ to $\mathcal A(1)=\mathcal A$ in $\operatorname{Sp}(\mathbf{R},2d)$, and such that $\det A(t)>0$ along the trajectory. Since $\mu_h^+(\mathcal A(t))^*$ and $\mu_h^+(\mathcal A(t)^{-1})$ both quantize $\mathcal A(t)^{-1}$, we know that
					\[\mu_h^+(\mathcal A(t))^*=\pm \mu_h^+(\mathcal A(t)^{-1}),\]
					where the sign $\pm$ is constant along the trajectory. By looking at $t=0$, we deduce that the sign is $+$.
				\end{proof}
			
				Next, we explain how continuous paths on $\mathrm{Sp}(\mathbf R,2d)$ lift to $\mathrm{Mp}_h(\mathbf R,2d)$.
				
				\begin{lem}[Lifting paths of symplectic matrices] Let $\{\mathcal A(t)\}_{t\in [0,1]}$ be a continuous path of symplectic matrices. Let \[\mathcal A(t)_c:=\begin{pmatrix}
						P_t & Q_t \\ \overline Q_t & \overline P_t
					\end{pmatrix}.\]
					Then, there is a unique continuous lift $t\mapsto \mathcal M(t)\in \operatorname{Mp}_h(\mathbf{R},2d)$ of $t\mapsto \mathcal A(t)$, up to the choice of sign of $\mathcal M(0)$. Moreover, if $s\le t$ are close enough, then
					\[\mathcal M(t)\mathcal M(s)^*=\mu_h^+(\mathcal A(t)\mathcal A(s)^{-1}),\]
					where $\mu_h^+(\cdot)$ has been defined in \eqref{met zworski}.
				\end{lem}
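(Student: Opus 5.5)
The statement to prove is the path-lifting lemma. The plan has three steps: existence via covering theory, the local identity $\mathcal M(t)\mathcal M(s)^*=\mu_h^+(\mathcal A(t)\mathcal A(s)^{-1})$ via a continuity-and-sign argument, and a check that $\mu_h^+$ is continuous on the relevant set.

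\textbf{Step 1: existence and uniqueness of the lift.} We use the Fock model. By Proposition \ref{prop: proprs metaplectic fock} and the definition $\mu_h(\mathcal A)=\mathtt B_h^*\nu_h(\overline{\mathcal A_c})\mathtt B_h$, the fiber of $\mu_h$ over $\mathcal A$ is the pair $\pm\,\mathtt B_h^*\nu_h(\overline{\mathcal A_c})\mathtt B_h$. The two elements differ only in the choice of $(\det \overline{P})^{1/2}$.

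Along the path, $\det P_t$ never vanishes, by Lemma \ref{lem: matrices P Q}(i), and it depends continuously on $t$. So a continuous branch $t\mapsto(\det\overline{P_t})^{1/2}$ exists. It is unique once its value at $t=0$ is fixed, by the usual lifting property for the double cover $z\mapsto z^2$ of $\mathbf C^\times$.

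Define $\mathcal M(t)$ with this branch. Continuity in $t$ should be taken in the strong operator topology. It follows from continuity of the kernel $K_{\overline{\mathcal A_c}}(z,w)$ in $t$, together with the uniform bound $\|P^{-1}Q\|<1$ from Lemma \ref{lem: matrices P Q}(ii), which gives uniform Gaussian control of the kernel on compact $t$-intervals.

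For uniqueness, any other continuous lift $\mathcal M'(t)$ satisfies $\mathcal M'(t)=\epsilon(t)\mathcal M(t)$ with $\epsilon(t)\in\{\pm1\}$. The function $\epsilon$ is continuous: pair with a fixed vector $u$ for which $\langle\mathcal M(t)u,u\rangle\neq0$ locally. Hence $\epsilon$ is constant.

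\textbf{Step 2: the local identity.} Set $\mathcal C(s,t):=\mathcal A(t)\mathcal A(s)^{-1}$. When $s=t$ this is the identity, whose upper-left block is invertible with positive determinant. By uniform continuity on $[0,1]^2$, there is $\eta>0$ such that $|t-s|\le\eta$ implies that the upper-left block $A(s,t)$ of $\mathcal C(s,t)$ satisfies $\det A(s,t)>0$. Proposition \ref{prop: met zworski is correct} then applies: $\mu_h^+(\mathcal C(s,t))$ is defined and is one of the two metaplectic operators quantizing $\mathcal C(s,t)$.

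On the other hand, $\mathcal M(t)\mathcal M(s)^*$ is unitary. By the exact Egorov property, conjugation by it transports $\mathrm{Op}_h^{\rm w}(a)$ to $\mathrm{Op}_h^{\rm w}(a\circ\mathcal C(s,t))$, and by Proposition \ref{prop: proprs metaplectic fock}(iii) it lies in $\operatorname{Mp}_h$. Therefore
\[
\mathcal M(t)\mathcal M(s)^*=\epsilon(s,t)\,\mu_h^+(\mathcal C(s,t)),\qquad \epsilon(s,t)\in\{\pm1\}.
\]

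Both sides are continuous on the connected set $\{(s,t)\in[0,1]^2:|t-s|\le\eta\}$. So $\epsilon$ is constant there. At $s=t$ the left side is $\mathcal M(t)\mathcal M(t)^*=\mathrm{Id}$ by unitarity. The right side is $\mu_h^+(I)$, which is the identity: with $\psi(x,\eta)=\langle x,\eta\rangle$, formula \eqref{met zworski} is Fourier inversion. Hence $\epsilon\equiv1$.

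\textbf{Step 3: continuity of $\mu_h^+$ (main obstacle).} The delicate point is justifying that $(s,t)\mapsto\mu_h^+(\mathcal C(s,t))$ is continuous as an operator, strongly on $L^2$. Formula \eqref{met zworski} is only an oscillatory integral.

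The first approach to try is to test against Schwartz functions. There the integral in $y$ is a Fourier transform, and the remaining $\eta$-integral has a Gaussian-type quadratic phase whose coefficients $A^{-1}$, $CA^{-1}$, $A^{-1}B$ vary continuously. Continuity then follows by dominated convergence after the usual integrations by parts in $\eta$. Since the operators are unitary, density extends this to all of $L^2$.

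If that becomes messy, the fallback is to get continuity from the Fock side instead. Write $\mu_h^+(\mathcal C)$ as $\pm$ the Fock-model operator attached to the continuous branch of $(\det\overline P)^{1/2}$. Then read off its sign at $\mathcal C=I$ by matching the leading constant $|\det A|^{-1/2}$, which is positive, against the Fock normalization.
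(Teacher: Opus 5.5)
Your proof is correct and follows the paper's argument: you take a continuous branch of $(\det\overline{P_t})^{1/2}$ in the Fock model, get uniqueness from the two-valuedness plus continuity, and fix the sign in $\mathcal M(t)\mathcal M(s)^*=\pm\mu_h^+(\mathcal A(t)\mathcal A(s)^{-1})$ by continuity and the value $\mathrm{Id}$ at $s=t$. Your Step 3, together with the remarks on the strong operator topology, makes explicit continuity facts that the paper's proof takes for granted.
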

				
				\begin{proof}\textbf{1.} Fix a continuous determination of $t\mapsto \det(\overline P_t)^{\frac 12}$. This gives a continuous determination of $t\mapsto \nu_h(\overline{\mathcal A(t)_c})$, then one can just set
					\[\mathcal M(t):=\mathtt B_h^* \nu_h(\overline{\mathcal A(t)_c}) \mathtt B_h.\]
					If $\mathcal M'(t)$ is another lift of the path, then for all $t$ one has $\mathcal M(t)=\pm \mathcal M'(t)$. Since both paths are continuous (and the operators are unitary) it implies that either $\mathcal M(t)= \mathcal M'(t)$ for all $t$ or $\mathcal M(t)= -\mathcal M'(t)$ for all $t$.
					
					\textbf{2.} For the second claim, notice that for $|t-s|$ small enough, the upper left-block of $\mathcal A(t)\mathcal A(s)^{-1}$ is invertible. Thus,
					\[\mathcal M(t)\mathcal M(s)^*=\pm \mu_h^+(\mathcal A(t)\mathcal A(s)^{-1}),\]
					where $\mu_h^+(\cdot)$ is computed using \eqref{met zworski}. By continuity and because $\mathcal M(s)\mathcal M(s)^*=\operatorname{Id}$ we deduce 
					\[\mathcal M(t)\mathcal M(s)^*=\mu_h^+(\mathcal A(t)\mathcal A(s)^{-1}).\]

				\end{proof}

				The previous result can be specialized to the case of closed paths of symplectic matrices.
				\begin{lem}[Lifting loops of symplectic matrices] \label{lem: lifting loops} Let $\{\mathcal A(t)\}_{t\in [0,1]}$ be a closed path of symplectic matrices, i.e. satisfying $\mathcal A(1)=\mathcal A(0)$. If $\mu_h(\mathcal A(t))$ is one of the two lifts of $t\mapsto \mathcal A(t)$ to $\mathrm{Mp}_h(\mathbf R,2d)$ one has
					\[\mu_h(\mathcal A(1))=(-1)^d\mu_h(\mathcal A(0)).\]
					Here $d$ is the degree of the map $\det({P_t}): [0,1]\to \mathbf C^\times$, where $P_t$ is the upper-left block of $\mathcal A(t)_c$.
				\end{lem}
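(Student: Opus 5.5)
The idea is to use the explicit Fock-space kernel of $\nu_h$ together with the previous lemma on lifting paths. Take the continuous lift $\mathcal M(t)=\mathtt B_h^*\nu_h(\overline{\mathcal A(t)_c})\mathtt B_h$ built from a continuous determination of $t\mapsto(\det \overline P_t)^{1/2}$. By that lemma this path is, up to an overall sign, the only continuous lift. So it suffices to compare $\mathcal M(1)$ with $\mathcal M(0)$ for this particular choice.

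The kernel $K_{\overline{\mathcal A_c}}(z,w)$ consists of the prefactor $(\det\overline P)^{-1/2}$ times an exponential. The exponential depends only on $\overline{\mathcal A_c}$, i.e. on $\overline P,\overline Q$ and $P^{-1}$-type combinations. Since $\mathcal A(1)=\mathcal A(0)$, the exponential factor at $t=1$ equals the one at $t=0$. The only possible discrepancy comes from the square root. Following $(\det\overline P_t)^{1/2}$ continuously along the loop changes it by the factor $(-1)^{m}$, where $m$ is the winding number of $t\mapsto\det\overline P_t$. Here $\det \overline P_t\neq0$ for all $t$ by Lemma~\ref{lem: matrices P Q}(i). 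Since $\det\overline P_t=\overline{\det P_t}$, this winding number is minus the degree of $t\mapsto\det P_t$. It therefore has the same parity as the degree $d$ in the statement, which gives the sign $(-1)^d$.

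Hence $\nu_h(\overline{\mathcal A(1)_c})=(-1)^d\nu_h(\overline{\mathcal A(0)_c})$, and conjugating by $\mathtt B_h$ yields $\mu_h(\mathcal A(1))=(-1)^d\mu_h(\mathcal A(0))$. The other lift differs from this one by a global sign, so the identity holds for it as well.

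The only point requiring care is to check that the exponential part of the kernel is genuinely single-valued, i.e. involves no branch choices. This is clear from the formula in \eqref{eq:metaplectic fock space}.
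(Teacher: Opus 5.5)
Your proof is correct and follows the same route as the paper. The exponential part of the Fock kernel of $\nu_h(\overline{\mathcal A(t)_c})$ is single-valued, so the only change around the loop comes from continuing $(\det\overline P_t)^{-1/2}$, which picks up the sign $(-1)^d$; conjugating by $\mathtt B_h$ then transfers this to $\mu_h$. You simply make explicit the details the paper leaves implicit, namely the uniqueness of the continuous lift and the fact that $\det\overline P_t=\overline{\det P_t}$ has winding number of the same parity as $d$.
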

				
				\begin{proof}Recalling the formula \eqref{eq:metaplectic fock space} for $\nu_h(\overline{\mathcal A_c})$, one finds 
					\[\nu_h(\overline{\mathcal A(1)_c})=(-1)^d\nu_h(\overline{\mathcal A(0)_c}),\]
					it remains to conjugate the operators by $\mathtt B_h$ to obtain the result.
				\end{proof}
				The degree of the map $t\mapsto \det (P_t)$ can be expressed solely in terms of the initial matrix $\mathcal A$.
				\begin{lem}\label{lem: expression winding det Pt} Let $\mathcal A(t)=\begin{pmatrix}
						A_t & B_t \\ C_t & D_t
					\end{pmatrix}$ be a loop of symplectic matrices, and let $\mathcal A_c(t)=\begin{pmatrix}
						P_t & Q_t \\ \overline Q_t & \overline P_t
					\end{pmatrix}$ denote the complexification of $\mathcal A(t)$. Then, we have the two formulas
					\[\mathrm{deg} \det(P_t)=\operatorname{deg} \det(A_t+\mathrm{i} C_t)=\operatorname{deg} \det(A_t-\mathrm{i}B_t).\]
					
				\end{lem}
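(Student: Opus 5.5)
The plan is to show that the three maps $\mathcal A\mapsto \det P$, $\mathcal A\mapsto\det(A+\mathrm iC)$ and $\mathcal A\mapsto \det(A-\mathrm iB)$ are continuous maps $\operatorname{Sp}(2d,\mathbf R)\to\mathbf C^\times$, and that they coincide on a deformation retract of $\operatorname{Sp}(2d,\mathbf R)$. Since the degree of a loop in $\mathbf C^\times$ is a homotopy invariant, this gives the equality of the three degrees.

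\textbf{Step 1: nonvanishing.} $\det P\neq 0$ is Lemma \ref{lem: matrices P Q}(i). For $A+\mathrm iC$, symplecticity gives that $A^\top C$ is symmetric and that $\binom{A}{C}$ has rank $d$. Hence
\[(A+\mathrm iC)^*(A+\mathrm iC)=A^\top A+C^\top C+\mathrm i(A^\top C-C^\top A)=A^\top A+C^\top C,\]
which is positive definite, so $A+\mathrm iC$ is invertible. Similarly, $\mathcal A^\top$ is symplectic, so $AB^\top$ is symmetric and $(A\ B)$ has rank $d$. This gives $(A-\mathrm iB)(A-\mathrm iB)^*=AA^\top+BB^\top>0$, so $A-\mathrm iB$ is invertible. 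Thus all three degrees are well defined for any loop, and they are invariant under homotopies of loops inside $\operatorname{Sp}(2d,\mathbf R)$.

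\textbf{Step 2: retraction onto $\operatorname{Sp}\cap O(2d)$.} For $\mathcal A\in\operatorname{Sp}(2d,\mathbf R)$, write the polar decomposition $\mathcal A=S\,\mathcal U$ with $S=(\mathcal A\mathcal A^\top)^{1/2}$. Both $S$ and $\mathcal U$ are symplectic: $S$ is positive symmetric symplectic, so $\log S$ is Hamiltonian and $S^{s}$ is symplectic for every $s$. The map $(s,\mathcal A)\mapsto S^{1-s}\,\mathcal U$ is therefore a continuous deformation retraction of $\operatorname{Sp}(2d,\mathbf R)$ onto $\operatorname{Sp}(2d,\mathbf R)\cap O(2d)$. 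Applying it to a loop $\mathcal A(t)$ homotopes the loop, through loops, to a loop in $\operatorname{Sp}\cap O(2d)$. By Step 1, none of the three degrees changes along this homotopy.

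\textbf{Step 3: computation on the unitary group.} An orthogonal symplectic matrix commutes with $J$. Checking $\mathcal MJ=J\mathcal M$ block by block forces the form
\[\mathcal M=\begin{pmatrix}X&Y\\-Y&X\end{pmatrix},\]
so $A=D=X$, $B=Y$, $C=-Y$. Then
\[2P=A+\mathrm iC-\mathrm iB+D=2(X-\mathrm iY),\qquad A+\mathrm iC=X-\mathrm iY,\qquad A-\mathrm iB=X-\mathrm iY.\]
The three functions coincide identically on the retract, so their degrees along the retracted loop agree.

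Combining Steps 2 and 3 gives the lemma. The only delicate point is Step 1: one must make sure the nonvanishing holds on all of $\operatorname{Sp}(2d,\mathbf R)$ and not just on the loop, since this is what makes the degree stable along the homotopy. The symmetric-cross-term computation handles this.
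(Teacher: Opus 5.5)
Your proof is correct, but it takes a different route from the paper's. The paper works pointwise along the loop, using two algebraic identities:
\[P_t+Q_t=A_t+\mathrm iC_t,\qquad P_t+\overline{Q_t}=A_t-\mathrm iB_t.\]
These give $\det(A_t+\mathrm iC_t)=\det(P_t)\det(I+P_t^{-1}Q_t)$ and $\det(A_t-\mathrm iB_t)=\det(I+\overline{Q_t}P_t^{-1})\det(P_t)$. The strict contraction estimates $\|P_t^{-1}Q_t\|<1$ and $\|\overline{Q_t}P_t^{-1}\|<1$ from Lemma \ref{lem: matrices P Q}(ii) then let the paper deform the extra factor $\det(I+\varepsilon R_t)$, $\varepsilon\in[0,1]$, to $1$ inside $\mathbf C^\times$. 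That factor therefore has degree zero.

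You instead prove nonvanishing of all three functions on the whole group $\operatorname{Sp}(2d,\mathbf R)$. For this you need only part (i) of that lemma, together with the positivity of $A^\top A+C^\top C$ and $AA^\top+BB^\top$. You then retract the loop onto $\operatorname{Sp}\cap O(2d)$ by polar decomposition, and there all three functions literally coincide with $\det(X-\mathrm iY)$.

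The paper's argument is shorter and uses no global topology of the symplectic group. Yours needs the polar retraction. The one point you should state more carefully is why $S=(\mathcal A\mathcal A^\top)^{1/2}$ is symplectic: it follows from $\log(\mathcal A\mathcal A^\top)\in\mathfrak{sp}(2d,\mathbf R)$, which holds because $J(\mathcal A\mathcal A^\top)J^{-1}=(\mathcal A\mathcal A^\top)^{-1}$ and the real logarithm of a positive definite matrix commutes with orthogonal conjugation. In return, your argument explains why the three degrees agree. Any continuous nonvanishing function on $\operatorname{Sp}(2d,\mathbf R)$ whose restriction to the maximal compact subgroup is $\det(X-\mathrm iY)$ computes the same winding number.
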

				
				\begin{proof}We prove the first equality. One has $2P_t=A_t+\mathrm{i}C_t-\mathrm{i} B_t+D_t$ and $2Q_t=A_t+\mathrm{i} C_t+ \mathrm{i} B_t-D_t$, so that $P_t +Q_t=A_t+\mathrm{i} C_t$. It follows that
					\[\det(A_t+\mathrm{i} C_t)=\det(I+P_t^{-1}Q_t)\det(P_t),\]
					and thus, letting $R_t=P_t^{-1}Q_t$, we have
					\[\operatorname{deg} \det(A_t+\mathrm{i}C_t)=\operatorname{deg}\det(P_t)+\operatorname{deg} \det(I+R_t).\]
					Since $\|R_t\|<1$ (by Lemma \ref{lem: matrices P Q}), the function $\varepsilon\mapsto \operatorname{deg}\det(I+\varepsilon R_t)$ is continuous on $[0,1]$. Since it vanishes at $\varepsilon=0$, it also vanishes at $\varepsilon=1$. 
					
					For the equality involving $A_t-\mathrm{i}B_t$, just note that $P_t+\overline Q_t=A_t-\mathrm{i}B_t$ and run the same argument but use $\|\overline Q_tP_t^{-1}\|<1$ this time. \end{proof}

	\appendix
		
\section{Semiclassical analysis on manifolds} This appendix provides a self-contained treatment of semiclassical pseudodifferential calculus on closed manifolds. We assume familiarity with the semiclassical pseudodifferential calculus on $\mathbf{R}^d$ and refer to the books of Dimassi--Sj\"ostrand \cite{dimassi} or Zworski \cite{zworski2012semiclassical} for this matter. Great sources of inspiration for these notes are the appendix of \cite{DJN21} and the notes of Guedes Bonthonneau \cite{bonthonneauWeyl,bonthonneauEDP}, which dealt with the $C^\infty_{\rm comp}$ calculus. Here we put emphasis on operators associated with symbols lying in some exotic Kohn--Nirenberg classes. In the whole appendix, $h$ is allowed to vary in $(0,1]$. The only exception is Corollary \ref{cor: exact inverse} where $h$ needs to be taken small enough. Apart from that, all the results are valid in particular for $h=1$, i.e. in the usual pseudodifferential calculus.

This section can be read independently of the rest of the paper.

\label{app: semiclassical on manifolds}
\subsection{Symbol classes}

We introduce the symbol classes that are used in this paper. We use the notation $\langle \xi\rangle:=(1+|\xi|^2)^{1/2}$ for $\xi\in \mathbf{R}^d$. We denote by $\mathrm{i}$ a square root of $-1$. Let $M$ be a smooth, compact, connected manifold, equipped with a smooth volume form $\operatorname{vol}$. The cotangent space of $M$ is denoted by $T^*M$. Functions on $T^*M$ are called \emph{symbols}.

\begin{defi}Let $m\in \mathbf R$. Consider a symbol $a\in C^\infty(T^*M)$. We define the seminorms:
\[\|a\|_{m,k}:=\sup_{|\alpha|+|\beta|\le k} \sup_{(x,\xi)\in T^*M}\langle \xi\rangle^{|\alpha|-m}|\partial_\xi^\alpha\partial_x^\beta a|.\]
Here the derivatives should be computed using a fixed, finite atlas of $M$. We say that $a\in S^m$ if $\|a\|_{m,k}<+\infty$ for all $k\ge 0$.  \end{defi}
In the following, we will allow the symbol $a\in S^m$ to depend on a semiclassical parameter $h\in (0,1]$. Some interesting cases are:
\begin{itemize}

\item The standard Kohn--Nirenberg class $S_h^m$ where $\|a\|_{m,k}\lesssim_{\!\! k} 1$ independently of $h$.
\item The exotic classes of symbols $S_{h,\delta}^m$ where $\|a\|_{m,k}\lesssim_{\!\! k} h^{-\delta k}$ for some $\delta\in [0,1/2)$.
\item The class $S^{\rm comp}_{h,\delta}(T^*M)$ where $a$ is uniformly supported in a compact set and $\|a\|_{C^k}\lesssim h^{-\delta k}$.
\item We will also consider the class $S_{h,\delta}^{m,2}$ of symbols satisfying $\|a\|_{m,k}\lesssim h^{-\delta(k-2)_+}$. This class is of interest when studying quantum dynamics because if $A$ is a pseudodifferential operator with symbol $a$, then the quantum propagator $\exp(-\mathrm{i} h^{-1}t A)$ can be expressed, at least for short times, as a Fourier integral operator (FIO) with a phase function $\varphi$ and amplitude $b$ satisfying $\|\varphi\|_{C^\ell}\lesssim h^{-\delta(\ell-2)_+}$ and $\|b\|_{C^\ell}\lesssim h^{-\delta \ell}$. In fact, the estimate $\|a\|_{m,k}\lesssim h^{-\delta(k-2)_+}$ appears to be a critical regularity assumption on the symbol which ensures that the propagator $\exp(-\mathrm{i} h^{-1}t A)$ fits into the theory of $h$-FIOs with exotic phase and amplitude, see \cite{Blair2022}.

\end{itemize}  
In practice, in most of the appendix we do not think of $a$ as a function of $h$ but rather let $(a,h)$ vary in $S^m\times (0,1]$.

\begin{rem}More generally, we could introduce an auxiliary parameter $\tilde h\in (0,1]$ and work with symbols satisfying $\|a\|_{m,k}\lesssim (h\tilde h^{-1})^{-k/2}$. The classes above correspond to the choice $\tilde h=h^{1-2\delta}$ with $\delta\in [0,1/2)$. All the results stated in the following work in this more general setting: one simply has to replace $h^{\delta}$ by $(h\tilde h^{-1})^{1/2}$ everywhere. 

\end{rem}\medskip

\noindent \textbf{Notation.} Let $\mathbf x$ vary in a space $\mathbf X$ and $\{\bullet\}$ be an auxiliary set of parameters. let $f,g$ be defined in terms of $\mathbf x$ and $\{\bullet\}$. If $\mathcal C=(c_i)_{i\in I}$ is a finite or infinite collection of nonnegative numbers depending on $\mathbf x$ but not $\bullet$, the notation $f\lesssim^{\!\! \operatorname{Pol}_k(\mathcal C)}_{\!\! \bullet} g$ or $f=\mathcal O_\bullet^{\operatorname{Pol}_k(\mathcal C)}(g)$ means that there is a polynomial expression $P_\bullet=P_\bullet(c_{i_1},\ldots,c_{i_n})$ of degree $\le k$ depending on $\bullet$ and defined on $\mathbf{R}_+^I$ such that for all $\mathbf x\in \mathbf X$, we have $f\le P_\bullet(\mathcal C) g$. We simply write $f\lesssim^{\!\! \operatorname{Pol}(\mathcal C)}_{\!\! \bullet} g$ if the degree of the polynomial is not specified.

The space $\mathbf X$ will often be implicit in the following. Typically, we may take $\mathbf X=S^m$, $\mathbf X=S^m\times (0,1]_h$ or $\mathbf X=S^{m_1}\times S^{m_2}\times (0,1]_h$ (data of one or two symbols and a semiclassical parameter $h\in (0,1]$). The auxiliary parameters $\{\bullet\}$ will always be precised.

\subsection{Formal power series and truncation operators} We introduce formal power series in $h$, which will be useful to state results and carry out the proofs. 

In various presentations of the semiclassical calculus, one works with \emph{classical symbols}, which are $h$-dependent functions $a(x,\xi;h)$ on $T^*M$ that admit a formal expansion 
\begin{equation} \label{eq: formal expansion classical symbol}a(x,\xi;h)\sim \sum h^k a_k(x,\xi),\end{equation}
in the sense that $a(x,\xi;h)-\sum_{k<N}h^k a_k(x,\xi)=\mathcal O(h^N)$ for all $N$. In turn, a family of pseudodifferential operators $A=A(h)$ is \emph{classical} if there exists a classical symbol $a$ such that $A=\mathrm{Op}_h^{\rm w}(a)$, up to some $\mathcal O(h^\infty)$ remainder in a suitable class of smoothing operators. This classicality property is invariant by change of coordinates and composition of operators. Also, it allows defining the \emph{principal symbol} of $A=\mathrm{Op}_h^{\rm w}(a)$ as $a_0$. Moreover, $a_0+ha_1$ is defined canonically and invariant under measure-preserving change of coordinates. Another advantage of working with classical symbols which simplifies the exposition is that one can identify coefficients of equal expansions in powers of $h$. By that we mean that if $a\sim \sum h^k a_k$ and $b\sim \sum h^k b_k$ are classical symbols, then $a=b$ implies $a_k=b_k$ for all $k\ge 0$. In particular, one can define truncation operators $T_N(a):=\sum_{k<N} h^k a_k$. \medskip

For \emph{nonclassical symbols} that do not admit an expansion \eqref{eq: formal expansion classical symbol}, and associated \emph{nonclassical operators}, one cannot define a principal symbol associating a genuine function to the operator, nor define truncation operators for symbols. In fact, identifying coefficients in $h$-expansions becomes meaningless if the coefficients depend themselves on $h$. \medskip

To remedy this issue, we introduce formal power series in $h$. To avoid confusion with the actual semiclassical parameter $h$, we denote the formal variable by $\underline{h}$. In practice, the expansions that appear in the calculus are rather in powers of $\mathrm{i} h$ rather than $h$ itself hence we will work with the formal variable $\mathrm{i}\underline h$. Let $\mathcal A:=C^\infty(T^*M)$. Consider a formal power series 
\[\mathbf a=\sum_{n\ge 0} (\mathrm{i}\underline{h})^n a_n\in \mathcal A[\![\mathrm{i}\underline h]\!].\]
We might also write $\mathbf a_n$ to denote the $n$-th coefficient of $\mathbf a$.
\begin{defi} Let $h\in (0,1]$. We define a truncation operator $T_N$ (that depends implicitly on $h$) that maps a formal symbol $\mathbf a\in \mathcal A[\![\mathrm{i}\underline h]\!]$ to the function $T_N(\mathbf a)\in C^\infty(T^*M)$ obtained by truncating the series at order $N-1$ and evaluating $\underline h=h$:
\[T_N(\mathbf a)=\sum_{n<N} (\mathrm{i}h)^n a_n.\]\end{defi}
Any smooth function $a\in C^\infty(T^*M)$ may be identified with a formal power series 
\[\iota(a):=a\cdot (\mathrm{i}\underline{h})^0+0\cdot (\mathrm{i}\underline{h})^1+\ldots,\]
which will usually still be denoted by $a$. Also, any linear operator $L:\mathcal A\to \mathcal A[\![\mathrm{i}\underline h]\!]$ naturally extends to an operator $\mathcal A[\![\mathrm{i}\underline h]\!]\to \mathcal A[\![\mathrm{i}\underline h]\!]$ by linearity, by setting
\[L\Big(\sum_{n\ge 0} (\mathrm{i}\underline h)^n a_n\Big):=\sum_{n\ge 0} (\mathrm{i}\underline h)^n La_n.\]

\begin{rem}Guedes Bonthonneau \cite{bonthonneauEDP} suggests an almost similar approach (see the footnote on page 2 of \cite{bonthonneauEDP}) of introducing an auxiliary parameter $0<\tilde h<h^\delta$ which is a priori independent of $h$ and considering functions $a(x,\xi,\tilde h,h)$ depending on two variables $h,\tilde h$ and satisfying bounds $\|a\|_{m,k}\lesssim \tilde h^{-\delta k}$. This also solves most of the previously raised issues because we can then again make sense of $h$-expansions.  \end{rem}

\subsection{Pseudodifferential calculus on $\mathbf{R}^d$} We recall some basic elements on the exotic semiclassical pseudodifferential calculus on $\mathbf{R}^d$, see the books of Dimassi--Sj\"ostrand \cite{dimassi} or Zworski \cite{zworski2012semiclassical}.

\subsubsection{Composition and change of variables}
\begin{defi}Let $a\in S^m(T^*\mathbf{R}^d)$. The \emph{Weyl quantization} of $a$ is the operator $\mathrm{Op}_h^{\rm w}(a):\mathcal S(\mathbf{R}^d)\to \mathcal S'(\mathbf{R}^d)$ defined by
\[\mathrm{Op}_h^{\rm w}(a)u(x):=\frac{1}{(2\pi h)^d}\int_{\mathbf{R}^d\times \mathbf{R}^d} \mathrm{e}^{\frac{\mathrm{i}}{h} \langle x-y,\xi\rangle} a\big(\frac{x+y}{2},\xi\big)u(y)\mathrm{d}y\mathrm{d}\xi.\]\end{defi}
\begin{rem}	More generally, one can quantize symbols $a(x,y,\xi)\in S^m(\mathbf{R}^{2d}\times \mathbf{R}^d)$ into operators
\[\mathrm{Op}_h(a)u(x):=\frac{1}{(2\pi h)^d}\int_{\mathbf{R}^d\times \mathbf{R}^d} \mathrm{e}^{\frac{\mathrm{i}}{h} \langle x-y,\xi\rangle} a(x,y,\xi)u(y)\mathrm{d}y\mathrm{d}\xi.\]\end{rem}

We recall the composition formula for the Weyl quantization on $\mathbf{R}^d$. If $f\in C^\infty(T^*\mathbf{R}^d\times T^*\mathbf{R}^d)$ we let $f_{|\rm Diag}$ denote the pullback of $f$ by the diagonal embedding $T^*\mathbf{R}^d\subset T^*\mathbf{R}^d\times T^*\mathbf{R}^d$.

\begin{prop}[Composition formula in $\mathbf{R}^d$]\label{prop: composition Rd} Let $a\in S^{m_1}(T^*\mathbf{R}^d)$, $b\in S^{m_2}(T^*\mathbf{R}^d)$. Then, there is a symbol $a\#_h b\in S^{m_1+m_2}(T^*\mathbf{R}^d)$ such that
\[\mathrm{Op}_h^{\rm w}(a)\mathrm{Op}_h^{\rm w}(b)=\mathrm{Op}_h^{\rm w}(a\#_h b).\]
The symbol $a\#_h b$ admits the following asymptotic expansion:
\[a\#_h b(x,\xi)=\sum_{n<N}\frac{(\mathrm{i} h)^n}{2^nn!}\big[A_{2n}(D)(a\otimes b)\big]_{|\rm Diag}+ h^Nr_N.\]
Here $A_{2n}(D)$ is a differential operator of order $2n$ acting on $C^\infty({T^*\mathbf{R}^d}_{(x,\xi)}\times {T^*\mathbf{R}^d}_{(y,\eta)})$ given by
\[A_{2n}(D)=\sigma(D)^n,\qquad \sigma(D_x,D_\xi;D_y,D_\eta)=D_y\cdot D_\xi-D_x\cdot D_\eta.\]
The remainder $r_N$ belongs to $S^{m_1+m_2-N}$. Let $\mathcal C=\{h^{\delta(j+k)}\|a\|_{m_1,j}\|b\|_{m_2,k}~:~ j,k\ge 0\}$. Then one has
\[\|r_N\|_{m_1+m_2-N,k}\lesssim_{\!\! N,k}^{\!\! \operatorname{Pol}_1(\mathcal C)} h^{-\delta(2N+k)}.\]
\end{prop}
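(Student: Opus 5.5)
The plan is the standard oscillatory-integral proof of the Moyal product, tracking powers of $h^{-\delta}$. Writing $\operatorname{Op}_h^{\rm w}(a)\operatorname{Op}_h^{\rm w}(b)$ as a composition of kernels and changing variables gives the exact representation
\[a\#_h b(x,\xi)=\frac{1}{(\pi h)^{2d}}\int_{\mathbf{R}^{4d}}\mathrm{e}^{-\frac{2\mathrm i}{h}\sigma(w_1,w_2)}a(x+w_1)\,b(x+w_2)\,\mathrm{d}w_1\mathrm{d}w_2,\]
where $x$ stands for $(x,\xi)$. Equivalently, $a\#_h b=\big[\mathrm{e}^{\frac{\mathrm ih}{2}\sigma(D)}(a\otimes b)\big]_{|\rm Diag}$, understood as a Fourier multiplier with quadratic, nondegenerate phase. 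I will use this second form.

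Taylor's formula with integral remainder for $s\mapsto \mathrm{e}^{\frac{\mathrm i sh}{2}\sigma(D)}$ on $[0,1]$ gives
\[\mathrm{e}^{\frac{\mathrm ih}{2}\sigma(D)}=\sum_{n<N}\frac{(\mathrm ih)^n}{2^n n!}\sigma(D)^n+\frac{(\mathrm ih/2)^N}{(N-1)!}\int_0^1(1-s)^{N-1}\mathrm{e}^{\frac{\mathrm ish}{2}\sigma(D)}\sigma(D)^N\,\mathrm{d}s.\]
Applying this to $a\otimes b$ and restricting to the diagonal produces exactly the stated expansion. It also gives
\[r_N=c_N\int_0^1(1-s)^{N-1}\Big[\mathrm{e}^{\frac{\mathrm ish}{2}\sigma(D)}\sigma(D)^N(a\otimes b)\Big]_{|\rm Diag}\mathrm ds.\]
The function $\sigma(D)^N(a\otimes b)$ is a finite sum of products $\partial^{\alpha}a\otimes\partial^{\beta}b$ with $|\alpha|=|\beta|=N$. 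Each $\xi$-derivative on $a$ is paired with an $x$-derivative on $b$ and vice versa, so the product lies in $S^{m_1}\otimes S^{m_2}$ with a total gain of $\langle\xi\rangle^{-N}$ in weight (or $\langle\eta\rangle^{-N}$). Its seminorms are bounded by $\|a\|_{m_1,N+j}\|b\|_{m_2,N+k}$.

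What remains is the uniform boundedness of the propagator $\mathrm{e}^{\frac{\mathrm ish}{2}\sigma(D)}$ followed by restriction to the diagonal, acting on the symbol class $S^{m_1}\otimes S^{m_2}$ on $T^*\mathbf{R}^d\times T^*\mathbf{R}^d$. I expect this to be the main obstacle. I would write the action as the oscillatory integral above with $h$ replaced by $sh$, and integrate by parts with the operator $L=(1+|w|^2/(sh))^{-1}(1+\tfrac{sh}{\mathrm i}\ldots)$ adapted to the phase $\sigma(w_1,w_2)/(sh)$. The change of variables $w=\sqrt{sh}\,\tilde w$ makes the phase $h$-independent. Each integration by parts then costs one derivative of the integrand and gains a factor $\sqrt{sh}\,|\tilde w|^{-1}$.

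The derivatives fall on $a$ and $b$. Measured against the seminorms $\|a\|_{m_1,j}$, each one costs at most $h^{-\delta}$, and $h^{1/2}h^{-\delta}\le1$ since $\delta<1/2$. For the weights, I would use Peetre's inequality $\langle\xi+w\rangle^{m}\lesssim\langle\xi\rangle^{m}\langle w\rangle^{|m|}$; the polynomial growth in $w$ is absorbed by performing $M\gg |m_1|+|m_2|+d$ integrations by parts. This yields
\[\Big|\partial^\gamma\big[\mathrm{e}^{\frac{\mathrm ish}{2}\sigma(D)}F\big]_{|\rm Diag}\Big|\lesssim\langle\xi\rangle^{m_1+m_2-N}\sum_{j+k\le |\gamma|+C}\|F\|_{j,k},\]
where $\|F\|_{j,k}$ denotes the mixed seminorms of $F=\sigma(D)^N(a\otimes b)$. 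The bound is linear, i.e.\ $\operatorname{Pol}_1$, in products of seminorms of $a$ and $b$. Some care is needed to ensure the loss in the $\xi$-weight is only $\langle\xi\rangle^{-N}$ and not worse. I would handle this by splitting the $w$-integral according to whether $|w|\le\tfrac12\langle\xi\rangle$ or not, using nonstationary phase on the far region.

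Combining the steps, $\partial^\gamma r_N$ is bounded by $\langle\xi\rangle^{m_1+m_2-N}$ times a linear combination of the quantities $\|a\|_{m_1,N+j}\|b\|_{m_2,N+k}$ with $j+k\le|\gamma|+C$. Rewriting each such quantity as $h^{-\delta(2N+j+k)}\cdot\big(h^{\delta(N+j)}\|a\|_{m_1,N+j}\cdot h^{\delta(N+k)}\|b\|_{m_2,N+k}\big)$, the bracket is an element of $\mathcal C$. The prefactors are at most $h^{-\delta(2N+k+C)}$; I would absorb the constant $C$ into the seminorm indices, i.e.\ into the elements of $\mathcal C$ used, rather than into the power of $h$. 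This gives $\|r_N\|_{m_1+m_2-N,k}\lesssim^{\operatorname{Pol}_1(\mathcal C)}_{N,k}h^{-\delta(2N+k)}$.
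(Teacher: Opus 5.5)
Your route is the standard one: the Moyal formula, Taylor expansion of $\mathrm{e}^{\frac{\mathrm{i}h}{2}\sigma(D)}$ with integral remainder, and uniform bounds for $\mathrm{e}^{\frac{\mathrm{i}sh}{2}\sigma(D)}$ on symbol classes via the rescaling $w=\sqrt{sh}\,\tilde w$ and integration by parts, as in the treatment of $S_\delta(m)$ in Zworski's book. The paper gives no proof of this proposition; it simply recalls it from Dimassi--Sj\"ostrand and Zworski, so there is no different argument to compare with. Two steps, however, are not correct as written and need repair.

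\textbf{The $\xi$-derivative gain is missing.} In your displayed estimate the weight $\langle\xi\rangle^{m_1+m_2-N}$ does not depend on $\gamma$. As stated, it only places $r_N$ in the order-function class $S(\langle\xi\rangle^{m_1+m_2-N})$. The seminorm $\|r_N\|_{m_1+m_2-N,k}$ also requires the extra factor $\langle\xi\rangle^{-|\alpha|}$ for $\partial_\xi^\alpha$.

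Your own tools give this:
\begin{itemize}
\item The operator $\partial_x^\beta\partial_\xi^\alpha$ commutes with $\mathrm{e}^{\frac{\mathrm{i}sh}{2}\sigma(D)}$. Before restriction to the diagonal it becomes $(\partial_x+\partial_y)^\beta(\partial_\xi+\partial_\eta)^\alpha$.
\item Each resulting term $\partial^{\alpha'}a\otimes\partial^{\beta'}b$ then carries a weight $\langle\xi\rangle^{p}\langle\eta\rangle^{q}$ with $p+q=m_1+m_2-N-|\alpha|$.
\item Peetre's inequality gives $\langle\xi\rangle^{p+q}$ directly.
\end{itemize}
To apply Peetre cleanly, first bound by $1$ any further $\xi$- or $\eta$-decay created by the integrations by parts. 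Otherwise the exponent $|p|+|q|$ would grow with the number of integrations by parts. With this done, the splitting $|w|\lessgtr\tfrac12\langle\xi\rangle$ is unnecessary.

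\textbf{The final $h$-bookkeeping is wrong as stated.} The constant $C$ cannot be ``absorbed into the seminorm indices.'' Every element of $\mathcal C$ already carries exactly $h^{\delta(j+k)}$ for its $j+k$ derivatives, so a prefactor $h^{-\delta(2N+k+C)}$ is genuinely weaker than the claim.

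What saves the estimate is the factor you noted earlier and then dropped. The extra derivatives are exactly those produced by integrating by parts in $\tilde w$, and each comes with a factor $(sh)^{1/2}$. Keeping these factors, a typical term is
\[
h^{-\delta(2N+k)}\big((sh)^{1/2}h^{-\delta}\big)^{|\mu|}\,c, \qquad c\in\mathcal C.
\]
Since $s\le 1$ and $\delta\le\tfrac12$, we have $(sh)^{1/2}h^{-\delta}\le 1$. This yields exactly the claimed $h^{-\delta(2N+k)}$.
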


Next, we recall how pseudodifferential operators transform under change of coordinates.

\begin{prop}\label{prop: change of variable pseudo euclidean} Let $a\in C^\infty(T^*\mathbf{R}^d)$, let $\chi$ be a cutoff function supported in an open set $U$, and let $\varphi:U\to V$ be a smooth diffeomorphism preserving the Lebesgue measure. Define
\[B:=\chi \varphi^* \mathrm{Op}_h^{\rm w}(a)\varphi_*\chi.\]
Then, $B=\mathrm{Op}_h^{\rm w}(\varphi_*b)$, for some explicit symbol $b\in S^m(T^*\mathbf{R}^d)$ which admits an expansion
\[b=\sum_{n<N, \ n \ \rm{even}} (\mathrm{i}h)^nb_n+h^{N}r_N,\]
with $b_0=\chi^2 a$. The higher order terms have the form
\[b_n=\mathbf D^{2n}_{\varphi,\chi} a.\]
Here $\mathbf D^{2n}_{\varphi,\chi}$ is a differential operator of order $2n$ with real coefficients that are independent of $\xi$, and always $\ge n$ derivatives hitting the $\xi$ variable. The remainder satisfies $r_N\in S^{m-N}$. Let $\mathcal C=\{h^{\delta k}\|a\|_{m,k}~:~k\ge 0\}$. Then,
\[\|r_N\|_{m-N,k}\lesssim_{\!\! N,k}^{\!\!\operatorname{Pol}_1(\mathcal C)} h^{-\delta(2N+k)}.\]
\end{prop}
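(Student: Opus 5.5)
The plan is to write the Schwartz kernel of $B$ explicitly and reduce it to a Weyl quantization via the Kuranishi trick, then read off the expansion from the standard conversion formula between general and Weyl quantizations. Since $\varphi$ preserves Lebesgue measure, $\varphi_*$ and $\varphi^*$ introduce no Jacobian factor. The kernel of $B$ is therefore
\[K_B(x,y)=\frac{1}{(2\pi h)^d}\int \mathrm{e}^{\frac{\mathrm{i}}{h}\langle \varphi(x)-\varphi(y),\xi\rangle}\, a\Big(\frac{\varphi(x)+\varphi(y)}{2},\xi\Big)\chi(x)\chi(y)\,\mathrm{d}\xi .\]

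First, localize near the diagonal. Insert a cutoff $\psi(x-y)$ equal to $1$ for $|x-y|\le \varepsilon$. On the support of $1-\psi$, the phase gradient in $\xi$ is bounded below, because $\varphi$ is injective and the support of $\chi$ is compact. Repeated integration by parts in $\xi$ then shows this part contributes $\mathcal O(h^\infty)$ in $S^{-\infty}$. Each integration by parts costs a $\xi$-derivative of $a$, which gains $\langle\xi\rangle^{-1}$ and loses at most $h^{-\delta}$, so this is compatible with the $\operatorname{Pol}_1(\mathcal C)$ bookkeeping.

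Next, near the diagonal, write $m=\frac{x+y}{2}$ and expand symmetrically: $\varphi(x)-\varphi(y)=F(x,y)(x-y)$. Taylor expanding $\varphi$ around $m$ shows that only odd powers of $x-y$ appear, so $F(x,y)=\mathrm{d}\varphi(m)+\mathcal O(|x-y|^2)$ and $F(x,y)=F(y,x)$. Similarly $\frac{\varphi(x)+\varphi(y)}{2}=\varphi(m)+\mathcal O(|x-y|^2)$ is even under $x\leftrightarrow y$. Change variables $\xi=F(x,y)^{-\top}\eta$. This produces $B=\mathrm{Op}_h(c)$ with amplitude
\[c(x,y,\eta)=\chi(x)\chi(y)\psi(x-y)\,|\det F(x,y)|^{-1}\,a\Big(\tfrac{\varphi(x)+\varphi(y)}{2},F^{-\top}\eta\Big).\]
By measure preservation, $\det \mathrm{d}\varphi(m)=1$, so $\det F=1+\mathcal O(|x-y|^2)$. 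Write $\tilde c(m,z,\eta)=c(m+z/2,m-z/2,\eta)$. This amplitude is even in $z$ and has real coefficients.

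Then I would convert to Weyl form using \cite[Theorem 4.20]{zworski2012semiclassical}, exactly as in the proof of Proposition \ref{prop: form of the differential operator}:
\[b^\sharp(m,\eta)=\mathrm{e}^{\mathrm{i}h\langle D_z,D_\eta\rangle}\tilde c(m,z,\eta)\big|_{z=0}=\sum_{n<N}\frac{(\mathrm{i}h)^n}{n!}(\partial_z\cdot\partial_\eta)^n\tilde c\big|_{z=0}+h^N r_N.\]
Evenness of $\tilde c$ in $z$ kills all odd $n$. For $n=0$ the term is $\chi(m)^2a(\varphi(m),\mathrm{d}\varphi(m)^{-\top}\eta)$, which is $\varphi_*(\chi^2a)$ in the symplectic lift coordinates; this gives $b_0=\chi^2 a$ after writing $b^\sharp=\varphi_*b$. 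Each $z$-derivative at $z=0$ hits either $F$, $\det F$, or the base point. In each case it produces coefficients independent of $\xi$, through $\varphi$ and its derivatives, together with a factor of $\eta$ when it hits $F^{-\top}\eta$. Since $(\partial_z\cdot\partial_\eta)^n$ carries $n$ derivatives in $\eta$, the resulting operator $\mathbf D^{2n}_{\varphi,\chi}$ has order $2n$ and at least $n$ derivatives on $\xi$. For $r_N$, I would use the integral form of the Taylor remainder of $\mathrm{e}^{\mathrm{i}h\langle D_z,D_\eta\rangle}$ together with the Calder\'on--Vaillancourt-type symbol estimates. Each seminorm of $\tilde c$ is a linear combination of seminorms of $a$, giving the $\operatorname{Pol}_1(\mathcal C)$ dependence. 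The $2N$ derivatives cost $h^{-2\delta N}$ and buy $\langle\xi\rangle^{-N}$ because at least $N$ of them fall on $\xi$.

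The main obstacle is this remainder estimate in the exotic $S^m$ class. The phase $\mathrm{e}^{\mathrm{i}h\langle D_z,D_\eta\rangle}$ must be controlled uniformly in $\langle\eta\rangle$, and the order must drop to $m-N$ while only a polynomial of degree $1$ in the weighted seminorms appears. My approach would be to write the remainder as an oscillatory integral in $(z,\eta)$-dual variables and apply stationary phase with the $\langle\eta\rangle$-weighted integration by parts used in the standard proof. Throughout, I would track that each $x$-derivative of $a$ loses $h^{-\delta}$ and each $\xi$-derivative gains $\langle\xi\rangle^{-1}$.
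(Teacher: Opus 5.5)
Your proof is correct in its main line, and it supplies more than the paper does. The paper gives no proof of this proposition: it recalls it as the standard change-of-variables formula. It only explains, in the remark that follows, why odd powers of $\mathrm{i}h$ are absent. For real $a$, the operator $B$ is formally self-adjoint because $\varphi$ preserves Lebesgue measure, so its Weyl symbol is real, and this kills the odd coefficients.

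That argument takes for granted the shape of the expansion, with universal real-coefficient operators $\mathbf D^{2n}_{\varphi,\chi}$. It also implicitly identifies coefficients of an $h$-expansion, which is only meaningful for $h$-independent $a$ (one then extends by linearity).

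Your midpoint-symmetric Kuranishi factorization obtains all of this structurally. Measure preservation makes the kernel symmetric in $(x,y)$, and $F(x,y)=F(y,x)$ and $\tfrac12(\varphi(x)+\varphi(y))$ are symmetric, so $\tilde c$ is even in $z$. The odd terms of $\sum_n\frac{(\mathrm{i}h)^n}{n!}(\partial_z\cdot\partial_\eta)^n\tilde c|_{z=0}$ then vanish term by term, with real coefficients and no appeal to uniqueness of expansions. So your argument applies directly to $h$-dependent exotic $a$.

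One harmless slip: with your convention $\tilde c(m,z,\eta)=c(m+z/2,m-z/2,\eta)$, the operator is $\mathrm{e}^{\mathrm{i}h\,\partial_z\cdot\partial_\eta}=\mathrm{e}^{-\mathrm{i}h\langle D_z,D_\eta\rangle}$. Your series is the right one, and the sign is immaterial since only even $n$ survive.

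Two pieces of bookkeeping need fixing.

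First, your sentence ``coefficients independent of $\xi$ \dots together with a factor of $\eta$'' is self-contradictory. The $\eta$-factors genuinely survive at $z=0$, through $\partial_z^2F^{-\top}$. Take $\varphi(x_1,x_2)=(x_1,x_2+f(x_1))$ and $\chi\equiv1$ near the point considered. Your formula then gives
\[
b_2=\tfrac18 f''(x_1)\,\partial_{x_2}\partial_{\xi_1}^2a-\tfrac1{24}f'''(x_1)\,\xi_2\,\partial_{\xi_1}^3a .
\]
For $a=\xi_1^3$ one checks directly that $\varphi^*\mathrm{Op}_h^{\rm w}(a)\varphi_*=(hD_{x_1}-f'(x_1)hD_{x_2})^3$ has Weyl symbol $(\xi_1-f'(x_1)\xi_2)^3+\tfrac{h^2}{4}f'''(x_1)\xi_2$. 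So the clause ``coefficients independent of $\xi$'' in the statement cannot hold literally.

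What your computation does prove, and what you should state, is that every term of $b_n$ has the form $\xi^\beta\partial_\xi^\gamma\partial_x^\alpha a$ with $|\gamma|-|\beta|=n$. Each $\partial_\eta$ contributes $+1$ (it either adds a $\xi$-derivative or removes an $\eta$-factor), and each $\partial_z$ contributes $0$. It is this net count, not ``at least $n$ derivatives on $\xi$'', that gives $b_n\in S^{m-n}$ and the order $m-N$ of the remainder.

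Second, the standard bound on $\mathrm{e}^{\mathrm{i}th\,\partial_z\cdot\partial_\eta}$ costs a dimension-dependent number $M$ of extra derivatives, i.e. an extra factor $h^{-\delta M}$. To get exactly $h^{-\delta(2N+k)}$, expand to order $N+M'$ with $M'(1-2\delta)\ge\delta M$. Then absorb the explicit terms of orders $N,\dots,N+M'-1$ into $h^Nr_N$.
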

\begin{rem}The fact that only even powers of $h$ appear in the expansion is due to the fact that $\varphi$ is volume-preserving. Indeed, if $a$ is a real symbol then $\mathrm{Op}_h^{\rm w}(a)$ is formally self-adjoint and since $\varphi$ preserves the volume, we find that $\chi \varphi^* \mathrm{Op}_h^{\rm w}(a)\varphi_*\chi$ is also formally self-adjoint. This forces all the coefficients of odd powers of $\mathrm{i}h$ to vanish in the expansion.
\end{rem}

The next lemma shows that for a differential operator $P$ on $C^\infty(T^*\mathbf{R}^d)$, the property that $P$ has coefficients independent of $\xi$ and that at least $k$ derivatives fall on the $\xi$ variable is invariant under symplectic change of coordinates induced by diffeomorphisms on $\mathbf{R}^d$. Hence this notion is well defined for differential operators on $C^\infty(T^*M)$ when $M$ is an arbitrary manifold.

\begin{lem}Let $P$ be a differential operator on $C^\infty(T^*\mathbf R^d)$ of the form
\begin{equation} \label{eq: diff op with k deriv} P=\sum_{|\alpha|= k}\sum_{|\beta|\le N} a_{\alpha \beta}(x)\partial_\xi^\alpha\partial_x^\beta.\end{equation}
Consider a diffeomorphism $\varphi:U\subset \mathbf{R}^d\to V\subset \mathbf{R}^d$ and a cutoff function $\chi\in C^\infty_{\rm comp}(V)$. Let $\widetilde \varphi:T^*U\to T^*V$ be the symplectic lift of $\varphi$. Then, the operator $Q:=\widetilde \varphi_* \chi P\widetilde \varphi^*\chi$ is still of the form \eqref{eq: diff op with k deriv}, i.e. one has
\[Q=\sum_{|\alpha|=k}\sum_{|\beta|\le N} b_{\alpha\beta}(x)\partial_\xi^\alpha\partial_x^\beta,\]
for some new coefficients $b_{\alpha\beta}$.
\end{lem}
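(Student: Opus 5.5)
Since $\widetilde\varphi$ is a fibrewise linear map, I would compute $Q$ directly by the chain rule and then check the two features of the class \eqref{eq: diff op with k deriv}, namely the form of the coefficients and the count of $\xi$-derivatives. Write $\widetilde\varphi(x,\xi)=(y,\eta)=(\varphi(x),M(x)\xi)$ with $M(x)=(\mathrm{d}\varphi(x))^{-\top}$. Because $\chi$ depends only on the base variable, $Q$ is $\chi\circ\varphi^{-1}$ times the pushforward of $P$ times $\chi$. Multiplication by $\chi$ does not change the form \eqref{eq: diff op with k deriv}, so the whole question is how $\widetilde\varphi_*P\widetilde\varphi^*$ looks in the $(y,\eta)$ variables.

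\textbf{Step 1 (fibre derivatives).} Since $\xi=M(\varphi^{-1}(y))^{-1}\eta$ is linear in $\eta$ with $y$-dependent coefficients, the chain rule gives $\partial_{\xi}=M(x)^{\top}\partial_\eta$ at $x=\varphi^{-1}(y)$. Hence $\partial_\xi^\alpha$ with $|\alpha|=k$ becomes a combination of $\partial_\eta^{\alpha'}$ with $|\alpha'|=k$ and coefficients depending only on $y$. The fibre derivatives commute among themselves and with multiplication by functions of $y$ alone, so there is no reordering error here.

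\textbf{Step 2 (base derivatives).} Differentiating $f(\varphi(x),M(x)\xi)$ gives
\[\partial_{x_j}=\sum_i \partial_j\varphi_i\,\partial_{y_i}+\sum_i\big((\partial_jM)\xi\big)_i\,\partial_{\eta_i}.\]
Substituting $\xi=M^{-1}\eta$, the second sum becomes $\sum_{i,l} c_{jil}(y)\,\eta_l\,\partial_{\eta_i}$. Composing $\partial_x^\beta$ with $|\beta|\le N$ and moving the coefficients to the left by Leibniz produces base derivatives $\partial_y^{\beta'}$ with $|\beta'|\le|\beta|$, together with operators $\eta_l\partial_{\eta_i}$. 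An operator $\eta_l\partial_{\eta_i}$ has degree $0$ under fibre dilations, so every resulting term still contains exactly $k$ net fibre derivatives in the homogeneity sense. Moreover, because $\partial_\eta^{\alpha'}$ sits to the right of the $\partial_x$ factors in \eqref{eq: diff op with k deriv}, the fibre derivatives coming from Step 1 are untouched by this commutation.

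\textbf{Main obstacle.} The terms $\eta_l\partial_{\eta_i}$ from Step 2 have coefficients linear in $\eta$, not independent of it. For example, with $d=1$, $k=0$, $P=\partial_x$ and $\varphi$ nonlinear, $Q$ contains $(\varphi''/\varphi'^2)\,\eta\,\partial_\eta$. So the conclusion with $\xi$-independent coefficients cannot be obtained by a purely formal rewriting. I would attack this step first and in this form: test whether $\eta\partial_\eta$ can be removed from the operators that actually occur, namely the $\mathbf D^{2n}_{\varphi,\chi}$ of Proposition~\ref{prop: change of variable pseudo euclidean}. There the pushforward is applied to a composite of the $\xi$-derivatives and the $\varphi$-dependent kernel terms, and I would look for cancellation between the $\eta\partial_\eta$ terms generated by different factors.

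If such cancellation fails, the chain-rule computation of Steps 1 and 2 still proves the following. The operator $Q$ has coefficients polynomial in $\eta$ and $y$-smooth. Its $\eta$-degree in coefficients minus its number of $\eta$-derivatives equals $-k$. It involves at most $N$ base derivatives. This is exactly the invariant property needed downstream, since only the gain $h^{\delta}$ per fibre derivative on symbols in $S^m$ is used, and that gain is preserved by operators of the form $\eta\partial_\eta$.
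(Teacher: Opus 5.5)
Your diagnosis is the essential point, and it is correct: as stated the lemma is false, and the paper gives no argument for it. Write $\widetilde\varphi(x,\xi)=(\varphi(x),M(x)\xi)$ with $M={}^t(\mathrm d\varphi)^{-1}$. Then each $\partial_{x_j}$ is transported to $\sum_i\partial_j\varphi_i\,\partial_{y_i}+\langle(\partial_jM)M^{-1}\eta,\partial_\eta\rangle$, and the second term survives as soon as $\mathrm d^2\varphi\neq0$ somewhere on $\{\chi\neq0\}$. Your counterexample is right; its precise coefficient depends on the direction of transport, e.g.\ it is $-(\varphi''/\varphi')\circ\varphi^{-1}$ for $\widetilde\varphi_*g=g\circ\widetilde\varphi^{-1}$. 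The failure has nothing to do with $k=0$: for $d=1$ and $P=\partial_\xi\partial_x$, with the same convention and coefficients evaluated at $\varphi^{-1}(y)$,
\[\widetilde\varphi_*P\widetilde\varphi^*=\partial_y\partial_\eta-\frac{\varphi''}{\varphi'^2}\,\partial_\eta-\frac{\varphi''}{\varphi'^2}\,\eta\,\partial_\eta^2.\]
So, for general $P$ of the stated form, the conclusion can only hold when $N=0$ or $\varphi$ is affine on the support of the cutoff.

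Your first line of attack will not succeed either. You propose to look for cancellations inside the operators $\mathbf D^{2n}_{\varphi,\chi}$ of Proposition~\ref{prop: change of variable pseudo euclidean}. In the convention of that proposition ($b_n=\mathbf D^{2n}_{\varphi,\chi}a$ with $b_0=\chi^2a$), these operators genuinely need $\eta$-dependent coefficients, even for volume-preserving $\varphi$. Take $d=2$, $\varphi(x_1,x_2)=(x_1,x_2+x_1^3)$ and $a=\eta_1^3$. Then $\varphi^*\mathrm{Op}_h^{\mathrm w}(a)\varphi_*=(hD_{x_1}-3x_1^2hD_{x_2})^3$, and an exact Moyal computation gives the Weyl symbol
\[(\xi_1-3x_1^2\xi_2)^3+\tfrac32h^2\xi_2=\big(a+\tfrac32h^2\eta_2\big)\circ\widetilde\varphi .\]
The correction $\tfrac32h^2\eta_2=h^2\cdot\tfrac14\eta_2\,\partial_{\eta_1}^3a$ cannot be produced from $\eta_1^3$ by any operator with $\eta$-independent coefficients. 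It does, however, have exactly the structure of your invariant: net fibre degree $-2$ and order $3\le4$.

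Your fallback is therefore the right replacement, and your Steps 1--2 prove it. A cleaner route uses fibre dilations $\delta_\lambda(x,\xi)=(x,\lambda\xi)$, which commute with $\widetilde\varphi$. Since $P\delta_\lambda^*=\lambda^k\delta_\lambda^*P$, also $Q\delta_\lambda^*=\lambda^k\delta_\lambda^*Q$. Writing $Q=\sum c_{\alpha'\beta'}(y,\eta)\partial_\eta^{\alpha'}\partial_y^{\beta'}$, each $c_{\alpha'\beta'}(y,\cdot)$ is then smooth and homogeneous of degree $|\alpha'|-k$. Hence it is a homogeneous polynomial, vanishing if $|\alpha'|<k$. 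The chain rule adds $|\beta'|\le N$ and $|\alpha'|+|\beta'|\le k+N$.

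This route also makes your remark that $\partial_\eta^{\alpha'}$ ``sits to the right'' unnecessary. In \eqref{eq: diff op with k deriv} it actually stands to the left of $\partial_x^\beta$; reordering is harmless since the two commute before transport.

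Finally, state the downstream justification in these terms rather than as a ``gain $h^\delta$ per fibre derivative''. On $S^m_{h,\delta}$ every derivative, $\eta\partial_\eta$ included, costs $h^{-\delta}$, while the drop in order comes from the net fibre degree. A term $\eta^\gamma\partial_\eta^{\alpha'}\partial_y^{\beta'}$ with $|\alpha'|-|\gamma|=n$ and $|\alpha'|+|\beta'|\le 2n$ maps $S^m_{h,\delta}$ into $S^{m-n}_{h,\delta}$ with loss $h^{-2n\delta}$. That is what the symbol estimates of the appendix actually use.
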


\subsubsection{Continuity in Sobolev spaces} We first recall the Calder\'on--Vaillancourt Theorem, see e.g. \cite[Theorem 4.23]{zworski2012semiclassical}.
\begin{prop}[$L^2$-continuity for bounded symbols] \label{prop: calderon} There is a dimensional constant $C_d>0$ such that for any $a\in C^\infty(T^*\mathbf{R}^d)$, one has
\[\|\mathrm{Op}_h^{\rm w}(a)\|_{L^2\to L^2}\le C_d\sum_{j\le C_d} h^{j/2}\|\partial^j a\|_{L^\infty(T^*\mathbf{R}^d)}.\]
In particular, if $\|a\|_{C^\ell}\lesssim h^{-\ell/2}$ then $\|\mathrm{Op}_h^{\rm w}(a)\|_{L^2\to L^2}\lesssim 1$.
\end{prop}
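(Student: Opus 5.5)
The statement to prove is the Calder\'on--Vaillancourt bound of Proposition \ref{prop: calderon}. I will reduce it to the $h=1$ case by a unitary rescaling, and then prove the $h=1$ case by a Cotlar--Stein almost-orthogonality argument.

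\textbf{Step 1: reduction to $h=1$.} Let $(T_h u)(x)=h^{d/4}u(h^{1/2}x)$, which is unitary on $L^2(\mathbf{R}^d)$. A change of variables in the Weyl integral gives
\[T_h^{-1}\,\mathrm{Op}_h^{\rm w}(a)\,T_h=\mathrm{Op}_1^{\rm w}(a_h),\qquad a_h(x,\xi):=a(h^{1/2}x,h^{1/2}\xi).\]
Moreover $\|\partial^j a_h\|_{L^\infty}=h^{j/2}\|\partial^j a\|_{L^\infty}$. It therefore suffices to show that
\[\|\mathrm{Op}_1^{\rm w}(b)\|_{L^2\to L^2}\le C_d\sum_{j\le C_d}\|\partial^j b\|_{L^\infty}.\]

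\textbf{Step 2: the $h=1$ case.} I take a partition of unity $1=\sum_{\gamma\in\mathbf Z^{2d}}\chi(\cdot-\gamma)$ on $\mathbf{R}^{2d}$ and set $b_\gamma=b\,\chi(\cdot-\gamma)$ and $B_\gamma=\mathrm{Op}_1^{\rm w}(b_\gamma)$.

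Each $B_\gamma$ is bounded uniformly in $\gamma$. Its kernel $K_\gamma(x,y)=(2\pi)^{-d}\int e^{i\langle x-y,\xi\rangle}b_\gamma(\tfrac{x+y}{2},\xi)\,d\xi$ is handled by integrating by parts in $\xi$, which gives decay in $|x-y|$. Schur's test then bounds $\|B_\gamma\|$ by a finite sum of sup norms of derivatives of $b$.

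For the almost-orthogonality estimate, the operators $B_\gamma^*B_{\gamma'}$ and $B_\gamma B_{\gamma'}^*$ have Weyl symbols $\bar b_\gamma\# b_{\gamma'}$ and $b_\gamma\#\bar b_{\gamma'}$. These are given by the explicit oscillatory integral
\[a\#b(\rho)=\pi^{-2d}\iint e^{-2i\sigma(\rho-\rho_1,\rho-\rho_2)}a(\rho_1)b(\rho_2)\,d\rho_1\,d\rho_2.\]
Because $a$ is supported near $\gamma$ and $b$ near $\gamma'$, repeated integration by parts in $\rho_1$ and $\rho_2$ should produce a factor $\langle\gamma-\gamma'\rangle^{-N}$, together with decay of the symbol away from the pair. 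Rerunning the kernel estimate of the previous paragraph on these composite symbols then gives
\[\|B_\gamma^*B_{\gamma'}\|+\|B_\gamma B_{\gamma'}^*\|\lesssim\langle\gamma-\gamma'\rangle^{-2d-1}\Big(\sum_{j\le C_d}\|\partial^j b\|_{L^\infty}\Big)^2.\]

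Finally, the Cotlar--Stein lemma, using $\sum_\gamma\langle\gamma\rangle^{-d-1/2}<\infty$ on $\mathbf Z^{2d}$, gives $\|\sum_\gamma B_\gamma\|\lesssim\sum_{j\le C_d}\|\partial^j b\|_{L^\infty}$. The number of derivatives consumed is a dimensional constant $C_d$.

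\textbf{Main obstacle.} The main difficulty is the almost-orthogonality estimate. One has to check that the integrations by parts in the twisted convolution really give decay in $|\gamma-\gamma'|$ while using only finitely many derivatives of $b$. This works because the phase $\sigma(\rho-\rho_1,\rho-\rho_2)$ has gradient in $(\rho_1,\rho_2)$ proportional to $(\rho-\rho_2,\rho-\rho_1)$, which is bounded below by roughly $|\gamma-\gamma'|$ on the supports.

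\textbf{The final claim.} If $\|a\|_{C^\ell}\lesssim h^{-\ell/2}$, then every term $h^{j/2}\|\partial^j a\|_{L^\infty}$ is $\mathcal O(1)$, so $\|\mathrm{Op}_h^{\rm w}(a)\|_{L^2\to L^2}\lesssim 1$ follows directly from the inequality.
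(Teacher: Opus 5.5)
This is the standard proof of the Calder\'on--Vaillancourt theorem: unitary rescaling by $h^{1/2}$ to reduce to $h=1$, a lattice partition of unity, decay of $\bar b_\gamma\# b_{\gamma'}$ by integrating by parts in the integral formula for $\#$, and the Cotlar--Stein lemma. The paper gives no proof of its own and cites \cite[Theorem 4.23]{zworski2012semiclassical}, whose argument follows this route. Bounding the operator norms by Schur's test on the kernels works, provided you use the $\xi$-decay of the composite symbols rather than compact support, as you indicate.

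There are two slips to correct.

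First, your lattice is $\mathbf Z^{2d}$, and $\sum_{\gamma\in\mathbf Z^{2d}}\langle\gamma\rangle^{-d-1/2}$ diverges. A bound $\langle\gamma-\gamma'\rangle^{-2d-1}$ on $\|B_\gamma^*B_{\gamma'}\|$ is therefore not enough for Cotlar--Stein. You need $\|B_\gamma^*B_{\gamma'}\|+\|B_\gamma B_{\gamma'}^*\|\lesssim\langle\gamma-\gamma'\rangle^{-N}$ with $N>4d$, for instance $N=4d+1$. Your integration by parts gives this at the cost of finitely many more derivatives, so the count remains a dimensional constant.

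Second, with $T_hu(x)=h^{d/4}u(h^{1/2}x)$ the correct identity is $T_h\,\mathrm{Op}_h^{\rm w}(a)\,T_h^{-1}=\mathrm{Op}_1^{\rm w}(a_h)$, with the inverse on the right. This does not affect the reduction, since conjugation by a unitary preserves the norm.
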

The semiclassical Sobolev space $H_h^s(\mathbf{R}^d)$ is defined as $\mathrm{Op}_h^{\rm w}(\langle \xi\rangle^s)^{-1}(L^2(\mathbf{R}^d))$. Note that $\mathrm{Op}_h^{\rm w}(\langle \xi\rangle^s)=(1+h^2\Delta)^{s/2}$, and that the operator $\mathrm{Op}_h^{\rm w}(\langle \xi\rangle^s)$ is invertible, with inverse $\mathrm{Op}_h^{\rm w}(\langle \xi\rangle^{-s})$. The $H_h^r\to H_h^s$ norm of an operator $A$ can be expressed as
\[\|A\|_{H_h^{r}\to H_h^{s}}= \|\mathrm{Op}_h^{\rm w}(\langle \xi\rangle^{s}) A\mathrm{Op}_h^{\rm w}(\langle \xi\rangle^{-r})\|_{L^2\to L^2}.\]
\begin{prop}[Continuity on Sobolev spaces]\label{prop: continuity Sobolev} Assume $a\in S^m(T^*\mathbf{R}^d)$ and $s\le r-m$. Let $\mathcal C=\{h^{\delta k}\|a\|_{m,k}\}$. Then,
\[\|\mathrm{Op}_h^{\rm w}(a)\|_{H_h^{r}\to H_h^{s}}\lesssim_{\!\! r,s}^{\!\!\operatorname{Pol}_1(\mathcal C)} 1.\] \end{prop}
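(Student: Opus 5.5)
The plan is a direct reduction to the Calderón--Vaillancourt estimate of Proposition \ref{prop: calderon}, using the identity
\[\|\mathrm{Op}_h^{\rm w}(a)\|_{H_h^{r}\to H_h^{s}}=\|\mathrm{Op}_h^{\rm w}(\langle \xi\rangle^{s})\mathrm{Op}_h^{\rm w}(a)\mathrm{Op}_h^{\rm w}(\langle \xi\rangle^{-r})\|_{L^2\to L^2}.\]
The aim is to write the operator on the right as a single Weyl quantization $\mathrm{Op}_h^{\rm w}(c)$ plus a remainder. The symbol $c$ should lie in $S^{s+m-r}\subset S^0$ (since $s\le r-m$), with seminorms controlled linearly by $\mathcal C$.

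\textbf{Step 1: the main term.} Apply the composition formula (Proposition \ref{prop: composition Rd}) twice: first to $\langle\xi\rangle^s\#_h a$, then to the result composed with $\langle \xi\rangle^{-r}$. The symbols $\langle\xi\rangle^{\pm}$ are $h$-independent classical symbols whose seminorms are bounded by constants depending only on $r,s$. Hence each term of the expansion is bilinear in a seminorm of $a$ and a constant, and so is $\mathcal O^{\operatorname{Pol}_1(\mathcal C)}$ after inserting the factors $h^{\delta k}$.

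Concretely, the term of order $n$ involves $2n$ derivatives. At least $n$ of them fall on $a$, and these come with $h^n$. This gives a contribution of size $h^{n}h^{-2n\delta}\cdot h^{2n\delta}\|a\|_{m,2n}$, which is $\lesssim h^{n(1-2\delta)}\times\operatorname{Pol}_1(\mathcal C)$ in $S^{s+m-r-n}$. Each of these terms is then fed into Proposition \ref{prop: calderon}. Each derivative $\partial^j$ costs at most $h^{-\delta j}$, which is compensated by $h^{j/2}$ because $\delta<1/2$. So the $L^2$ norm of each term is $\lesssim^{\operatorname{Pol}_1(\mathcal C)}1$.

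\textbf{Step 2: the remainder.} The remainder $h^N r_N$ from Proposition \ref{prop: composition Rd} lies in $S^{s+m-r-N}$ with seminorms $\lesssim^{\operatorname{Pol}_1(\mathcal C)} h^{-\delta(2N+k)}$. Applying Calderón--Vaillancourt again gives a bound $h^{N(1-2\delta)}\operatorname{Pol}_1(\mathcal C)$, which is fine for any $N\ge 0$; even $N=0$ would suffice. The polynomial degree stays at $1$ because $a$ appears linearly and the other two factors are fixed.

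\textbf{Main obstacle.} The delicate point is that Proposition \ref{prop: calderon} is stated for bounded symbols with $L^\infty$ derivative bounds, while our symbols belong to $S^0$ with $\langle\xi\rangle$ weights. This is harmless here, because weights $\langle\xi\rangle^{|\alpha|-0}$ only improve decay, so $\|\partial^j c\|_{L^\infty}\le \|c\|_{0,j}$.

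The second issue is making sure the composition remainder estimates genuinely hold with the weights $\langle\xi\rangle^{-r}$ and $\langle\xi\rangle^{s}$ of arbitrary sign, so that the composite symbol truly lies in $S^{s+m-r}$. This is exactly what Proposition \ref{prop: composition Rd} provides, since it is stated for general orders $m_1,m_2$. I therefore expect this step to reduce to a careful bookkeeping of orders and the factors $h^{\delta k}$.
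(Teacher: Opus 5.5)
Your proof is correct. The paper states this proposition without proof, as a recalled standard fact, and your route is exactly the standard argument the paper sets up just before it: the norm identity $\|A\|_{H_h^r\to H_h^s}=\|\mathrm{Op}_h^{\rm w}(\langle\xi\rangle^{s})A\,\mathrm{Op}_h^{\rm w}(\langle\xi\rangle^{-r})\|_{L^2\to L^2}$, then Proposition~\ref{prop: composition Rd}, then Proposition~\ref{prop: calderon}. As you observe yourself, applying the $N=0$ case of Proposition~\ref{prop: composition Rd} twice already gives $\|c\|_{s+m-r,k}\lesssim^{\operatorname{Pol}_1(\mathcal C)}_{r,s,k} h^{-\delta k}$ for the full symbol $c=\langle\xi\rangle^{s}\#_h a\#_h\langle\xi\rangle^{-r}$, so the term-by-term analysis of Step~1 is superfluous, and the bound then follows from $\|\partial^j c\|_{L^\infty}\le\|c\|_{s+m-r,j}$ (valid since $s+m-r\le 0$) together with $h^{j(1/2-\delta)}\le 1$.
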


\subsection{Pseudodifferential operators on manifolds} Let $(M,\rm{vol})$ be a closed manifold, equipped with a smooth volume form $\operatorname{vol}$. To avoid working with half-densities, following Guedes Bonthonneau \cite{bonthonneauWeyl}, we endow $M$ with a finite atlas $\{(U_\ell,\varphi_\ell)\}$ of \emph{isochore charts}. It means that all the chart maps push forward the volume form onto the Lebesgue measure, i.e. $(\varphi_\ell)_*\mathrm{vol}_M=\mathrm{Leb}_{\mathbf{R}^d}$. To the atlas $\{(U_\ell,\varphi_\ell)\}$ of $M$ we add a quadratic partition of unity $\{\chi_\ell\}$, that is a family of functions $\chi_\ell\in C^\infty(M,[0,1])$ such that $\operatorname{Supp}(\chi_\ell)\subset U_\ell$ and $\sum \chi_\ell^2=1$. Also, we consider functions $\chi_\ell'\in C^\infty(M)$ such that $\operatorname{Supp}(\chi_\ell')\subset U_\ell$ and $\chi_\ell\prec \chi_\ell'$. We denote $\Xi=\{(U_\ell,\varphi_\ell,\chi_\ell',\chi_\ell)\}$. All our constructions will depend on $\Xi$. We will index elements of $\Xi$ by the letter $\ell$.

\begin{defi}[Sobolev spaces] Let $r\in \mathbf R$. The space $H_h^r(M)$ is defined as the completion of $C^\infty(M)$ with respect to the norm
\[\|u\|_{H_h^r(M)}^2=\sum_\ell \|(\varphi_\ell)_*\chi_\ell u\|_{H_h^r(\mathbf{R}^d)}^2.\]
Note that $\|\bullet\|_{H^0_h(M)}=\|\bullet\|_{L^2(M)}$.
\end{defi}

\subsection{Quantization} One can assign an operator to any symbol $a\in S^{m}$ through a quantization procedure, which will be fixed once and for all.

In the following, to avoid heavy notation, we just write $\chi_\ell$ instead of $\chi_\ell\circ \widetilde \varphi_\ell^{-1}$ for the pushforward of $\chi_\ell$ by the chart $\varphi_\ell$, and similarly for $\chi_\ell'$.
\begin{defi}[Weyl quantization on manifolds] \label{defi: weyl quanti manifold} For $a\in S^m(T^*M)$, define
\[\mathrm{Op}_h^M(a):=\sum_{\ell\in \Xi} (\varphi_\ell)^* \chi_\ell \mathrm{Op}^{\rm w}_h(\chi_\ell'a\circ \widetilde \varphi_\ell^{-1}) (\varphi_\ell)_* \chi_\ell.\]
\end{defi}

\begin{thm} The main properties of the calculus are as follows.
\begin{enumerate}[\normalfont(i)]
\item \emph{\textbf{$\xi$-independent symbols.}} If $a\in C^\infty(M)$ is viewed as a function on $T^*M$ that is constant over the fibers, then \[\operatorname{Op}_h^{M}(a)u(x)=a(x)u(x).\]
In particular, $\operatorname{Op}_h^{M}(1)=\operatorname{id}$.
\item \emph{\textbf{Adjoint.}} Let $a\in S^m(T^*M)$. For any $u,v\in C^\infty(M)$, one has
\[\langle \operatorname{Op}_h^{M}(a)u,v\rangle_{L^2(M)}=\langle u,\operatorname{Op}_h^{M}(\bar a)v\rangle_{L^2(M)}.\] Here $\bar a$ denotes the complex conjugate of $a$. In particular, if $a$ is real then $\operatorname{Op}_h^{M}(a)$ is formally self-adjoint on $L^2(M)$.
\item \emph{\textbf{Composition.}} (see Proposition \ref{prop: composition manifold}). There is an associative law $\diamond$ on $\mathcal A[\![\mathrm{i}\underline h]\!]$ such that for any $(a,b)\in S^{m_1}\times S^{m_2}$, for any $N\ge 0$, one has
\[\operatorname{Op}_h^{M}(a)\circ \operatorname{Op}_h^{M}(b)=\operatorname{Op}_h^{M}(T_N(a\diamond b))+R_N.\]
Moreover, letting $\mathcal C=\{h^{\delta (j+k)}\|a\|_{m_1,k}\|b\|_{m_2,k}\}$, one has that for any $s\le r-(m_1+m_2-N)$:
\[\|R_N\|_{H_h^r\to H_h^s}\lesssim_{\!\! N,r,s}^{\!\!\operatorname{Pol}_1(\mathcal C)} h^{N(1-2\delta)}.\]
At first order, we have $a\diamond b=ab-\frac{1}{2}\{a;b\}\mathrm{i}\underline h+\underline h^2\cdots$.
\item \emph{\textbf{Commutator.}} (see Proposition \ref{prop: commutator PDOs}). The difference $a\diamond b-b\diamond a$ does not feature even powers of $\underline h$.
\item \emph{\textbf{Inverse.}} (see Theorem \ref{thm: parametrix}) A formal power series $\mathbf a=a_0+\mathrm{i}\underline h\cdots$ is invertible in $\mathcal A[\![\mathrm{i}\underline h]\!]$ for $\diamond$ if and only if $a_0$ is nonvanishing. The inverse $\mathbf a^{\diamond_{-1}}$ satisfies $\mathbf{a}^{\diamond_{-1}}=a_0^{-1}+\mathrm{i}\underline h\cdots$.	
\item \emph{\textbf{Functional calculus.}} (see Theorem \ref{thm: functional calculus}) For any real symbol $a\in S^m$, such that $a+\mathrm{i}$ is elliptic in $S^m$, the operator $\mathrm{Op}_h^{M}(a)$ is self-adjoint on $L^2(M)$ with domain $H_h^m(M)$. For any smooth compactly supported function $f$, there is a formal power series $\bm{f}[a]$ such that for any $N\ge 0$, we have
\[f(\operatorname{Op}_h^{M}(a))=\operatorname{Op}_h^{M}(T_N(\bm{f}[a]))+R_N.\]
If $|a(x,\xi)+\mathrm{i}|\ge K\langle \xi\rangle^m$ with $K\in (0,1]$ and $\mathcal C=\{h^{\delta k}\|a\|_{m,k}\}$, we have
\[\|R_N\|_{H_h^r\to H_h^s}\lesssim_{\!\! f,N,r,s}^{\!\!\operatorname{Pol}(\mathcal C,K^{-1})} h^{N(1-2\delta)}.\]
One has $\bm{f}[a]=f(a)+\underline h^2\cdots$. Moreover, given another function $g$, we have $\bm{fg}[a]=\bm{f}[a]\diamond \bm{g}[a]$.
\end{enumerate}
\end{thm}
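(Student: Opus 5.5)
The plan is to verify each item of the theorem directly from Definition \ref{defi: weyl quanti manifold}, reducing every statement to the Euclidean calculus (Propositions \ref{prop: composition Rd}, \ref{prop: change of variable pseudo euclidean}, \ref{prop: calderon}, \ref{prop: continuity Sobolev}) in the isochore charts of $\Xi$. I would then carry out the algebraic bookkeeping at the level of formal power series in $\mathcal A[\![\mathrm{i}\underline h]\!]$, so that $h$-dependent coefficients cause no ambiguity.

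Items (i) and (ii) are immediate. For (i), if $a=a(x)$ then $\mathrm{Op}_h^{\rm w}(\chi_\ell' a\circ\widetilde\varphi_\ell^{-1})$ is multiplication by $\chi_\ell' a$. Since $\chi_\ell'\equiv 1$ on $\operatorname{Supp}\chi_\ell$, we get $\operatorname{Op}_h^M(a)=\sum_\ell \chi_\ell^2 a=a$. For (ii), the charts are volume preserving, so $(\varphi_\ell)^*$ and $(\varphi_\ell)_*$ are mutually adjoint isometries on the relevant supports. Combined with $\mathrm{Op}_h^{\rm w}(a)^*=\mathrm{Op}_h^{\rm w}(\bar a)$ on $\mathbf{R}^d$, this gives the adjoint formula term by term.

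The heart of the argument is (iii). Write
\[
\operatorname{Op}_h^M(a)\operatorname{Op}_h^M(b)=\sum_{\ell,\ell'}(\varphi_\ell)^*\chi_\ell\,\mathrm{Op}_h^{\rm w}(a_\ell)\,(\varphi_\ell)_*\chi_\ell\chi_{\ell'}(\varphi_{\ell'})^*\,\mathrm{Op}_h^{\rm w}(b_{\ell'})\,(\varphi_{\ell'})_*\chi_{\ell'} .
\]
For each pair, I transport the $\ell'$-factor into the $\ell$-chart using Proposition \ref{prop: change of variable pseudo euclidean} applied to the transition map $\varphi_\ell\circ\varphi_{\ell'}^{-1}$, with a cutoff supported in $U_\ell\cap U_{\ell'}$. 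I then compose in the $\ell$-chart with Proposition \ref{prop: composition Rd}. Each step produces a formal series of differential operators applied to $a\otimes b$, so the total is a bidifferential operator series $a\diamond b$. The remaining difficulty is to re-express the result as $\operatorname{Op}_h^M(\cdot)$ of a single global symbol. To do this, I would show that for any symbol $c$ supported in one chart the operator $(\varphi_\ell)^*\mathrm{Op}_h^{\rm w}(c)(\varphi_\ell)_*$ equals $\operatorname{Op}_h^M(\mathbf c)$ for an explicit formal series $\mathbf c=c+\underline h^2\cdots$. This again follows by applying Proposition \ref{prop: change of variable pseudo euclidean} to each term of the quantization sum, together with the composition formula to absorb the cutoffs $\chi_\ell$. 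Associativity of $\diamond$ then follows from associativity of operator composition: the coefficients of $a\diamond b$ are defined independently of $h$ as formal objects, and two formal series whose truncations quantize to the same operator modulo $\mathcal O(h^{N(1-2\delta)})$ for all $N$ and all symbols must coincide. I would prove this injectivity by testing on $h$-independent symbols and letting $h\to 0$.

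The remainder bounds come from collecting the $r_N$'s, which carry the stated polynomial dependence on $\mathcal C$, and from Proposition \ref{prop: continuity Sobolev}. Off-chart pieces with disjoint supports are $\mathcal O(h^\infty)$ by the Euclidean expansion. The first-order formula $ab-\tfrac12\{a,b\}\mathrm{i}\underline h$ is read off from Proposition \ref{prop: composition Rd}, using that coordinate changes contribute only at even order. That same parity observation gives (iv): for real $a,b$, the formal adjoint of $a\diamond b$ equals $\bar b\diamond\bar a$ by (ii), and this conjugation sends $\mathrm{i}\underline h\mapsto-\mathrm{i}\underline h$. Hence $a\diamond b-b\diamond a$ is odd in $\mathrm{i}\underline h$, and the complex case follows by sesquilinearity.

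For (v), I would solve $\mathbf a\diamond\mathbf b=1$ recursively. At each order the new coefficient $b_n$ appears as $a_0b_n$ plus terms involving only $b_0,\dots,b_{n-1}$, so ellipticity of $a_0$ allows division. Right and left inverses coincide by associativity. Item (vi) is where I expect the main obstacle, because of uniformity in $\delta$ and in the ellipticity constant $K$. I would use the Helffer--Sjöstrand formula $f(A)=-\frac1\pi\int\bar\partial\tilde f(z)(A-z)^{-1}\,\mathrm{d}m(z)$ and build a parametrix for $(A-z)^{-1}$ from (v) applied to $a-z$. This requires tracking the $|\operatorname{Im}z|^{-k}$ growth of the coefficients and remainders, which is then killed by $\bar\partial\tilde f=\mathcal O(|\operatorname{Im} z|^\infty)$. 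Self-adjointness on $H_h^m$ comes from ellipticity of $a+\mathrm{i}$ via the exact inverse of Corollary \ref{cor: exact inverse} for small $h$. Finally, $\bm{fg}[a]=\bm f[a]\diamond\bm g[a]$ follows from $f(A)g(A)=(fg)(A)$ and the injectivity argument above.
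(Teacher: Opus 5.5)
Your outline is correct and shares the paper's architecture: Euclidean calculus in isochore charts, identification of formal coefficients by letting $h\to0$ on $h$-independent symbols (both for associativity and for $\bm{fg}[a]=\bm{f}[a]\diamond\bm{g}[a]$), parity from adjoints for (iv), and Helffer--Sj\"ostrand with $|\operatorname{Im}z|$-tracking for (vi). Where you differ is in how (iii) and (v) are organized.

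In (iii) the paper never expands the double sum over chart pairs. It localizes the product in one chart at a time by pseudolocality and uses Lemma~\ref{lem: operator in a chart}, so that each factor has chart symbol $\chi^2a$ up to $\mathcal O(\underline h^2)$, with no first-order term. It then checks that $\operatorname{Op}_h^M(a)\operatorname{Op}_h^M(b)$ minus $\operatorname{Op}_h^M$ of its zeroth- and first-order terms satisfies hypothesis $\mathbf H$, and applies the general recognition result Proposition~\ref{prop: constructing the symbol}, which is reused for $h^2\Delta_g$ and for changes of quantization. Your pairwise decomposition only needs to recognize operators localized in a single chart, which avoids the array formalism. Two points must be supplied, however.

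First, that recognition is not one application of Proposition~\ref{prop: change of variable pseudo euclidean}. Such an application computes the forward map $\mathbf c\mapsto(1+\underline h^2\mathcal L_\ell)\mathbf c$, giving the chart-$\ell$ symbol of $\operatorname{Op}_h^M(\mathbf c)$ for $\mathbf c$ compactly supported in the chart. You must then invert this map formally, by a Neumann series in $\underline h^2\mathcal L_\ell$ truncated after about $N/2$ steps; this is exactly the iteration of Lemma~\ref{lem: lemma before construction of symbol}.

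Second, your individual pieces are not self-adjoint. Each pair $(\ell,\ell')$ therefore carries first-order brackets involving $\chi_\ell,\chi_{\ell'}$, which cancel only after summation, via $\sum_\ell\{\chi_\ell^2,\cdot\}=0$ and the antisymmetry of $\chi_\ell\chi_{\ell'}\{\chi_\ell,\chi_{\ell'}\}$ in $(\ell,\ell')$. Parity of coordinate changes alone does not give $-\tfrac12\{a;b\}\,\mathrm{i}\underline h$.

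In (v) your recursion proves the statement. The paper instead takes $b_0=1/a$ and a Neumann series in $\mathbf r$, where $a\diamond b_0=1-\mathbf r$. This directly yields the structure $\sum_{k\ge3}a^{-k}Q_{2n,k}$ that (vi) integrates against $\bar\partial\tilde f$. With the recursion you must check separately that $b_1(z)=0$ and that each $b_n(z)$ is a finite sum of powers $(a-z)^{-k}$ with $z$-independent coefficients.

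Finally, in (vi) the spectral bound $\|(A-z)^{-1}\|_{L^2\to L^2}\le|\operatorname{Im}z|^{-1}$ only controls the remainder as $H_h^{-N}\to L^2$. For arbitrary $H_h^r\to H_h^s$ you need either Sobolev bounds on $(A-z)^{-1}$ polynomial in $|\operatorname{Im}z|^{-1}$, or the paper's trick of applying the result to $(x+\mathrm{i})^kf$. Also, self-adjointness via Corollary~\ref{cor: exact inverse} covers only small $h$, whereas elliptic regularity from the parametrix construction of Theorem~\ref{thm: parametrix} works for every $h\in(0,1]$.
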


\subsubsection{Operators in charts}

The action of $\mathrm{Op}_h^M(a)$ in charts is as follows. It is a direct consequence of Proposition \ref{prop: change of variable pseudo euclidean}.
\begin{lem}[Operator in a chart]\label{lem: operator in a chart}Consider an isochore chart $(\varphi,\chi)$. For any $a\in S^m(T^*M)$ there is a formal symbol $L_{\varphi,\chi}a=\sum_{n=0}^{+\infty} (\mathrm{i}\underline{h})^n L_{\varphi,\chi,n}a$ such that for all $N\ge 0$:
\[\varphi_*\chi\mathrm{Op}_h^M(a)\varphi^*\chi=\mathrm{Op}_h^{\rm w}(\varphi_* T_N(L_{\varphi,\chi} a)+h^Nr_N).\]
Here $L_{\varphi,\chi,n}$ is a differential operator of order $2n$ with real coefficients independent of $\xi$ and always $\ge n$ derivatives hitting the $\xi$ variable. One has $L_{\varphi,\chi,0}a=\chi^2 a$, and $L_{\varphi,\chi,n}=0$ for all $n$ odd. Let $\mathcal C=\{h^{\delta k}\|a\|_{m,k}\}$. The remainder satisfies
\[\|r_N\|_{m-N,k}\lesssim_{\!\! N,k}^{\!\!\operatorname{Pol}_1(\mathcal C)} h^{-\delta(2N+k)}.\]
\end{lem}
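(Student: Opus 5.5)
The statement to prove is Lemma \ref{lem: operator in a chart}: in an isochore chart $(\varphi,\chi)$, the operator $\varphi_*\chi\operatorname{Op}_h^M(a)\varphi^*\chi$ is a Weyl quantization with an explicit formal symbol and a controlled remainder. The plan is to unfold Definition \ref{defi: weyl quanti manifold} and reduce every summand to a single change of variables on $\mathbf{R}^d$.

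\textbf{Step 1: expansion into chart-to-chart terms.} We have
\[\varphi_*\chi\operatorname{Op}_h^M(a)\varphi^*\chi=\sum_{\ell}\varphi_*\chi(\varphi_\ell)^*\chi_\ell\operatorname{Op}_h^{\rm w}(\chi_\ell' a\circ\widetilde\varphi_\ell^{-1})(\varphi_\ell)_*\chi_\ell\varphi^*\chi .\]
Set $\psi_\ell:=\varphi\circ\varphi_\ell^{-1}$, defined on $\varphi_\ell(U_\ell\cap U)$. Since both charts are isochore, $\psi_\ell$ preserves Lebesgue measure. The cutoff product $\chi\chi_\ell$, pushed into the chart $\varphi_\ell$, is compactly supported in the domain of $\psi_\ell$. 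The $\ell$-th term is therefore of the form $\tilde\chi\,\psi^*\operatorname{Op}_h^{\rm w}(\tilde a)\psi_*\tilde\chi$, up to the bookkeeping of which side carries $\psi$ versus $\psi^{-1}$. Here $\tilde\chi=(\chi\chi_\ell)\circ\varphi_\ell^{-1}$ and $\tilde a=\chi_\ell' a\circ\widetilde\varphi_\ell^{-1}$.

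\textbf{Step 2: apply Proposition \ref{prop: change of variable pseudo euclidean} to each term.} This gives the symbol
\[b^{(\ell)}=\sum_{n<N,\ n\text{ even}}(\mathrm{i}h)^n\,\mathbf D^{2n}_{\psi_\ell,\tilde\chi}\tilde a+h^N r_N^{(\ell)},\]
pushed forward to the $\varphi$-chart. The leading term is $\tilde\chi^2\tilde a$. Because $\chi_\ell\prec\chi_\ell'$, we have $\chi_\ell'=1$ on the support of $\chi_\ell$. Hence the leading term equals $\chi^2\chi_\ell^2 a$ in invariant terms, and summing over $\ell$ with $\sum\chi_\ell^2=1$ gives $L_{\varphi,\chi,0}a=\chi^2a$.

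\textbf{Step 3: the higher coefficients.} We define
\[L_{\varphi,\chi,n}a:=\sum_\ell \widetilde\varphi^*\,\mathbf D^{2n}_{\psi_\ell,\tilde\chi}\big(\chi_\ell' a\circ\widetilde\varphi_\ell^{-1}\big).\]
Each summand is a differential operator of order $2n$ with real, $\xi$-independent coefficients and at least $n$ $\xi$-derivatives. This persists after composing with the symplectic lift by the lemma on invariance of that structure. The factor $\chi_\ell'$ depends only on $x$, so the Leibniz rule moves its derivatives into the $x$-part only; the count of $\xi$-derivatives is preserved. Odd coefficients vanish because only even powers occur in Proposition \ref{prop: change of variable pseudo euclidean}, by the self-adjointness argument given in the remark after it.

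\textbf{Step 4: remainder bound.} This is where I expect the main (mild) obstacle: checking the remainder estimate uniformly. One needs $h^{\delta k}\|\chi_\ell' a\circ\widetilde\varphi_\ell^{-1}\|_{m,k}$ to be controlled linearly by the family $\mathcal C$. This holds because multiplication by a fixed smooth cutoff and composition with a fixed symplectic lift change the seminorms $\|\cdot\|_{m,k}$ only by constant multiples of lower seminorms. The powers $\langle\xi\rangle$ are comparable under the linear fibre map $\mathrm d\varphi^{-\top}$ on compact sets. The $\operatorname{Pol}_1(\mathcal C)$ bound from Proposition \ref{prop: change of variable pseudo euclidean} then transfers to the finite sum over $\ell$, giving $\|r_N\|_{m-N,k}\lesssim h^{-\delta(2N+k)}$.
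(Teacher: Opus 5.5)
Your proposal is correct and is exactly the paper's argument: the paper proves this lemma in one line, as a direct consequence of Proposition~\ref{prop: change of variable pseudo euclidean}. Your Steps 1--4 supply the details left implicit there: unfolding Definition~\ref{defi: weyl quanti manifold}, applying that proposition to each measure-preserving transition map $\psi_\ell=\varphi\circ\varphi_\ell^{-1}$ with cutoff $\chi\chi_\ell$, using $\chi_\ell'\equiv 1$ on $\operatorname{supp}\chi_\ell$ and $\sum_\ell\chi_\ell^2=1$ for the leading term, and transferring the $\operatorname{Pol}_1(\mathcal C)$ seminorm bounds.

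One caveat, which you inherit from the paper rather than introduce: under a symplectic lift an $x$-derivative picks up a term of the form $c(y)\,\eta\cdot\partial_\eta$. So the ``$\xi$-independent coefficients'' structure you invoke in Step 3 is only invariant in the weaker sense of coefficients polynomial in $\xi$, with the number of $\xi$-derivatives minus the $\xi$-degree at least $n$. That weaker form is all the symbol-class estimates actually use.
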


\subsubsection{Continuity on Sobolev spaces} We now give the analogue of Proposition \ref{prop: continuity Sobolev} for manifolds.
\begin{lem}[Continuity on Sobolev spaces on manifolds] \label{lem: continuity sobolev mfld} Let $a\in S^m(T^*M)$, and set $\mathcal C=\{h^{\delta k}\|a\|_{m,k}\}$. Then, for all $r\in \mathbf{R}$,
\[\|\operatorname{Op}_h^{M}(a)\|_{H_h^r\to H_h^{r-m}}\lesssim_{\!\! r,m}^{\!\!\operatorname{Pol}_1(\mathcal C)} 1.\] \end{lem}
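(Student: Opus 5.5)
The plan is to reduce everything to the Euclidean Sobolev bound of Proposition \ref{prop: continuity Sobolev}, applied chart by chart. By Definition \ref{defi: weyl quanti manifold},
\[\operatorname{Op}_h^{M}(a)=\sum_{\ell} (\varphi_\ell)^*\chi_\ell \operatorname{Op}_h^{\rm w}(a_\ell)(\varphi_\ell)_*\chi_\ell,\qquad a_\ell:=\chi_\ell'\, a\circ\widetilde\varphi_\ell^{-1}.\]
Since the atlas is finite, it suffices to bound each summand from $H_h^r(M)$ to $H_h^{r-m}(M)$. The first step is routine: the chart seminorms satisfy $\|a_\ell\|_{m,k}\lesssim_k \sum_{j\le k}\|a\|_{m,j}$. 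This holds because $\chi_\ell'$ is $\xi$-independent, and the manifold seminorms are computed in this same fixed atlas. Consequently $h^{\delta k}\|a_\ell\|_{m,k}$ is a linear combination of elements of $\mathcal C$.

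The main work is comparing the manifold Sobolev norms with the Euclidean ones after localization. I would establish two facts.

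\textbf{(A)} For $v\in C^\infty(M)$ and any cutoff $\psi$ supported in $U_\ell$,
\[\|(\varphi_\ell)_*\psi v\|_{H_h^r(\mathbf{R}^d)}\lesssim_r \|v\|_{H_h^r(M)}.\]
\textbf{(B)} For $w\in C^\infty_{\rm comp}(\varphi_\ell(U_\ell))$ with fixed compact support,
\[\|(\varphi_\ell)^*w\|_{H_h^{s}(M)}\lesssim_s \|w\|_{H_h^s(\mathbf{R}^d)}.\]

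For (B), write $\|(\varphi_\ell)^*w\|_{H_h^s(M)}^2=\sum_{\ell'}\|(\varphi_{\ell'})_*\chi_{\ell'}(\varphi_\ell)^* w\|^2_{H^s_h}$. Each term is the composition of multiplication by a smooth compactly supported function with pullback by the diffeomorphism $\varphi_{\ell'}\circ\varphi_\ell^{-1}$. Such an operator is bounded on $H_h^s(\mathbf{R}^d)$ uniformly in $h$; this is the change-of-variables statement of Proposition \ref{prop: change of variable pseudo euclidean} applied to $\langle\xi\rangle^s$, or equivalently the conjugation of $\operatorname{Op}_h^{\rm w}(\langle\xi\rangle^s)$ by the change of variables, which yields a symbol in $S^s$ with $h$-uniform seminorms.

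For (A), insert the partition $1=\sum_{\ell'}\chi_{\ell'}^2$ to get
\[(\varphi_\ell)_*\psi v=\sum_{\ell'} (\varphi_\ell)_*\psi\chi_{\ell'}(\varphi_{\ell'})^*\big[(\varphi_{\ell'})_*\chi_{\ell'}v\big].\]
Each map $w\mapsto (\varphi_\ell)_*\psi\chi_{\ell'}(\varphi_{\ell'})^*w$ is again a cutoff composed with a change of variables, hence bounded on $H_h^r(\mathbf{R}^d)$. Summing over $\ell'$ and applying Cauchy--Schwarz gives (A). For negative $r$, the bound on $H_h^r(\mathbf{R}^d)$ follows by duality, or directly from the symbolic conjugation argument, which is insensitive to the sign of $r$.

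With (A) and (B) in hand, I would assemble the estimate as follows:
\[\|(\varphi_\ell)^*\chi_\ell\operatorname{Op}_h^{\rm w}(a_\ell)(\varphi_\ell)_*\chi_\ell u\|_{H_h^{r-m}(M)}\lesssim \|\chi_\ell\operatorname{Op}_h^{\rm w}(a_\ell)(\varphi_\ell)_*\chi_\ell u\|_{H_h^{r-m}(\mathbf{R}^d)}.\]
Multiplication by $\chi_\ell$ is $\operatorname{Op}_h^{\rm w}(\chi_\ell)$, which is bounded on $H_h^{r-m}$ by Proposition \ref{prop: continuity Sobolev} with $h$-independent constants. Proposition \ref{prop: continuity Sobolev} with $s=r-m$ then bounds $\operatorname{Op}_h^{\rm w}(a_\ell)$ from $H_h^r$ to $H_h^{r-m}$ by a degree-one polynomial in $\{h^{\delta k}\|a_\ell\|_{m,k}\}$, hence in $\mathcal C$. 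Finally (A) controls $\|(\varphi_\ell)_*\chi_\ell u\|_{H_h^r}\lesssim\|u\|_{H^r_h(M)}$. Density of $C^\infty(M)$ in $H_h^r(M)$ extends the bound to the whole space.

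The only genuinely delicate point is the $h$-uniform invariance of Euclidean semiclassical Sobolev norms under smooth changes of variables with cutoffs, for arbitrary real $r$, which is what (A) and (B) require. I would handle it by conjugating $\langle\xi\rangle^{r}$ with Proposition \ref{prop: change of variable pseudo euclidean}, using $\delta=0$ symbols so no losses in $h$ occur. The resulting symbol lies in $S^r$, and Proposition \ref{prop: continuity Sobolev} then closes the argument.
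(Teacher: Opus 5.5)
Your proposal is correct and follows essentially the same route as the paper: you expand $\operatorname{Op}_h^M(a)$ chart by chart, use the $h$-uniform boundedness of cutoffs composed with changes of variables on $H_h^s(\mathbf R^d)$ (your (B)), and close with Proposition~\ref{prop: continuity Sobolev}. The one difference is that your fact (A) is superfluous, since you only use it with $\psi=\chi_\ell$, and there $\sum_\ell\|(\varphi_\ell)_*\chi_\ell u\|_{H_h^r(\mathbf R^d)}^2=\|u\|_{H_h^r(M)}^2$ holds directly by the definition of the norm, which is exactly the paper's last step.
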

\begin{proof}For $u\in H_h^{r-m}(M)$, by the definition of the quantization and the Sobolev norm, and by the triangle inequality, we have:
\begin{equation*}
\begin{aligned}
\|\operatorname{Op}_h^{M}(a)u\|_{H_h^{r-m}(M)}^2& =\sum_k \|(\varphi_k)_*\chi_k \operatorname{Op}_h^{M}(a)u\|_{H_h^{r-m}(\mathbf{R}^d)}^2 \\
& \le \sum_{k,\ell} \|(\varphi_k)_* \chi_k\chi_\ell (\varphi_\ell)^* \mathrm{Op}_h^{\rm w}((\varphi_\ell)_* \chi_\ell' a_\ell) (\varphi_\ell)_* \chi_\ell u\|_{H_h^{r-m}(\mathbf{R}^d)}^2 \\
& \lesssim \sum_\ell \|\mathrm{Op}_h^{\rm w}((\varphi_\ell)_* \chi_\ell' a_\ell) (\varphi_\ell)_* \chi_\ell u\|_{H_h^{r-m}(\mathbf{R}^d)}^2 \\
& \lesssim \sum_\ell \|(\varphi_\ell)_*\chi_\ell u\|_{H_h^{r}(T^*\mathbf{R}^d)}^2=\|u\|_{H_h^r(M)}^2.
\end{aligned}
\end{equation*}
In the last two lines we have used the fact that multiplication by smooth functions and local changes of coordinates are continuous operators on $H_h^r(\mathbf{R}^d)$ with $h$-independent norms.
\end{proof}
We finally give a pseudolocality property for operators $\operatorname{Op}_h^{M}(a)$, which directly follows from the result on $\mathbf{R}^d$.
\begin{lem}[Pseudolocality of pseudodifferential operators]\label{prop: pseudolocality} Let $\psi_1,\psi_2\in C^\infty(M)$ be independent of $h$, with disjoint supports. Let $\mathcal C=\{h^{\delta k}\|a\|_{m,k}\}$. Then, for any $r,s\in \mathbf{R}$, one has:
\[\|\psi_1\operatorname{Op}_h^{M}(a)\psi_2\|_{H_h^r\to H_h^s}\lesssim_{\!\! r,s,\psi_1,\psi_2}^{\!\!\operatorname{Pol}_1(\mathcal C)} 1.\]

\end{lem}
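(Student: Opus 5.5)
The statement to prove is the pseudolocality Lemma \ref{prop: pseudolocality}: for $h$-independent $\psi_1,\psi_2$ with disjoint supports, $\|\psi_1\operatorname{Op}_h^{M}(a)\psi_2\|_{H_h^r\to H_h^s}$ is bounded by a degree-one polynomial in the seminorms $\mathcal C=\{h^{\delta k}\|a\|_{m,k}\}$. (In fact one expects $\mathcal O(h^\infty)$ times such a polynomial, but only boundedness is claimed.) The plan is to reduce to the Euclidean setting chart by chart, using Definition \ref{defi: weyl quanti manifold}.

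First I expand
\[\psi_1\operatorname{Op}_h^{M}(a)\psi_2=\sum_\ell (\varphi_\ell)^*\,\psi_1\chi_\ell\, \mathrm{Op}^{\rm w}_h(\chi_\ell'a\circ\widetilde\varphi_\ell^{-1})\,\chi_\ell\psi_2\,(\varphi_\ell)_*.\]
Each summand is, in the chart, an operator $\tilde\psi_1\mathrm{Op}_h^{\rm w}(b)\tilde\psi_2$ on $\mathbf{R}^d$. Here $\tilde\psi_i=(\psi_i\chi_\ell)\circ\varphi_\ell^{-1}$ are compactly supported, $h$-independent, and have disjoint supports at positive distance $c>0$. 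The symbol $b=\chi_\ell' a\circ\widetilde\varphi_\ell^{-1}$ satisfies $\|b\|_{m,k}\lesssim \sum_{j\le k}\|a\|_{m,j}$ by the chain rule.

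Next I analyze the Schwartz kernel
\[\tilde\psi_1(x)\,(2\pi h)^{-d}\int e^{\frac{\mathrm i}{h}\langle x-y,\xi\rangle}\,b\big(\tfrac{x+y}{2},\xi\big)\,d\xi\;\tilde\psi_2(y),\]
which is supported where $|x-y|\ge c$. On that region I integrate by parts in $\xi$ $N$ times using $L=\frac{h(x-y)\cdot D_\xi}{|x-y|^2}$. Each application gains a factor $h$ together with one $\xi$-derivative on $b$. Since $\|b\|_{m,k}$ controls $\langle\xi\rangle^{|\alpha|-m}|\partial_\xi^\alpha b|$, after $N>m+d$ steps the integrand is absolutely integrable. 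Taking additional $x,y$ derivatives and spending extra $\xi$-integrations by parts yields a smooth, compactly supported kernel. Its $C^K$ norms are bounded by $h^{N-d}h^{-\delta(N+K)}$ times $\sum_{k\le N+K}h^{\delta k}\|a\|_{m,k}$, which is linear in $\mathcal C$; since $\delta<1/2$, the power of $h$ is harmless.

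Finally, a compactly supported kernel with bounded $C^K$ norm for $K$ large (depending on $r,s$) defines an operator $H_h^r\to H_h^s$ with norm bounded by those $C^K$ norms times a power of $h^{-1}$. This power is absorbed by choosing $N$ large. Summing the finitely many charts gives the result, using the $h$-uniform continuity of pullbacks and cutoffs between $H_h^r(M)$ and the chart spaces, as in the proof of Lemma \ref{lem: continuity sobolev mfld}.

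The main obstacle is purely bookkeeping: making sure the dependence on $a$ stays linear. It does, because every term is linear in $b$ and hence in $a$. The only other point is ensuring that the $\xi$-integrability for symbols of positive order $m$ is handled by enough integrations by parts, rather than treating the oscillatory integral as absolutely convergent.
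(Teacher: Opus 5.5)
Your proposal is correct and follows the same route as the paper, which only remarks that the lemma ``directly follows from the result on $\mathbf{R}^d$'' after the chart-by-chart reduction built into Definition~\ref{defi: weyl quanti manifold}. You additionally write out the Euclidean off-diagonal argument: integration by parts in $\xi$ on $\{|x-y|\ge c\}$, which gives a smooth kernel whose bounds are linear in $\mathcal C$, followed by the $h$-uniform transfer between $H_h^r(M)$ and the chart spaces as in Lemma~\ref{lem: continuity sobolev mfld}.
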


\subsection{Reconstruction of the symbol} Lemmas \ref{lem: operator in a chart}--\ref{prop: pseudolocality} show that operators $\operatorname{Op}_h^{M}(a)$ behave as pseudodifferential operators in charts, and enjoy a pseudolocality property. It turns out that the converse is true: any reasonable pseudolocal operator that behaves as a pseudodifferential operator of order $m$ in charts can be expressed, up to a negligible remainder, as $\operatorname{Op}_h^{M}(a)$ for some symbol $a\in S^m$.

We give a general formulation. We let $\Xi=\{(U_\ell,\varphi_\ell,\chi_\ell',\chi_\ell)\}$ be as above. The next definition is not the usual one for $h$-pseudodifferential operators on manifolds. Yet, it is practical to attach to an operator the seminorms of its symbol in charts together with the constants in the pseudolocality estimates, see Lemma \ref{lem: lemma before construction of symbol} below.

\begin{defi}\label{defi: pseudo in chart} Let $m\in \mathbf{R}$. Let $\mathcal C=(C_{k})_{k\in \mathbf N}$ be a nondecreasing sequence of positive constants. Let $\vec{\mathbf a}=\{\mathbf a_{\ell}\}_{\ell\in \Xi}$ be an array of formal power series in $\mathrm{i}\underline h$, with $\mathbf a_{\ell}$ supported on $\{\chi_\ell'\neq 0\}$. We say that a sequence of operators $(A^N)_{N\ge 0}$ satisfies $\mathbf H \big(h,m,\vec{\mathbf a},\mathcal C)$ if
\begin{enumerate}
\item For all $N\ge 0$, for each $\ell\in \Xi$ we have:
\begin{equation} \label{eq: assume express charts AN}(\varphi_\ell)_*\chi_\ell' A^N \chi_\ell'\varphi_\ell^*=\mathrm{Op}_h^{\rm w}((\varphi_\ell)_*T_N(\mathbf a_{\ell}))+R_{N,\ell}.\end{equation}
with $\|R_{N,\ell}\|_{H_h^r\to H^{s}}\le C_{N+|r|+|s|} h^{N(1-2\delta)}$ provided $s\le r-(m-N)$. 
\item The coefficients of the expansion $\mathbf a_\ell:=\sum (\mathrm{i}\underline h)^k a_{\ell,k}$ satisfy $a_{\ell,k}\in S^{m-k}$, with
\[\|a_{\ell,k}\|_{m-k,j}\le C_{j+k} h^{-\delta(2k+j)}.\]
\item For any $\ell\in \Xi$ and any $s,r\in \mathbf R$:
\begin{equation} \label{eq: assum pseudo loc AN} \big\|(1-\chi_\ell') A^N\chi_\ell\big\|_{H_h^r\to H_h^s}\le C_{N+|r|+|s|} h^{N(1-2\delta)} , \quad \big\|\chi_\ell A^N(1-\chi_\ell')\big\|_{H_h^r\to H_h^{s}}\le C_{N+|r|+|s|} h^{N(1-2\delta)}.\end{equation}
\end{enumerate}
\end{defi} 
\begin{rem} Lemmas \ref{lem: operator in a chart}--\ref{prop: pseudolocality} precisely assert the following. For $a\in S^m$ and $h\in (0,1]$, let $C_k(a,h)=h^{\delta k}\|a\|_{m,k}$ and $\mathcal C=\{C_k(a,h)\}_{k\in \mathbf N}$. Then the constant family $(\operatorname{Op}_h^{M}(a))_{N\ge 0}$ satisfies $\mathbf H(h,m,\vec{\mathbf a},\mathcal C')$ with $\mathbf a_{\ell}=L_{\varphi_\ell,\chi_\ell'}a$, and where $\mathcal C'=\{C_k'\}_{k\in \mathbf N}$ satisfies $C'_k\lesssim^{\!\! \operatorname{Pol}_1(\mathcal C)}_{\!\! k,\Xi} 1$. In other words the new constants $C_k'$ are bounded by linear expressions in the variables $C_k(a,h)$.
\end{rem}

\begin{prop}\label{prop: constructing the symbol}Let $A$ be a linear operator on $C^\infty(M)$ such that the constant sequence $(A^N)_{N\ge 0}=(A)_{N\ge 0}$ satisfies $\mathbf H(h,m,\vec{\mathbf a},\mathcal C)$. Denote $\vec{\mathbf a}=\{\mathbf a_{\ell}\}_{\ell\in \Xi}$. There is a formal symbol 
\begin{equation} \label{eq: sigma tot}\sigma_{\rm tot}(\vec{\mathbf a})=\sum_{n\ge 0}(\mathrm{i}\underline h)^n \mathbf D^{2n}_{\Xi}\vec{\mathbf a}.\end{equation}
such that if $s\le r-(m-N)$:
\[A=\mathrm{Op}_h^M\big(T_N(\sigma_{\rm tot}(\vec{\mathbf a}))\big)+\mathcal O_{N,r,s,\Xi}^{\operatorname{Pol}_1(\mathcal C)}(h^{N(1-2\delta)})_{H_h^r\to H_h^{s}}.\]
Here $\mathbf D_{\Xi}^{2n}$ is a matrix differential operator of order $2n$ with real coefficients independent of $\xi$, and has $\ge n$ derivatives hitting the $\xi$ variable. The first terms are given by 
\[\mathbf D_{\Xi}^0 \vec{\mathbf a}=\pi(\vec{\mathbf a})=\sum \chi_\ell^2 \mathbf a_{\ell}, \qquad \mathbf D_{\Xi}^2 \vec{\mathbf a}=0.\]
\end{prop}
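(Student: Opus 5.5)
The idea is to reconstruct $A$ chart by chart from its local symbols and to correct the resulting error order by order in $h$. Decompose $A=\sum_\ell \chi_\ell^2 A$, which is legitimate since $\sum\chi_\ell^2=1$. Then write
\[\chi_\ell^2 A=\chi_\ell\chi_\ell' A\chi_\ell'\chi_\ell+\chi_\ell^2 A(1-\chi_\ell')+(\chi_\ell^2-\chi_\ell^2\chi_\ell')A\chi_\ell'.\]
The second term is negligible by the pseudolocality assumption \eqref{eq: assum pseudo loc AN}. The third term vanishes, because $\chi_\ell'=1$ on $\operatorname{Supp}\chi_\ell$.

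For the main term, push it into the chart $\varphi_\ell$ and use \eqref{eq: assume express charts AN}. This gives
\[\chi_\ell A\chi_\ell=\varphi_\ell^*\chi_\ell\operatorname{Op}_h^{\rm w}\big((\varphi_\ell)_*T_N(\mathbf a_\ell)\big)\chi_\ell(\varphi_\ell)_*\]
up to an $\mathcal O(h^{N(1-2\delta)})$ remainder. Next, apply the Weyl composition formula on $\mathbf{R}^d$ (Proposition \ref{prop: composition Rd}) to compose with the multiplication operators $\chi_\ell$, which have $\xi$-independent symbols. The result is $\operatorname{Op}_h^{\rm w}(\chi_\ell\# T_N(\mathbf a_\ell)\#\chi_\ell)$. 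Since the Weyl symbols of multiplication operators are real and the sandwich is symmetric, only even powers of $\mathrm{i}h$ survive, and the leading term is $\chi_\ell^2\mathbf a_\ell$. The $2n$-th coefficient is a bidifferential expression with $n$ derivatives on $\chi_\ell$ (in $x$) and $n$ derivatives on $\mathbf a_\ell$ in $\xi$.

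This yields a first candidate symbol $\pi(\vec{\mathbf a})=\sum_\ell\chi_\ell^2\mathbf a_\ell$ plus corrections. These corrections are not yet of the form $\operatorname{Op}_h^M$ of anything, because $\operatorname{Op}_h^M$ itself involves the cutoffs $\chi_{\ell'}$ and the changes of charts.

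To close the argument, run an iterative correction. Set $A_0:=A-\operatorname{Op}_h^M(T_N(\pi(\vec{\mathbf a})))$. By Lemma \ref{lem: operator in a chart} and the remark after Definition \ref{defi: pseudo in chart}, $\operatorname{Op}_h^M(\pi(\vec{\mathbf a}))$ satisfies $\mathbf H$ with local symbols $L_{\varphi_\ell,\chi_\ell'}\pi(\vec{\mathbf a})$. Because $A$ satisfies $\mathbf H$ as well, $A_0$ satisfies $\mathbf H(h,m,\vec{\mathbf a}^{(1)},\mathcal C')$, and its local symbols $\mathbf a^{(1)}_\ell$ vanish to first order in $\mathrm{i}\underline h$. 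In fact they vanish to order two, since all odd orders vanish: each $L_{\varphi,\chi,n}$ vanishes for $n$ odd, and the analogous parity holds for the sandwich composition.

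Applying the same procedure to $A_0$ defines $\mathbf D^{2n}_\Xi$ recursively. We obtain
\[\sigma_{\rm tot}=\pi(\vec{\mathbf a})+\pi(\vec{\mathbf a}^{(1)})+\cdots,\]
where at step $k$ only coefficients of order $\ge 2k$ appear. Because of this, the formal series is well defined coefficientwise, and $\mathbf D^2_\Xi=0$ follows from the vanishing of the $\mathrm{i}\underline h$ coefficients. The structural claims hold because each step combines $\xi$-independent differential operators. These are the operators of Lemma \ref{lem: operator in a chart}, and by the lemma on invariance under symplectic lifts their form survives change of charts. They keep real coefficients and have at least $n$ $\xi$-derivatives at order $2n$; the same property holds for the composition coefficients $A_{2n}(D)$ when one factor is $\xi$-independent. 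Along the way, the constants stay linear in $\mathcal C$, as $\operatorname{Pol}_1$ requires.

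The main obstacle is the error bookkeeping in the recursion. At step $k$, the $N$-truncation remainders coming from Proposition \ref{prop: composition Rd} and Lemma \ref{lem: operator in a chart} must be controlled in $H_h^r\to H_h^s$. They should be $\mathcal O(h^{N(1-2\delta)})$, with constants polynomial of degree one in $\mathcal C$, using Proposition \ref{prop: continuity Sobolev} applied to remainders in $S^{m-N}$ with seminorms $\lesssim h^{-\delta(2N+k)}$. I would stop the recursion after $N$ steps, since later steps only contribute to orders $\ge 2N$, and then sum the finitely many remainders.
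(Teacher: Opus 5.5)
Your overall architecture is the paper's: subtract $\operatorname{Op}_h^M(T_N(\pi(\vec{\mathbf a})))$, show that the remainder again satisfies $\mathbf H$ with local symbols starting at order $(\mathrm{i}\underline h)^2$, iterate finitely many times and sum. Your termination and remainder bookkeeping would be fine. But the step that makes the iteration run is not established, and the argument you build around it is wrong.

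First, your three-term decomposition of $\chi_\ell^2A$ is false. Since $\chi_\ell\chi_\ell'=\chi_\ell$, its right-hand side equals $\chi_\ell A\chi_\ell+\chi_\ell^2A-\chi_\ell^2A\chi_\ell'$. It would therefore force $\chi_\ell A\chi_\ell\approx\chi_\ell^2A$, i.e.\ $\chi_\ell[A,\chi_\ell]$ negligible. It is not: for $A=\operatorname{Op}_h^{\rm w}(a)$ on $\mathbf R$ and $\sum_\ell\chi_\ell^2=1$, the Weyl symbol of $\sum_\ell\chi_\ell A\chi_\ell$ is $a+\tfrac{h^2}{2}\big(\sum_\ell(\partial_x\chi_\ell)^2\big)\partial_\xi^2a+\mathcal O(h^4)$. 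So $A$ is not $\sum_\ell\chi_\ell A\chi_\ell$ up to admissible errors, and the parity of the symmetric sandwich tells you nothing about $A$.

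More seriously, the claim that $A_0:=A-\operatorname{Op}_h^M(T_N(\pi(\vec{\mathbf a})))$ satisfies $\mathbf H$ with local symbols vanishing at orders $0$ and $1$ does not follow from ``$A$ satisfies $\mathbf H$ as well''. With the natural choice $\mathbf a_\ell-L_{\varphi_\ell,\chi_\ell'}\pi(\vec{\mathbf a})$, the zeroth coefficient is $a_{\ell,0}-(\chi_\ell')^2\sum_{\ell'}\chi_{\ell'}^2a_{\ell',0}$. This vanishes only if the chart data agree coefficientwise on overlaps. Hypothesis $\mathbf H$ imposes nothing of the kind, and since the $a_{\ell,k}$ depend on $h$ you cannot extract such agreement by identifying coefficients. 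In fact the array is not even determined by $A$: add $h^{1-2\delta}c$ at order $0$ and $\mathrm{i}h^{-2\delta}c$ at order $1$ in one chart, with $c\in S^{m-1}$ fixed and supported in $\{\chi_\ell'\neq0\}$. This leaves every $T_N$ with $N\ge2$ unchanged and is compatible with $\mathbf H$, yet in general makes that zeroth coefficient nonzero. Your parity claim fails too, since the $\mathbf a_\ell$ may have odd coefficients, e.g.\ for $A=\operatorname{Op}_h^M(a+\mathrm{i}hb)$.

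The missing idea is Lemma~\ref{lem: lemma before construction of symbol}. In the chart $(\varphi,\chi)=(\varphi_{\ell_0},\chi_{\ell_0}')$ one \emph{recomputes} the symbol of $A$ from all the $\mathbf a_\ell$:
\begin{enumerate}
\item write $\chi A\chi=\sum_\ell\chi A\chi_\ell^2\chi$;
\item replace each summand by $\chi\chi_\ell'[\chi_\ell'A\chi_\ell']\chi_\ell^2\chi$, up to a pseudolocal remainder;
\item insert \eqref{eq: assume express charts AN} in chart $\ell$;
\item transport to chart $\ell_0$ by Proposition~\ref{prop: change of variable pseudo euclidean}.
\end{enumerate}
This gives a universal differential expression in $\vec{\mathbf a}$ whose order-$0$ term is $\chi^2\pi(\vec{\mathbf a})$ for \emph{every} array. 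It therefore cancels exactly against Lemma~\ref{lem: operator in a chart} applied to $\pi(\vec{\mathbf a})$.

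The order-$1$ terms are the brackets $-\{\chi^2a_{\ell,k},\chi_\ell^2\}$ coming from the one-sided factor $\chi_\ell^2$. The paper cancels them with $\sum_\ell\chi_\ell^2=1$, not by parity; odd orders $\ge3$ do survive in $Q$. Note that this last cancellation uses agreement of the entries where $\mathrm d\chi_\ell\neq0$. Without that agreement, $Q$ only raises the order by one, which still suffices for the iteration to terminate after $N$ steps.
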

Note that the formal symbol $\sigma_{\rm tot}(\vec{\mathbf a})$ is canonically associated with $\vec{\mathbf a}$ but not with $A$. Proposition \ref{prop: constructing the symbol} is obtained from the next lemma, that will be used inductively in the construction of the symbol.

\begin{lem}\label{lem: lemma before construction of symbol} Let $(A^N)_{N\ge 0}$ satisfy $\mathbf{H}(h,m,\vec{\mathbf a},\mathcal C)$. Define
\[\pi(\vec{\mathbf a}):=\sum_\ell \chi_\ell^2\mathbf a_{\ell}.\]
Set $B^N:= A^N-\operatorname{Op}_h^{M}(T_N(\pi(\vec{\mathbf a}))$. Then $(B^N)_{N\ge 0}$ satisfies $\mathbf{H}(h,m,\vec{\mathbf b},\mathcal C')$ where:
\begin{itemize}
\item  There is a matrix operator $Q:=\sum_{n\ge 2} (\mathrm{i}\underline h)^n \mathbf D^{2n}_\Xi$ acting on arrays of formal power series $\vec{\mathbf a}\in \mathcal A[\![\mathrm{i}\underline{h}]\!]^\Xi$ such that 
\begin{equation} \label{eq: def mathbf b}\vec{\mathbf b}=Q\vec{\mathbf a}.\end{equation} 
Here $\mathbf D^{2n}_\Xi$ is a matrix of differential operators of order $2n$ with \textbf{real} coefficients independent of $\xi$ and always $\ge n$ derivatives hitting the $\xi$ variable. 
\item The new family $\mathcal C'=(C_k')$ satisfies $C_k'\lesssim_{\!\! k}^{\!\! \operatorname{Pol}_1(\mathcal C)} 1$.
\end{itemize}
\end{lem}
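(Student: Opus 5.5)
We compute $B^N$ chart by chart and check the three conditions of $\mathbf H$. Fix $k\in\Xi$. By definition of the quantization,
\[(\varphi_k)_*\chi_k'\operatorname{Op}_h^{M}(T_N\pi(\vec{\mathbf a}))\chi_k'\varphi_k^*=\sum_\ell (\varphi_k)_*\chi_k'\chi_\ell\varphi_\ell^*\operatorname{Op}_h^{\rm w}\big(\chi_\ell'\,T_N\pi(\vec{\mathbf a})\circ\widetilde\varphi_\ell^{-1}\big)(\varphi_\ell)_*\chi_\ell\chi_k'\varphi_k^*.\]
On $\operatorname{Supp}\chi_\ell$ we have $\chi_\ell'\equiv1$. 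For each pair $(k,\ell)$ we use the transition map $\varphi_k\circ\varphi_\ell^{-1}$, which is volume preserving because the charts are isochore. Proposition \ref{prop: change of variable pseudo euclidean} then writes the $(k,\ell)$ term as $\operatorname{Op}_h^{\rm w}$ of a symbol in the $k$-chart. Its leading coefficient is $\chi_\ell^2(\chi_k')^2\,\pi(\vec{\mathbf a})$. Its higher coefficients are $\mathbf D^{2n}_{\varphi_k\varphi_\ell^{-1},\chi_\ell\chi_k'}$ applied to $\pi(\vec{\mathbf a})$, and only even $n$ appear. The remainder $r_N$ is controlled by $\mathcal O^{\operatorname{Pol}_1(\mathcal C)}(h^{-\delta(2N+k)})$.

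Next we sum over $\ell$. Since $\sum_\ell\chi_\ell^2=1$, the order-zero part is $(\chi_k')^2\pi(\vec{\mathbf a})$, not $\mathbf a_k$. So the order-zero piece of $B^N$ is $(\chi_k')^2\mathbf a_k-(\chi_k')^2\pi(\vec{\mathbf a})$ after accounting for the cutoffs. This does not vanish, so I need the following reorganization. The condition $\mathbf H$ for $A^N$ forces the local symbols to be compatible on overlaps: on $\{\chi_k'\chi_\ell'\neq0\}$, $\mathbf a_k$ equals the pushforward of $\mathbf a_\ell$ up to the change-of-variables corrections. To see this, apply \eqref{eq: assume express charts AN} in two charts to the same operator $\chi_k'\chi_\ell'A^N\chi_\ell'\chi_k'$, change variables with Proposition \ref{prop: change of variable pseudo euclidean}, and identify the formal symbols. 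This identification is legitimate because we work with formal series in $\mathrm{i}\underline h$; the Weyl symbol of an operator is unique, and by a standard argument we compare truncations at every order $N$. It yields
\[\mathbf a_k=\mathbf a_\ell+\sum_{n\ge1}(\mathrm{i}\underline h)^n\mathbf D^{2n}_{k\ell}\mathbf a_\ell\]
on the overlap, with only even $n$ contributing. Substituting this into $\pi(\vec{\mathbf a})=\sum_\ell\chi_\ell^2\mathbf a_\ell$ near $\operatorname{Supp}\chi_k'$ shows that $\mathbf a_k-\pi(\vec{\mathbf a})$ begins at order $(\mathrm{i}\underline h)^2$. Combined with the change-of-variable terms, the chart symbol of $B^N$ is $\mathbf b_k=\sum_{n\ge2}(\mathrm{i}\underline h)^n\mathbf D^{2n}_\Xi\vec{\mathbf a}$. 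Here the $n=1$ terms vanish because odd powers are absent. The operators are built from the real operators of Proposition \ref{prop: change of variable pseudo euclidean}, so they have real $\xi$-independent coefficients. By the subsequent lemma on invariance, they have at least $n$ derivatives in $\xi$.

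Condition (2), the symbol estimates for $b_{k,n}$, follows because $\mathbf D^{2n}$ has order $2n$ with $\ge n$ derivatives in $\xi$. Applied to $a_{\ell,j}\in S^{m-j}$ with bound $h^{-\delta(2j+\cdot)}$, it gains $\langle\xi\rangle^{-n}$ and loses $h^{-2n\delta}$. This is consistent with $\|b_{k,n}\|_{m-n,i}\lesssim h^{-\delta(2n+i)}$.

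The remainder bounds in (1) combine $R_{N,\ell}$, the $r_N$ of Proposition \ref{prop: change of variable pseudo euclidean} and Lemma \ref{lem: operator in a chart}, and the truncation error from dropping the orders $\ge N$ of $T_N$ compositions. Each is $\mathcal O^{\operatorname{Pol}_1(\mathcal C)}(h^{N(1-2\delta)})$ from $H_h^r$ to $H_h^s$ by Proposition \ref{prop: continuity Sobolev}.

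Condition (3), pseudolocality, holds for $A^N$ by assumption and for $\operatorname{Op}_h^{M}(T_N\pi(\vec{\mathbf a}))$ by Lemma \ref{prop: pseudolocality}, so it holds for their difference $B^N$.

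The main obstacle is the compatibility identification on overlaps. One has to pass from two chart representations of the same operator, each with $\mathcal O(h^{N(1-2\delta)})$ errors, to an equality of formal symbols order by order. I would try the following. Take the difference of the two $\operatorname{Op}_h^{\rm w}$ expressions. Its symbol is an $h$-expansion whose coefficients are $h$-dependent but bounded by $h^{-\delta(\cdot)}$, and the whole operator is $\mathcal O(h^{N(1-2\delta)})$ for every $N$. Then argue inductively: the lowest nonzero coefficient would have to be $\mathcal O(h^{\varepsilon})$. This is awkward for nonclassical symbols, so I would instead define the $\mathbf D^{2n}_\Xi$ directly by the change-of-variables formula applied to each $\mathbf a_\ell$. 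That makes \eqref{eq: def mathbf b} an identity by construction, with no uniqueness argument needed.
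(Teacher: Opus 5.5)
\textbf{Genuine gap: the overlap relation.} Your treatment of $\operatorname{Op}_h^{M}(T_N\pi(\vec{\mathbf a}))$ in chart $k$, of the remainders, and of conditions (2)--(3) is fine. The gap is on the $A^N$ side. Everything there rests on the overlap relation
\[\mathbf a_k=\mathbf a_\ell+\sum_{n\ge1}(\mathrm{i}\underline h)^n\mathbf D^{2n}_{k\ell}\mathbf a_\ell .\]
This relation is not merely hard to prove for nonclassical symbols. It does not follow from $\mathbf H$ at all, because $\mathbf H$ does not determine the array $\vec{\mathbf a}$.

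For example, in a single chart $\ell$ replace $a_{\ell,0}$ by $a_{\ell,0}+hc$ and $a_{\ell,1}$ by $a_{\ell,1}+\mathrm{i}c$, with $c\in S^{m-1}$ supported in a region where $\chi_\ell'=\chi_k'=1$. Then $T_N(\mathbf a_\ell)$ is unchanged for $N\ge 2$ and moves by $hc=\mathcal O(h)$ for $N=1$. The bounds in (2) persist with harmless constants, and the operators $A^N$ are untouched. So $\mathbf H$ still holds, while the order-zero part of your relation fails.

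Consequently, as long as you use $\mathbf a_k$ as the chart-$k$ symbol of $A^N$, the order-zero coefficient of $\mathbf b_k$ is $a_{k,0}-(\chi_k')^2\pi(\vec a_0)$, which need not vanish. Then $\vec{\mathbf b}$ is not of the form $Q\vec{\mathbf a}$ with $Q$ raising the $\underline h$-order, which is exactly what the iteration in Proposition \ref{prop: constructing the symbol} relies on. Your fallback of defining $\mathbf D^{2n}_\Xi$ ``by the change-of-variables formula applied to each $\mathbf a_\ell$'' does not repair this. Condition (1) for $B^N$ is an operator estimate. For such a $\mathbf b_k$ you would still have to show that $(\varphi_k)_*\chi_k'A^N\chi_k'\varphi_k^*$ equals, up to $\mathcal O(h^{N(1-2\delta)})$, the Weyl quantization of an expression built from all the $\mathbf a_\ell$. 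Hypothesis \eqref{eq: assume express charts AN} in chart $k$ alone does not give that.

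\textbf{The missing idea (the paper's route).} Never invoke \eqref{eq: assume express charts AN} in chart $k$ for $A^N$. Instead, re-express $\chi A^N\chi$, with $\chi=\chi_k'$, through the other charts:
\[\chi A^N\chi=\sum_\ell \chi(\chi_\ell')^2A^N\chi_\ell'\chi_\ell^2\chi+\sum_\ell \chi\big(1-(\chi_\ell')^2\big)A^N\chi_\ell^2\chi .\]
The second sum is $\mathcal O(h^{N(1-2\delta)})$ by the pseudolocality hypothesis \eqref{eq: assum pseudo loc AN}. So hypothesis (3) is needed to prove conclusion (1) for $B^N$, not only conclusion (3). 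In the first sum, use \eqref{eq: assume express charts AN} in chart $\ell$, Proposition \ref{prop: change of variable pseudo euclidean} for $\varphi_k^{-1}\circ\varphi_\ell$, and composition with $\chi_\ell^2$.

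Summing with $\sum_\ell\chi_\ell^2=1$, the order-zero symbol of $\chi A^N\chi$ comes out as $\chi^2\pi(\vec{\mathbf a})$. This is exactly what you computed for $\chi\operatorname{Op}_h^M(T_N\pi(\vec{\mathbf a}))\chi$. The difference is then, by construction, a universal differential expression in $\vec{\mathbf a}$ of positive $\underline h$-order, and no comparison between different $\mathbf a_\ell$ is ever made.

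\textbf{Caution on the $n=1$ term.} In this route $\chi_\ell^2$ sits on one side only, so the $(\mathrm{i}\underline h)^1$ term is not removed by parity. It is a multiple of $\sum_\ell\{\chi^2a_{\ell,0},\chi_\ell^2\}$, which $\sum_\ell\chi_\ell^2=1$ kills only when the $a_{\ell,0}$ agree on overlaps. If you want $Q$ to start at $n=2$ for arbitrary arrays, split symmetrically instead:
\[A^N=\sum_\ell\chi_\ell A^N\chi_\ell+\tfrac12\sum_\ell[\chi_\ell,[\chi_\ell,A^N]].\]
The first terms equal $\chi_\ell(\chi_\ell'A^N\chi_\ell')\chi_\ell$, which have symmetric cutoffs. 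The double commutators have formal symbols containing only even powers, starting at $(\mathrm{i}\underline h)^2$.
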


\begin{proof}

\textbf{1.} Let us first express the action of $A^N$ in charts. Fix $\ell_0\in \Xi$ and let $(\varphi,\chi):=(\varphi_{\ell_0},\chi_{\ell_0}')$. Using the partition of unity $\sum \chi_\ell^2=1$, we have
\[\chi \varphi_* A^N \varphi^*\chi=\sum_\ell \varphi_*\chi A^N \chi_\ell^2 \chi\varphi^*.\]
In turn, using $\chi_\ell\prec \chi_\ell'$,
\begin{equation} \label{eq: split AN charts}\chi \varphi_* A^N \varphi^*\chi=\sum_\ell \varphi_* \chi (\chi_\ell')^2A^N\chi_\ell' \chi_\ell^2 \chi\varphi^*+\sum_\ell \varphi_*\chi (1-(\chi_{\ell}')^2) A^N\chi_\ell^2 \chi \varphi^*.\end{equation}
We first control the second sum on the right-hand side of \eqref{eq: split AN charts}. By assumption, each of the operators $(1-\chi_\ell')A^N \chi_\ell$ is continuous $H_h^r\to H_h^{s}$ with
\[\big\|(1-\chi_\ell')A^N \chi_\ell\big\|_{H_h^r\to H_h^{s}}\lesssim_{\!\! N,r,s}^{\!\!\operatorname{Pol}_1(\mathcal C)} h^{N(1-2\delta)},\]
since multiplication operators by smooth functions are continuous on each Sobolev spaces we infer
\[\Big\|\sum \varphi_*\chi (1-(\chi_{\ell}')^2) A^N\chi_\ell^2 \chi \varphi^*\Big\|_{H_h^r\to H_h^{s}}\lesssim_{\!\! N,r,s,\Xi}^{\!\!\operatorname{Pol}_1(\mathcal C)} h^{N(1-2\delta)}.\]
We now control the first sum in \eqref{eq: split AN charts}. For each $\ell$, we write
\[\varphi_* \chi (\chi_\ell')^2A^N\chi_\ell' \chi_\ell^2 \chi\varphi^*=\varphi_*\chi \chi_{\ell}' (\varphi_{\ell})_* \big[(\varphi_{\ell})^*\chi_{\ell}' A^N\chi_{\ell}' (\varphi_\ell)^*\big] (\varphi_\ell)_*\varphi^* \chi_{\ell}^2\chi\]
By assumption, provided $s\le r-(m-N)$, the middle operator in brackets writes $\mathrm{Op}_h^{\rm w}(T_N((\varphi_\ell)_*\mathbf a_{\ell}))+\mathcal O^{\mathcal C}_{N,\Xi,r,s}(h^{N(1-2\delta)})$.
Letting $\kappa_\ell:=\varphi^{-1}\circ \varphi_\ell$, it follows that
\[\varphi_* \chi (\chi_\ell')^2A^N\chi_\ell' \chi_\ell^2 \chi\varphi^*=\chi \chi_{\ell}' \kappa_\ell^* \mathrm{Op}_h^{\rm w}\big((\varphi_\ell)_*T_N({\mathbf a}_{\ell})\big) (\kappa_\ell)_* \chi_\ell' \chi \chi_{\ell}^2+\mathcal O_{N,r,s,\Xi}^{\operatorname{Pol}_1(\mathcal C)}(h^{N(1-2\delta)})_{H_h^r\to H_h^{s}}.\]
We expand the truncated symbol
\[T_N(\mathbf a_{\ell}) =\sum_{k\le N} h^k a_{\ell,k}.\]
By Proposition \ref{prop: change of variable pseudo euclidean}, for each $k<N$, one has:
\[\chi \chi_{\ell}' \kappa_\ell^* \mathrm{Op}_h^{\rm w}\big((\varphi_\ell)_*a_{\ell,k}\big) (\kappa_\ell)_* \chi\chi_\ell'=\mathrm{Op}_h^{\rm w}\Big(\varphi_*\sum_{j<N-k} (\mathrm{i} h)^j \big(\mathbf D^{2j}_{\kappa_\ell} a_{\ell,k}\big) + h^{N-k}r_{N-k}^{(\ell)}\Big),\]
where the remainder $r_{N-k}^{(\ell)}$ is quantized into an operator satisfying
\[\|\mathrm{Op}_h^{\rm w}(r_{N-k}^{(\ell)})\|_{H_h^r\to H_h^s}\lesssim_{\!\! _{N,r,s}}^{\!\! \operatorname{Pol}_1(\mathcal C)} h^{-2\delta N}. \]
We compose with the multiplication by $\chi_\ell^2$:
\begin{equation} \label{eq: AN ell k in chart} \varphi_* \chi (\chi_\ell')^2\mathrm{Op}_h^{\rm w}((\varphi_\ell)_* a_{\ell,k})\chi_\ell' \chi_\ell^2 \chi\varphi^*=\mathrm{Op}_h^{\rm w}\Big(\varphi_*\sum_{j<N-k}(\mathrm{i} h)^j \big(\mathbf D^{2j}_\ell a_{\ell,k}\big) + h^{N-k}r_{N-k}^{(\ell)}\Big),\end{equation}
here the operators $\mathbf D^{2j}_\ell$ and the error may have changed but are still of the previous form. Explicitly, we have (using $\chi_\ell\prec \chi_\ell'$):
\begin{equation} \label{eq: form D  ell}\mathbf D^0_\ell=\chi^2\chi_\ell^2, \quad \mathbf D^{2}_\ell a=-\{(\chi_\ell')^2\chi^2a ,\chi_\ell^2\}=-\{\chi^2 a,\chi_\ell^2\}.\end{equation}
We form the arrays $\vec a_n=\{a_{\ell,n}\}_{\ell\in \Xi}$, that are such that
\[\vec{\mathbf a}=\sum_{n\ge 0} (\mathrm{i}\underline h)^n\vec a_n.\]
By summing the contributions \eqref{eq: AN ell k in chart} over the charts $\ell\in \Xi$ and the indices $k<N$, we find
\begin{equation} \label{eq: AN in chart phi chi}\varphi_* \chi A^N \chi\varphi^*= \mathrm{Op}_h^{\rm w}\Big(\varphi_*\sum_{n< N}(\mathrm{i}h)^n  \sum_{j+k=n}\mathbf D^{2j}_{\Xi,\chi,\varphi} \vec{a}_k\Big)+\mathcal O_{N,r,s}^{\operatorname{Pol}_1(\mathcal C)}(h^{N(1-2\delta)})_{H_h^r\to H_h^{s}},\end{equation}
provided $s\le r-(m-N)$. Here $\mathbf D^0_{\Xi,\chi,\varphi}{\vec{\mathbf a}}=\chi^2\pi(\vec{\mathbf a})$, and $\mathbf D^2_{\Xi,\chi,\varphi}=0$. To see this, recall \eqref{eq: form D  ell} and observe that 
\[\sum_\ell \chi_\ell^2 =1, \qquad \sum_\ell \{\chi^2 a,\chi_\ell^2\}=\{\chi^2 a,1\}=0.\]
Lastly, note that
\[\sum_{n< N}(\mathrm{i} h)^n  \sum_{j+k=n}\big(\mathbf D^{2j}_{\Xi,\chi,\varphi} \vec{a}_k\big)=T_N\Big(\sum (\mathrm{i} \underline h)^n\mathbf D^{2n}_{\Xi,\chi,\varphi} \vec{\mathbf a}\Big).\]

\textbf{2.} We now evaluate $\varphi_*\chi \operatorname{Op}_h^{M}(T_N(\pi(\vec{\mathbf a})))\varphi^*\chi$. Recall that 
\[T_N(\pi(\vec{\mathbf a}))=\sum_{k<N} (\mathrm{i} h)^k\sum_{\ell}\chi_\ell^2 a_{\ell,k}.\]
Working as above, we see that for $s\le r-(m-N)$, the operator is expressed in charts as
\begin{equation} \label{eq: Op pi a in charts} \begin{aligned}\varphi_*\chi \operatorname{Op}_h^{M}(T_N(\pi(\vec{\mathbf a})))\varphi^*\chi& =\mathrm{Op}_h^{\rm w}\Big( \varphi_*\sum_{n\le N} (\mathrm{i} h)^n\sum_{j+k=n}\sum_{\ell} \mathbf D^{2j}_{\varphi,\chi}\big( \chi_\ell^2 (\varphi_\ell)^* a_{\ell,k}\big)\Big)\\ & +\mathcal O^{\operatorname{Pol}_1(\mathcal C)}_{N,r,s}(h^{N(1-2\delta)})_{H_h^r\to H_h^s},\end{aligned}\end{equation}
with $\mathbf D^0_{\varphi,\chi}=\chi^2$ and $\mathbf D^{2}_{\varphi,\chi}=0$. Extracting the term $j=0$ from the sum gives:
\[\mathrm{Op}_h^{\rm w}\Big(\varphi_* \chi^2\sum_{n\le N} (\mathrm{i} h)^n\sum_\ell \big[(\chi_\ell)^2(\varphi_\ell)^* a_{\ell,k}\big]\Big).\]
which coincides with the subsum $j=0$ in \eqref{eq: AN in chart phi chi}. By taking the difference between \eqref{eq: AN in chart phi chi} and \eqref{eq: Op pi a in charts}, we obtain for $s\le r-(m-N)$:
\[\varphi_*\chi \big(A^N-\operatorname{Op}_h^{M}(T_N(\pi(\vec{\mathbf a})))\big) \chi\varphi^*= \mathrm{Op}_h^{\rm w}\Big(\sum_{n=0}^N (\mathrm{i} h)^nb_{\varphi,\chi,n}\Big)+\mathcal O^{\operatorname{Pol}_1(\mathcal C)}_{N,r,s}(h^{N(1-2\delta)}),\]
with
\[b_{\varphi,\chi,n}=\sum_{\substack{j+k= n \\ j\ge 2}} \mathbf D^{2j}_{\varphi,\chi}(\vec a_{k}).\]
Here $\vec a_k=\{a_{\ell,k}\}_{\ell\in \Xi}$ and $\mathbf D^{2j}_{\varphi,\chi}$ is a differential operator of order $\le 2j$, with real coefficients independent of $\xi$, with always $\ge j$ hitting the $\xi$ variable. 
The symbols $b_{\varphi,\chi,n}$ are controlled by
\[\|b_{\varphi,\chi,n}\|_{m-n,k}\lesssim_{\!\! n,k}^{\!\! \operatorname{Pol}_1(\mathcal C)} h^{-\delta(2n+k)}.\]

\textbf{3.} Let us now check the pseudolocality statement. For $(A^N)$, this is assumption \eqref{eq: assum pseudo loc AN}. As for $\operatorname{Op}_h^{M}(T_N(\pi(\vec{\mathbf a})))$, the result follows from Lemma \ref{prop: pseudolocality}. Hence $B^N=A^N-\operatorname{Op}_h^{M}(\pi(\vec{\mathbf a}))$ also satisfies \eqref{eq: assum pseudo loc AN}, with new constants $C_{N,r,s}'$ that are linear expressions in $\mathcal C$.  

\textbf{4.} By the above computations, letting $\mathbf b_{\ell}=\sum (\mathrm{i}\underline h)^n b_{\varphi_\ell,\chi_\ell',n}$ and $\vec{\mathbf b}=\{\mathbf b_\ell\}_{\ell\in \Xi}$, we have found an operator
\[Q:\mathcal A[\![\mathrm{i}\underline h]\!]^\Xi\longrightarrow \mathcal A[\![\mathrm{i}\underline h]\!]^\Xi\] such that $\vec{\mathbf b}=Q\vec{\mathbf a}=\sum_{n\ge 2} (\mathrm{i}\underline h)^n \mathbf D^{2n}_\Xi \vec{\mathbf a}$. Here $\mathbf D^{2n}_\Xi$ is a matrix differential operator of order $2n$ with real coefficients acting on arrays $\{a_\ell\}_{\ell\in \Xi}$. 

\end{proof}

We are now ready to prove Proposition \ref{prop: constructing the symbol}.
\begin{proof}[Proof of Proposition \ref{prop: constructing the symbol}] Using the operator $Q$ from \eqref{eq: def mathbf b}, we define inductively:
\begin{equation}\label{eq: def induction symbol} \left\{\begin{array}{l}
\vec {\mathbf b}_{0}=\vec {\mathbf a}, B_{0}^N= A^N \\
\vec {\mathbf b}_{n+1}=Q \vec{\mathbf b}_n \\
A_{n+1}^{N}=\mathrm{Op}_h^M(T_N(\pi(\vec{\mathbf b}_n))) \\
B_{n+1}^N=B_n^N-A_{n+1}^N
\end{array}\right.,\end{equation}

Since $(B_{0}^N)_{N\ge 0}$ satisfies $\mathbf{H}(m,\vec{\mathbf b}_{0},\mathcal C_0)$ with $\mathcal C_0=\mathcal C$, by applying Lemma \ref{lem: lemma before construction of symbol} inductively, we find that $(B_{n}^N)_{N\ge 0}$ satisfies $\mathbf{H}(h,m,\vec{\mathbf{b}}_{n},\mathcal C_n)$ for all $n$, where $\mathcal C_n$ consists of linear expressions in the elements of $\mathcal C_0$. In particular, provided $s\le r-(m-N)$:
\[(\varphi_\ell)*\chi_\ell' B_n^N (\varphi_\ell)^*\chi_\ell'=\mathrm{Op}_h^{\rm w}( \varphi_*T_N(\vec{\mathbf b}_n)_{\ell})+ \mathcal O^{\operatorname{Pol}_1(\mathcal C)}_{n,N,r,s}(h^{N(1-2\delta)})_{H_h^r\to H_h^s}.\]
Now by \eqref{eq: def mathbf b} we have
\[\vec{\mathbf b}_{n}=Q^n \vec{\mathbf b}_0.\]
Since $Q$ maps $\mathcal A[\![\mathrm{i}\underline h] \!]$ into $(\mathrm{i}\underline h)^2\mathcal A[\![\mathrm{i}\underline h] \!]$, the truncated symbol $T_N(\vec{\mathbf b}_n)=T_N(Q^n \vec{\mathbf b}_0)$ vanishes for $n\ge N/2$. Therefore, letting $n_0=\lfloor N/2\rfloor +1$, if $s\le r-(m-N)$ then
\[\|\varphi_*\chi B_{n_0}^N \varphi^*\chi\|_{H_h^r\to H_h^s}\lesssim_{\!\! N,r,s}^{\!\!\operatorname{Pol}_1(\mathcal C)} h^{N(1-2\delta)}.\]
Combined with the pseudolocality statement, this shows that 
\begin{equation} \label{eq: bound Bn0}\|B_{n_0}^N\|_{H_h^r\to H_h^{s}}\lesssim_{\!\! N,r,s}^{\!\!\operatorname{Pol}_1(\mathcal C)}h^{N(1-2\delta)}.\end{equation}
Recalling \eqref{eq: def induction symbol}, we have
\[A^N=B_0^N=\Big(\sum_{n=0}^{n_0-1} B_n^N-B_{n+1}^N\Big)+ B_{n_0}^N=\Big(\sum_{n=1}^{n_0} A_n^N\Big) +B_{n_0}^N.\]
Since $A_n^N=\operatorname{Op}_h^{M}(T_N(\pi(\vec{\mathbf b}_n)))$, by setting
\begin{equation} \label{eq: setting sigma tot}\sigma_{\rm tot}(\vec{\mathbf a}):=\sum_{n\ge 0} \pi(\vec{\mathbf b}_n)=\pi\Big(\sum_{n\ge 0} Q^n \vec{\mathbf a}\Big) \end{equation}
and using the estimate \eqref{eq: bound Bn0}, we obtain, for $s\le r-(m-N)$:
\[A^N=\operatorname{Op}_h^{M}\big(\varphi_*T_N(\sigma_{\rm tot}(\vec{\mathbf a}))\big)+\mathcal O_{N,r,s}^{\operatorname{Pol}_1(\mathcal C)}(h^{N(1-2\delta)})_{H_h^r\to H_h^{s}}.\]
Writing $Q=\sum_{n\ge 2}(\mathrm{i} \underline h)^n\mathbf D_{\Xi}^{2n}$, we see that
\[\sigma_{\rm tot}(\vec{\mathbf a})=\pi\Big(\sum_{n\ge 0}(\mathrm{i}\underline h)^n \widetilde{\mathbf D}_{\Xi}^{2n} \vec{\mathbf a}\Big),\]
for some new operators
\[\widetilde{\mathbf D}_{\Xi}^{0}=1, \quad \text{and}\quad \widetilde{\mathbf D}_{\Xi}^{2n}=\sum_{j_1+\ldots+j_k=n} \mathbf D_\Xi^{2j_1}\cdots \mathbf D_\Xi^{2j_k}\quad \text{for $n\ge 1$}.\]
The first terms in \eqref{eq: setting sigma tot} are $\mathbf D_\Xi^0\vec{\mathbf a}=\pi(\vec{\mathbf a})$ and $\mathbf D_\Xi^{2}\vec{\mathbf a}=0$.
\end{proof}

As an example, we give the symbol of the semiclassical Laplacian $h^2\Delta_g$.
\begin{prop}Let $g$ be a smooth Riemannian metric on $M$, and take $\operatorname{vol}=\operatorname{vol}_g$ in the definition of the quantization. There is a formal symbol $\mathbf r=\sum_{n\ \rm{even}} \underline h^nr_n$ with $r_n\in S^{-n}(T^*M)$ such that for $s\le r-(2-N)$ we have
\[h^2\Delta_g=\operatorname{Op}_h^{M}\big(|\xi|_g^2+h^2T_N(\mathbf r)\big)+\mathcal O(h^{N})_{H_h^r\to H_h^{s}}.\]\end{prop}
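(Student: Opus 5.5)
The plan is to apply Proposition \ref{prop: constructing the symbol} to the constant family $A^N:=h^2\Delta_g$, with $m=2$ and $\delta=0$. This reduces the statement to three tasks: compute $h^2\Delta_g$ exactly in each isochore chart, read off the resulting formal symbol $\sigma_{\rm tot}(\vec{\mathbf a})$, and then remove the odd powers of $h$.

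\textbf{Step 1: the operator in a chart.} Since $\operatorname{vol}=\operatorname{vol}_g$ and the charts $\varphi_\ell$ are isochore, we have $\sqrt{\det g}=1$ in these coordinates. Hence $h^2\Delta_g=\sum_{i,j}(hD_{x_i})\,g^{ij}(x)\,(hD_{x_j})$ exactly in each chart. The Weyl symbol of this operator is computed by the rule for symbols linear in $\xi$ already used in \S\ref{sec: short time WKB} (\cite[Theorem 4.5]{zworski2012semiclassical}), applied twice. This gives
\[(\varphi_\ell)_*\chi_\ell' h^2\Delta_g\chi_\ell'\varphi_\ell^*=\mathrm{Op}_h^{\rm w}\big(\chi_\ell'^2 g^{ij}\xi_i\xi_j+h^2 c_\ell(x)\big)+h\,\mathrm{Op}_h^{\rm w}(e_\ell),\]
where $c_\ell$ and $e_\ell$ are smooth, $\xi$-independent or linear in $\xi$, and come from derivatives of $\chi_\ell'$ and $g^{ij}$. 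There is no infinite tail here: the formal series $\mathbf a_\ell$ is a polynomial in $\mathrm{i}\underline h$ of degree at most $2$, so conditions (1)--(2) of Definition \ref{defi: pseudo in chart} hold with $R_{N,\ell}=0$.

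\textbf{Step 2: pseudolocality.} Condition (3) of Definition \ref{defi: pseudo in chart} is trivial. The operator $h^2\Delta_g$ is differential, so $(1-\chi_\ell')h^2\Delta_g\chi_\ell=0$ because $\chi_\ell\prec\chi_\ell'$. Proposition \ref{prop: constructing the symbol} then gives
\[h^2\Delta_g=\operatorname{Op}_h^{M}\big(T_N(\sigma_{\rm tot}(\vec{\mathbf a}))\big)+\mathcal O(h^N)_{H_h^r\to H_h^s}.\]
The leading term is $\pi(\vec{\mathbf a})=\sum_\ell\chi_\ell^2\mathbf a_\ell$. Because $\chi_\ell\prec\chi_\ell'$ and $\sum\chi_\ell^2=1$, this equals $|\xi|_g^2$ plus lower-order terms in $\underline h$. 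We also have $\mathbf D^2_\Xi=0$.

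\textbf{Step 3: structure of the higher terms.} Each $\mathbf D^{2n}_\Xi$ has order $2n$ and at least $n$ derivatives falling on $\xi$. The symbols $\mathbf a_\ell$ are at most quadratic in $\xi$. Therefore $\mathbf D^{2n}_\Xi\vec{\mathbf a}$ vanishes for $n\ge 3$, and for smaller $n$ it loses $n$ orders in $\xi$. This yields the claimed orders $r_n\in S^{-n}$ after bookkeeping. The expansion is actually finite.

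\textbf{Step 4: removing odd powers (main obstacle).} The remaining delicate point is the parity claim. The coefficients of $\mathbf D^{2n}_\Xi$ are real, but the odd-order contributions $(\mathrm{i}h)^n$ with $n$ odd come from the $h e_\ell$ terms and are purely imaginary. I would not try to prove these cancel combinatorially. Instead I would use self-adjointness. Write $b:=T_N(\sigma_{\rm tot}(\vec{\mathbf a}))$. Since $h^2\Delta_g$ is self-adjoint and $\operatorname{Op}_h^M(b)^*=\operatorname{Op}_h^M(\bar b)$ by the adjoint property of the quantization, we get
\[\operatorname{Op}_h^M(\operatorname{Re} b)=\tfrac12\big(A+A^*\big)+\mathcal O(h^N)=h^2\Delta_g+\mathcal O(h^N).\]
So we may replace $b$ by $\operatorname{Re}b$. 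By Step 3 and the reality of the coefficients, $\operatorname{Re}b$ retains only the even-$n$ terms. Collecting these defines $\mathbf r=\sum_{n\ \rm even}\underline h^n r_n$ with $|\xi|_g^2+h^2T_N(\mathbf r)=\operatorname{Re}b$.
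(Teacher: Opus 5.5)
Your proposal is correct and follows the paper's own proof: an exact chart computation using $\sqrt{\det g}=1$ in isochore charts, then Proposition~\ref{prop: constructing the symbol}, then self-adjointness of $h^2\Delta_g$ to eliminate the odd powers of $\underline h$. Your replacement of $b$ by $\operatorname{Re} b$ is an explicit version of the paper's step ``self-adjoint, hence real symbol, hence $r_n=0$ for $n$ odd''. Moreover, the term $h\,e_\ell$ in your chart computation is actually zero, since the Weyl symbol of the formally self-adjoint operator $\chi_\ell' h^2\Delta_g\chi_\ell'$ in an isochore chart is real.
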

\begin{proof}In an isochore chart $(\varphi,\chi)$, the metric is represented by a matrix $g(x)$ satisfying $|\det g(x)|=1$. We let $g^{-1}=(g^{ij})_{ij}$ denote the inverse of $g$. Then, for $s\le r-(2-N)$, by using the composition and change of variable formulae:
\begin{align*}\varphi_*\chi h^2\Delta_g \varphi^* \chi & = \chi\Big(\sum_{i,j}(-\mathrm{i}h\partial_i)\big(g^{ij}(-\mathrm{i} h\partial_j)\Big)\chi \\ 
& = \sum_{i,j}\mathrm{Op}_h^{\rm w}(\chi)\mathrm{Op}_h^{\rm w}(\xi_i) \mathrm{Op}_h^{\rm w}(g^{ij})\mathrm{Op}_h^{\rm w}(\xi_j)\mathrm{Op}_h^{\rm w}(\chi)\\
& =\mathrm{Op}_h^{\rm w}(T_N(\mathbf a_{\varphi,\chi}))+\mathcal O(h^{N})_{H_h^r\to H_h^s}\end{align*}
with $\mathbf a_{\varphi,\chi}=\chi^2|\xi|_g^2+(\mathrm{i}\underline h)\cdots$. Here we have used $\sum g^{ij}\xi_i\xi_j=|\xi|_g^2$.
By Proposition \ref{prop: constructing the symbol}, we infer that there is a formal symbol $\mathbf r=\sum (\mathrm{i}\underline h)^nr_n$ with $r_n\in S^{-n}(T^*M)$ such that
\[h^2\Delta_g=\operatorname{Op}_h^{M}(|\xi|_g^2 +h^2T_N(\mathbf r))+\mathcal O(h^N)_{H_h^r\to H_h^s}.\]
Since $h^2\Delta_g$ is self-adjoint, it has real symbol, thus $r_n=0$ when $n$ is odd.
\end{proof}

\subsubsection{Changing quantization} Another consequence of Proposition \ref{prop: constructing the symbol} is the following result, that expresses how different quantizations are related to each other.
\begin{prop}Let $\widetilde{\operatorname{Op}}^M_h$ be another quantization procedure associated with a different atlas of isochore charts with cutoffs $\widetilde \Xi$. Let $a\in S^m$, and set $\mathcal C=\{h^{\delta k}\|a\|_{m,k}\}$. Then, for any $s\le r-(m-N)$, one has
\[\widetilde{\operatorname{Op}}^M_h(a)=\operatorname{Op}_h^{M}\Big(a +\sum_{\substack{2\le n< N \\ n\ \rm{even}}}(\mathrm{i}\underline h)^n \mathbf D^{2n}_{\Xi,\widetilde \Xi}a\Big)+\mathcal O_{N,r,s,\Xi,\widetilde \Xi}^{\operatorname{Pol}_1(\mathcal C)}(h^{N(1-2\delta)})_{H_h^r\to H_h^s}.\]
Here $\mathbf D^{2n}_{\Xi,\widetilde \Xi}$ is a differential operator of order $2n$ with real coefficients, and always $\ge n$ derivatives hitting the $\xi$ variable.	
\end{prop}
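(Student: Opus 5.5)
The plan is to deduce the statement directly from Proposition \ref{prop: constructing the symbol}. Set $A:=\widetilde{\operatorname{Op}}^M_h(a)$ and check that the constant sequence $(A)_{N\ge 0}$ satisfies $\mathbf H(h,m,\vec{\mathbf a},\mathcal C')$ relative to the atlas $\Xi$, with constants $\mathcal C'$ linear in $\mathcal C$.

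\textbf{Step 1: the hypotheses of $\mathbf H$.} Pseudolocality with respect to the cutoffs of $\Xi$ follows from Lemma \ref{prop: pseudolocality} applied to the quantization $\widetilde{\operatorname{Op}}^M_h$. Those cutoffs are $h$-independent, and the lemma was proved for an arbitrary isochore atlas. For the chart expression, fix $\ell\in\Xi$ and write
\[
(\varphi_\ell)_*\chi_\ell' A\chi_\ell'\varphi_\ell^*=\sum_{k\in\widetilde\Xi}(\varphi_\ell)_*\chi_\ell'\tilde\chi_k\tilde\varphi_k^*\operatorname{Op}_h^{\rm w}(\tilde\chi_k'a\circ\widetilde{\tilde\varphi}_k^{-1})(\tilde\varphi_k)_*\tilde\chi_k\chi_\ell'\varphi_\ell^*.
\]
Each summand is handled by Proposition \ref{prop: change of variable pseudo euclidean} with the volume-preserving transition map $\tilde\varphi_k\circ\varphi_\ell^{-1}$; where the chart domains do not overlap, pseudolocality handles the term instead. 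The output is a formal symbol
\[
\mathbf a_\ell=\sum_{n\ \mathrm{even}}(\mathrm{i}\underline h)^n\,\mathbf E^{2n}_{\ell}a ,
\]
where $\mathbf E^{2n}_\ell$ are differential operators of order $2n$ with real, $\xi$-independent coefficients, at least $n$ of the derivatives fall on $\xi$, and only even powers occur. The leading term is $\mathbf E^0_\ell a=\sum_k(\chi_\ell')^2\tilde\chi_k^2a=(\chi_\ell')^2a$. The remainder bounds are those of Proposition \ref{prop: change of variable pseudo euclidean}, which are $\operatorname{Pol}_1(\mathcal C)$.

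\textbf{Step 2: reconstruct the symbol.} Proposition \ref{prop: constructing the symbol} now gives
\[
A=\operatorname{Op}^M_h\big(T_N(\sigma_{\rm tot}(\vec{\mathbf a}))\big)+\mathcal O^{\operatorname{Pol}_1(\mathcal C)}(h^{N(1-2\delta)}),
\qquad
\sigma_{\rm tot}=\sum_n(\mathrm{i}\underline h)^n\mathbf D^{2n}_\Xi\vec{\mathbf a}.
\]
Composing the operators $\mathbf D^{2j}_\Xi$ with the $\mathbf E^{2k}_\ell$ yields operators $\mathbf D^{2n}_{\Xi,\widetilde\Xi}$ of order $2n$ with real coefficients and at least $n$ derivatives in $\xi$. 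Both properties are stable under composition: orders add, and the counts of $\xi$-derivatives add. The zeroth-order term is $\pi(\vec{\mathbf a})=\sum_\ell\chi_\ell^2(\chi_\ell')^2a=a$, using $\chi_\ell\prec\chi_\ell'$. The order-one term vanishes because $\mathbf D^2_\Xi=0$ and $\mathbf E^2_\ell=0$.

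\textbf{Step 3: parity, the main obstacle.} It remains to show that odd powers of $\mathrm{i}\underline h$ are absent from the final expansion. This does not follow from Proposition \ref{prop: constructing the symbol} as stated. The plan is to argue as in the remark after Proposition \ref{prop: change of variable pseudo euclidean}. For real $a$, both $\widetilde{\operatorname{Op}}^M_h(a)$ and $\operatorname{Op}^M_h$ of a real symbol are formally self-adjoint, and the operators $\mathbf D^{2n}_{\Xi,\widetilde\Xi}$ have real coefficients. Hence the coefficient of $(\mathrm{i}\underline h)^n$ is real, while its contribution $\mathrm{i}^n h^n\mathbf D^{2n}a$ must be real, which forces it to vanish for odd $n$.

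To justify identifying coefficients, I would use the fact that the $\mathbf D^{2n}_{\Xi,\widetilde\Xi}$ are universal operators, independent of $a$ and of $h$. The identity can therefore be tested on $h$-independent real symbols, for which uniqueness of classical expansions applies. The operator-level conclusion then extends to general $a$ by linearity, since the decomposition of $a$ into real and imaginary parts is compatible with everything above.
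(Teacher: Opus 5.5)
Your proposal is correct and matches the paper's route: the paper presents this result as a direct consequence of Proposition~\ref{prop: constructing the symbol}, obtained by checking hypothesis $\mathbf H$ for $\widetilde{\operatorname{Op}}^M_h(a)$. That check uses the chart expansion of Lemma~\ref{lem: operator in a chart} (i.e.\ Proposition~\ref{prop: change of variable pseudo euclidean} applied to the transition maps) together with pseudolocality, exactly as in your Steps 1--2. Your Step~3 parity argument is the same self-adjointness plus identification-of-coefficients argument the paper uses for $h^2\Delta_g$ and in the remark after Proposition~\ref{prop: change of variable pseudo euclidean}; the only trivial slip is that summands with $U_\ell\cap\tilde U_k=\varnothing$ vanish identically, so no pseudolocality is needed for them.
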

In particular, the symbol of a classical operator is well defined modulo $h^2$, independently of the choice of quantization.

\subsection{The algebra of pseudodifferential operators} The next result is similar to \cite[Lemma A.6]{DJN21}, which dealt with the $C^\infty_{\rm comp}$ calculus.
\begin{prop}[Composition]\label{prop: composition manifold} Let $a\in S^{m_1}(T^*M), b\in S^{m_2}(T^*M)$ and $\mathcal C=\{h^{\delta(j+k)}\|a\|_{m_1,j}\|b\|_{m_2,k}~:~j,k\ge 0\}$. Then, for $s\le r-(m_1+m_2-N)$ we have
\[\mathrm{Op}^M_h(a)\circ \mathrm{Op}_h^M(b)=\mathrm{Op}_h^{M}(T_N(a\diamond b))+\mathcal O^{\operatorname{Pol}_1(\mathcal C)}_{N,r,s}\big(h^{(1-2\delta )N}\big)_{H_h^r\to H_h^{s}}.\]
The formal symbol $a\diamond b$ is defined by
\[a\diamond b=ab+\frac{1}{2\mathrm{i}} \{a;b\} \underline{h}+\sum_{n=2}^{+\infty} (\mathrm{i}\underline{h})^n \mathbf D^{2n-2}(\mathrm{d}a\otimes \mathrm{d}b)_{|\rm{Diag}},\]
where $\mathbf D^{2n-2}$ is a differential operator of order $2n-2$ with real coefficients independent of $(\xi,\eta)$ acting on $\mathrm{d}a(x,\xi)\otimes \mathrm{d}b(y,\eta)$; the total number of derivatives hitting the $\xi,\eta$ variables is always $\ge n$. 
\end{prop}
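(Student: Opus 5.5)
The plan is to reduce the statement to the Euclidean composition formula (Proposition \ref{prop: composition Rd}) chart by chart. Then I will reassemble a global symbol with the reconstruction result, Proposition \ref{prop: constructing the symbol}. Set $A:=\mathrm{Op}_h^M(a)\mathrm{Op}_h^M(b)$, viewed as a constant sequence $(A^N)_{N\ge0}$. The goal is to show that it satisfies the hypothesis $\mathbf H(h,m_1+m_2,\vec{\mathbf c},\mathcal C')$ of Definition \ref{defi: pseudo in chart}, where $\vec{\mathbf c}=\{\mathbf c_\ell\}$ are explicit formal symbols and $\mathcal C'$ consists of linear expressions in $\mathcal C$. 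Proposition \ref{prop: constructing the symbol} then gives $A=\mathrm{Op}_h^M(T_N(\sigma_{\rm tot}(\vec{\mathbf c})))+\mathcal O(h^{N(1-2\delta)})$. We then define $a\diamond b:=\sigma_{\rm tot}(\vec{\mathbf c})$ and check its structure.

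\textbf{Step 1: pseudolocality.} Fix $\ell$. I write $(1-\chi_\ell')A\chi_\ell$ and insert a cutoff $\psi$ with $\chi_\ell\prec\psi\prec\chi_\ell'$, splitting
\[A=\mathrm{Op}_h^M(a)\psi\mathrm{Op}_h^M(b)+\mathrm{Op}_h^M(a)(1-\psi)\mathrm{Op}_h^M(b).\]
In each term one factor is sandwiched between functions with disjoint supports, so Lemma \ref{prop: pseudolocality} applies. Sobolev continuity (Lemma \ref{lem: continuity sobolev mfld}) bounds the other factor. This gives \eqref{eq: assum pseudo loc AN} with constants that are polynomial, in fact bilinear, in the seminorms, i.e.\ linear in $\mathcal C$.

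\textbf{Step 2: local symbols.} In the chart $(\varphi_\ell,\chi_\ell')$, I insert $1=\sum_k\chi_k^2$ between the two factors. Pseudolocality discards the terms where $\chi_k$ is far from $\chi_\ell'$, up to $\mathcal O(h^\infty)$. The remaining terms I transport to the chart $\varphi_\ell$ using Lemma \ref{lem: operator in a chart} and Proposition \ref{prop: change of variable pseudo euclidean}. The chart change from $k$ to $\ell$ is volume preserving, so only even powers appear, and the leading coefficient is $\chi^2a$. This expresses $(\varphi_\ell)_*\chi_\ell'A\chi_\ell'\varphi_\ell^*$ as a product of Weyl quantizations of explicit formal symbols, up to admissible remainders. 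Applying Proposition \ref{prop: composition Rd} then gives $\mathbf c_\ell=(L a)\#(L b)$ as a formal series. The remainder estimate $h^{-\delta(2N+k)}$ times a bilinear expression in the seminorms yields the $H^r_h\to H^s_h$ bound $h^{N(1-2\delta)}$ for $s\le r-(m_1+m_2-N)$, using Proposition \ref{prop: continuity Sobolev}.

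\textbf{Step 3: structure of the formal symbol.} The zeroth and first order terms of $\sigma_{\rm tot}$ are $\pi(\vec{\mathbf c})$, since $\mathbf D^2_\Xi=0$. The first two terms of the Euclidean Moyal product are $ab+\frac{\mathrm{i} h}{2}\{\cdot,\cdot\}$, and $\sum_\ell\chi_\ell^2=1$. From these I get $ab+\frac{1}{2\mathrm{i}}\{a;b\}\underline h$, using that $\{a,b\}$ is invariant under symplectic lifts. For $n\ge2$ each term of $\sigma_{\rm tot}$ is a combination of derivatives of $a\otimes b$. The terms in which no derivative hits $a$ or $b$ vanish, because $\sum\chi_\ell^2$ is constant. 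This is what allows writing the sum as $\mathbf D^{2n-2}(\mathrm{d}a\otimes\mathrm{d}b)$.

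\textbf{Main obstacle.} The delicate step is the bookkeeping in Step 3: showing that every term of order $n\ge2$ involves at least one derivative on each of $a$ and $b$, with at least $n$ derivatives in the fiber variables and real coefficients. I would track these properties through Lemma \ref{lem: lemma before construction of symbol}, which preserves the ``$\ge n$ derivatives in $\xi$'' and ``real coefficients'' properties. Pure terms such as $a\,\partial b$ should come from derivatives of the cutoffs, and these sum to zero. I would first verify this at order $2$ and then argue that it propagates by induction through the operator $Q$.
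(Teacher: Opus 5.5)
Your architecture matches the paper's: pseudolocality, chart symbols via Lemma~\ref{lem: operator in a chart} and Proposition~\ref{prop: composition Rd}, then Proposition~\ref{prop: constructing the symbol}. It does produce a formal symbol with the stated remainder. The gap is the structural claim for $n\ge 2$, namely the absence of pure terms $(\partial^{2n}a)\,b$ and $a\,\partial^{2n}b$, and the mechanism you propose for it does not work. You reconstruct directly from $\mathbf c_\ell=(L_\ell a)\#(L_\ell b)$, whose $b$-undifferentiated part is $(L_\ell a)\,b$ near $\operatorname{supp}\chi_\ell$. The terms $L_{\ell,n}a$ with $n\ge2$ are nonzero, and they do not sum to zero. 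Already on a flat torus with translation charts, the chart symbol of $\mathrm{Op}_h^M(a)$ is $\sum_k\chi_k\#a\#\chi_k$. Using $\partial_{ij}\sum_k\chi_k^2=0$, its $h^2$-coefficient is $\frac14\sum_{i,j,k}(\partial_i\chi_k\partial_j\chi_k-\chi_k\partial_{ij}\chi_k)\partial_{\xi_i}\partial_{\xi_j}a=\frac12\sum_k\langle\nabla_x\chi_k,\nabla_\xi\rangle^2a$. This is nonzero; for instance it equals $\sum_k|\nabla\chi_k|^2$ when $a=|\xi|^2$. So $\pi(\vec{\mathbf c})$ contains the nonvanishing pure term $\big(\frac12\sum_k\langle\nabla_x\chi_k,\nabla_\xi\rangle^2a\big)\,b$ at order $h^2$. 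In general charts, further pure terms come from third and higher derivatives of the transition maps. These are cancelled only by $\pi(Q\vec{\mathbf c}),\pi(Q^2\vec{\mathbf c}),\dots$. Proving this amounts to showing that $\sigma_{\rm tot}$ of the chart symbols of $\mathrm{Op}_h^M(a)$ is exactly $a$. Lemma~\ref{lem: lemma before construction of symbol} does not give you this (the paper stresses that $\sigma_{\rm tot}(\vec{\mathbf a})$ is attached to $\vec{\mathbf a}$, not to the operator), and your induction on $Q$ is not specified.

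The missing idea is the paper's subtraction. Apply Proposition~\ref{prop: constructing the symbol} not to the product but to $C=\mathrm{Op}_h^M(a)\mathrm{Op}_h^M(b)-\mathrm{Op}_h^M\big(ab+\frac{h}{2\mathrm i}\{a;b\}\big)$. Work in a chart with cutoffs $\chi\prec\chi'$. For $n\ge1$, $L_n$ has no zeroth-order part, so Leibniz gives $L_n(ab)=(L_na)b+a(L_nb)+(\text{mixed})$. The only pure terms of $\sum_{k+k'+i=n}A_{2i}(D)(L_ka\otimes L_{k'}b)$ are $(L_na)(L_0b)+(L_0a)(L_nb)$, with $L_0=(\chi')^2\equiv1$ near $\operatorname{supp}\chi$. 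Hence the pure terms, together with orders $0$ and $1$, cancel locally, and the chart symbols of $C$ are already of the form $\sum_{n\ge2}(\mathrm i\underline h)^n\mathbf D^{2n-2}(\mathrm da\otimes\mathrm db)$. The map $\sigma_{\rm tot}$ is linear and built from differential operators. A differential operator applied to $\partial^\alpha a\,\partial^\beta b$ with $|\alpha|,|\beta|\ge1$ keeps at least one derivative on each factor. So the form passes to $\sigma_{\rm tot}(\vec{\mathbf c})$ without any tracking through $Q$. Alternatively, you can stay on your route. The $b$-undifferentiated part of the bilinear expression $(a\diamond b)_n$ equals $(a\diamond 1)_n\,b$. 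Since $\mathrm{Op}_h^M(1)=\mathrm{Id}$, for $h$-independent $a\in C^\infty_{\rm comp}$ you get $\mathrm{Op}_h^M\big(\sum_{1\le n<N}(\mathrm ih)^n(a\diamond1)_n\big)=\mathcal O(h^N)$. Letting $h\to0$, as in the paper's associativity lemma, forces $(a\diamond1)_n=0$ for $n\ge1$; the same argument with $a=1$ handles the other side.
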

\begin{rem}If $\mathbf a$ and $\mathbf b$ are formal power series in $\mathrm{i}\underline h$, we get a natural definition of $\mathbf a\diamond \mathbf b$ that extends the one for functions (i.e. one has $\iota(a)\diamond \iota(b)=a\diamond b$).  \end{rem}

\begin{proof}We essentially follow the proof of \cite[Lemma A.6]{DJN21}. We set 
\[C:=\mathrm{Op}_h^M(a)\mathrm{Op}_h^M(b)-\mathrm{Op}_h^M\Big(ab+\frac{h}{\mathrm{i}}\{a;b\}\Big).\]
Our goal is to express the action of $C$ in charts and then use Proposition \ref{prop: constructing the symbol} to write $C=\mathrm{Op}_h^M(\mathbf c)$, for some formal symbol $\mathbf c$.\bigskip

\textbf{1.} The first step is to make explicit the action of $C$ in charts. Let $A=\operatorname{Op}_h^{M}(a)$ and $B=\operatorname{Op}_h^{M}(b)$. We first compute $\varphi_*\chi AB\chi\varphi^*$. Let $\chi\prec \chi'$, then by pseudolocality
\begin{align*}
\varphi_*\chi AB\chi\varphi^*& = \chi\big[\varphi^*\chi' \operatorname{Op}_h^{M}(a)\chi'\varphi^*\big]\circ \big[\varphi_*\chi' \operatorname{Op}_h^{M}(b)\chi'\varphi^*\big]\chi\\ & + \mathcal O^{\operatorname{Pol}_1(\mathcal C)}_{N,r,s}(h^{N(1-2\delta)})_{H_h^r\to H_h^{s}}.
\end{align*}
We use Lemma \ref{lem: operator in a chart} to express (we write shortly $L_k:=L_{\varphi,\chi',k}$)
\[\varphi^*\chi' \operatorname{Op}_h^{M}(a)\chi'\varphi^*=\operatorname{Op}_h^{\rm w}\Big(\varphi_*\sum_{k<N} (\mathrm{i}h)^k (L_ka)\Big)+\mathcal O^{\operatorname{Pol}_1(\mathcal C)}_{N,r,s}(h^{N(1-2\delta)})_{H_h^{r}\to H_h^{s}},\]
provided $s\le r-(m_1-N)$. Recall that $L_0=(\chi')^2 a$ and $L_1=0$. We can do the same thing for $b$:
\[\varphi^*\chi' \operatorname{Op}_h^{M}(b)\chi'\varphi^*=\mathrm{Op}_h^{\rm w}\Big(\varphi_*\sum_{k'<N-k} (\mathrm{i} h)^{k'} L_{k'}b\Big)+\mathcal O_{N,s,t}^{\operatorname{Pol}_1(\mathcal C)}(h^{(N-k)(1-2\delta)})_{H_h^{s}\to H_h^{t}},\] 
for $t\le s-(m_2-N+k)$. By expanding the products, we find
\[\varphi_*\chi AB\chi\varphi^*=\chi\Big[\sum_{k+k'<N}(\mathrm{i}h)^{k+k'}\mathrm{Op}_h^{\rm w}(\varphi_*(L_ka)) \mathrm{Op}_h^{\rm w}(\varphi_*(L_{k'}b))\Big]\chi  +\mathcal O^{\operatorname{Pol}_1(\mathcal C)}_{N,r,t}(h^{N(1-2\delta)})_{H_h^r\to H_h^t},\]
for $t\le r-(m_1+m_2-N)$. For each pair $(k,k')$ with $k+k'=n$ we use the composition formula for the Weyl quantization on $\mathbf R^d$ to get
\begin{equation*}\begin{aligned}
\mathrm{Op}_h^{\rm w}(\varphi_*(L_ka)) \mathrm{Op}_h^{\rm w}(\varphi_*(L_{k'}b))& =\mathrm{Op}_h^{\rm w}\Big(\sum_{i<N-n} (\mathrm{i} h)^{i+k+k'}A_{2i}(D)(\varphi_*(L_k a)\otimes \varphi_*(L_{k'} b))\Big)\\ & +\mathcal O^{\operatorname{Pol}_1(\mathcal C)}_{N,r,t}(h^{(1-2\delta N)-n})_{H_h^r\to H_h^t}.
\end{aligned} \end{equation*}

Summing over $k+k'< N$ leads to
\begin{equation}\label{eq: AB in charts}
\begin{array}{ll}\varphi_*\chi AB\chi\varphi^*& =\chi\mathrm{Op}_h^{\rm w}\Big(\displaystyle \sum_{n<N}(\mathrm{i} h)^n\sum_{k+k'+i=n} A_{2i}(D)(\varphi_*(L_ka)\otimes \varphi_*(L_{k'}b))_{\rm Diag}\Big)\chi \\ & +\mathcal O^{\operatorname{Pol}_1(\mathcal C)}_{N,r,s}(h^{N(1-2\delta)})_{H_h^r\to H_h^{s}}.\end{array}
\end{equation}

For $n\ge 2$, the summands giving terms of the form $a\otimes \partial^j b$ or $\partial^j a\otimes b$ for some $j\ge 1$, i.e. with no differentiation on the symbol $a$ or $b$, correspond to indices $(k,k',i)=(n,0,0)$ or $(k,k',i)=(0,n,0)$ in the sum \eqref{eq: AB in charts}. The corresponding term in \eqref{eq: AB in charts} is
\begin{equation} \label{eq: no derivative AB chart} \chi\mathrm{Op}_h^{\rm w}\Big(\sum_{n< N} (\mathrm{i} h)^n\varphi_*\big((L_na)(L_0b)+(L_0)a(L_nb)\big)\Big)\chi.\end{equation}
On the other hand, by Lemma \ref{lem: operator in a chart}, we have
\begin{equation}\label{eq: op ab + bracket chart}\begin{aligned}
\varphi_*\chi'\operatorname{Op}_h^{M}(ab+\frac{h}{\mathrm{i}} \{a;b\})\chi'\varphi^*& =\mathrm{Op}_h^{\rm w}\Big(\varphi_* (\chi')^2\big(ab+\frac{h}{\mathrm{i}}\{a;b\}\big)+\varphi_*\sum_{n=2}^{N-1} (\mathrm{i} h)^n L_n\big(ab+\frac{h}{\mathrm{i}} \{a;b\}\big)\Big) \\ & +\mathcal O_{N,r,s}^{\operatorname{Pol}_1(\mathcal C)}(h^{N(1-2\delta)})_{H_h^r\to H_h^s}.
\end{aligned}\end{equation}
By Leibniz rule, since $L_n$ is a differential operator, we have for some coefficients $c_{ij}$:
\begin{equation} \label{eq: leibniz rule}L_n(ab)=aL_n b+bL_n a+\sum_{\substack{i+j\le 2n \\ i\ge 1,j\ge 1}} c_{ij}\partial^ia\partial^j b.\end{equation}
By taking the difference between \eqref{eq: AB in charts} and \eqref{eq: op ab + bracket chart}, we obtain 
\[\chi\varphi_*C\varphi^*\chi=\chi\mathrm{Op}_h^{\rm w}\Big(\sum_{n<N} (\mathrm{i} h)^n c_n\Big)\chi,\]
where
\begin{itemize}
\item $c_0=\varphi_*\big((\chi')^4ab-(\chi')^2ab\big)$ vanishes on the support of $\chi$.
\item $c_1=\varphi_*\big((\chi')^2\{a;b\}-\{(\chi')^2a;(\chi')^2b\}\big)$ vanishes on the support of $\chi$.
\item For $n\ge 2$, by \eqref{eq: no derivative AB chart} and \eqref{eq: op ab + bracket chart} and the Leibniz rule \eqref{eq: leibniz rule}, one has 
\[c_n=(L_na)(L_0b)+(L_0 a)L_n b-(L_na)(L_n b)+\sum_{\substack{i+j\le 2n, \\ i,j\ge 1}} c_{ij}\partial^ia \partial^jb.\]
Since $L_0=(\chi')^2$ and $\chi\prec \chi'$, the difference $(L_na)(L_0b)+(L_0 a)L_n b-(L_na)(L_n b)$ vanishes on the support of $\chi$.
\end{itemize}
By applying the composition formula one last time, we find, for $s\le r-(m_1+m_2-N)$:
\begin{equation} \label{eq: C in charts} \varphi_*\chi C \varphi^*\chi=\mathrm{Op}_h^{\rm w}\Big(\sum_{n=2}^{N-1} (\mathrm{i} h)^n\mathbf D^{2n-2}_{\varphi,\chi,\Xi}(\mathrm{d}a\otimes \mathrm{d}b)_{|\rm Diag}\Big)+\mathcal O^{\operatorname{Pol}_1(\mathcal C)}_{N,r,s}(h^{N(1-2\delta)})_{H_h^r\to H_h^s}.\end{equation}
Here $\mathbf D^{2n-2}_{\varphi,\chi,\Xi}$ is of order $2n-2$, and the number of derivatives hitting the $\xi$ variable in the expression $\mathbf D^{2n-2}_{\varphi,\chi,\Xi}(\mathrm{d}a\otimes \mathrm{d}b)_{|\rm Diag}$ is $\ge n-1$.

It is straightforward to show that $C$ satisfies the pseudolocality property.  We let
\begin{equation} \label{eq: c phi chi ell} \mathbf c_{\varphi_\ell,\chi_\ell'}:=\sum_{n=2}^{+\infty} (\mathrm{i}\underline{h})^n \mathbf D^{2n-2}_{\varphi_\ell,\chi_\ell',\Xi}(\mathrm{d}a\otimes \mathrm{d}b),\end{equation}
then set $\vec{\mathbf c}:=\{c_{\varphi_\ell,\chi_\ell'}\}_{\ell\in\Xi}$. We have precisely shown that the operator $C$ satisfies~$\mathbf H(h,m_1+m_2,\vec{\mathbf c},\mathcal C')$, where the family $\mathcal C'$ consists of linear expressions in the elements of $\mathcal C$.

\textbf{2.} Let $\sigma_{\rm tot}(\vec{\mathbf c})$ be the total symbol of $\vec{\mathbf c}$, which is defined by
\begin{equation} \label{eq: a diamond b in proof}\sigma_{\rm tot}(C):=\sum_{n=0}^{+\infty}\underline{h}^n\mathbf D_\Xi^{2n} \vec{\mathbf c},\end{equation}
where the operators $\mathbf D_{\Xi}^{2n}$ are as in \eqref{eq: sigma tot}. By combining \eqref{eq: a diamond b in proof} with \eqref{eq: c phi chi ell}, one can see that for some other family of operators $\{\mathbf D_\Xi^{2n-2}\}_{n\ge 0}$ we have:
\[\sigma_{\rm tot}(\vec{\mathbf c})=\sum_{n=2}^{+\infty}(\mathrm{i}\underline h)^n \mathbf D_\Xi^{2n-2} (\mathrm{d}a\otimes \mathrm{d}b)_{|\rm Diag}.\]
By Proposition \ref{prop: constructing the symbol}, if $s\le r-(m_1+m_2-N)$:
\[C=\mathrm{Op}_h^M(T_N(\sigma_{\rm tot}(\vec{\mathbf c})))+\mathcal O_{N,r,s}^{\operatorname{Pol}_1(\mathcal C)}(h^{N(1-2\delta)})_{H_h^r\to H^{s}}.\]
Setting
\[a\diamond b:=ab (\mathrm{i}\underline h)^0-\{a;b\} (\mathrm{i}\underline h)+\sigma_{\rm tot}(\vec{\mathbf c}),\]
we find
\[\operatorname{Op}_h^{M}(a)\operatorname{Op}_h^{M}(b)=\operatorname{Op}_h^{M}(T_N(a\diamond b))+\mathcal O^{\operatorname{Pol}_1(\mathcal C)}_{N,r,s}(h^{N(1-2\delta)})_{H_h^r\to H_h^{s}}.\]
This finishes the proof.\end{proof}
The next result is intuitive because composition of operators is associative.
\begin{lem}The operator $\diamond$ is associative, i.e. $\mathbf a\diamond (\mathbf b\diamond \mathbf c)=(\mathbf a \diamond \mathbf b)\diamond \mathbf c$.\end{lem}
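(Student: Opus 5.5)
The plan is to use that $\operatorname{Op}_h^{M}$ is, up to negligible errors, injective on formal symbols. Composition of operators is associative, so the two formal symbols must coincide. Fix $N$ and formal symbols $\mathbf a,\mathbf b,\mathbf c$, and first reduce to the case of genuine functions $a\in S^{m_1}$, $b\in S^{m_2}$, $c\in S^{m_3}$ with $h$-independent seminorms. This reduction is harmless: by construction, $\diamond$ on formal series is defined coefficientwise from its action on functions, through the bidifferential operators $\mathbf D^{2n-2}$ of Proposition \ref{prop: composition manifold}. The $n$-th coefficient of $\mathbf a\diamond(\mathbf b\diamond \mathbf c)$ is therefore a finite sum of trilinear differential expressions in the coefficients $a_i,b_j,c_k$ with $i+j+k\le n$, and the same holds on the other side. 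It thus suffices to prove the identity for the coefficient functions, that is, for $\iota(a),\iota(b),\iota(c)$.

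Next, apply Proposition \ref{prop: composition manifold} twice to each bracketing of $\operatorname{Op}_h^{M}(a)\operatorname{Op}_h^{M}(b)\operatorname{Op}_h^{M}(c)$. This gives, for $s\le r-(m_1+m_2+m_3-N)$,
\[\operatorname{Op}_h^{M}\big(T_N((a\diamond b)\diamond c)\big)-\operatorname{Op}_h^{M}\big(T_N(a\diamond(b\diamond c))\big)=\mathcal O(h^{N})_{H_h^r\to H_h^s}.\]
To compose $\operatorname{Op}_h^{M}(T_N(a\diamond b))$ with $\operatorname{Op}_h^{M}(c)$, apply the proposition to each term $h^n(a\diamond b)_n$ separately and collect powers of $h$. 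Work with $\delta=0$, so the error is a genuine $h^N$. Hence $d:=T_N((a\diamond b)\diamond c)-T_N(a\diamond(b\diamond c))$, which is a polynomial in $h$ of degree $<N$ with $h$-independent coefficients $d_n$ that are trilinear differential expressions in $a,b,c$, satisfies $\operatorname{Op}_h^{M}(d)=\mathcal O(h^N)$ in some Sobolev norm.

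The main obstacle is recovering the coefficients from this bound, which means showing the quantization is asymptotically injective. For that, localize in a chart with Lemma \ref{lem: operator in a chart}: $\varphi_*\chi\operatorname{Op}_h^{M}(d)\chi\varphi^*=\operatorname{Op}_h^{\rm w}(\varphi_*T_N(L_{\varphi,\chi}d))+\mathcal O(h^N)$. Then recover the Weyl symbol by testing against coherent states, or via the kernel. A Weyl operator $\operatorname{Op}_h^{\rm w}(e)$ with $\|\operatorname{Op}_h^{\rm w}(e)\|=\mathcal O(h^N)$ has $e=\mathcal O(h^{N-C})$ pointwise, by pairing with Gaussian wavepackets at scale $h^{1/2}$ and Taylor expanding $e$. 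Since $L_{\varphi,\chi}$ is triangular with leading term $\chi^2$, induction on $n$ shows $\chi^2 d_n=\mathcal O(h^{N-n-C})$ for each $h$-independent $d_n$. Taking $N$ large (we have $N$ arbitrary while $n$ is fixed) forces $d_n=0$ on $\{\chi=1\}$. Covering $M$ by charts gives $d_n\equiv0$ for every $n$, which is the claimed associativity.

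An alternative that avoids the analysis would be to verify associativity algebraically from the explicit recursive construction of $\sigma_{\rm tot}$. That route looks messier, so I would use the injectivity argument above.
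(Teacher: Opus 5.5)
Your argument is correct and is essentially the paper's: the paper also reduces to fixed $h$-independent (there, compactly supported, so all remainders are $L^2\to L^2$) functions. It computes $\operatorname{Op}_h^{M}(a)\operatorname{Op}_h^{M}(b)\operatorname{Op}_h^{M}(c)$ in both bracketings to get that the quantized difference of truncated symbols is $\mathcal O(h^N)$, identifies coefficients as $h\to 0$, and extends to formal series by linearity. The only difference is that you make explicit the asymptotic injectivity of the quantization, via localization in a chart and testing against wavepackets; this works cleanly here because the symbol is a polynomial in $h$ with $h$-independent coefficients, and it is a step the paper compresses into ``letting $h\to 0$''.
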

\begin{proof}It is enough to prove the result for three smooth compactly supported functions $a,b,c$. Let $\mathcal C$ denote the collection of the $C^k$ norms of the symbols $a,b,c$. Since all functions are compactly supported we can work with $L^2\to L^2$ remainders in the calculus. 
By computing in two different ways the composition
\[\operatorname{Op}_h^{M}(a)\operatorname{Op}_h^{M}(b)\operatorname{Op}_h^{M}(c),\]
and taking the difference, we have
\[\operatorname{Op}_h^{M}\Big(T_N\big(T_N(a\diamond b)\diamond c-a\diamond T_N(b\diamond c)\big)\Big)=\mathcal O_N^{\mathcal C}(h^N)_{L^2\to L^2}.\]
Let us write
\[T_N\big(T_N(a\diamond b)\diamond c-a\diamond T_N(b\diamond c)\big)=\sum_{k<N}h^k \sum_{j<N} h^j\Big(\big((a\diamond b)_k\diamond c\big)_j- \big(a\diamond(b\diamond c)_k\big)_j\Big).\]
Since this produces a $\mathcal O_N^{\mathcal C}(h^N)_{L^2\to L^2}$ operator for all $N$, by letting $h\to 0$, we find that all the coefficients of the expansion have to vanish:
\[\forall n\ge 0, \ \sum_{k+j=n}\big((a\diamond b)_k\diamond c\big)_j - \big(a\diamond(b\diamond c)_k\big)_j=0.\]
The extension to formal power series is immediate by linearity.
\end{proof}

We give a result for the commutator of two pseudodifferential operators on $M$, which is analogous to  \cite[Lemma A.6]{DJN21}. I learned from the note of Guedes Bonthonneau \cite{bonthonneauEDP} that even on manifolds, for our choice of quantization, the formal symbol of the commutator does not feature any even powers of $\underline h$.

\begin{lem}One has
\begin{equation} \label{eq:a diam b - b diam a}a\diamond b-b\diamond a=\sum_{n\ge 0\ \rm{odd}}(\mathrm{i}\underline h)^n \mathbf D^{2n-2}(\mathrm{d}a\otimes \mathrm{d}b)_{|\rm Diag}.\end{equation}
\end{lem}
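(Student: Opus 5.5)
We plan to deduce the parity statement \eqref{eq:a diam b - b diam a} from the self-adjointness property of the quantization (item (ii) of the main theorem of the appendix), together with the fact, recorded in Proposition \ref{prop: composition manifold}, that the operators $\mathbf D^{2n-2}$ appearing in $a\diamond b$ have \emph{real} coefficients. By bilinearity of $\diamond$, and since the coefficient of $(\mathrm{i}\underline h)^n$ in $a\diamond b-b\diamond a$ is a universal bilinear differential expression in $(a,b)$, it suffices to prove that the even coefficients vanish for \emph{real}, $h$-independent, compactly supported smooth symbols $a,b$. The identity then extends to all symbols (and to formal power series) by linearity, exactly as in the proof of associativity above.

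\textbf{Step 1.} Fix such real $a,b$ and write $\mathbf c:=a\diamond b-b\diamond a=\sum_{n\ge 0}(\mathrm{i}\underline h)^n c_n$. Because the $\mathbf D^{2n-2}$ have real coefficients, every $c_n$ is a real, $h$-independent, compactly supported function. Moreover $c_0=ab-ba=0$, and $c_1=-\{a;b\}+\{b;a\}$ is irrelevant since $1$ is odd.

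\textbf{Step 2.} By Proposition \ref{prop: composition manifold}, with $L^2$ remainders since everything is compactly supported, and with $\delta=0$,
\[[\operatorname{Op}_h^{M}(a),\operatorname{Op}_h^{M}(b)]=\operatorname{Op}_h^{M}(T_N(\mathbf c))+\mathcal O_N(h^N)_{L^2\to L^2}.\]
The left-hand side is anti-self-adjoint because $\operatorname{Op}_h^{M}(a)$ and $\operatorname{Op}_h^{M}(b)$ are self-adjoint. On the other hand, the adjoint property gives $\operatorname{Op}_h^{M}(T_N(\mathbf c))^*=\operatorname{Op}_h^{M}\big(\sum_{n<N}(-\mathrm{i}h)^nc_n\big)$, using that each $c_n$ is real. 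Adding this to $\operatorname{Op}_h^{M}(T_N(\mathbf c))$, we obtain
\[\operatorname{Op}_h^{M}\Big(\sum_{n<N,\ n\ \mathrm{even}}2(\mathrm{i}h)^n c_n\Big)=\mathcal O_N(h^N)_{L^2\to L^2}.\]

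\textbf{Step 3 (the main point).} We need to extract the coefficients from this operator identity. Argue by contradiction: let $n_0<N$ be the smallest even index with $c_{n_0}\neq 0$. Divide by $h^{n_0}$. Using the $L^2$ bound of Lemma \ref{lem: continuity sobolev mfld}, the higher terms are $\mathcal O(h)$ in operator norm. Hence $\|\operatorname{Op}_h^{M}(c_{n_0})\|_{L^2\to L^2}\to 0$ as $h\to 0$. This contradicts the standard lower bound $\liminf_{h\to0}\|\operatorname{Op}_h^{M}(c)\|\ge \sup|c|$ for a fixed smooth compactly supported $c$, which can be proved by testing against coherent states localized at a point where $|c|$ is maximal, working in a chart via Lemma \ref{lem: operator in a chart}. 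Since $N$ is arbitrary, all even coefficients vanish.

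This last step is the only genuinely non-formal one: a lower bound on the norm of a quantized symbol, needed to justify identifying $h$-coefficients. The associativity lemma above uses the same principle without further comment, and we will argue it in the same way.
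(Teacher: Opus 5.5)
Your proposal is correct and follows the paper's proof essentially verbatim. You reduce to real, compactly supported, $h$-independent symbols, then use the composition formula, the identity $C+C^*=0$ for the anti-self-adjoint commutator $C$, and the reality of the coefficients to isolate the even-order terms, and finally identify coefficients as $h\to 0$. The only addition is that you justify this last step explicitly through the lower bound $\liminf_{h\to 0}\|\operatorname{Op}_h^{M}(c)\|_{L^2\to L^2}\ge \sup|c|$, which the paper leaves implicit.
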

\begin{proof}It is enough to prove the result in the case where $a$ and $b$ are both smooth, real, and compactly supported. In this case, the operator $C:=[\operatorname{Op}_h^{M}(a),\operatorname{Op}_h^{M}(b)]$ is anti self-adjoint on $L^2(M)$. Let $\mathcal C=\{\|a\|_{C^j}\|b\|_{C^k}~:~j,k\ge 0\}$. By Proposition \ref{prop: composition manifold}, we have
\begin{equation} \begin{aligned}C& =\operatorname{Op}_h^{M}(T_N(a\diamond b-b\diamond a))+\mathcal O_N^{\operatorname{Pol}_1(\mathcal C)}(h^N)_{L^2\to L^2}\\ & =\sum_{n<N} (\mathrm{i} h)^n \mathbf D^{2n-2}(\mathrm{d}a\otimes \mathrm{d}b) +\mathcal O_N^{\operatorname{Pol}_1(\mathcal C)}(h^N)_{L^2\to L^2}.\end{aligned}\end{equation}
Since $C+C^*=0$, we infer
\[\sum_{n<N\ \rm{even}} (-1)^{n/2}h^n\mathbf D^{2n-2}(\mathrm{d}a\otimes \mathrm{d}b)=\mathcal O_N^{\operatorname{Pol}_1(\mathcal C)}(h^N)_{L^2\to L^2}.\]
By letting $h\to 0$, we deduce $\mathbf D^{2n-2}(\mathrm{d}a\otimes \mathrm{d}b)_{|\rm Diag}=0$ for all even $n\in \mathbf N$.
\end{proof}
By arguing as in \cite[Lemma A.6]{DJN21}, one can show that the operator $\mathbf D^{2n-2}_\Xi(\mathrm{d}a\otimes \mathrm{d}b)_{|\rm{Diag}}$ in \eqref{eq: a diamond b in proof} is actually of the form $\mathbf D^{2n-4}_\Xi(\mathrm{d}^2a\otimes \mathrm{d}^2b)_{|\rm{Diag}}$. This leads to the following proposition.

\begin{prop}[Commutator]\label{prop: commutator PDOs} For $a,b\in S^{m_1},S^{m_2}$, let $\mathcal C=\{h^{\delta(j+k)} \|a\|_{m_1,j}\|b\|_{m_2,k}\}$. If $s\le r-(m_1+m_2-N)$, then
\[\big[\mathrm{Op}_h^{M}(a),\mathrm{Op}_h^M(b)\big]=\mathrm{Op}_h^M\left(T_N(\mathbf c)\right)+\mathcal O^{\operatorname{Pol}_1(\mathcal C)}_{N,r,s}(h^{N(1-2\delta)})_{H_h^r\to H_h^s},\]
where $\mathbf c$ is the formal symbol given by
\begin{equation} \label{eq: def bf c}\mathbf c=a\diamond b-b\diamond a=-\mathrm{i} \underline h \{a;b\}+\sum_{n\ge 3\ \rm{odd}} (\mathrm{i}\underline{h})^n\mathbf D^{2n-4}_\Xi(\mathrm{d}^2a\otimes \mathrm{d}^2b)_{|\rm{Diag}},\end{equation}
here $\mathbf D^{2n-4}_\Xi$ is a differential operator of order $2n-4$ with real coefficients independent of $\xi$. The total number of derivatives hitting the $(\xi,\eta)$ variable in the expression $\mathbf D^{2n-4}_\Xi(\mathrm{d}^2a\otimes \mathrm{d}^2b)$ is always $\ge n$.\end{prop}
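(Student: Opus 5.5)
The plan is to reduce the claim to the identity for the composition symbol $a\diamond b$ from Proposition \ref{prop: composition manifold}, together with the parity statement \eqref{eq:a diam b - b diam a}. By Proposition \ref{prop: composition manifold} applied to $(a,b)$ and to $(b,a)$, for $s\le r-(m_1+m_2-N)$ we get
\[[\mathrm{Op}_h^M(a),\mathrm{Op}_h^M(b)]=\mathrm{Op}_h^M(T_N(a\diamond b-b\diamond a))+\mathcal O^{\operatorname{Pol}_1(\mathcal C)}_{N,r,s}(h^{N(1-2\delta)})_{H_h^r\to H_h^s}.\]
So it remains to identify the formal symbol $\mathbf c=a\diamond b-b\diamond a$.

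\textbf{Low-order terms.} The zeroth-order terms $ab-ba$ cancel. At first order, the explicit expansion $a\diamond b=ab-\frac12\{a;b\}\mathrm{i}\underline h+\cdots$, combined with the antisymmetry of the Poisson bracket, gives the coefficient $-\mathrm{i}\underline h\{a;b\}$. By \eqref{eq:a diam b - b diam a}, all even powers of $\mathrm{i}\underline h$ vanish, so only odd $n\ge3$ remain.

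\textbf{Higher-order terms.} The main step is to show that for odd $n\ge3$ the coefficient $\mathbf D^{2n-2}(\mathrm{d}a\otimes\mathrm{d}b)$ can be rewritten as $\mathbf D^{2n-4}_\Xi(\mathrm{d}^2a\otimes\mathrm{d}^2b)$. In other words, every monomial should contain at least two derivatives on each of $a$ and $b$. The idea, as in \cite[Lemma A.6]{DJN21}, is to check this first in the simplest special cases.
\begin{itemize}
\item If $b$ is a function of $x$ only, or if $b$ is linear in $\xi$ with $x$-dependent coefficients, then the commutator is computed exactly.
\item For $b\in C^\infty(M)$, the commutator $[\mathrm{Op}_h^M(a),b]$ is given by the chart formulas via Lemma \ref{lem: operator in a chart}.
\item For $b$ linear in $\xi$, $\mathrm{Op}_h^M(b)$ is, up to lower-order terms, $\frac h{\mathrm i}(X+\frac12\operatorname{div}X)$ for a vector field $X$. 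Its commutator with $\mathrm{Op}_h^M(a)$ should then be $\frac{h}{\mathrm i}\mathrm{Op}_h^M(Xa)$ with no higher corrections, because the quantization uses isochore charts and is invariant under volume-preserving changes of coordinates.
\end{itemize}
In these cases every monomial with only one derivative on $b$ has vanishing coefficient. By bilinearity and locality, the universal differential operator $\mathbf D^{2n-2}$ therefore has no terms with exactly one derivative on $b$. Applying the same argument with the roles of $a$ and $b$ exchanged, using antisymmetry, removes the terms with exactly one derivative on $a$.

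\textbf{Counting derivatives.} The order bound and the requirement of at least $n$ derivatives in $(\xi,\eta)$ are inherited from Proposition \ref{prop: composition manifold}. Removing one derivative from each of $\mathrm{d}a$ and $\mathrm{d}b$ lowers the order to $2n-4$.

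I expect this step to be the main obstacle. It requires verifying that the chart-transition operators $\mathbf D_\Xi$ from Proposition \ref{prop: constructing the symbol} introduce no extra first-derivative terms. If the exact-commutator argument turns out to be delicate, I would fall back on a direct computation in a single chart, using the Moyal expansion of Proposition \ref{prop: composition Rd}. There, odd powers of $\sigma(D)$ applied to $a\otimes b$ indeed involve at least one derivative on each factor, and the chart-change corrections appear only at order $h^{\ge2}$ via Lemma \ref{lem: operator in a chart}. I would then track the parity through those corrections.
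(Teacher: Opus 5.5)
Your reduction to the formal symbol is correct and follows the paper's route. You apply Proposition~\ref{prop: composition manifold} to $(a,b)$ and $(b,a)$, cancel $ab-ba$, extract the first-order term $-\mathrm{i}\underline h\{a;b\}$, and use \eqref{eq:a diam b - b diam a} to remove even powers. For the remaining claim (at least two derivatives on each factor) the paper gives no argument beyond citing \cite[Lemma A.6]{DJN21}.

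Your argument for that claim fails at its premise. It is false that $[\mathrm{Op}_h^M(a),\mathrm{Op}_h^M(f+\langle X,\xi\rangle)]$ has no corrections beyond order $h$ because $\mathrm{Op}_h^M$ is invariant under volume-preserving coordinate changes. The Weyl quantization is exactly covariant only under affine symplectic maps; nonlinear volume-preserving maps produce the $h^2$ corrections of Proposition~\ref{prop: change of variable pseudo euclidean}. In addition, $\mathrm{Op}_h^M$ depends on the partition $\{\chi_\ell\}$, which the flow of $X$ does not preserve. Even for a single chart on $\mathbf R^d$, the Moyal term $h^3\sigma(D)^3(a\otimes b)$ with $b=\langle X(x),\xi\rangle$ contains $\partial^2X$ and $\partial^3X$ and is nonzero in general. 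So your premise proves too much: it would make every coefficient with $n\ge 3$ vanish for all $b$ linear in $\xi$.

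Worse, the two-derivative structure does not hold for a general atlas $\Xi$, so no special-case argument can establish it. Take $M=\mathbf R/\mathbf Z$ with two charts, both given by the standard coordinate, and $\chi_1^2+\chi_2^2=1$. From $\sum_\ell\chi_\ell A\chi_\ell=A-\frac12\sum_\ell[\chi_\ell,[\chi_\ell,A]]$ one gets $\mathrm{Op}_h^M(a)=\mathrm{Op}_h^{\rm w}\big(a+h^2g\,\partial_\xi^2a+\mathcal O(h^4)\big)$ with $g=\frac12\sum_\ell(\partial_x\chi_\ell)^2$, while $\mathrm{Op}_h^M(\xi)=hD_x$ exactly. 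Since $[\mathrm{Op}_h^{\rm w}(q),hD_x]=\mathrm{i}h\,\mathrm{Op}_h^{\rm w}(\partial_xq)$, this gives, for $a\in C^\infty_{\rm comp}$ independent of $h$,
\[[\mathrm{Op}_h^M(a),\mathrm{Op}_h^M(\xi)]=\mathrm{Op}_h^M\big(\mathrm{i}h\,\partial_xa+\mathrm{i}h^3(\partial_xg)\,\partial_\xi^2a\big)+\mathcal O(h^5)_{L^2\to L^2}.\]
So the coefficient of $(\mathrm{i}\underline h)^3$ is $-(\partial_xg)\,\partial_\xi^2a$, which is nonzero wherever $\partial_xg\neq0$, even though $b=\xi$ has vanishing second derivatives.

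Your fallback chart computation shows the mechanism. Write $L_{\varphi,\chi}=1+(\mathrm{i}\underline h)^2L_2+\cdots$ where $\chi\equiv1$. Then the order-$3$ coefficient contains $\{L_2a,b\}+\{a,L_2b\}-L_2\{a,b\}$. Its terms with exactly one derivative of $b$ are $\sum_j\big(\partial_{x_j}b\,[\partial_{\xi_j},L_2]a-\partial_{\xi_j}b\,[\partial_{x_j},L_2]a\big)$. These vanish only if $L_2$ has constant coefficients, which fails wherever several $\chi_\ell$ overlap.

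What can actually be proved along your lines is a weaker statement:
\begin{itemize}
\item only odd powers of $\mathrm{i}\underline h$ occur;
\item the leading term is $-\mathrm{i}\underline h\{a;b\}$;
\item each higher term carries at least one derivative of $a$ and one of $b$.
\end{itemize}
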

\begin{rem}At first order in $h$, the defect of commutativity of $\operatorname{Op}_h^{M}(a)$ with $\operatorname{Op}_h^{M}(b)$ is expressed by the nonvanishing of the Poisson bracket $\{a;b\}\neq 0$, which at the classical level means that the Hamiltonian flows generated by $a$ and $b$ do not commute.
\end{rem}

\subsection{Construction of a parametrix for elliptic operators} We review the parametrix construction for an elliptic operator. Our goal is to find an inverse to $\mathrm{Op}_h^M(a)$ modulo a remainder $\mathcal O(h^N)_{H_h^r\to H_h^{s}}$, when $a$ is an elliptic symbol.
We first state a lemma expressing derivatives of the inverse of a function, which can be proved by induction.
\begin{lem}\label{lem: derivatives inverse} Let $a$ be a smooth nonvanishing function. Then, for any nonzero multiindex $\alpha$ of length $n$, one has
\[\partial^\alpha a^{-1}=\sum_{2\le k\le n+1} \sum_{(\alpha_1,\ldots,\alpha_{k-1})} \frac{1}{a^k}\Big(\prod_{i=1}^{k-1} \partial^{\alpha_i}a\Big),\]
where the sum runs over tuples $(\alpha_1,\ldots,\alpha_{k-1})$ satisfying
\[\sum |\alpha_i|=n, \qquad \sum (|\alpha_i|-2)_+\le n+1-k.\]
\end{lem}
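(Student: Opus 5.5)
Proof by induction on $n=|\alpha|$. The case $n=1$ is the direct computation $\partial_j a^{-1}=-a^{-2}\partial_j a$. This is of the claimed form with $k=2$ and a single factor $\alpha_1=e_j$. The constraints hold: $|\alpha_1|=1=n$, and $(1-2)_+=0\le n+1-k=0$. The combinatorial coefficients will not be tracked, in keeping with the paper's convention that such sums denote linear combinations with universal coefficients.

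For the inductive step, assume the formula for every multiindex of length $n$ and take $\partial^{\alpha+e_j}a^{-1}=\partial_j\partial^\alpha a^{-1}$. Differentiate a generic term $a^{-k}\prod_{i=1}^{k-1}\partial^{\alpha_i}a$ by the Leibniz rule. There are two types of new terms.

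\textbf{(a) The derivative hits $a^{-k}$.} This produces $-k\,a^{-(k+1)}\,\partial_j a\prod_i\partial^{\alpha_i}a$. The new tuple is $(\alpha_1,\ldots,\alpha_{k-1},e_j)$ with $k'=k+1$. Its total length is $n+1$. The defect sum acquires $(1-2)_+=0$, so it stays at most $n+1-k=(n+1)+1-k'$, as required.

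\textbf{(b) The derivative hits one factor $\partial^{\alpha_i}a$.} This replaces $\alpha_i$ by $\alpha_i+e_j$ and leaves $k$ unchanged. The total length becomes $n+1$. The $i$-th defect $(|\alpha_i|-2)_+$ increases by at most $1$, so the defect sum is at most $n+1-k+1=(n+1)+1-k$, as required.

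It remains to check the bounds on $k$. In case (a), $k'\le n+2=(n+1)+1$. In case (b), $k$ is unchanged. In both cases $k\ge2$ is preserved. This completes the induction.

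The only point needing care is that the defect bound is not lost in case (b). This works because $(x+1-2)_+\le (x-2)_++1$. There is no genuine obstacle.
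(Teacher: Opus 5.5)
Your induction on $n=|\alpha|$ is correct and is exactly the argument the paper has in mind; the paper only remarks that the lemma ``can be proved by induction''. Both Leibniz cases preserve $|\alpha_i|\ge 1$ and $2\le k\le n+1$, and the defect bound survives case (b) because $(x-1)_+\le (x-2)_++1$. The same bookkeeping in fact gives the exact identity $\sum_i(|\alpha_i|-1)=n+1-k$ (this is Fa\`a di Bruno for $x\mapsto x^{-1}$), which implies the stated bound since $(x-2)_+\le x-1$ for $x\ge 1$.
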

We can then estimate the seminorms of $a^{-1}$ in $S^m$ in terms of the seminorms of $a$ and the ellipticity constant, which is allowed to depend on $h$.
\begin{cor} \label{cor: bounds a inverse} Let $a\in S^m$ be a nonvanishing symbol, satisfying $|a|\ge \varepsilon h^\gamma\langle \xi\rangle^m$ for $0<\varepsilon\le 1$. Let
\[\mathcal C_1=\{h^{\delta k}\|a\|_{m,k}\}, \qquad \mathcal C_2=\{h^{\delta(k-2)_+}\|a\|_{m,k}\}.\]
Also, let $\sigma_1=\delta+\gamma$ and $\sigma_2=\max(\delta,\gamma)$. Then, for $i=1,2$:
\[\|a^{-1}\|_{-m,k}\lesssim_{\!\! k}^{\!\! \operatorname{Pol}(\mathcal C_i)} h^{-\gamma}h^{-\sigma_i k}\varepsilon^{-(k+1)}.\]
\end{cor}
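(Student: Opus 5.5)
The plan is to apply Lemma \ref{lem: derivatives inverse} term by term and bound each product using the seminorms of $a$ together with the ellipticity lower bound. Fix multi-indices $\alpha$ in $\xi$ and $\beta$ in $x$ with $|\alpha|+|\beta|=k\ge 1$; the case $k=0$ is immediate, since $|a^{-1}|\le \varepsilon^{-1}h^{-\gamma}\langle\xi\rangle^{-m}$. By Lemma \ref{lem: derivatives inverse}, $\partial_\xi^\alpha\partial_x^\beta a^{-1}$ is a finite sum (with $k$-dependent combinatorial coefficients) of terms
\[\frac{1}{a^j}\prod_{i=1}^{j-1}\partial^{\gamma_i}a,\qquad 2\le j\le k+1,\quad \sum|\gamma_i|=k,\quad \sum_i(|\gamma_i|-2)_+\le k+1-j,\]
where the $\xi$-derivatives are distributed among the factors.

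For the $\langle\xi\rangle$ weights, the factor $|\partial^{\gamma_i}a|$ is at most $\|a\|_{m,|\gamma_i|}\langle\xi\rangle^{m-|\gamma_i^\xi|}$, and $|a|^{-j}\le \varepsilon^{-j}h^{-\gamma j}\langle\xi\rangle^{-mj}$. Summing the $\xi$-orders gives $|\alpha|$, so the total weight is $\langle\xi\rangle^{-m-|\alpha|}$, as required for $S^{-m}$. The $h$-bookkeeping for the two cases is as follows.

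\emph{Case $i=1$.} Write $\|a\|_{m,|\gamma_i|}=h^{-\delta|\gamma_i|}\cdot(h^{\delta|\gamma_i|}\|a\|_{m,|\gamma_i|})$, where the bracket is an element of $\mathcal C_1$. The product then gives $h^{-\delta k}$ times a monomial of degree $j-1\le k$ in $\mathcal C_1$. The prefactor is $\varepsilon^{-j}h^{-\gamma j}=h^{-\gamma}\,h^{-\gamma(j-1)}\varepsilon^{-j}$. Since $j-1\le k$, $h\le1$ and $\varepsilon\le1$, this is at most $h^{-\gamma}h^{-\gamma k}\varepsilon^{-(k+1)}$. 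The total is $h^{-\gamma}h^{-(\delta+\gamma)k}\varepsilon^{-(k+1)}$ times a polynomial in $\mathcal C_1$.

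\emph{Case $i=2$.} Here the finer constraint in Lemma \ref{lem: derivatives inverse} is essential. Write $\|a\|_{m,|\gamma_i|}=h^{-\delta(|\gamma_i|-2)_+}\,c_i$ with $c_i\in\mathcal C_2$. The $h$-loss is then
\[h^{-\gamma j}\,h^{-\delta\sum(|\gamma_i|-2)_+}\le h^{-\gamma}\,h^{-\gamma(j-1)}\,h^{-\delta(k+1-j)}\le h^{-\gamma}\,h^{-\max(\gamma,\delta)\,[(j-1)+(k+1-j)]}=h^{-\gamma}h^{-\sigma_2 k}.\]
The $\varepsilon$ factor is again bounded by $\varepsilon^{-(k+1)}$.

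Taking the supremum over $(x,\xi)$ and over $|\alpha|+|\beta|\le k$, then summing the finitely many terms, gives $\|a^{-1}\|_{-m,k}\lesssim^{\operatorname{Pol}(\mathcal C_i)}_k h^{-\gamma}h^{-\sigma_ik}\varepsilon^{-(k+1)}$.

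The only delicate step is the exponent count in case $i=2$, namely using $\sum(|\gamma_i|-2)_+\le k+1-j$ so that the $\gamma$-losses and $\delta$-losses add up to at most $k$ powers of $\sigma_2$ beyond the base $h^{-\gamma}$. Everything else is routine bookkeeping. One should also check the handling of charts on the manifold: derivatives are computed in a fixed finite atlas, and the bound above holds chart by chart.
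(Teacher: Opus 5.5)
Your proof is correct and follows the route the paper intends: the paper gives no separate argument and states this corollary as a direct consequence of Lemma~\ref{lem: derivatives inverse}. Your bookkeeping is what that deduction requires, namely the $\langle\xi\rangle$-weights summing to $-m-|\alpha|$, and in the $\mathcal C_2$ case the constraint $\sum_i(|\gamma_i|-2)_+\le k+1-j$ splitting the $h$-loss into $j-1$ powers of $h^{-\gamma}$ and at most $k+1-j$ powers of $h^{-\delta}$. (One cosmetic point: calling the multi-indices $\gamma_i$ clashes with the exponent $\gamma$.)
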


\begin{thm}\label{thm: parametrix} Let $a\in S^m$ satisfy $|a(x,\xi)|\ge \varepsilon h^\gamma\langle \xi\rangle^m$ for some $\gamma\in [0,1/2)$ and $0<\varepsilon\le 1$. Let $\mathbf b=a^{\diamond_{-1}}$ denote the inverse of $a$ in $\mathcal A[\![\mathrm{i} \underline h]\!]$ for the law $\diamond$. Then $\mathbf b=\sum \underline h^n b_n$ with $b_n\in S^{-m-n}$, and in the notation of Corollary \ref{cor: bounds a inverse}, we have for $i\in \{1,2\}$:
\[\mathrm{Op}_h^M(a)\mathrm{Op}_h^M(T_N(\mathbf b))=\mathrm{Id} +\mathcal O_{N,r,s}^{\operatorname{Pol}(\mathcal C_i,\varepsilon^{-1})} (h^{-\gamma} h^{N(1-2\sigma_i)})_{H_h^r\to H_h^s}.\]
More precisely, the symbol $\mathbf b$ has the following form:
\[\mathbf b=\frac 1a+\sum_{n\ge 2\ \rm{even}}(\mathrm{i}\underline{h})^n\Big(\sum_{3\le k\le 2n}\frac{Q_{2n,k}(\mathbf Da,\ldots,\mathbf D^{2n}a)}{a^{k}}\Big).\]
Here $Q_{2n,k}(\bullet)$ is a polynomial expression involving derivatives of the symbol $a$. For any product $\prod (\partial^{\alpha_i} a)$ appearing in the numerator one has $\sum |\alpha_i|=2n$ and $\sum (|\alpha_i|-2)_+\le 2n-k$. 

\end{thm}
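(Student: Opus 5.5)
The plan is to build $\mathbf b$ by a formal Neumann-type iteration in $\mathcal A[\![\mathrm{i}\underline h]\!]$, using the structure of $\diamond$ from Proposition \ref{prop: composition manifold}, and then to convert the formal identity $a\diamond \mathbf b=1$ into an operator statement by truncation. Set $b_0=a^{-1}$. Proposition \ref{prop: composition manifold} gives
\[a\diamond a^{-1}=1+\tfrac{1}{2\mathrm{i}}\{a;a^{-1}\}\underline h+\sum_{n\ge 2}(\mathrm{i}\underline h)^n\mathbf D^{2n-2}(\mathrm{d}a\otimes \mathrm{d}a^{-1})_{|\rm Diag}.\]
Since $\{a;a^{-1}\}=-a^{-2}\{a;a\}=0$, we have $a\diamond a^{-1}=1-\mathbf r$ with $\mathbf r\in(\mathrm{i}\underline h)^2\mathcal A[\![\mathrm{i}\underline h]\!]$. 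We then define the right inverse $\mathbf b:=a^{-1}\diamond\sum_{k\ge0}\mathbf r^{\diamond k}$. This sum makes sense formally because $\mathbf r^{\diamond k}$ starts at order $\underline h^{2k}$, and associativity gives $a\diamond\mathbf b=(1-\mathbf r)\diamond\sum\mathbf r^{\diamond k}=1$. A left inverse is built the same way, and by associativity the two coincide, so $\mathbf b=a^{\diamond_{-1}}$.

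Next we check the structural form of $\mathbf b$. Each coefficient of $\mathbf r^{\diamond k}$, and of its $\diamond$-products with $a^{-1}$, is obtained by applying the operators $\mathbf D^{2n-2}$ (real coefficients, at least $n$ derivatives falling on $\xi$) to products of derivatives of $a$ and $a^{-1}$. Lemma \ref{lem: derivatives inverse} expresses $\partial^\alpha a^{-1}$ as a sum of $a^{-k}\prod\partial^{\alpha_i}a$ with the constraint $\sum(|\alpha_i|-2)_+\le |\alpha|+1-k$. Tracking total order $2n$ and powers of $a^{-1}$ by induction on $n$ gives the stated shape $Q_{2n,k}/a^k$ with $3\le k\le 2n$ and $\sum(|\alpha_i|-2)_+\le 2n-k$. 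The order counting works as follows: every $\diamond$-step at order $\underline h^n$ adds $2n$ derivatives, and each new factor $a^{-1}$ contributes one derivative of $a$. Odd coefficients vanish for the same reason as in Proposition \ref{prop: commutator PDOs}. Take $a$ real, compactly supported and elliptic. Then $\mathrm{Op}_h^M(T_N\mathbf b)$ is the parametrix of a self-adjoint operator, and comparing it with its adjoint modulo $\mathcal O(h^N)$ forces the odd terms to vanish; since the coefficients are universal real polynomial expressions, this extends to general $a$. The membership $b_n\in S^{-m-n}$ follows because each $\xi$-derivative gains $\langle\xi\rangle^{-1}$ and at least $n$ of them fall on $\xi$.

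The seminorm bounds combine Corollary \ref{cor: bounds a inverse} with the product structure. Consider a term $a^{-k}\prod\partial^{\alpha_i}a$ with $\sum|\alpha_i|=2n$, carrying $\ell$ extra derivatives. Each $a^{-1}$ costs $\varepsilon^{-1}h^{-\gamma}$, and the derivatives cost $h^{-\delta}$ per derivative in the $\mathcal C_1$ normalization, or only past the second derivative in the $\mathcal C_2$ normalization. Using $\sum(|\alpha_i|-2)_+\le 2n-k$, the net weight of $h^n b_n$ comes out as $h^{-\gamma}h^{n(1-2\sigma_i)}h^{-\sigma_i\ell}$, with polynomial dependence on $\mathcal C_i$ and $\varepsilon^{-1}$. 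This bookkeeping is the main obstacle. It requires a careful count showing that the powers of $h^{-\gamma}$ coming from $a^{-k}$ beyond the first are paid for by derivatives, so that the loss is $h^{-2\sigma_i}$ per power of $h$ and not worse. I would try first to prove it by induction on $n$, using the recursion $b_n=-a^{-1}\sum_{j<n}(a\diamond b_j)_n$ rather than expanding the Neumann series.

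Finally, we apply Proposition \ref{prop: composition manifold} to $a$ and $T_N\mathbf b$:
\[\mathrm{Op}_h^M(a)\mathrm{Op}_h^M(T_N\mathbf b)=\mathrm{Op}_h^M\big(T_N(a\diamond T_N\mathbf b)\big)+\text{remainder}.\]
Here $T_N(a\diamond T_N\mathbf b)=T_N(a\diamond\mathbf b)=1$, and $\mathrm{Op}_h^M(1)=\mathrm{Id}$. The composition remainder, whose constants are controlled polynomially by the seminorms estimated above, is of size $h^{-\gamma}h^{N(1-2\sigma_i)}$ in $H_h^r\to H_h^s$.
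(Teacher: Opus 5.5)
Your proposal is correct and follows the paper's proof essentially step by step: $b_0=1/a$, then $\{a;a^{-1}\}=0$ so that $a\diamond a^{-1}=1-\mathbf r$ with $\mathbf r=\mathcal O(\underline h^2)$, the Neumann series $\mathbf b=a^{-1}\diamond\sum_k \mathbf r^{\diamond k}$ justified by associativity, Lemma~\ref{lem: derivatives inverse} for the structure and the seminorm bounds, the composition formula for the operator identity, and an adjoint argument for the vanishing of the odd coefficients. Two small points need tidying: $T_N(a\diamond T_N\mathbf b)$ is not exactly $T_N(a\diamond \mathbf b)=1$ but differs from it by cross terms of order $h^{\ge N}$, which are absorbed in the remainder (the paper sidesteps this by composing coefficient by coefficient with $T_{N-n}$); and in the parity argument $a$ should be taken real and elliptic rather than compactly supported, or you can argue directly with $\mathrm{Op}_h^M(\bar a)^{-1}$ as the paper does, which avoids the polynomial-identity extension.
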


The basic idea is that any formal power series of the form $1-(\mathrm{i}\underline h) \mathbf r$ in $\mathcal A[\![\mathrm{i}\underline h]\!]$ admits an inverse $(1-(\mathrm{i}\underline h)\mathbf r)^{\diamond_{-1}}\in \mathcal A[\![\mathrm{i}\underline h]\!]$ for the law $\diamond$ in the form of a Neumann series. To obtain a quasi-inverse to the operator $\operatorname{Op}_h^{M}(a)$, it is enough to find an inverse to the symbol $a$ in $\mathcal A[\![\mathrm{i}\underline h]\!]$. By the previous observation, it is enough to find a function $b_0=b_0(\mathrm{i}\underline h)^0$ such that $a\diamond b_0=1-\mathrm{i}\underline h\mathbf r$, in which case the inverse of $a$ is given by $b_0\diamond (1-\mathrm{i}\underline h \mathbf r)^{\diamond_{-1}}$. We will simply choose $b_0=\frac 1a$.
\begin{proof}
\textbf{1.} Let $b_0=\frac 1a$. Let $(\mathcal C,\sigma)$ be either $(\mathcal C_1,\sigma_1)$ or $(\mathcal C_2,\sigma_2)$. By Corollary \ref{cor: bounds a inverse}, one has
\[\|b_0\|_{-m,k}\lesssim_{\!\! k}^{\!\! \operatorname{Pol}(\mathcal C)} \varepsilon^{-(k+1)}h^{-\gamma} h^{-\sigma k}.\]
Then, the composition formula shows that
\[\mathrm{Op}_h^M(a)\mathrm{Op}_h^M(b_0)=\mathrm{Id}-\mathrm{Op}_h^M(T_N(\mathbf r))+ \mathcal O^{\operatorname{Pol}(\mathcal C,\varepsilon^{-1})}_{N,r,s}(h^{N(1-2\sigma)-\gamma})_{H_h^r\to H_h^s},\]
where $\mathbf r=\sum_{n=2}^{+\infty} \underline{h}^n r_n$. Since $\{a;a^{-1}\}=0$, the symbol $\mathbf r$ has the following form:
\[\mathbf r=\sum_{n=2}^{\infty}(\mathrm{i}\underline h)^n \mathbf D^{2n-2}(\mathrm{d}a\otimes \mathrm{d}(a^{-1})).\]
By Lemma \ref{lem: derivatives inverse}, the coefficient of $(\mathrm{i}\underline h)^n$ is of the form
\begin{equation} \label{eq: formule r_n} r_n=\sum_{2\le k\le 2n} a^{-k}\Big(\sum_{(\alpha_1,\ldots,\alpha_k)} \prod \partial^{\alpha_i} a\Big).\end{equation}
Here the sum runs over tuples of multiindices with $|\alpha_i|\ge 1$, \ $\sum |\alpha_i|\le 2n$ and $\sum (|\alpha_i|-2)_+\le 2n-k$. As always, there are $\ge n$ derivatives hitting the $\xi$ variables. It follows that 
\[\|r_n\|_{-n,k}\lesssim_{\!\! n,k}^{\!\! \mathcal C} \varepsilon^{-(2n+k)}h^{-(2n+k)\sigma}.\]

\textbf{2.} We define a sequence of formal symbols $(r^{\diamond_n})$ by induction as follows:
\[\mathbf r^{\diamond_0}=1, \qquad \mathbf r^{\diamond_{n+1}}=\mathbf r\diamond \mathbf r^{\diamond_n} \quad \text{for $n\ge 0$}.\]
In particular $\mathbf r^{\diamond_1}=\mathbf r$. From \eqref{eq: formule r_n}, one can show by induction that for all $n\ge 1$, for all $k\ge 0$, we have
\begin{equation} \label{eq: formule (r diamond n)} (\mathbf r^{\diamond_n})_k=\sum_{2n\le j\le 2k} a^{-j}\sum_{(\alpha_1,\ldots,\alpha_{j})}\prod_{i=1}^j \partial^{\alpha_i}a,\end{equation}
where the sum runs over tuples satisfying $\sum |\alpha_i|=2k$ and $\sum (|\alpha_i|-2)_+\le 2k-j$.
Now, define 
\[\mathbf c:=1+\mathbf r+\ldots+\mathbf r^{\diamond_N}+\ldots=\sum_{n\ge 0} \mathbf r^{\diamond_n}.\]
Note that $\mathbf c\diamond (1-\mathbf r)=1$ as formal power series. Writing $\mathbf c=\sum (\mathrm{i}\underline h)^n c_n$, we deduce from \eqref{eq: formule (r diamond n)} that
\begin{equation} \label{eq: express cn}c_0=1, \quad c_1=0, \qquad c_n=\sum_{2\le j\le 2n} a^{-j}\sum_{(\alpha_1,\ldots,\alpha_{j})}\prod_{i=1}^j \partial^{\alpha_i}a\quad \text{for $n\ge 2$},\end{equation}
where the sum runs over tuples satisfying $\sum |\alpha_i|=2n$ and $\sum (|\alpha_i|-2)_+\le 2n-j$. Again one may check that
\[\|c_n\|_{-n,j}\lesssim_{\!\! n,j}^{\!\!\mathcal C} \varepsilon^{-(2n+j)} h^{-(2n+j)\sigma}.\]
Let us compute
\[\operatorname{Op}_h^{M}(a)\operatorname{Op}_h^{M}(T_N(b_0\diamond \mathbf c))=\sum_{n\le N} (\mathrm{i} h)^n\operatorname{Op}_h^{M}(a)\operatorname{Op}_h^{M}((b_0\diamond \mathbf c)_n).\]
For each $n$, we use the composition formula with the fact that 
\[\|(b_0\diamond \mathbf c)_n\|_{-m-n,k}\lesssim_{\!\! n,k}^{\!\!\operatorname{Pol}(\mathcal C,\varepsilon^{-1})} h^{-\gamma} h^{-\sigma(2n+k)}\] 
to obtain
\begin{align*}\operatorname{Op}_h^{M}(a)\operatorname{Op}_h^{M}((b_0\diamond \mathbf c)_n)& =\operatorname{Op}_h^{M}\big(T_{N-n}( a\diamond (b_0\diamond \mathbf c)_n)\big)\\ & +h^{-2n\sigma}h^{-\gamma}\mathcal O^{\operatorname{Pol}(\mathcal C,\varepsilon^{-1})}_{N,r,s}(h^{(N-n)(1-2\sigma)})_{H_h^r\to H_h^{s}}.\end{align*}
Summing over $k+n<N$, we obtain
\[\operatorname{Op}_h^{M}(a)\operatorname{Op}_h^{M}(T_N(b_0\diamond \mathbf c))=\mathrm{Op}_h^{M}\Big(\sum_{n+k< N}(\mathrm{i}h)^{n+k}\big(a\diamond (b_0\diamond \mathbf{c})_n\big)_k\Big)+\mathcal O^{\mathcal C,\varepsilon^{-1}}_{N,r,s}(h^{-\gamma} h^{N(1-2\sigma)})_{H_h^r\to H_h^s}.\]
By the associativity property of $\diamond$, we have
\[\sum_{n+k\le N}(\mathrm{i}h)^{n+k}\big(a\diamond (b_0\diamond \mathbf{c})_n\big)_k=\sum_{n+k\le N}(\mathrm{i}h)^{n+k}\big((a\diamond b_0)_n\diamond \mathbf{c}\big)_k=T_N((1-\mathbf r)\diamond \mathbf c)=T_N(1)=1.\]
Since $\operatorname{Op}_h^{M}(1)=\operatorname{Id}$, by letting $\mathbf b:=b_0\diamond \mathbf c$ we have obtained 
\[\operatorname{Op}_h^{M}(a)\operatorname{Op}_h^{M}(T_N(\mathbf b))=\operatorname{Id}+\mathcal O^{\operatorname{Pol}(\mathcal C,\varepsilon^{-1})}_{N,r,s}(h^{-\gamma} h^{N(1-2\sigma)})_{H_h^r\to H_h^s}.\]
To conclude the proof, note that by \eqref{eq: express cn}, the formal symbol $\mathbf b$ is indeed of the form $\mathbf b=\sum (\mathrm{i}\underline h)^nb_n$ with $b_0=\frac 1a$, $b_1=0$ and for $n\ge 2$:
\[b_n=\sum_{3\le 2k\le 2n} a^{-k}\sum_{(\alpha_1,\ldots,\alpha_k)}\prod \partial^{\alpha_i}a.\]
\textbf{3.} Let us show that $\mathbf b$ does not feature odd powers of $(\mathrm{i}\underline h)$. The idea is to express the adjoint of $\operatorname{Op}_h^{M}(a)^{-1}$ in two different ways. Fix $a\in S^m$, assume $|a|\ge \varepsilon\langle \xi\rangle^m$ and let $\mathcal C=\{\|a\|_{m,k}\}$. Then for $h$ small enough depending on $\mathcal C$ and $\varepsilon$ (see Corollary \ref{cor: exact inverse} below), the operator $\operatorname{Op}_h^{M}(a):C^\infty(M)\to C^\infty(M)$ is invertible. On the one hand
\[\big[\operatorname{Op}_h^{M}(a)^{-1}\big]^*=\mathrm{Op}_h^{M}(T_N(\mathbf b))^*+\mathcal O(h^N)=\operatorname{Op}_h^{M}(\overline{T_N(\mathbf b)})+\mathcal O_{N,\varepsilon}^{\operatorname{Pol}(\mathcal C)}(h^N).\]
On the other hand, 
\[\big[\operatorname{Op}_h^{M}(a)^{-1}\big]^*=[\operatorname{Op}_h^{M}(a)^*]^{-1}=\operatorname{Op}_h^{M}(\overline{a})^{-1}=\operatorname{Op}_h^{M}\Big(\sum_{n<N} (\mathrm{i} h)^n \overline{b_n}\Big)+\mathcal O_{N,r,m,\varepsilon}^{\operatorname{Pol}(\mathcal C)}(h^N)_{H_h^r\to H_h^{r+m}}.\]
By equating the above expressions, we deduce
\[\sum (-\mathrm{i} h)^n \overline{b_n}=\sum (\mathrm{i} h)^n \overline{b_n}+\mathcal O_{N,r,m,\varepsilon}^{\operatorname{Pol}(\mathcal C)}(h^N)_{H_h^r\to H_h^{r+m}}.\]
By letting $h\to 0$, we obtain $b_n=0$ for all $n$ odd.
\end{proof}
This implies the next corollary.
\begin{cor}[exact invertibility] \label{cor: exact inverse} Let $\mathcal C'=\{C_k'\}_{k\in \mathbf N}$ be a sequence of nonnegative numbers, and let $M,\varepsilon>0$ and $\gamma\in [0,1/2)$. Then there exists $h_0=h_0(\mathcal C',M,\varepsilon,\gamma,\delta)>0$ such that the following holds. Let $a\in S^m$, and take $i\in \{1,2\}$. We assume
\begin{itemize}
\item $|m|\le M$.
\item $a(x,\xi)\ge \varepsilon h^\gamma\langle \xi\rangle^m$.
\item For $i\in \{1,2\}$, we define $\mathcal C_i$ and $\sigma_i$ as in Corollary \ref{cor: bounds a inverse}. Writing $\mathcal C_i=\{C_{i,k}\}_{k\in \mathbf N}$, we assume $C_{i,k}\le C_i'$.
\end{itemize}
Then, for $0<h<h_0$, the operator $\operatorname{Op}_h^{M}(a):H_h^r\to H_h^{r-m}$ is invertible, with inverse satisfying
\[\|\operatorname{Op}_h^{M}(a)^{-1}\|_{H_h^{r-m}\to H_h^r}\lesssim_{\!\! M,r}^{\!\!\operatorname{Pol}(\mathcal C_i,\varepsilon^{-1})} 1.\]
In particular, for any $m\in \mathbf{R}$, provided $h$ is small enough depending on $m$, the operator $\operatorname{Op}_h^{M}(\langle \xi\rangle^m)$ is invertible, and there is a constant $C_m\ge 1$ which is independent of $h$ such that
\[C_m^{-1}\|\operatorname{Op}_h^{M}(\langle \xi\rangle^m)u\|_{L^2}\le \|u\|_{H_h^m(M)}\le C_m\|\operatorname{Op}_h^{M}(\langle \xi\rangle^m)u\|_{L^2}.\]
\end{cor}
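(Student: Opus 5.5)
The natural route is a Neumann-series argument built on the parametrix of Theorem \ref{thm: parametrix}. Fix $i\in\{1,2\}$ and let $\mathbf b=a^{\diamond_{-1}}$. Take $N$ large enough that $N(1-2\sigma_i)-\gamma\ge 1$; this is possible because $\sigma_i<1/2$ whenever $\delta,\gamma<1/2$ for $i=2$, and one can argue similarly for $i=1$ as long as $\delta+\gamma<1/2$. Set $B:=\operatorname{Op}_h^{M}(T_N(\mathbf b))$. Theorem \ref{thm: parametrix} then gives
\[\operatorname{Op}_h^{M}(a)B=\mathrm{Id}+R, \qquad \|R\|_{H_h^{r-m}\to H_h^{r-m}}\lesssim^{\operatorname{Pol}(\mathcal C_i,\varepsilon^{-1})}_{N,r} h^{N(1-2\sigma_i)-\gamma}.\]
The implicit polynomial is controlled uniformly by the assumed bounds $C_{i,k}\le C'_k$, and the dependence on $m$ is uniform for $|m|\le M$. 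We can therefore choose $h_0$, depending only on $(\mathcal C',M,\varepsilon,\gamma,\delta)$, so that $\|R\|\le 1/2$ for $h<h_0$. Then $\mathrm{Id}+R$ is invertible on $H_h^{r-m}$ by a Neumann series, and $B(\mathrm{Id}+R)^{-1}$ is a right inverse of $\operatorname{Op}_h^{M}(a)$ from $H_h^{r-m}$ to $H_h^r$.

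For a left inverse, I would use the adjoint. By the Adjoint property, $\operatorname{Op}_h^{M}(a)^*=\operatorname{Op}_h^{M}(\bar a)$, and $\bar a$ satisfies the same ellipticity bound and the same seminorm bounds. Applying the previous paragraph to $\bar a$ at regularity index $-r+m$ gives a right inverse of $\operatorname{Op}_h^{M}(\bar a):H_h^{-r+m}\to H_h^{-r}$. Dualizing through the $L^2$ pairing, which identifies $(H_h^s)^*$ with $H_h^{-s}$ up to $h$-independent constants, gives a left inverse of $\operatorname{Op}_h^{M}(a)$ from $H_h^{r-m}$ to $H_h^r$. Alternatively, one can build the left parametrix directly as $T_N(\mathbf b)\diamond a=1$. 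Having both a left and a right inverse gives invertibility, and the two inverses agree. The norm bound $\|\operatorname{Op}_h^{M}(a)^{-1}\|\le 2\|B\|$ then follows from Lemma \ref{lem: continuity sobolev mfld} applied to each term $b_n\in S^{-m-n}$, using the bounds in Corollary \ref{cor: bounds a inverse}.

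The last claim about $\langle\xi\rangle^m$ is a special case. Here $a=\langle\xi\rangle^m$, $\gamma=\delta=0$ and $\varepsilon=1$, so the inverse is bounded $L^2\to H_h^m$, which is the right-hand inequality. The left-hand inequality is Lemma \ref{lem: continuity sobolev mfld}.

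I expect the main obstacle to be bookkeeping rather than any new idea. One has to make sure that the remainder from Theorem \ref{thm: parametrix} is genuinely small with the $h^{-\gamma}$ loss included, and that every implicit constant depends only on $\mathcal C'$, $M$ and $\varepsilon$, so that $h_0$ is uniform over the class of symbols $a$. A further point needing care is that Theorem \ref{thm: parametrix}'s proof of the absence of odd terms itself invoked this corollary. That part is not needed here, so there is no circularity.
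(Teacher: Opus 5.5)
Your strategy is the paper's: a Neumann series for the remainder of the parametrix from Theorem~\ref{thm: parametrix}, plus a left inverse. Your adjoint/duality route to the left inverse is a fine substitute for the paper's ``similar argument'', and is especially clean here because $a\ge\varepsilon h^\gamma\langle\xi\rangle^m$ forces $a$ to be real.

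The gap is in where you sum the series. You impose $\|R\|_{H_h^{r-m}\to H_h^{r-m}}\le\tfrac12$, but the implicit constant in that bound depends on $r$ (you write $\lesssim_{N,r}$ yourself). So your $h_0$ depends on the Sobolev index, and the same happens again in the duality step at index $m-r$. The statement, and your own sentence about $h_0$, require $h_0=h_0(\mathcal C',M,\varepsilon,\gamma,\delta)$ uniformly in $r$.

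The paper gets this uniformity by imposing smallness only on $L^2$, which fixes $h_0$ once and for all. It then exploits that $R$ is smoothing: $R$ is a sum of composition remainders of order $-N$, so it maps $H_h^s\to H_h^{s+N}$ for every $s$. Use the paper's convention $\operatorname{Op}_h^{M}(a)B=\mathrm{Id}-R$, and sum $S=\sum_{k\ge0}R^k$ on $L^2$. Take $K$ with $KN\ge|s|$. The identity $S=\sum_{k<K}R^k+R^KS$ handles $s\ge 0$, and $S=\sum_{k<K}R^k+SR^K$ handles $s\le 0$. Together they show $S$ is bounded on every $H_h^s$, with an $s$-dependent norm but no further condition on $h$. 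With this modification, applied to the left inverse as well, your argument goes through.

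A separate caveat, which affects the paper's proof equally: the final norm bound is not $\lesssim 1$ when $\gamma>0$. By Corollary~\ref{cor: bounds a inverse}, $\|a^{-1}\|_{-m,k}$ carries a factor $h^{-\gamma}$. Your estimate $\|\operatorname{Op}_h^{M}(a)^{-1}\|\le 2\|B\|$ therefore only gives $h^{-\gamma}$ times a polynomial in $(\mathcal C_i,\varepsilon^{-1})$.

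This is sharp. For $a=\varepsilon h^\gamma\langle\xi\rangle^m$ the inverse is $\varepsilon^{-1}h^{-\gamma}\operatorname{Op}_h^{M}(\langle\xi\rangle^m)^{-1}$, whose $H_h^{r-m}\to H_h^r$ norm is $\gtrsim\varepsilon^{-1}h^{-\gamma}$, while $\mathcal C_i$ stays bounded. So the displayed bound holds as written only for $\gamma=0$. That case is all that is needed for the statement about $\operatorname{Op}_h^{M}(\langle\xi\rangle^m)$.
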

\begin{proof}Let $\mathbf b$ be the inverse of $a$ in $\mathcal A[\![\mathrm{i} \underline h]\!]$. Take $N$ large enough and let $Q=\operatorname{Op}_h^{M}(T_N(\mathbf b))$. Then $Q$ is continuous $H_h^r\to H_h^{r+m}$ for all $r\in \mathbf{R}$. Moreover, by Theorem \ref{thm: parametrix}, for $s\le r-N$ we have
\[\operatorname{Op}_h^{M}(a)Q=\operatorname{Id}-R \qquad \text{with} \quad R=\mathcal O_{N,r,s}^{\mathcal C_i,\varepsilon^{-1}}(h^{-\gamma} h^{N(1-2\sigma_i)})_{H_h^r\to H_h^s}.\] 
We take $N=N(\gamma,\delta)$ large enough to ensure $N(1-2\sigma_i)-\gamma>0$. The series $S:=\sum_{k\ge 0} R^k$ converges in $\mathcal L(H_h^s,H_h^s)$ whenever $\|R\|_{H_h^s\to H_h^s}\le \frac 12$ in which case $\|S\|_{H_h^s\to H_h^s}\le 2$. For $s=0$ this is true whenever $0<h<h_0(\mathcal C_i,\varepsilon,N,\gamma,\delta)$. The identity $S=\operatorname{Id}+SR$ shows that $S:H_h^s\to H_h^s$ is continuous for all $s\in \mathbf{R}$. Then,
\[\operatorname{Op}_h^{M}(a) QS=\operatorname{Id},\]
thus $\operatorname{Op}_h^{M}(a):H_h^r\to H_h^{r-m}$ admits a right-inverse $QS$ satisfying 
\[\|QS\|_{H_h^r\to H_h^{r+m}}\lesssim_{\!\! m,r}^{\!\! \operatorname{Pol}(\mathcal C_i,\varepsilon^{-1})} 1.\] A similar argument shows that $\operatorname{Op}_h^{M}(a)$ admits a left-inverse satisfying the same estimates.
\end{proof}
When $a\in S^m$ is a real elliptic symbol, the operator $\operatorname{Op}_h^{M}(a+\mathrm{i})$ is automatically invertible by the spectral theorem. We give estimates on its inverse acting on Sobolev spaces.
\begin{prop}[Inverting $A+\mathrm{i}$] \label{prop: inverting A+i} Let $a\in S^m$ be a real symbol such that $a+\mathrm{i}$ is elliptic in $S^m$, with $|a+\mathrm{i}|(x,\xi)\ge K\langle \xi\rangle^m$. Let $\mathcal C=\{h^{\delta k}\|a\|_{m,k}\}$. Then, for all $s\in \mathbf{R}$, the operator $(\operatorname{Op}_h^{M}(a)+\mathrm{i})^{-1}$ is continuous $H_h^{s-m}\to H_h^{s}$, with
\[\|(\operatorname{Op}_h^{M}(a)+\mathrm{i})^{-1}\|_{H_h^{s-m}\to H_h^{s}}\lesssim_{\!\! m,s}^{\!\! \operatorname{Pol}(\mathcal C,K^{-1})} 1.\]
\end{prop}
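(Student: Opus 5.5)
We construct the inverse from the parametrix of Theorem \ref{thm: parametrix}, applied to the elliptic symbol $a+\mathrm{i}$. The spectral theorem already gives invertibility and the $L^2$ bound $\|(\operatorname{Op}_h^{M}(a)+\mathrm{i})^{-1}\|_{L^2\to L^2}\le 1$. So the only task is to upgrade this to Sobolev mapping properties with constants polynomial in $\mathcal C$ and $K^{-1}$.

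\textbf{Step 1 (parametrix).} Note that $\operatorname{Op}_h^{M}(a)+\mathrm{i}=\operatorname{Op}_h^{M}(a+\mathrm{i})$ by property (i) of the calculus. We apply Theorem \ref{thm: parametrix} with $\gamma=0$, $\varepsilon=K$ and $(\mathcal C_1,\sigma_1=\delta)$; the seminorms of $a+\mathrm{i}$ are controlled by $\mathcal C$ plus a constant. For $N$ fixed we set $Q=\operatorname{Op}_h^{M}(T_N(\mathbf b))$, which is bounded $H_h^{s-m}\to H_h^{s}$ with polynomial constants by Lemma \ref{lem: continuity sobolev mfld} and Corollary \ref{cor: bounds a inverse}. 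We obtain
\[(\operatorname{Op}_h^{M}(a)+\mathrm{i})Q=\operatorname{Id}-R,\qquad \|R\|_{H_h^{r}\to H_h^{r'}}\lesssim^{\operatorname{Pol}(\mathcal C,K^{-1})} h^{N(1-2\delta)}\]
for $r'\le r-N$. Since $h\le 1$, the factor $h^{N(1-2\delta)}$ is harmless, so $R$ is smoothing of order $N$ with polynomial constants.

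\textbf{Step 2 (resolvent identity).} Multiplying by the exact inverse on the left gives
\[(\operatorname{Op}_h^{M}(a)+\mathrm{i})^{-1}=Q+(\operatorname{Op}_h^{M}(a)+\mathrm{i})^{-1}R.\]
The symmetric identity $Q'(\operatorname{Op}_h^{M}(a)+\mathrm{i})=\operatorname{Id}-R'$ comes from a left parametrix, obtained either by taking adjoints (using property (ii)) or by inverting $\mathbf b$ on the other side in $\mathcal A[\![\mathrm{i}\underline h]\!]$. It gives
\[(\operatorname{Op}_h^{M}(a)+\mathrm{i})^{-1}=Q'+R'(\operatorname{Op}_h^{M}(a)+\mathrm{i})^{-1}.\]
Substituting one identity into the other yields
\[(\operatorname{Op}_h^{M}(a)+\mathrm{i})^{-1}=Q'+R'Q+R'(\operatorname{Op}_h^{M}(a)+\mathrm{i})^{-1}R.\]

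\textbf{Step 3 (Sobolev bounds).} Given $s$, we choose $N\ge |s|+|s-m|+|m|$. Then $R:H_h^{s-m}\to L^2$ and $R':L^2\to H_h^{s}$ are bounded, and the middle factor is bounded by $1$ on $L^2$. The terms $Q'$ and $R'Q$ map $H_h^{s-m}\to H_h^{s}$ by Step 1, which gives the stated estimate.

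The main obstacle is Step 2's left parametrix and the bookkeeping of the order conditions $s\le r-(m-N)$ in the remainder estimates. I would handle the left parametrix via adjoints: $\big(\operatorname{Op}_h^{M}(a-\mathrm{i})\,\tilde Q\big)^*$, where $\tilde Q$ is the right parametrix for the conjugate symbol $a-\mathrm{i}$. This requires checking that adjoints preserve the Sobolev bounds through duality $H_h^{r}\leftrightarrow H_h^{-r}$.
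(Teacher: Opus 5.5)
Your proof is correct, but it follows a different route from the paper's.

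Write $A=\operatorname{Op}_h^{M}(a)$ and $G=(A+\mathrm{i})^{-1}$. The paper never uses Theorem~\ref{thm: parametrix}. It takes only the first-order parametrix $B=\operatorname{Op}_h^{M}((a+\mathrm{i})^{-1})$. Proposition~\ref{prop: composition manifold} then gives $B(A+\mathrm{i})=\mathrm{Id}-R_1$ and $(A+\mathrm{i})B=\mathrm{Id}-R_2$, with $R_1,R_2$ gaining one derivative. It bootstraps by iterating $G=B+R_1G$ into $G=(\sum_{k<n}R_1^k)B+R_1^nG$ for $s\ge 0$, and $G=B+GR_2$ into $G=B(\sum_{k<n}R_2^k)+GR_2^n$ for $s\le 0$. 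In both cases the spectral bound $\|G\|_{L^2\to L^2}\le 1$ absorbs the last term.

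You instead use the full order-$N$ parametrix on both sides and the sandwich identity $G=Q'+R'Q+R'GR$, which treats every $s$ at once. Like the paper's argument, it needs no smallness of the remainders, so it holds for all $h\in(0,1]$.

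The cost is heavier input. You need the whole formal inverse $\mathbf b$ and seminorm bounds on every $b_n$. These come from the explicit form of $b_n$ in Theorem~\ref{thm: parametrix}; Corollary~\ref{cor: bounds a inverse} alone only controls $b_0$. The paper needs only $(a+\mathrm{i})^{-1}$ and the composition formula.

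In exchange, your identity handles the intermediate range $0<s<m$ automatically. The paper's $s\ge 0$ iteration leaves a term $\|u\|_{L^2}$, which is dominated by $\|u\|_{H_h^{s-m}}$ only when $s\ge m$. In that range one has to feed the $s\le 0$ bound back into the term $R_1^nG$.

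There is one slip to correct in Step 1. The remainder $R$ has order $m+(-m)-N=-N$, so Proposition~\ref{prop: composition manifold} makes it bounded $H_h^{r}\to H_h^{r'}$ for $r'\le r+N$, not for $r'\le r-N$. With the inequality as you wrote it, Step 3 fails, because $R$ would not send $H_h^{s-m}$ into $L^2$. With the correct one, which ``smoothing of order $N$'' shows you meant, your choice $N\ge |s|+|s-m|+|m|$ works.

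Finally, the two identities in Step 2 hold first on $L^2$, where $Q$ and $G$ land in the domain $H_h^m$. For $s<m$, the estimate on $H_h^{s-m}$ then follows by density, which is the intended meaning of the statement.
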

\begin{proof}Let $A=\operatorname{Op}_h^{M}(a)$. Since $A$ is self-adjoint on $L^2(M)$ with domain $H_h^m(M)$, the inverse $Q:=(A+\mathrm{i})^{-1}$ is first defined as a continuous operator $L^2\to H_h^r(M)$, which satisfies $\|(A+\mathrm{i})^{-1}\|_{L^2\to L^2}\le 1$. Let $B=\operatorname{Op}_h^{M}((a+\mathrm{i})^{-1})$. Then, by the composition formula of Proposition \ref{prop: composition manifold}, we have
\begin{equation} \label{eq: B(A+i)}B(A+\mathrm{i})=\operatorname{Id}-R_1,\end{equation}
with $R_1:H_h^r\to H_h^{r+1}$. More precisely $\|R_1\|_{H_h^r\to H_h^{r+1}}\lesssim_{\!\! r}^{\!\!\operatorname{Pol}(\mathcal C,K^{-1})} 1$. Multiplying both sides of \eqref{eq: B(A+i)} by $Q$ on the right yields
\[Q=B+R_1Q.\]
Iterating down this formula, one finds for any $n\ge 1$:
\[Q=(\sum_{k<n}R_1^k)B+R_1^nQ.\]
In particular, for $0\le s\le n$,
\begin{equation} \begin{aligned}\|Qu\|_{H_h^s}& \le \Big(\sum_{k<n} \|R_1^k\|_{H_h^s\to H_h^s}\Big)\|Bu\|_{H_h^s}+\|R_1^n\|_{L^2\to H_h^s}\cdot \|Qu\|_{L^2} \\
	& \lesssim_{\!\! s}^{\!\!\operatorname{Pol}(\mathcal C,K^{-1})}\|u\|_{H_h^{s-m}}+\|u\|_{L^2}.\end{aligned}\end{equation}
	Thus, for all $s\ge 0$,
	\[\|Qu\|_{H_h^{s-m}\to H_h^{s}}\lesssim_{\!\! s}^{\!\!\operatorname{Pol}(\mathcal C,K^{-1})}1.\]
	By playing with the composition formula the other way around, one has
	\[Q=B+QR_2,\]
	with $\|R_2\|_{H_h^r\to H_h^{r+1}}\lesssim_{\!\! r}^{\!\!\operatorname{Pol}(\mathcal C,K^{-1})} 1$ for all $r\in \mathbf{R}$. Iterating this down, we infer
	\[Q=B\Big(\sum_{k<n} R_2^k\Big)+ QR_2^n.\]
	Thus, for $s\le 0$, by taking $n\ge -s$ and reasoning as above, we find
	\[\|Q\|_{H_h^{s-m}\to H^s}
	\lesssim_{\!\! m,s}^{\!\!\operatorname{Pol}(\mathcal C,K^{-1})}1.\]
	We have covered the cases $s\ge 0$ and $s\le 0$, which concludes the proof.
	
\end{proof}

\subsection{Functional calculus for pseudodifferential operators on manifolds} Let $a\in S^m(T^*M)$ be a real symbol, such that $a+\mathrm{i}$ is elliptic in $S^m$. Then $\operatorname{Op}_h^{M}(a)$ is a self-adjoint operator on $L^2(M)$ with domain $H_h^m(M)$, and if $f$ is a smooth compactly supported function on $\mathbf{R}$, we can define $f(\operatorname{Op}_h^{M}(a))$ by functional calculus as an operator acting on $L^2(M)$. It turns out that $f(\mathrm{Op}_h^M(a))$ is a pseudodifferential operator on $M$, that extends continuously to an operator $H_h^{-N}\to H_h^N$ for all $N\ge 0$. The proof relies on the Helffer--Sjöstrand formula (see \cite[Theorem 8.1]{dimassi}):
\begin{equation} \label{eq: helffer sjostrand}f(A)=\frac{1}{2\mathrm{i}\pi}\iint_{\mathbf C} \frac{\bar \partial \tilde f(z)}{(z-A)} \mathrm{d}z\wedge \mathrm{d}\bar z,\end{equation}
where $\mathrm{d}z=\mathrm{d}x+\mathrm{i} \mathrm{d}y$ and $\tilde f$ is a quasi-analytic extension of $f$. We recall that a quasi-analytic extension of $f\in C^\infty_{\rm comp}(\mathbf{R},\mathbf{C})$ is a function $\tilde f\in C^\infty_{\rm comp}(\mathbf C,\mathbf C)$ that coincides with $f$ on the real line and that satisfies
\[\forall N\ge 0, \qquad |\bar \partial\tilde f(z)|\lesssim_{\!\!} |\operatorname{Im}(z)|^N.\]
Note that such an extension is not unique.
\begin{thm}[Functional calculus in $S^m_\delta$]\label{thm: functional calculus} Let $a\in S^m(T^*M)$ be a real symbol, and set $\mathcal C=\{h^{\delta k}\|a\|_{m,k}\}$. We make the following ellipticity assumption on $a+\mathrm{i}$: there is some $K\in (0,1]$ such that
\begin{equation} \label{eq: assumption a+i}|a(x,\xi)+\mathrm{i}|\ge K \langle \xi\rangle^m.\end{equation}
Let $f:\mathbf R\to \mathbf C$ be a smooth compactly supported function. Then there is a formal symbol 
\[\bm{f}[a]=\sum_{n\ge 0\ \rm{even}} (\mathrm{i}\underline{h})^n\bm{f}[a]_n\] such that for any $r,s\in \mathbf{R}$,
\[f(\operatorname{Op}_h^{M}(a))=\mathrm{Op}_h^M(T_N(\bm{f}[a]))+\mathcal O_{f,N,r,s}^{\operatorname{Pol}(\mathcal C,K^{-1})}(h^{N(1-2\delta)})_{H_h^r\to H_h^s}.\]
More precisely, $\bm{f}[a]_0=f(a)$, and the higher order coefficients are given by
\begin{equation} \label{eq: cn functional calculus}\bm{f}[a]_n=\sum_{3\le k\le 2n} f^{(k-1)}(a)Q_{2n,k}(\mathbf Da,\ldots,\mathbf D^{2n}a)\ \ \text{for $n\ge 2$ even},  \end{equation}
where $Q_{2n,k}$ has the same form as in Theorem \ref{thm: parametrix} (but is not the same operator). Moreover, given two functions $f$ and $g$ we have 
\[\bm{(f+g)}[a]=\bm{f}[a]+\bm{g}[a]\quad \text{and}\quad \bm{fg}[a]=\bm{f}[a]\diamond \bm{g}[a].\] 
\end{thm}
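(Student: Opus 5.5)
The proof will go through the Helffer--Sjöstrand formula \eqref{eq: helffer sjostrand}, applied with $A=\operatorname{Op}_h^{M}(a)$ and a quasi-analytic extension $\tilde f$. The core of the argument is to build, for each $z$ in the support of $\tilde f$ with $\operatorname{Im} z\neq 0$, a parametrix for the resolvent $(z-A)^{-1}$. I will then integrate its symbol against $\bar\partial\tilde f$.

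For $\operatorname{Im} z\ne0$ the symbol $z-a$ is elliptic, with $|z-a|\gtrsim |\operatorname{Im} z|\,K\langle\xi\rangle^m$ uniformly for $z$ in a compact set. This follows from \eqref{eq: assumption a+i} by a standard comparison of $|z-a|$ with $|a+\mathrm{i}|$. Theorem \ref{thm: parametrix} then gives the formal inverse $\mathbf b(z)=(z-a)^{\diamond_{-1}}$ of the stated form
\[
\mathbf b(z)=\frac1{z-a}+\sum_{n\ge2\ \rm even}(\mathrm{i}\underline h)^n\sum_{3\le k\le 2n}\frac{Q_{2n,k}(\mathbf Da,\ldots,\mathbf D^{2n}a)}{(z-a)^k},
\]
since derivatives of $z-a$ are derivatives of $a$. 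I will keep $\gamma=0$ but track the $\varepsilon$-dependence, with $\varepsilon\sim|\operatorname{Im} z|$. The remainder is then polynomially bounded in $|\operatorname{Im} z|^{-1}$:
\[
(z-A)\operatorname{Op}_h^{M}(T_N(\mathbf b(z)))=\mathrm{Id}+R_N(z),\qquad \|R_N(z)\|_{H_h^r\to H_h^s}\lesssim |\operatorname{Im} z|^{-C_N}h^{N(1-2\delta)}.
\]
Multiplying on the left by $(z-A)^{-1}$ gives
\[
(z-A)^{-1}=\operatorname{Op}_h^{M}(T_N\mathbf b(z))-(z-A)^{-1}R_N(z).
\]
To bound $(z-A)^{-1}$ on Sobolev spaces I will use Proposition \ref{prop: inverting A+i}, adapted to $z$ in place of $\mathrm{i}$ (same proof), which yields bounds $\lesssim|\operatorname{Im} z|^{-C}$ from $H_h^{s-m}$ to $H_h^s$.

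Next I plug this into \eqref{eq: helffer sjostrand}. The remainder contributes $\iint|\bar\partial\tilde f|\,|\operatorname{Im} z|^{-C'_N}\,h^{N(1-2\delta)}$, which is finite because $|\bar\partial\tilde f|=\mathcal O(|\operatorname{Im} z|^\infty)$. For the main term I exchange the integral with the quantization, using linearity and continuity of $\operatorname{Op}_h^{M}$ in the symbol seminorms. The resulting coefficients are computed by the Cauchy-type identity
\[
\frac1{2\mathrm{i}\pi}\iint \frac{\bar\partial\tilde f(z)}{(z-a)^k}\,\mathrm{d}z\wedge\mathrm{d}\bar z=\frac{f^{(k-1)}(a)}{(k-1)!}.
\]
This identity follows from \eqref{eq: helffer sjostrand} applied to multiplication by the real number $a$ and differentiation in $a$. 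It yields $\bm f[a]_0=f(a)$, the absence of odd terms, and the form \eqref{eq: cn functional calculus} after absorbing the factorials into $Q_{2n,k}$. The seminorm estimates on the $\bm f[a]_n$ come from those of Theorem \ref{thm: parametrix}, with all $z$-dependence integrated out.

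Finally, $\bm{(f+g)}[a]=\bm f[a]+\bm g[a]$ is immediate from linearity. For $\bm{fg}[a]=\bm f[a]\diamond\bm g[a]$, I compare the two expansions of $f(A)g(A)=(fg)(A)$. One expansion comes from Proposition \ref{prop: composition manifold} applied to the two truncated symbols, the other from the statement for $fg$. Taking the difference gives $\operatorname{Op}_h^{M}$ of a truncated formal series equal to $\mathcal O(h^{N(1-2\delta)})$, and the coefficients vanish by letting $h\to0$, as in the associativity lemma. One subtlety is that the coefficients are $h$-dependent through $a$. To handle this I will argue for $h$-independent $a$ with a fixed formal variable, then observe that the identity is purely algebraic in the jets of $a$, so it holds in general.

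I expect the main obstacle to be the uniform tracking of the $|\operatorname{Im} z|^{-1}$ dependence through the parametrix. This requires checking that the $\operatorname{Pol}(\mathcal C,\varepsilon^{-1})$ bounds of Theorem \ref{thm: parametrix} and Corollary \ref{cor: bounds a inverse} are genuinely polynomial in $\varepsilon^{-1}$ at every order, including the remainder term.
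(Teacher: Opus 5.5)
Your proposal is correct, and its skeleton matches the paper's proof. You use the Helffer--Sj\"ostrand formula and the formal inverse of $z-a$ from Theorem \ref{thm: parametrix}, with ellipticity constant of order $K|\operatorname{Im}z|/\langle z\rangle$. The bounds there are already stated as polynomial in $\varepsilon^{-1}$, so the ``main obstacle'' you flag is taken care of. You then use the identity $\frac{1}{2\mathrm i\pi}\iint\bar\partial\tilde f\,(z-a)^{-k}\,\mathrm dz\wedge\mathrm d\bar z=f^{(k-1)}(a)/(k-1)!$, and prove multiplicativity by identifying coefficients for $h$-independent symbols. Your remark that this identity is algebraic in the jets of $a$, and therefore transfers to $h$-dependent $a$, makes explicit a point the paper leaves implicit.

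Where you differ is in how the remainder is controlled in Sobolev norms.
\begin{itemize}
\item The paper inserts only the spectral bound $\|(A-z)^{-1}\|_{L^2\to L^2}\le|\operatorname{Im}z|^{-1}$ into the integral, which gives an $H_h^{-N}\to L^2$ error. It then upgrades this by applying the result to $f_k(x):=(x+\mathrm i)^kf(x)$, using $\bm{f_k}[a]=\bm{g_k}[a]\diamond\bm f[a]$ with $g_k(x):=(x+\mathrm i)^k$, together with a parametrix for $(A+\mathrm i)^{-k}$. In the paper's route, multiplicativity is therefore an ingredient of the error estimate, and Sobolev information is needed only for the single operator $(A+\mathrm i)^{-1}$.
\item You instead bound $(z-A)^{-1}:H_h^{s-m}\to H_h^s$ uniformly in $z$, polynomially in $|\operatorname{Im}z|^{-1}$. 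The $H_h^r\to H_h^s$ estimate then comes out of the integral in one step, and multiplicativity becomes a corollary rather than an input.
\end{itemize}

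Your resolvent bound does not require rerunning Proposition \ref{prop: inverting A+i} for $A-z$. That proposition, together with Lemma \ref{lem: continuity sobolev mfld}, gives $\|u\|_{H_h^{jm}}\simeq\|(A+\mathrm i)^ju\|_{L^2}$ for all $j\in\mathbf Z$. Since $(A-z)^{-1}$ commutes with $(A+\mathrm i)^j$, and $\|(A+\mathrm i)(A-z)^{-1}\|_{L^2\to L^2}\le 1+|z+\mathrm i|\,|\operatorname{Im}z|^{-1}$, the spectral theorem alone yields $H_h^{jm}\to H_h^{(j+1)m}$ bounds of the required polynomial form.
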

We state two corollaries of this result.
\begin{cor}Assume that $f(a(\rho))\equiv 1$ on the support of $\chi$. Then,
\[f(\operatorname{Op}_h^{M}(a))\operatorname{Op}_h^{M}(\chi)=\operatorname{Op}_h^{M}(f(a))\operatorname{Op}_h^{M}(\chi)+\mathcal O(h^{\infty})_{L^2\to L^2}.\]\end{cor}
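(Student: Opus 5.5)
The statement to prove is the corollary: if $f(a(\rho))\equiv 1$ on $\operatorname{Supp}\chi$, then $f(\operatorname{Op}_h^{M}(a))\operatorname{Op}_h^{M}(\chi)=\operatorname{Op}_h^{M}(f(a))\operatorname{Op}_h^{M}(\chi)+\mathcal O(h^\infty)_{L^2\to L^2}$.

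The plan is to reduce everything to a symbolic identity in $\mathcal A[\![\mathrm{i}\underline h]\!]$ and then use the composition formula. By Theorem \ref{thm: functional calculus}, for every $N$,
\[f(\operatorname{Op}_h^{M}(a))=\operatorname{Op}_h^{M}(T_N(\bm f[a]))+\mathcal O(h^{N(1-2\delta)})_{L^2\to L^2}.\]
Multiplying on the right by $\operatorname{Op}_h^{M}(\chi)$, whose $L^2$ norm is uniformly bounded by Lemma \ref{lem: continuity sobolev mfld}, preserves this error. The analogous statement for $\operatorname{Op}_h^{M}(f(a))$ is trivial, since it is exactly the leading part. It therefore suffices to compare the two products
\[\operatorname{Op}_h^{M}(T_N(\bm f[a]))\operatorname{Op}_h^{M}(\chi) \qquad\text{and}\qquad \operatorname{Op}_h^{M}(f(a))\operatorname{Op}_h^{M}(\chi).\]
The difference between them is $\sum_{2\le n<N,\ n\text{ even}}(\mathrm{i}h)^n\operatorname{Op}_h^{M}(\bm f[a]_n)\operatorname{Op}_h^{M}(\chi)$.

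Apply Proposition \ref{prop: composition manifold} to each term $\operatorname{Op}_h^{M}(\bm f[a]_n)\operatorname{Op}_h^{M}(\chi)$. Up to an $\mathcal O(h^{N(1-2\delta)})$ error, its symbol is $T_N(\bm f[a]_n\diamond \chi)$. The key observation is locality: every coefficient of the formal symbol $\bm f[a]_n\diamond\chi$ is a sum of products of derivatives of $\bm f[a]_n$ and derivatives of $\chi$, evaluated on the diagonal, because the operators $\mathbf D^{2n-2}$ are differential operators. Such a product vanishes outside $\operatorname{Supp}\chi$.

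On $\operatorname{Supp}\chi$ we use the hypothesis. Since $f\circ a\equiv 1$ there and $a$ is smooth, I read the hypothesis as holding on a neighborhood of $\operatorname{Supp}\chi$: either $f\equiv 1$ near $a(\operatorname{Supp}\chi)$, or one shrinks the support slightly, which is the natural robust interpretation. Then $f^{(k)}(a)=0$ there for all $k\ge 1$. By \eqref{eq: cn functional calculus}, each $\bm f[a]_n$ with $n\ge 2$ is a combination of terms $f^{(k-1)}(a)Q_{2n,k}$ with $k\ge 3$, so it vanishes identically on a neighborhood of $\operatorname{Supp}\chi$, together with all its derivatives. Hence every coefficient of $\bm f[a]_n\diamond\chi$ vanishes, and each correction term is $\mathcal O(h^{N(1-2\delta)})$ in $L^2$ operator norm.

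Summing finitely many terms and letting $N$ be arbitrary gives the $\mathcal O(h^\infty)$ conclusion. The main obstacle is the support issue: if the hypothesis holds only exactly on $\operatorname{Supp}\chi$, the derivatives of $f(a)$ need not vanish at boundary points where $\chi$ vanishes to infinite order. To handle that case I would argue instead that $\partial^\alpha(f(a)-1)$ vanishes on the interior of $\operatorname{Supp}\chi$, by continuity vanishes on its closure, and so every product with derivatives of $\chi$ is zero. This argument covers derivatives in all directions only if $f\circ a$ is constant on an open set, so I would state the neighborhood assumption explicitly.
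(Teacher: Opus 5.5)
Your proposal is correct and is essentially the argument the paper intends (it is only sketched in the remark following the two corollaries): expand $f(\operatorname{Op}_h^{M}(a))$ via Theorem \ref{thm: functional calculus}, note that the corrections $\bm f[a]_n$, $n\ge 2$, involve only $f^{(k-1)}(a)$ with $k\ge 3$ and hence vanish near $\operatorname{Supp}\chi$, and conclude by the locality of the coefficients of $\diamond$ in Proposition \ref{prop: composition manifold}. Reading the hypothesis as holding on a neighborhood is the intended interpretation, but the literal hypothesis already suffices, because $f\circ a\equiv 1$ on the open set $\{\chi\neq 0\}$ forces each $f^{(k-1)}(a)Q_{2n,k}$ to vanish there: on $\{\mathrm{d}a\neq 0\}$ since $f\equiv 1$ on an open interval, on the interior of $\{\mathrm{d}a=0\}$ since $Q_{2n,k}=0$, and on the remaining set by density; hence $\bm f[a]_n$ vanishes to infinite order on $\operatorname{Supp}\chi$.
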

\begin{cor}Assume that $f(a(\rho))=a(\rho)$ on the support of $\chi$. Then,
\[f(\operatorname{Op}_h^{M}(a))\operatorname{Op}_h^{M}(\chi)=\operatorname{Op}_h^{M}(\chi)+\mathcal O(h^{\infty})_{L^2\to L^2}.\]
\end{cor}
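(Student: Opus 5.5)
The plan is to reduce the statement to the symbol calculus of Theorem \ref{thm: functional calculus} and Proposition \ref{prop: composition manifold}, and to use the hypothesis to make every term of a formal symbol vanish. One caveat comes first. As literally written, the right-hand side $\operatorname{Op}_h^{M}(\chi)$ cannot be correct in general: take $f(s)=s$ near $0$ and $a\equiv 0$ near $\operatorname{Supp}\chi$, and the left side is $\mathcal O(h^\infty)$ while $\operatorname{Op}_h^M(\chi)$ is not. The intended identity, parallel to the previous corollary, is
\[f(\operatorname{Op}_h^{M}(a))\operatorname{Op}_h^{M}(\chi)=\operatorname{Op}_h^{M}(a)\operatorname{Op}_h^{M}(\chi)+\mathcal O(h^{\infty})_{L^2\to L^2}.\]
This is what I will prove; in the paper's later use, $\operatorname{Op}_h^M(a)$ is then replaced by $\operatorname{Op}^M_h(p_\delta+r_\delta)$. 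I also read the hypothesis in the robust form in which it is used: $f(s)=s$ for $s$ in an open interval containing $a(U)$, where $U$ is a neighbourhood of $\operatorname{Supp}\chi$. A merely pointwise equality on the closed support would not control the derivatives of $f$ that appear below.

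\textbf{Step 1 (both sides as pseudodifferential operators).} Fix $N$. Theorem \ref{thm: functional calculus} gives $f(\operatorname{Op}_h^M(a))=\operatorname{Op}_h^M(T_N(\bm f[a]))+\mathcal O(h^{N(1-2\delta)})_{H_h^r\to H_h^s}$. Proposition \ref{prop: composition manifold} then gives
\[f(\operatorname{Op}_h^M(a))\operatorname{Op}_h^M(\chi)=\operatorname{Op}_h^M\big(T_N(\bm f[a]\diamond\chi)\big)+\mathcal O(h^{N(1-2\delta)-C})_{L^2\to L^2}.\]
Since $\chi$ is compactly supported, all Sobolev remainders can be taken as $L^2\to L^2$. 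In the same way, $\operatorname{Op}_h^M(a)\operatorname{Op}_h^M(\chi)=\operatorname{Op}_h^M(T_N(a\diamond\chi))+\mathcal O(h^{N(1-2\delta)-C})$. The difference of the two sides is therefore $\operatorname{Op}_h^M\big(T_N((\bm f[a]-a)\diamond\chi)\big)$ up to $\mathcal O(h^{N(1-2\delta)-C})$.

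\textbf{Step 2 (the formal symbol vanishes).} I would next show that $(\bm f[a]-a)\diamond\chi=0$ as a formal series.

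By \eqref{eq: cn functional calculus}, $\bm f[a]-a=(f(a)-a)+\sum_{n\ge2}(\mathrm i\underline h)^n\sum_{k\ge3}f^{(k-1)}(a)Q_{2n,k}$. On $U$ the function $f(a)-a$ vanishes identically, and so does $f^{(k-1)}(a)$ for every $k\ge3$, because $f$ is affine on the relevant interval. Hence every coefficient of $\bm f[a]-a$ vanishes on the open set $U$.

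Each coefficient of $\mathbf b\diamond\chi$ is a finite sum of products of derivatives of coefficients of $\mathbf b$ and of $\chi$, evaluated at the same point; this is the local bidifferential form in Proposition \ref{prop: composition manifold}. It is supported in $\operatorname{Supp}\chi$. There, all derivatives of the coefficients of $\bm f[a]-a$ vanish because they vanish on a neighbourhood. So every coefficient of $(\bm f[a]-a)\diamond\chi$ is zero.

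\textbf{Step 3 (conclusion).} The difference is $\mathcal O(h^{N(1-2\delta)-C})$ for every $N$, and since $\delta<1/2$ this is $\mathcal O(h^\infty)$.

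The only delicate point is Step 2, namely the locality of $\diamond$ on the manifold. The terms $\mathbf D^{2n-2}_\Xi$ come from the chart reconstruction in Proposition \ref{prop: constructing the symbol}. I would check that these are differential operators evaluated on the diagonal, so that they cannot transport information from outside $U$. This holds because all operators there are differential operators with coefficients depending on $x$ only, and the cutoffs $\chi_\ell,\chi'_\ell$ enter only multiplicatively.
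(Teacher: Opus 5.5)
Your proof is correct and is the argument the paper has in mind. The remark after Theorem~\ref{thm: functional calculus} stresses precisely that only $k\ge 3$ occurs in \eqref{eq: cn functional calculus}, so every correction term carries a factor $f^{(k-1)}(a)$ with $k-1\ge 2$. You are also right that the conclusion must read $\operatorname{Op}_h^{M}(a)\operatorname{Op}_h^{M}(\chi)$, which is how it is used at the end of \S\ref{sec: energy loc}.

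Your strengthening of the hypothesis is harmless but, contrary to your remark, not needed. At each point of the open set $\{\chi\neq 0\}$, either $a$ is constant on a small connected neighbourhood (so every $Q_{2n,k}$ vanishes there), or it maps that neighbourhood onto a nondegenerate interval on which $f=\mathrm{id}$ (so $f^{(k-1)}(a)=0$ for $k\ge 3$). Hence all coefficients of $\bm{f}[a]-a$ vanish on $\{\chi\neq 0\}$, and by continuity so do all their derivatives on $\operatorname{Supp}\chi$.
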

\begin{rem}To obtain these corollaries, it is crucial that the sum \eqref{eq: cn functional calculus} does not involve the indices $k=1,2$. This follows from the precise expression of the coefficients in the expansion of $a\diamond b$ which do not feature terms of the form $h^n(\partial^{2n}a)b$ or $h^na(\partial^{2n}b)$, see Proposition \ref{prop: composition manifold}.
\end{rem}

\begin{proof}
\textbf{1.} Let $\tilde f$ be a quasi-analytic extension of $f$. With the Helffer--Sjöstrand formula \eqref{eq: helffer sjostrand} at hand, we are led to studying the resolvent $(\mathrm{Op}_h^M(a-z))^{-1}$. This has been done in the previous subsection, where we constructed an inverse to an elliptic operator, modulo a remainder in $\mathcal O(h^N)_{H_h^r\to H_h^{s}}$. Here, for any $z\in \mathbf C\backslash \mathbf R$, the symbol $a-z$ is elliptic, with elliptic constant $K(z)$ depending on $z$. This raises no issue since we have made explicit the dependencies in the ellipticity constant in Theorem \ref{thm: parametrix}. By assumption \ref{eq: assumption a+i} and since $a$ is real, there is a universal constant $c>0$ such that for all $z\in \mathbf C$, one has
\[|a(x,\xi)-z|\ge c K\frac{|\mathrm{Im}(z)|}{\langle z\rangle} \langle \xi\rangle^m.\]
In particular, since $\tilde f$ has compact support, the denominator is uniformly bounded for $z\in \operatorname{Supp}(\tilde f)$. For $z\in \mathbf C\backslash \mathbf R$, define $\mathbf b(z)=(a-z)^{\diamond_{-1}}$. By Theorem \ref{thm: parametrix}, one has for any $N\ge 0$ and $z\in \operatorname{Supp}(\tilde f)$:
\[\mathrm{Op}_h^M(a-z)\mathrm{Op}_h^M(T_N(\mathbf b(z)))=\mathrm{Id}+\mathcal O_{N,f}^{\operatorname{Pol}(\mathcal C,K^{-1}|\operatorname{Im}(z)|)}(h^{N(1-2\delta)})_{H_h^{-N}\to L^2}.\]
We multiply both sides by $(A-z)^{-1}=(\mathrm{Op}_h^M(a)-z)^{-1}$ on the left, and use the spectral theoretic bound $\|(A-z)^{-1}\|_{L^2\to L^2}\le \frac{1}{|\mathrm{Im}(z)|}$ to get
\begin{equation} \label{eq: (A-z) inverse} (A-z)^{-1}=\mathrm{Op}_h^M(T_N(\mathbf b(z)))+\mathcal O_N^{\operatorname{Pol}(\mathcal C,K^{-1}|\operatorname{Im}(z)|)}(h^{N(1-2\delta)})_{H_h^{-N}\to L^2}.\end{equation}
Since $\tilde f$ has compact support and $|\bar \partial \tilde f(z)|=\mathcal O(|\operatorname{Im}(z)|^\infty)$, by integrating \eqref{eq: (A-z) inverse} against $\bar \partial \tilde f$, one finds
\[f(A)=\frac{1}{2\pi \mathrm{i}}\int_{\mathbf C} \bar \partial \tilde f(z) \mathrm{Op}_h(T_N(\mathbf b(z)))\mathrm{d}z\wedge \mathrm{d}\bar z+\mathcal O_{N,f}^{\operatorname{Pol}(\mathcal C,K^{-1})}(h^{N(1-2\delta)})_{H_h^{-N}\to L^2}.\]
Now, recall from Theorem \ref{thm: parametrix} that $\mathbf b(z)$ is of the form
\[\mathbf b(z)=\frac{1}{z-a} +\sum_{n\ge 2\ \rm{even}}(\mathrm{i}\underline{h})^n\Big(\sum_{3\le k\le 2n}\frac{Q_{2n,k}(\mathbf Da,\ldots,\mathbf D^{2n}a)}{(z-a)^{k}}\Big).\]
Here $Q_{2n,k}(\bullet)$ is a polynomial expression in the derivatives of $a$ as in Theorem \ref{thm: parametrix}. Note that the terms in the numerators do not involve $z$. We use the fact that
\[\int_{\mathbf C} \frac{\bar \partial \tilde f(z)}{(z-a)^k} \mathrm{d}z\wedge \mathrm{d}\bar z=\frac{2\pi \mathrm{i}}{(k-1)!}f^{(k-1)}(a)\]
to obtain
\begin{equation} \label{eq: to obtain f(A)}f(A)=\mathrm{Op}_h^M(T_N(\bm{f}[a]))+\mathcal O_{N,f}^{\operatorname{Pol}(C,K^{-1})}(h^{N(1-2\delta)})_{H_h^{-N}\to L^2},\end{equation}
where $\bm{f}[a]=\sum_{n\ge 0\ \rm{even}} (\mathrm{i}\underline h)^n\bm{f}[a]_n$, with $\bm{f}[a]_0=f(a)$ and
\begin{equation} \label{eq: formula bm{f}[a]}\bm{f}[a]_n=\sum_{3\le k\le 2n} f^{(k-1)}(a)Q_{2n,k}(\mathbf Da,\ldots,\mathbf D^{2n}a)\ \ \text{for $n\ge 2$ even}.\end{equation}
Here the polynomials $Q_{2n,k}$ have changed in order to incorporate the factors $\frac{1}{(k-1)!}$. We will improve the error in \eqref{eq: to obtain f(A)} in step \textbf{3.} below.\medskip

\textbf{2.} Before that, we show that $\bm{(f+g)}[a]=\bm{f}[a]+\bm{g}[a]$ and $\bm{fg}[a]=\bm{f}[a]\diamond \bm{g}[a]$. The first identity is obvious from \eqref{eq: formula bm{f}[a]}. Let us prove the second identity. We fix $a\in S^m$ with $a+\mathrm{i}$ elliptic and will let $h\to 0$. Let us set $\mathcal C'=\{\|a\|_{m,k}\}_{k\in \mathbf N}$. On the one hand,
\begin{equation}\label{eq: f(a)g(a)} \begin{aligned}fg(\operatorname{Op}_h^{M}(a))&  = f(\operatorname{Op}_h^{M}(a))g(\operatorname{Op}_h^{M}(a))\\ & =\operatorname{Op}_h^{M}(T_N(\bm{f}[a]))\operatorname{Op}_h^{M}g(T_N(\bm{g}[a]))+\mathcal O(h^N)_{L^2\to L^2} \\
	& =\operatorname{Op}_h^{M}\big(T_{N}(T_N(\bm{f}[a])\diamond T_N(\bm{g}[a]))\big)+\mathcal O(h^{N})_{L^2\to L^2}\end{aligned}.\end{equation}
	On the other hand
	\begin{equation} \label{eq:fg(a)} \begin{aligned} fg(\operatorname{Op}_h^{M}(a))=\operatorname{Op}_h^{M}(T_{N}(\bm{fg}[a]))+\mathcal O(h^{N})_{L^2\to L^2}\end{aligned}.\end{equation}
	Since $a$ is independent of $h$, by equating \eqref{eq: f(a)g(a)} and \eqref{eq:fg(a)}, we find
	\[\forall N\ge 0, \quad T_{N}(T_N(\bm{f}[a])\diamond T_N(\bm{g}[a]))=T_{N}(\bm{fg}[a])+\mathcal O(h^N).\]
	In other words,
	\begin{equation*} \begin{aligned}
	\sum_{n<N}(\mathrm{i}h)^n\bm{fg}[a]_n& =(T_N(\bm{f}[a])\diamond T_N(\bm{g}[a]))_n+\mathcal O(h^N)\\ & =\sum_{i<N}\sum_{j,k<N} (\mathrm{i}h)^{i+j+k} (\bm{f}[a]_j\diamond \bm{g}[a]_k)_i+\mathcal O(h^N).
	\end{aligned}\end{equation*}
	By fixing $a$ and letting $h\to 0$, we can identify the coefficients in the $h$-expansion, and obtain that for all $n<N$:
	\[\bm{fg}[a]_n=\sum_{i+j+k=n} (\bm{f}[a]_j\diamond \bm{g}[b]_k)_i.\]
	This exactly means $\bm{fg}[a]=\bm{f}[a]\diamond \bm{g}[a]$.

	\textbf{3.} Finally, we improve the error in \eqref{eq: to obtain f(A)}. We go back to the case $\mathcal C=\{h^{\delta k}\|a\|_{m,k}\}$. To obtain a better error, the standard trick is to apply \eqref{eq: to obtain f(A)} with the function $f_k(x)=(x+\mathrm{i})^k f(x)$ instead of $f$, see \cite[Chapter 8]{dimassi}. Then $f_k(A)=(A+\mathrm{i})^k f(A)$ and \eqref{eq: to obtain f(A)} applied to $f_k$ gives
	\begin{equation} \label{eq: A+i k f(A)}(A+\mathrm{i})^k f(A)=\operatorname{Op}_h^{M}(T_N(\bm{f_k}[a]))+\mathcal O_{N,k,f}^{\mathcal C}(h^{N(1-2\delta)})_{H_h^{-N}\to L^2}.\end{equation}
	Letting $g_k:=(x+\mathrm{i})^k$, we have $\bm{f_k}[a]=\bm{g_k}[a]\diamond \bm{f}[a]$ by step \textbf{2}. By Corollary \ref{cor: exact inverse}, we know that $(A+\mathrm{i})^{-k}:L^2\to H_h^{mk}$ is continuous, thus \eqref{eq: A+i k f(A)} becomes
	\begin{equation} \label{eq: f(A)=(A+i)-k bla} f(A)=(A+\mathrm{i})^{-k}\operatorname{Op}_h^{M}(T_N(\bm{g_k}[a]\diamond \bm{f}[a]))+\mathcal O_{N,k}^{\operatorname{Pol}(\mathcal C,K^{-1})}(h^{N(1-2\delta)})_{H_h^{-N}\to H_h^{mk}}.\end{equation}
	Let $\mathbf b:=\bm{g_k}[a]^{\diamond_{-1}}$ be the inverse of $\bm{g_k}[a]$ for the law $\diamond$. Then, by Theorem \ref{thm: parametrix}, \[(A+\mathrm{i})^{-k}=\operatorname{Op}_h^{M}(T_N(\mathbf b))+\mathcal O_{N,k}^{\operatorname{Pol}(\mathcal C,K^{-1})}(h^{N(1-2\delta)}).\] Lastly, for $s\le r-N$, working modulo $\mathcal O_N^{\operatorname{Pol}(\mathcal C,K^{-1})}(h^{N(1-2\delta)})_{H_h^r\to H_h^s}$, we have
	\begin{equation}
\begin{aligned}(A+\mathrm{i})^{-k}\operatorname{Op}_h^{M}(T_N(\bm{g_k}[a]\diamond \bm{f}[a]))& =\operatorname{Op}_h^{M}(T_N(\mathbf b))\operatorname{Op}_h^{M}(T_N(\bm{g_k}[a]\diamond \bm{f}[a])) \\
	& =\operatorname{Op}_h^{M}\big(T_N\big(\mathbf b\diamond T_N(\bm{g_k}[a]\diamond \bm{f}[a])\big)\big)\\
	& =\operatorname{Op}_h^{M}(T_N(\mathbf b\diamond \bm{g_k}[a]\diamond \bm{f}[a])) \\
	&= \operatorname{Op}_h^{M}(T_N(\bm{f}[a])).\end{aligned}
	\end{equation}
	Thus by taking $k$ large enough depending on $N$, \eqref{eq: f(A)=(A+i)-k bla} becomes
	\[f(A)=\operatorname{Op}_h^{M}(T_N(\bm{f}[a]))+\mathcal O_{N,r,s}^{\operatorname{Pol}(\mathcal C,K^{-1})}(h^{N(1-2\delta)})_{H_h^r\to H_h^s},\]
	for $s\le r-N$, which improves \eqref{eq: to obtain f(A)}.
\end{proof}

\subsubsection{Taking more singular functions of $A$} 
One can study more singular functions of $A$, in the spirit of the work of K\"uster \cite{Kuster}. Actually, the tools developed in the present paper allow to generalize the results of K\"uster, who was concerned with Schr\"odinger operators $h^2\Delta_g+V$, to arbitrary elliptic $h$-pseudodifferential operators.

We recall the following result from \cite{Kuster}, which gives a quantitative control on a quasi-analytic extension of $f$.
\begin{lem}[{\cite[\S 3]{Kuster}}] \label{lem: kuster} Let $f\in C^\infty_{\rm comp}(\mathbf{R},\mathbf{R})$ satisfy $\operatorname{diam}(\operatorname{Supp}(f))\le 1$. Then, $f$ admits a quasi-analytic extension $\tilde f$ supported on $|\operatorname{Im}(z)|\le 1$, such that $\operatorname{diam}(\operatorname{Supp}(\tilde f))\le 2$, and such that for all $N\ge 0$, 
\[|\bar \partial_z \tilde f(z)|\le C_N |\operatorname{Im}(z)|^N \|f\|_{C^{N+3}}.\]
The constant $C_N$ is independent of $f$.
\end{lem}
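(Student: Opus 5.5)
The plan is to build $\tilde f$ with the classical almost-analytic Taylor construction, localized by an explicit cutoff in $\operatorname{Im}(z)$, and to check the three required properties directly. After a translation, assume $\operatorname{Supp}(f)\subset[0,1]$. Choose once and for all $\psi\in C^\infty_{\rm comp}(\mathbf{R},[0,1])$ with $\psi\equiv 1$ on $[-\tfrac12,\tfrac12]$ and $\operatorname{Supp}\psi\subset[-1,1]$. Since the required bound must hold for \emph{every} $N$ with a constant independent of $f$, a fixed-order Taylor extension is not enough. I would therefore use a Borel/Mather-type construction, taking the Taylor order up to infinity with shrinking cutoffs:
\[\tilde f(x+\mathrm{i}y)=\psi(y)\sum_{k\ge 0}\frac{f^{(k)}(x)}{k!}(\mathrm{i}y)^k\,\psi(\lambda_k y),\]
where $\lambda_k\to\infty$ is a fixed, $f$-independent sequence. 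One natural choice is $\lambda_k=2^k$. Another is the Fourier version $\tilde f(x+\mathrm{i}y)=\psi(y)\int \hat f(\xi)\mathrm{e}^{\mathrm{i}(x+\mathrm{i}y)\xi}\psi(y\langle\xi\rangle)\,\mathrm{d}\xi$, which is Dyn'kin's construction.

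The support properties should be immediate. The factor $\psi(y)$ gives $|\operatorname{Im} z|\le 1$, and the $x$-support lies in $\operatorname{Supp} f\subset[0,1]$. So the support sits inside $[0,1]\times[-1,1]$, whose diameter is $\sqrt5$. To get diameter at most $2$, I would rescale: use $\psi(2y)$ in place of $\psi(y)$, giving $|y|\le\tfrac12$, so the support lies in $[0,1]\times[-\tfrac12,\tfrac12]$ of diameter $\sqrt2\le 2$. The $k=0$ term gives $\tilde f(x)=f(x)$ on the real line, since the higher terms vanish at $y=0$.

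For the $\bar\partial$ estimate I would prefer the Fourier version, since it makes uniformity in $f$ transparent. Compute $\bar\partial=\tfrac12(\partial_x+\mathrm{i}\partial_y)$. On the function $\mathrm{e}^{\mathrm{i}z\xi}$ this operator vanishes, so $\bar\partial$ only falls on the cutoffs:
\[\bar\partial\tilde f=\tfrac{\mathrm{i}}{2}\psi'(y)\int\cdots+\tfrac{\mathrm{i}}{2}\psi(y)\int\hat f(\xi)\mathrm{e}^{\mathrm{i}x\xi-y\xi}\langle\xi\rangle\psi'(y\langle\xi\rangle)\,\mathrm{d}\xi.\]
In the second term the integrand is supported where $|y|\langle\xi\rangle\in[\tfrac12,1]$. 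There $\langle\xi\rangle\sim|y|^{-1}$ and $|y\xi|\le 1$, so the exponential is bounded. Hence
\[|\cdot|\lesssim\int_{\langle\xi\rangle\ge 1/(2|y|)}|\hat f(\xi)|\langle\xi\rangle\,\mathrm{d}\xi\le (2|y|)^N\int|\hat f|\langle\xi\rangle^{N+1}\,\mathrm{d}\xi.\]
Next I bound the Fourier side by derivatives of $f$ with a universal constant. Using $\langle\xi\rangle^{N+3}|\hat f(\xi)|\lesssim\|(1-\partial^2)^{(N+3)/2}f\|_{L^1}\lesssim\|f\|_{C^{N+3}}$, which is valid because $f$ is supported in an interval of length at most $1$, together with integrability of $\langle\xi\rangle^{-2}$, I get $\int|\hat f|\langle\xi\rangle^{N+1}\lesssim_N\|f\|_{C^{N+3}}$. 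This gives exactly the claimed loss of three derivatives. For odd $N$ I would round up to an even order.

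The first term, involving $\psi'(y)$ (or $\psi'(2y)$), is supported where $|y|\ge\tfrac14$. There $|y|^N$ is bounded below, so a crude bound $|\cdot|\lesssim\|\hat f\|_{L^1}\lesssim\|f\|_{C^2}$ suffices. The one remaining point is that $\mathrm{e}^{-y\xi}$ could be large there, but the cutoff forces $|y|\langle\xi\rangle\le 1$, so this term is harmless.

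One subtlety needs care, and I expect it to be the main obstacle. In the Fourier version, $\tilde f$ need not be compactly supported in $x$, since $\hat f$ is not compactly supported. So the diameter claim fails unless I also multiply by a cutoff $\chi(x)$ equal to $1$ near $\operatorname{Supp} f$, with support of width about $1.2$. The new term $\chi'(x)\tilde f$ must then also be $\mathcal O(|y|^N\|f\|_{C^{N+3}})$. This holds because $\tilde f(x+\mathrm{i}y)$ for $x$ outside $\operatorname{Supp} f$ is a smooth function of $y$ vanishing to infinite order at $y=0$: $\partial_y^k\tilde f|_{y=0}=\mathrm{i}^kf^{(k)}(x)=0$ there. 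Taylor's formula in $y$ then gives the bound with the same derivative count. Keeping this cutoff bookkeeping consistent with the diameter-$2$ constraint, by choosing the widths $\le 1.2$ in $x$ and $\le 1$ in $y$, is where I would be most careful.
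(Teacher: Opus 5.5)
The paper gives no argument of its own for this lemma; it simply quotes it from K\"uster. Your Fourier-side construction is a correct, self-contained proof of the statement as written. That construction is $\tilde f(x+\mathrm{i}y)=\chi(x)\psi(2y)\int \hat f(\xi)\mathrm{e}^{\mathrm{i}(x+\mathrm{i}y)\xi}\psi(y\langle\xi\rangle)\,\mathrm{d}\xi$, with $\chi$ a fixed cutoff equal to $1$ near $[0,1]$ and supported in an interval of length $1.2$. It is the standard Mather-type argument, and the count of $N+3$ derivatives in the statement is exactly what it yields. The verification goes as follows:
\begin{itemize}
\item $\bar\partial$ only hits the cutoffs.
\item The $\psi'(y\langle\xi\rangle)$ term gives $|y|^N\int|\hat f|\langle\xi\rangle^{N+1}\,\mathrm{d}\xi$, since $|\mathrm{e}^{-y\xi}|\le \mathrm{e}$ on its support.
\item The $\psi'(2y)$ term lives on $|y|\ge 1/4$.
\item Your Taylor-in-$y$ argument correctly handles the $\chi'(x)$ term, because $\tilde f$ is flat at $y=0$ away from $\operatorname{Supp} f$ and $\partial_y^N\tilde f$ is bounded by $\int|\hat f|\langle\xi\rangle^{N}\,\mathrm{d}\xi$.
\item The support lies in a $1.2\times 1$ rectangle, whose diameter is below $2$.
\end{itemize}

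Two small corrections are needed.

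First, discard the Taylor--Borel variant with an $f$-independent sequence such as $\lambda_k=2^k$. On $|y|\sim 2^{-k}$ its $k$-th term has size of order $\|f^{(k)}\|_\infty 2^{-k^2}/k!$. This is unbounded in $k$ for suitable $f$, for instance with Borel-prescribed derivatives at a point. So that extension need not even be continuous at $\operatorname{Im} z=0$. The Taylor construction only works with $\lambda_k$ adapted to $\|f^{(k)}\|_\infty$.

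Second, no parity rounding is needed. Rounding $(N+3)/2$ up would cost an extra derivative when $N$ is even. Instead use $\langle\xi\rangle^{N+3}|\hat f(\xi)|\le 2^{(N+3)/2}\big(|\hat f(\xi)|+|\widehat{f^{(N+3)}}(\xi)|\big)\le C_N\big(\|f\|_{L^1}+\|f^{(N+3)}\|_{L^1}\big)$. This is bounded by $C_N\|f\|_{C^{N+3}}$ for every $N$, because $\operatorname{Supp} f$ has length at most $1$.
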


To simplify the statement, in the next theorem, we only consider functions with support of size $\le 1$. The general case follows by using a partition of unity.
\begin{thm}[Functional calculus for singular functions]\label{thm: func calc singular func} Let $\delta,\gamma\in [0,1/2)$. Let $a\in S^m(T^*M)$ satisfy the ellipticity property
\[|a(x,\xi)+\mathrm{i}|\ge K \langle \xi\rangle^m.\]
Let $R\ge 1$. Let $f:\mathbf R\to \mathbf C$ be a smooth compactly supported function in $[-R,R]$, with $\operatorname{diam}(\operatorname{supp}(f))\le 1$. Set $\mathcal C_f=\{h^{\gamma k}\|f\|_{C^k}\}_{k\in \mathbf N}$. Set $\mathcal C_1=\{h^{\delta k}\|a\|_{m,k}\}$ and $\mathcal C_2=\{h^{\delta(k-2)_+}\|a\|_{m,k}\}$, also let $\sigma_1=\delta+\gamma$ and $\sigma_2=\max(\delta,\gamma)$. Then, for $i\in \{1,2\}$, assuming $\sigma_i<1/2$, we have
\[f(\operatorname{Op}_h^{M}(a))=\mathrm{Op}_h^M(T_N(\bm{f}[a]))+\mathcal O^{\mathcal C_i,\mathcal C_f,K^{-1}}_{N,R,r,s}(h^{N(1-2\sigma_i)})_{H_h^r\to H_h^s}.\]
\end{thm}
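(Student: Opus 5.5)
The plan is to follow the proof of Theorem \ref{thm: functional calculus} step by step, keeping track of how the derivatives of $f$ enter. The only new inputs are the quantitative almost-analytic extension of Lemma \ref{lem: kuster} and the two versions of Corollary \ref{cor: bounds a inverse}. Two features of $f$ matter. Since $\operatorname{diam}\operatorname{Supp} f\le 1$ and $\operatorname{Supp} f\subset[-R,R]$, Lemma \ref{lem: kuster} gives an extension $\tilde f$ supported in a region of diameter $\le 2$ inside $\{|\operatorname{Im} z|\le 1\}\cap\{|\operatorname{Re}z|\le R+2\}$, with $|\bar\partial\tilde f(z)|\le C_N|\operatorname{Im}z|^N\|f\|_{C^{N+3}}\lesssim C_N|\operatorname{Im} z|^N h^{-\gamma(N+3)}$. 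The $h^{-\gamma}$ loss per derivative of $f$ is what turns $\delta$ into $\sigma_i$.

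\textbf{Step 1 (the resolvent at distance $h^\gamma$).} The Helffer--Sjöstrand integral \eqref{eq: helffer sjostrand} is split into two regions. In the region $|\operatorname{Im} z|\le h^{\gamma}$, use only the spectral bound $\|(A-z)^{-1}\|\le|\operatorname{Im}z|^{-1}$ together with the bound on $\bar\partial\tilde f$. Taking $N'$ large, this contributes $O(h^{\gamma N'}h^{-\gamma(N'+3)}\cdot h^{\gamma(N'-1)})$, which can be made $O(h^{N})$, at least in $L^2\to L^2$. The Sobolev mapping properties are upgraded at the end using the $(A+\mathrm{i})^k$ trick of step 3 of Theorem \ref{thm: functional calculus}.

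\textbf{Step 2 (the parametrix region).} In the region $|\operatorname{Im} z|\ge h^\gamma$, the symbol $a-z$ is elliptic with constant $\gtrsim K h^{\gamma}/\langle R\rangle$. Apply Theorem \ref{thm: parametrix} with this $\gamma$. For $\mathcal C_i$ and $\sigma_i$ exactly as in Corollary \ref{cor: bounds a inverse}, this gives
\[(A-z)^{-1}=\operatorname{Op}_h^M(T_N(\mathbf b(z)))+\mathcal O(h^{-\gamma-1}h^{N(1-2\sigma_i)}),\]
where the extra factor $|\operatorname{Im}z|^{-1}\le h^{-\gamma}$ comes from multiplying by the exact resolvent. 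The polynomial losses in $K^{-1}|\operatorname{Im} z|^{-1}$ are at most powers of $h^{-\gamma}$, and they are absorbed by $\bar\partial\tilde f=O(|\operatorname{Im} z|^\infty)$ against a fixed power of $\|f\|_{C^{k}}$, which is controlled by $\mathcal C_f$. Integrating over the bounded support of $\tilde f$ costs only $O_R(1)$.

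\textbf{Step 3 (identifying the symbol).} Reassemble the main terms by integrating the explicit expansion of $\mathbf b(z)$ against $\bar\partial\tilde f$ over all of $\mathbb C$. The region $|\operatorname{Im}z|<h^\gamma$ is put back in at an $O(h^\infty)$ cost by the same argument as Step 1, now applied to explicit symbols. This yields $\bm f[a]_n=\sum_{3\le k\le 2n}f^{(k-1)}(a)Q_{2n,k}$. Each term is bounded by $h^{-\gamma(k-1)}$ times a product of derivatives of $a$ with $\sum|\alpha_i|=2n$ and $\sum(|\alpha_i|-2)_+\le 2n-k$. Counting exponents in the two cases ($h^{-\delta|\alpha_i|}$ for $\mathcal C_1$, $h^{-\delta(|\alpha_i|-2)_+}$ for $\mathcal C_2$) gives $\|\bm f[a]_n\|_{C^j}\lesssim h^{-\sigma_i(2n+j)}$. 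The resulting remainder terms are therefore $O(h^{N(1-2\sigma_i)})$, provided $\sigma_i<1/2$. Finally, run step 3 of the proof of Theorem \ref{thm: functional calculus} with $f_k=(x+\mathrm{i})^kf$, whose norms are still controlled by $\mathcal C_f$ and $R$, to obtain $H_h^r\to H_h^s$ remainders.

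The main obstacle is the bookkeeping in Step 2 and Step 3 for $i=2$. There one has to check that the improved counting $\sum(|\alpha_i|-2)_+\le 2n-k$ pairs correctly with the $h^{-\gamma(k-1)}$ loss from $f^{(k-1)}$, so that the result is $\max(\delta,\gamma)$ rather than $\delta+\gamma$. This pairing is exactly the structure recorded in Theorem \ref{thm: parametrix} and Corollary \ref{cor: bounds a inverse}, so I would verify it term by term there.
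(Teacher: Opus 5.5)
\textbf{There is a genuine gap in Step 1.} Write $A=\operatorname{Op}_h^M(a)$. You use $|\bar\partial\tilde f(z)|\le C_{N'}|\operatorname{Im}z|^{N'}\|f\|_{C^{N'+3}}\lesssim|\operatorname{Im}z|^{N'}h^{-\gamma(N'+3)}$ together with $\|(A-z)^{-1}\|_{L^2\to L^2}\le|\operatorname{Im}z|^{-1}$. With these, the strip $\{|\operatorname{Im}z|\le h^\gamma\}$ is only bounded by
\[
h^{-\gamma(N'+3)}\int_0^{h^\gamma}y^{N'-1}\,\mathrm{d}y\sim h^{-3\gamma}.
\]
The gain $h^{\gamma N'}$ from $|\operatorname{Im}z|^{N'}$ is exactly cancelled by $\|f\|_{C^{N'+3}}$, so taking $N'$ large buys nothing. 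The factor $h^{\gamma(N'-1)}$ in your count has no source.

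This is not merely a lossy estimate. No argument that uses only bounds of this form can make the strip negligible. For $f=g(h^{-\gamma}\,\cdot)$, the function $\tilde g(h^{-\gamma}z)$ is an admissible almost-analytic extension obeying bounds of the same type, and its $\bar\partial$ is supported in $|\operatorname{Im}z|\le h^\gamma$. So the region you discard can carry all of $f(A)$. For the same reason, the claim in Step 3 that the strip can be put back into the symbol integrals at $O(h^\infty)$ cost is unfounded.

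The underlying issue is that, with $f$ unscaled, every power of $|\operatorname{Im}z|$ you buy from Lemma \ref{lem: kuster} costs a factor $h^{-\gamma}$. The scale $|\operatorname{Im}z|\sim h^\gamma$, where $f$ actually varies, must therefore be handled by the parametrix and not by the spectral bound.

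\textbf{How the paper avoids this.} It rescales before applying Helffer--Sj\"ostrand.
\begin{itemize}
\item It writes $f(A)=f_\gamma(h^{-\gamma}A)$ with $f_\gamma=f(h^\gamma\,\cdot)$, whose $C^k$ norms are bounded by $\mathcal C_f$ uniformly in $h$.
\item It cuts $f_\gamma$ into $O(h^{-\gamma})$ unit-size pieces, each extended via Lemma \ref{lem: kuster} with $h$-independent constants.
\item The resolvent that appears is $(h^\gamma z-A)^{-1}$, and $a-h^\gamma z$ is elliptic with constant $\gtrsim Kh^\gamma|\operatorname{Im}z|$. This is exactly Theorem \ref{thm: parametrix} with exponent $\gamma$ and $\varepsilon=K|\operatorname{Im}z|$.
\item The polynomial losses in $|\operatorname{Im}z|^{-1}$ are then absorbed by $|\bar\partial\tilde f_{\gamma,n}|=O(|\operatorname{Im}z|^\infty)$ at no cost in $h$.
\item The exponents $\sigma_1=\delta+\gamma$ and $\sigma_2=\max(\delta,\gamma)$ are inherited directly from that theorem.
\end{itemize}

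\textbf{How to keep your own decomposition.} Move the cut to $|\operatorname{Im}z|=h^{\gamma+\epsilon}$, with $\epsilon$ small enough that $\sigma_i+\epsilon<1/2$.
\begin{itemize}
\item The thin strip is then $O(h^{\epsilon N'-3\gamma})$, which is truly negligible for $N'$ large.
\item Step 2 runs with ellipticity exponent $\gamma+\epsilon$ and gives remainders $O(h^{N(1-2\sigma_i-2\epsilon)-C})$.
\item You recover the stated $h^{N(1-2\sigma_i)}$ by keeping more terms of the expansion. Your Step 3 symbol bounds give $h^n\operatorname{Op}_h^M(\bm{f}[a]_n)=O(h^{n(1-2\sigma_i)})$, so the extra terms are controlled.
\end{itemize}
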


\begin{proof} The proof is analogous to that of Theorem \ref{thm: functional calculus}. The function $f_\gamma:=f(h^\gamma\cdot)$ is supported on $[-Rh^{-\gamma},Rh^{-\gamma}]$ and satisfies $\|f_\gamma\|_{C^k}\lesssim_{\!\! k,\mathcal C_f} 1$. Let $\omega\in C^{\infty}_{\rm comp}([-2,2],\mathbf{R})$ satisfy $\sum \omega(\cdot-n)=1$ on $\mathbf{R}$. We set $f_{\gamma,n}:=f_\gamma\omega_n$. We now express
\begin{equation} \label{eq: we now express}f(A)=f_\gamma(h^{-\gamma} A)=\sum_n f_{\gamma,n}(h^{-\gamma}A)=\sum_n \frac{1}{2\pi \mathrm{i}}\int \bar \partial \tilde f_{\gamma,n}(z) (z-h^{-\gamma}A)^{-1} \mathrm{d}z\wedge \mathrm{d}\bar z,\end{equation}
where $\tilde f_{\gamma,n}$ is a quasi-analytic extension of $f_{\gamma,n}$, chosen so as to satisfy the estimates of Lemma \ref{lem: kuster}:
\[|\bar \partial \tilde f_{\gamma,n}(z)|\lesssim_{\!\! N,\mathcal C_f}|\operatorname{Im}(z)|^N.\] 
We study separately the terms of the sum. Write
\[f_{\gamma,n}(A)=\frac{h^{\gamma}}{2\pi \mathrm{i}}\int \bar \partial \tilde f_{\gamma,n}(z)(h^{\gamma}z -A)^{-1}\mathrm{d}z\wedge \mathrm{d}\bar z.\]
For $z\in \mathbf C\backslash \mathbf{R}$, the symbol $a-h^\gamma z$ is elliptic with ellipticity constant 
\[K(z)=K \frac{h^\gamma|\operatorname{Im}(z)|}{\langle h^\gamma z\rangle}.\] 	
For $z\in \operatorname{Supp} \tilde f_{\gamma,n}$, one has $|z|\lesssim h^{-\gamma }R$ thus $K(z)\gtrsim K h^{\gamma}|\operatorname{Im}(z)|$. By running again the proof of Theorem \ref{thm: functional calculus} but using the precise version of Theorem \ref{thm: parametrix} where the ellipticity constant is allowed to degenerate with $h$, we obtain
\[f_{\gamma,n}(A)=\operatorname{Op}_h^{M}\big({\bm{f_{\gamma,n}}[h^{-\gamma}a]}\big)+\mathcal O(h^\gamma h^{N(1-2\sigma_i)})_{H_h^r\to H_h^s}.\]
It remains to sum over the $\mathcal O(Rh^{-\gamma})$ terms of the sum \eqref{eq: we now express} to obtain the result.
\end{proof}

\begin{rem}For $\gamma\ge \frac 12$, the operator $f(\operatorname{Op}_h^{M}(a))$ does not fit into the pseudodifferential calculus anymore, in the sense that the errors $\mathcal O(h^{N(1-2\sigma_i)})$ grow as $h\to 0$ and as $N$ increases. This operator should instead be viewed as a Fourier integral operator.
\end{rem}
\subsection{Egorov Theorem for the exotic propagator} We prove an exotic version of the Egorov theorem where the generator of the dynamics is allowed to depend on $h$.

\begin{prop}[Egorov theorem for exotic propagators] \label{prop: egorov exotic} Let $\delta,\nu\in [0,1/2)$ and set $\sigma=\max(\delta,\nu)$. Let $p=p_\delta+h^2r_\delta\in C^\infty_{\rm comp}(T^*M)$ be a real symbol, such that \[\|p_\delta\|_{C^\ell}\lesssim h^{-\delta(\ell-2)_+}, \qquad \|r_\delta\|_{C^\ell}\lesssim h^{-\delta(\ell+2)}.\] Let $\Phi^t$ denote the Hamiltonian flow generated by $p_\delta$. Let $P_h=\mathrm{Op}_h^M(p)$, and $U(t)=\mathrm{e}^{-\frac{\mathrm i}{h} tP_h}$. Let $a\in S^{\rm comp}_\nu(T^*M)$ and set $\mathcal C=\{h^{\nu k}\|a\|_{C^k}\}$. For any $t\in [0,1]$, there is a formal symbol $\mathbf a_t$ such that for any $r,s\in \mathbf{R}$, one has
\[U(-t)\mathrm{Op}_h^M(a)U(t)=\mathrm{Op}_h^M(T_N(\mathbf a_t)\circ \Phi^t)+\mathcal O_{N,r,s}^{\rm{Pol}_1(\mathcal C)}(h^{N(1-2\sigma)})_{H_h^r\to H_h^s},\]
uniformly in $t\in [0,1]$. More precisely, the symbol $\mathbf a_t$ is of the form
\begin{equation} \label{eq: formal at}\mathbf a_t=a+\sum_{n\ge 2}(\mathrm{i}\underline h)^n \Big(\sum_{1\le j\le 2n} B_{j,n}(t)\mathbf D^{j}_n a\Big).\end{equation}
Here $\|B_{j,n}\|_{C^\ell_{t,\rho}}\lesssim h^{-\delta(2n-j)}$, and $\mathbf D^j_n$ is a differential operator of order $j$ such that $\mathbf D^j_n 1=0$.
In particular, writing $T_N(\mathbf a_t)=a+r_N$, one has
\[\|a_t\|_{C^\ell}\lesssim_{\!\! \ell}^{\!\!\rm Pol_1(\mathcal C)} h^{-\sigma \ell}, \quad \|r_N\|_{C^\ell}\lesssim_{\!\! N,\ell}^{\!\! \operatorname{Pol}_1(\mathcal C)} h^{-\sigma(\ell+4)}.\]
\end{prop}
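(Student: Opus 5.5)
The proof follows the classical Heisenberg-picture argument: first construct a formal symbol solving the transport hierarchy order by order, then control the error by a Duhamel estimate. Set $A(t):=U(-t)\mathrm{Op}_h^M(a)U(t)$, so that
\[hD_tA(t)=-[P_h,A(t)],\qquad A(0)=\mathrm{Op}_h^M(a).\]
We look for $B(t)=\mathrm{Op}_h^M(T_N(\mathbf b_t))$ with $\mathbf b_t=\mathbf a_t\circ\Phi^t$. We use the commutator formula, Proposition \ref{prop: commutator PDOs}, in which $\mathbf c=p\diamond\mathbf b-\mathbf b\diamond p=-\mathrm{i}\underline h\{p;\mathbf b\}+\sum_{n\ge3\ \mathrm{odd}}(\mathrm{i}\underline h)^n\mathbf D^{2n-4}(\mathrm d^2p\otimes\mathrm d^2\mathbf b)$. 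This reduces the requirement $hD_tB+[P_h,B]=\mathcal O(h^{N(1-2\sigma)})$ to a hierarchy of transport equations. The leading equation is
\[\partial_t b_0=\{p_\delta,b_0\},\]
which gives $b_0(t)=a\circ\Phi^t$. For $n\ge2$ the equations read
\[\partial_tb_n-\{p_\delta,b_n\}=F_n,\]
where $F_n$ collects the odd-order higher commutator terms applied to the previous $b_k$, together with the contributions of $h^2r_\delta$ (the latter contribute only from order $h^2$ on). Since the commutator contains only odd powers, $b_1=0$. The source terms always involve at least one derivative of $b_k$, hence of $a$. Solving by Duhamel along $\Phi^t$ and pulling back by $\Phi^{-t}$ gives the form \eqref{eq: formal at}, with $\mathbf D^j_n1=0$.

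The symbol bounds are proved by induction on $n$. Proposition \ref{prop: short time derivative flow} gives $\|\Phi^t\|_{C^\ell}\lesssim h^{-\delta(\ell-1)_+}$ for $t\le1$, and the derivatives of $p_\delta$ cost $h^{-\delta(\ell-2)_+}$. In the term $\mathbf D^{2n-4}(\mathrm d^2p\otimes\mathrm d^2b)$, the derivatives falling on $p$ beyond the first two lose $h^{-\delta}$ each, while the remaining ones fall on $b_k$. This distribution yields coefficients $B_{j,n}$ with $\|B_{j,n}\|_{C^\ell}\lesssim h^{-\delta(2n-j)}$, multiplying $\mathbf D^j_na$ with $\|\mathbf D^j_na\|\lesssim h^{-\nu j}$. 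The coefficient of $h^n$ is then $\lesssim h^{-2n\sigma-\sigma\ell}$, which is an acceptable $h^{n(1-2\sigma)}$ gain. For $n=2$ there is a possible loss from $h^2r_\delta$, whose bound is $h^{-\delta(\ell+2)}$; this accounts for the stated bound on $r_N$ with the shift $\ell+4$.

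To close the argument, set $E(t):=A(t)-B(t)$. Then $E(0)=0$ and $hD_tE=-[P_h,E]+\mathcal O(h^{N(1-2\sigma)+1})_{H_h^r\to H_h^s}$, the remainder coming from Proposition \ref{prop: commutator PDOs} together with the truncation. Conjugating back with $U(t)$ gives
\[\partial_t\big(U(t)E(t)U(-t)\big)=\mathcal O(h^{N(1-2\sigma)}).\]
Integrating on $[0,1]$ and using unitarity gives the $L^2$ bound. Sobolev bounds $H^r_h\to H^s_h$ follow because $p$ is compactly supported, so $U(t)$ is bounded on every $H^r_h$ uniformly in $t\le1$. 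Alternatively, they can be obtained by inserting $\mathrm{Op}_h^M(\langle\xi\rangle^{s})$, using Corollary \ref{cor: exact inverse} and the fact that $B$ has compactly supported symbol.

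The main obstacle is the bookkeeping in the inductive derivative bounds. The Poisson bracket with the exotic $p_\delta$ must be shown to be harmless at first order, since transporting along $\Phi^t$ costs $h^{-\delta}$ per derivative but no extra power overall. One must also check that the total loss at order $h^n$ never exceeds $h^{-2n\sigma}$. I would first verify this for $n=2$ explicitly, then set up an induction on $(n,\ell)$ mirroring Lemma \ref{lem: spacetime derivatives generating function}, using Fa\`a di Bruno with the counting $\sum(\ell_i-2)_+$.
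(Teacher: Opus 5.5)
Your proposal is correct and follows essentially the same route as the paper. You build $\mathbf a_t$ order by order from the commutator formula: the $\{\cdot,p_\delta\}$ term is cancelled exactly by transport along $\Phi^t$, $h^2\{\cdot,r_\delta\}$ fits into the $(\mathrm{i}h)^2$ term, and the coefficient bounds $h^{-\delta(2j-|\alpha|)}$ come from Fa\`a di Bruno plus the flow estimates. You then close by showing that $U(t)\big(A(t)-B(t)\big)U(-t)$ has a small time derivative, exactly as the paper does. Two small fixes. The sign should be $hD_tA=[P_h,A]$. And the uniform $H_h^r$-boundedness of $U(t)$ is asserted rather than proved: compact support of $p$ alone does not give it, because of the $1/h$ in the exponent. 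It needs the short Gr\"onwall argument using $\Lambda_rP_h\Lambda_r^{-1}=P_h+\mathcal O(h)_{L^2\to L^2}$, with $\Lambda_r=\mathrm{Op}_h^M(\langle\xi\rangle^r)$, which the paper isolates as a separate lemma.
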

\begin{rem}The motivation for studying such symbols is that if $q$ satisfies $\|q\|_{C^\ell}\lesssim h^{-\delta (\ell-2)_+}$ and $f$ is a volume-preserving diffeomorphism, then, informally,
\[f_*\operatorname{Op}_h^{\rm w}(q)f^*\approx \operatorname{Op}_h^{\rm w}(p_\delta+r_\delta),\]
with $p_\delta=f_*q$ and $r_\delta$ of the above form.
\end{rem}
We first prove a short lemma about the mapping properties of $U(t)$ between Sobolev spaces.
\begin{lem}\label{lem: sobolev continuity propagator} Under the assumptions of Proposition \ref{prop: egorov exotic}, the operator $U(t)$ is continuous on $H_h^m$ for all $m\in \mathbf{R}$, with uniform estimates in $h$:
\[\|U(t)\|_{H_h^m\to H_h^m}\lesssim_{\!\! m} 1.\]\end{lem}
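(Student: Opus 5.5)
The plan is a commutator/energy argument: conjugate $U(t)$ by an elliptic operator of order $m$ and show that the conjugated generator differs from $P_h$ by an $L^2$-bounded perturbation. Set $\Lambda_m:=\operatorname{Op}_h^{M}(\langle \xi\rangle^m)$. By Corollary \ref{cor: exact inverse}, for $h$ small enough $\Lambda_m$ is invertible, with $\|u\|_{H_h^m}\simeq \|\Lambda_m u\|_{L^2}$ uniformly in $h$. So it suffices to bound $\|\Lambda_m U(t)\Lambda_m^{-1}\|_{L^2\to L^2}$ uniformly for $t\in[0,1]$. Note that $P_h$ is self-adjoint, with real symbol of compact support, so $U(t)$ is unitary on $L^2$.

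Put $V(t):=\Lambda_m U(t)\Lambda_m^{-1}$. On $C^\infty(M)$ it satisfies
\[hD_tV(t)=-\Lambda_m P_h\Lambda_m^{-1}V(t)=-\big(P_h+[\Lambda_m,P_h]\Lambda_m^{-1}\big)V(t).\]
Let $B:=h^{-1}[\Lambda_m,P_h]\Lambda_m^{-1}$. Duhamel's formula with respect to the unitary group $U(t)$ gives
\[V(t)=U(t)-\mathrm{i}\int_0^t U(t-s)\,B\,V(s)\,\mathrm{d}s.\]
If $\|B\|_{L^2\to L^2}\lesssim 1$, Gronwall's lemma yields $\|V(t)\|_{L^2\to L^2}\le \mathrm{e}^{C t}$, and the result follows.

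The main step is therefore the bound $\|[\Lambda_m,P_h]\|_{H_h^{r}\to H_h^{r-m}}\lesssim h$. I would invoke Proposition \ref{prop: commutator PDOs} with $a=\langle\xi\rangle^m$, $b=p$, and track the powers of $h$ by hand rather than through the $\mathcal C$-seminorms, since $p$ is exotic. Because $p$ has compact support, all terms lie in $S^{-\infty}$, and only the $h$-growth of the coefficients matters. The leading term is $-\mathrm{i}h\{\langle\xi\rangle^m,p\}$, and $\|\{\langle\xi\rangle^m,p\}\|_{C^k}\lesssim h^{-\delta(k-1)_+}$ because only first derivatives of $p$ enter. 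The higher terms have the form $h^n\mathbf D^{2n-4}(\mathrm{d}^2a\otimes \mathrm{d}^2 b)$ with $n\ge3$ odd, which involve at most $2n-2$ derivatives of $p_\delta$ and so are $O(h^{n}h^{-\delta(2n-4)})=O(h^{1+(n-1)(1-2\delta)})$. The $h^2r_\delta$ part contributes $h\cdot h\cdot h^{-\delta(k+3)}$, which is still $O(h)$ in $L^2$ after Calderón–Vaillancourt (Proposition \ref{prop: calderon}), since $2\delta<1$ and only $O(1)$ derivatives are needed. The remainder is $O(h^{N(1-2\delta)})$.

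The expected obstacle is precisely this bookkeeping. The seminorms $h^{\delta k}\|p\|_{m,k}$ do not stay bounded for small $k$ when applied naively to the $h^{-\delta(\ell-2)_+}$ class, so the generic $\operatorname{Pol}_1(\mathcal C)$ estimate loses a factor $h^{-2\delta}$. I would circumvent this by using the explicit structure in Proposition \ref{prop: commutator PDOs}: every term carries at least two derivatives on each factor, and the leading term uses only $\mathrm{d}p$. If needed, I would instead compute locally in charts using the Weyl composition of Proposition \ref{prop: composition Rd}, together with pseudolocality, Lemma \ref{prop: pseudolocality}. Composing with $\Lambda_m^{-1}\in \operatorname{Op}(S^{-m})$ then gives $\|B\|_{L^2\to L^2}\lesssim 1$. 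For negative $m$, the same argument applies, or one can argue by duality using the case $-m$ for $U(-t)=U(t)^*$.
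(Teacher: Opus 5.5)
Your proposal is correct and follows essentially the paper's route: conjugate by $\Lambda_m=\operatorname{Op}_h^{M}(\langle\xi\rangle^m)$, use the calculus together with $\|p_\delta\|_{C^\ell}\lesssim h^{-\delta(\ell-2)_+}$ to get $\Lambda_mP_h\Lambda_m^{-1}-\operatorname{Op}_h^{M}(p)=\mathcal O(h)_{L^2\to L^2}$, and close with Gr\"onwall. The only difference is the last step: the paper uses the energy identity $\frac{\mathrm d}{\mathrm dt}\|v(t)\|_{L^2}^2=\frac 2h\operatorname{Im}\langle \Lambda_mP_h\Lambda_m^{-1}v,v\rangle$ and self-adjointness of $\operatorname{Op}_h^{M}(p)$, while you use Duhamel and unitarity of $U(t)$, which is equivalent. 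One small correction to your stated obstacle: the seminorms satisfy $h^{\delta k}\|p\|_{m,k}\lesssim h^{\delta k-\delta(k-2)_+}$ and are therefore bounded, so the generic $\operatorname{Pol}_1(\mathcal C)$ remainder is harmless once $N(1-2\delta)\ge 1$, and only the explicit low-order terms need the finer derivative count you carry out.
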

\begin{proof}Start from a smooth initial datum $u\in C^\infty(M)$. Let $\Lambda_m:=\operatorname{Op}_h^{M}(\langle \xi\rangle^m)$ and set $v(t):=\Lambda_m \mathrm{e}^{-\frac{\mathrm{i}}{h} tP_h}\Lambda_m^{-1}u$. Since $p$ has compact support, the operator $P_h=\operatorname{Op}_h^{M}(p)$ is continuous $H_h^r\to H_h^s$ for any $r,s\in \mathbf{R}$. It follows that $v(t)\in C^\infty(M)$. We compute
\begin{equation} \label{eq: d dt norm v}\frac{\mathrm{d}}{\mathrm{d}t}\big\|v(t)\big\|_{L^2(M)}^2= \frac{2}{h} \operatorname{Im} \langle \Lambda_m P_h\Lambda_m^{-1} v(t),v(t)\rangle_{L^2(M)}.\end{equation}
By the composition theorem of Proposition \ref{prop: composition manifold} and the parametrix construction from Theorem \ref{thm: parametrix}, we have
\[\Lambda_m P_h\Lambda_m^{-1}=\operatorname{Op}_h^{M}(p)+\mathcal O(h)_{L^2\to L^2}.\]
Here we have used the fact that $\|p\|_{C^\ell}\lesssim h^{-\delta(\ell-2)_+}$ to get a $\mathcal O(h)$ remainder. Since $p$ is real, the operator $\operatorname{Op}_h^{M}(p)$ is self-adjoint thus $\operatorname{Im}\langle \operatorname{Op}_h^{M}(p)v(t),v(t)\rangle_{L^2(M)}=0$. Going back to \eqref{eq: d dt norm v}, we obtain
\begin{equation} \label{eq: bound d d t v} \frac{\mathrm{d}}{\mathrm{d}t}\big\|v(t)\big\|_{L^2(M)}^2\le C_m\|v(t)\|_{L^2(M)}^2,\end{equation}
for some $h$-independent constant $C_m$. By Gr\"onwall's inequality, \eqref{eq: bound d d t v} implies $\|v(t)\|_{L^2(M)}^2\le \mathrm{e}^{C_mt}\|v(0)\|_{L^2(M)}^2$, i.e. $\|U(t)u\|_{H_h^m}\le \mathrm{e}^{C_mt} \|u\|_{H_h^m}$. This holds for any $u\in C^\infty(M)$, thus by density $U(t):H_h^m\to H_h^m$ is continuous and 
\[\|U(t)\|_{H_h^m\to H_h^m}\le \mathrm{e}^{C_mt}.\]
\end{proof}
We turn to the proof of Proposition \ref{prop: egorov exotic}.
\begin{proof}[Proof of Proposition \ref{prop: egorov exotic}] We look for $a_t$ of the form $a_t=\sum_{n\ge 0} h^j a_{t}^{(n)}$. Let $V(t):=U(t) \mathrm{Op}_h^M(a_t\circ \Phi^t)U(-t)$. Let us compute
\begin{align*}
\partial_t V(t)= & -\frac{\mathrm{i}}{h} P_hU(t)\mathrm{Op}_h^M(a_t\circ \Phi^t)U(-t) +U(-t)\mathrm{Op}_h^M(\dot a_t\circ \Phi^t + \mathrm da\cdot \partial_t \Phi^t) U(t)\\ & +\frac{\mathrm{i}}{h} U(t)\mathrm{Op}_h^M(a_t\circ \Phi^t) P_hU(-t).
\end{align*}
It implies, using the fact that $[U(t),P_h]=0$:
\[U(-t)\big[\partial_tV(t)\big]U(t)=-\frac{\mathrm{i}}{h} P_h\mathrm{Op}_h^M(a_t\circ \Phi^t) +\mathrm{Op}_h^M(\dot a_t\circ \Phi^t+ \mathrm da\cdot \partial_t \Phi^t)+\frac{\mathrm{i}}{h}\mathrm{Op}_h^M(a_t\circ \Phi^t) P_h. \]
Recall that $P_h=\mathrm{Op}_h^M(p)$ and $\mathrm da_t\cdot \partial_t \Phi^t=H_{p} a_t$, to get
\begin{equation} \label{eq: U(-t) dt V U(t)}U(-t)\big[\partial_tV(t)\big]U(t)=\frac{\mathrm{i}}{h} \big[\mathrm{Op}_h^M(a_t\circ \Phi^t),\mathrm{Op}_h^M(p)\big]+\mathrm{Op}_h^M(\dot a_t\circ \Phi^t+ H_{p_\delta}a_t).\end{equation}
We will prove in \eqref{eq: borne rang n proof egorov} below that for our choice of $a_t$ we have $\|a_t\|_{C^\ell}\lesssim h^{-\sigma \ell}$. Since $\|\Phi^t\|_{C^\ell}\lesssim h^{-\delta(\ell-1)_+}$ we obtain $\|a_t\circ \Phi^t\|_{C^\ell}\lesssim h^{-\sigma \ell}$. By the commutator formula of Proposition \ref{prop: commutator PDOs}, since all functions have compact support in $M$, one has for all $r,s\in \mathbf{R}$:
\begin{equation}\label{eq: by the commutator formula}\begin{array}{ll}\displaystyle \frac{\mathrm{i}}{h} \big[\mathrm{Op}_h^M(a_t\circ \Phi^t),\mathrm{Op}_h^M(p)\big]=& \displaystyle \mathrm{Op}_h^M\Big(\{ a_t\circ \Phi^t;p\} +\sum_{j=2}^{N-1}(\mathrm{i} h)^j \mathbf D^{2j-2}(\mathrm{d}^2(a_t\circ \Phi^t)\otimes  \mathrm{d}^2 p)_{|\rm diag}\Big)\\ & \displaystyle+\mathcal O_{N,r,s}(h^{N(1-2\sigma)-1})_{H_h^r\to H_h^s}.\end{array}\end{equation}
By the estimates on $\Phi^t$ and $p$, it follows from Fa\`a di Bruno's formula that
\begin{equation} \label{eq: it then follows from FdB}\sum_{j=2}^{N-1}(\mathrm{i} h)^j \mathbf D^{2j-2}(\mathrm d^2 (a_t\circ \Phi_t)\otimes \mathrm d^2p)_{|\rm diag}=\sum_{j=2}^{N-1}(\mathrm{i} h)^j\sum_{1\le |\alpha|\le 2j} (c_{j,\alpha} \partial^\alpha a_t)\circ \Phi^t,\end{equation}
for some coefficients $c_{j,\alpha}$ depending on $t,x,\xi$, satisfying
\begin{equation} \label{eq: cjalpha proof egorov} \|c_{j,\alpha}\|_{C^\ell(M\times [0,1))}\lesssim h^{-\delta(2j+\ell-|\alpha|)}.\end{equation}
We split the term $\{a_t\circ \Phi^t,p\}$ in \eqref{eq: by the commutator formula} as
\[\{a_t\circ \Phi^t,p\}=\{a_t\circ \Phi^t,p_\delta\}+h^2\{a_t\circ \Phi^t,r_\delta\}.\]
The first term $\{a_t\circ \Phi^t,p_\delta\}$ cancels out with $H_{p_\delta}a_t$ in \eqref{eq: U(-t) dt V U(t)}, while the second term $h^2\{a_t\circ \Phi^t,r_\delta\}$ can be incorporated in the sum \eqref{eq: it then follows from FdB} at the index $(j,|\alpha|)=(2,1)$. Indeed, it suffices to check that $\{a_t\circ \Phi^t,r_\delta\}=\partial r_\delta \partial \Phi^t\partial a_t$ and note that $\partial r_\delta\partial \Phi^t$ is bounded in $C^\ell$ norm by $h^{-\delta(\ell+3)}$.
Gathering everything, we find
\begin{equation} \label{eq: U(-t)dt V(t)U(t)} \begin{aligned}
	U(-t)\big[\partial_tV(t)\big]U(t)&= \displaystyle \mathrm{Op}_h^M\Big(\dot a_t\circ \Phi^t+\sum_{j=2}^{N-1} (\mathrm{i} h)^{j} \sum_{1\le |\alpha|\le 2j}  c_{j,\alpha}(\partial^\alpha a_t)\circ \Phi^t \Big)\\ & +\mathcal O_{N,r,s}(h^{N(1-2\sigma)-1})_{H_h^r\to H_h^s}.
	\end{aligned}\end{equation}
	We look for $a_t=T_N(\mathbf a_t)$ where $\mathbf a_t:=\sum_{n\ge 0} (\mathrm{i}h)^n a_t^{(n)}$ is a formal symbol. By injecting this in \eqref{eq: U(-t) dt V U(t)} and requiring that all the coefficients in the resulting formal $h$ expansion vanish, 
	we find that $(a_t^{(n)})_{n\ge 0}$ has to solve:
	\begin{equation} \label{eq: for a_t^n}\partial_t a_t^{(0)}=0, \qquad \partial_t a_t^{(n)}=-\sum_{\substack{i+j=n, \ j\ge 2 \\ 1\le |\alpha|\le 2j} }  c_{j,\alpha}\partial^\alpha a_{t}^{(i)}.\end{equation}
	The solution to \eqref{eq: for a_t^n} with initial condition $a_0=a$ is given by
	\begin{equation} \label{eq: def recursion egorov}a_t^{(0)}=a,\quad  a_t^{(1)}=0, \quad \text{and for $n\ge 2$: } a_t^{(n)}:=-\sum_{\substack{i+j=n, \ j\ge 2 \\ \  1\le |\alpha|\le 2j}}\int_0^t c_{j,\alpha}\partial^\alpha a_s^{(i)}\mathrm ds.\end{equation}
	An induction based on \eqref{eq: def recursion egorov} and \eqref{eq: cjalpha proof egorov} shows that $a_t^{(n)}$ is of the claimed form \eqref{eq: formal at}. It also follows that
	\begin{equation} \label{eq: borne rang n proof egorov} \|a_t^{(n)}\|_{C^\ell(T^*M)}\lesssim_{\!\! n,\ell} h^{-\sigma(2n+\ell)}, \quad \text{and}\quad \|a_t\|_{C^\ell}\lesssim_{\!\! N,\ell} h^{-\sigma \ell}.\end{equation}
	We set $a_t=T_N(\mathbf a_t)$. Then  \eqref{eq: U(-t) dt V U(t)} becomes
	\[U(-t)\big[\partial_tV(t)\big]U(t)=\mathcal O(h^{N(1-2\sigma)-1})_{H_h^r\to H_h^s}.\]
	By Lemma \ref{lem: sobolev continuity propagator}, we infer
	\begin{equation} \label{eq: partial V(t) is small}\partial_t V(t)=\mathcal O(h^{N(1-2\sigma)-1})_{H_h^r\to H_h^s}.\end{equation}
	Recall that $V(0)=\operatorname{Op}_h^{M}(a)$. For $t\in [0,1]$, by integrating \eqref{eq: partial V(t) is small} over $[0,t]$, and using Lemma \ref{lem: sobolev continuity propagator}, we deduce
	\[U(-t)\operatorname{Op}_h^{M}(a)U(t)=\operatorname{Op}_h^{M}(a_t\circ \Phi^t)+\mathcal O(h^{N(1-2\sigma)-1})_{H_h^r\to H_h^s}.\]
	This concludes the proof.
\end{proof}

\subsection{Egorov theorem for time-dependent symbols} \label{subsec: egorov time dep}

We give another version of the Egorov theorem for time-dependent symbols. For $M=\mathbf{R}^d$, it is essentially contained in \cite[Chapter 11]{zworski2012semiclassical}.

\begin{prop}[Egorov theorem for time-dependent symbols] \label{prop: egorov bis} Let $M=\mathbf{R}^d$ or be a closed manifold. Let $\{q_t\}$ be a piecewise smooth family of uniformly bounded (in $C^k$ norm for $k$ large enough) compactly supported real symbols, and let $U(t)$ solve
\[\left\{\begin{array}{ll} U(0)=\mathrm{Id}, \\ hD_t U(t)=\operatorname{Op}_h^{M}(q_t)U(t).\end{array}\right.\]
One defines a smooth family of symplectomorphisms $\{\kappa_t\}_{t\in [0,1]}$ of $T^*M$ by solving
\[\left\{\begin{array}{ll} \kappa_0=\mathrm{Id}, \\ \partial_t\kappa_t=H_{q_t},\end{array}\right.\]
where $H_{q_t}$ is the Hamiltonian vector field on $T^*M$ associated with $q_t$. Then, for any smooth symbol $a\in S_\nu^{\rm comp}(T^*M)$, there is a formal symbol $\mathbf b$ such that
\[U(1)^* \operatorname{Op}_h^{M}(a)U(1)=\operatorname{Op}_h^{M}(T_N(\mathbf b)\circ \kappa_1)+\mathcal O(h^{N(1-2\nu)})_{L^2}.\]
The symbol is of the form
\begin{equation} \label{eq: bf b for time dep}\mathbf b=a+\sum_{n\ge 2}(\mathrm{i} \underline h)^n \Big(\sum_{1\le j\le 2n} \mathbf D^j_n a\Big).\end{equation}
In particular, writing $T_N(\mathbf b)=a+r_N$, we have:
\begin{itemize}
\item if $\|a\|_{C^\ell}\lesssim h^{-\nu \ell}$ then $\|r\|_{C^\ell}\lesssim h^2h^{-\nu(\ell+4)}$.
\item if $\|a\|_{C^\ell}\lesssim h^{-\nu(\ell-2)_+}$ then $\|r\|_{C^\ell}\lesssim h^2 h^{-\nu(\ell+2)}$.
\end{itemize}
\end{prop}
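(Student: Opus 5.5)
We follow the same scheme as the proof of Proposition \ref{prop: egorov exotic}, but now with an $h$-independent, time-dependent generator. Set $V(t):=U(t)\operatorname{Op}_h^{M}(b_t\circ\kappa_t)U(t)^*$, where $b_t$ is a symbol to be determined with $b_0=a$. Since $q_t$ is real, $\operatorname{Op}_h^{M}(q_t)$ is self-adjoint and $U(t)$ is unitary. Differentiating, and using $hD_tU=\operatorname{Op}_h^{M}(q_t)U$, we obtain
\[
U(t)^*\big[\partial_tV(t)\big]U(t)=\frac{\mathrm i}{h}\big[\operatorname{Op}_h^{M}(b_t\circ\kappa_t),\operatorname{Op}_h^{M}(q_t)\big]+\operatorname{Op}_h^{M}\big(\partial_t(b_t\circ\kappa_t)\big).
\]
(Signs are to be fixed according to the convention $\partial_t\kappa_t=H_{q_t}$; the transport term should cancel the Poisson bracket.)

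We expand the commutator with Proposition \ref{prop: commutator PDOs}. The first term is $\{b_t\circ\kappa_t,q_t\}$, and it cancels against $(\mathrm d b_t\cdot\partial_t\kappa_t)$. The remaining terms come only at odd powers $n\ge3$ of $\mathrm i\underline h$ in $\mathbf c$; after division by $h$ they contribute at orders $h^{n-1}$, $n-1\ge2$, through $\mathbf D^{2n-4}(\mathrm d^2(b_t\circ\kappa_t)\otimes\mathrm d^2q_t)$. We rewrite each of them via Faà di Bruno as $\sum_{1\le|\alpha|\le 2j}(c_{j,\alpha}\partial^\alpha b_t)\circ\kappa_t$. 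This is possible because $\kappa_t$ has uniformly bounded derivatives, since $q_t$ is $h$-independent and bounded in $C^k$. The coefficients $c_{j,\alpha}$ are therefore bounded in every $C^\ell$, with no loss in $h$, and they annihilate constants, since at least one derivative falls on $b_t$.

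Next we write $\mathbf b_t=\sum(\mathrm i\underline h)^n b_t^{(n)}$ and require that each coefficient vanish. This gives $\partial_t b^{(0)}_t=0$ and $b_t^{(1)}=0$, and for $n\ge2$
\[
b_t^{(n)}=-\sum_{i+j=n,\,j\ge2}\ \sum_{1\le|\alpha|\le2j}\int_0^t c_{j,\alpha}(s)\,\partial^\alpha b_s^{(i)}\,\mathrm ds .
\]
By induction, $b^{(n)}=\sum_{1\le j\le 2n}\mathbf D^j_n a$, which is the form \eqref{eq: bf b for time dep}. Piecewise smoothness in $t$ causes no difficulty, because we only integrate in $t$.

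For the symbol estimates, each $\mathbf D^j_n$ has order between $1$ and $2n$. If $\|a\|_{C^\ell}\lesssim h^{-\nu\ell}$, then $h^n\|\mathbf D^j_na\|_{C^\ell}\lesssim h^nh^{-\nu(\ell+2n)}$. For $n\ge2$ this is at most $h^2h^{-\nu(\ell+4)}$, using $h^{n-2}h^{-2\nu(n-2)}\le1$ since $\nu<1/2$. If instead $\|a\|_{C^\ell}\lesssim h^{-\nu(\ell-2)_+}$, the same count gives $h^nh^{-\nu(\ell+2n-2)}\le h^2h^{-\nu(\ell+2)}$. This yields both bulleted bounds.

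It remains to control the remainder. Set $b_t=T_N(\mathbf b_t)$. Then $U(t)^*\partial_tV(t)U(t)=\mathcal O(h^{N(1-2\nu)-1})_{L^2}$, where the commutator remainder is controlled by Proposition \ref{prop: commutator PDOs} with compactly supported symbols. Since $U(t)$ is unitary on $L^2$, integrating from $0$ to $1$ gives the claim with error $\mathcal O(h^{N(1-2\nu)-1})$. The loss of $h^{-1}$ is absorbed by increasing $N$.

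On a closed manifold, all calculus statements are those of the appendix. On $\mathbf R^d$, they reduce to the Weyl calculus of Proposition \ref{prop: composition Rd}, since everything is compactly supported.

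We expect the main obstacle to be the bookkeeping showing that the coefficients in the commutator expansion, once composed with $\kappa_t$, only hit $b_t$ through derivatives of order $\ge1$ with exactly the stated counting. This is what gives $\mathbf D^j_n1=0$ and the sharp exponent $\ell+2$ in the second bound. To handle it, we would use the structure $\mathbf D^{2n-4}(\mathrm d^2\cdot\otimes\mathrm d^2\cdot)$ from Proposition \ref{prop: commutator PDOs}, which guarantees at least two derivatives on each factor before composition with $\kappa_t$.
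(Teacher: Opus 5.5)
There is a genuine gap at the very first step. You transcribed your identity for $U(t)^*[\partial_tV(t)]U(t)$ from the proof of Proposition~\ref{prop: egorov exotic}, where it rests on $[U(t),P_h]=0$. For a time-dependent generator this commutation fails. Differentiating $V(t)=U(t)C_tU(t)^*$ with $C_t=\operatorname{Op}_h^{M}(b_t\circ\kappa_t)$ actually gives
\[
U(t)^*\big[\partial_tV(t)\big]U(t)=\frac{\mathrm i}{h}\big[U(t)^*\operatorname{Op}_h^{M}(q_t)U(t),\,C_t\big]+\partial_tC_t ,
\]
so the generator appears conjugated by $U(t)$.

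Correspondingly, the leading-order cancellation you assert is false, and this is not a sign issue. On one side, $\partial_t(b\circ\kappa_t)=(H_{q_t}b)\circ\kappa_t$. On the other, the bracket with the unconjugated $q_t$ produces $H_{q_t}(b\circ\kappa_t)=(H_{q_t\circ\kappa_t^{-1}}b)\circ\kappa_t$, since $\kappa_t$ is symplectic. These agree for all $b$ only if $q_t\circ\kappa_t^{-1}-q_t$ is locally constant. That is conservation of energy in the autonomous case, but it fails in general. For example, take $q_t=\xi^2/2$ for $t<1/2$ and $q_t=x^2/2$ for $t>1/2$; then $q_t\circ\kappa_t^{-1}$ acquires a $\xi^2$ term. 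Keeping the conjugated generator would make your ansatz consistent at leading order, but only through an Egorov theorem for $U(t)^*\operatorname{Op}_h^{M}(q_t)U(t)$, which is the statement being proved. Everything downstream in your proposal (the hierarchy, its solution, the form of $\mathbf b$) is built on this cancellation.

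The repair is to differentiate the conjugate in which the generator enters unconjugated. First normalize the propagator as $hD_tU+\operatorname{Op}_h^{M}(q_t)U=0$. With the literal $hD_tU=\operatorname{Op}_h^{M}(q_t)U$, already $q_t=\xi$ gives $U(1)^*\operatorname{Op}_h^{M}(a)U(1)=\operatorname{Op}_h^{M}(a\circ\kappa_1^{-1})$, so this is the convention under which the statement holds. Then, for any family of symbols $y_t$,
\[
\frac{\mathrm d}{\mathrm dt}\Big(U(t)^*\operatorname{Op}_h^{M}(y_t)U(t)\Big)=U(t)^*\Big(\operatorname{Op}_h^{M}(\partial_ty_t)+\frac{\mathrm i}{h}\big[\operatorname{Op}_h^{M}(q_t),\operatorname{Op}_h^{M}(y_t)\big]\Big)U(t).
\]
The leading part of the bracket is $\operatorname{Op}_h^{M}(H_{q_t}y_t)$. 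The transport equation $\partial_ty_t=-H_{q_t}y_t$ with terminal value $y_1=a$ is solved by $y_t=a\circ\kappa_1\circ\kappa_t^{-1}$, that is, $a$ composed with the two-time map from $t$ to $1$ rather than with $\kappa_t$.

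Write $y_t=T_N(\mathbf b_t)\circ\kappa_1\circ\kappa_t^{-1}$ with $\mathbf b_1=a$, and solve your hierarchy backward in $t$. Integrating over $[0,1]$ and using $U(0)=\mathrm{Id}$ gives
\[
U(1)^*\operatorname{Op}_h^{M}(a)U(1)=\operatorname{Op}_h^{M}\big(T_N(\mathbf b_0)\circ\kappa_1\big)+\mathcal O(h^{N(1-2\nu)-1})_{L^2},
\]
so one takes $\mathbf b:=\mathbf b_0$. This is the standard arrangement; the paper itself only refers to \cite[Chapter~11]{zworski2012semiclassical} here. With this change, the remaining steps go through as you wrote them:
\begin{itemize}
\item only odd powers appear in Proposition~\ref{prop: commutator PDOs};
\item Fa\`a di Bruno applies with the $h$-independent maps $\kappa_1\circ\kappa_t^{-1}$;
\item $\mathbf D^j_n1=0$ and both bulleted bounds follow;
\item the $h^{-1}$ loss is absorbed into $N$.
\end{itemize}
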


One may reverse the perspective and consider the following situation. Let $\kappa:U_0\subset T^*\mathbf{R}^d\to U_1\subset T^*\mathbf{R}^d$ be a symplectomorphism sending $(0,0)$ to $(0,0)$. We assume that $U_0$ is relatively compact and star-shaped with respect to $(0,0)$. By \cite[Theorem 11.4]{zworski2012semiclassical}, $\kappa$ extends to a global symplectomorphism $\tilde \kappa$ of $T^*\mathbf{R}^d$, that coincides with the identity outside a compact set. Moreover, there is a piecewise smooth deformation of symplectomorphisms $\{\tilde \kappa_t\}_{t\in [0,1]}$ of $T^*\mathbf{R}^d$, and a piecewise smooth family of uniformly compactly supported symbols $q_t\in C^\infty_{\rm comp}(T^*\mathbf{R}^d,\mathbf{R})$, such that
\[\partial_t\tilde \kappa_t=H_{q_t}\tilde \kappa_t, \quad \tilde \kappa_0=\operatorname{Id}, \quad \tilde \kappa_1=\tilde \kappa.\]
Then, Proposition \ref{prop: egorov bis} produces a unitary operator $\mathcal T_\kappa:L^2(\mathbf{R}^d)\to L^2(\mathbf{R}^d)$ such that for any $a\in S_\nu^{\rm comp}(T^*\mathbf{R}^d)$, one has
\[\mathcal T_\kappa^*\mathrm{Op}_h^{\rm w}(a)\mathcal T_\kappa=\mathrm{Op}_h^{\rm w}\big(T_N(\mathbf b)\circ \tilde \kappa\big)+\mathcal O_{N,r,s}(h^{N(1-2\nu)})_{H_h^r\to H_h^s},\]
with $\mathbf b$ as in \eqref{eq: bf b for time dep}. In particular, if $a\in C^\infty_{\rm comp}(U_0)$ and $\chi\in C^\infty_{\rm comp}(U_1)$ then
\[\mathcal T_\kappa^*\mathrm{Op}_h^{\rm w}(a)\mathcal T_\kappa=\mathrm{Op}_h^{\rm w}(\chi(T_N(\mathbf b)\circ \kappa))+\mathcal O_{N,r,s}(h^{N(1-2\nu)})_{H_h^r\to H_h^s}.\]
The operator $\mathcal T_\kappa$ is called a \emph{local semiclassical quantization} of $\kappa$. Finally, if the graph of $\kappa$ is projectable, then we can find a generating function $\varphi(x,\eta)$ of $\tilde \kappa:(y,\eta)\to (x,\xi)$, and arrange $\mathcal T_\kappa$ to have a FIO representation
\[\mathcal T_\kappa u(x)=\frac{1}{(2\pi h)^d}\int_{\mathbf{R}^d}\mathrm{e}^{\frac{\mathrm{i}}{h} (\varphi(x,\eta)-\langle y,\eta\rangle)}\alpha_h(x,\eta)u(y)\mathrm{d}y\mathrm{d}\eta,\]
with $\alpha_h(x,\eta)=|\partial_{x,\eta}^2\varphi|^{1/2}+\mathcal O(h)$ --- of course one can always multiply $\alpha_h$ by a complex number of modulus $1$ and obtain another operator quantizing $\kappa$.

For more on this we refer to \cite[\S 4.1]{NZ09}. Note that when $\kappa$ is a linear symplectomorphism of $T^*\mathbf{R}^d$, it can be quantized into a \emph{metaplectic operator} that yields an exact Egorov Theorem.

\subsection{Counting eigenvalues} We give a Weyl type upper bound for the counting function of eigenvalues of elliptic pseudodifferential operators with exotic symbols.

\begin{prop}\label{prop: rough bounds counting func} Let $\mathcal C=(C_k)$ be a sequence of nonnegative numbers. Let $m,K,R>0$. Let $a\in S^m$ be a real symbol, satisfying the ellipticity property $a(x,\xi)\ge K\langle \xi\rangle^m-R$. We suppose $\|a\|_{m,k}\le C_k h^{-\delta k}$. Then, $\mathrm{Op}_h^M(a)$ has discrete spectrum $\{\lambda_j(h)\}$ on $L^2(M)$, and there is a constant $C>0$ such that
\begin{equation} \label{eq: bound eigenvalues}\#\{\lambda_j(h)\le \lambda\}\lesssim_{\!\! m,\mathcal C,K,R}h^{-d}(\lambda+C)^{d/m}.\end{equation}
\end{prop}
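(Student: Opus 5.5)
The strategy is to trade the spectral counting for a trace estimate, using a comparison operator whose trace can be computed directly from the pseudodifferential calculus. Write $A=\mathrm{Op}_h^M(a)$. Since $a$ is real and $a+\mathrm{i}$ is elliptic in $S^m$, Proposition \ref{prop: inverting A+i} and the functional calculus make $A$ self-adjoint with domain $H_h^m(M)$. Its resolvent $(A+\mathrm{i})^{-1}$ maps $L^2$ into $H_h^m$, which embeds compactly into $L^2$ because $M$ is closed and $m>0$. Hence the spectrum is discrete.

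For the counting bound, fix $\lambda$ and set $\mu:=\lambda+C$ with $C\ge R+1$. By the min–max principle, it suffices to bound the number of eigenvalues of $A+C$ below $\mu$. For $k$ large, with $km>d$, this number is at most
\[
\mu^{k}\operatorname{Tr}\big((A+C)^{-k}\big),
\]
since each eigenvalue $\lambda_j+C\le\mu$ contributes at least $\mu^{-k}$ to the trace.

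The positivity of $A+C$ is the first thing to establish. The bound $a\ge K\langle\xi\rangle^m-R$ gives $a+C\ge K\langle\xi\rangle^m+1$. To transfer this to the operator, I would use the parametrix of Theorem \ref{thm: parametrix} with $\gamma=0$, combined with Corollary \ref{cor: exact inverse}, to obtain
\[
(A+C)^{-1}=\mathrm{Op}_h^M\big(T_N(\mathbf b)\big)+\mathcal O\big(h^{N(1-2\delta)}\big)_{H_h^r\to H_h^{s}},\qquad b_0=(a+C)^{-1}\in S^{-m}.
\]
Positivity of $A+C$ for $h$ small would then follow from a sharp Gårding-type argument. Alternatively, one can simply enlarge $C$: the parametrix shows that $A+C$ is invertible with $\|(A+C)^{-1}\|$ bounded, and then the spectrum of $A$ is bounded below by $-C'$, so $C$ can be taken larger than $C'$.

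The trace estimate is where the work lies. Composing the parametrix $k$ times via Proposition \ref{prop: composition manifold} writes $(A+C)^{-k}$ as $\mathrm{Op}_h^M(c)$ plus a remainder that is small in $H_h^{-N}\to H_h^{N}$. Here $c\in S^{-km}$ has seminorms controlled polynomially by $\mathcal C$ and $K^{-1}$. The remainder has trace norm $\mathcal O(h^{N(1-2\delta)-C_d})$, and this is where the trace-class conversion Lemma \ref{lem: convert sobolev estimate into trace class estimate} enters. In each chart, the trace of $\mathrm{Op}_h^{\rm w}(c)$ is
\[
(2\pi h)^{-d}\int c\,\mathrm{d}x\,\mathrm{d}\xi,
\]
and this is $\lesssim h^{-d}\int\langle\xi\rangle^{-km}\,\mathrm{d}\xi<\infty$ because $km>d$. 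To obtain a genuine trace-norm bound rather than just a trace, I would factor $\mathrm{Op}_h(c)=\mathrm{Op}_h(\langle\xi\rangle^{-km/2})\,\mathrm{Op}_h(c')$ and use Hilbert–Schmidt norms.

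This only yields $\#\le \mu^k h^{-d}$, which is the wrong power of $\mu$. The main obstacle is therefore to get the sharp power $\mu^{d/m}$, and a semiclassical rescaling is the natural fix. Since $a\ge K\langle\xi\rangle^m-R$, the eigenvalues below $\mu$ concern the region $|\xi|\lesssim \mu^{1/m}$. Setting $\tilde h=h\mu^{-1/m}$ and $\tilde a(x,\xi)=\mu^{-1}a(x,\mu^{1/m}\xi)$ gives $\mathrm{Op}_h(a)=\mu\,\mathrm{Op}_{\tilde h}(\tilde a)$. Here $\tilde a$ is still elliptic of order $m$ with uniform constants for $\mu\ge1$, using the fact that $a\in S^m$ with symbolic decay in $\xi$ keeps the seminorms uniform. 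Applying the $\mu$-independent bound above at spectral level $1$ with semiclassical parameter $\tilde h$ gives $\tilde h^{-d}=h^{-d}\mu^{d/m}$, which is \eqref{eq: bound eigenvalues}.

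The delicate point is checking that the exotic seminorm bounds $\|\tilde a\|_{m,k}\lesssim \tilde h^{-\delta k}$ survive the rescaling. I expect them to hold because $h^{-\delta}\le\tilde h^{-\delta}$ when $\mu\ge1$.
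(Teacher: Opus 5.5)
There is a genuine gap in the rescaling step, which is the step meant to produce the sharp power $\mu^{d/m}$. You claim that $\tilde a(x,\zeta)=\mu^{-1}a(x,\mu^{1/m}\zeta)$ has $S^m$ seminorms bounded uniformly in $\mu\ge 1$. This is false in general. One has
\[
\partial_\zeta^\alpha\partial_x^\beta\tilde a(x,\zeta)=\mu^{(|\alpha|-m)/m}(\partial_\xi^\alpha\partial_x^\beta a)(x,\mu^{1/m}\zeta).
\]
For $\zeta$ near $0$ the factor $\partial_\xi^\alpha a$ is $O(1)$ but typically nonzero. So for $|\alpha|>m$ the only bound is $\mu^{(|\alpha|-m)/m}$, which blows up as $\mu\to\infty$.

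Concretely, for $a=\langle\xi\rangle^m$ you get $\tilde a=(\mu^{-2/m}+|\zeta|^2)^{m/2}$. Then $\partial_{\zeta_1}^2\tilde a(0)=m\mu^{-1+2/m}$, which is unbounded when $m<2$. The rescaled symbols converge to $|\zeta|^m$, which is singular at $0$ unless $m$ is an even integer. Symbolic decay only helps where $|\xi|\gtrsim 1$, i.e. $|\zeta|\gtrsim\mu^{-1/m}$. But the level-$1$ trace computation integrates over $|\zeta|\lesssim 1$, including $\zeta\approx 0$. Essentially only symbols polynomial in $\xi$ rescale cleanly.

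It is actually worse than non-uniform constants. Since $\mu^{1/m}=h/\tilde h$, the order-$k$ $\zeta$-derivatives near $0$ are of size $(h/\tilde h)^{k-m}$. Once $\mu\ge h^{-m}$ this is $\ge\tilde h^{-(k-m)/2}$, so $\tilde a$ lies in no admissible exotic class for the $\tilde h$-calculus. Hence the ``$\mu$-independent bound at spectral level $1$'' cannot be invoked over the full range of $\lambda$ in the statement. Your check $h^{-\delta}\le\tilde h^{-\delta}$ concerns only the $x$-derivatives and misses this.

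The missing idea is to avoid running the calculus on $a$ at all energy scales, and instead to import the sharp $\lambda$-dependence from a model operator via min-max. This is what the paper does. From $a\ge K\langle\xi\rangle^m-R$ and a Gårding-type inequality, one gets the form bound $\operatorname{Op}_h^M(a)\ge c(1+h^2\Delta_g)^{m/2}-C$. Min-max then gives $\lambda_j(h)\ge c(1+h^2\mu_j)^{m/2}-C$, and Weyl's law $\#\{\mu_j\le x\}\lesssim 1+x^{d/2}$ for $\Delta_g$ yields $h^{-d}(\lambda+C)^{d/m}$ directly.

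If you want to keep a trace argument, you would need something like $\#\{\lambda_j+C\le\mu\}\le 2^k\mu^k\operatorname{Tr}(A+C+\mu)^{-k}$. You would also need a parameter-dependent calculus, uniform in $\mu$ relative to the weight $\langle\xi\rangle^m+\mu$, which the paper's appendix does not provide.

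Separately, your ``enlarge $C$'' alternative for positivity does not work. Invertibility of $A+C$ for a given $C$ holds only for $h<h_0$, with $h_0$ depending on the seminorms of $a+C$ and hence on $C$, and it does not exclude eigenvalues below $-C$. You genuinely need the Gårding-type lower bound, which the paper's argument also uses.
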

\begin{rem}Proposition \ref{prop: rough bounds counting func} does not simply follow from the functional calculus because Theorem \ref{thm: functional calculus} only allows counting eigenvalues in bounded energy windows. Instead, we use a min-max principle to compare eigenvalues of $\operatorname{Op}_h^{M}(a)$ to those of the Laplacian. 
\end{rem}
\begin{proof}By the ellipticity assumption on $a$, we can find constants $C,c>0$ depending on $K,R$ and $\mathcal C$ such that the operator $a+C- c(1+h^2\Delta_g)^{m/2}$ is nonnegative, in the sense that
\[\forall u\in C^\infty(M), \quad \langle \mathrm{Op}_h^{\rm w}(a)u,u\rangle_{L^2(M)}\ge \langle (c(1+h^2\Delta_g)^{m/2}-C)u,u\rangle_{L^2(M)}.\]
Letting $\{\lambda_j(h)\}$ and $\{\mu_j\}$ denote the ordered spectra of $\mathrm{Op}_h^{\rm w}(a)$ and $\Delta_g$, it follows from a min-max inequality that
\[\lambda_j(h)\ge c(1+h^2\mu_j)^{m/2}-C.\]
The upper bound \eqref{eq: bound eigenvalues} then follows from the Weyl's law
\[\#\{\mu_j\le x\}\lesssim 1+x^{d/2}.\]
\end{proof}

The next lemma allows controlling the trace class norm of a continuous operator $H_h^s(M)\to H_h^{s+r}(M)$ provided $r>d$ where $d$ is the dimension of $M$. 
\begin{lem}\label{lem: convert sobolev estimate into trace class estimate} Assume that $A:H_h^s\to H_h^{s+r}$ is a continuous operator for all $s$, with $r>d$. Then, $A$ is trace-class on $L^2(M)$ and
\[\|A\|_{\rm tr}\lesssim h^{-d}\|A\|_{L^2\to H_h^{r}} \qquad \big({\mathrm{or}~\lesssim h^{-d} \|A\|_{H_h^{-r}\to L^2}}\big).\]\end{lem}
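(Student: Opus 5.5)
The statement to prove is Lemma \ref{lem: convert sobolev estimate into trace class estimate}: an operator continuous $H_h^s\to H_h^{s+r}$ with $r>d$ is trace class, with $\|A\|_{\rm tr}\lesssim h^{-d}\|A\|_{L^2\to H_h^r}$. The plan is to factor $A$ through a fixed smoothing operator whose trace norm we can compute from the spectrum of the Laplacian. Set $\Lambda_r:=(1+h^2\Delta_g)^{r/2}$, defined by spectral calculus. By Corollary \ref{cor: exact inverse} and the Laplacian example above (which gives $h^2\Delta_g=\operatorname{Op}_h^M(|\xi|_g^2+\mathcal O(h^2))$), $\Lambda_r$ is comparable to $\operatorname{Op}_h^M(\langle\xi\rangle^r)$. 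Hence $\|\Lambda_r u\|_{L^2}\simeq \|u\|_{H_h^r}$ uniformly in $h$, and $\Lambda_r^{-1}$ maps $L^2\to H_h^r$ isometrically up to constants. We write
\[A=\Lambda_r^{-1}\,(\Lambda_r A),\qquad \|A\|_{\rm tr}\le \|\Lambda_r^{-1}\|_{\rm tr}\,\|\Lambda_r A\|_{L^2\to L^2}\lesssim \|\Lambda_r^{-1}\|_{\rm tr}\,\|A\|_{L^2\to H_h^r}.\]
For the alternative bound we instead write $A=(A\Lambda_r)\Lambda_r^{-1}$ and use $\|A\Lambda_r\|_{L^2\to L^2}\lesssim \|A\|_{H_h^{-r}\to L^2}$.

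It remains to bound $\|\Lambda_r^{-1}\|_{\rm tr}=\sum_j(1+h^2\mu_j)^{-r/2}$, where $\{\mu_j\}$ are the eigenvalues of $\Delta_g$; this sum is exactly the trace norm because $\Lambda_r^{-1}$ is positive. By Weyl's law, $N(x):=\#\{\mu_j\le x\}\lesssim 1+x^{d/2}$, the same bound used in Proposition \ref{prop: rough bounds counting func}. We sum by dyadic shells in the variable $h^2\mu_j$. The shell $h^2\mu_j\le 1$ contains $\lesssim h^{-d}$ eigenvalues, each contributing $\le 1$. The shell $2^k<h^2\mu_j\le 2^{k+1}$ contains $\lesssim h^{-d}2^{(k+1)d/2}$ eigenvalues, each contributing $\lesssim 2^{-kr/2}$. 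Summing over $k\ge 0$ gives
\[\|\Lambda_r^{-1}\|_{\rm tr}\lesssim h^{-d}\Big(1+\sum_{k\ge0}2^{k(d-r)/2}\Big)\lesssim h^{-d},\]
and the geometric series converges precisely because $r>d$.

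The main point requiring care is the norm equivalence between $\Lambda_r$ and the chart-based norm $\|\cdot\|_{H_h^r(M)}$, uniformly in $h$. We would prove it via the functional calculus (Theorem \ref{thm: functional calculus} applied after composing with the parametrix), or more simply via Corollary \ref{cor: exact inverse} applied to the elliptic symbol $\langle\xi\rangle_g^r$. The relevant fact is that $\Lambda_r\operatorname{Op}_h^M(\langle\xi\rangle^{-r})$ is bounded on $L^2$ with bounded inverse for small $h$. An alternative that avoids spectral calculus for $\Lambda_r$ is to take $\Lambda_r=\operatorname{Op}_h^M(\langle\xi\rangle^{-r})$ directly. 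In that case its trace norm would be bounded by localizing in charts and using the standard trace-norm bound $(2\pi h)^{-d}\sum_{|\alpha|\le 2d+1}\|\partial^\alpha\langle\xi\rangle^{-r}\|_{L^1}$. That route needs the symbol to be integrable, which again uses $r>d$.
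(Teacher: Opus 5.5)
Your proposal is correct and follows essentially the same route as the paper. Both arguments factor $A$ through a positive smoothing operator: you use $(1+h^2\Delta_g)^{-r/2}$, the paper uses the comparable $(1+h\sqrt{\Delta})^{-r}$. Both then bound its trace norm by $h^{-d}$ via Weyl's law and $r>d$, and control the remaining factor by $\|A\|_{L^2\to H_h^r}$. Your dyadic summation, your treatment of the $H_h^{-r}\to L^2$ variant, and your remark on the uniform equivalence $\|\Lambda_r u\|_{L^2}\simeq\|u\|_{H_h^r}$ only make explicit steps the paper leaves implicit.
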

\begin{proof}We take $s=0$ above. Then, one has
\[\|A\|_{\rm tr}=\|(1+h\sqrt{\Delta})^{-r}\cdot (1+h\sqrt{\Delta})^r A\|_{\rm tr}\lesssim \|(1+h\sqrt{\Delta})^{-r}\|_{\rm tr}\cdot \|A\|_{L^2\to H_h^r}.\]
Now, by Weyl's law, provided $r>d$, the operator $(1+h\sqrt{\Delta})^{-r}$ is trace-class and
\[\|(1+h\sqrt{\Delta})^{-r}\|_{\rm tr}=\sum_{\mu_j\in \mathrm{Spec}(\sqrt{\Delta})} \frac{1}{(1+h\mu_j)^{r}}\lesssim_{\!\! r} h^{-d}.\]
\end{proof}
	
	\printbibliography
\end{document}